\tikzset{dynkdot/.style={circle,draw,scale=.38}}
\newcommand{\diagramone}[6]{
\xymatrix@R=6.5ex@C=9ex{
V(\varpi_{#1}) \otimes V(\varpi_{#2})_{-q^{#3}z} \otimes V(\varpi_{#4})_{(-q)^{#5} z}
\ar[rr]^{\qquad \qquad \quad V(\varpi_{#1})\otimes p_{#2,#4}} \ar[d]_{\Runiv_{#1,#2}((-q)^{#3}z) \otimes V(\varpi_#4)_{(-q)^{#5}z}} &&
V(\varpi_{#1}) \otimes V(\varpi_{#6})_{z} \ar[dd]_{\Runiv_{#1,#6}(z)} \\
V(\varpi_{#2})_{(-q)^{#3}z}  \otimes V(\varpi_{#1}) \otimes V(\varpi_{#4})_{(-q)^{#5}z}
\ar[d]_{V(\varpi_{#2})_{(-q)^{#3}z} \otimes \Runiv_{#1,#4}((-q)^{#5}z)} &\\
V(\varpi_{#2})_{(-q)^{#3}z} \otimes V(\varpi_{#4})_{(-q)^{#5}z}  \otimes V(\varpi_{#1})\ar[rr]^{\qquad \qquad \qquad p_{#1,#2}\otimes V(\varpi_3)  }&& V(\varpi_{#6})_{z} \otimes V(\varpi_{#1}).
}}
\newcommand{\diagramtwo}[7]{
\xymatrix@R=3ex@C=9ex{
u_{\varpi_{#1}} \tens u_{\varpi_{#2}} \tens f_{#7}u_{\varpi_{#4}} \ar[rr]  \ar[d] &&
u_{\varpi_{#1}} \tens u_{\varpi_{#6}} \ar[dd] \\
a_{#1,#2}((-q)^{#3}z)u_{\varpi_{#2}} \tens u_{\varpi_{#1}}  \tens f_{#7}u_{\varpi_{#4}}
\ar[d] &\\
a_{{#1},{#2}}((-q)^{#3}z)a_{{#1},{#4}}((-q)^{{#5}}z)u_{\varpi_{#2}} \tens w \ar[rr] &&
a_{{#1},{#6}}(z)u_{\varpi_{#6}}  \tens u_{\varpi_{#1}}
}
}
\newcommand{\nc}{\newcommand}
\numberwithin{equation}{section}
\newenvironment{rouge}{\relax\color{red}}{\relax}
\newenvironment{blue}{\relax\color{blue}}{\hspace*{.5ex}\relax}
\newenvironment{jaune}{\relax\color{Orchid}}{\hspace*{.5ex}\relax}
\nc{\hs}{\hspace*}
\newcommand{\ber}{\begin{rouge}}
\newcommand{\er}{\end{rouge}}
\nc{\bjn}{\begin{jaune}}
\nc{\ejn}{\end{jaune}}
\newcommand{\beb}{\begin{blue}}
\newcommand{\eb}{\end{blue}}
\newcommand{\bers}{\begin{blue}{}\marginnote{\fbox{\scshape\lowercase{O}}}{}}
\theoremstyle{plain}
\newtheorem{lemma}{Lemma}[section]
\newtheorem{proposition}[lemma]{Proposition}
\newtheorem{theorem}[lemma]{Theorem}
\newtheorem{algorithm}[lemma]{Algorithm}
\newtheorem{corollary}[lemma]{Corollary}
\theoremstyle{definition}
\newtheorem{remark}[lemma]{Remark}
\newtheorem{example}[lemma]{Example}
\newtheorem{definition}[lemma]{Definition}
\newcommand{\Lto}{\longrightarrow}
\newcommand{\hd}{{\operatorname{hd}}}
\newcommand{\soc}{{\operatorname{soc}}}
\newcommand{\lr}[1]{ \lan #1 \ran }
\newcommand{\lrt}[1]{ \{ #1 \} }
\newcommand{\SmQ}[2]{S_\mQ {\scriptstyle\sprt{#1}{#2}} }
\newcommand{\VmQ}[2]{V_\mQ {\scriptstyle\sprt{#1}{#2}} }
\newcommand{\SoQ}[2]{S_Q {\scriptstyle\sprt{#1}{#2}} }
\newcommand{\VoQ}[2]{V_Q {\scriptstyle\sprt{#1}{#2}} }
\newcommand{\pprt}[4]{ {\scriptstyle \left( \begin{matrix} #1 \\ #2 \\ #3 \\ #4 \end{matrix}\right)} }
\newcommand{\cmA}{\mathsf{A}}
\newcommand{\rl}{\mathsf{Q}}
\newcommand{\Stom}{\overset{\to}{S}_{[\redez]}(\um)}
\newcommand{\Sgetsm}{\overset{\gets}{S}_{[\redez]}(\um)}
\newcommand{\clfw}{\overline{\Lambda}} 
\newcommand{\prt}[2]{ \left( \begin{matrix} #1 \\ #2 \end{matrix}\right) }
\newcommand{\sprt}[2]{ \fontsize{5}{5}\selectfont \left( \begin{matrix} #1 \\ #2 \end{matrix} \right) \fontsize{11}{11}\selectfont }
\newcommand{\um}{\underline{m}}
\newcommand{\un}{\underline{n}}
\newcommand{\us}{\underline{s}}
\newcommand{\up}{\underline{p}}
\newcommand{\tb}{\mathtt{b}}
\newcommand{\bzero}{\mathbf{0}}
\newcommand{\al}{\alpha}
\newcommand{\be}{\beta}
\newcommand{\ga}{\gamma}
\newcommand{\lan}{\langle}
\newcommand{\ran}{\rangle}
\newcommand{\lf}{[\hspace{-0.3ex}[}
\newcommand{\rf}{]\hspace{-0.3ex}]}
\newcommand{\soplus}{\mathop{\mbox{\normalsize$\bigoplus$}}\limits}
\newcommand{\To}[1][{\hspace{2ex}}]{\xrightarrow{\,#1\,}}
\newcommand{\g}{\mathfrak{g}}
\newcommand{\Z}{\mathbb{Z}}
\newcommand{\A}{\mathbb{A}}
\newcommand{\N}{\mathsf{N}}
\newcommand{\Q}{\mathbb{Q}}
\newcommand{\C}{\mathbb{C}}
\newcommand{\F}{\mathcal{F}}
\newcommand{\Ca}{\mathcal{C}}
\newcommand{\Sym}{\mathfrak{S}}
\newcommand{\mC}{\mathscr{C}}
\newcommand{\mQ}{\mathscr{Q}}
\newcommand{\seteq}{\mathbin{:=}}
\newcommand{\Hom}{\operatorname{Hom}}
\newcommand{\Aut}{\operatorname{Aut}}
\newcommand{\Image}{\operatorname{Im}}
\newcommand{\Rep}{\operatorname{Rep}}
\newcommand{\tens}{\mathop\otimes}
\DeclareMathOperator{\id}{id}  
\newcommand{\len}{\mathrm{len}}
\newcommand{\wt}{\mathrm{wt}}
\newcommand{\cl}{\mathrm{cl}}
\newcommand{\aff}{\mathrm{aff}}
\newcommand{\univ}{\mathrm{univ}}
\newcommand{\norm}{\mathrm{norm}}
\newcommand{\ren}{\mathrm{ren}}
\newcommand{\Mod}{\operatorname{Mod}}
\newcommand{\Modg}{\operatorname{{Mod}_{\mathrm{gr}}}}
\nc{\gmod}{\mbox{-$\mathrm{gmod}$}}
\newcommand{\Rnorm}[1]{R^\norm_{#1}}
\newcommand{\Runiv}[1]{R^\univ_{#1}}
\newcommand{\Rren}[1]{R^\ren_{#1}}
\newcommand{\rmat}[1]{{\mathbf r}_{\mspace{-2mu}\raisebox{-.5ex}{${\scriptstyle{#1}}$}}}
\newcommand{\redex}{\widetilde{w}}
\newcommand{\redez}{{\widetilde{w}_0}}
\newcommand{\rrz}{{\widetilde{\mathsf{w}}_0}}
\newcommand{\ko}{\mathbf{k}}
\newcommand{\conv}{\mathbin{\mbox{\large $\circ$}}}
\newcommand{\qtext}[1]{ \quad \text{#1}\quad }
\newcommand{\iso}{\simeq}
\newcommand{\oO}{\overline{1}}
\newcommand{\oT}{\overline{2}}
\newcommand{\oTh}{\overline{3}}
\newcommand{\PR}{\Phi^+}
\newcommand{\dist}{{\rm dist}}
\newcommand{\Vifp}[1]{V^{(3)}(\varpi_{1})_{-w^2q^{#1}}}
\newcommand{\Vithp}[1]{V^{(3)}(\varpi_{1})_{-wq^{#1}}}
\newcommand{\Viop}[1]{V^{(3)}(\varpi_{1})_{-q^{#1}}}
\newcommand{\Vitp}[1]{V^{(3)}(\varpi_{2})_{-q^{#1}}}
\newcommand{\SQ}[1]{S_Q(\lr{#1})}
\definecolor{darkred}{rgb}{0.7,0,0} 
\newcommand{\defn}[1]{{\color{darkred}\emph{#1}}} 
\title[Categorical Langlands duality: Exceptional cases]{
Categorical relations between Langlands dual quantum affine algebras: Exceptional cases}
\author[S.-j.~Oh]{Se-jin Oh}
\thanks{}
\address[S.-j.~Oh]{Ewha Womans University Seoul, 52 Ewhayeodae-gil, Daehyeon-dong, Seodaemun-gu, Seoul, South Korea}
\email{sejin092@gmail.com}
\urladdr{https://sites.google.com/site/mathsejinoh/}
\author[T.~Scrimshaw]{Travis Scrimshaw}
\address[T. Scrimshaw]{School of Mathematics and Physics, The University of Queensland, St.\ Lucia, QLD 4072, Australia}
\email{tcscrims@gmail.com}
\urladdr{https://people.smp.uq.edu.au/TravisScrimshaw/}
\keywords{R-matrix, affine quantum group, Dorey's rule, quiver Hecke algebra}
\subjclass[2010]{
17B37,  
17B65,  
17B10,  
05E10}  
\thanks{S.-j.\ Oh was partially supported by the National Research Foundation of Korea(NRF) Grant funded by the Korea government(MSIP) (NRF-2016R1C1B2013135).
T.\ Scrimshaw was partially supported by the Australian Research Council DP170102648.}
\begin{document}

\begin{abstract}
We first compute the denominator formulas for quantum affine
algebras of all exceptional types. Then we prove the isomorphisms
among Grothendieck rings of categories $C_Q^{(t)}$ $(t=1,2,3)$,
$\mC_{\mQ}^{(1)}$ and $\mC_{\mathfrak{Q}}^{(1)}$. These results give Dorey's rule for all exceptional affine types, prove the conjectures of Kashiwara-Kang-Kim and Kashiwara-Oh, and
provides the partial answers of Frenkel-Hernandez on Langlands
duality for finite dimensional representations of quantum affine
algebras of exceptional types.
\end{abstract}

\maketitle

\tableofcontents

\section{Introduction}

Let $U_q'(\g)$ be a quantum affine algebra over an algebraically closed field $\ko$ of characteristic zero. The category $\mathcal{C}_\g$ of finite
dimensional integrable modules over $U_q'(\g)$ has been intensively studied since it is closely related to many branches of mathematics and physics including statistical mechanics, cluster algebras, dynamical systems, etc. We refer the reader to~\cite{CP95,FR99,Her10,HL10,IKT12,Kas02,KNS11,Nakajima04} and references therein.  The category
$\mathcal{C}_\g$ is a $\ko$-linear abelian monoidal category in the sense that it is abelian,  the tensor functor $\otimes$ is $\ko$-bilinear and exact.
Moreover it is rigid in the sense that each module $M$ in $\mathcal{C}_\g$ has a left dual $M^*$ and a right dual ${}^*M$ (see~\eqref{eq: LR dual} for more details).  Inside $\mathcal{C}_\g$, there exists a rigid monoidal subcategory, denoted by $\mathcal{C}^0_\g$, that contains all simple modules in $\mathcal{C}_\g$ up to parameter shifts (see~\cite[Section 10]{HL10} and~\cite[\S 3.1]{KKKOIV}).

The category $\mathcal{C}^0_\g$ contains a countably infinitely set $\mathcal{P} \seteq \{V(\varpi_i)_a\}_{(i \in I,\, a \in \Z[q_s^{\pm 1}])}$ consisting of simple $U_q'(\g)$-modules called the fundamental modules with spectral parameters $a$, whose tensor monomials in a certain order form a basis of the Grothendieck ring $[\mathcal{C}^0_\g]$ of $\mathcal{C}^0_\g$ (\cite{AK97,Chari02,Kas02,VV02}). Thus one can understand the set $\mathcal{P}$ as PBW-type modules of $\mathcal{C}^0_\g$.

For modules $M, N \in \mathcal{C}_\g$, there exists a non-zero $U_q'(\g)$-homomorphism
\[
\rmat{M,N} \colon M \otimes N \to N \otimes M
\]
that satisfies the Yang-Baxter equation.
When $M$ and $N$ are \emph{good} modules in the sense of \cite{Kas02}, there exists a polynomial called the denominator $d_{M,N}(z) \in \ko[z]$. The roots of $d_{M,N}(z)$ determine whether $\rmat{M_a,N_b}$ is an isomorphism or not. Indeed, the subcategory $\mathcal{C}^0_\g$ and the order in the previous paragraph are determined by the denominator formulas $d_{i,j}(z) \seteq d_{V(\varpi_i),V(\varpi_j)}(z)$ between the
fundamental modules $V(\varpi_i)$ and $V(\varpi_j)$. Roughly speaking, one can say that the representation theory for $\mathcal{C}_\g$ is \emph{controlled} by the denominator formulas $d_{i,j}(z)$.

For $\g^{(1)}$ of type $A_n^{(1)}$, $D_n^{(1)}$ or $E_{6,7,8}^{(1)}$ in~\cite{HL11}
and $\g^{(2)}$ of type $A_n^{(2)}$, $D_n^{(2)}$ or $E_6^{(2)}$ in~\cite{KKKOIV,Oh15E},
a tensor subcategory $\mathcal{C}^{(t)}_Q$ of $\mathcal{C}^0_\g$, where $t = 1,2$ and $\g = \g^{(t)}$, that contains
finite number of fundamental modules $\mathcal{P}^{(t)}_Q \seteq \{ V^{(t)}_Q(\be)\seteq V(\varpi_{i_\be})_{a_\be} \ | \ \be \in \PR\} \subset \mathcal{P}$ whose tensor monomials in the induced order (in short, PBW-monomials) form a basis of $[\mathcal{C}^{(1)}_Q]$ and
whose repeated duals of $V^{(t)}_Q(\be)$'s cover $\mathcal{P}$:
\begin{equation} \label{eq: cover}
 \mathcal{P} \ni M \iff \text{ there exists $\be \in \PR$ and $k \in \Z$ such that $M = \bigl(V^{(t)}_Q(\beta)\bigr)^{k*}$.}
\end{equation}
Here $Q$ denotes a Dynkin quiver of the corresponding finite type Dynkin diagram and $\PR$ is the set of positive roots of the same finite type.
Also the pair $(i_\be,a_\be) \in I \times \Z[q_s^{\pm1}]$ for each $V^{(t)}_Q(\be)$ is determined by the coordinate system of the Auslander-Reiten (AR) quiver $\Gamma_Q$ of the path algebra $\C Q$.
Furthermore, it was shown in \cite{HL11,KKKOIV} that $\mathcal{C}^{(t)}_Q$ categorifies the negative half $U_q^-(\mathsf{g})$ of the finite quantum group $U_q(\mathsf{g}) \subset U_q'(\g)$ and PBW-monomials in $\mathcal{P}^{(t)}_Q$ arranged by $d_{i,j}(z)$ categorify the PBW-basis of $U_q^-(\mathsf{g})$ arising from $Q$.
However, for $t = 2$, the result depends on Dorey's rule, which is only conjectured for type $E_6^{(2)}$.

Let us briefly recall a combinatorial property of $\Gamma_Q$ (see \cite{Bedard99}).
For a finite Weyl group $W$ of the same type of $Q$, let $w_0$ be the longest element and $\redez$ be a reduced expression of $w_0$. Then $\Gamma_Q$ is the Hasse diagram of the convex partial order $\prec_{Q}$ on $\PR$. In particular, each $\prec_{Q}$ coincides with the convex partial order $\prec_{[Q]}$ induced from the commutation class $[Q]$ consisting of all reduced expressions adapted to $Q$ and has its unique Coxeter element.
Also all commutation classes adapted to some $Q$ can be grouped into one $r$-cluster point.

In \cite{Oh14D,Oh14A}, the first named author observed that if $V^{(t)}_Q(\al) \otimes V^{(t)}_Q(\be)$ is reducible, then $(\al,\be)$ are comparable with respect to $\prec_Q$ when $\g$ is of type $A_n^{(t)}$ or $D_n^{(t)}$ $(t=1,2)$. The observation means that the order for PBW-monomials in $\mathcal{P}^{(t)}_Q$ $(t=1,2)$ determined by $d_{i,j}(z)$ is equivalent to $\prec_Q$.  Furthermore, the following are proved:
\begin{enumerate}
\item The conditions for
\[
\Hom_{U_q'(\g)}\big( V(\varpi_i)_a \otimes  V(\varpi_j)_b, V(\varpi_k)_c \big) \ne 0
\]
can be interpreted as the coordinates in some $\Gamma_Q$ when $\g$ is of type $A_n^{(t)}$ or $D_n^{(t)}$ and are referred to as Dorey's rule for quantum affine algebras.

\item By defining new statistics on $\Gamma_Q$, the denominator formulas for $A_n^{(t)}$ or $D_n^{(t)}$ can be read from any $\Gamma_Q$.
\end{enumerate}

Here let us review the history of Dorey's rule briefly. In~\cite{Dorey91}, Dorey described relations between three-point couplings in the simply-laced affine
Toda field theories (ATFTs) and Lie theories. More precisely, simply-laced ATFTs on untwisted affine Lie algebras are related to the same type of Lie algebras and simply-laced
ATFTs on twisted affine Lie algebras are related to non-simply laced Lie algebras obtained
by corresponding Dynkin diagram automorphisms.
Generally, in ATFTs, quantum affine algebras appear as quantum symmetry groups and
the fundamental representations of quantum affine algebras correspond to the quantum particles~\cite{BL91}. Interestingly, Dorey's rule was interpreted by Chari and Pressley~\cite{CP96} in the language of representation theory of classical quantum affine algebras by using Coxeter elements for $U_q'(A_n^{(1)})$ and $U_q'(D_n^{(1)})$ and twisted Coxeter elements for $U_q'(B_n^{(1)})$ and $U_q'(C_n^{(1)})$.
Analogous results for $U_q'(E_{6,7,8}^{(1)})$ and classical twisted affine types were shown in~\cite[Section 7.3]{FH15},~\cite{Oh15E} and~\cite{KKKOIV} respectively (see also~\cite{YZ11}).

Motivated by the ideas of Chari and Pressley on Dorey's rule for quantum affine algebras of type $B_n^{(1)}$ (resp.~$C^{(1)}_{n}$) using twisted Coxeter elements
of type $A_{2n-1}$ (resp.~$D_{n+1}$),  the first named author and Suh introduced the folded AR quiver $\widehat{\Upsilon}_{\mQ}$ in \cite{OS15,OS16B,OS16C},
a combinatorial object that can be understood as a twisted analogue of $\Gamma_Q$.  Here $[[\mQ]]$ denotes the $r$-cluster point of a simply-laced finite type $X$ with positive roots $\PR$, which corresponds to any twisted Coxeter element.
In particular, the folded AR quiver has the following properties:
\begin{enumerate}
\item $\widehat{\Upsilon}_{\mQ}$ is the Hasse diagram of $\prec_{[\mQ]}$.
\item We can define some certain subcategory $\mathscr{C}^{(1)}_\mQ$ of $\mathcal{C}_{\g}$ for $\g=B_n^{(1)}$,
$C_n^{(1)}$, $F_4^{(1)}$ and $G_2^{(1)}$ determined by the coordinate system of $\widehat{\Upsilon}_{\mQ}$ (see also the category $\mathscr{C}^-$ in \cite{HL16}).
\item The modules $\mathscr{P}_\mQ \seteq \{ V_\mQ(\be) \in \mathscr{C}^{(1)}_\mQ \mid \be \in \PR\}$ satisfies the property stated in~\eqref{eq: cover}.
\item When $\g=B_n^{(1)}$ or $C_n^{(1)}$, the order for PBW-monomials in $\mathscr{P}_\mQ$  arranged by $d_{i,j}(z)$ is equivalent to $\prec_{[\mQ]}$.
\item For $\g=B_n^{(1)}$ or $C_n^{(1)}$, the Dorey's rule
can be interpreted as \emph{$[\mQ]$-minimal pair} $(\al,\be)$ of $\al+\be \in \PR$
for some $[\mQ]$, and the denominator formulas can be read from any $\widehat{\Upsilon}_{\mQ}$.
\end{enumerate}
However, due to the lack of Dorey's rule and denominator formulas for all exceptional affine types, there were not exceptional affine types analogues of many of the aforementioned results.

The quiver Hecke algebras $R^{\mathsf{g}}$, independently introduced by Khovanov-Lauda \cite{KL09,KL11} and Rouquier \cite{Rou08}, categorify the negative part
$U^-_q(\mathsf{g})$ of quantum groups $U_q(\mathsf{g})$ for all symmetrizable Kac-Moody algebras $\mathsf{g}$. The categorification for (dual) PBW-bases and global bases
of the integral form $U_\A^-(\mathsf{g})$ of $U^-_q(\mathsf{g})$ were very quickly developed since the introduction of quiver Hecke algebras.
Among them, \cite{BKM12,Kato12,KR11,McN15} give the categorification theory for (dual) PBW-bases of $U^-_\A(\mathsf{g})$ associated to finite simple Lie algebra $\mathsf{g}$ by using
convex orders $\prec_{[\redez]}$ on the set of positive roots $\PR$ for any commutation class
$[\redez]$.
As we reviewed, $U^-_q(\mathsf{g})$ for finite type $A_n$, $D_n$ or $E_{6,7,8}$
is also categorified by $\Ca_Q^{(1)}$. Thus it was natural to ask the concrete relationship between the categories $\Rep(R^{\mathsf{g}})$ and $C_Q^{(1)}$ (see~\cite[\S 1.6]{HL11}).

When $\g$ is of affine type $A^{(1)}_n$ and $D^{(1)}_n$, the affirmative answer was given by
Kang, Kashiwara and Kim in~\cite{KKK13B,KKK13A}: For a quantum affine algebra $U_q'(\g)$, we can construct a generalized Schur-Weyl duality functor
\[
\mathcal{F} \colon \Rep(R^J) \to \mathcal{C}_\g
\]
by observing denominator formulas $d_{V,W}(z)$ between good modules
$V,W \in \mathcal{C}_\g$. Here $R^J$ is the quiver Hecke algebra determined by only the choice of good modules in $\mathcal{C}_\g$ and denominator formulas. For
$\g$ of affine type $A^{(1)}_{n}$ and $D^{(1)}_{n}$, by taking $\{ V_Q^{(1)}(\al_i) \}_{i \in I}$
as the chosen good modules, they proved that the quiver Hecke algebra $R^J$ is indeed
$R^{\mathsf{g}}$ by using the denominator formulas $d_{i,j}(z)$ for $U_q'(\g)$. Moreover, by adopting the framework of~\cite{HL11}, it was shown that the induced generalized Schur-Weyl duality functor
\[
\mathcal{F}^{(1)}_Q \colon \Rep(R^{\mathsf{g}}) \to \mathcal{C}^{(1)}_Q
\]
sends simple modules to simple modules bijectively. Thus the PBW-type modules associated to $[Q]$
in both categories can be identified via $\mathcal{F}^{(1)}_Q$. These results were extended to the cases $\Ca_Q^{(2)}$ for $\g=A^{(2)}_{n}$ and $D^{(2)}_{n}$ in~\cite{KKKOIV}. In these cases, however, to show the bijection, the Dorey's rule for $U_q'(\g)$, developed in~\cite{KKKOIV}, was crucially used instead of~\cite{HL11}.

Since there are $\mC_\mQ$, $\widehat{\Upsilon}_\mQ$, Dorey's rule and
denominator formulas for $\g=B_n^{(1)}$ and $C_n^{(1)}$, the above results were extended to
the $U_q'(\g)$ cases~\cite{KO17}. Here, $[\mQ]$ denotes a commutation class of type $A_{2n-1}$ or $D_{n+1}$, respectively, arising from a twisted Coxeter element associated to the corresponding Dynkin diagram folding. More precisely, there exists an exact generalized Schur-Weyl duality functor
\[
\mathscr{F}_\mQ \colon \Rep(R^{\mathsf{g}}) \to \mC^{(1)}_\mQ, \qquad
(\mathsf{g},\g) =
\begin{cases}
(A_{2n-1},B_n^{(1)}), \\
(D_{n+1},C_n^{(1)}),
\end{cases}
\]
which sends simple modules to simple modules bijectively, and give a categorification
theorem for the PBW-bases of $U_q^-(\mathsf{g})$ associated to $[\mQ]$ via the PBW-monomials in $\mathscr{P}_\mQ$.

Hence,  there are subcategories $\mC^{(2)}_Q$ of $U_q'(\g^{(2)})$-modules $(\g^{(2)}=A_{2n-1}^{(2)},D^{(2)}_{n+1} )$ and $\mC_\mQ$ of $U_q'({}^L\g^{(2)})$-modules
$({}^L\g^{(2)}=B_{n}^{(1)},C^{(1)}_{n} )$ whose Grothendieck groups are isomorphic to each other. Such similarities between twisted quantum affine algebra
and its untwisted Langlands dual were observed by many mathematician and are known to be related to the geometric Langlands correspondence
(see \cite{FH11,FH11B,FR98}).

In this paper, we extend these results to all exceptional affine algebras $U_q'(\g)$. As previously mentioned, the main difficulties we had to overcame were lack of denominator formulas and Dorey's rule for exceptional affine cases.
In particular, the denominator formulas for exceptional types are quite difficult to study due to their large dimensions and multiplicities in their classical decompositions.
In order to compute the denominator formulas, we required an explicit basis for $V(\varpi_i)$ with the $U_q'(\g)$-action on that basis.
To the best knowledge of the authors, this was not known.
We began by constructing an explicit $U_q'(\g)$-module structure for fundamental modules associated to minuscule and adjoint representations by using the corresponding crystal graphs.
Using this, we can compute the denominator formulas between these representations and the induced Dorey's type homomorphisms. This allows us to compute all denominator formulas $d_{i,j}(z)$ 
by applying the framework of~\cite{Oh14R} and statistics on $\Gamma_Q$ and $\widehat{\Upsilon}_\mQ$.

From the denominator formulas $d_{i,j}(z)$, we can construct
exact functors
\[
\mathcal{F}_Q^{(t)} \colon \Rep(R^{\mathsf{g}}) \to \mathcal{C}_Q^{(t)} \ (t=1,2,3),
\qquad \mathscr{F}_\mQ \colon \Rep(R^{\mathsf{g}}) \to \mC_\mQ^{(1)}.
\]
This gives an answer for~\cite[Conjecture 4.3.2]{KKK13B}.  This conjecture for $E^{(1)}_{6,7,8}$ is also proved by Fujita~\cite{Fujita18}, announced after completing this paper,
by generalizing Ginzburg-Reshetikhin-Vasserot's geometric realization of the quantum affine Schur-Weyl duality~\cite{GRV94} to generalized Schur-Weyl duality for $A_n^{(1)}$, $D_n^{(1)}$ and $E^{(1)}_{6,7,8}$,
which also implies simpleness of some roots of the denominator formulas in that cases.

By utilizing both categories with the exact functors, we can obtain the complete Dorey's rules for
all exceptional affine cases. In particular, we use the rigidity of $\Ca_\g$ and the exact sequences
for ($[\mQ]$-minimal) pairs $(\al,\be)$ of $\al+\be \in \PR$ in $\Rep(R^{\mathsf{g}})$. With the
results on denominator formulas and Dorey's type homomorphisms at hand, we prove that
\begin{enumerate}
\item $\mathcal{C}_Q^{(t)}$ and $\mC_\mQ^{(1)}$ give categorifications of $U_q^-(\mathsf{g})$,
\item $\mathcal{F}_Q^{(t)}$ and $\mathscr{F}_\mQ$ sends simple modules to simple modules bijectively, and
\item the PBW-monomials in $\mathcal{P}^{(t)}_Q$ and $\mathscr{P}_\mQ$  give categorifications of the PBW bases of $U_q^-(\mathsf{g})$ associated to $[Q]$ and $[\mQ]$ for all exceptional cases.
\end{enumerate}
These results for $E_6^{(t)}$ $(t=1,2)$ and $F_4^{(1)}$ were conjectured in \cite[Appendix]{KO17}.

Now we have the following diagram which are related to our works on exceptional cases:
\[
\begin{tikzpicture}[scale=2.7,baseline=0]
\node (CQ) at (0,0.8) {$\ \ \quad \qquad \mC^{(1)}_\mQ  \subset \Ca_{F_4^{(1)}}$};
\node (U) at (0,0) {$\Rep(R^{E_6})$};
\node (C1) at (-1,-0.6) {$ \Ca_{E_6^{(1)}} \supset \mathcal{C}^{(1)}_{Q}$};
\node (C2) at (1,-0.6) {$ \mathcal{C}^{(2)}_{Q'} \subset \Ca_{E_6^{(2)}}$};
\draw[<-] (CQ) -- node[midway,right] {\tiny $\mathscr{F}_\mQ$}  (U);
\draw[<-] (C1) -- node[midway,sloped,above] {\tiny $\mathcal{F}^{(1)}_Q$}  (U);
\draw[<-] (C2) -- node[midway,sloped,above] {\tiny $\mathcal{F}^{(2)}_{Q'}$}  (U);
\draw[<->,dotted] (CQ) .. controls (-0.5,0.5) and (-1,0) .. (C1);
\draw[<->,dotted] (CQ) .. controls (0.5,0.5) and (1,0) .. (C2);
\draw[<->,dotted] (C1) -- (C2);
\end{tikzpicture}
\
\begin{tikzpicture}[scale=2.7,baseline=0]
\node (C1) at (0,0.8) {$\mathcal{C}^{(1)}_{Q} \subset \Ca_{D_4^{(1)}}$};
\node (C2) at (-1.2,0) {$ \Ca_{D_4^{(2)}} \supset\mathcal{C}^{(2)}_{Q'}$};
\node (C3) at (-0.6,-0.6) {$\Ca_{D_4^{(3)}} \supset \mathcal{C}^{(3)}_{Q''}$};
\node (CD) at (1.2,0) {$\mC^{(1)}_{\mathfrak{Q}} \subset \Ca_{C_3^{(1)}} $};
\node (CQ) at (0.6,-0.6) {$\mC^{(1)}_\mQ \subset \Ca_{G_2^{(1)}}$};
\node (U) at (0,0) {$\Rep(R^{D_4})$};
\draw[<-] (C1) -- node[midway,right] {\tiny $\mathcal{F}^{(1)}_Q$}  (U);
\draw[<-] (C2) -- node[midway,sloped,above] {\tiny $\mathcal{F}^{(2)}_{Q'}$}  (U);
\draw[<-] (C3) -- node[midway,sloped,above] {\tiny $\mathcal{F}^{(3)}_{Q''}$}  (U);
\draw[<-] (CD) -- node[midway,sloped,above] {\tiny $\mathscr{F}_{\mathfrak{Q}}$}  (U);
\draw[<-] (CQ) -- node[midway,sloped,above] {\tiny $\mathscr{F}_{\mQ}$}  (U);
\draw[<->,dotted] (C1) .. controls (-0.6,0.7) and (-0.9,0.5) .. (C2);
\draw[<->,dotted] (C2) -- (C3);
\draw[<->,dotted] (C3) -- (CQ);
\draw[<->,dotted] (CQ) -- (CD);
\draw[<->,dotted] (CD) .. controls (0.9,0.5) and (0.6,0.7) .. (C1);
\end{tikzpicture}
\]
These diagrams are closely related to the conjectures of Frenkel-Hernandez (see \cite[Conj.~2.2, Conj.~2.4, Conj.~3.10]{FH11B})
for exceptional affine types similar to classical affine types as shown in~\cite{KO17}:
For a representation $V$ in $\mathcal{C}^{(2)}_{Q}$, one can corresponds to a representation $\widehat{V}$ in $\mC^{(1)}_\mQ$ in a canonical way
via induced functor $\F_\mQ \circ \bigl( F^{(2)}_Q \bigr)^{-1}$ for any $[Q]$ and $[\mQ]$. Furthermore, if $V$ is simple, so is $\widehat{V}$.

Therefore, our results finish this program on categorical similarities between monoidal subcategories of (un)twisted quantum affine algebras and
their Langlands duals for the exceptional affine types, where the nonexceptional affine types were done in~\cite{KKK13B,KKKOIV,KO17} (see also \cite{KKO17}).


This paper is organized as follows.
In Section~\ref{sec1}, we briefly review the previous results on the commutations classes and (folded) AR quivers we require.
In Section~\ref{sec2:Qunatum affine algebras}, we review the necessary theory on quantum affine algebras, Dorey's rule, denominator formulas and (folded) AR quivers. We also give our explicit $U_q'(\g)$-module structure for affine minuscule and adjoint representations.
In Section~\ref{sec4:computations}, we compute the denominator formulas for all exceptional types  with some ambiguity for roots of higher orders
and parts of Dorey's type homomorphisms.
In Section~\ref{sec5:quiver Hecke}, we review the basic materials on quiver Hecke algebras, the generalized Schur-Weyl duality functors and known results on the functors.
In Section~\ref{sec6:application}, we apply the denominator formulas obtained in
Section~\ref{sec4:computations} to the context of the generalized Schur-Weyl duality, which yields our main results for categories and Dorey's rule.
In Section~\ref{sec:Refine} and Appendix~\ref{Sec: Addendum}, we remove all of ambiguities about orders of roots in cases of $U_q'(F_4^{(1)})$, $U_q'(E_6^{(2)})$  and $E_{6,7,8}^{(1)}$ by using the exact functor what have constructed in Section~\ref{sec6:application} and the homomorphisms investigated in~\cite{Oh15E}.

Note that Appendix~\ref{Sec: Addendum} has been included as an addendum to the published version~\cite{OS19}, which removes all of the ambiguities of orders of roots.


\subsection*{Acknowledgements}
The authors would like to thank Masaki Kashiwara for useful discussions.
The authors would like to thank the anonymous referee for useful comments.
T.S.\ would like to thank Ewha Womans University for its hospitality during his stay in June, 2017, where this work began.
S.-j.O.\ would like to thank The University of Queensland for its hospitality during his stay in February, 2018.
This work benefited from computations using \textsc{SageMath}~\cite{sage,combinat}.

\section{\texorpdfstring{$r$}{r}-cluster points and their folded Auslander-Reiten quivers with coordinates} \label{sec1}

\subsection{Cartan datum and quantum groups}

A \defn{Cartan datum} is a sextuple $(\cmA,P,\Pi,P^\vee,\Pi^\vee, (\cdot\,,\,\cdot))$ with index set $I$ consisting of
(a) an integer-valued matrix $\cmA = (a_{ij})_{i,j \in I}$, called \defn{the symmetrizable generalized Cartan matrix},
(b) a free abelian group $P$ called the \defn{weight lattice},
(c) $\Pi= \{ \alpha_i \in P \mid \ i \in I \}$ called the \defn{simple roots},
(d) $P^{\vee}\seteq\Hom(P, \Z)$ called the \defn{co-weight lattice} with the canonical pairing $\langle\cdot\,,\, \cdot \rangle$,
(e) $\Pi^{\vee} = \{ h_i \mid i \in I \}\subset P^{\vee}$ called the \defn{simple coroots},
(f) a symmetric bilinear form $(\cdot\,,\,\cdot) \colon P \times P \to \Q$,
satisfying
(i) $\langle h_i,\alpha_j \rangle = a_{ij}$ for all $i,j \in I$,
(ii) $\Pi$ is linearly independent,
(iii) for each $i \in I$, there exists $\Lambda_i \in P$ such that
           $\langle h_j, \Lambda_i \rangle =\delta_{ij}$ for all $j \in I$,
(iv)  $(\al_i,\al_i)\in\Q_{>0}$ for any $i\in I$,
(v) for any $\lambda\in P $ and $i \in I$,
$\lan h_i,\lambda\ran=
\dfrac{2(\alpha_i,\lambda)}{(\alpha_i,\alpha_i)}.$

We call $\Lambda_i$ the \defn{fundamental weights}. The free abelian group $\rl \seteq \soplus_{i \in I}\Z\al_i$ is called the \defn{root lattice}. Set $\rl^+=\sum_{i \in I}\Z_{\ge 0} \al_i \subset \rl$ and
$\rl^- = \sum_{i \in I}\Z_{\le 0} \al_i \subset \rl$. For $\beta = \sum_{i \in I}n_i\al_i \in\rl^+$, we set $|\beta| = \sum_{i \in I} n_i \in \Z_{\ge 0}$

Let $\gamma$ be the smallest positive integer such that $\gamma \dfrac{(\al_i,\al_i)}{2} \in \Z$ for all $i \in I$. For each $i \in I$, set $q_i = q ^{\frac{(\al_i,\al_i)}{2}} \in \Q(q^{1/\gamma})$.
For $m,n \in \Z_{\ge 0}$ and $i \in I$, we define
\[
[n]_i = \dfrac{q_i^n-q_i^{-n}}{q_i-q_i^{-1}},
\qquad\qquad
[n]_i!=\prod_{k=1}^n [k]_i,
\qquad\qquad
\begin{bmatrix} m \\ n \end{bmatrix}_i = \dfrac{[m]_i!}{[m-n]_i![n]_i!}.
\]

\begin{remark}
The imaginary number $i$ will always be denoted by $\sqrt{-1}$ so that there is no danger of confusion with an indexing variable $i$.
\end{remark}

\begin{definition}
For Cartan datum $(\cmA,P,\Pi,P^\vee,\Pi^\vee, (\cdot\,,\,\cdot))$, the associated \defn{(Drinfel'd-Jimbo) quantum group} $U_q(\mathsf{g})$ is the $\Q(q^{1/\gamma})$-algebra generated by $e_i, \ f_i$ $(i \in I)$ and $q^h$ $(h \in \gamma^{-1}P)$ subject to
the following relations:
\begin{enumerate}[{\rm (i)}]
  \item  $q^0=1, q^{h} q^{h'}=q^{h+h'} $ for $ h,h' \in \gamma^{-1}P^{\vee},$
  \item  $q^{h}e_i q^{-h}= q^{\langle h, \alpha_i \rangle} e_i,
          \ q^{h}f_i q^{-h} = q^{-\langle h, \alpha_i \rangle }f_i$ for $h \in \gamma^{-1}P^{\vee}, i \in I$,
  \item  $e_if_j - f_je_i =  \delta_{ij} \dfrac{K_i -K^{-1}_i}{q_i- q^{-1}_i }, \ \ \text{ where } K_i=q_i^{ h_i},$
  \item  $\displaystyle \sum^{1-a_{ij}}_{k=0}
  (-1)^ke^{(1-a_{ij}-k)}_i e_j e^{(k)}_i =  \sum^{1-a_{ij}}_{k=0} (-1)^k
  f^{(1-a_{ij}-k)}_i f_jf^{(k)}_i=0 \quad \text{ for }  i \ne j, $
\end{enumerate}
where $e_i^{(k)}=e_i^k/[k]_i!$ and $f_i^{(k)}=f_i^k/[k]_i!$. Recall that $\delta_{ij}$ is the Kronecker delta. We set $q_s = q^{1/\gamma}$.
\end{definition}
We denote by $U_q^+(\mathsf{g})$ (resp.\ $U_q^-(\mathsf{g})$) by the subalgebra of $U_q(\mathsf{g})$ generated by $e_i$ (resp.\ $f_i$) $(i \in I)$.
We also denote by $U_\A^+(\mathsf{g})$ (resp.~$U_\A^-(\mathsf{g})$) by the $\A\seteq\Z[q,q^{-1}]$-subalgebra of $U_q(\mathsf{g})$ generated by $e^{(n)}_i$ (resp.\ $f^{(n)}_i$) for all $i \in I$ and $n\in \Z$.

\subsection{Adapted and (triply) twisted adapted points}\label{subsec: adapted points}
Let us denote by $\Delta$ the Dynkin diagrams of finite type, labeled by an index set $I$, and their automorphisms $\sigma$.
By $\sigma$, we can obtain the Dynkin diagrams $\widehat{\Delta}$ of finite type $BCFG$ as orbits of $\sigma$:
\begin{subequations}
\label{eq: diagram foldings}
\begin{gather}
\begin{tikzpicture}[xscale=1.75,yscale=.8,baseline=0]
\node (A2n1) at (0,1) {$A_{2n-1}$};
\node[dynkdot,label={above:$n+1$}] (A6) at (4,1.5) {};
\node[dynkdot,label={above:$n+2$}] (A7) at (3,1.5) {};
\node[dynkdot,label={above:$2n-2$}] (A8) at (2,1.5) {};
\node[dynkdot,label={above:$2n-1$}] (A9) at (1,1.5) {};
\node[dynkdot,label={above left:$n-1$}] (A4) at (4,0.5) {};
\node[dynkdot,label={above left:$n-2$}] (A3) at (3,0.5) {};
\node[dynkdot,label={above left:$2$}] (A2) at (2,0.5) {};
\node[dynkdot,label={above left:$1$}] (A1) at (1,0.5) {};
\node[dynkdot,label={right:$n$}] (A5) at (5,1) {};
\path[-]
 (A1) edge (A2)
 (A3) edge (A4)
 (A4) edge (A5)
 (A5) edge (A6)
 (A6) edge (A7)
 (A8) edge (A9);
\path[-,dotted] (A2) edge (A3) (A7) edge (A8);
\path[<->,thick,red] (A1) edge (A9) (A2) edge (A8) (A3) edge (A7) (A4) edge (A6);
\def\Foffset{-1}
\node (Bn) at (0,\Foffset) {$B_n$};
\foreach \x in {1,2}
{\node[dynkdot,label={below:$\x$}] (B\x) at (\x,\Foffset) {};}
\node[dynkdot,label={below:$n-2$}] (B3) at (3,\Foffset) {};
\node[dynkdot,label={below:$n-1$}] (B4) at (4,\Foffset) {};
\node[dynkdot,label={below:$n$}] (B5) at (5,\Foffset) {};
\path[-] (B1) edge (B2)  (B3) edge (B4);
\draw[-,dotted] (B2) -- (B3);
\draw[-] (B4.30) -- (B5.150);
\draw[-] (B4.330) -- (B5.210);
\draw[-] (4.55,\Foffset) -- (4.45,\Foffset+.2);
\draw[-] (4.55,\Foffset) -- (4.45,\Foffset-.2);
 \draw[-latex,dashed,color=blue,thick]
 (A1) .. controls (0.75,\Foffset+1) and (0.75,\Foffset+.5) .. (B1);
 \draw[-latex,dashed,color=blue,thick]
 (A9) .. controls (1.75,\Foffset+1) and (1.25,\Foffset+.5) .. (B1);
 \draw[-latex,dashed,color=blue,thick]
 (A2) .. controls (1.75,\Foffset+1) and (1.75,\Foffset+.5) .. (B2);
 \draw[-latex,dashed,color=blue,thick]
 (A8) .. controls (2.75,\Foffset+1) and (2.25,\Foffset+.5) .. (B2);
 \draw[-latex,dashed,color=blue,thick]
 (A3) .. controls (2.75,\Foffset+1) and (2.75,\Foffset+.5) .. (B3);
 \draw[-latex,dashed,color=blue,thick]
 (A7) .. controls (3.75,\Foffset+1) and (3.25,\Foffset+.5) .. (B3);
 \draw[-latex,dashed,color=blue,thick]
 (A4) .. controls (3.75,\Foffset+1) and (3.75,\Foffset+.5) .. (B4);
 \draw[-latex,dashed,color=blue,thick]
 (A6) .. controls (4.75,\Foffset+1) and (4.25,\Foffset+.5) .. (B4);
 \draw[-latex,dashed,color=blue,thick] (A5) -- (B5);
\draw[->] (A2n1) -- (Bn);
\end{tikzpicture}
\label{eq: B_n} \allowdisplaybreaks \\
\begin{tikzpicture}[xscale=1.75,yscale=1.25,baseline=-25]
\node (Dn1) at (0,0) {$D_{n+1}$};
\node[dynkdot,label={above:$1$}] (D1) at (1,0){};
\node[dynkdot,label={above:$2$}] (D2) at (2,0) {};
\node[dynkdot,label={above:$n-3$}] (D3) at (3,0) {};
\node[dynkdot,label={above:$n-2$}] (D4) at (4,0) {};
\node[dynkdot,label={right:$n$}] (D6) at (5,.4) {};
\node[dynkdot,label={right:$n-1$}] (D5) at (5,-.4) {};
\path[-] (D1) edge (D2)
  (D3) edge (D4)
  (D4) edge (D5)
  (D4) edge (D6);
\draw[-,dotted] (D2) -- (D3);
\path[<->,thick,red] (D6) edge (D5);
\def\Coffset{-1.2}
\node (Cn) at (0,\Coffset) {$C_n$};
\foreach \x in {1,2}
{\node[dynkdot,label={below:$\x$}] (C\x) at (\x,\Coffset) {};}
\node[dynkdot,label={below:$n-2$}] (C3) at (3,\Coffset) {};
\node[dynkdot,label={below:$n-1$}] (C4) at (4,\Coffset) {};
\node[dynkdot,label={below:$n$}] (C5) at (5,\Coffset) {};
\draw[-] (C1) -- (C2);
\draw[-,dotted] (C2) -- (C3);
\draw[-] (C3) -- (C4);
\draw[-] (C4.30) -- (C5.150);
\draw[-] (C4.330) -- (C5.210);
\draw[-] (4.55,\Coffset+.1) -- (4.45,\Coffset) -- (4.55,\Coffset-.1);
\path[-latex,dashed,color=blue,thick]
 (D1) edge (C1)
 (D2) edge (C2)
 (D3) edge (C3)
 (D4) edge (C4)
 (D5) edge (C5);
\draw[-latex,dashed,color=blue,thick]
 (D6) .. controls (6.15,-.25) and (6.15,-1) .. (C5);
 \draw[->] (Dn1) -- (Cn);
\end{tikzpicture}
 \label{eq: C_n} \allowdisplaybreaks \\
\begin{tikzpicture}[xscale=1.75,yscale=.8,baseline=0]
\node (E6desc) at (0,1) {$E_6$};
\node[dynkdot,label={above:$2$}] (E2) at (4,1) {};
\node[dynkdot,label={above:$4$}] (E4) at (3,1) {};
\node[dynkdot,label={above:$5$}] (E5) at (2,1.5) {};
\node[dynkdot,label={above:$6$}] (E6) at (1,1.5) {};
\node[dynkdot,label={above left:$3$}] (E3) at (2,0.5) {};
\node[dynkdot,label={above right:$1$}] (E1) at (1,0.5) {};
\path[-]
 (E2) edge (E4)
 (E4) edge (E5)
 (E4) edge (E3)
 (E5) edge (E6)
 (E3) edge (E1);
\path[<->,thick,red] (E3) edge (E5) (E1) edge (E6);
\def\Foffset{-1}
\node (F4desc) at (0,\Foffset) {$F_4$};
\foreach \x in {1,2,3,4}
{\node[dynkdot,label={below:$\x$}] (F\x) at (\x,\Foffset) {};}
\draw[-] (F1.east) -- (F2.west);
\draw[-] (F3) -- (F4);
\draw[-] (F2.30) -- (F3.150);
\draw[-] (F2.330) -- (F3.210);
\draw[-] (2.55,\Foffset) -- (2.45,\Foffset+.2);
\draw[-] (2.55,\Foffset) -- (2.45,\Foffset-.2);
\path[-latex,dashed,color=blue,thick]
 (E2) edge (F4)
 (E4) edge (F3);
\draw[-latex,dashed,color=blue,thick]
 (E1) .. controls (1.25,\Foffset+1) and (1.25,\Foffset+.5) .. (F1);
\draw[-latex,dashed,color=blue,thick]
 (E3) .. controls (1.75,\Foffset+1) and (1.75,\Foffset+.5) .. (F2);
\draw[-latex,dashed,color=blue,thick]
 (E5) .. controls (2.75,\Foffset+1) and (2.25,\Foffset+.5) .. (F2);
\draw[-latex,dashed,color=blue,thick]
 (E6) .. controls (0.25,\Foffset+1) and (0.75,\Foffset+.5) .. (F1);
\draw[->] (E6desc) -- (F4desc);
\end{tikzpicture}
 \label{eq: F_4} \allowdisplaybreaks \\
\begin{tikzpicture}[xscale=1.9,yscale=1.5,baseline=-25]
\node (D4desc) at (0,0) {$D_{4}$};
\node[dynkdot,label={right:$1$}] (D1) at (1.75,.4){};
\node[dynkdot,label={above:$2$}] (D2) at (1,0) {};
\node[dynkdot,label={right:$3$}] (D3) at (2,0) {};
\node[dynkdot,label={right:$4$}] (D4) at (1.75,-.4) {};
\draw[-] (D1) -- (D2);
\draw[-] (D3) -- (D2);
\draw[-] (D4) -- (D2);
\path[->,red,thick]
(D1) edge[bend left=20] (D3)
(D3) edge[bend left=20] (D4)
(D4) edge[bend left=20] (D1);
\def\Coffset{-1.1}
\node (G2desc) at (0,\Coffset) {$G_2$};
\node[dynkdot,label={below:$1$}] (G1) at (1,\Coffset){};
\node[dynkdot,label={below:$2$}] (G2) at (2,\Coffset) {};
\draw[-] (G1) -- (G2);
\draw[-] (G1.40) -- (G2.140);
\draw[-] (G1.320) -- (G2.220);
\draw[-] (1.55,\Coffset+.1) -- (1.45,\Coffset) -- (1.55,\Coffset-.1);
\path[-latex,dashed,color=blue,thick]
 (D2) edge (G1);
\draw[-latex,dashed,color=blue,thick]
 (D1) .. controls (2.7,0.1) and (2.5,-0.8) .. (G2);
\draw[-latex,dashed,color=blue,thick]
 (D3) .. controls (2.1,-0.3) and (2.2,-0.7) .. (G2);
\draw[-latex,dashed,color=blue,thick] (D4) -- (G2);
\draw[->] (D4desc) -- (G2desc);
\end{tikzpicture}
\qquad \text{or} \qquad
\begin{tikzpicture}[xscale=1.9,yscale=1.5,baseline=-25]
\node[dynkdot,label={right:$1$}] (D1) at (1.75,.4){};
\node[dynkdot,label={above:$2$}] (D2) at (1,0) {};
\node[dynkdot,label={right:$3$}] (D3) at (2,0) {};
\node[dynkdot,label={right:$4$}] (D4) at (1.75,-.4) {};
\draw[-] (D1) -- (D2);
\draw[-] (D3) -- (D2);
\draw[-] (D4) -- (D2);
\path[<-,red,thick]
(D1) edge[bend left=20] (D3)
(D3) edge[bend left=20] (D4)
(D4) edge[bend left=20] (D1);
\def\Coffset{-1.1}
\node[dynkdot,label={below:$1$}] (G1) at (1,\Coffset){};
\node[dynkdot,label={below:$2$}] (G2) at (2,\Coffset) {};
\draw[-] (G1) -- (G2);
\draw[-] (G1.40) -- (G2.140);
\draw[-] (G1.320) -- (G2.220);
\draw[-] (1.55,\Coffset+.1) -- (1.45,\Coffset) -- (1.55,\Coffset-.1);
\path[-latex,dashed,color=blue,thick]
 (D2) edge (G1);
\draw[-latex,dashed,color=blue,thick]
 (D1) .. controls (2.7,0.1) and (2.5,-0.8) .. (G2);
\draw[-latex,dashed,color=blue,thick]
 (D3) .. controls (2.1,-0.3) and (2.2,-0.7) .. (G2);
\draw[-latex,dashed,color=blue,thick] (D4) -- (G2);
\end{tikzpicture}
  \label{eq: G_2}
\end{gather}
\end{subequations}

\begin{definition}\label{def: overline sigma}
We denote by $\overline{\sigma}$ the map from vertices of $\Delta$ to the one of $\widehat{\Delta}$ induced by $\sigma$.
\end{definition}

For example, $\overline{\sigma}(4)=3$ when $\Delta$ is of type $E_6$.

Let $\mathsf{W}$ be the Weyl group, generated by simple reflections
$( s_i \mid i \in I)$ corresponding to $\Delta$,
and $w_0$ the longest element of $\mathsf{W}$. We denote by
$\ast$ the involution on $I$ defined by
\begin{align}\label{eq:invI}
w_0(\al_i)=-\al_{i^*}.
\end{align}
 We also denote by  $\Phi$ the set of all roots and by $\Phi^+$ the set of all positive roots. Let $\N = |\PR|$.
\begin{remark}
In this paper, we fix the length $|\alpha_i|$ of the longest root as $1$. Furthermore, $q_s = q_i$, where $\alpha_i$ is a shortest root.
\end{remark}

We say that two reduced expressions
$\redex=s_{i_1}s_{i_2}\cdots s_{i_{\ell}}$ and
$\redex'=s_{j_1}s_{j_2}\cdots s_{j_{\ell}}$ of $w
\in W$ are \defn{commutation equivalent}, denoted by $\widetilde{w} \sim \widetilde{w}'$, if
$s_{j_1}s_{j_2}\cdots s_{j_{\ell}}$ is obtained from
$s_{i_1}s_{i_2}\cdots s_{i_{\ell}}$ by applying the commutation
relations $s_{k}s_{l} = s_{l}s_{k}$.
We denote by $[\redex]$ the commutation equivalence class
of $\redex$.

 The following proposition is well-known:

\begin{proposition}
For $\redez=s_{i_1}s_{i_2}\cdots s_{i_{\N-1}}s_{i_\N}$,
$\redez'=s_{i_\N^*}s_{i_1}s_{i_2}\cdots s_{i_{\N-1}}$ is a reduced expression of $w_0$ and $[\redez'] \ne [\redez]$. Similarly,
$\redez''=s_{i_2}\cdots s_{i_{\N-1}}s_{i_\N}s_{i^*_1}$ is a reduced expression of $w_0$
and $[\redez''] \ne [\redez]$.
\end{proposition}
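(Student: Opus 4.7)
The plan is to verify the two reduced-expression statements algebraically using the identity $w_0 s_i = s_{i^*} w_0$ (equivalently $s_i w_0 = w_0 s_{i^*}$), which is immediate from $w_0(\alpha_i) = -\alpha_{i^*}$. From $w_0 = s_{i_1}\cdots s_{i_\N}$ one gets $s_{i_1}\cdots s_{i_{\N-1}} = w_0 s_{i_\N} = s_{i_\N^*} w_0$, whence
\[
s_{i_\N^*}\, s_{i_1}\cdots s_{i_{\N-1}} = s_{i_\N^*}(s_{i_\N^*} w_0) = w_0,
\]
and as the product has $\N$ factors it is automatically reduced. The statement for $\redez''$ is symmetric: $s_{i_2}\cdots s_{i_\N} = s_{i_1} w_0 = w_0 s_{i_1^*}$, so $s_{i_2}\cdots s_{i_\N}\,s_{i_1^*} = w_0$, again a reduced expression.

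For the commutation class inequalities I would work with the convex partial order $\prec_{[\redex]}$ on $\PR$ attached to each commutation class $[\redex]$. A reduced expression $\redex = s_{j_1}\cdots s_{j_\N}$ of $w_0$ yields the linear extension $\beta_k = s_{j_1}\cdots s_{j_{k-1}}(\alpha_{j_k})$ of $\prec_{[\redex]}$; one computes $\beta_1 = \alpha_{j_1}$ and $\beta_\N = -w_0(\alpha_{j_\N}) = \alpha_{j_\N^*}$, so $\alpha_{j_1}$ is a minimal element and $\alpha_{j_\N^*}$ is a maximal element of $\prec_{[\redex]}$. Applied to $\redez$ and $\redez'$: the simple root $\alpha_{i_\N^*}$ is maximal in $\prec_{[\redez]}$ (via the last letter of $\redez$) yet minimal in $\prec_{[\redez']}$ (via the first letter of $\redez'$). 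Were the two classes equal, $\alpha_{i_\N^*}$ would be both maximal and minimal in the same convex partial order, i.e.\ isolated in it.

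Such an isolated simple root is impossible in a connected Dynkin diagram of rank at least two: pick any Dynkin neighbor $j$ of $i_\N^*$; then $\alpha_{i_\N^*}+\alpha_j \in \PR$, and the defining convexity property ($\alpha \prec \alpha+\beta \prec \beta$, up to reversal, whenever $\alpha,\beta,\alpha+\beta\in\PR$) forces $\alpha_{i_\N^*}$ to be comparable to $\alpha_{i_\N^*}+\alpha_j$, contradicting isolation. The argument for $[\redez''] \ne [\redez]$ is parallel: the simple root $\alpha_{i_1}$ is minimal in $\prec_{[\redez]}$ (first letter of $\redez$) but maximal in $\prec_{[\redez'']}$ (since $\redez''$ ends in $s_{i_1^*}$ and $(i_1^*)^* = i_1$), yielding the same contradiction. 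The only point requiring care is the standard identification between first/last letters in a commutation class and minimal/maximal elements of the induced convex partial order; the rank one case is trivial.
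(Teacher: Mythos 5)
The paper records this proposition as well known and provides no proof of its own, so there is nothing in the paper to compare against; I will assess your argument on its merits. The first half is fine: from $w_0(\alpha_j)=-\alpha_{j^*}$ one gets $w_0 s_j w_0^{-1}=s_{j^*}$, hence $w_0 s_j = s_{j^*}w_0$, and then $s_{i_\N^*}\,s_{i_1}\cdots s_{i_{\N-1}} = s_{i_\N^*}(w_0 s_{i_\N}) = s_{i_\N^*}(s_{i_\N^*}w_0) = w_0$; the word has $\N=\ell(w_0)$ letters, so it is reduced, and $\redez''$ is handled symmetrically. Your reduction of $[\redez']\neq[\redez]$ to a statement about $\prec_{[\redez]}$ is also sound: the minimum of any one linear extension $<_{\redez'}$ is automatically a minimal element of the intersection $\prec_{[\redez']}$, and the maximum of $<_{\redez}$ is $\beta^{\redez}_\N = -w_0(\alpha_{i_\N})=\alpha_{i_\N^*}$, so if the classes coincided then $\alpha_{i_\N^*}$ would be both a maximal element (from $\redez$) and a minimal element (from $\redez'$) of the same partial order, hence isolated.

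One step deserves to be made explicit, because the paper defines convexity only for the \emph{total} orders $<_{\redez'}$. You invoke it for the partial order $\prec_{[\redez]}$, namely that $\alpha$ and $\alpha+\beta$ are always comparable when $\alpha,\beta,\alpha+\beta\in\PR$; a priori the orientation of the chain $\alpha<\alpha+\beta<\beta$ could flip between different $\redez'\in[\redez]$, which would destroy comparability in the intersection. The repair is short and worth recording: since $\alpha+\beta$ lies strictly between $\alpha$ and $\beta$ in every $<_{\redez'}$, the roots $\alpha$ and $\beta$ are never adjacent in any of these chains, so no single commutation move (which transposes two consecutive roots) can interchange them, and therefore their relative order, hence the orientation of the whole convexity chain, is constant over $[\redez]$. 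With this, $\alpha_{i_\N^*}$ is comparable to $\alpha_{i_\N^*}+\alpha_j$ for any Dynkin neighbor $j$ and your contradiction goes through. One last caveat: the rank-one case is not ``trivial'' in the sense of being easily true; it is a genuine exception, since for $A_1$ one has $i_1^*=i_1$ and $\redez'=\redez$. The statement should be read with a standing rank $\geq 2$ hypothesis, which covers every case the paper actually uses.
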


The action of the reflection functor $r_i$ on $[\redez]$ is defined by
\[
r_i \, [\redez]=
\begin{cases}
[(s_{i^*},s_{i_1}\cdots, s_{i_{\N-1}})] &  \text{if there is $\redez'=(s_{i_1}, \cdots, s_{i_{\N-1}}, s_i)\in [\redez]$,}\\
\ [\redez] &  \text{otherwise.}
\end{cases}
\]

\begin{definition}[\cite{OS15}]  \label{def: ref equi}
 Let  $[\redez]$ and $[\redez']$ be two commutation classes. We say $[\redez]$ and $[\redez']$ are
\defn{reflection equivalent} and write $[\redez]\overset{r}{\sim} [\redez']$ if $[\redez']$ can be obtained from $[\redez]$ by a
sequence of reflection maps. The equivalence class
$\lf \redez \rf \seteq \{\, [\redez] \mid [\redez]\overset{r}{\sim}[\redez']\, \}$
with respect to the reflection equivalence relation  is called an \defn{$r$-cluster point}.
\end{definition}

Let $\sigma \in GL(\C\Phi)$ be a linear transformation of finite order
which preserves $\Pi$. Hence $\sigma$ preserves
$\Phi$ itself and normalizes $\mathsf{W}$ and so $\mathsf{W}$ acts by conjugation on the coset $W\sigma$.
Note that $\sigma$ agrees with the Dynkin diagram automorphism given by~\eqref{eq: diagram foldings}.

Let $\{ \Pi_{i_1}, \ldots,\Pi_{i_k} \}$ be the all orbits of $\Pi$ in
$\Phi$ with respect to $\sigma$. For each $r\in \{1, \cdots, k \}$, choose $\alpha_{i_r} \in \Pi_{i_r}$ arbitrarily,
and let $s_{i_r} \in W$ denote the corresponding reflection. Let $w$
be the product of $s_{i_1}, \ldots , s_{i_k}$ in any order. The
element $w\sigma \in W\sigma$ of $w\in W$ thus obtained is called a \defn{$\sigma$-Coxeter element}.
\begin{itemize}
\item If $\sigma = \id$, then a \defn{$\id$-Coxeter element} is simply called a \defn{Coxeter element}.
\item If $\sigma$ is an involution, then a \defn{$\sigma$-Coxeter element} is also called a \defn{twisted Coxeter element}.
\item If $\sigma$ is of order $3$, then a \defn{$\sigma$-Coxeter element} is also called a \defn{triply twisted Coxeter element}.
\end{itemize}

\begin{example}\label{ex: t coxeter for exceptional} \hfill
\begin{enumerate}
\item For $\sigma$ in~\eqref{eq: F_4}, $s_1s_3s_4s_2 \sigma$ is a twisted Coxeter element of finite type $E_6$.
\item For $\sigma$ in~\eqref{eq: G_2}, $s_2s_1 \sigma$ is a triply twisted Coxeter element of finite type $D_4$.
\end{enumerate}
\end{example}

For a reduced expression $s_{j_1} \cdots s_{j_\ell}$, we define
\[
(s_{j_1} \cdots s_{j_\ell})^{\sigma} \seteq s_{j^{\sigma}_1} \cdots s_{j^{\sigma}_\ell} \text{ and }
(s_{j_1} \cdots s_{j_s})^{k \sigma} \seteq  ( \cdots ((s_{j_1} \cdots s_{j_s} \underbrace{ )^{\sigma} )^{\sigma} \cdots )^{\sigma}}_{ \text{ $k$-times} }.
\]

\begin{proposition}[{\cite{OS16B,OS16C}}]
For each $($triply$)$ twisted Coxeter element $s_{i_1} \cdots s_{i_\ell} \sigma$,
\begin{align} \label{eq: red twisted}
\redez^{\sigma} \seteq \prod_{i=0}^{ (|\PR| / \ell)  -1} (s_{i_1} \cdots s_{i_\ell})^{k \sigma} \text{ is a reduced expression of $w_0$.}
\end{align}
\end{proposition}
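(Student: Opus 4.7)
The plan is to verify the two defining properties of a reduced expression for $w_0$: that the word $\redez^\sigma$ has $\N$ letters, and that as an element of $W$ it equals $w_0$. By construction, $\redez^\sigma$ is the concatenation of $\N/\ell$ blocks of length $\ell$, so it has exactly $\N$ letters. Since $\len(w_0) = \N$ and every word of length $\len(w)$ representing $w$ is automatically reduced, it suffices to show $\redez^\sigma = w_0$ as elements of $W$.

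To compute this group element I would work inside the semidirect product $W \rtimes \langle \sigma \rangle$. The identity $\sigma s_i \sigma^{-1} = s_{i^\sigma}$ yields $(s_{i_1}\cdots s_{i_\ell})^{k\sigma} = \sigma^k c \sigma^{-k}$, where $c \seteq s_{i_1}\cdots s_{i_\ell}$, so a telescoping reorganization inside $(c\sigma)^{\N/\ell} = c\sigma \cdot c\sigma \cdots c\sigma$ gives
\[
(c\sigma)^{\N/\ell} \;=\; \redez^\sigma \cdot \sigma^{\N/\ell}.
\]
The proof thus reduces to establishing the single identity $(c\sigma)^{\N/\ell} = w_0 \cdot \sigma^{\N/\ell}$ inside $W \rtimes \langle \sigma \rangle$.

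The main obstacle is this last identity. My primary plan is to analyze the action of $c\sigma$ on the Cartan $\mathfrak{h}$ via Springer's theory of $\sigma$-regular elements: the twisted Coxeter element $c\sigma$ is $\sigma$-regular, and its eigenvalues on $\mathfrak{h}$ are prescribed by the twisted exponents of $(\Delta,\sigma)$, which are themselves governed by the folded diagram $\widehat{\Delta}$. From this one reads off that $(c\sigma)^{\N/\ell}$ and $w_0 \cdot \sigma^{\N/\ell}$ act by the same linear transformation on $\mathfrak{h}$, and faithfulness of the $W \rtimes \langle \sigma \rangle$-action then delivers the equality. A parallel, more combinatorial approach in the spirit of~\cite{OS16B,OS16C} is an induction on the number of blocks already accumulated: the key structural input is that simple reflections within a single $\sigma$-orbit on $\Pi$ commute, so each individual block is itself reduced regardless of the order chosen among orbit representatives; the inductive step must rule out a length collapse when appending a new block by producing, for each newly introduced letter, an explicit positive root that is sent to a negative root. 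The verification of this last step amounts to a finite case check over the four foldings tabulated in~\eqref{eq: diagram foldings}.
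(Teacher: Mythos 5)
Your proof strategy is correct in structure and almost certainly differs from the route taken in the cited references~\cite{OS16B,OS16C}; note that the paper at hand quotes the proposition without proof, so there is no in-paper argument to compare against line by line. The telescoping identity $(c\sigma)^{\N/\ell} = \redez^\sigma\cdot\sigma^{\N/\ell}$ in $W\rtimes\langle\sigma\rangle$ is valid, and since $\redez^\sigma$ has exactly $\N=\ell(w_0)$ letters, reducing the claim to the group identity $\redez^\sigma=w_0$ is legitimate. A simplification worth making explicit: in every folding under consideration, $w_0\sigma^{\N/\ell}$ acts on the Cartan as $-\mathrm{id}$ --- for $A_{2n-1}$ and $E_6$ one has $w_0=-\sigma$ with $\N/\ell$ odd, for $D_{n+1}$ the parities of $w_0$ and of $\sigma^{n+1}$ cancel in both subcases, and for $D_4$ one has $w_0=-\mathrm{id}$ with $\sigma^{6}=\mathrm{id}$ --- so your target collapses to the uniform identity $(c\sigma)^{\N/\ell}=-\mathrm{id}_{\mathfrak h}$. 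That is exactly the content hidden behind ``one reads off'': you need that every eigenvalue $\lambda$ of $c\sigma$ satisfies $\lambda^{\N/\ell}=-1$, i.e.\ that all twisted exponents are odd with respect to the twisted Coxeter number $2\N/\ell$. This is true, but it is a nontrivial fact about twisted Coxeter elements and should be pinned to a precise statement of Springer rather than asserted; as written it is the one genuine gap in the plan. The combinatorial alternative you sketch is much closer to what the references actually do (they argue directly from the structure of the twisted/folded AR quiver and the convex order, tracking positive roots through the word); that route is more elementary and self-contained but requires a case check over the four foldings, whereas the Springer route is shorter once the twisted-regular-element machinery is imported. Either path, carried out in detail, would complete the proof.
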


\begin{definition} \hfill
\begin{enumerate}
\item For $\sigma$ in \eqref{eq: B_n}, \eqref{eq: C_n}, \eqref{eq: F_4}, $\lf \mQ \rf \seteq \lf \redez^{\sigma} \rf$ is called the \defn{twisted adapted $r$-cluster point} of type $A_{2n-1}$, $D_{n+1}$ or $E_6$.
A commutation class $[\mQ'] \in \lf \mQ \rf$ is called a \defn{twisted adapted class}.
\item For $\sigma$ and $\sigma'$ in \eqref{eq: G_2},  $\lf \mathfrak{Q} \rf \seteq \lf \redez^{\sigma} \rf \sqcup \lf \redez^{\sigma'} \rf $ is called the \defn{triply twisted adapted $r$-cluster point} of $D_4$.
A commutation class $[\mathfrak{Q}'] \in \lf \mathfrak{Q} \rf$ is called a \defn{triply twisted adapted class}.
\end{enumerate}
\end{definition}

\begin{example}
\label{ex: redez for exceptional}
By Example~\ref{ex: t coxeter for exceptional}, we have the following reduced expressions of $w_0$ contained $\lf \mQ \rf $ (resp.~$\lf \mathfrak{Q} \rf$):
\begin{enumerate}
\item $\redez^{\sigma} \seteq (s_1s_3s_4s_2 s_6s_5s_4s_2)^4 s_1s_3s_4s_2 \in \lf \mQ \rf $ of type $E_6$;
\item $\redez^{\sigma} \seteq (s_2s_1 s_2s_3 s_2s_4)^2 \in  \lf \mathfrak{Q} \rf$ of type $D_4$.
\end{enumerate}
\end{example}

For any quiver $\mathtt{Q}$, we say that a vertex $i$ in $\mathtt{Q}$ is a \defn{source} (resp.\ \defn{sink}) if and only if
there are only exiting arrows out of it (resp.\ entering arrows into it).
Let $Q$ be a Dynkin quiver by orienting edges of a Dynkin diagram $\Delta$ of type $ADE$. We denote by $Q^{{\rm rev}}$ the quiver obtained
by reversing all arrows of $Q$.
For any $i \in I$, let $s_iQ$ denote the quiver obtained by $Q$ by reversing the arrows incident with $i$.
We say that a reduced expression $\redex=s_{i_1}s_{i_2}\cdots s_{i_{\ell(w)}}$
 of $w \in W$  is
\defn{adapted} to $Q$ if $i_k$ is a sink of the quiver $s_{i_{k-1}} \cdots s_{i_2}s_{i_1}Q$ for all $1 \le k \le \ell(w)$.

The following results are well-known:

\begin{theorem} [\cite{Bedard99,Lus90}]  \label{thm: Qs} \hfill
\begin{enumerate}[{\rm (1)}]
\item Any reduced expression $\redez$ of $w_0$ is adapted
to at most one Dynkin quiver $Q$.
\item For each Dynkin quiver $Q$, there is a reduced expression $\redez$ of $w_0$ adapted to $Q$. Moreover,
any reduced expression $\redez'$ in $[\redez]$ is adapted to $Q$,
and the commutation equivalence class $[\redez]$
is uniquely determined by $Q$.
We denote by $[Q]$ of the commutation class $[\redez]$.
\item For any Dynkin quivers $Q$ and $Q'$ of a fixed Dynkin diagram, the commutation classes $[Q]$ and $[Q']$ are reflection equivalent.
\end{enumerate}
\end{theorem}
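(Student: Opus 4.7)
The plan is to prove the three parts of the theorem using the \emph{greedy sink algorithm}, which gives a clean dictionary between adapted reduced expressions of $w_0$ and orientations of the Dynkin diagram.

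For (1), I would argue that the quiver is forced vertex-by-vertex from the reduced expression. Suppose $\redez = s_{i_1} \cdots s_{i_\N}$ is adapted to $Q$. Being adapted means $i_1$ is a sink of $Q$, so every edge of $Q$ incident to $i_1$ must point toward $i_1$: the orientations at $i_1$ are forced. Next, $i_2$ is a sink of $s_{i_1} Q$; since $s_{i_1}$ reverses only the edges incident to $i_1$, any edge $\{i_2,j\}$ with $j\neq i_1$ has the same orientation in $Q$ and in $s_{i_1}Q$, hence is forced to point toward $i_2$ in $Q$. Proceeding inductively, every edge of $Q$ eventually occurs at a sink in some intermediate quiver $s_{i_{k-1}}\cdots s_{i_1}Q$ and thus has its orientation in $Q$ determined by $\redez$. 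Hence $Q$ is unique.

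For (2), existence follows from the greedy sink algorithm: set $Q^{(0)}=Q$; at step $k$ choose any sink $i_k$ of $Q^{(k-1)}$ (which exists because finite acyclic quivers have sinks), put $Q^{(k)} \seteq s_{i_k} Q^{(k-1)}$, and record $s_{i_k}$. One checks inductively that the product $s_{i_1} s_{i_2} \cdots s_{i_k}$ is reduced, because at each step $s_{i_k}$ sends the positive simple root $\al_{i_k}$ of $Q^{(k-1)}$ to a negative root that has not yet been produced; iterating $h$ times, where $h$ is the Coxeter number of the diagram, exhausts all $\N = |\Phi^+|$ positive roots and returns $Q^{(h)} = Q$. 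The resulting word is reduced of length $\N$, hence a reduced expression of $w_0$ adapted to $Q$. For the commutation class statement, if at some step $Q^{(k-1)}$ has two distinct sinks $i$ and $j$, then $\{i,j\}$ cannot be an edge (an edge would have to point both ways), so $s_i s_j = s_j s_i$; the only ambiguity in the algorithm is the ordering of simultaneously available sinks, and this ordering is absorbed by commutation. Hence any two adapted reduced expressions are commutation equivalent, and combined with (1) the class $[Q]$ is well-defined.

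For (3), I would use the reflection map $r_i$. If $i$ is a source of $Q$, running the greedy algorithm from the opposite end shows that some element of $[Q]$ ends in $s_i$, so $r_i[Q]$ is defined. Using the standard identity $s_{i^*} w_0 = w_0 s_i$ together with the fact that reversing the arrows at the source $i$ of $Q$ yields a quiver $s_iQ$ in which $i$ is now a sink, one identifies $r_i [Q] = [s_i Q]$. Since any two orientations of a fixed finite-type Dynkin diagram are connected by a finite sequence of source-reversals (a standard fact about orientations of trees, or of the bipartite double in the general $ADE$ case), the commutation classes $[Q]$ and $[Q']$ are linked by a sequence of reflection maps, hence $[Q] \overset{r}{\sim} [Q']$.

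The main technical obstacle is the claim in part (2) that the greedy sink algorithm, iterated $h$ times, produces a reduced expression of $w_0$ of length exactly $\N$ rather than stalling or producing a non-reduced word. This is where Coxeter-theoretic input is essential: one invokes that the product of sinks (in the order they are removed) is a Coxeter element $c_Q$, whose order is the Coxeter number $h$, and that the orbits of $c_Q$ on $\Phi^+$ together with the sink-removal pattern account bijectively for all positive roots. Once this geometric input is secured, the remainder of the proof is bookkeeping with the commutation relations.
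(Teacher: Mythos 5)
The paper does not prove this result: it is stated as ``well-known'' with citations to B\'edard and Lusztig, so there is no in-text proof to compare against. Judged on its own, your outline is the standard source/sink-reversal proof and is basically the right route, but several steps are stated incorrectly or are incomplete.

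In part (2) the arithmetic with the Coxeter number is off. After one \emph{round} of picking each vertex once in sink order you return to $Q$, so the periodicity of the quiver sequence is $n$ (the rank), not $h$: you should have $Q^{(n)}=Q$, and the full adapted word $c_Q^{h/2}$ (up to the $\ast$-twist when $w_0\neq -1$) has length $\N = nh/2 = |\PR|$, i.e.\ $h/2$ rounds. Writing ``iterating $h$ times \ldots returns $Q^{(h)}=Q$'' conflates rounds with single sink-removals and doubles the count. More substantively, even after fixing the bookkeeping, the claim that the $nh/2$-letter word is reduced and represents $w_0$ is precisely the nontrivial Kostant--Steinberg input about Coxeter elements acting freely on $\Phi$ with $h$-cycles; you flag this, but ``the orbits of $c_Q$ on $\Phi^+$ \ldots account bijectively for all positive roots'' needs a precise statement (e.g.\ each $c_Q$-orbit on $\Phi$ meets $\Phi^+$ in exactly $h/2$ roots when $-1\in W$) and a citation or proof, otherwise part (2) remains an assertion.

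Also in part (2), your commutation-equivalence argument only shows that any two outputs of the greedy algorithm are commutation-equivalent (they differ by reordering simultaneously available, hence non-adjacent, sinks). The stated theorem also asserts the converse: \emph{every} reduced word in $[\redez]$ is adapted to $Q$. This requires the local check that a single commutation move $s_{i_k}s_{i_{k+1}}\rightsquigarrow s_{i_{k+1}}s_{i_k}$ preserves adaptedness: since $i_k,i_{k+1}$ commute they are non-adjacent, so $s_{i_k}$ does not touch edges at $i_{k+1}$, hence $i_{k+1}$ is a sink of $s_{i_{k-1}}\cdots s_{i_1}Q$ as well, and $s_{i_k}s_{i_{k+1}}$ and $s_{i_{k+1}}s_{i_k}$ produce the same quiver. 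You should make this direction explicit. Finally, in part (3) the parenthetical ``or of the bipartite double in the general $ADE$ case'' is spurious: every Dynkin diagram of type $ADE$ is already a tree, and the connectedness of orientations under source (or sink) reversal is exactly the tree case; no bipartite double is involved.
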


From Theorem~\ref{thm: Qs}(3), for a fixed Dynkin diagram $\Delta$, there exists only a single $r$-cluster point $\lf \Delta \rf$.

\begin{remark}
In~\cite{KO17}, the notation $\lf Q \rf$ was used instead of $\lf \Delta \rf$, where $Q$ is a Dynkin quiver of $\Delta$.
\end{remark}

\begin{theorem}[{\cite{OS16B,OS16C}}]
The number of commutation classes in $\lf \Delta \rf$ is the same as the one of $\lf \mQ \rf$, when their types coincide.
\end{theorem}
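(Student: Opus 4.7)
The plan is to compute both cardinalities in closed form and verify they coincide, type by type. By Theorem~\ref{thm: Qs}(2), the adapted commutation classes in $\lf \Delta \rf$ are in bijection with Dynkin quivers of $\Delta$, so
\[
|\lf \Delta \rf| \;=\; 2^{|E(\Delta)|} \;=\; \begin{cases} 2^{2n-2} & \Delta = A_{2n-1}, \\ 2^n & \Delta = D_{n+1}, \\ 32 & \Delta = E_6, \\ 8 & \Delta = D_4. \end{cases}
\]

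For the twisted side I would invoke the explicit description of twisted adapted classes developed in~\cite{OS16B,OS16C}. There each $[\mQ] \in \lf \mQ \rf$ is shown to be parametrized by a twisted Coxeter element $w\sigma$, where $w = s_{i_1} \cdots s_{i_k}$ is an ordered product of reflections, one chosen from each $\sigma$-orbit of $\Pi$, and two such products yield the same class $[\redez^\sigma]$ precisely when they differ by commuting reflections whose $\sigma$-orbits are orthogonal in the folded diagram $\widehat{\Delta}$. The enumeration therefore factors as (i) the choice of a representative within each $\sigma$-orbit, times (ii) the number of orderings of the orbits modulo these orbit-level commutation relations, the latter being governed by the shape of $\widehat{\Delta}$.

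I would then verify that the product count in each type matches the number computed above. For $\Delta = A_{2n-1}$ and $\Delta = D_{n+1}$, the folded diagrams $B_n$ and $C_n$ are linear chains, so the orderings modulo commutation correspond to orientations of $\widehat{\Delta}$ and are counted by $2^{n-1}$; combined with the $2^{n-1}$ and $2$ representative choices respectively, this yields $2^{2n-2}$ and $2^n$. For $\Delta = E_6$ with folded diagram $F_4$, the same analysis produces $4 \times 8 = 32$. The triply-twisted case $\Delta = D_4$ requires summing separately over the two order-three automorphisms $\sigma, \sigma'$ appearing in~\eqref{eq: G_2}, exactly as in the definition $\lf \mathfrak{Q} \rf = \lf \redez^\sigma \rf \sqcup \lf \redez^{\sigma'} \rf$, and the total is verified directly to be $8$.

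The main obstacle is the combinatorial bookkeeping inside step (ii): after fixing orbit representatives and an ordering, one must check that no two distinct twisted Coxeter elements accidentally produce commutation-equivalent expressions $\redez^\sigma$, and conversely that every class in $\lf \mQ \rf$ arises from such a twisted Coxeter element via the formula~\eqref{eq: red twisted}. This is most delicate in the triply-twisted $D_4$ case, where the order-three orbit $\{1,3,4\}$ prevents a direct reduction to the chain-type argument used for $B_n$ and $C_n$, and where one must carefully track how $\sigma$ and $\sigma'$ distribute the $8$ classes between the two constituent cluster points. The cleanest route is to appeal to the explicit parametrization tables already assembled in~\cite{OS16B,OS16C}, reducing the final step to an inspection of those enumerations.
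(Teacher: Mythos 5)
The paper does not prove this theorem---it is cited directly from [OS16B, OS16C]---so there is no in-paper argument to compare against; what follows is an assessment of your proposal on its own terms. Your strategy of computing both cardinalities in closed form is sensible, and your formula $|\lf \Delta \rf| = 2^{|E(\Delta)|}$ for the untwisted side follows correctly from Theorem~\ref{thm: Qs}. Your count for the twisted side (orbit representatives times orderings-modulo-commutation) does produce $2^{2n-2}$, $2^n$, and $32$ for $A_{2n-1}$, $D_{n+1}$, $E_6$, matching the untwisted side, \emph{provided} the map from twisted Coxeter elements to the commutation classes $[\redez^{\sigma}]$ is a bijection onto $\lf \mQ \rf$. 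You correctly identify this as the core difficulty, but then defer it to ``the explicit parametrization tables already assembled in [OS16B,OS16C]''---which makes the argument circular, since that bijection is precisely the content of the theorem you are trying to prove. What is missing is an independent argument for injectivity (distinct twisted Coxeter elements yield distinct commutation classes) and surjectivity (every class in $\lf \mQ \rf$ arises from $\redez^{\sigma}$ for some twisted Coxeter element), neither of which you sketch.

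There is also a concrete factual error in the last paragraph. You assert the triply twisted $D_4$ total ``is verified directly to be $8$.'' First, the theorem does not make any such claim: it refers to $\lf \mQ \rf$, which Definition~\ref{def: VmQ(beta)}'s surrounding material (the definition of the twisted adapted $r$-cluster point) restricts to the involutions \eqref{eq: B_n}, \eqref{eq: C_n}, \eqref{eq: F_4}; the triply twisted cluster point is denoted $\lf \mathfrak{Q} \rf$ and is outside the scope of the statement. Second, the count is in fact $12$, not $8$. For $\sigma=(1\,3\,4)$, the six triply twisted Coxeter elements $s_i s_2 \sigma$ and $s_2 s_i \sigma$ with $i\in\{1,3,4\}$ yield, via \eqref{eq: red twisted}, six length-$12$ words in which no two of $s_1,s_3,s_4$ are ever adjacent; each commutation class is therefore a singleton, they are pairwise distinct, and one checks directly that the reflection functors $r_i$ cycle these six among themselves, so $|\lf\redez^{\sigma}\rf|=6$. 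The same holds for $\sigma'=(1\,4\,3)$ with a disjoint set of six singleton classes, giving $|\lf\mathfrak{Q}\rf|=12$. This actually agrees with your own formula ($3$ representatives $\times$ $2$ orderings $\times$ $2$ automorphisms $=12$), not with the claimed $8$, and it does not match $|\lf D_4\rf|=8$. This is not a counterexample to the theorem---precisely because the theorem excludes $\lf\mathfrak{Q}\rf$---but it does show that the heuristic equality you were extrapolating fails outside the scope the statement actually covers, and that a correct proof must stay within the involution cases.
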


\subsection{(Folded) Auslander-Reiten quivers}

Note that for any reduced expression $\redez = s_{i_1} \cdots s_{i_{\N}}$ of $w_0$, we can define a total order $<_{\redez}$ on $\Phi^+$ as follows:
\begin{equation}
\label{compatible reading}
\alpha_{i_1} <_{\redez} s_{i_1}( \alpha_{i_2}) <_{\redez} s_{i_1} s_{i_2}(\alpha_{i_3}) <_{\redez} \cdots <_{\redez} s_{i_1} s_{i_2} \cdots s_{i_{\N-1}}(\alpha_{i_{\N}})
\end{equation}
Denote $\beta^{\redez}_k \seteq s_{i_1} s_{i_2} \cdots s_{i_{k-1}}(\alpha_{i_k})$.
The order $<_{\redez}$ is \defn{convex} in the following sense: For any $\al,\be \in \Phi^+$ satisfying $\al+\be \in \Phi^+$, we have either
\begin{align*}
\al <_{\redez} \al+\be <_{\redez} \be \quad\text{ or }\quad \be <_{\redez} \al+\be <_{\redez} \al.
\end{align*}
By considering  $<_{\redez'}$ for all $\redez' \in [\redez]$, we can obtain the \defn{convex \emph{partial} order} $\prec_{[\redez]}$ on $\Phi^+$ defined as follows:
\begin{align}
\al \prec_{[\redez]}  \be \quad \text{ if } \al <_{\redez'} \be \text{ for all } \redez' \in [\redez].
\end{align}

In~\cite{OS15}, the combinatorial \defn{Auslander-Reiten (AR) quiver}  $\Upsilon_{[\redez]}$ is introduced
for \emph{any} commutation class $[\redez]$ of any finite type.

\begin{algorithm}
\label{Alg_AbsAR}
Let $\redez=s_{i_1}s_{i_2} \cdots s_{i_{\N}}$ be
a reduced expression of the longest element $w_0\in W$.
The quiver $\Upsilon_{\redez}=(\Upsilon^0_{\redez},
\Upsilon^1_{\redez})$ associated to $\redez$ is constructed as follows:
\begin{enumerate}[{\rm (Q1)}]
\item $\Upsilon_{\redez}^0$ consists of $\N$ vertices labeled by $\beta^{\redez}_1, \cdots, \beta^{\redez}_{\N}$ $($see \eqref{compatible reading}$)$.
 \item There is an arrow from $\beta^{\redez}_k$ to $\beta^{\redez}_j$ for $1\leq j<k \leq \N$ if $(i)$ two vertices $i_k$ and $i_j$ are connected in the Dynkin diagram,
$(ii)$ for $j'$ such that $j<j'<k$, we have $i_{j'}\neq i_j, i_k$.
\end{enumerate}
\end{algorithm}

\begin{theorem}[{\cite{OS15}}]
\hfill
\begin{enumerate}[{\rm (1)}]
\item For any $\redez' \sim \redez'' \in [\redez]$, we have $\Upsilon_{\redez'} \iso \Upsilon_{\redez''}$ as quivers with the same labeling. Thus $\Upsilon_{[\redez]}$ is well-defined.
\item $\al \prec_{[\redez]} \be$ if and only if there exists a path from $\be$ to $\al$ in $\Upsilon_{[\redez]}$.
\item When $[\redez]$ coincides with $[Q]$ for some Dynkin quiver $Q$, $\Upsilon_{[\redez]}$ is isomorphic to the AR quiver $\Gamma_Q$ associated to the Dynkin quiver $Q$.
\end{enumerate}
\end{theorem}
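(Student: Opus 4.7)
The plan is to prove the three assertions in order. For part (1), I would reduce to the case of a single commutation move: suppose $\redez' = s_{i_1} \cdots s_{i_{j-1}} s_{i_j} s_{i_{j+1}} s_{i_{j+2}} \cdots s_{i_\N}$ and $\redez'' = s_{i_1} \cdots s_{i_{j-1}} s_{i_{j+1}} s_{i_j} s_{i_{j+2}} \cdots s_{i_\N}$ with $a_{i_j, i_{j+1}} = 0$. Since $s_{i_j}$ fixes $\al_{i_{j+1}}$ and $s_{i_{j+1}}$ fixes $\al_{i_j}$, one checks directly that $\be^{\redez''}_j = \be^{\redez'}_{j+1}$, $\be^{\redez''}_{j+1} = \be^{\redez'}_j$, and $\be^{\redez''}_k = \be^{\redez'}_k$ for all other $k$, so the two quivers carry identical labeled vertex sets. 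For arrows, the only potential issue concerns positions in $\{j, j{+}1\}$: there is no arrow between $j$ and $j{+}1$ because $i_j$ and $i_{j+1}$ are not connected in the Dynkin diagram, while arrows to any position outside $\{j, j{+}1\}$ depend only on the multiset of intermediate indices, which is unaffected by swapping a commuting pair.

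For part (2), the backward direction is straightforward by induction on the length of a directed path in $\Upsilon_{[\redez]}$. A single arrow $\be \to \al$ persists in $\Upsilon_{\redez'}$ for every $\redez' \in [\redez]$ by part (1), and its defining condition (Q2) forces the position of $\al$ to precede that of $\be$ in $\redez'$, since two connected indices cannot be transposed within a commutation class without an intermediate occurrence. Concatenating, any path $\be = \ga_0 \to \ga_1 \to \cdots \to \ga_n = \al$ gives $\al <_{\redez'} \be$ for every $\redez' \in [\redez]$, hence $\al \prec_{[\redez]} \be$. The forward direction is the main obstacle: assuming $\al \prec_{[\redez]} \be$ but no path from $\be$ to $\al$, I would argue contrapositively and produce some $\redez'' \in [\redez]$ in which $\be <_{\redez''} \al$. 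The mechanism is to inductively move $\be$ leftward past each root $\ga$ that is neither equal to $\al$ nor connected to $\be$ via an arrow forcing $\be$ to wait, using that the absence of a directed path from $\be$ to $\al$ provides, at each step, a commutation move available in $[\redez]$; this is the principal combinatorial content of the theorem.

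For part (3), I would appeal to the classical Bédard-type description of $\Gamma_Q$. Fix a reduced expression $\redez$ of $w_0$ adapted to $Q$, which exists and is unique up to commutation equivalence by Theorem~\ref{thm: Qs}. The enumeration $\be^{\redez}_1, \ldots, \be^{\redez}_\N$ then matches the standard listing of dimension vectors of indecomposable $\C Q$-modules obtained by applying the Coxeter functor, and irreducible morphisms between indecomposables translate exactly to (Q2): an arrow $\be^{\redez}_k \to \be^{\redez}_j$ in $\Gamma_Q$ corresponds to $i_k, i_j$ being adjacent in the Dynkin diagram with no intermediate occurrence of either index strictly between positions $j$ and $k$. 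Hence $\Upsilon_{[Q]} \iso \Gamma_Q$ as labeled quivers. Overall, parts (1) and (3) amount to unpacking the definitions once the right framework is invoked, while the forward direction of (2) is the genuine obstacle and likely requires a careful induction on $|\be| - |\al|$ or on the number of arrows between the two vertices in any extremal $\redez'' \in [\redez]$.
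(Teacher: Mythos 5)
The paper does not supply its own proof of this theorem; it is cited verbatim from \cite{OS15}. So your proposal can only be evaluated on its own terms. Parts (1) and (3) are correct: the single-commutation-move reduction in (1) is a straightforward but honest check, and (3) rightly reduces to B\'{e}dard's identification of $\Gamma_Q$ with the labeled quiver read off any adapted reduced expression, which is a legitimate citation. The backward implication of (2) is also correct: once (1) is known, an arrow $\be \to \al$ persists with the same labeling over the whole class, and (Q2) forces $\al$ to sit at a strictly lower position in every $\redez' \in [\redez]$, so concatenating arrows along a path gives $\al <_{\redez'} \be$ for all $\redez'$.

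The gap is in the forward direction of (2), and it is more than a missing detail. The mechanism you describe --- ``inductively move $\be$ leftward past each root $\ga$ that is neither equal to $\al$ nor connected to $\be$ via an arrow forcing $\be$ to wait'' --- does not advance as stated. If the root $\ga$ immediately to the left of $\be$ has residue connected to that of $\be$, then (Q2) gives an arrow $\be \to \ga$ (the intermediate range is empty), so $\be$ cannot pass and your phrase ``the absence of a directed path from $\be$ to $\al$ provides, at each step, a commutation move'' is precisely the assertion that requires proof; it is not something to be assumed one step at a time, because unblocking $\be$ may require first rearranging $\ga$ and possibly a whole chain of roots behind it. The clean way to close this is to observe that $\Upsilon_{[\redez]}$ is the Hasse diagram of the heap poset of $\redez$ and to isolate the lemma: a position $p$ is heap-minimal (no outgoing arrows in $\Upsilon_{\redez}$) if and only if $s_{i_p}$ can be commuted to the front of $\redez$, the nontrivial direction using the fact that between any two occurrences of the same letter in a reduced word there must be an occurrence of an adjacent letter (otherwise the subword collapses). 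Granting this lemma, linear extensions of the heap biject with reduced words in $[\redez]$, so heap-incomparability of $\al$ and $\be$ produces $\redez'' \in [\redez]$ with $\be <_{\redez''} \al$, contradicting $\al \prec_{[\redez]} \be$. Your sketch circles this idea but never isolates the minimality lemma, and the ``move $\be$ leftward'' framing obscures the place where the inductive structure actually lives.
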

We call $\Upsilon_{[\redez]}$ the \defn{twisted Auslander-Reiten (AR) quiver} when $[\redez]$ is (triply) twisted adapted.

By the above theorem, for $\beta \in \PR$ and $[\redez]$, we can assign an index $i$ on $\beta$ satisfying $\beta= \beta^{\redez'}_k$ for some $\redez'=\beta^{\redez}_1, \cdots, \beta^{\redez}_{\N} \in [\redez]$ with $i_k=i$. We call $i$ the \defn{residue} with respect to $[\redez]$.

For the rest of this subsection, we denote by $[\rrz]$ a reduced expression of $w_0$ of type $X$ that is contained in one of $\lf \Delta \rf$, $\lf \mQ \rf$ or $\lf \mathfrak{Q} \rf$, all of which are of simply laced types.
Recall that $[\rrz]$ is associated to a Dynkin diagram automorphism $\sigma$. We denote by $\widehat{X}$ the type of the Dynkin diagram obtained by the automorphism $\sigma$. For instance, if $[\rrz] \in \lf \mQ \rf$ of type $D_5$, $\widehat{X}$
represents the finite type $C_4$. We remark that $X$ is different from $\widehat{X}$ if and only if $\sigma \ne \id$.

\begin{definition}
\label{def: length and d}
Let $\mathsf{a}$ be an arrow between a vertex of residue $i$ and a vertex of residue $j$ in $\Upsilon_{[\rrz]}$.
Recall that $\widehat{I}$ is the index set of the Dynkin diagram $\widehat{\Delta}$ of~\eqref{eq: diagram foldings}. Let $\{\widehat{\alpha}_{\widehat{\imath}} \mid \widehat{\imath} \in \widehat{I} \}$ be the corresponding simple roots of $\widehat{\Delta}$.
We define the \defn{length} $\ell(\mathsf{a})$ as follows:
\[
\ell(\mathsf{a}) \seteq \min\{ |\widehat{\al}_{\widehat{\imath}}|^2, |\widehat{\al}_{\widehat{\jmath}}|^2 \}.
\]
Let $\mathsf{d}$ denote the order of the Dynkin diagram automorphism $\sigma$. 
\end{definition}
Furthermore, the lengths of arrows in a (twisted) AR quiver $\Upsilon_{[\rrz]}$ is equal to $1 / \mathsf{d}$ except for possibly when $\sigma$ is \eqref{eq: B_n} or \eqref{eq: F_4}, in which case the length of an arrow may also be $1$.

Then it is proved in~\cite{OS16B,OS16C} that we can assign $\beta \in \Upsilon_{[\rrz]}$ a coordinate $\Omega_{[\rrz]}(\be) \seteq (i,p) \in I \times \dfrac{1}{\mathsf{d}}\Z$ which is \emph{compatible} to the lengths of arrows.
Here $i$ is the residue of $\beta$ with respect to $[\rrz]$.
Here we give examples instead of giving all the details (see~\cite{OS16B,OS16C} for details). We remark here that the coordinate system on $\Gamma_Q$ was originally given in~\cite{HL10}.

\begin{example}
\label{ex:twisted AR quiver E}
\hfill \\
(1) For the Dynkin quiver
$
 Q =
\begin{tikzpicture}[baseline=5,>=stealth,yscale=0.7,xscale=0.8,font=\tiny]
\node[dynkdot,label={below:1}] (-2) at (-2,0) {};
\node[dynkdot,label={below:3}] (-1) at (-1,0) {};
\node[dynkdot,label={below:4}] (0) at (0,0) {};
\node[dynkdot,label={below:5}] (1) at (1,0) {};
\node[dynkdot,label={below:6}] (2) at (2,0) {};
\node[dynkdot,label={[label distance=-4pt]-30:2}] (t) at (0,1) {};
\draw[->] (-2) -- (-1);
\draw[->] (-1) -- (0);
\draw[->] (0) -- (1);
\draw[->] (1) -- (2);
\draw[->] (t) -- (0);
\end{tikzpicture}
$
of type $E_6$, its AR quiver $\Gamma_Q$ with the coordinate system can be depicted as follows: $\left({\scriptstyle\prt{a_1a_2a_3}{a_4a_5a_6}} \seteq \displaystyle\sum_{i=1}^6 a_i \al_i\right)$
\begin{equation}
\label{eq: E6 AR quiver}
\raisebox{6em}{\scalebox{0.65}{\xymatrix@R=1.0ex@C=1.3ex{
(i/p) & 1 & 2 & 3 & 4 & 5 & 6 & 7 & 8 & 9 & 10 & 11 & 12 & 13 & 14 & 15   \\
1&{\scriptstyle\prt{000}{001}} \ar[dr] && {\scriptstyle\prt{000}{010}} \ar[dr] && {\scriptstyle\prt{000}{100}} \ar[dr]&& {\scriptstyle\prt{011}{111}}\ar[dr]
&&{\scriptstyle\prt{101}{110}}\ar[dr] && {\scriptstyle\prt{010}{100}}\ar[dr] && {\scriptstyle\prt{001}{000}} \ar[dr]&& {\scriptstyle\prt{100}{000}} \\
3&& {\scriptstyle\prt{000}{011}}\ar[dr]\ar[ur] && {\scriptstyle\prt{000}{110}}\ar[dr]\ar[ur] && {\scriptstyle\prt{011}{211}}\ar[dr]\ar[ur]
&& {\scriptstyle\prt{112}{221}}\ar[dr]\ar[ur] && {\scriptstyle\prt{111}{210}}\ar[dr]\ar[ur] && {\scriptstyle\prt{011}{100}}\ar[dr]\ar[ur]
&& {\scriptstyle\prt{101}{000}}\ar[ur] \\ 
4&&& {\scriptstyle\prt{000}{111}}\ar[ddr]\ar[dr]\ar[ur]&& {\scriptstyle\prt{011}{221}}\ar[ddr]\ar[dr]\ar[ur]&& {\scriptstyle\prt{112}{321}}\ar[ddr]\ar[dr]\ar[ur]
&& {\scriptstyle\prt{122}{321}}\ar[ddr]\ar[dr]\ar[ur] && {\scriptstyle\prt{112}{210}}\ar[ddr]\ar[dr]\ar[ur]&& {\scriptstyle\prt{111}{100}}\ar[dr]\ar[ur] \\
2&&&& {\scriptstyle\prt{010}{111}}\ar[ur]&& {\scriptstyle\prt{001}{110}}\ar[ur]&& {\scriptstyle\prt{111}{211}}\ar[ur]&& {\scriptstyle\prt{011}{110}}\ar[ur]
&& {\scriptstyle\prt{101}{100}}\ar[ur]&& {\scriptstyle\prt{010}{000}} \\
5&&&& {\scriptstyle\prt{001}{111}}\ar[uur]\ar[dr]&& {\scriptstyle\prt{111}{221}}\ar[uur]\ar[dr]&& {\scriptstyle\prt{011}{210}}\ar[uur]\ar[dr]
&& {\scriptstyle\prt{112}{211}}\ar[uur]\ar[dr]&& {\scriptstyle\prt{111}{110}}\ar[uur] \\
6&&&&& {\scriptstyle\prt{101}{111}}\ar[ur]&& {\scriptstyle\prt{010}{110}}\ar[ur]&& {\scriptstyle\prt{001}{100}}\ar[ur]&& {\scriptstyle\prt{111}{111}}\ar[ur] \\
}}}
\end{equation}

\noindent
(2) For $[\rrz]=[\mQ]$ of type $E_6$ in (1) of Example~\ref{ex: redez for exceptional}, its twisted AR quiver $\Upsilon_{[\mQ]}$ with the coordinate system can be depicted as follows:
\begin{equation}
\label{eq:unfolded1}
 \raisebox{4.6em}{\scalebox{0.51}{\xymatrix@C=0.1ex@R=2.6ex{
(i/p) & \frac{1}{2}  & 1 & \frac{3}{2} & 2 & \frac{5}{2} & 3 & \frac{7}{2} & 4 & \frac{9}{2} & 5 & \frac{11}{2} & 6 & \frac{13}{2} & 7 & \frac{15}{2} & 8 & \frac{17}{2} & 9 & \frac{19}{2} &&& 10    \\
1 &&&& {\scriptstyle\prt{000}{111}} \ar@{->}[drr] &&&& {\scriptstyle\prt{011}{110}}\ar@{->}[drr] &&&& {\scriptstyle\prt{111}{211}}\ar@{->}[drr]
&&&& {\scriptstyle\prt{001}{000}} \ar@{->}[drr] &&&&&&  {\scriptstyle\prt{100}{000}} \\
3 && {\scriptstyle\prt{000}{110}} \ar@{->}[urr]\ar@{->}[dr] &&&& {\scriptstyle\prt{011}{221}}\ar@{->}[urr]\ar@{->}[dr] &&&&{\scriptstyle\prt{122}{321}}
\ar@{->}[urr]\ar@{->}[dr]
&&&&{\scriptstyle\prt{112}{211}} \ar@{->}[urr]\ar@{->}[dr] &&&& {\scriptstyle\prt{101}{000}}\ar@{->}[urrrr]\\
4 & {\scriptstyle\prt{000}{100}}\ar@{->}[ur]\ar@{->}[dr] && {\scriptstyle\prt{010}{110}} \ar@{->}[dr]\ar@{->}[ddr] && {\scriptstyle\prt{001}{110}} \ar@{->}[ur]\ar@{->}[dr]
&& {\scriptstyle\prt{011}{211}} \ar@{->}[dr]\ar@{->}[ddr] && {\scriptstyle\prt{111}{221}}\ar@{->}[ur]\ar@{->}[dr] &&{\scriptstyle\prt{112}{210}}\ar@{->}[ddr]\ar@{->}[dr]
&&{\scriptstyle\prt{011}{111}}\ar@{->}[ur]\ar@{->}[dr] &&
{\scriptstyle\prt{101}{111}}\ar@{->}[dr]\ar@{->}[ddr] && {\scriptstyle\prt{111}{100}}\ar@{->}[ur]\ar@{->}[dr]\\
2 && {\scriptstyle\prt{010}{100}}\ar@{->}[ur] && {\scriptstyle\prt{000}{010}} \ar@{->}[ur] && {\scriptstyle\prt{001}{100}} \ar@{->}[ur] &&
{\scriptstyle\prt{010}{111}}\ar@{->}[ur] &&{\scriptstyle\prt{101}{110}}\ar@{->}[ur] && {\scriptstyle\prt{011}{100}} \ar@{->}[ur]&& {\scriptstyle\prt{000}{011}}
\ar@{->}[ur] &&{\scriptstyle\prt{101}{100}}\ar@{->}[ur]  &&{\scriptstyle\prt{010}{000}} \\
5 &&&& {\scriptstyle\prt{011}{210}}\ar@{->}[drr]\ar@{->}[uur] &&&& {\scriptstyle\prt{112}{321}}\ar@{->}[drr]\ar@{->}[uur]
&&&& {\scriptstyle\prt{112}{221}}\ar@{->}[drr]\ar@{->}[uur] &&&& {\scriptstyle\prt{111}{111}} \ar@{->}[drr]\ar@{->}[uur]\\
6 &&&&&& {\scriptstyle\prt{111}{210}}\ar@{->}[urr]  &&&& {\scriptstyle\prt{001}{111}} \ar@{->}[urr]
&&&& {\scriptstyle\prt{111}{110}} \ar@{->}[urr]  &&&& {\scriptstyle\prt{000}{001}}
}}}
\end{equation}
\end{example}

\begin{example}
\label{ex:twisted AR quiver D}
\hfill \\
(1) For the Dynkin quiver
$
 Q =
\begin{tikzpicture}[baseline=0,>=stealth,yscale=0.7,xscale=0.8,font=\tiny]
\node[dynkdot,label={below:1}] (1) at (-1,0) {};
\node[dynkdot,label={below:2}] (2) at (0,0) {};
\node[dynkdot,label={right:3}] (3) at (1,0.4) {};
\node[dynkdot,label={right:4}] (4) at (1,-0.4) {};
\draw[->] (2) -- (1);
\draw[->] (2) -- (4);
\draw[->] (3) -- (2);
\end{tikzpicture}
$
of type $D_4$, its AR quiver $\Gamma_Q$ with the coordinate system can be depicted as follows:
\begin{equation}
\label{eq: D4 AR quiver}
\raisebox{2.5em}{ \scalebox{0.7}{\xymatrix@R=1ex{
(i,p) & 1 & 2 & 3 & 4 & 5 & 6 & 7 \\
1&\lan  1,-2 \ran  \ar@{->}[dr] && \lan  2,4 \ran \ar@{->}[dr] && \lan  1,-4 \ran  \ar@{->}[dr]  \\
2&& \lan  1,4 \ran  \ar@{->}[dr]\ar@{->}[ddr]\ar@{->}[ur] && \lan  1,2 \ran \ar@{->}[ddr]\ar@{->}[dr]\ar@{->}[ur] && \lan  2,-4 \ran  \ar@{->}[dr] \\
3&&& \lan  1,3\ran  \ar@{->}[ur] && \lan  2,-3 \ran  \ar@{->}[ur] && \lan  3,-4 \ran  \\
4& \lan  3,4 \ran  \ar@{->}[uur] , && \lan  1,-3 \ran  \ar@{->}[uur] && \lan  2,3 \ran  \ar@{->}[uur]
}}}
\end{equation}
Here $\lan a, \pm b \ran \seteq \epsilon_a \pm \epsilon_b$.

\noindent
(2) For $[\rrz]=[\mathfrak{Q}]$ of type $D_4$ in (2) of Example~\ref{ex: redez for exceptional}, its twisted AR quiver $\Upsilon_{[\mathfrak{Q}]}$ with the coordinate system can be depicted as follows:
\begin{equation}
\label{eq:unfolded2}
\raisebox{3.5em}{\scalebox{0.7}{\xymatrix@C=2ex@R=1ex{
(i/p) & 1 & 1\frac{1}{3} & 1\frac{2}{3} &2 & 2\frac{1}{3} &2\frac{2}{3} &3 & 3\frac{1}{3} & 3\frac{2}{3} &4 & 4\frac{1}{3} &4\frac{2}{3}  \\
1 &&&&& \lan 1, 3 \ran \ar@{->}[dr] &&&&&& \lan 1, -3 \ran \ar@{->}[dr]\\
2 && \lan 2, 4 \ran \ar@{->}[dr] && \lan 3, -4 \ran\ar@{->}[ur] && \lan 1, 4 \ran \ar@{->}[ddr]&& \lan 2, -4 \ran \ar@{->}[dr]&& \lan 1, -2 \ran \ar@{->}[ur] && \lan 2, -3 \ran\\
3 &&& \lan 2, 3 \ran\ar@{->}[ur] &&&&&& \lan 1, -4 \ran\ar@{->}[ur]\\
4 & \lan 3, 4 \ran \ar@{->}[uur] &&&&&& \lan 1, 2 \ran \ar@{->}[uur]
}}}
\end{equation}
\end{example}

If $\sigma \ne \id$, we can construct the \defn{folded AR quiver} $\widehat{\Upsilon}_{[\rrz]}$ from $\Upsilon_{[\rrz]}$ via the involution $\sigma$ in the following sense:
There are \emph{no} vertices $\al$ and $\be$ in $\Upsilon_{[\mQ]}$ (resp.~$\Upsilon_{[\mathfrak{Q}]}$) such that
\[
(\widehat{\imath},p)=(\widehat{\jmath},p'), \  \text{ where } \ \Omega_{[\rrz]}(\al)=(i,p) \  \text{ and } \  \Omega_{[\rrz]}(\be)=(j,p') \ ([\rrz]=[\mQ] \text{ or } [\mathfrak{Q}]).
\]
Thus we can assign
$\beta \in \widehat{\Upsilon}_{[\rrz]}$ a $\widehat{\Omega}_{[\rrz]}(\be) \seteq  (\widehat{\imath},p) \in \widehat{I} \times \dfrac{1}{\mathsf{d}}\Z$, called the \defn{folded coordinate} of $\beta$.

\begin{example}
\label{ex:folded AR quiver E}
In Example~\ref{ex:twisted AR quiver E}, $\Upsilon_{[\mQ]}$ can be folded into $\widehat{\Upsilon}_{[\mQ]}$ as follows:
\begin{equation}
\label{eq:folded1}
 \raisebox{4.6em}{\scalebox{0.6}{\xymatrix@C=0.1ex@R=2.6ex{
(\widehat{\imath}/p) & \frac{1}{2}  & 1 & \frac{3}{2} & 2 & \frac{5}{2} & 3 & \frac{7}{2} & 4 & \frac{9}{2} & 5 & \frac{11}{2} & 6 & \frac{13}{2} & 7 & \frac{15}{2} & 8 & \frac{17}{2} & 9 & \frac{19}{2} & 10    \\
1 &&&& {\scriptstyle\prt{000}{111}} \ar@{->}[drr] &&{\scriptstyle\prt{111}{210}}\ar@{->}[drr]&& {\scriptstyle\prt{011}{110}}\ar@{->}[drr] && {\scriptstyle\prt{001}{111}} \ar@{->}[drr]
&& {\scriptstyle\prt{111}{211}}\ar@{->}[drr]
&& {\scriptstyle\prt{111}{110}} \ar@{->}[drr]
&& {\scriptstyle\prt{001}{000}} \ar@{->}[drr] &&
{\scriptstyle\prt{000}{001}} && {\scriptstyle\prt{100}{000}} \\
2 && {\scriptstyle\prt{000}{110}} \ar@{->}[urr]\ar@{->}[dr] &&  {\scriptstyle\prt{011}{210}}\ar@{->}[urr]\ar@{->}[dr]   && {\scriptstyle\prt{011}{221}}\ar@{->}[urr]\ar@{->}[dr] && {\scriptstyle\prt{112}{321}}\ar@{->}[urr]\ar@{->}[dr]
&&{\scriptstyle\prt{122}{321}} \ar@{->}[urr]\ar@{->}[dr]
&& {\scriptstyle\prt{112}{221}}\ar@{->}[urr]\ar@{->}[dr]
&&{\scriptstyle\prt{112}{211}} \ar@{->}[urr]\ar@{->}[dr] &&
{\scriptstyle\prt{111}{111}} \ar@{->}[urr]\ar@{->}[dr] && {\scriptstyle\prt{101}{000}}\ar@{->}[urr]\\
3 & {\scriptstyle\prt{000}{100}}\ar@{->}[ur]\ar@{->}[dr] && {\scriptstyle\prt{010}{110}} \ar@{->}[dr]\ar@{->}[ur] && {\scriptstyle\prt{001}{110}} \ar@{->}[ur]\ar@{->}[dr]
&& {\scriptstyle\prt{011}{211}} \ar@{->}[dr]\ar@{->}[ur] && {\scriptstyle\prt{111}{221}}\ar@{->}[ur]\ar@{->}[dr] &&{\scriptstyle\prt{112}{210}}\ar@{->}[ur]\ar@{->}[dr]
&&{\scriptstyle\prt{011}{111}}\ar@{->}[ur]\ar@{->}[dr] &&
{\scriptstyle\prt{101}{111}}\ar@{->}[dr]\ar@{->}[ur] && {\scriptstyle\prt{111}{100}}\ar@{->}[ur]\ar@{->}[dr]\\
4 && {\scriptstyle\prt{010}{100}}\ar@{->}[ur] && {\scriptstyle\prt{000}{010}} \ar@{->}[ur] && {\scriptstyle\prt{001}{100}} \ar@{->}[ur] &&
{\scriptstyle\prt{010}{111}}\ar@{->}[ur] &&{\scriptstyle\prt{101}{110}}\ar@{->}[ur] && {\scriptstyle\prt{011}{100}} \ar@{->}[ur]&& {\scriptstyle\prt{000}{011}}
\ar@{->}[ur] &&{\scriptstyle\prt{101}{100}}\ar@{->}[ur]  &&{\scriptstyle\prt{010}{000}}
}}}
\end{equation}
\end{example}

\begin{example}
\label{ex:folded AR quiver D}
In Example~\ref{ex:twisted AR quiver D}, $\Upsilon_{[\mathfrak{Q}]}$ can be folded into $\widehat{\Upsilon}_{[\mathfrak{Q}]}$ as follows:
\begin{align}\label{eq:folded2}
\raisebox{1.5em}{\scalebox{0.75}{\xymatrix@C=2ex@R=1.5ex{
(\widehat{\imath}/p) & 1 & 1\frac{1}{3} & 1\frac{2}{3} &2 & 2\frac{1}{3} &2\frac{2}{3} &3 & 3\frac{1}{3} & 3\frac{2}{3} &4 & 4\frac{1}{3} &4\frac{2}{3}  \\
1&\lan 3, 4 \ran \ar@{->}[dr]  && \lan 2, 3 \ran \ar@{->}[dr]  && \lan 1, 3 \ran \ar@{->}[dr] &&\lan 1, 2 \ran \ar@{->}[dr]  &&\lan 1, -4 \ran \ar@{->}[dr]  && \lan 1, -3 \ran \ar@{->}[dr]\\
2&& \lan 2, 4 \ran \ar@{->}[ur] && \lan 3, -4 \ran\ar@{->}[ur] && \lan 1, 4 \ran \ar@{->}[ur]&& \lan 2, -4 \ran \ar@{->}[ur]&& \lan 1, -2 \ran \ar@{->}[ur] && \lan 2, -3 \ran
}}}
\end{align}
\end{example}
For simplicity, we also use $\widehat{\Upsilon}_{[\rrz]}$ when $\sigma = \id$; in that case, $[\rrz] =[Q]$ for some Dynkin quiver $Q$, $\widehat{X}=X$ and $\widehat{\Upsilon}_{[\rrz]} = \Gamma_Q$.




\subsection{Statistics on \texorpdfstring{$\widehat{\Upsilon}_{[\redez]}$}{Upsilonhat[redez]}}

Recall the notation $\beta^{\redez}_k$ and the total order $<_{\redez}$ for $\redez$.
Consider a sequence $\um = (\um_1,\um_2,\ldots,\um_\N) \in \Z_{\ge 0}^{\N}$, and we define $\wt_{\redez}(\um) = \sum_{i=1}^\N \um_i\beta^{\redez}_i\in\rl^+$.

\begin{definition}[\cite{McN15,Oh15E}]
We define the partial orders $<^\tb_{\redez}$ and $\prec^\tb_{[\redez]}$ on $\Z_{\ge 0}^{\N}$ as follows:
\begin{enumerate}[{\rm (i)}]
\item $<^\tb_{\redez}$ is the bi-lexicographical partial order induced by $<_{\redez}$. Namely, $\um<^\tb_{\redez}\um'$ if there exist
$j$ and $k$ $(1\le j\le k\le \N)$ such that
\begin{itemize}
\item $\um_s=\um'_s$ for $1\le s<j$ and $\um_j<\um'_j$,
\item $\um_{s}=\um'_{s}$ for $k<s\le \N$ and $\um_k<\um'_k$.
\end{itemize}
\item For sequences $\um$ and $\um'$, we have $\um \prec^\tb_{[\redez]} \um'$ if and only if $\wt_{\redez}(\um)=\wt_{\redez}(\um')$ and
$\un<^\tb_{\redez'} \un'$ for all $\redez' \in [\redez]$,
where $\un$ and $\un'$ are sequences such that
$\un_{\redez'}=\um_{\redez}$ and $\un'_{\redez'}=\um_{\redez}$.
\end{enumerate}
\end{definition}

We give the following definitions from~\cite{McN15,Oh15E} but instead using more of the language of posets.
We say a sequence $\um=(\um_1,\um_2,\ldots,\um_{\N}) \in \Z^{\N}_{\ge 0}$ is \defn{$[\redez]$-simple} if it is minimal with respect to the partial order $\prec^\tb_{[\redez]}$. For a given $[\redez]$-simple sequence $\us=(s_1,\ldots,s_{\N}) \in \Z^{\N}_{\ge 0}$, we call a cover\footnote{Recall that a cover of $x$ in a poset $P$ with partial order $\prec$ is an element $y \in P$ such that $x \prec y$ and there does not exists $y' \in P$ such that $x \prec y' \prec y$.} of $\us$ under $\prec^{\tb}_{[\redez]}$ a \defn{$[\redez]$-minimal sequence of $\us$}. The \defn{$[\redez]$-distance} of a sequence $\um$ is the largest integer $k \geq 0$ such that
\[
\um^{(0)} \prec^\tb_{[\redez]} \cdots \prec^\tb_{[\redez]} \um^{(k)} = \um
\]
and $\um^{(0)}$ is $[\redez]$-simple.

We call a sequence $\um$ a \defn{pair} if $|\um|\seteq \sum_{i=1}^\N m_i=2$ and $m_i \le 1$ for $1\le i\le \N$. We mainly use the notation $\up$ for a pair.
Consider a pair $\up$ such that there exists a unique $[\redez]$-simple sequence $\us$ satisfying
$\us \preceq^\tb_{[\redez]} \up$, we call $\us$ the \defn{$[\redez]$-socle} of $\up$ and denoted it by $\soc_{[\redez]}(\up)$.

\begin{proposition}[{\cite[Lemma 2.6]{BKM12}}]
\label{pro: BKM minimal}
For $\ga \in \PR \setminus \Pi$ and any $\redez$ of $w_0$, a $[\redez]$-minimal sequence of $\ga$ is indeed a pair $(\al,\beta)$ for some $\al,\beta \in \PR$ such that $\al+\beta = \ga$.
\end{proposition}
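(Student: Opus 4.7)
The plan is to show that a cover $\up$ of the $[\redez]$-simple sequence $\us$ associated to $\ga$ in the partial order $\prec^\tb_{[\redez]}$ must satisfy $|\up|=2$ with its two nonzero entries recording a pair of positive roots summing to $\ga$. Here $\us$ is the sequence with a single $1$ at the unique position $k$ where $\beta^{\redez}_k=\ga$, and I must rule out any other cover.

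First I would reduce to controlling the cardinality of the support. Writing the positions of nonzero entries of $\up$ as $\delta_1<_{\redez}\cdots<_{\redez}\delta_r$ with multiplicities $c_1,\ldots,c_r\geq 1$, the identity $\wt_{\redez}(\up)=\ga$ becomes $\sum_{j=1}^{r}c_j\delta_j=\ga$. Since every positive root is indivisible, $r=1$ forces $c_1=1$ and $\delta_1=\ga$, i.e.\ $\up=\us$; so $r\geq 2$. Convexity of $<_{\redez}$ then gives $\delta_1<_{\redez}\ga<_{\redez}\delta_r$ and, in particular, $\delta_1\neq\delta_r$.

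The main step is to rule out both $r\geq 3$ and the case $r=2$ with multiplicities above $1$ (equivalently, to show $|\up|=2$). The idea is to produce a strictly intermediate sequence, contradicting the cover property. Set $\delta=\delta_1$ and $\delta'=\ga-\delta$; writing $\delta'=\sum_{j\geq 2}c_j\delta_j+(c_1-1)\delta$, this is a nonnegative integer combination of positive roots in the convex interval $(\delta,\delta_r]\cup\{\delta\}$, and a standard root-system argument (combining indivisibility with the fact that $\ga$ and $\delta$ are positive roots with $\delta<_{\redez}\ga$) yields $\delta'\in\PR$ and $\delta'>_{\redez}\ga$. Consequently the pair sequence $\up'=(\delta,\delta')$ satisfies $\wt(\up')=\ga$ and $\us\prec^\tb_{[\redez]}\up'$. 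If $\up\neq\up'$, the bi-lex comparison forced by $\delta'>_{\redez}\delta_j$ for some $j$ in the support of $\up$ yields $\up'\prec^\tb_{[\redez]}\up$, a strict intermediate that contradicts $\up$ being a cover. Thus $\up=\up'$, which is already the desired pair.

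Finally, I need to verify the strict comparison $\up'\prec^\tb_{[\redez]}\up$ holds in every commutation-equivalent reduced expression $\redez'\in[\redez]$, not just in $\redez$. This is the main obstacle, since the positions of $\delta$, $\delta'$, and the $\delta_j$ can shift under commutation moves; however, convexity of $\prec_{[\redez]}$ (which is intrinsic to the commutation class, not to the chosen reduced word) guarantees that both the leftmost and rightmost entries of $\up'$ still sandwich $\ga$, and a careful tracking of the bi-lexicographic inequality shows that the required inequalities are preserved. Combined with $\alpha+\beta=\ga$, this proves $\up$ is a pair of positive roots summing to $\ga$, which is the assertion of the proposition.
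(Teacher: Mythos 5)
The paper does not actually prove this statement; it is cited verbatim from \cite[Lemma 2.6]{BKM12}, so there is no internal proof to compare against. Evaluating your argument on its own terms, the pivotal step fails: you set $\delta=\delta_1$ (the $<_{\redez}$-minimal root in the support of the putative cover $\up$) and $\delta'=\gamma-\delta$, and then assert that ``a standard root-system argument \ldots\ yields $\delta'\in\PR$.'' No such argument exists. Knowing that $\gamma,\delta\in\PR$ with $\delta<_{\redez}\gamma$ does not make $\gamma-\delta$ a root. For a concrete counterexample, take $D_4$ with $\gamma=\theta=\epsilon_1+\epsilon_2=\alpha_1+2\alpha_2+\alpha_3+\alpha_4$ and any convex order whose first root is $\alpha_1$; the Kostant partition $\{\alpha_1,\ \alpha_2+\alpha_3,\ \alpha_2+\alpha_4\}=\{\epsilon_1-\epsilon_2,\ \epsilon_2-\epsilon_4,\ \epsilon_2+\epsilon_4\}$ sums to $\theta$ and has $\delta_1=\alpha_1$, yet $\theta-\alpha_1=2\epsilon_2$ is not a root. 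Even in $A_3$ with the convex order from $\redez=s_1s_2s_1s_3s_2s_1$, one has $\alpha_2<_{\redez}\gamma=\alpha_1+\alpha_2+\alpha_3$ while $\gamma-\alpha_2=\alpha_1+\alpha_3\notin\PR$. Since a proof by contradiction must handle an arbitrary $\up$ with $|\up|\geq 3$, and your candidate intermediate $(\delta_1,\gamma-\delta_1)$ simply does not exist when $\gamma-\delta_1\notin\PR$, the contradiction never materializes and the whole chain of steps 6--8 is unsupported.

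A secondary issue is the bi-lexicographic comparison in step 7: even when $\delta'\in\PR$, the inequality $\up'\prec^\tb_{[\redez]}\up$ requires knowing that some $\delta_j$ with $j\geq 2$ (or $\delta_1$ itself if $c_1>1$) lies strictly to the left of $\delta'$ in the convex order, and symmetrically for the right end. This can be extracted from the fact that a positive root outside an inversion set cannot be a positive combination of roots inside it, but you do not state or use this, relying instead on a vague appeal to convexity. The honest way to close the gap is not a one-root replacement of $\delta_1$; one has to exhibit, for every Kostant partition $\um\neq e_k$ of $\gamma$, a two-part partition $(\alpha,\beta)$ with $\alpha+\beta=\gamma$, $\alpha\prec_{[\redez]}\gamma\prec_{[\redez]}\beta$, and $(\alpha,\beta)\preceq^\tb_{[\redez]}\um$, which is the actual content of \cite[Lemma 2.6]{BKM12} and requires an argument of a different shape (choosing $\alpha$ extremally among roots with $\gamma-\alpha\in\PR$, or an induction on the number of parts, rather than just taking the extremal element of the given support).
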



As we did in the previous subsection, $[\rrz]$ denotes a commutation class in $\lf \Delta \rf$, $\lf \mQ \rf$ or $\lf \mathfrak{Q} \rf$. Recall, for each $[\rrz]$, we can correspond $\widehat{X}$, $\sigma$, $\widehat{I}$ and $\mathsf{d}$.

Following~\cite{Oh15E,OS16B,OS16C}, for a folded AR quiver $\widehat{\Upsilon}_{[\rrz]}$, indices $\widehat{k},\widehat{l} \in \widehat{I}$ and an integer $t \in \Z_{\ge 1}$,
we define the subset $\Phi_{[\rrz]}(\widehat{k},\widehat{l})[t] \subset \PR \times \PR$ as the pairs $(\alpha,\beta) \in \PR \times \PR$ such that $\alpha$ and $\beta$ are comparable under $\prec_{[\rrz]}$ and
\[
\{ \widehat{\Omega}_{[\rrz]}(\al),\widehat{\Omega}_{[\rrz]}(\beta) \} = \{ (\widehat{k},a), (\widehat{l},b)\} \quad \text{ such that } \quad |a-b| = \dfrac{t}{\mathsf{d}}.
\]

\begin{proposition}[{\cite{Oh15E,OS16B,OS16C}}]
\label{prop:dist_theta_defn}
\hfill
\begin{enumerate}[{\rm (1)}]
\item For any $(\al,\beta), \ (\al',\beta')  \in \Phi_{[\rrz]}(\widehat{k},\widehat{l})[t]$, we have
\begin{align} \label{eq: otkl}
\dist_{[\rrz]}(\al,\beta)=\dist_{[\rrz]}(\al',\beta').
\end{align}
Moreover, we denote by $o^{[\rrz]}_t(\widehat{k},\widehat{l}) \seteq \dist_{[\rrz]}(\al,\beta)$ for any $(\al,\beta) \in \Phi_{[\rrz]}(\widehat{k},\widehat{l})[t]$.

\item The integer, defined by
\begin{equation} \label{eq: theta}
\theta^{[\rrz]}_t(\widehat{k},\widehat{l}) = \begin{cases}
\max\left\{ o^{[Q]}_t(\widehat{k},\widehat{l}),o^{[Q^{{\rm rev}}]}_t(\widehat{k},\widehat{l}) \right\} & \text{ if  $[\rrz]=[Q]$ for some $Q$}, \\[2ex]
\left\lceil o^{[\rrz]}_t(\widehat{k},\widehat{l})/\mathsf{d} \right \rceil  & \text{ otherwise},
\end{cases}
\end{equation}
does not depend on the choice of $[\rrz'], [\rrz''] \in \lf \rrz \rf$.
\end{enumerate}
\end{proposition}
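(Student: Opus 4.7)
The plan is to prove (1) by exploiting the periodic translation structure of the (folded) AR quiver and (2) by analyzing the behavior of distances under the reflection maps $r_i$ acting on commutation classes.

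For (1), I would first observe that each reduced expression in $\lf \rrz \rf$ arranges $\PR$ into a quiver $\widehat{\Upsilon}_{[\rrz]}$ carrying a natural translation of the $p$-coordinate arising from the cyclic description of $\rrz^\sigma$ in~\eqref{eq: red twisted}; this block-periodic structure is visible in examples such as~\eqref{eq:folded1} and~\eqref{eq:folded2}. Such a translation induces an order-preserving bijection between $\widehat{\Upsilon}_{[\rrz]}$ and $\widehat{\Upsilon}_{[\rrz']}$ for some $[\rrz'] \in \lf \rrz \rf$, and therefore maps $[\rrz]$-minimal pairs (characterized via Proposition~\ref{pro: BKM minimal}) to $[\rrz']$-minimal pairs, preserving $[\rrz]$-distance. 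Since any two pairs in $\Phi_{[\rrz]}(\widehat{k},\widehat{l})[t]$ have identical horizontal separation $t/\mathsf{d}$ between their residues $\widehat{k}$ and $\widehat{l}$, they can be matched by a combination of such a translation and, where necessary, a reflection (whose effect is handled in part (2)). Consequently $\dist_{[\rrz]}(\alpha,\beta) = \dist_{[\rrz]}(\alpha',\beta')$, giving a well-defined $o^{[\rrz]}_t(\widehat{k},\widehat{l})$.

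For (2), I would study how $\dist_{[\rrz]}(\alpha,\beta)$ transforms under a single reflection $r_i$, since reflection equivalence generates the relation on $\lf \rrz \rf$. In the untwisted case $[\rrz]=[Q]$, the involution $Q \leftrightarrow Q^{\rm rev}$ reverses the total order underlying $\prec_{[Q]}$ and hence can swap the two roles of the ordered pair $(\alpha,\beta)$; this swap may change the value of $o^{[Q]}_t(\widehat{k},\widehat{l})$, and the maximum in~\eqref{eq: theta} is precisely what renders the output symmetric and therefore invariant. In the (triply) twisted cases, iterating the reflections $\mathsf{d}$ times corresponds to a cyclic translation of $p$ by $\mathsf{d}$ units, so dividing $o^{[\rrz]}_t$ by $\mathsf{d}$ and rounding up collapses this $\mathsf{d}$-fold ambiguity and yields a value independent of $[\rrz] \in \lf \rrz \rf$. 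Invariance under a generic reflection $r_j$ can then be verified directly from the explicit coordinate transformation recorded in~\cite{OS15,OS16B,OS16C}.

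The hardest part will be (2): while (1) essentially reduces to a translation symmetry built into the definition of $\lf \rrz \rf$, in (2) one must track how distances behave under reflections that genuinely reshape the folded AR quiver rather than merely translate it. The delicate points are confirming that the max in the untwisted formula correctly absorbs the asymmetry between $[Q]$ and $[Q^{\rm rev}]$, and that the ceiling in the twisted formula interacts cleanly with the $\mathsf{d}$-periodicity of $\widehat{\Upsilon}_{[\rrz]}$. I would address these by first checking the statement on the concrete AR quivers in~\eqref{eq: E6 AR quiver}, \eqref{eq: D4 AR quiver}, \eqref{eq:folded1}, and~\eqref{eq:folded2}, then propagating these local checks to arbitrary rank by induction along a sequence of reflections, using the explicit compatibility of the coordinate system $\widehat{\Omega}_{[\rrz]}$ with $r_i$.
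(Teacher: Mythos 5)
This proposition is quoted from \cite{Oh15E,OS16B,OS16C}; the paper under review does not supply a proof, so there is no ``paper's own proof'' to compare against. The remarks below therefore assess the internal soundness of your plan.

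Your sketch for (1) contains a logical wrinkle that must be repaired before the argument is usable. The claim is \emph{internal} to a single commutation class: two pairs lying in the same set $\Phi_{[\rrz]}(\widehat{k},\widehat{l})[t]$ have equal $[\rrz]$-distance. Your translation, however, produces an order-preserving bijection from $\widehat{\Upsilon}_{[\rrz]}$ to a \emph{different} $\widehat{\Upsilon}_{[\rrz']}$ with $[\rrz']\in\lf\rrz\rf$, which gives you an identity of the form $\dist_{[\rrz]}(\al,\be)=\dist_{[\rrz']}(\al'',\be'')$ rather than the required $\dist_{[\rrz]}(\al,\be)=\dist_{[\rrz]}(\al',\be')$. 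What you actually want is the AR-translation acting on $\widehat{\Upsilon}_{[\rrz]}$ \emph{itself}: in the interior of the quiver it preserves the Hasse-diagram structure and hence the $[\rrz]$-distance, and a separate argument handles pairs near the boundary of the window. Your clause about invoking ``a reflection (whose effect is handled in part (2))'' to close the gap compounds the problem: part (1) must stand on its own before part (2) can even be stated, since $o^{[\rrz]}_t(\widehat{k},\widehat{l})$ has to be a well-defined quantity before one can ask whether $\theta^{[\rrz]}_t(\widehat{k},\widehat{l})$ depends on the representative $[\rrz]$.

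For (2), the framework you describe is the right one: analyze the change of $o^{[\rrz]}_t(\widehat{k},\widehat{l})$ under a single reflection $r_i$ and propagate along a chain of reflections connecting any two classes in $\lf\rrz\rf$. Your identification of the two mechanisms --- that $\max\{o^{[Q]}_t, o^{[Q^{\rm rev}]}_t\}$ symmetrizes the $Q\leftrightarrow Q^{\rm rev}$ asymmetry, and that $\lceil\,\cdot\,/\mathsf{d}\rceil$ collapses the $\mathsf{d}$-fold periodicity built into the twisted reduced word~\eqref{eq: red twisted} --- is a correct reading of why the formula~\eqref{eq: theta} has the shape it does. But these are precisely the combinatorial facts the cited references establish by explicit bookkeeping of how $\widehat{\Omega}_{[\rrz]}$ transforms under $r_i$. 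Your plan correctly locates the work; it does not yet do it.
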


From Proposition~\ref{prop:dist_theta_defn}, we can define the (folded) distance polynomial $D^{\widehat{X}}_{\widehat{k},\widehat{l}}(z;-q)$ for $\widehat{k},\widehat{l} \in \widehat{I}$ on $\lf \rrz \rf$ as
\[
D^{\lf \rrz \rf}_{\widehat{k},\widehat{l}}(z;-q) = \prod_{t \in \Z_{\ge 0}} \left( z - (-1)^\kappa q^t \right)^{\theta^{\lf \rrz \rf}_t(\widehat{k},\widehat{l})},
\]
where
\[
\kappa = \begin{cases}
\widehat{k}+\widehat{l} & \text{if $\sigma$ is from~\eqref{eq: B_n} or~\eqref{eq: F_4}}, \\
t & \text{otherwise}.
\end{cases}
\]

\section{Quantum affine algebras}
\label{sec2:Qunatum affine algebras}

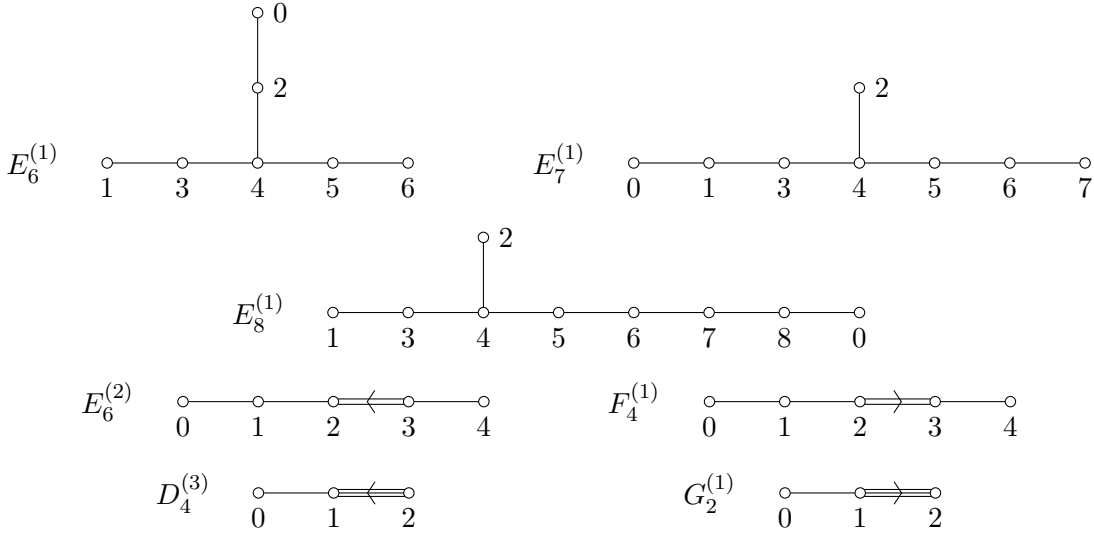
\begin{figure}[t]
\begin{center}
\begin{tikzpicture}[scale=1, baseline=-20]
\draw (-0.5,0) node[anchor=east] {$E_6^{(1)}$};
\node[dynkdot,label={below:$1$}] (E61) at (0,0) {};
\node[dynkdot,label={below:$3$}] (E63) at (1,0) {};
\node[dynkdot,label={below:$4$}] (E64) at (2,0) {};
\node[dynkdot,label={below:$5$}] (E65) at (3,0) {};
\node[dynkdot,label={below:$6$}] (E66) at (4,0) {};
\node[dynkdot,label={right:$2$}] (E62) at (2,1) {};
\node[dynkdot,label={right:$0$}] (E60) at (2,2) {};
\path[-] (E61) edge (E63) (E63) edge (E64) (E64) edge (E65) (E65) edge (E66)
  (E64) edge (E62) (E62) edge (E60);
\begin{scope}[xshift=8cm]
\draw (-1.5,0) node[anchor=east] {$E_7^{(1)}$};
\node[dynkdot,label={below:$1$}] (E1) at (0,0) {};
\node[dynkdot,label={below:$3$}] (E3) at (1,0) {};
\node[dynkdot,label={below:$4$}] (E4) at (2,0) {};
\node[dynkdot,label={below:$5$}] (E5) at (3,0) {};
\node[dynkdot,label={below:$6$}] (E6) at (4,0) {};
\node[dynkdot,label={below:$7$}] (E7) at (5,0) {};
\node[dynkdot,label={right:$2$}] (E2) at (2,1) {};
\node[dynkdot,label={below:$0$}] (E0) at (-1,0) {};
\path[-] (E1) edge (E3) (E3) edge (E4) (E4) edge (E5) (E5) edge (E6) (E6) edge (E7)
  (E4) edge (E2) (E1) edge (E0);
\end{scope}
\end{tikzpicture}
\begin{tikzpicture}[scale=1]
\draw (-0.5,0) node[anchor=east] {$E_8^{(1)}$};
\node[dynkdot,label={below:$1$}] (E1) at (0,0) {};
\node[dynkdot,label={below:$3$}] (E3) at (1,0) {};
\node[dynkdot,label={below:$4$}] (E4) at (2,0) {};
\node[dynkdot,label={below:$5$}] (E5) at (3,0) {};
\node[dynkdot,label={below:$6$}] (E6) at (4,0) {};
\node[dynkdot,label={below:$7$}] (E7) at (5,0) {};
\node[dynkdot,label={below:$8$}] (E8) at (6,0) {};
\node[dynkdot,label={right:$2$}] (E2) at (2,1) {};
\node[dynkdot,label={below:$0$}] (E0) at (7,0) {};
\path[-] (E1) edge (E3) (E3) edge (E4) (E4) edge (E5) (E5) edge (E6) (E6) edge (E7) (E7) edge (E8)
  (E4) edge (E2) (E8) edge (E0);
\end{tikzpicture}

\vspace{5pt}
\begin{tikzpicture}[scale=1]
\node (F4desc) at (-1,0) {$E_6^{(2)}$};
\foreach \x in {0,1,2,3,4}
{\node[dynkdot,label={below:$\x$}] (F\x) at (\x,0) {};}
\path[-] (F0) edge (F1) (F1) edge (F2) (F3) edge (F4);
\draw[-] (F2.30) -- (F3.150);
\draw[-] (F2.330) -- (F3.210);
\draw[-] (2.55,+.15) -- (2.45,0) -- (2.55,0-.15);

\begin{scope}[xshift=7cm]
\node (E6desc) at (-1,0) {$F_4^{(1)}$};
\foreach \x in {0,1,2,3,4}
{\node[dynkdot,label={below:$\x$}] (E\x) at (\x,0) {};}
\path[-] (E0) edge (E1) (E1) edge (E2) (E3) edge (E4);
\draw[-] (E2.30) -- (E3.150);
\draw[-] (E2.330) -- (E3.210);
\draw[-] (2.45,+.15) -- (2.55,0) -- (2.45,-.15);
\end{scope}
\end{tikzpicture}

\vspace{5pt}
\begin{tikzpicture}[scale=1,baseline=0]
\node (G2desc) at (-1,0) {$D_4^{(3)}$};
\node[dynkdot,label={below:$0$}] (G0) at (0,0){};
\node[dynkdot,label={below:$1$}] (G1) at (1,0){};
\node[dynkdot,label={below:$2$}] (G2) at (2,0) {};
\draw[-] (G0) -- (G1) -- (G2);
\draw[-] (G1.40) -- (G2.140);
\draw[-] (G1.320) -- (G2.220);
\draw[-] (1.55,+.15) -- (1.45,0) -- (1.55,0-.15);

\begin{scope}[xshift=7cm]
\node (D2desc) at (-1,0) {$G_2^{(1)}$};
\node[dynkdot,label={below:$0$}] (D0) at (0,0){};
\node[dynkdot,label={below:$1$}] (D1) at (1,0){};
\node[dynkdot,label={below:$2$}] (D2) at (2,0) {};
\draw[-] (D0) -- (D1) -- (D2);
\draw[-] (D1.40) -- (D2.140);
\draw[-] (D1.320) -- (D2.220);
\draw[-] (1.45,+.15) -- (1.55,0) -- (1.45,-.15);
\end{scope}
\end{tikzpicture}
\end{center}
\caption{Dynkin diagrams for the exceptional affine types.}
\label{fig:exceptional_types}
\end{figure}

\subsection{Quantum affine algebra}
In this section, we review the representation theory of finite dimensional integrable modules over quantum affine algebras referring~\cite{AK97,Kas02}.
When we deal with quantum affine algebras, we take a base field $\ko$ which is the algebraic closure of $\C(q)$ in $\cup_{m > 0}\C((q^{1/m}))$.

Let $\cmA$ be a generalized Cartan matrix of affine type. We choose $0 \in I$ as the leftmost vertex in the table~\cite[page 54,55]{Kac} except in the $A_{2n}^{(2)}$ case, in which case we take the longest simple root as $\al_0$.
For the exceptional types, we follow the labeling given in Figure~\ref{fig:exceptional_types}, and otherwise we follow the labeling of~\cite{Kac}.
We denote the \defn{imaginary root} by $\delta = \sum_{i \in I}d_i\al_i$ and the \defn{canonical central element} $c = \sum_{i \in I}c_i h_i$~\cite[Chapter 4]{Kac}.
We normalize the bilinear form $(\cdot\,,\,\cdot)$ by
\[
(\delta, \lambda) = \lan c,\lambda \ran \quad \text{ for any } \lambda \in P.
\]

For the affine Cartan datum $(\cmA,P,$ $\Pi,P^\vee,\Pi^\vee, (\cdot\,,\,\cdot))$ and associated quantum group $U_q(\g)$, let $U'_q(\g)$ denote the \defn{quantum affine algebra} that is the subalgebra of $U_q(\g)$ generated by $e_i,f_i,K_i^{\pm 1}$ $(i \in I)$.\footnote{The quantum group $U_q'(\g)$ also may be considered as the quantum group $U_q(\g')$ associated to the derived subalgebra $\g' \seteq [\g, \g]$.}
Let us denote by $\Mod(U_q'(\g))$ the category of left $U_q'(\g)$-modules.

Let $U_q(\g_0)$ denote the quantum group of the corresponding finite-type given by $I_0 \seteq I \setminus \{0\}$, and note that $U_q(\g_0) \subset U_q(\g)$ as the subalgebra generated by $e_i$, $f_i$, $K_i$ for $i \in I_0$.
Let $\clfw_i$ denote the fundamental weights associated to the Cartan datum of $\g_0$.


Let us denote by $ \ \bar{ } \ $ the involution of $U_q'(\g)$ defined as follows:
\[
e_i \mapsto e_i,
\qquad\qquad
f_i \mapsto f_i,
\qquad\qquad
K_i \mapsto K_i ,
\qquad\qquad
q^{1/\gamma} \to q^{-1/\gamma}.
\]

Set $P_\cl \seteq P / \Z\delta$ and $\cl \colon P \to P_\cl$ as the canonical projection. Then $P_\cl = \soplus_{i \in I}\Z \cl(\Lambda_i)$.
We also define the subset $P_\cl^0$ of $P_\cl$ as follows:
\[
P_\cl^0 \seteq \{ \lambda \in P_\cl \mid \lan c, \lambda \ran =0 \} \subset P_\cl.
\]

We denote by $W_{\aff}$ the affine Weyl group generated by $s_i \in \Aut(P)$ $(i \in I)$, where $s_i(\mu)=\mu-\lan h_i,\mu \ran \al_i$. Then $W_\aff$ acts also on $P_\cl$
and $P_\cl^0$.

We call a $U_q'(\g)$-module $M$ \defn{integrable} if
\begin{enumerate}[(a)]
\item $M$ admits a weight space decomposition
\[
M = \soplus_{\mu \in P_\cl} M_\mu,  \quad \text{ where } M_\mu = \{ u \in M \mid K_iu = q_i^{\lan h_i,\mu \ran} \text{ for all } i \in I \},
\]
\item the actions of $e_i$ and $f_i$ are locally nilpotent for all $i \in I$.
\end{enumerate}

We denote by $\Ca_\g$ the abelian tensor category consisting of finite dimensional integrable $U_q'(\g)$-modules. For a simple module $M$ in
$\Ca_\g$, there exists a non-zero vector $v_{\lambda} \in M$ of weight $\lambda \in P_\cl^0$ such that $\lan h_i,\lambda \ran \ge 0$ for all $i \in I_0$
and $\ko v = M_{\lambda}$.
Such a weight $\lambda$ is called the \defn{dominant extremal weight} of $M$ and a non-zero vector in $M_{\lambda}$
is called a \defn{dominant extremal vector} of $M$.

For an integrable $U_q'(\g)$-module $M$, the \defn{affinization} $M_\aff := \ko[z,z^{-1}] \otimes  M$ of $M$ is considered as a vector space over $\ko$ and is equipped with a $U_q'(\g)$-module structure
\[
e_i(u_z) = z^{\delta_{i,0}}(e_iu)_z,
\qquad\qquad
f_i(u_z) = z^{-\delta_{i,0}}(f_iu)_z,
\qquad\qquad
K_i(u_z) = (K_iu)_z,
\]
for all $i \in I$.
Here $u_z$ denotes $\mathbf{1} \otimes  u \in M_\aff$ for $u \in M$. We denote by the action of $z$ on $M_\aff$ by $z_M$ and sometimes write $M_z$ instead of $M_\aff$.

For $a \in \ko^\times$, we define the \defn{evaluation module of $M$ at $a$} as
\[
M_a \seteq M_z / (z - a) M_z
\]
and call the $a$ the \defn{spectral parameter}. Note that for $U_q'(\g)$-modules $M$ and $N$, we have
\[
(M \otimes N)_a \iso M_a \otimes N_a.
\]

For a $U_q'(\g)$-module $M$, we denote by the $U_q'(\g)$-module $\overline{M}=\{ \bar{u} \mid u \in M \}$ whose module structure is give as follows:
\[
x \overline{u} \seteq \overline{\overline{x} u},
\]
where $x \in U_q'(\g)$.
Then we have
\begin{align} \label{eq: bar struture}
\overline{M_a} \iso \overline{M}_{\overline{a}}, \qquad\qquad \overline{M \otimes N} \iso \overline{N} \otimes \overline{M}.
\end{align}

For each $i \in I_0$, we set
\begin{align} \label{eq: funda}
\varpi_i \seteq {\rm gcd}(c_0,c_i)^{-1}\cl(c_0\Lambda_i-c_i\Lambda_0) \in P^0_\cl,
\end{align}
which is called an $i$-th \defn{level $0$ fundamental weight}. Then $\{ \varpi_i \}_{i \in I_0}$ is a basis of $P_\cl^0$.
Therefore, for each $i \in I_0$, there exists a unique simple $U_q'(\g)$-module $V(\varpi_i) \in \Ca_\g$ whose dominant extremal weight is $\varpi_i$ and satisfies the following properties:
\[
\parbox{85ex}{
We can take $u_\mu \in V(\varpi_i)_\mu$ for each $\mu \in W\varpi_i \subset P_\cl$ such that
\begin{enumerate}[(a)]
\item for $j \in I$ and $\mu \in W\varpi_i$ such that $\lan h_i,\mu \ran \ge 0$, $e_j u_\mu =0$ and $f_j^{(\lan h_i,\mu \ran)}u_\mu=u_{s_j\mu}$,
\item for $j \in I$ and $\mu \in W\varpi_i$ such that $\lan h_i,\mu \ran < 0$, $f_j u_\mu =0$ and $e_j^{(-\lan h_i,\mu \ran)}u_\mu=u_{s_j\mu}$,
\item $M$ is generated by $u_{\varpi_i}$.
\end{enumerate}
}
\]
We call $V(\varpi_i)$ the \defn{fundamental representation}.

We say that a simple $U_q'(\g)$-module $M$ is \defn{good} if it has a \defn{bar involution}, a  \defn{crystal basis} with \defn{simple crystal graph} and a \defn{lower global basis} (see~\cite[\S 8]{Kas02} for precise definitions). Note that the fundamental representation $V(\varpi_i)_x$ is good. For each $M \in \Ca_g$, there exist the right dual ${}^*M$ and the left dual $M^*$ of $M$ in the following sense: we have $U_q'(\g)$-module isomorphisms
\begin{equation} \label{eq: LR dual}
\begin{aligned}
& \Hom(M \otimes  X,Y) \iso \Hom(X,{}^*M \otimes  Y), \  \Hom(X \otimes  {}^*M,Y) \iso \Hom(X, Y \otimes  M), \\
& \Hom(M^* \otimes  X,Y) \iso \Hom(X, M \otimes  Y), \  \Hom(X\otimes  M,Y) \iso \Hom(X,Y \otimes  M^*),
\end{aligned}
\end{equation}
which are functorial in $U_q'(\g)$-modules $X$ and $Y$. The duals of a fundamental representation $V(\varpi_i)$ $(i \in I_0)$ are given as follows:
\[
{}^*V(\varpi_i) \iso V(\varpi_{i^*})_{p^*} \qtext{and} V(\varpi_i)^* \iso V(\varpi_{i^*})_{(p^*)^{-1}},
\]
where $p^* \seteq (-1)^{\lan \rho^\vee,\delta \ran}q^{\lan c,\rho \ran}$. Here $\rho$ (resp.~$\rho^\vee$) denotes an element in $P$ (resp.~$P^\vee$) satisfying
$\lan h_i,\rho \ran=1$ (resp.~$\lan \rho^\vee,\alpha_i \ran=1$) for all $i \in I$, and $i^*$ denotes the index in $I$ determined by
$w_0(\alpha_i)=-\alpha_{i^*}$, where $w_0$ is the longest element of $W_0 = \lan s_i \mid i \in I_0 \ran$.

For the fundamental representations, we can compute the $U_q(\g_0)$-decomposition by using the (virtual) Kleber algorithm~\cite{Kleber98,OSS03II}.

\begin{remark}
When we say a vector $v$ is \emph{the} highest weight vector of an irreducible $V(\lambda)$ component in the $U_q(\g_0)$-decomposition of a $U_q'(\g)$-module, we will always mean ``up to scalar in $\ko$.''
\end{remark}

\subsubsection{Module structure for affine minuscule representations}

Let $\g$ be dual to an untwisted affine type.
Suppose $r \in I_0$ is in the orbit of $0$ under some affine Dynkin diagram automorphism.
Then we say $V(\varpi_r)$ is a \defn{affine minuscule representation} since $V(\varpi_r) \iso V(\clfw_r)$ as $U_q(\g_0)$-modules and $V(\clfw_r)$ is a minuscule representation of $U_q(\g_0)$.

We consider the crystal $B(\varpi_r)$ of the fundamental representation $V(\varpi_r)$~\cite{Kas02,NS03,NS06II}.
Denote on $B(\varpi_r)$ the \defn{Kashiwara operators}~\cite{Kas90,Kas91} by $\widetilde{e}_i, \widetilde{f}_i \colon B(\varpi_r) \to B(\varpi_r) \sqcup \{ \bzero \} $ and the weight function by $\wt \colon B(\varpi_r) \to P_{\cl}$.
We define a $U_q(\g)$-module $\mathbb{V}(\varpi_r) \seteq \ko \{ v_b \mid b \in B(\varpi_r) \}$ by
\[
e_i v_{b} = v_{\widetilde{e}_i b},
\qquad\qquad
f_i v_{b} = v_{\widetilde{f}_i b},
\qquad\qquad
K_i v_{b} = q_i^{\langle h_i, \wt(b) \rangle} v_{b},
\]
where we consider $v_{\bzero } \seteq 0$.

\begin{proposition}
$\mathbb{V}(\varpi_r)$ is a $U_q'(\g)$-module.
\end{proposition}

\begin{proof}
Note that since $V(\clfw_i)$ is a minuscule representation, so from, \textit{e.g.},~\cite{Stembridge01II}, as we can parameterize the weight spaces in $V(\varpi_r)$ by the minimal length coset representatives of $W_0 / \operatorname{Stab}_{W_0}(\clfw_r)$, recalling that the stabilizer of $\clfw_r$ in $W_0$ is a parabolic subgroup of $W_0$.
Hence, we have $\widetilde{e}_i^2 b = \widetilde{f}_i^2 b = 0$ for all $b \in B(\varpi_r)$ and $i \in I$.
Therefore, from~\cite[Prop.~2.1]{Stembridge01II}, we have that $\mathbb{V}(\clfw_r)$ is a $U_q'(\g)$-module.
\end{proof}

\begin{proposition}
\label{prop:minuscule_repr}
We have $\mathbb{V}(\varpi_r) \iso V(\varpi_r)$.
\end{proposition}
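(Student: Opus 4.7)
The plan is to identify $\mathbb{V}(\varpi_r)$ with the unique simple object of $\Ca_\g$ whose dominant extremal weight is $\varpi_r$, after which the asserted isomorphism is immediate. Three things need to be verified: (a) $\mathbb{V}(\varpi_r) \in \Ca_\g$; (b) $\mathbb{V}(\varpi_r)$ is a simple $U_q'(\g)$-module; (c) $\varpi_r$ is the dominant extremal weight of $\mathbb{V}(\varpi_r)$.

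For (a), the preceding proposition already supplies the $U_q'(\g)$-module structure, while the weight decomposition and integrability are built into the definition. Finite-dimensionality holds because $B(\varpi_r)$ is finite: $V(\clfw_r)$ is a finite-dimensional minuscule $U_q(\g_0)$-module, and the affine Kashiwara operators only permute its weight basis.

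For (b), I would first observe that every weight space $\mathbb{V}(\varpi_r)_\mu$ is one-dimensional. Indeed, since $V(\clfw_r)$ is minuscule its weights form a single $W_0$-orbit $W_0\varpi_r$, so each $\mu \in W_0\varpi_r$ is realized by a unique element of $B(\varpi_r)$. A nonzero submodule $N \subset \mathbb{V}(\varpi_r)$ therefore contains some $v_b$, and the defining relations $e_i v_b = v_{\widetilde{e}_i b}$ and $f_i v_b = v_{\widetilde{f}_i b}$ make $N$ closed under the Kashiwara operators. Connectivity of $B(\varpi_r)$ as a crystal (immediate from connectivity of $B(\clfw_r)$, a fortiori) then forces $N = \mathbb{V}(\varpi_r)$.

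For (c), the unique $b_{\varpi_r} \in B(\varpi_r)$ of weight $\varpi_r$ satisfies $\widetilde{e}_i b_{\varpi_r} = \bzero$ for every $i \in I_0$, so $\mathbb{V}(\varpi_r)_{\varpi_r} = \ko v_{b_{\varpi_r}}$; together with $\lan h_i, \varpi_r\ran \ge 0$ for $i \in I_0$, this exhibits $\varpi_r$ as the dominant extremal weight. By the uniqueness of simple modules in $\Ca_\g$ with a prescribed dominant extremal weight, $\mathbb{V}(\varpi_r) \iso V(\varpi_r)$. The only conceptual point to watch is that the Kashiwara operators on the crystal genuinely reproduce the Chevalley action on $V(\varpi_r)$; in the minuscule setting this is the short-string phenomenon already invoked, via Stembridge's criterion, in the previous proposition, so no further obstacle arises.
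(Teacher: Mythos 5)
Your proof is correct, and it establishes simplicity by a route different from the paper's. The paper fixes an arbitrary nonzero vector $v = \sum_b \xi_b v_b$, chooses a maximal-length raising sequence $e_{i_1}\cdots e_{i_\ell}$ (with $i_j \in I_0$) carrying some contributing crystal element to the top, and then observes that this sequence sends $v$ to a nonzero multiple of the dominant extremal vector $v_{u_{\varpi_r}}$ --- a direct highest-weight argument whose correctness tacitly relies on the two structural facts you make explicit, namely that each weight space is one-dimensional and that the crystal $B(\varpi_r)$ is connected. You instead lead with those facts: one-dimensionality of weight spaces forces any nonzero submodule $N$ to contain a pure basis vector $v_b$, and since the Chevalley action traces exactly the crystal arrows (or annihilates), connectivity of $B(\varpi_r)$ immediately gives $N = \mathbb{V}(\varpi_r)$. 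Your version treats $e_i$ and $f_i$ symmetrically and dispenses with the maximal-length bookkeeping, at the small cost of spelling out the weight-multiplicity observation; the paper's version is more compressed and in the mold of a standard highest-weight argument, leaving the generation of $\mathbb{V}(\varpi_r)$ by the extremal vector implicit. Both arguments then close identically, by appealing to the uniqueness of the simple object of $\Ca_\g$ with dominant extremal weight $\varpi_r$.
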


\begin{proof}
Let $u_{\varpi_r} \in B(\varpi_r)$ be the unique element such that $e_i u_{\varpi_r} = 0$ for all $i \in I_0$.
For any vector
\[
v = \sum_{b \in B(\varpi_r)} \xi_b v_b  \in \mathbb{V}(\varpi_r)
\]
choose a maximal length sequence $(i_j \in I_0)_{j=1}^{\ell}$ such that there exists $b \in B(\varpi_i)$ that satisfies $\widetilde{e}_{i_1} \cdots \widetilde{e}_{i_{\ell}} b = u_{\clfw_r}$ and $\xi_b \neq 0$.
Therefore, we have
\[
e_{i_1} \cdots e_{i_{\ell}} v = \xi_b v_{u_{\varpi_r}},
\]
and hence $\mathbb{V}(\varpi_r)$ is simple.
Since $\mathbb{V}(\varpi_r)$ has a dominant extremal vector of weight $\varpi_r$, the claim follows.
\end{proof}

From Proposition~\ref{prop:minuscule_repr}, we obtain a new construction of the affine minuscule modules from~\cite{Chari01}.

\subsubsection{Module structure for affine adjoint representations}
\label{sec:adjoint_repr}

Suppose $\g$ is not of type $A_n^{(1)}$.
Suppose $r \in I_0$ is adjacent to $0$ in the Dynkin diagram of $\g$.
Then we say $V(\varpi_r)$ is a \defn{affine adjoint representation} since $V(\varpi_r) \iso V(\clfw_r) \oplus V(0)$ as $U_q(\g_0)$-modules and $V(\clfw_r)$ is the adjoint representation of $U_q(\g_0)$.

We require the statistics
\[
\varepsilon_i(b) = \max \{ k \mid \widetilde{e}_i^k b \neq 0 \},
\qquad\qquad
\varphi_i(b) = \max \{ k \mid \widetilde{f}_i^k b \neq 0 \}.
\]
Recall from~\cite{BFKL06}, we can identify
\[
B(\varpi_r) = \{x_{\beta} \mid \beta \in \Phi \} \sqcup \{ y_i \mid i \in I \}
\]
where the crystal structure is given by
\newcommand{\iarrow}{\xrightarrow[\hspace{20pt}]{i}}
\[
\begin{array}{c@{\hspace{10ex}}c}
x_{\beta} \iarrow x_{\beta-\alpha_i} & (\beta - \alpha_i \in \Phi),
\\
x_{\alpha_i} \iarrow y_i \iarrow x_{-\alpha_i},
\end{array}
\]
with $\alpha_0 = \theta$, the highest root of $\Phi^+$.
Note that $y_0$ was denoted by $\emptyset$ in~\cite{BFKL06}.
Let $i \sim j$ denote $a_{ij} \neq 0$ and $i \neq j$, \textit{i.e.}, the nodes $i$ and $j$ are adjacent in the Dynkin diagram.
We define a $U_q(\g)$-module $\mathbb{V}(\varpi_r) \seteq \ko \{ v_b \mid b \in B(\varpi_r) \}$ by
\begin{gather*}
e_i v_{b} = \begin{cases}
0 & \widetilde{e}_i b = \bzero, \\
[\varphi_i(b) + 1]_i v_{\widetilde{e}_i b} & \wt(\widetilde{e}_i b) \neq 0, \vspace{2pt} \\
\displaystyle v_{y_i} + \sum_{j \sim i} \frac{[-a_{ij}]_i}{[2]_j} v_{y_j} & \text{otherwise},
\end{cases}
\qquad
f_i v_{b} = \begin{cases}
0 & \widetilde{f}_i b = \bzero, \\
[\varepsilon_i(b) + 1]_i v_{\widetilde{f}_i b} & \wt(\widetilde{f}_i b) \neq 0, \vspace{2pt} \\
\displaystyle v_{y_i} + \sum_{j \sim i} \frac{[-a_{ij}]_i}{[2]_j} v_{y_j} & \text{otherwise},
\end{cases}
\\
K_i v_{b} = q_i^{\langle h_i, \wt(b) \rangle} v_{b}.
\end{gather*}
Note that $\varphi_i(b) + 1 = \varphi_i(\widetilde{e}_i b)$ and $\varepsilon_i(b) + 1 = \varepsilon_i(\widetilde{f}_i b)$.

\begin{proposition}
$\mathbb{V}(\varpi_r)$ is a $U_q'(\g)$-module.
\end{proposition}

\begin{proof}
For ease of notation, we denote $v_{\beta} := v_{x_{\beta}}$. We verify the relation
\[
e_i f_j - f_i e_i = \delta_{ij} \frac{K_i - K_i^{-1}}{q_i - q_i^{-1}}
\]
and leave the rest to the reader as the computation is similar.

We consider $\beta \in \PR$ and note that the relation holds for $\beta \in \Phi^-$ by the natural duality $e_i \leftrightarrow f_i$ and $v_{\alpha} \leftrightarrow v_{-\alpha}$.
If $\beta + \alpha_i - \alpha_j \notin \PR \sqcup \{0\}$, then $(e_i f_j - f_j e_i) v_{\beta} = 0$. Note that this occurs only if $i \neq j$, and so
\[
(e_i f_j - f_j e_i) v_{\beta} = 0 = \delta_{ij} \frac{K_i - K_i^{-1}}{q_i - q_i^{-1}} v_{\beta}
\]
as desired.
Henceforth we assume $\beta + \alpha_i - \alpha_j \notin \PR$ (note that it cannot be $0$ by height considerations and $\beta \in \PR$).
If $\beta - \alpha_j \notin \PR \sqcup \{ 0 \}$, then $\varphi_i(x_{\beta}) = 0$ and
\[
(e_i f_j - f_j e_i) v_{\beta} =
-\delta_{ij} [\varepsilon_i(x_{\beta})]_i [\varphi_i(x_{\beta}) + 1]_i  v_{\beta} =  -\delta_{ij} [\varepsilon_i(x_{\beta})]_i  v_{\beta}.
\]
Recall from~\cite{Kas90,Kas91} that $\varphi_i(b) - \varepsilon_i(b) = \langle h_i, \wt(b) \rangle$ for all $b \in B(\varpi_r)$. So we have
\begin{align*}
\delta_{ij} \frac{K_i - K_i^{-1}}{q_i - q_i^{-1}} v_{\beta}
& = \delta_{ij} \frac{q_i^{\langle h_i, \wt(b) \rangle} - q_i^{-\langle h_i, \wt(b) \rangle}}{q_i - q_i^{-1}} v_{\beta}
= \delta_{ij} \frac{q_i^{\varphi_i(x_{\beta})} q_i^{-\varepsilon_i(x_{\beta})} - q_i^{\varepsilon_i(x_{\beta})} q_i^{-\varphi_i(x_{\beta})}}{q_i - q_i^{-1}} v_{\beta}
\\ &= \delta_{ij} \frac{q_i^{-\varepsilon_i(x_{\beta})} - q_i^{\varepsilon_i(x_{\beta})}}{q_i - q_i^{-1}} v_{\beta}
= -\delta_{ij} [\varepsilon_i(x_{\beta})]_i  v_{\beta}.
\end{align*}
Similar if $\beta + \alpha_i \notin \PR \sqcup \{ 0 \}$.
Now suppose $\beta + \alpha_i, \beta - \alpha_j \in \PR \sqcup \{ 0 \}$. Therefore, we must either have $i = j$ or $a_{ij} = a_{ji} = 0$ by examining each irreducible finite root system.
Note that if $i \neq j$, then we have
\[
\varphi_i(\widetilde{f}_j x_{\beta}) = \varphi_i(x_{\beta}),
\qquad\qquad
\varepsilon_j(\widetilde{e}_i x_{\beta}) = \varepsilon_j(x_{\beta}),
\]
and by direct computation, we have
\[
(e_i f_j - f_j e_i) v_{\beta}
 = \Bigl( [\varepsilon_j(x_{\beta}) + 1]_j [\varphi_i(x_{\beta}) + 1]_i - [\varepsilon_j(x_{\beta}) + 1]_j [\varphi_i(x_{\beta}) + 1]_i \Bigr) v_{\beta - \alpha_j + \alpha_i}
 = 0
\]
as desired. Now we consider the case $i = j$, and so we have
 \begin{align*}
 (e_i f_i - f_i e_i) v_{\beta}
 & = \Bigl( [\varepsilon_i(x_{\beta}) + 1]_i [\varphi_i(x_{\beta})]_i - [\varepsilon_i(x_{\beta})]_i [\varphi_i(x_{\beta}) + 1]_i \Bigr)
\\ & = \frac{q_i^{\varphi_i(x_{\beta})} q_i^{-\varepsilon_i(x_{\beta})} - q_i^{\varepsilon_i(x_{\beta})} q_i^{-\varphi_i(x_{\beta})}}{q_i - q_i^{-1}} v_{\beta}
\\ & = \frac{q_i^{\langle h_i, \wt(b) \rangle} - q_i^{-\langle h_i, \wt(b) \rangle}}{q_i - q_i^{-1}} v_{\beta} = \delta_{ij} \frac{K_i - K_i^{-1}}{q_i - q_i^{-1}} v_{\beta}.
\end{align*}
Note that this also covers the case when $\beta = \alpha_i$ as $e_i v_{y_j} = f_i v_{y_i} = 0$ and $\varphi_i(x_{\alpha_i}) = 0$.

Now if we consider $v_{y_i}$, the computation is similar to the above except for when $i = j$, where we will have the extra contribution of
\[
\sum_{k \sim i} \frac{[-a_{ik}]_i}{[2]_k} v_{y_k} - \sum_{k \sim i} \frac{[-a_{ik}]_i}{[2]_k} v_{y_k} = 0.
\]
Thus the claim follows.
\end{proof}

\begin{remark}
We note that we can explicitly verify the specific $\mathbb{V}(\varpi_r)$, in the sense that it is a finite computation to show the relations hold, that we use in this paper are indeed $U_q'(\g)$-modules and do not require the general result.
\end{remark}

\begin{proposition}
\label{prop:adjoint_repr}
We have $\mathbb{V}(\varpi_r) \iso V(\varpi_r)$.
\end{proposition}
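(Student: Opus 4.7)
The strategy parallels Proposition~\ref{prop:minuscule_repr}: produce a dominant extremal vector of weight $\varpi_r$, prove simplicity, and invoke uniqueness of the fundamental module in $\Ca_\g$.

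Let $\theta$ denote the highest root of $\PR$ for $\g_0$, so that $\clfw_r = \theta$ since $r$ is the adjoint node. Because $\theta + \alpha_i \notin \Phi$ for every $i \in I_0$, we have $\widetilde{e}_i x_\theta = \bzero$, and therefore $e_i v_{x_\theta} = 0$ for $i \in I_0$ by the first case of the defining formula. Moreover $\cl(\theta) = \varpi_r$ in $P_\cl^0$ and $x_\theta$ is the unique element of $B(\varpi_r)$ of this weight, so $v_{x_\theta}$ is a dominant extremal vector of weight $\varpi_r$.

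For simplicity, let $v \in \mathbb{V}(\varpi_r)$ be a nonzero weight vector. If $\wt(v) \neq 0$, then since every nonzero weight space of $B(\varpi_r)$ is one-dimensional (indexed by a unique root), we have $v = \xi v_{x_\beta}$ for some $\beta \in \Phi$ and $\xi \in \ko^\times$; pick any sequence $(i_j)_{j=1}^\ell$ with $\widetilde{e}_{i_1} \cdots \widetilde{e}_{i_\ell} x_\beta = x_\theta$ (via connectivity of the $\g_0$-crystal on $\{x_\gamma\}$), and note that each intermediate element has nonzero weight, so the middle case of the definition applies with nonzero quantum-integer scalars $[\varphi_{i_j}(\cdots)+1]_{i_j}$, yielding $e_{i_1} \cdots e_{i_\ell} v \in \ko^\times v_{x_\theta}$. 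If instead $\wt(v) = 0$, write $v = \sum_{i \in I} \xi_i v_{y_i}$ and pick $i$ with $\xi_i \neq 0$; the identities $\widetilde{e}_i y_j = \bzero$ for $j \neq i$ and $\widetilde{e}_i y_i = x_{\alpha_i}$ (nonzero weight) give $e_i v = [2]_i \xi_i v_{x_{\alpha_i}}$, reducing to the previous case. Hence any nonzero $U_q'(\g)$-submodule contains $v_{x_\theta}$. Conversely, $v_{x_\theta}$ generates $\mathbb{V}(\varpi_r)$: the $U_q(\g_0)$-orbit reaches every $v_{x_\beta}$ for $\beta \in \Phi$ and produces $|I_0|$ vectors $f_i v_{x_{\alpha_i}} = v_{y_i} + \sum_{j \sim i} \tfrac{[-a_{ij}]_i}{[2]_j} v_{y_j}$; the remaining direction $v_{y_0}$ is reached by $f_0 v_{x_\theta} = v_{y_0} + \sum_{j \sim 0} \tfrac{[-a_{0j}]_0}{[2]_j} v_{y_j}$ (using $x_\theta = x_{\alpha_0}$ and $\widetilde{f}_0 x_{\alpha_0} = y_0$), and the resulting $|I| \times |I|$ matrix of coefficients on the basis $\{v_{y_j}\}_{j \in I}$ is invertible. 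Thus $\mathbb{V}(\varpi_r)$ is simple.

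Finally, since $\mathbb{V}(\varpi_r)$ is a simple object of $\Ca_\g$ with dominant extremal vector of weight $\varpi_r$ and crystal $B(\varpi_r)$ built in by construction --- which is the crystal of the fundamental module --- the uniqueness of the fundamental representation yields $\mathbb{V}(\varpi_r) \iso V(\varpi_r)$. The main technical obstacle is verifying the invertibility of the $|I| \times |I|$ matrix with diagonal $1$'s and off-diagonal entries $\tfrac{[-a_{ij}]_i}{[2]_j}$ (for $i \sim j$) encoding the expansion of the $f_i v_{x_{\alpha_i}}$'s in the $\{v_{y_j}\}$ basis; this guarantees that the zero weight space is spanned by the $U_q'(\g)$-orbit of $v_{x_\theta}$. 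While invertibility should hold generically as a Laurent-polynomial identity over $\Z$, a type-by-type verification is likely needed for the exceptional cases treated in this paper.
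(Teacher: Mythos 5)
Your overall strategy matches the paper's: exhibit $v_{x_\theta}$ as the dominant extremal vector, prove simplicity, invoke uniqueness of the fundamental module. You also correctly put your finger on the subtlety the paper's proof addresses, namely the $|I|$-dimensional zero weight space. Two issues, however.

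First, the sentence ``pick any sequence $(i_j)$ with $\widetilde{e}_{i_1}\cdots\widetilde{e}_{i_\ell}x_\beta=x_\theta$ via connectivity of the $\g_0$-crystal on $\{x_\gamma\}$, and note that each intermediate element has nonzero weight'' is false when $\beta$ is a negative root. Since differences of distinct simple roots are never roots, the only $\widetilde{e}$-arrow out of $x_{-\alpha_i}$ is $\widetilde{e}_i x_{-\alpha_i}=y_i$, which has weight $0$; thus $\{x_\gamma : \gamma\in\Phi\}$ is not closed under the Kashiwara operators and the positive and negative root vectors sit in different components of the induced subgraph. Your conclusion $e_{i_1}\cdots e_{i_\ell}v\in\ko^\times v_{x_\theta}$ is still correct, but the justification must track the two-step passage through weight $0$: $e_i v_{x_{-\alpha_i}}=v_{y_i}+\sum_{j\sim i}\tfrac{[-a_{ij}]_i}{[2]_j}v_{y_j}$, and then $e_i$ of that equals $[2]_iv_{x_{\alpha_i}}$ because $\widetilde{e}_iy_j=\bzero$ for $j\ne i$ and $[\varphi_i(y_i)+1]_i=[2]_i$. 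Replace the claim of nonzero intermediate weights with this explicit computation.

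Second, you are right to flag the invertibility of the $|I|\times|I|$ matrix $M$ with $M_{ii}=1$ and $M_{ij}=\tfrac{[-a_{ij}]_i}{[2]_j}$ for $i\sim j$: the zero-weight part of the submodule $N$ generated by $v_{x_\theta}$ is exactly $\operatorname{span}\{f_iv_{x_{\alpha_i}} : i\in I\}$, so simplicity of $\mathbb{V}(\varpi_r)$ is literally equivalent to invertibility of $M$. The paper's printed proof does not verify this either; it records an expansion for $f_iv$ with $v=\sum_i\xi_iv_{\alpha_i}$ that, as written, carries a spurious $\sum_{j\sim i}\xi_j\tfrac{[-a_{ji}]_j}{[2]_i}$ contribution to $v_{y_i}$ — but $f_iv_{x_{\alpha_j}}=0$ for $j\ne i$ since $\alpha_j-\alpha_i\notin\Phi\cup\{0\}$, so only the conclusion ``$f_iv\ne0$ when $\xi_i\ne0$'' is recoverable, which is strictly weaker than invertibility of $M$. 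You can close the gap without any matrix computation at all: from your first half, every nonzero submodule contains $v_{x_\theta}$, so $N$ is the unique minimal nonzero submodule, hence irreducible, hence $N\iso V(\varpi_r)$ by uniqueness; but $\dim\mathbb{V}(\varpi_r)=|B(\varpi_r)|=|\Phi|+|I|=\dim V(\clfw_r)+1=\dim V(\varpi_r)$ by the classical decomposition stated at the start of Section~\ref{sec:adjoint_repr}, forcing $N=\mathbb{V}(\varpi_r)$. If you prefer to verify invertibility directly, the simply-laced affine cases are clean, since then $M=I+[2]^{-1}A$ with $A$ the adjacency matrix of the affine Dynkin diagram, and $\det([2]I+A)\neq 0$ because the eigenvalues of $A$ are algebraic numbers in $[-2,2]$ while $q+q^{-1}$ is transcendental over $\Q$; the non-simply-laced cases need an analogous (or finite) check, as you anticipate.
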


\begin{proof}
The proof is similar to Proposition~\ref{prop:minuscule_repr} except we have to consider when the result of applying $e_i$ or $f_i$ to a weight vector results in the $0$ weight space. By the weight space decomposition of $\mathbb{V}(\varpi_r)$ and the proof of Proposition~\ref{prop:minuscule_repr}, it is sufficient to consider vectors of the form
\[
v = \sum_{i \in I} \xi_i v_{\alpha_i} \in \mathbb{V}(\varpi_r)
\]
and show that $f_i v \neq 0$ for all $i \in I$ as the result for $e_i v$ is similar. However, we have
\[
f_i v = \left(\xi_i + \sum_{j \sim i} \xi_j \frac{[-a_{ji}]_j}{[2]_i} \right) v_{y_i} + \xi_i \left( \sum_{j \sim i} \frac{[-a_{ij}]_i}{[2]_j} \right) v_{y_j},
\]
which is non-zero if $\xi_i \neq 0$. Hence, $v \neq 0$ if and only if there exists an $i \in I$ such that $f_i v \neq 0$. Hence $\mathbb{V}(\varpi_r)$ is simple.
\end{proof}

From Proposition~\ref{prop:adjoint_repr}, we obtain a new construction of the affine adjoint modules from~\cite{Chari01}.

\subsection{\texorpdfstring{$R$}{R}-matrices}

In this subsection, we recall the notion of $R$-matrices for quantum affine algebras following~\cite[\S 8]{Kas02}.
For $M, N \in \Ca_\g$, there is a morphism of
$\ko[[z_N/z_M]] \otimes _{\ko[z_N/z_M]} \ko[z_M^{\pm 1},z_N^{\pm 1}] \otimes  U_q'(\g)$-modules, denoted by  $\Runiv{M_{z_M},N_{z_N}}$ and  called the
\defn{universal $R$-matrix}:
\begin{align}
\Runiv{M_{z_M},N_{z_N}} \colon \ko[[z_N/z_M]] \otimes _{\ko[z_N/z_M]} (M_{z_M} \otimes  N_{z_N}) \to \ko[[z_N/z_M]] \otimes _{\ko[z_N/z_M]} (N_{z_N}\otimes  M_{z_M}).
\end{align}

We say that $\Runiv{M_{z_M},N_{z_N}}$ is \defn{rationally renormalizable} if there exist $a \in \ko(z_N/z_M)$ and a $\ko[z_M^{\pm 1},z_N^{\pm 1}] \otimes  U_q'(\g)$-module homomorphism
\[
\Rren{M_{z_M},N_{z_N}} \colon M_{z_M} \otimes  N_{z_N} \to N_{z_N} \otimes  M_{z_M}
\]
such that $\Rren{M_{z_M},N_{z_N}} = a\Runiv{M_{z_M},N_{z_N}}$.
Then we can choose $\Rren{M_{z_M},N_{z_N}}$ so that for any $a_M, a_N \in \ko^\times$, the specialization of $\Rren{M_{z_M},N_{z_N}}$ at $z_M=a_M$, $z_N=a_N$,
\begin{align} \label{eq: renormalized R-matrix}
\Rren{M_{z_M},N_{z_N}}|_{z_M=a_M,z_N=a_N} \colon M_{a_M} \otimes N_{a_N} \to N_{a_N} \otimes M_{a_M}
\end{align}
does not vanish provided that $M$ and $N$ are non-zero $U_q'(\g)$-modules in $\Ca_\g$. Such an $\Rren{}$ is unique up to $\ko[(z_M/z_N)^{\pm 1}]^\times =
\bigsqcup\limits_{n \in \Z}\ko^\times z_M^{n}z_N^{-n}$, and it is called a \defn{renormalized $R$-matrix}. For more details, we refer the reader to~\cite{EFK98}.

We denote by
\[
\rmat{M,N} \seteq \Rren{M_{z_M},N_{z_N}}|_{z_M=1,z_N=1} \colon M \otimes  N \to N \otimes  M
\]
and call it the \defn{$R$-matrix}.
By definition $ \rmat{M,N}$ never vanishes.
When $M = V(\varpi_i)$ and $N = V(\varpi_j)$, we simply denote
\[
\rmat{i,j}(z_{V(\varpi_i)}, z_{V(\varpi_j)}) \seteq \Rren{V(\varpi_i)_{z_{V(\varpi_i)}},V(\varpi_j)_{z_{V(\varpi_y)}}}.
\]

For simple $U_q'(\g)$-modules $M$ and $N$ in $\Ca_\g$, the universal $R$-matrix $\Runiv{M_{z_M},N_{z_N}}$ is rationally renormalizable. Then, for
dominant extremal weight vectors $u_M$ and $u_N$ of $M$ and $N$, there exists $a_{M,N}(z_N/z_M) \in \ko[[z_N/z_M]]^\times$ such that
\begin{align}\label{eq: aMN}
\Runiv{M_{z_M},N_{z_N}} \big( (u_M)_{z_M} \otimes  (u_N)_{z_N}) \big) =  a_{M,N}(z_N/z_M) \big( (u_N)_{z_N} \otimes  (u_M)_{z_M}) \big).
\end{align}
Then the map $\Rnorm{M_{z_M},N_{z_N}} \seteq a_{M,N}(z_N/z_M)^{-1}\Runiv{M_{z_M},N_{z_N}}$ is the unique $\ko(z_M,z_N) \otimes _{\ko[z_M^{\pm1},z_N^{\pm1}]} U_q'(\g)$-module homomorphism sending
$\big( (u_M)_{z_M} \otimes  (u_N)_{z_N} \big)$ to $\big( (u_N)_{z_N} \otimes  (u_M)_{z_M} \big)$.
%
It is known that $\ko(z_M,z_N) \otimes _{\ko[z_M^{\pm1},z_N^{\pm1}]} ( M_{z_M} \otimes  N_{z_N})$ is simple
$\ko(z_M,z_N) \otimes _{\ko[z_M^{\pm1},z_N^{\pm1}]} U_q'(\g)$-module (\cite[Proposition 9.5]{Kas02}). We call
$\Rnorm{M_{z_M},N_{z_N}}$ the \defn{normalized $R$-matrix}.

Let us denote by $d_{M,N}(u) \in \ko[u]$ a monic polynomial of the smallest degree such that the image
$d_{M,N}(z_N/z_M)\Rnorm{M_{z_M},N_{z_N}}$ is contained in $N_{z_N} \otimes  M_{z_M}$. We call $d_{M,N}$
the \defn{denominator} of $\Rnorm{M_{z_M},N_{z_N}}$. Then,
\[
d_{M,N}(z_N/z_M)\Rnorm{M_{z_M},N_{z_N}} \colon M_{z_M} \otimes  N_{z_N} \to N_{z_N} \otimes  M_{z_M}
\]
is a renormalized $R$-matrix, and
\[
\rmat{M,N} \colon M \otimes  N \to N \otimes  M
\]
is equal to $d_{M,N}(z_N/z_M)\Rnorm{M_{z_M},N_{z_N}}|_{z_M=1,z_N=1}$ up to a constant multiple.

Note that the following diagram  is commutative
%
\begin{align}\label{eq: Runiv property}
&\xymatrix@C=15ex{
M \otimes  M' \otimes  N\ar[r]_{M \otimes  R^\univ_{M_2,N}}\ar@/^2pc/[rr]^
{R^\univ_{M\otimes  M', N}}
&M\otimes  N\otimes  M' \ar[r]_{R^\univ_{M,N} \otimes  M'}&
N \otimes  M \otimes  M',
}
\end{align}
for $M,M',N  \in \Ca_\g$. Hence, if $\Runiv{{M}_{z_{M}},N_{z_N}}$ and $\Runiv{{M'}_{z_{M'}},N_{z_N}}$ are rationally renormalizable, then $\Runiv{(M \otimes  M')_z,N_{z_N}}$ is also.
In particular, if $M$, $M'$ and $N$ are simple modules in $\Ca_\g$, then
$R^\univ_{(M \otimes  M')_{z_1},N_{z_2}}$ is  rationally renormalizable.


\begin{lemma}[{\cite[Lemma 7.3]{KO17}}]
\label{lem:simplepole}
Let $U_q'(\g)$ be a quantum affine algebra, and let $V$ and $W$ be  good $U_q'(\g)$-modules.
If the normalized $R$-matrix $\Rnorm{V,W}(z)$ has a simple pole at $z=a$ for some $a \in \ko^\times$, then we have
\[
\Image \big( (z-a) \Rnorm{V,W} |_{z=a} \big) = \operatorname{Ker} \big( \Rnorm{W,V} |_{z=a} \big).
\]
Moreover, the tensor product $V \otimes W_a$  is of composition length $2$.
\end{lemma}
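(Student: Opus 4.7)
The plan is to first establish the easy inclusion $\Image\bigl((z-a)\Rnorm{V,W}|_{z=a}\bigr) \subseteq \operatorname{Ker}\bigl(\Rnorm{W,V}|_{z=a}\bigr)$ directly from the normalization identity of the $R$-matrices, and then invoke the structure theory of tensor products of good modules to promote this inclusion to an equality together with the composition-length statement. For brevity, write $r \seteq \bigl((z-a)\Rnorm{V,W}(z)\bigr)|_{z=a} \colon V_a \otimes W \to W \otimes V_a$ and $r' \seteq \Rnorm{W,V}(z)|_{z=a} \colon W \otimes V_a \to V_a \otimes W$; both are nonzero $U_q'(\g)$-module homomorphisms---the former because the pole at $z=a$ is simple, the latter because the normalization forces $\Rnorm{W,V}(z)$ to send the dominant extremal vector to its counterpart at every $z$, including $z = a$.

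For the inclusion, my first step is to exploit the identity
\[
\Rnorm{W,V}(z) \circ \Rnorm{V,W}(z) = \id_{V_z \otimes W},
\]
valid as rational functions in $z$. This holds because both $R$-matrices act as the identity on the dominant extremal weight vector and because the generic fiber $V_z \otimes W$ is an irreducible module over $\ko(z) \otimes_{\ko[z^{\pm 1}]} U_q'(\g)$ with scalar endomorphism ring, per~\cite{Kas02}. Multiplying by $(z-a)$ and specializing at $z = a$ yields $r' \circ r = 0$; symmetrically, $r \circ r' = 0$, so $\Image(r) \subseteq \operatorname{Ker}(r')$ inside $W \otimes V_a$.

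The main obstacle is the reverse inclusion together with the length-$2$ assertion, for which I would invoke the structure theory of tensor products of good modules from~\cite{Kas02}. Each of $V_a \otimes W$ and $W \otimes V_a$ admits a unique simple head and a unique simple socle, and the $R$-matrices realize the canonical identifications $\Image(r) \iso \hd(V_a \otimes W) \iso \soc(W \otimes V_a)$ and $\Image(r') \iso \hd(W \otimes V_a) \iso \soc(V_a \otimes W)$. The decisive ingredient is the simple-pole hypothesis: in the good-module setting, the pole order of the normalized $R$-matrix at $z = a$ controls how many composition factors of $V_a \otimes W$ lie strictly above the socle, so a simple pole forces precisely two composition factors. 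Once composition length $2$ is in hand, $\operatorname{Ker}(r')$ is the unique maximal proper submodule of $W \otimes V_a$ (because $(W \otimes V_a)/\operatorname{Ker}(r') \iso \Image(r') \iso \hd(W \otimes V_a)$ is simple), and in a length-$2$ module this maximal submodule coincides with the simple socle, which is $\Image(r)$, completing the equality.
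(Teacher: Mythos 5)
The paper does not prove this lemma itself (it cites~\cite[Lemma~7.3]{KO17}), so I evaluate your argument on its own merits.

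Your first step is sound: from $\Rnorm{W,V}(z)\circ\Rnorm{V,W}(z)=\id$ (valid by irreducibility of the generic fiber), multiplying by $(z-a)$ and specializing gives $r'\circ r=0$ and $r\circ r'=0$, hence $\Image(r)\subseteq\operatorname{Ker}(r')$. The identifications $\Image(r)\iso\soc(W\otimes V_a)$ and $\Image(r')\iso\hd(W\otimes V_a)$ via the renormalized $R$-matrix are also legitimate invocations of the structure theory of two-fold tensor products of good modules.

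The gap is the assertion that ``the pole order of the normalized $R$-matrix at $z=a$ controls how many composition factors of $V_a\otimes W$ lie strictly above the socle, so a simple pole forces precisely two composition factors.'' This is not a citable fact from~\cite{Kas02} or elsewhere; it is essentially the conclusion of the lemma, so invoking it here is circular. Your plan proves the displayed equality \emph{from} composition length two, but you never actually establish composition length two. What is missing is the second-order information extracted from the inverse relation: differentiating the identity
\[
(z-a)\,\id \;=\; \bigl[(z-a)\Rnorm{V,W}(z)\bigr]\circ \Rnorm{W,V}(z)
\]
and specializing at $z=a$ yields $\id = s\circ r' + r\circ t$ for suitable $U_q'(\g)$-linear maps $s,t$ (derivatives of the respective factors at $z=a$). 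Applying this to any $w\in\operatorname{Ker}(r')$ gives $w=r(t(w))$, whence $\operatorname{Ker}(r')\subseteq\Image(r)$. Combined with your inclusion this yields the equality; then $\Image(r)$ is the simple socle and $(W\otimes V_a)/\operatorname{Ker}(r')\iso\Image(r')$ is the simple head, so length two follows \emph{as a consequence}. Without this derivative step (which is precisely where the simplicity of the pole is used, since a higher-order pole would not produce the identity on the right-hand side after a single differentiation), the argument does not close.
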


\begin{lemma}[{\cite[Lemma C.15]{AK97}}] \label{lem:dvw avw}
Let $V', V'', V$ and $W$ be simple $U_q'(\g)$-modules in $\Ca_\g$. Assume that we have a surjective $U_q'(\g)$-homomorphism
\[
V'\otimes V'' \twoheadrightarrow V.
\]
Then we have
  \begin{align*}
   \dfrac{ d_{W,V'}(z) d_{W,V''}(z) a_{W,V}(z)}{d_{W,V}(z)a_{W,V'}(z)a_{W,V''}(z)}
   \quad \text{and} &\quad
   \dfrac{ d_{V',W}(z) d_{V'',W}(z) a_{V,W}(z)}{d_{V,W}(z)a_{V',W}(z)a_{V'',W}(z)} \in \ko[z^{\pm 1}].
  \end{align*}
\end{lemma}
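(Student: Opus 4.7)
The plan is to prove both divisibility claims by combining the naturality of the universal $R$-matrix with the tensor product decomposition~\eqref{eq: Runiv property}. I would treat the first ratio explicitly; the second follows by the entirely symmetric argument applied to $\Runiv{V' \otimes V'', W}$ using naturality in the first slot along $p \colon V' \otimes V'' \twoheadrightarrow V$.

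First I would combine the slot-swapped analogue of~\eqref{eq: Runiv property}, namely $\Runiv{W, V' \otimes V''} = (\id_{V'} \otimes \Runiv{W, V''}) \circ (\Runiv{W, V'} \otimes \id_{V''})$, with the naturality of $\Runiv{\cdot,\cdot}$ along $p$ to produce the identity
\[
(p \otimes \id_W) \circ (\id_{V'} \otimes \Runiv{W, V''}) \circ (\Runiv{W, V'} \otimes \id_{V''}) = \Runiv{W, V} \circ (\id_W \otimes p)
\]
valid after affinization of $W$. Next, multiplying both sides by $\frac{d_{W, V'}(z) d_{W, V''}(z)}{a_{W, V'}(z) a_{W, V''}(z)}$ and using $\Runiv{M, N} = a_{M, N}(z) \Rnorm{M, N}$ turns the left-hand side into the composition of the $\ko[z^{\pm 1}]$-linear maps $d_{W, V'}(z) \Rnorm{W, V'} \otimes \id_{V''}$, $\id_{V'} \otimes d_{W, V''}(z) \Rnorm{W, V''}$, and $p \otimes \id_W$, each regular by the definitions of the respective denominators. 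The right-hand side then equals $\frac{d_{W, V'}(z) d_{W, V''}(z) a_{W, V}(z)}{a_{W, V'}(z) a_{W, V''}(z)} \Rnorm{W, V} \circ (\id_W \otimes p)$, so this composite is also regular.

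Since the affinized modules are finite-rank free $\ko[z^{\pm 1}]$-modules and $\ko[z^{\pm 1}]$ is a principal ideal domain, the surjection $\id_W \otimes p$ splits as a morphism of $\ko[z^{\pm 1}]$-modules, providing a $\ko[z^{\pm 1}]$-linear section $s$. Post-composing with $s$ then cancels $\id_W \otimes p$ and shows that $\frac{d_{W, V'}(z) d_{W, V''}(z) a_{W, V}(z)}{a_{W, V'}(z) a_{W, V''}(z)} \Rnorm{W, V}$ is regular. Evaluating at the dominant extremal vector $u_W \otimes u_V$, on which $\Rnorm{W, V}$ returns $u_V \otimes u_W$, shows that the scalar factor is itself a Laurent polynomial in $z$. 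The minimality characterization of $d_{W, V}(z)$ then forces $d_{W, V}(z)$ to divide this scalar in $\ko[z^{\pm 1}]$, which is precisely the asserted membership.

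The main obstacle will be the careful bookkeeping among the three relevant rings (the formal power series completion in which $\Runiv{\cdot,\cdot}$ is originally defined, the rational completion in which $\Rnorm{\cdot,\cdot}$ lives, and the Laurent polynomial ring in which $d_{\cdot,\cdot}(z) \Rnorm{\cdot,\cdot}$ becomes regular) together with the justification of naturality of $\Runiv{\cdot,\cdot}$ along $p$ in this formal setting. Once this is handled, the existence of the required section (merely $\ko[z^{\pm 1}]$-linear, not $U_q'(\g)$-linear) is routine.
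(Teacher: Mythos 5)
Your proof is correct, and since the paper does not prove this lemma but cites it from~\cite[Lemma~C.15]{AK97}, your argument reproduces the expected one: combine the hexagon identity for $\Runiv{}$ (in the slot indicated) with naturality along $p$, clear poles with $d_{W,V'}d_{W,V''}$, choose a $\ko[z^{\pm1}]$-linear section of the surjection to obtain regularity of the scalar multiple of $\Rnorm{W,V}$, read off membership of the scalar in $\ko[z^{\pm1}]$ on dominant extremal vectors, and conclude divisibility by $d_{W,V}$ from minimality. One small remark on the justifications: the splitting of $\id_W\otimes p$ already follows from freeness (hence projectivity) of $W\otimes V_z$ as a $\ko[z^{\pm1}]$-module rather than from $\ko[z^{\pm1}]$ being a PID, whereas the final ``minimality forces divisibility'' step is the place that genuinely uses that $\{f\in\ko[z^{\pm1}]: f\cdot\Rnorm{W,V}\text{ is regular}\}$ is a principal ideal with monic, nonvanishing-at-$0$ generator $d_{W,V}$.
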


For $i,j \in I_0$, we denote by $d_{i,j}(z)$ by the denominator $d_{V(\varpi_i),V(\varpi_j)}$ between the fundamental representations $V(\varpi_i)$ and $V(\varpi_j)$.
We write
\[
d_{i,j}(z)=\prod_{\nu} (z-x_\nu) \qtext{ and } d_{i^*,j}(z)=\prod_{\nu} (z-y_\nu).
\]

For rational functions $f,g \in \ko(z)$, we write $f \equiv g$ if there exists an element $a \in \ko[z^{\pm 1}]^\times$ such that $f = ag$.

\begin{lemma}[{\cite{AK97}}]
\label{Lem: aij and dij}
For $k,l \in I_0$, we have
\begin{subequations}
\label{eq: aij and dij}
\begin{align}
a_{k,l}(z)a_{k^*,l}((p^*)^{-1}z) & \equiv
\dfrac{d_{k,l}(z)}{d_{k^*,l}(p^*z^{-1})},\\
a_{k,l}(z) & = q^{(\varpi_k , \varpi_l)}
 \prod_{\nu}\dfrac{(p^* y_\nu z; p^{*2})_\infty
 (p^* y_\nu^{-1} z; p^{*2})_\infty}
  {(x_\nu z; p^{*2})_\infty (p^{*2} x_\nu^{-1} z; p^{*2})_\infty},
\end{align}
\end{subequations}
where $(z;x)_\infty= \prod_{t=0}^\infty (1-x^tz)$ and $a_{k,l}(z) \seteq a_{V(\varpi_k),V(\varpi_l)}$ in~\eqref{eq: aMN}.
\end{lemma}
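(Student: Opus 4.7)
The two identities are closely linked: (a) is a functional equation for $a_{k,l}(z)$ coming from rigidity, and (b) is the explicit infinite-product solution of this equation normalized by its value at $z=0$.

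\textbf{Step 1 (establishing (a)).} The plan is to exploit the rigidity of $\Ca_\g$. Since $V(\varpi_k)^* \iso V(\varpi_{k^*})_{(p^*)^{-1}}$ and ${}^*V(\varpi_k) \iso V(\varpi_{k^*})_{p^*}$, there exist canonical evaluation and coevaluation morphisms relating $V(\varpi_k)$ and $V(\varpi_{k^*})_{(p^*)^{\pm 1}}$ to the trivial module $\mathbf{1}$. Apply the hexagon identity~\eqref{eq: Runiv property} with $M = V(\varpi_k)$, $M' = V(\varpi_{k^*})_{(p^*)^{-1}z}$ and $N = V(\varpi_l)_w$ (for formal variables $z,w$). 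Composing with the evaluation map kills the factor $V(\varpi_k)\otimes V(\varpi_{k^*})_{(p^*)^{-1}}$, so the resulting endomorphism of $V(\varpi_l)_w$ is a scalar (as $V(\varpi_l)$ is simple). Compute this scalar in two ways: on one side it is $a_{k,l}(w/z) \, a_{k^*,l}((p^*)^{-1} w/z)$ from~\eqref{eq: aMN} applied to dominant extremal vectors; on the other side, the poles and zeros are determined by the denominators $d_{k,l}$ and $d_{k^*,l}$ arising from the normalized $R$-matrices through the dual pairing, with the shift $p^* z^{-1}$ dictated precisely by the duality isomorphism. Lemma~\ref{lem:dvw avw}, applied to the surjection $V(\varpi_k)\otimes V(\varpi_{k^*})_{(p^*)^{-1}}\twoheadrightarrow \mathbf{1}$, guarantees that the ratio lies in $\ko[z^{\pm 1}]^\times$, giving the ``$\equiv$'' in (a).

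\textbf{Step 2 (deriving (b)).} By the definition~\eqref{eq: aMN}, $a_{k,l}(z) \in \ko[[z]]^\times$; its value at $z=0$ is dictated by the diagonal (Cartan) part of $R^\univ$, namely $q^{(\varpi_k,\varpi_l)}$ from the action of the $K$-matrix factor on $u_{\varpi_k}\otimes u_{\varpi_l}$. Combined with~(a), this gives a first-order $q$-difference equation (shift $z \mapsto (p^*)^{-1} z$) with initial condition $a_{k,l}(0) = q^{(\varpi_k,\varpi_l)}$, which has a unique formal solution. Using the elementary identity $(x;p^{*2})_\infty = (1-x)(p^{*2}x;p^{*2})_\infty$ and factoring $d_{k,l}(z)=\prod_\nu(z-x_\nu)$, $d_{k^*,l}(p^*z^{-1})=\prod_\nu(p^*z^{-1}-y_\nu)$, one verifies directly that the advertised infinite product in (b) satisfies both the functional equation in (a) and the initial condition. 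Uniqueness of the solution yields the explicit formula.

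\textbf{Main obstacle.} The delicate point is Step~1: precisely matching the $p^*$-shifts coming from left/right duality against the locations of zeros and poles of $R^\norm_{k,l}$ and $R^\norm_{k^*,l}$, and checking that no spurious factor appears when collapsing the dual pairing inside the commutative diagram~\eqref{eq: Runiv property}. The tools are rigidity of $\Ca_\g$, Lemma~\ref{lem:dvw avw}, and the fact that $\ko(z_M,z_N)\otimes_{\ko[z_M^{\pm 1},z_N^{\pm 1}]}(M_{z_M}\otimes N_{z_N})$ is simple, which pins the scalar in Step~1 uniquely. Step~2 is then a formal manipulation of $q$-Pochhammer symbols once Step~1 is in hand.
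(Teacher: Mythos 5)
The paper does not prove this lemma; it is stated with a citation to Akasaka--Kashiwara, so your proposal is effectively a reconstruction of the argument in that reference. The broad strategy --- establish the duality relation~(a) using rigidity and the hexagon, then treat~(a) as a $q$-difference equation and solve it with the initial condition $a_{k,l}(0)=q^{(\varpi_k,\varpi_l)}$ --- is indeed the right one, and you correctly identify the formal-manipulation nature of Step~2. However, there are two places where the sketch as written has real gaps rather than just missing detail.

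First, you claim that Lemma~\ref{lem:dvw avw}, applied to the surjection $V(\varpi_k)\otimes V(\varpi_{k^*})_{(p^*)^{-1}}\twoheadrightarrow \mathbf{1}$, ``guarantees that the ratio lies in $\ko[z^{\pm 1}]^\times$.'' That lemma produces membership in $\ko[z^{\pm 1}]$ only, i.e., one direction of divisibility. To upgrade to an equality up to units you also need the reverse divisibility, which requires either the coevaluation $\mathbf{1}\hookrightarrow V(\varpi_{k^*})_{p^*}\otimes V(\varpi_k)$ together with the companion version of that lemma, or some input like the unitarity relation between $R^\norm_{M,N}$ and $R^\norm_{N,M}$ plus $d_{k,l}=d_{l,k}$ from Theorem~\ref{Thm: basic properties}(4). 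Moreover, the specific shape of the right-hand side in~(a) --- the appearance of $d_{k^*,l}(p^*z^{-1})$ with the argument inverted --- comes precisely from the interplay between the left and right duals ($p^*$ versus $(p^*)^{-1}$) and between $R^\norm_{M,N}(z)$ and $R^\norm_{N,M}(z^{-1})$; a single application of Lemma~\ref{lem:dvw avw} only produces factors with the ``untwisted'' argument $(p^*)^{-1}z$, so this inversion needs to be accounted for explicitly. You flag this as ``the delicate point'' without actually closing it.

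Second, in Step~2 you describe the recursion as a ``first-order $q$-difference equation (shift $z\mapsto (p^*)^{-1}z$).'' That is not what~(a) gives: (a) relates $a_{k,l}(z)$ to the \emph{different} function $a_{k^*,l}((p^*)^{-1}z)$. To obtain a closed recursion for $a_{k,l}$ alone you must compose~(a) with its $k\leftrightarrow k^*$ instance, yielding $a_{k,l}(z)/a_{k,l}((p^*)^{-2}z)$ in terms of denominators, so the effective shift is $z\mapsto(p^*)^{-2}z$. This is consistent with the base $p^{*2}$ in the $q$-Pochhammer symbols of~(b), and it is also why the product in~(b) simultaneously involves both the roots $x_\nu$ of $d_{k,l}$ and the roots $y_\nu$ of $d_{k^*,l}$. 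Once these two points are addressed, the remainder of Step~2 (uniqueness of the formal solution, verification of the product formula via $(x;p^{*2})_\infty = (1-x)(p^{*2}x;p^{*2})_\infty$) goes through as you describe.
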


\subsection{Denominator formulas and folded AR quiver} In this subsection, we shall review the relationship the folded AR quiver
with the denominator formulas between fundamental representations. We start this subsection with the following theorem telling that
the denominator formulas between fundamental representations play an important roles of representation theory on $\Ca_\g$.

\begin{theorem}[{\cite{AK97,Chari02,Kas02,VV02} (see also~\cite[Theorem 2.2.1]{KKK13A})}]
\label{Thm: basic properties}
\hfill
\begin{enumerate}
\item[{\rm (1)}] For good modules $M,N$, the zeros of $d_{M,N}(z)$ belong to
$\C[[q^{1/m}]]q^{1/m}$ for some $m\in\Z_{>0}$.
\item[{\rm (2)}] Let $M_k$ be a good module
with a dominant extremal vector $u_k$ of weight $\lambda_k$, and
$a_k\in\mathbf{k}^\times$ for $k=1,\ldots, t$.
Assume that $a_j/a_i$ is not a zero of $d_{M_i, M_j}(z) $ for any
$1\le i<j\le t$. Then the following statements hold.
\begin{enumerate}
\item[{\rm (i)}]
 $(M_1)_{a_1}\otimes\cdots\otimes (M_t)_{a_t}$ is generated by
$u_1\otimes\cdots \otimes u_t$.
\item[{\rm (ii)}] The head of
$(M_1)_{a_1}\otimes\cdots\otimes (M_t)_{a_t}$ is simple.
\item[{\rm (iii)}] Any non-zero
submodule of $(M_t)_{a_t}\otimes\cdots\otimes (M_1)_{a_1}$ contains the vector $u_t\otimes\cdots\otimes u_1$.
\item[{\rm (iv)}] The socle of $(M_t)_{a_t}\otimes\cdots\otimes (M_1)_{a_1}$
is simple.
\item[{\rm (v)}]
 Let
$\rmat{} \colon (M_1)_{a_1}\otimes\cdots\otimes (M_t)_{a_t}
\to (M_t)_{a_t}\otimes\cdots\otimes (M_1)_{a_1}$
 be  the specialization of $R^{{\rm norm}}_{M_1,\ldots, M_t}$
at $z_k=a_k$.  Then the image of $\rmat{}$ is simple and
it coincides with the head of
$(M_1)_{a_1}\otimes\cdots\otimes (M_t)_{a_t}$
and also with the socle of $(M_t)_{a_t}\otimes\cdots\otimes (M_1)_{a_1}$.
\end{enumerate}
\item[{\rm (3)}] For a simple integrable $U_q'(\g)$-module M, there exists a unique
finite sequence $\big((i_k,a_k) \in I_0 \times \ko^{\times}\bigr)_{k=1}^t$ (up to permutation)
such that $d_{i_k,i_{k'}}(a_{k'}/a_k)\not=0$ for $1\le k<k'\le t$ and
\[
M \iso \hd \left( \bigotimes_{k=1}^t V(\varpi_{i_k})_{a_k} \right).
\]
\item[{\rm (4)}] $d_{k,l}(z)=d_{l,k}(z)=d_{k^*,l^*}(z)=d_{l^*,k^*}(z)$ for $k,l \in I_0$.
\end{enumerate}
\end{theorem}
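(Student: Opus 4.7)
The plan is to treat the four claims separately, as each has a well-established argument but they draw on rather different ingredients from the theory of good modules.

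For (1), I would invoke the explicit form of the universal $R$-matrix on good modules: after normalizing by the dominant extremal matrix coefficient $a_{M,N}(z)$ from~\eqref{eq: aMN}, the action of $\Runiv{M_z,N_w}$ on any pair of weight vectors is given by an element of $\ko[[w/z]][w^{\pm1},z^{\pm1}]$ whose coefficients lie in $\C[q^{\pm 1/m}]$ for some $m$ depending only on $\gamma$. The denominator $d_{M,N}(z)$ is then a monic polynomial clearing these formal-power-series denominators, so its zeros must lie in $\C[[q^{1/m}]]q^{1/m}$ as claimed.

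Claim (2) is the cyclicity/simple-head theorem of Kashiwara. The key input is that $\Rnorm{(M_i)_{a_i},(M_j)_{a_j}}$ is an isomorphism precisely when $d_{M_i,M_j}(a_j/a_i)\neq 0$, which is exactly the generic-separation hypothesis. I would argue by induction on $t$ that the specialization
\[
\rmat{}\colon (M_1)_{a_1}\otimes\cdots\otimes (M_t)_{a_t}\To (M_t)_{a_t}\otimes\cdots\otimes (M_1)_{a_1}
\]
of the normalized $R$-matrix is well-defined and sends $u_1\otimes\cdots\otimes u_t$ to $u_t\otimes\cdots\otimes u_1$. For (i) and (iii) one uses that each good module has a simple crystal, so its unique dominant extremal vector generates the module; by induction on $t$ and the PBW/triangularity provided by the crystal basis, the full tensor product is generated by the extremal tensor, and dually every nonzero submodule of the reverse order contains the opposite extremal vector. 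Then (ii) and (iv) follow: the head (resp.\ socle) must be generated (resp.\ cogenerated) by a single dominant extremal vector, hence is simple. Finally (v) is immediate from (ii)--(iv) because the image of $\rmat{}$ is a quotient containing the tensor of the extremal vectors as well as a submodule containing it.

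Statement (3) is Chari's theorem~\cite{Chari02}: starting from a simple integrable module $M$ of dominant extremal weight $\sum n_i \varpi_i$, one recovers the Drinfeld polynomials (equivalently the $\ell$-weight) and reads off an ordered multiset $\{(i_k,a_k)\}$ of fundamental modules. Uniqueness up to permutation, modulo the no-pole hypothesis, is then a direct consequence of (2): any two orderings can be intertwined by a product of normalized $R$-matrices which are isomorphisms by the hypothesis on $d_{i_k,i_{k'}}(a_{k'}/a_k)$, so the simple heads coincide.

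The hardest part is (4), the fourfold symmetry of $d_{k,l}(z)$. For $d_{k,l}(z)=d_{l,k}(z)$, the point is that $\Rnorm{l,k}(z^{-1})\circ \Rnorm{k,l}(z)$ is a scalar on the generically simple tensor product, forcing its numerator and denominator to match after clearing the scalar; combined with the normalization convention that $d_{M,N}$ is monic, this yields the equality. For $d_{k,l}(z)=d_{k^*,l^*}(z)$, I would use the left/right duality of~\eqref{eq: LR dual} together with the identification ${}^*V(\varpi_i)\iso V(\varpi_{i^*})_{p^*}$: applying the duality functor to $\Rnorm{k,l}$ yields, up to a shift by $p^*$ in both factors, the normalized $R$-matrix $\Rnorm{k^*,l^*}$, and a simultaneous shift does not move any of the zeros. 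Composing the two symmetries produces the remaining identities. The main obstacle is bookkeeping with the normalization factors $a_{k,l}(z)$; the cleanest route is to pass through Lemma~\ref{Lem: aij and dij}, which already encodes the relationship between $a_{k,l}$, $a_{k^*,l}$ and the denominators $d_{k,l}$, $d_{k^*,l}$.
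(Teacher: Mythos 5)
The paper does not prove this theorem; Theorem~\ref{Thm: basic properties} is stated as a recollection from~\cite{AK97,Chari02,Kas02,VV02} and~\cite[Theorem 2.2.1]{KKK13A}, so there is no in-paper argument to compare yours against. Your outline, however, tracks the structure of the cited proofs closely and is broadly consistent with them.

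A few remarks on where your sketch is solid and where it is thin. Parts (1) and (2) follow the line of~\cite{Kas02}: the denominator lies in $\C[[q^{1/m}]]q^{1/m}$ because the entries of $\Runiv{M_z,N_w}$ are formal power series in $w/z$ with coefficients in a fractional-power polynomial ring, and the cyclicity/simple-head statements flow from the fact that a good module is generated by its (unique up to scalar) dominant extremal vector, combined with an induction on $t$ in which the no-pole hypothesis makes each $\Rnorm{(M_i)_{a_i},(M_j)_{a_j}}$ an isomorphism. Your derivation of (ii), (iv), (v) from (i) and (iii) is the standard one. Part (3) is Chari's classification and your reduction to (2) for the uniqueness is correct.

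The weakest point is (4), and it is worth being more careful than the sketch allows. The unitarity relation $\Rnorm{l,k}\circ\Rnorm{k,l}=\id$ (with the appropriate specialization of spectral parameters) only directly tells you that the pole locus of one normalized $R$-matrix is the zero locus of the other in the \emph{reciprocal} variable; passing from that to $d_{k,l}(z)=d_{l,k}(z)$ requires tracking the orders of poles and zeros and the convention that the denominator is monic, which is not automatic. Likewise, applying duality to deduce $d_{k,l}=d_{k^*,l^*}$ requires keeping careful track of the shift by $p^*=(-1)^{\lan\rho^\vee,\delta\ran}q^{\lan c,\rho\ran}$ and of the fact that taking the left dual exchanges the order of the tensor factors; the relations in Lemma~\ref{Lem: aij and dij} (imported from~\cite{AK97}) already package this bookkeeping, and you are right that invoking them is the cleanest route. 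If you want a self-contained argument you should spell out how the symmetry of $a_{k,l}(z)$ and $d_{k,l}(z)$ under $k\mapsto k^*$ falls out of~\eqref{eq: aij and dij} rather than asserting that the zeros "do not move."
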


The denominator formulas between fundamental representations over classical quantum affine algebras were calculated in~\cite{AK97,DO94,KKK13B,Oh14R}. Interestingly, such formulas can be read from folded AR quiver, which are explained
in the following theorems:

\begin{theorem}[{\cite{Oh14R,Oh15E,OS16B,OS16C}}]
\label{thm:reading deno from AR}
\mbox{}
\begin{enumerate}
\item[{\rm (1)}]
For type $A^{(1)}_{n}$, $B^{(1)}_{n}$, $C^{(1)}_{n}$, and $D^{(1)}_{n}$, their denominator formulas between fundamental representations can be read from folded AR quivers
$(\Gamma_Q$ or $\widehat{\Upsilon}_{[\mQ]})$
directly in the following sense:
\begin{align*}
d^{A^{(1)}_{n}}_{k,l}(z) & = D^{\lf \Delta \rf}_{k,l}(z;-q) \times (z-p^{*})^{\delta_{k,l^*}} && \text{for $\lf \Delta \rf$ of type $A_n$,} \\
d^{D^{(1)}_{n}}_{k,l}(z) & = D^{\lf \Delta \rf}_{k,l}(z;-q) \times (z-p^{*})^{\delta_{k,l^*}} && \text{for $\lf \Delta \rf$ of type $D_n$,} \\
d^{B^{(1)}_{n}}_{k,l}(z) & = D^{\lf \mQ \rf}_{k,l}(z;-q_s) \times (z-p^{*})^{\delta_{k,l}} && \text{for $\lf \mQ \rf$ of type $A_{2n-1}$,} \\
d^{C^{(1)}_{n}}_{k,l}(z) & = D^{\lf \mQ \rf}_{k,l}(z;-q_s) \times
(z-p^{*})^{\delta_{k,l}} && \text{for $\lf \mQ \rf$ of type
$D_{n+1}$.}
\end{align*}
%
\item[{\rm (2)}]  For type $A^{(2)}_{n}$ and $D^{(2)}_{n+1}$, their denominator formulas between fundamental representations
can be obtained from the one of type $A^{(1)}_{n}$ and $D^{(1)}_{n+1}$ respectively. Thus the formulas can be also read from AR quivers:
\begin{align*}
d^{A^{(2)}_{n}}_{k,l}(z) & = D^{\lf \Delta \rf}_{k,l}(z;-q)D^{\lf \Delta \rf}_{n+1-k,l}((-1)^nz;-q) \times (z-p^{*})^{\delta_{k,l}}
\\ & = d^{A^{(1)}_{n}}_{k,l}(z)d^{A^{(1)}_{n}}_{n+1-k,l}((-1)^n z)
\\ d^{D^{(2)}_{n+1}}_{k,l}(z) & = \begin{cases}
D^{\lf \Delta \rf}_{n,n+1}(-z;-q)D^{\lf \Delta \rf}_{n,n}(-z;-q) \times (z-p^{*}) & \text{ if } k=l=n, \\
D^{\lf \Delta \rf}_{k,l}(z^2;-q^2)(z-p^{*})^{\delta_{k,l}} &  \text{ otherwise},
\end{cases}
\end{align*}
up to sign $\pm 1$.
\end{enumerate}
\end{theorem}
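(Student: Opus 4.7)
The plan is to handle parts (1) and (2) by a unified inductive strategy: first compute base denominators from explicit module structures, then bootstrap to all $d_{k,l}(z)$ via Dorey-type surjections, and finally match the resulting zeros against the AR-quiver statistics $\theta^{[\rrz]}_t(k,l)$. Concretely, for the base step I would compute $d_{1,l}(z)$ for each type by evaluating $\Rnorm{V(\varpi_1),V(\varpi_l)}$ on dominant extremal vectors using the Chari-type explicit realization of $V(\varpi_1)$ (vector or minuscule); the zeros and their multiplicities are read off from where the image fails to land in $V(\varpi_l)\otimes V(\varpi_1)$. This reduces to the classical computations of~\cite{AK97,DO94} for $A_n^{(1)}$ and $D_n^{(1)}$, and to the folded analogues of~\cite{Oh15E,OS16B,OS16C} for $B_n^{(1)}$ and $C_n^{(1)}$.

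For the inductive step, the driving observation is that every $V(\varpi_k)$ can be realized as the head of a tensor product of smaller fundamentals at spectral parameters dictated by the coordinate system of $\widehat{\Upsilon}_{[\rrz]}$ (the relevant Dorey data is already encoded in the AR quiver). Given a surjection $V(\varpi_i)_a \otimes V(\varpi_j)_b \twoheadrightarrow V(\varpi_k)$, Lemma~\ref{lem:dvw avw} produces a divisibility constraint expressing $d_{k,l}(z)$ as a factor of a ratio involving the already-known $d_{i,l}$ and $d_{j,l}$, and the symmetries $d_{k,l}=d_{l,k}=d_{k^*,l^*}$ from Theorem~\ref{Thm: basic properties}(4) pin down $d_{k,l}(z)$ up to possible overestimates of root multiplicities. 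Part (2) for $A_n^{(2)}$ and $D_{n+1}^{(2)}$ then follows from part (1) via the folding correspondence: a fundamental module for the twisted algebra pairs with two fundamentals for the untwisted companion related by the diagram automorphism $\sigma$, producing the product $d^{A_n^{(1)}}_{k,l}(z)\,d^{A_n^{(1)}}_{n+1-k,l}((-1)^n z)$ and its $D_{n+1}^{(2)}$-analogue (with the degenerate case $k=l=n$ handled separately on the two spin nodes), while the factor $(z-p^*)^{\delta_{k,l}}$ records the self-dual pole from the left/right duality~\eqref{eq: LR dual}.

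The final identification with $D^{\lf \rrz \rf}_{k,l}(z;-q)$ requires matching each pair $(\alpha,\beta) \in \Phi_{[\rrz]}(k,l)[t]$ with a zero of $d_{k,l}(z)$ at $(-1)^{\kappa}q^t$. Here the convexity of $\prec_{[\rrz]}$ and the fact that $\widehat{\Upsilon}_{[\rrz]}$ is its Hasse diagram give a bijection between comparable pairs in $\Phi_{[\rrz]}(k,l)[t]$ and the reducibilities of $V(\varpi_k)_a \otimes V(\varpi_l)_b$ at the corresponding spectral shift, while Proposition~\ref{prop:dist_theta_defn} guarantees independence of the chosen commutation class within $\lf \rrz \rf$. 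The hardest part will be controlling multiplicities at higher-order zeros: the inductive step yields only a divisibility, not an equality, so the upper bound from the AR quiver must be matched by a lower bound coming from Lemma~\ref{lem:simplepole} at simple poles, combined with an induction on the height $|\alpha|+|\beta|$. This is precisely the source of the ambiguities that Section~\ref{sec4:computations} of the present paper must address in the exceptional cases.
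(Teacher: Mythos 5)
This theorem is imported verbatim from the cited references \cite{Oh14R,Oh15E,OS16B,OS16C}; the present paper states it as background and does not provide a proof, so there is no in-paper argument to compare against. That said, your sketch is a faithful reconstruction of the strategy those works use, and it is in fact the same two-stage template the present paper deploys for the exceptional types in Sections~\ref{sec4:computations}, \ref{sec6:application}, and~\ref{sec:Refine}: (i) anchor the induction by computing a small set of $d_{k,l}(z)$ directly from explicit module structure and extremal-vector normalizations, and (ii) propagate via Dorey-type surjections through Lemma~\ref{lem:dvw avw}, with the AR-quiver coordinates supplying both the Dorey data and the target zero set.

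The place where your sketch is thinner than the actual arguments in the cited works is the multiplicity matching. Lemma~\ref{lem:simplepole} only certifies that a simple pole corresponds to a length-two tensor product; it gives you no handle on zeros of order $\geq 2$, which do occur already in classical types (\textit{e.g.}\ $d^{D_n^{(1)}}_{k,l}$ for middle nodes, and the $B_n^{(1)}$, $C_n^{(1)}$ denominators). Your ``induction on $|\alpha|+|\beta|$'' needs to be supplemented with the composition-length estimates of the form in Propositions~\ref{prop: [Q]-simple pair} and~\ref{prop: comp length} (or their antecedents in \cite{Oh15E,OS16B,OS16C}), which bound the composition length of $V(\varpi_k)_a\otimes V(\varpi_l)_b$ from below by $\len_{[Q]}(\up)+2$ and hence force the zero order to match the distance-polynomial exponent $\theta^{\lf\rrz\rf}_t(k,l)$. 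Without that lower bound, the divisibility from Lemma~\ref{lem:dvw avw} and the AR-quiver upper bound leave exactly the kind of $\epsilon\in\{0,1\}$ ambiguities that the present paper wrestles with in Propositions~\ref{prop:E61_ambiguity}, \ref{prop: d22 E6(2)} and resolves only via the Schur--Weyl functor. For the classical types in the theorem those ambiguities are resolved in the cited papers by precisely this $\len_{[Q]}$ / $\theta_t$ bookkeeping; you should cite that as the load-bearing step rather than folding it into the word ``induction.'' The remainder of your sketch (base cases, Dorey propagation, folding to get the $A_n^{(2)}$ and $D_{n+1}^{(2)}$ products with the spin-node special case, the $(z-p^*)^{\delta_{k,l^*}}$ self-duality factor) is accurate.
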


\subsection{Dorey's rule, categorification and folded AR quiver}

The morphisms in
\[
\Hom_{U_q'(\g)}\big( V(\varpi_i)_a \otimes  V(\varpi_j)_b, V(\varpi_k)_c \big) \quad \text{ for } i,j,k\in I_0 \text{ and } a,b,c \in \ko^\times
\]
are studied by~\cite{CP96,HL10,KKK13B,KKKOIV,Oh14R,Oh15E,YZ11} and called \defn{Dorey's type} morphisms. In~\cite{Oh14D,Oh14A,Oh15E},
the condition of non-vanishing of the above Hom space
are interpreted the positions of $\al,\beta,\ga \in \PR$ in some $\widehat{\Upsilon}_{[\rrz]}$,
where $\gamma = \alpha + \beta$, and $\g$ is a quantum affine algebra of non-exceptional type or $E_{6,7,8}^{(1)}$.
We start this subsection with the definition of monoidal subcategories of $\mC_\g$, which are closely related to the $r$-cluster points and their (folded) AR-quivers introduced
in the earlier parts in this paper.

Recall the map $\overline{\sigma}$ in Definition \ref{def: overline sigma}. For simplicity, we use the following convention:
\[
\text{For a statement $P$, $\chi(P)$ is $1$ if $P$ is true and $0$ if $P$ is false.}
\]

\begin{definition}[{\cite{HL10,KKKOIV}}]
\label{def: VQ(beta)}
Fix $[Q] \in \lf \Delta \rf$ of finite type $\g=A_n$, $D_n$ or $E_{6,7,8}$,
and a positive root $\beta\in \PR$ with $\widehat{\Omega}_{[Q]}(\beta)=(i,p)$. For each $\g^{(t)}$ $(t=1,2,3)$, we take the Dynkin diagram automorphism $\sigma$ of $\Delta$ of type $\g$ whose order is $t$.
We set $U_q'(\g^{(t)})$-module $V^{(t)}_{Q}(\beta)$ defined as follows$\colon$
\begin{align*}
V^{(1)}_{Q}(\beta) \seteq  V(\varpi_{i})_{(-q)^{p}},
\qquad
V^{(2)}_{Q}(\beta) \seteq  V(\varpi_{i^\star})_{((-q)^{p})^\star},
\qquad
V^{(3)}_{Q}(\beta) \seteq  V(\varpi_{i^\dagger})_{((-q)^{p})^\dagger},
\end{align*}
where
\begin{subequations}
\label{eq: star dagger}
\begin{align}
i^\star \seteq \overline{\sigma}(i), \ \ \bigl( (-q)^{p} \bigr)^\star & \seteq
\begin{cases}
\bigl( \chi\bigl(i \le \sigma(i)\bigr)+ \chi\bigl(i>\sigma(i)\bigr) (-1)^n \bigr)(-q)^{p} & \text{ if } \g^{(2)}=A^{(2)}_n, \\ 
\bigl( \delta_{i,\sigma(i)} (\sqrt{-1})^{n-i} + \chi(n-1 \le i \le n)(-1)^i \bigr)(-q)^{p} & \text{ if } \g^{(2)}=D^{(2)}_n, \\
\bigl( \chi(i < \sigma(i))- \chi(i>\sigma(i))+ \delta_{i,\sigma(i)} \sqrt{-1} \bigr) (-q)^p & \text{ if } \g^{(2)}=E^{(2)}_6, \\
\end{cases} \label{eq:star}\\
i^\dagger \seteq \overline{\sigma}(i), \ \ \bigl( (-q)^{p} \bigr)^\dagger &  \seteq
\bigl(\delta_{i,1}-\delta_{i,2}+\delta_{i,3} \omega + \delta_{i,4} \omega^2\bigr)(-q)^{p} \hspace{6.1ex} \text{ if } \g^{(3)}=D^{(3)}_4.
\label{eq:dagger}
\end{align}
\end{subequations}
Here $\omega$ denotes the primitive third root of unity.

We define the smallest abelian full subcategory
$\mathcal{C}^{(t)}_Q$ $(t=1,2,3)$ of $\mathcal{C}_{\g^{(t)}}$ such that
\begin{enumerate}[{\rm (a)}]
\item it is stable by taking subquotient, tensor product and extension,
\item it contains $V^{(t)}_Q(\beta)$ for all $\beta \in \PR$.
\end{enumerate}
\end{definition}

\begin{theorem}[\cite{CP96,KKKOIV,Oh15E}]
\label{thm: Dorey classical 1}
For $\g^{(1)} = A_n^{(1)}$, $D_n^{(1)}$, $E_{6,7,8}^{(1)}$ or $\g^{(2)} = A_n^{(2)}$, $D_n^{(2)}$,  let $(i_1,x_1)$, $(i_2,x_2)$, $(i_3,x_3) \in I_0 \times \ko^\times$. Then
\[
\dim_{\ko} \left( \Hom_{U_q'(\g^{(t)})}\big( V^{(t)}(\varpi_{i_2})_{x_2} \otimes V^{(t)}(\varpi_{i_1})_{x_1} , V^{(t)}(\varpi_{i_3})_{x_3}  \big) \right) = 1  \qquad (t=1,2)
\]
if and only if there exists a Dynkin quiver $Q$ and $\al,\beta,\ga \in \Phi_{Q}^+$ such that
\begin{enumerate}[{\rm (1)}]
\item $\alpha \prec_{[Q]} \beta$ and $\alpha + \beta = \gamma$,
\item $V^{(t)}(\varpi_{i_2})_{x_2}  = V^{(t)}_{Q}(\beta)_a, \ V^{(t)}(\varpi_{i_1})_{x_1}  = V^{(t)}_{Q}(\al)_a, \ V^{(t)}(\varpi_{i_3})_{x_3}  = V^{(t)}_{Q}(\ga)_a$
for some $a \in \ko^\times$.
\end{enumerate}
\end{theorem}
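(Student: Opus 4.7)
The plan is to combine the structure theorem for heads of tensor products in $\Ca_{\g^{(t)}}$ (Theorem~\ref{Thm: basic properties}) with the combinatorics of the AR quiver $\Gamma_Q$ and the denominator formulas of Theorem~\ref{thm:reading deno from AR}. I begin with a preliminary reduction: the head $\hd\bigl( V^{(t)}(\varpi_{i_2})_{x_2} \otimes V^{(t)}(\varpi_{i_1})_{x_1} \bigr)$ is simple, and any non-zero morphism from the tensor product to a simple module $V^{(t)}(\varpi_{i_3})_{x_3}$ must factor through this head. Hence the Hom space has dimension at most one, and it is non-zero exactly when the head is isomorphic to $V^{(t)}(\varpi_{i_3})_{x_3}$. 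This in particular forces the tensor product to be reducible, which via the normalized $R$-matrix forces $x_2/x_1$ to be a root of $d_{i_2,i_1}(z)$.

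For the ``if'' direction, suppose $\alpha,\beta,\gamma \in \Phi_Q^+$ with $\alpha \prec_{[Q]} \beta$ and $\alpha + \beta = \gamma$. Convexity of $\prec_{[Q]}$ yields $\alpha \prec_{[Q]} \gamma \prec_{[Q]} \beta$. The ratio of AR-coordinates of $\beta$ and $\alpha$ is a root of $d_{i_\beta,i_\alpha}(z)$ by Theorem~\ref{thm:reading deno from AR}, so the specialization of the normalized $R$-matrix has a pole there; by Lemma~\ref{lem:simplepole} the tensor product has composition length exactly $2$. It remains to identify the simple head with $V^{(t)}_Q(\gamma)_a$ for the appropriate shift $a$, which follows by matching dominant extremal weights (the head must carry weight $\varpi_{i_\gamma}$) and then matching the spectral parameter through the coordinate assignment of Definition~\ref{def: VQ(beta)}.

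For the ``only if'' direction, if the Hom space is non-zero then $x_2/x_1$ is a root of $d_{i_2,i_1}(z)$. By Theorem~\ref{thm:reading deno from AR}, every root of $d_{i_2,i_1}(z)$, apart from the contribution $(z-p^*)^{\delta_{i_2,i_1^*}}$ which encodes the self-dual pairing rather than a genuine three-point coupling, is indexed by a pair $(\alpha,\beta)$ of comparable positive roots with residues $(i_1,i_2)$ in some $\Gamma_Q$; this pair pins down a common shift $a$ such that $V^{(t)}(\varpi_{i_1})_{x_1} = V^{(t)}_Q(\alpha)_a$ and $V^{(t)}(\varpi_{i_2})_{x_2} = V^{(t)}_Q(\beta)_a$. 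Comparing $U_q(\g_0)$-characters shows that the simple head of the tensor product has fundamental dominant weight if and only if $\gamma \seteq \alpha + \beta \in \Phi^+$, and in that case Proposition~\ref{pro: BKM minimal} identifies $(\alpha,\beta)$ as a $[Q]$-minimal pair of $\gamma$. This furnishes the data required by the theorem.

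The main obstacle is the explicit bijection between simple roots of $d_{i,j}(z)$ and $[Q]$-minimal pairs $(\alpha,\beta)$ whose sum lies in $\Phi^+$, which is genuinely type-specific. The original arguments use Coxeter elements for $A_n^{(1)}$ and $D_n^{(1)}$ \cite{CP96}, direct AR-quiver analysis for $E_{6,7,8}^{(1)}$ \cite{Oh15E}, and Dynkin folding for the twisted types $A_n^{(2)}$ and $D_n^{(2)}$ \cite{KKKOIV}. A uniform treatment would exploit the exact Schur-Weyl duality functor $\mathcal{F}_Q^{(t)} \colon \Rep(R^{\mathsf{g}}) \to \Ca^{(t)}_Q$ to convert the three-point coupling in $\Ca_{\g^{(t)}}$ into a short exact sequence of PBW-simples in $\Rep(R^{\mathsf{g}})$, for which the minimal-pair condition is visible directly from the convex order $\prec_{[Q]}$.
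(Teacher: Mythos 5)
This theorem is cited from \cite{CP96,KKKOIV,Oh15E} rather than proved in the paper, so there is no direct proof to match; but the paper does prove the analogous statements for the new exceptional cases (Theorem~\ref{thm: D43 S to V}, Corollary~\ref{cor: Dorey D43 E63 completely}, Theorem~\ref{thm: F41 S to V}, Corollary~\ref{cor: Dorey F41 G21 completely}), so that is the natural benchmark for your method.

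Your ``if'' direction has a genuine gap. You conclude from Lemma~\ref{lem:simplepole} that $V^{(t)}_Q(\beta)_a \otimes V^{(t)}_Q(\al)_a$ has composition length $2$, but that lemma applies only when the normalized $R$-matrix has a \emph{simple} pole. Theorem~\ref{thm: simply laced minimal denom}(2) guarantees a simple pole only for a $[Q]$-\emph{minimal} pair $(\al,\be)$; condition~(1) of the present theorem, by contrast, merely asks that $\al \prec_{[Q]} \be$ and $\al + \be = \ga$, so the pair may well be non-minimal, in which case the corresponding root of $d_{i_\al,i_\be}(z)$ can have multiplicity $\ge 2$ (the paper itself exhibits multiplicities as high as $6$ in Appendix~\ref{app:E8_denominators}). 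For such pairs the composition length can exceed $2$ and the argument as written does not identify the head. Relatedly, in the ``only if'' direction you invoke Proposition~\ref{pro: BKM minimal} to conclude that $(\al,\be)$ is a $[Q]$-minimal pair of $\ga$, but that proposition says the \emph{opposite implication}: a $[\redez]$-minimal sequence of a non-simple $\ga$ is always a pair. It does not assert that every pair summing to $\ga$ is minimal, and indeed many are not.

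The remark you make in closing is the right fix and is precisely the mechanism the paper uses in the exceptional cases: after establishing $\F_Q^{(t)}\bigl(S_Q(\be)\bigr) \iso V_Q^{(t)}(\be)$ inductively on $[Q]$-minimal pairs, one pushes the six-term exact sequence of \cite[Theorem 5.20]{Oh15E}
\[
0 \to S_Q(\ga) \to S_Q(\al)\conv S_Q(\be) \to S_Q(\be)\conv S_Q(\al) \to S_Q(\ga) \to 0
\]
through the exact functor $\F_Q^{(t)}$; this sequence exists for \emph{every} pair $(\al,\be)$ of $\ga$, minimal or not, and immediately gives the surjection $V^{(t)}_Q(\be)\otimes V^{(t)}_Q(\al) \twoheadrightarrow V^{(t)}_Q(\ga)$ without any appeal to pole order or composition length. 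Your head-and-denominator strategy does yield the dimension-at-most-one bound and the necessity of $x_2/x_1$ being a denominator root, so it is a reasonable skeleton, but to make the ``if'' direction work for non-minimal pairs you need to replace the composition-length-$2$ claim by the Schur--Weyl transfer argument you gesture at.
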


\begin{theorem}[{\cite{HL11,KKKOIV}}]
\label{thm:categorification1}
Let $Q$ be a Dynkin quiver of finite type $A_n$, $D_n$ $(t = 1,2)$ and $E_{6,7,8}$ $(t=1)$.
\begin{enumerate}[{\rm (1)}]
\item $[\Ca_Q^{(t)}] \iso U_\A(\mathsf{g})^\vee$, where $\mathsf{g}$ denotes the simple Lie algebra of the same type and $[\mathcal{C}_Q^{(t)}]$ denotes the Grothendieck ring of $\mathcal{C}_Q^{(t)}$.
\item The dual PBW-basis associated to $[Q]$ and the upper global basis of
$U^-_{\A}(\mathsf{g})^{\vee}$ are categorified by the modules over $U_q'(\g^{(t)})$ in the following sense:
\begin{enumerate}[{\rm (i)}]
\item The set of all simples in $\mathcal{C}^{(t)}_Q$ corresponds to
the upper global basis of $U^-_{\A}(\mathsf{g})^{\vee}|_{q=1}$.
\item The set
\[
\left\{ V_{Q}^{(t)}(\beta_1)^{\otimes m_1}\otimes\cdots \otimes V_{Q}^{(t)}(\beta_\N)^{\otimes m_\N} \mid \um \in \Z_{\ge 0}^\N \right\}
\]
corresponds to the dual PBW-basis under the isomorphism in {\rm (1)}.
\end{enumerate}
\end{enumerate}
\end{theorem}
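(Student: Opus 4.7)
The plan is to follow the program of Hernandez--Leclerc~\cite{HL11} for $t=1$ and of Kashiwara--Kang--Kim--Oh~\cite{KKKOIV} for $t=2$, leveraging the combinatorics of $\Gamma_Q$ together with the denominator formulas of Theorem~\ref{thm:reading deno from AR} and Dorey's rule (Theorem~\ref{thm: Dorey classical 1}). First, using the coordinates $\widehat{\Omega}_{[Q]}(\beta)=(i_\beta,p_\beta)$ and the formula for $d_{i,j}(z)$ in terms of $\widehat{\Upsilon}_{[Q]}$, I would verify that for any two incomparable positive roots $\alpha,\beta\in\PR$, the normalized $R$-matrix $\Rnorm{V^{(t)}_Q(\alpha),V^{(t)}_Q(\beta)}$ has no pole at the specialized parameter; consequently $V^{(t)}_Q(\alpha)\otimes V^{(t)}_Q(\beta)$ is simple and commutes (up to isomorphism) with $V^{(t)}_Q(\beta)\otimes V^{(t)}_Q(\alpha)$ in $\mathcal{C}_Q^{(t)}$.

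Next, for comparable pairs $\alpha\prec_{[Q]}\beta$ with $\alpha+\beta\in\PR$, the location of the unique relevant pole of $d_{i_\alpha,i_\beta}(z)$ in $\widehat{\Upsilon}_{[Q]}$ matches exactly the coordinate of $\gamma=\alpha+\beta$. Invoking Theorem~\ref{thm: Dorey classical 1}, I would identify the simple head/socle of $V^{(t)}_Q(\beta)\otimes V^{(t)}_Q(\alpha)$ with $V^{(t)}_Q(\gamma)$ and, via Lemma~\ref{lem:simplepole}, produce a short exact sequence whose other composition factor is $V^{(t)}_Q(\alpha')\otimes V^{(t)}_Q(\beta')$ for the $[Q]$-minimal pair of $\gamma$ situated just ``below'' $(\alpha,\beta)$ in $\prec^{\tb}_{[Q]}$. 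These exact sequences categorify the quantum Serre/Levendorskii--Soibelman straightening relations between dual PBW vectors associated to $[Q]$.

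With these building blocks in hand, I would then prove part~(1) by defining the map $\Psi\colon U_\A^-(\mathsf{g})^\vee\to [\mathcal{C}_Q^{(t)}]$ sending the dual PBW vector $F^*(\beta_1)^{(m_1)}\cdots F^*(\beta_\N)^{(m_\N)}$ (in the order dictated by $\redez\in[Q]$) to the class of the ordered tensor product $\bigotimes_{k=1}^\N V^{(t)}_Q(\beta_k)^{\otimes m_k}$. Injectivity follows because these ordered tensor products have distinct highest weights (in the classical sense, after forgetting the loop structure); surjectivity follows from Theorem~\ref{Thm: basic properties}(3) together with the fact that $\mathcal{C}_Q^{(t)}$ is generated under subquotients, tensor products, and extensions by the $V^{(t)}_Q(\beta)$. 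That $\Psi$ is a ring homomorphism amounts to verifying the above straightening relations hold on both sides, which we have established. This simultaneously proves claim (2)(ii) about the dual PBW basis.

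For part (2)(i), the correspondence between simple modules of $\mathcal{C}_Q^{(t)}$ and the upper global basis, I would proceed by comparing two triangular decompositions: on the algebra side, each dual canonical basis element expands in the dual PBW basis with leading term itself and strictly lower terms under $\prec^{\tb}_{[Q]}$ with positive integer coefficients; on the category side, Theorem~\ref{Thm: basic properties}(2) gives that the head of an ordered tensor product of PBW modules is simple and appears with multiplicity one, while all other composition factors correspond to strictly smaller PBW monomials in the bilexicographic order. The main obstacle here is the matching of the two triangular decompositions; I would handle this by induction on the $[Q]$-distance of sequences $\um$, using the exact sequences from the previous paragraph to relate the composition factors of $\bigotimes V^{(t)}_Q(\beta_k)^{\otimes m_k}$ to those of strictly smaller tensor products, and then comparing with Lusztig's positivity of the transition matrix between upper global and dual PBW bases.
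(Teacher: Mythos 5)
Your proposal is in the spirit of the PBW/triangular-decomposition philosophy, but it diverges from the proofs actually given in \cite{HL11,KKKOIV}, and in one place this divergence creates a genuine gap. In \cite{HL11} the isomorphism $[\Ca_Q^{(1)}]\iso U_\A^-(\mathsf{g})^\vee$ is obtained by identifying the ($t$-deformed) Grothendieck ring with the derived Hall algebra of $Q$, and the matching with the dual canonical basis uses $q$-character and Nakajima-variety technology. In \cite{KKKOIV} (and in this paper's own proofs of the exceptional analogues, Theorems~\ref{thm: D43 E62} and~\ref{thm: F4(1) G2(1)}), the key tool is the generalized Schur--Weyl duality functor $\F_Q^{(t)}\colon R^{\mathsf{g}}\gmod\to\Ca_Q^{(t)}$: one shows it is exact, tensor, and sends $S_Q(\beta)$ to $V^{(t)}_Q(\beta)$, and then the entire categorification package (ring isomorphism, PBW categorification, and the bijection of simples with the upper global basis) is transported in one stroke from the quiver Hecke side, where Theorem~\ref{thm: BkMc} already supplies the unitriangularity and characterization needed.

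The gap in your plan is in part~(2)(i). You obtain, via the exact sequences, the transition matrix between the classes of ordered PBW tensor products and the classes of simples in $[\Ca_Q^{(t)}]$, and you know (say from \cite{Rou12,VV11}, Theorem~\ref{thm:categorification 2} here) that on the quantum-group side the dual canonical basis is unitriangular over the dual PBW basis in $\prec^\tb_{[Q]}$. But ``Lusztig positivity of the transition matrix'' is not the statement that lets you conclude these two transition matrices coincide. What characterizes the dual canonical basis among $\Z[q^{\pm1}]$-bases unitriangular in the PBW basis is bar-invariance (together with the normalization), and you have not explained what plays the role of the bar involution on $[\Ca_Q^{(t)}]$, nor why the simple modules would be fixed by it. Without either the functor $\F_Q^{(t)}$ (which transports the bar-invariant basis of self-dual simples from $R^{\mathsf{g}}\gmod$) or the $q$-character/geometric machinery of Hernandez--Leclerc and Nakajima (which supplies the graded/$t$-deformed structure and Kazhdan--Lusztig-type positivity on the quantum affine side), the induction you propose pins down at most the Gram matrix of the simples against the PBW monomials, not the identification with $\mathbf{B}^{\rm up}$. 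This is precisely the step that both cited sources spend most of their effort on, and your plan currently asserts rather than proves it. Parts~(1) and~(2)(ii), by contrast, are essentially sound as outlined once the basic simplicity and straightening facts are in place, though the injectivity of $\Psi$ should be argued via dominant monomials (the $l$-highest weight of the ordered tensor product is the product of those of the factors) rather than classical highest weights.
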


\begin{definition} [cf.~\cite{OS16B,OS16C}] \label{def: VmQ(beta)} \hfill
\begin{enumerate}
\item[{\rm (1)}] For any $[\mQ] \in \lf \mQ \rf$
and any positive root $\beta\in \PR$
of type $A_{2n-1}$, $D_{n+1}$ or $E_6$, we set the $U_q'(\g^{(1)})$-module $(\g^{(1)}=B_n^{(1)}$, $C_n^{(1)}$ or $F_4^{(1)})$ $V_{\mQ}(\beta)$ defined as follows:
For $\widehat{\Omega}_{\mQ}(\beta)=(i,p/2)$, we define
\begin{align} \label{eq: VmQ(beta)1}
V_{\mQ}(\beta) \seteq  \begin{cases}
V(\varpi_{i})_{(-1)^i(q^{1/2})^{p}} & \text{ if } \g^{(1)}=B_n^{(1)} \text{ or } F_4^{(1)}, \\
V(\varpi_{i})_{(-q^{1/2})^{p}} & \text{ if } \g^{(1)}=C_n^{(1)}.
\end{cases}
\end{align}
\item[{\rm (2)}] For any $[\mathfrak{Q}] \in \lf \mathfrak{Q} \rf$
and any positive root $\beta\in \PR$
of type $D_{4}$, we set the $U_q'(G_2^{(1)})$-module ($\g^{(1)}=G_2^{(1)}$) $V_{\mathfrak{Q}}(\beta)$ defined as follows:
For $\widehat{\Omega}_{\mathfrak{Q}}(\beta)=(i,p/3)$, we define
\begin{align} \label{eq: VmQ(beta)2}
V_{\mathfrak{Q}}(\beta) \seteq V(\varpi_{i})_{(-q^{1/3})^{p}}.
\end{align}
\end{enumerate}
We define the smallest abelian full subcategory
$\mathscr{C}^{(1)}_\mQ$ of $\mathcal{C}_{\g^{(1)}}$ such that
\begin{itemize}
\item[{\rm (a)}] it is stable by taking subquotient, tensor product and extension,
\item[{\rm (b)}] it contains $V_\mQ(\beta)$ (resp.~$V_{\mathfrak{Q}}(\beta)$) for all $\beta \in \PR$.
\end{itemize}
\end{definition}

\begin{theorem} [\cite{CP96,OS16B,OS16C}] \label{thm: Dorey classical 2}
Let $(i,x)$, $(j,y)$, $(k,z) \in I_0 \times \ko^\times$. Then
\[
\dim_{\ko} \left( \Hom_{U_q'(\g^{(1)})}\big( V(\varpi_{j})_y \otimes V(\varpi_{i})_x , V(\varpi_{k})_z  \big) \right) = 1  \quad \text{ for } \g^{(1)}=B^{(1)}_{n} \ \text{ $($resp.\ $C^{(1)}_{n})$}
\]
if and only if there exists a twisted adapted class $[\mQ]$ of type $A_{2n-1}$ $($resp.\ $D_{n+1})$ and $\al,\beta,\ga \in \Phi_{A_{2n-1}}^+$
$($resp.\ $\Phi_{D_{n+1}}^+)$ such that
\begin{enumerate}[{\rm (1)}]
\item $(\al,\beta)$ is a $[\mQ]$-minimal pair of $\ga$,
\item $V(\varpi_{j})_y  = V_{\mQ}(\beta)_a, \ V(\varpi_{i})_x  = V_{\mQ}(\al)_a, \ V(\varpi_{k})_z  = V_{\mQ}(\ga)_a$
for some $a \in \ko^\times$.
\end{enumerate}
\end{theorem}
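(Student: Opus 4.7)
The plan is to combine the denominator formulas of Theorem~\ref{thm:reading deno from AR}(1) with the pole-image structure of normalized R-matrices given by Lemma~\ref{lem:simplepole}, using the coordinate system on $\widehat{\Upsilon}_{[\mQ]}$ as a dictionary between positive roots and fundamental modules via Definition~\ref{def: VmQ(beta)}.

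For the ``if'' direction, fix a $[\mQ]$-minimal pair $(\al,\be)$ of $\ga = \al+\be$ with coordinates $\widehat{\Omega}_{[\mQ]}(\al)=(i,p/2)$, $\widehat{\Omega}_{[\mQ]}(\be)=(j,p'/2)$, and $\widehat{\Omega}_{[\mQ]}(\ga)=(k,p''/2)$. The associated fundamental modules $V_{\mQ}(\al)$, $V_{\mQ}(\be)$, $V_{\mQ}(\ga)$ are read off from Definition~\ref{def: VmQ(beta)}. Minimality, together with Proposition~\ref{prop:dist_theta_defn} and the denominator formula, ensures that the ratio of spectral parameters is a \emph{simple} root of $d_{j,i}(z)$, coming from the lowest-$t$ contribution in $D^{\lf\mQ\rf}_{j,i}(z;-q_s)$. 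By Lemma~\ref{lem:simplepole}, the tensor product $V_{\mQ}(\be)\otimes V_{\mQ}(\al)$ has composition length two, and the image of the specialized R-matrix is a simple module. Its classical weight equals $\al+\be=\ga$, and tracking the affine spectral parameter via $\widehat{\Omega}_{[\mQ]}$ identifies this image with $V_{\mQ}(\ga)$, yielding the desired homomorphism.

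Conversely, if the Hom space is non-zero, then the general R-matrix theory together with Theorem~\ref{Thm: basic properties}(2)(v) forces $y/x$ to be a root of $d_{j,i}(z)$; by Theorem~\ref{thm:reading deno from AR}(1), any such root arises from a comparable pair $(\al,\be) \in \Phi_{[\mQ']}(\widehat{j},\widehat{i})[t]$ for some $[\mQ']\in\lf\mQ\rf$ and $t \ge 1$. Simplicity of the target $V(\varpi_k)_z$ together with Theorem~\ref{Thm: basic properties}(2)(v) pins down $\ga = \al+\be$ with $\widehat{\Omega}_{[\mQ']}(\ga) = (k,\cdot)$. Proposition~\ref{pro: BKM minimal} then guarantees that $(\al,\be)$ must be $[\mQ']$-minimal, as a non-minimal pair would admit a strictly intermediate covering pair, and hence would correspond to a strictly smaller pole, contradicting the simplicity of the image.

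The main obstacle is pinning down, in the forward direction, that the simple image of the R-matrix at the minimal pole is exactly the fundamental $V_{\mQ}(\ga)$, and not some other simple of classical weight $\ga$ arising from a longer tensor product. This is controlled by showing that the minimal pole lies in the distinguished factor $D^{\lf\mQ\rf}_{j,i}(z;-q_s)$ of the denominator whose roots, read off from $\widehat{\Upsilon}_{[\mQ]}$, are in bijection with covers of positive roots in $\prec^{\tb}_{[\mQ]}$. This requires the twisted Coxeter-element analysis of~\cite{OS16B,OS16C}, generalizing the untwisted Coxeter-element argument of Chari-Pressley~\cite{CP96} to the folded AR quiver setting, at which point the forward direction falls out cleanly from the characterization of minimal pairs and the single-pole multiplicity of the relevant factor.
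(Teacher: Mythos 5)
The paper gives no proof of Theorem~\ref{thm: Dorey classical 2}; it is quoted from~\cite{CP96,OS16B,OS16C}. Evaluating your proposal on its own terms, there are two substantive gaps.

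In the forward direction, Lemma~\ref{lem:simplepole} does give you a composition-length-two tensor product with a simple $R$-matrix image, but you never actually identify that image with the fundamental module $V_{\mQ}(\ga)$. The phrase ``its classical weight equals $\al+\be=\ga$'' conflates positive roots of $A_{2n-1}$ (resp.\ $D_{n+1}$) with $U_q(\g_0)$-weights of $U_q'(B_n^{(1)})$- (resp.\ $U_q'(C_n^{(1)})$-)modules; the dominant extremal weight of $V_\mQ(\be)\otimes V_\mQ(\al)$ is $\varpi_j+\varpi_i$, and a proper simple subquotient of such a tensor product is generically \emph{not} a fundamental module. Identifying the image as $V(\varpi_k)_c$ is precisely the content of Dorey's rule; you flag this as ``the main obstacle'' and then defer it to ``the twisted Coxeter-element analysis of~\cite{OS16B,OS16C},'' i.e.\ to the result under proof, so the step is circular as written. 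A non-circular argument has to compute the dominant $\ell$-weight of the image directly (as Chari--Pressley do in the untwisted setting), or bootstrap through T-systems and duality as the paper does for $F_4^{(1)}$ in the proof of Theorem~\ref{thm: F41 S to V}.

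In the backward direction, the cited facts do not support your conclusion. Roots of $D^{\lf\mQ\rf}_{\widehat{\jmath},\widehat{\imath}}(z;-q_s)$ arise from comparable pairs $(\al,\be)\in\Phi_{[\mQ']}(\widehat{\jmath},\widehat{\imath})[t]$, but a comparable pair need not satisfy $\al+\be\in\PR$, so ``pins down $\ga=\al+\be$'' requires an argument you do not give. Proposition~\ref{pro: BKM minimal} says only that a $[\redez]$-\emph{minimal} sequence of $\ga\in\PR\setminus\Pi$ must be a pair; it cannot be read to say that every pair summing to $\ga$ is minimal. The claim that a non-minimal pair ``would correspond to a strictly smaller pole, contradicting the simplicity of the image'' is not a consequence of anything in the paper: non-minimal pairs typically yield higher pole multiplicity (cf.\ Corollary~\ref{cor: order 0}) and longer composition series (Proposition~\ref{prop: comp length}), but even a simple pole at a non-minimal pair would not by itself produce a contradiction. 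What the ``only if'' direction actually requires is to rule out that a non-minimal pair produces a \emph{fundamental} simple quotient, and your argument does not engage with this.
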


\begin{theorem}[{\cite{KO17}}]
\label{thm:categorification2}
Let $[\mQ]$ be a twisted adapted class of finite type $A_{2n-1}$ $($resp.~$D_{n+1})$.
\begin{enumerate}[{\rm (1)}]
\item $[\mathscr{C}_\mQ^{(1)}] \iso U_\A(\mathsf{g})^\vee$, where $\mathsf{g}$ denotes the simple Lie algebra of $A_{2n-1}$ $($resp.~$D_{n+1})$ and $[\mathscr{C}_\mQ^{(1)}]$ denotes the Grothendieck ring of $\mathscr{C}_\mQ^{(1)}$.
\item The dual PBW-basis associated to $[\mQ]$ and the upper global basis of
$U^-_{\A}(\mathsf{g})^{\vee}$ are categorified by the modules over $U_q'(\g^{(1)})$ of affine type $B_n^{(1)}$ $($resp.~$C^{(1)}_{n})$ in the following sense:
\begin{enumerate}[{\rm (i)}]
\item The set of all simples in $\mathscr{C}^{(1)}_Q$ corresponds to
the upper global basis of $U^-_{\A}(\mathsf{g})^{\vee}|_{q=1}$.
\item For each $\um \in \Z_{\ge 0}^\N$, define the  $U_q'(\g^{(1)})$-module
$V_\mQ^{(1)}(\um)$ by $$
V_{\mQ}(\beta_1)^{\otimes m_1}\otimes\cdots \otimes V_{\mQ}(\beta_\N)^{\otimes m_\N}.$$
Then the set $\{ V_\mQ(\um) \mid \um \in \Z_{\ge 0}^\N\}$ corresponds to
the dual PBW-basis under the isomorphism in~{\rm (1)}.
\end{enumerate}
\end{enumerate}
\end{theorem}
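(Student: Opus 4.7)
The plan is to prove this through the generalized Schur-Weyl duality framework of Kang-Kashiwara-Kim, using the denominator formulas for $U_q'(B_n^{(1)})$ and $U_q'(C_n^{(1)})$ together with Dorey's rule stated in Theorem~\ref{thm: Dorey classical 2}. First I would fix the set of good modules $\{V_\mQ(\alpha_i)\}_{i \in I}$ corresponding to the simple roots under $\widehat{\Omega}_{[\mQ]}$. Using the denominator formulas of Theorem~\ref{thm:reading deno from AR}(1), I would read off the integers $\theta^{[\mQ]}_t(k,l)$ from the folded AR quiver $\widehat{\Upsilon}_{[\mQ]}$ and verify that the quiver Hecke algebra $R^J$ produced by the KKK construction coincides with the symmetric quiver Hecke algebra $R^{\mathsf{g}}$ of finite type $A_{2n-1}$ or $D_{n+1}$. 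This yields an exact tensor functor
\[
\mathscr{F}_\mQ \colon \Rep(R^{\mathsf{g}}) \to \mathscr{C}^{(1)}_\mQ
\]
which sends the simple $R^{\mathsf{g}}$-modules $L(i)$ to $V_\mQ(\alpha_i)$ up to grading shift.

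Next I would show that $\mathscr{F}_\mQ$ sends the full set of PBW-type modules of $\Rep(R^{\mathsf{g}})$ bijectively to $\{V_\mQ(\beta) \mid \beta \in \PR\}$, and more generally respects the PBW monomials indexed by $\um \in \Z_{\ge 0}^\N$. The key input is Theorem~\ref{thm: Dorey classical 2}: whenever $(\alpha,\beta)$ is a $[\mQ]$-minimal pair of $\gamma = \alpha+\beta \in \PR$, there is a Dorey-type surjection $V_\mQ(\beta) \otimes V_\mQ(\alpha) \twoheadrightarrow V_\mQ(\gamma)$, matching the short exact sequence in $\Rep(R^{\mathsf{g}})$ that defines the PBW vector attached to $\gamma$ via $[\mQ]$. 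Combined with the BKM/Kato/McNamara categorification of the PBW basis in $\Rep(R^{\mathsf{g}})$ associated to the convex order $\prec_{[\mQ]}$, this shows $\mathscr{F}_\mQ$ intertwines the PBW structures, establishing (2)(ii).

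For (2)(i), I would use rigidity of $\mathcal{C}_\g$ and the exact sequences from $[\mQ]$-minimal pairs together with Lemma~\ref{lem:simplepole} and Lemma~\ref{lem:dvw avw} to propagate the bijection from PBW modules to all simples: a simple $S \in \Rep(R^{\mathsf{g}})$ is characterized as the head of an ordered tensor product of PBW pieces, and since $\mathscr{F}_\mQ$ sends heads to heads and preserves PBW tensor products up to the ordering dictated by $d_{i,j}(z)$, the image $\mathscr{F}_\mQ(S)$ is a simple object in $\mathscr{C}_\mQ^{(1)}$ and the induced map is a bijection onto the simples. Part (1), the ring isomorphism $[\mathscr{C}_\mQ^{(1)}] \simeq U_\A(\mathsf{g})^\vee$, then follows because $[\Rep(R^{\mathsf{g}})] \simeq U_\A^-(\mathsf{g})^\vee$ by Khovanov-Lauda-Rouquier categorification, and $\mathscr{F}_\mQ$ is exact and monoidal with the compatibility between simples on both sides.

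The main obstacle is controlling the bijection on simples: one must be certain that no simple of $\mathscr{C}_\mQ^{(1)}$ is missed, and this requires simultaneously verifying that every simple in $\mathscr{C}_\mQ^{(1)}$ arises as the head of some ordered tensor product of fundamentals in $\mathscr{P}_\mQ$ (so that the Grothendieck rings have matching ranks in each weight), and that no distinct simples in $\Rep(R^{\mathsf{g}})$ collapse under $\mathscr{F}_\mQ$. The first part uses the covering property~\eqref{eq: cover} for $\mathscr{P}_\mQ$ together with Theorem~\ref{Thm: basic properties}, while the second is where Dorey's rule (Theorem~\ref{thm: Dorey classical 2}) is essential, since it ensures the combinatorial matching between $[\mQ]$-minimal pairs on both sides of the functor. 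All remaining pieces are formal consequences of the general Schur-Weyl duality machinery.
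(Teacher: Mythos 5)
This theorem is quoted from \cite{KO17}; the paper states it without proof, and it serves as the classical ($B_n^{(1)}$, $C_n^{(1)}$) template for the exceptional-type analogue proved later as Theorem~\ref{thm: F4(1) G2(1)}. Your overall strategy --- verify the KKK quiver $\Gamma^J$ is of Dynkin type $A_{2n-1}$ or $D_{n+1}$ via the denominator formulas, deduce an exact monoidal functor $\mathscr{F}_\mQ$, match the PBW modules $S_{[\mQ]}(\beta)\mapsto V_\mQ(\beta)$ via the $[\mQ]$-minimal pair short exact sequences and Dorey's rule, then extend the bijection to all simples and read off the ring isomorphism from KLR categorification --- is precisely the framework used both in \cite{KO17} and in Section~\ref{sec6:application} of this paper for $F_4^{(1)}$ and $G_2^{(1)}$.

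There is, however, a gap in your step establishing that $\mathscr{F}_\mQ$ sends simples to simples. You assert that ``$\mathscr{F}_\mQ$ sends heads to heads'' and attribute this to rigidity plus Lemma~\ref{lem:simplepole} and Lemma~\ref{lem:dvw avw}. Those two lemmas are tools for \emph{computing} denominator formulas (controlling composition length at a simple pole, respectively a divisibility criterion under surjections); they do not by themselves say anything about how an exact functor interacts with heads of tensor products. An exact monoidal functor does not automatically carry the head of $M_1\conv\cdots\conv M_t$ to the head of $\mathscr{F}(M_1)\otimes\cdots\otimes\mathscr{F}(M_t)$. The actual mechanism --- see Theorem~\ref{thm: F} and the discussion around Theorem~\ref{thm:gQASW duality 2} --- is that the $(U_q'(\g),R^J)$-bimodule $\widehat{V}^{\tens\be}$ is built so that $\mathscr{F}_\mQ$ intertwines the renormalized $R$-matrices $\rmat{M,N}$ on the quiver-Hecke side with those on the quantum-affine side up to nonzero scalar, and on each side the simple head is characterized as the image of the composite $R$-matrix (Theorem~\ref{thm: BkMc}(b) on one side, Theorem~\ref{Thm: basic properties}(2)(v) on the other). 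It is this compatibility of $R$-matrices, together with the matching of orders coming from reading $d_{i,j}(z)$ off $\widehat{\Upsilon}_{[\mQ]}$ (Theorem~\ref{thm:reading deno from AR}(1)), that lets one transport simplicity and establish both injectivity and surjectivity on simples; the covering property~\eqref{eq: cover} then enters exactly as you indicate to ensure no simple of $\mC_\mQ^{(1)}$ is missed. Your sketch has the right skeleton, but the head-preservation step needs to cite the $R$-matrix intertwining built into the generalized Schur--Weyl construction rather than the two denominator lemmas.
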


In the rest of this paper, we shall prove
\begin{itemize}
\item[(1)] analogues of Theorem~\ref{thm:reading deno from AR} for all exceptional affine types,
\item[(2)] analogues of Theorem~\ref{thm: Dorey classical 1} and Theorem~\ref{thm:categorification1} for both $E^{(2)}_{6}$ and $D^{(3)}_{4}$,
\item[(2')] analogues of Theorem~\ref{thm: Dorey classical 2} and Theorem~\ref{thm:categorification2} for both $F^{(1)}_{4}$ and $G^{(1)}_2$,
\end{itemize}
by analyzing the structure of their fundamental representations and $R$-matrices between them.

\section{Denominator formulas for quantum affine algebras} \label{sec4:computations}
In this section, we compute the denominator formula $d_{i,j}(z)$
between fundamental representations $V(\varpi_i)$ and $V(\varpi_j)$
by employing the framework of~\cite{AK97,KKK13A,Oh14R}. We first do
this for types $G_2^{(1)}$, $D_4^{(3)}$, $E_6^{(2)}$, and $F_4^{(1)}$
because one can compute $d_{i,j}(z)$ for certain $i,j \in I_0$ by hand. Using
such basic formulas and Dorey's type morphisms, we can compute all
$d_{i,j}(z)$'s. This strategy will be applied to all exceptional
affine types; that is, for the remained cases, we implemented the fundamental
modules and the $R$-matrix computation in \textsc{SageMath}~\cite{sage,combinat}.
Using this code, we compute some of the basic formulas, which
we then are able to compute all $d_{i,j}(z)$'s using Dorey's rule as
in the case of $U_q'(G_2^{(1)})$ and $U_q'(D_4^{(3)})$. We have also
verified a number of the computations using our code.

We note here that, using geometric approach, Fujita independently showed the distribution of simple roots for $d_{i,j}(z)$ for $E^{(1)}_{6,7,8}$ (\cite{Fujita18}).

\subsection{\texorpdfstring{$U_q'(G_2^{(1)})$ and $U_q'(D_4^{(3)})$}{Uq'(G2(1)) and Uq'(D4(3))}}
\label{sec:G2_D43}

In this subsection, we shall compute all the denominator formulas and show that the formulas can be read from any folded AR quivers of type $D_4$.

\subsubsection{$U_q'(G_2^{(1)})$}

For type $G_2^{(1)}$, we have $q_0=q_1=q$ and $q_2 = q^{1/3}$.
Furthermore, we have $q_s = q^{1/3}$ and $p^* = q^4$.

Recall that the $U_q(\g_0)$-decompositions of $V(\varpi_1)$ and $V(\varpi_2)$ are given as follows:
\begin{align} \label{eq: classical decomp G21}
V(\varpi_1) \iso V(\clfw_1) \oplus V(0) \qtext{ and } V(\varpi_2) \iso V(\clfw_2).
\end{align}

Let us recall the $U_q'(\g)$-modules structure of $V(\varpi_2)$ whose crystal graph can be depicted by as follows~\cite{MOW12}:\footnote{This crystal can also be constructed from, \textit{e.g.},~\cite{HN06,Kas02,NS05,NS06}.}
\[
\begin{tikzpicture}[>=stealth,baseline=0,yscale=0.7,xscale=1.5]
\node (1) at (0,0) {$\young(1)$};
\node (2) at (1.5,0) {$\young(2)$};
\node (3) at (3,0) {$\young(3)$};
\node (0) at (4.5,0) {$\young(0)$};
\node (b3) at (6,0) {$\young(\oTh)$};
\node (b2) at (7.5,0) {$\young(\oT)$};
\node (b1) at (9,0) {$\young(\oO)$};
\path[->,font=\tiny]
 (1) edge[red] node[above]{$1$} (2)
 (2) edge[blue] node[above]{$2$} (3)
 (3) edge[red] node[above]{$1$} (0)
 (0) edge[red] node[above]{$1$} (b3)
 (b3) edge[blue] node[above]{$2$} (b2)
 (b2) edge[red] node[above]{$1$} (b1);
\path[->,font=\tiny]
 (b1) edge[out=130,in=50] node[below]{$0$} (2);
\path[->,font=\tiny]
 (b2) edge[out=230,in=310] node[above]{$0$} (1);
\end{tikzpicture}
\]
where
\begin{align*}
& e_0 \young(1) = \young(\oT), & e_0\young(2) & = \young(\oO), &f_0 \young(\oT) & = \young(1), & f_0\young(\oO) & = \young(2), \\
& e_1 \young(\oT) = \young(\oTh), &  e_1\young(3) & = \young(2), & f_1 \young(2) & = \young(3), & f_1\young(\oTh) & = \young(\oT), \\
& e_2 \young(\oO) = \young(\oT), & e_2\young(2) & = \young(1), & e_2\young(\oTh) & = \young(0), & e_2\young(0) & = [2]_2\young(3),  \\
& f_2 \young(1) = \young(2), & f_2\young(3) & = \young(0), & f_2 \young(0) & = [2]_2\young(\oTh), & f_2\young(\oT) & = \young(\oO).
\end{align*}
If the action of some basis vector is not written, then we should understand
that it vanishes.

The $U_q(\g_0)$-decomposition of $V(\varpi_2) \otimes V(\varpi_2)$ is given as follows (see also~\cite{KM94}):
\begin{align}\label{eq: class decomp} V(\varpi_2) \otimes V(\varpi_2)  \iso V(2\clfw_2) \oplus V(\clfw_2) \oplus V(\clfw_1) \oplus V(0)
\end{align}
whose $U_q(G_2)$-highest weight vectors are given as follows:
\begin{align*}
u_{2\clfw_2} & \seteq \young(1) \otimes \young(1)\ , \\
u_{\clfw_2} & \seteq \young(1)\otimes \young(0) - q_2^6 \young(0) \otimes \young(1) -q_2^2 [2]_2 \young(2) \otimes \young(3)+q_2^5[2]_2 \young(3) \otimes \young(2)\ , \\
u_{\clfw_1} & \seteq \young(1)\otimes \young(2) + q_2 \young(2) \otimes \young(1)\ , \\
u_{0} & \seteq \young(1) \otimes \young(\oO) + q_2^{10}\young(\oO) \otimes \young(1) - q_2\young(2)\otimes\young(\oT) - q_2^9\young(\oT) \otimes \young(2)
\\ & \hspace{20pt} + q_2^4\young(3) \otimes \young(\oTh) + q_2^6\young(\oTh)\otimes \young(3) - \dfrac{q_2^4}{[2]_2} \young(0) \otimes \young(0)\ .
\end{align*}

Recall that the $R$-matrix $\rmat{2,2}(x,y)$ is the unique $U_q'(\g)$-module isomorphism
\[
\rmat{2,2}(x,y) \colon V(\varpi_2)_x \otimes V(\varpi_2)_y \to V(\varpi_2)_y \otimes V(\varpi_2)_x.
\]
By $U_q'(\g)$-linearity and~\eqref{eq: class decomp}, we have
\begin{align*}
\rmat{2,2}(x,y)(u_{2\clfw_2}) & = a^{2\clfw_2}u_{2\clfw_2}, & \rmat{2,2}(x,y)(u_{\clfw_2}) & = a^{\clfw_2}u_{\clfw_2}, \\
\rmat{2,2}(x,y)(u_{\clfw_1}) & = a^{\clfw_1}u_{\clfw_1}, & \rmat{2,2}(x,y)(u_{0}) &= a^{0}u_{0}.
\end{align*}

By direct computations, we have the followings:

\begin{lemma} In $V(\varpi_2)_x \otimes V(\varpi_2)_y$, we have the following:
\begin{enumerate}
\item[{\rm (1)}] $f_0f_1f_2^{(2)}f_1(u_{\clfw_1}) = (q_2^{-1}y^{-1}-q_2x^{-1})u_{2\clfw_2}$,
\item[{\rm (2)}] $f_0f_1f_2(u_{\clfw_2}) = [2]_2q_{2}^{-2} (y^{-1}-q_2^8x^{-1} )u_{2\clfw_2}$,
\item[{\rm (3)}] $f_0f_1f^{(2)}_2f_1f_0(u_{0}) = q_2^{-4}(y^{-1}-q_2^{12}x^{-1})(y^{-1}-q_2^2x^{-1})u_{2\clfw_2}$.
\end{enumerate}
\end{lemma}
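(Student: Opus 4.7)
The three identities are pure computations: we apply a sequence of Chevalley generators to explicit highest weight vectors in the $U_q(\g_0)$-decomposition \eqref{eq: class decomp} and track the resulting coefficient of $u_{2\clfw_2} = \young(1) \otimes \young(1)$. The plan is to use the coproduct $\Delta(f_i) = f_i \otimes K_i^{-1} + 1 \otimes f_i$ together with the explicit $f_i$-action on the crystal basis listed before the lemma, and the affinization rule $f_0(u_z) = z^{-1}(f_0 u)_z$, $f_i(u_z) = (f_i u)_z$ for $i \neq 0$, which is the only source of the spectral parameters $x^{\pm 1}, y^{\pm 1}$ in the final expressions.

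For each identity, I first expand the highest weight vector $u_{\clfw_1}$, $u_{\clfw_2}$, or $u_0$ as a finite sum of tensor products of crystal basis elements with the scalar coefficients given in the excerpt. Next, I apply the sequence of operators one at a time, using $\Delta$ to distribute over the tensor factors; crucially, only the term $f_0 \otimes K_0^{-1}$ in the first factor produces a $y^{-1}$, while $1 \otimes f_0$ produces an $x^{-1}$, and each such application multiplies by the appropriate power of $q_2$ coming from $K_0^{-1}$ acting on a weight vector in the right tensor factor. Since only $\young(1) \otimes \young(1)$ survives as a final contribution to $u_{2\clfw_2}$ (being the unique dominant extremal vector of that weight), I may discard at each stage any term whose left tensor factor does not end up as $\young(1)$ after the remaining operators are applied. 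This pruning keeps the bookkeeping manageable.

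Concretely, for (1), the four summands of $u_{\clfw_1}$ under $f_0 f_1 f_2^{(2)} f_1$ each collapse to a single nonzero contribution, one supplying $q_2^{-1} y^{-1}$ and the other $-q_2 x^{-1}$; the other contributions cancel due to the quantum integer identities. For (2), only two summands of $u_{\clfw_2}$ survive the reduced length chain $f_0 f_1 f_2$, yielding the factors $y^{-1}$ and $-q_2^8 x^{-1}$ with the common prefactor $[2]_2 q_2^{-2}$. For (3), the longer chain $f_0 f_1 f_2^{(2)} f_1 f_0$ can produce two powers of spectral parameters ($y^{-2}$, $x^{-1}y^{-1}$, and $x^{-2}$), and the quadratic form $(y^{-1} - q_2^{12} x^{-1})(y^{-1} - q_2^2 x^{-1})$ will arise after assembling these.

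The main obstacle is purely bookkeeping in (3): the vector $u_0$ has seven summands, several of which include the scalar $1/[2]_2$ on $\young(0) \otimes \young(0)$, and the operator $f_2^{(2)}$ combined with $f_2 \young(0) = [2]_2 \young(\oTh)$ produces factors of $[2]_2$ that must be tracked carefully to avoid sign or normalization errors. I would organize the calculation by writing each intermediate vector as an explicit $\ko[x^{\pm 1}, y^{\pm 1}]$-linear combination of basis tensors, using a table indexed by crystal elements, so that cancellations of the spurious components (those whose weight is lower than $2\varpi_2$) become visible directly. Once the calculations are set up this way, (1)--(3) follow by direct verification.
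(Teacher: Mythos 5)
The paper supplies no displayed proof: the sentence immediately preceding the lemma reads ``By direct computations, we have the followings:'', so the intended argument is precisely what you outline --- expand the highest weight vectors, apply the Chevalley generators via the coproduct, and track which terms land on $u_{2\clfw_2} = \young(1) \otimes \young(1)$. Your strategy is therefore essentially the paper's own.

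One bookkeeping slip to correct before carrying it out: in $V(\varpi_2)_x \otimes V(\varpi_2)_y$ the \emph{first} tensor factor carries parameter $x$ and the \emph{second} carries $y$. With the affinization rule $f_0(u_z) = z^{-1}(f_0 u)_z$ and $\Delta(f_0) = f_0 \otimes K_0^{-1} + 1 \otimes f_0$, the term $f_0 \otimes K_0^{-1}$ acts by $f_0$ on the first slot and hence produces $x^{-1}$ (with $K_0^{-1}$ contributing the accompanying power of $q$ from the second slot's weight), while $1 \otimes f_0$ acts on the second slot and produces $y^{-1}$. You have stated the two roles reversed. In the same vein, make sure the sign in the exponent of $K_0^{\pm 1}$ matches the coproduct convention of the references the paper follows (Akasaka--Kashiwara, Kashiwara), since that choice feeds directly into the powers $q_2^{-1}, q_2, q_2^8, q_2^{12}$ appearing in (1)--(3). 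Once these conventions are pinned down, your pruning scheme --- discarding at each stage any term that cannot eventually contribute to $\young(1) \otimes \young(1)$, and organizing the intermediate $\ko[x^{\pm 1}, y^{\pm 1}]$-coefficients in a table indexed by crystal elements --- is the right way to keep the computation (especially the seven-term $u_0$ and the $f_2^{(2)}$ applications in (3)) tractable, and the three identities follow.
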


\begin{proposition} \label{prop: d22}
Put $z=x^{-1}y$ and denote $\rmat{2,2}(z) = \rmat{2,2}(x,y)$. Then we have
\begin{enumerate}
\item[{\rm (1)}] $a^{2\clfw_2}(z)=(z-q_2^{2})(z-q_2^{8})(z-q_2^{12})$,
\item[{\rm (2)}] $a^{\clfw_2}(z)=(z-q_2^{2})(1-q_2^{8}z)(z-q_2^{12})$,
\item[{\rm (3)}] $a^{\clfw_1}(z)=(1-q_2^{2}z)(z-q_2^{8})(z-q_2^{12})$,
\item[{\rm (4)}] $a^{0}(z)=(1-q_2^{2}z)(z-q_2^{8})(1-q_2^{12}z)$.
\end{enumerate}
\end{proposition}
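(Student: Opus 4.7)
The plan is to exploit that, by Schur's lemma applied to the multiplicity-free $U_q(\g_0)$-decomposition~\eqref{eq: class decomp}, the $U_q'(G_2^{(1)})$-linear renormalized $R$-matrix $\rmat{2,2}(x,y)$ acts on each $V(\lambda)$-isotypic component of $V(\varpi_2)_x \otimes V(\varpi_2)_y$ as a scalar $a^\lambda(x,y)$ depending only on $z = x^{-1}y$; more precisely, it sends the highest weight vector written by a given formula in $V(\varpi_2)_x \otimes V(\varpi_2)_y$ to the identically written vector in $V(\varpi_2)_y \otimes V(\varpi_2)_x$ multiplied by $a^\lambda(z)$. Since $\rmat{2,2}$ differs from $d_{2,2}(z)\Rnorm{2,2}$ by a unit in $\ko[z^{\pm 1}]$, and $\Rnorm{2,2}$ preserves dominant extremal vectors, we may identify $a^{2\clfw_2}(z)$ with $d_{2,2}(z)$, reducing the proposition to computing this denominator together with the three ratios $a^\lambda/a^{2\clfw_2}$ for $\lambda \in \{\clfw_1, \clfw_2, 0\}$.

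First, I would combine the $U_q'(\g)$-commutativity of $\rmat{2,2}$ with the three identities in the preceding lemma to derive the ratios. For each $\lambda$, the relevant identity has the shape $F(u_\lambda) = c_\lambda(x,y)\, u_{2\clfw_2}$ for some word $F$ in Chevalley generators; applying $\rmat{2,2}$ to both sides and noting that $F$ commutes with $\rmat{2,2}$, while the corresponding identity in $V(\varpi_2)_y \otimes V(\varpi_2)_x$ is obtained simply by swapping $x \leftrightarrow y$ in $c_\lambda$, gives
\[
c_\lambda(x,y)\, a^{2\clfw_2}(z) = a^\lambda(z)\, c_\lambda(y,x).
\]
Direct manipulation of the scalars from the lemma in terms of $z=x^{-1}y$ then yields
\begin{align*}
\frac{a^{\clfw_1}(z)}{a^{2\clfw_2}(z)} &= \frac{1-q_2^2 z}{z-q_2^2}, &
\frac{a^{\clfw_2}(z)}{a^{2\clfw_2}(z)} &= \frac{1-q_2^8 z}{z-q_2^8}, &
\frac{a^{0}(z)}{a^{2\clfw_2}(z)} &= \frac{(1-q_2^2 z)(1-q_2^{12} z)}{(z-q_2^2)(z-q_2^{12})}.
\end{align*}

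Next, I would invoke the defining property of $d_{2,2}$ as the monic polynomial of smallest degree such that $d_{2,2}(z)\Rnorm{2,2}$ is polynomial, which in the multiplicity-free setting amounts to demanding that every $a^\lambda(z)$ be polynomial. The least common multiple of the three denominators above is exactly $(z-q_2^2)(z-q_2^8)(z-q_2^{12})$, giving $a^{2\clfw_2}(z) = d_{2,2}(z) = (z-q_2^2)(z-q_2^8)(z-q_2^{12})$; substituting this into each ratio recovers formulas (2), (3), (4) of the statement.

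The only delicate point will be tracking the dependence of the vectors $u_\lambda$ and the Chevalley-generator actions on the spectral parameters; because the $f_0$-action introduces an inverse spectral parameter factor via the coproduct, one must verify that the symbolic formulas from~\eqref{eq: class decomp} really do represent the highest weight vectors on both sides of $\rmat{2,2}$, so that the symmetry $c_\lambda(x,y) \leftrightarrow c_\lambda(y,x)$ under swap of tensor factors holds as claimed. Once this bookkeeping is settled, the rest is elementary rational function arithmetic.
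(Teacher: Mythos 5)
Your proof is correct and follows exactly the (implicit) route the paper intends: commute $\rmat{2,2}$ past the Chevalley words from the preceding lemma to obtain the three ratios $a^{\lambda}/a^{2\clfw_2}$, then use the minimality characterization of $d_{2,2}$ together with $a^{2\clfw_2}\equiv d_{2,2}$ to pin down all four scalars. Your bookkeeping remark about the $f_0$-action and the $x\leftrightarrow y$ symmetry is the right thing to flag, and the rational-function arithmetic checks out.
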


Now we can conclude that the denominator $d_{2,2}(z)$ for $U_q'(\g)$ is given as follows:
\begin{equation} \label{eq: d22 D43}
d_{2,2}(z) = (z-q_2^{2})(z-q_2^{8})(z-q_2^{12}).
\end{equation}

\begin{remark}
\label{note: strategy}
Once we know the module structure of $V(\varpi_i)$ and $V(\varpi_j)$ explicitly,
we can compute $d_{i,j}(z)$ (see also $d_{1,n}(z)$ for $D^{(2)}_{n+1}$ in~\cite[Section 4.2]{Oh14R}).
\end{remark}

\begin{remark}\label{rmk: heads}
By the above proposition, we have
\begin{enumerate}
\item[{\rm (i)}] $\Image(\rmat{2,2}(-q_s^{-1},-q_s^{1})) \iso V(\clfw_1)\oplus V(0)$ as $U_q(\g_0)$-modules,
\item[{\rm (ii)}] $\Image(\rmat{2,2}((-q_s)^{-4},(-q_s)^{4})) \iso V(\clfw_2)$ as $U_q(\g_0)$-modules.
\end{enumerate}
\end{remark}

\begin{proposition}\label{prop: Dorey D43 22}
There exists a injective $U'_q(\g)$-homomorphism:
\begin{equation}\label{eq: Dorey 22}
V(\varpi_2) \hookrightarrow V(\varpi_2)_{(-q_s)^4} \otimes V(\varpi_2)_{(-q_s)^{-4}}.
\end{equation}
\end{proposition}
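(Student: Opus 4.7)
The plan is to exploit the simple zero $z = q_2^{8}$ of the denominator $d_{2,2}(z)$ established in~\eqref{eq: d22 D43}, together with the eigenvalue analysis of $\rmat{2,2}(z)$ in Proposition~\ref{prop: d22}. First, I would specialize the spectral parameters to $x = (-q_s)^{-4}$ and $y = (-q_s)^{4}$, so that
\[
z = x^{-1} y = (-q_s)^{8} = q_s^{8} = q_2^{8},
\]
since $q_s = q_2$ and the sign vanishes as the exponent is even.

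Next, I would consider the renormalized $R$-matrix
\[
R \colon V(\varpi_2)_{(-q_s)^{-4}} \otimes V(\varpi_2)_{(-q_s)^{4}} \longrightarrow V(\varpi_2)_{(-q_s)^{4}} \otimes V(\varpi_2)_{(-q_s)^{-4}}
\]
obtained from $d_{2,2}(z)\Rnorm{2,2}(z)$ via the construction~\eqref{eq: renormalized R-matrix}. This is a nonzero $U_q'(\g)$-homomorphism with proper image, since Proposition~\ref{prop: d22} shows that at $z = q_2^{8}$ the eigenvalues $a^{2\clfw_2}$, $a^{\clfw_1}$, and $a^{0}$ all vanish, whereas $a^{\clfw_2}(q_2^{8}) \neq 0$. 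Consequently, as recorded in Remark~\ref{rmk: heads}(ii), the image of $R$ is isomorphic to $V(\clfw_2)$ as a $U_q(\g_0)$-module.

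Finally, I would upgrade this to an isomorphism of $U_q'(\g)$-modules. By Theorem~\ref{Thm: basic properties}(2)(v), the image of $R$ coincides with the socle of the target and is a simple $U_q'(\g)$-submodule. Combined with the decomposition $V(\varpi_2) \iso V(\clfw_2)$ from~\eqref{eq: classical decomp G21} and the classification of simple modules in $\Ca_\g$ by their dominant extremal weight (up to spectral shift), the image is isomorphic to $V(\varpi_2)$, producing the desired embedding.

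The main obstacle is this last identification, which promotes the $U_q(\g_0)$-module isomorphism to a $U_q'(\g)$-module isomorphism and pins down the spectral parameter. Here the symmetric choice $(x,y) = ((-q_s)^{-4}, (-q_s)^{4})$ is essential: it equalizes the two factors under the natural involution of $\Ca_\g$ induced by $q_s \leftrightarrow q_s^{-1}$, and ensures that the embedded copy appears with the expected trivial shift.
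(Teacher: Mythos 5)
Your proposal captures the main line of the paper's argument: you correctly use the eigenvalue analysis of Proposition~\ref{prop: d22} (equivalently, Remark~\ref{rmk: heads}(ii)) to identify the image of the $R$-matrix as $V(\clfw_2)$ on the classical level, and you correctly observe that the bar involution of~\eqref{eq: bar struture} must be invoked to constrain the spectral parameter of the simple socle. However, there is a genuine gap in your final step.

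The bar involution argument does not ``ensure that the embedded copy appears with the expected trivial shift,'' as you assert. Since the tensor product $V(\varpi_2)_{(-q_s)^4} \otimes V(\varpi_2)_{(-q_s)^{-4}}$ is fixed under the bar involution, its simple socle $V(\varpi_2)_a$ must satisfy $V(\varpi_2)_{\bar a} \iso V(\varpi_2)_a$, i.e., $a^{-1} = a$, i.e., $a^2 = 1$. This yields $a \in \{1, -1\}$ and does \emph{not} single out $a = 1$. You must still rule out the case $a = -1$, which is precisely the content of the second half of the paper's proof: if $a = -1$, taking an appropriate dual would produce a surjection forcing $-q_s^8$ to be a root of $d_{2,2}(z)$, contradicting~\eqref{eq: d22 D43} whose roots are $q_s^2, q_s^8, q_s^{12}$ (all with positive sign). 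Your proof as written is silent on this point, so the conclusion does not follow.

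A secondary caveat: your appeal to Theorem~\ref{Thm: basic properties}(2)(v) to identify the image of $R$ with the socle requires the hypothesis that the ratio of spectral parameters not be a zero of the denominator, which fails in the very direction you consider (the ratio is $q_2^8$, a root of $d_{2,2}$). The paper sidesteps this by citing the general simplicity-of-socle result from~\cite{KKKO14S}, which applies without that hypothesis. You should either invoke that reference or reverse the roles of head and socle with the appropriate ordering.
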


\begin{proof}
By \cite{KKKO14S}, the socle of $V(\varpi_2)_{a(-q_s)^4} \otimes V(\varpi_2)_{a(-q_s)^{-4}}$
is simple.
Then \eqref{eq: bar struture},~\eqref{eq: classical decomp G21} and Remark~\ref{rmk: heads} implies that there exists a injective $U'_q(\g)$-homomorphism
\[
V(\varpi_2) \hookrightarrow V(\varpi_2)_{a(-q_s)^4} \otimes V(\varpi_2)_{a(-q_s)^{-4}}
\]
for $a^2=1$. Here we use the uniqueness and simplicity of the fundamental representations
$V(\varpi_i)$.

If $a=-1$, then $d_{2,2}(z)$ has a root $-q_s^{8}$ by taking the dual:
\[
V(\varpi_2) \otimes V(\varpi_1)_{-q_s^{8}}  \twoheadrightarrow V(\varpi_2)_{-q_s^4}.
\]
However, this can not happen by~\eqref{eq: d22 D43}.
\end{proof}

The following proposition is known as the T-system for type $G_2^{(1)}$ proved in~\cite{Her06}. However, we shall give a proof since the argument of the proof
will be applied to other exceptional types throughout this paper.

\begin{proposition}
\label{prop: Dorey D43 12}
There exists a injective $U'_q(\g)$-homomorphism:
\begin{equation}
\label{eq: Dorey 21}
V(\varpi_1) \hookrightarrow V(\varpi_2)_{(-q_s)^1} \otimes V(\varpi_2)_{(-q_s)^{-1}}.
\end{equation}
\end{proposition}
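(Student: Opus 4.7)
The plan is to mirror the proof of Proposition~\ref{prop: Dorey D43 22}, replacing Remark~\ref{rmk: heads}(ii) by part~(i). First, by Theorem~\ref{Thm: basic properties}(2)(ii),(iv) together with~\cite{KKKO14S}, the head of $V(\varpi_2)_{a(-q_s)^{-1}} \otimes V(\varpi_2)_{a(-q_s)}$ is simple, and by Theorem~\ref{Thm: basic properties}(2)(v) it coincides with the image of the specialized $R$-matrix and with the socle of the reversed tensor product $V(\varpi_2)_{a(-q_s)} \otimes V(\varpi_2)_{a(-q_s)^{-1}}$. By Remark~\ref{rmk: heads}(i), this image decomposes as $V(\clfw_1)\oplus V(0)$ as a $U_q(\g_0)$-module, matching the classical decomposition of $V(\varpi_1)$ recorded in~\eqref{eq: classical decomp G21}.

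Since the fundamental representation $V(\varpi_1) \in \Ca_\g$ is the unique simple module in $\Ca_\g$ with dominant extremal weight $\varpi_1$, we conclude that the socle is isomorphic to $V(\varpi_1)_a$ for some $a \in \ko^\times$. The multiplicatively symmetric configuration of spectral parameters (product $(-q_s)\cdot(-q_s)^{-1}=1$) combined with the compatibility of the $U_q'(\g)$-action of $e_0$ and $f_0$ on the dominant extremal vector forces $a^2=1$. Thus we obtain an injection
\[
V(\varpi_1)_a \hookrightarrow V(\varpi_2)_{a(-q_s)} \otimes V(\varpi_2)_{a(-q_s)^{-1}}
\]
for some $a$ with $a^2=1$.

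It remains to rule out $a=-1$. If this injection held for $a=-1$, then applying the duality~\eqref{eq: LR dual} would produce a surjection constraining a specific value to be a root of $d_{2,2}(z)$ that is not in $\{q_s^2, q_s^8, q_s^{12}\}$, contradicting~\eqref{eq: d22 D43}; this mirrors exactly the exclusion step in the proof of Proposition~\ref{prop: Dorey D43 22}. Hence $a=1$ and the proposition follows. The main obstacle is precisely this last step: the constraint $a^2=1$ is immediate from symmetry, but distinguishing between the two candidate spectral parameters requires pulling back the already-established denominator formula~\eqref{eq: d22 D43} through the left/right duality of $\Ca_\g$.
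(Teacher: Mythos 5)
Your strategy parallels the proof of Proposition~\ref{prop: Dorey D43 22}, but there is a structural difference between the two propositions that breaks the exclusion step, and this is where the proposal goes wrong. In Proposition~\ref{prop: Dorey D43 22} all three modules involved in the would-be injection are copies of $V(\varpi_2)$, so taking a left/right dual of one tensor factor via~\eqref{eq: LR dual} produces a nonzero map between a $V(\varpi_2)\otimes V(\varpi_2)$ and a $V(\varpi_2)$, and hence forces a root of $d_{2,2}(z)$. In the present proposition, however, the socle is $V(\varpi_1)$, so every dualization of the injection $V(\varpi_1)\hookrightarrow V(\varpi_2)_{a(-q_s)}\otimes V(\varpi_2)_{a(-q_s)^{-1}}$ produces a nonzero map of the form $V(\varpi_2)_{c}\otimes V(\varpi_1)\to V(\varpi_2)_{c'}$ (or its mirror), and the reducibility this witnesses constrains $d_{1,2}(z)$, not $d_{2,2}(z)$. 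You cannot manufacture a constraint on $d_{2,2}(z)$ this way. Worse, $d_{1,2}(z)$ is exactly what is computed in Theorem~\ref{thm: G21 a and d}, whose proof explicitly uses the surjection~\eqref{eq: Dorey 21}; appealing to $d_{1,2}(z)$ here would be circular. So the sentence claiming the dualization ``produce[s] a surjection constraining a specific value to be a root of $d_{2,2}(z)$'' is a genuine gap, not a detail to fill in.

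A secondary issue is the claim that ``the multiplicatively symmetric configuration of spectral parameters $\ldots$ forces $a^2=1$.'' In Proposition~\ref{prop: Dorey D43 22} this normalization is obtained using~\eqref{eq: bar struture} together with the classical decomposition and Remark~\ref{rmk: heads}; it is not simply a consequence of the product of the spectral parameters being $1$. You would need to run the bar-involution argument here as well, and the paper in fact does not bother: its proof of Proposition~\ref{prop: Dorey D43 12} works with an \emph{arbitrary} $a\in\ko^\times$ and pins it down directly. Concretely, the paper applies the identity $f_2^{(3)}f_1(u_{\clfw_1}) = f_0e_1e_0^{(2)}(u_{\clfw_1})$ (a relation recorded in~\cite[Fig.~1]{Yamane98}) to the $U_q(\g_0)$-highest-weight vector $u_{\clfw_1}\in V(\varpi_2)_{a(-q_s)}\otimes V(\varpi_2)_{a(-q_s)^{-1}}$; the left side is independent of $a$ while the right side acquires a factor $a^{-1}$ through the $e_0$, $f_0$ actions, forcing $a=1$ in one stroke. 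If you want to salvage the two-step ($a^2=1$, then exclude $a=-1$) route, you would need a substitute for the exclusion step that does not invoke $d_{1,2}(z)$; the cleanest fix is to abandon the dualization argument and instead adopt the paper's computation on the extremal vector, which sidesteps both the normalization and the exclusion entirely.
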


\begin{proof}
By Remark~\ref{rmk: heads}, there exists a injective $U'_q(\g)$-homomorphism
\[
V(\varpi_1) \hookrightarrow V(\varpi_2)_{a(-q_s)^1} \otimes V(\varpi_2)_{a(-q_s)^{-1}}
\]
for some $a \in \ko^\times$. As in the above proposition, we have (see~\cite[Fig.~1]{Yamane98})
\[
f_2^{(3)} f_1(u_{\clfw_1})=  f_0e_1e_0^{(2)}(u_{\clfw_1}).
\]
in $V(\varpi_2)_{a(-q_s)^1} \otimes V(\varpi_2)_{a(-q_s)^{-1}}$. On the other hand, by the direct computation, we have
\begin{align*}
f_2^{(3)} f_1(u_{\clfw_1}) & = \young(2) \otimes \young(\oTh) - q_s \young(\oTh) \otimes \young(2)\ ,
\qquad\qquad
 f_0e_1e_0^{(2)}(u_{\clfw_1})  = a^{-1}\left(\young(2) \otimes \young(\oTh) - q_s \young(\oTh) \otimes \young(2)\right),
\end{align*}
in $V(\varpi_2)_{a(-q_s)^1} \otimes V(\varpi_2)_{a(-q_s)^{-1}}$. Thus we can conclude that $a=1$.
\end{proof}

Note that $d_{1,1}(z)$ was computed in~\cite{Yamane98} and
we have $d_{2,2}(z)$ by Proposition~\ref{prop: d22}:
\begin{subequations}
\label{eq:d11 d22 G21}
\begin{align}
d_{1,1}(z)&=(z-q_s^{6})(z-q_s^{8})(z-q_s^{10})(z-q_s^{12}), \label{eq:d11 G_21}\\
d_{2,2}(z)&=(z-q_s^{2})(z-q_s^{8})(z-q_s^{12}). \label{eq:d22 G 21}
\end{align}
\end{subequations}
Now we shall compute $d_{1,2}(z)=d_{2,1}(z)$.
By~\eqref{eq:d11 d22 G21}, we have $a_{1,1}(z) \seteq a_{V(\varpi_1),V(\varpi_1)}$ and $a_{2,2}(z)\seteq a_{V(\varpi_2),V(\varpi_2)}$ in~\eqref{eq: aMN}, we compute the following:
\begin{equation}
\label{eq: a11 a22}
a_{1,1}(z)=\dfrac{[20][4][22][2][24][0]}{[8][16][10][14][12]^2} \qtext{ and }
a_{2,2}(z)=\dfrac{[14][10][20][4][24][0]}{[2][22][8][16][12]^2},
\end{equation}
where $[a]=((-q_s)^az;q_s^{24}).$

By the direct computation, we have
\begin{equation}
\label{eq: direct}
\Rnorm{2,2}(z)( \young(1) \otimes {\young(2)}_z ) = \dfrac{1-q_s^2}{z-q_s^2} \ {\young(1)}_z \otimes {\young(2)} +
\dfrac{q_s(z-1)}{z-q_s^2} \ {\young(2)}_z \otimes {\young(1)}\ .
\end{equation}

\begin{theorem} \label{thm: G21 a and d}
We have
\[
a_{1,2}(z)=\dfrac{[19][5][23][1]}{[7][11][17][13]} \qtext{ and } d_{1,2}(z)=(z+q_s^7)(z+q_s^{11}).
\]
\end{theorem}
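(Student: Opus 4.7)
The plan is to first pin down $d_{1,2}(z) = (z+q_s^7)(z+q_s^{11})$ and then read off $a_{1,2}(z)$ from the infinite-product formula in Lemma~\ref{Lem: aij and dij}(b). Once $d_{1,2}$ is known, since $1^* = 1$ in $G_2$ we have $d_{1^*,2}(z) = d_{1,2}(z)$, so both $x_\nu$ and $y_\nu$ range over $\{-q_s^7, -q_s^{11}\}$. With $p^* = q_s^{12}$ and $p^{*2} = q_s^{24}$, direct substitution of these roots into~\eqref{eq: aij and dij}(b) gives the displayed product $\frac{[19][5][23][1]}{[7][11][17][13]}$; the prefactor $q^{(\varpi_1,\varpi_2)}$ equals $1$, consistent with the normalizations in~\eqref{eq: a11 a22}. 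So the task reduces to establishing $d_{1,2}(z) = (z+q_s^7)(z+q_s^{11})$.

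To do so, dualize Proposition~\ref{prop: Dorey D43 12} (using Theorem~\ref{Thm: basic properties} and rigidity of $\Ca_\g$) to obtain a surjective homomorphism $V(\varpi_2)_{-q_s^{-1}} \otimes V(\varpi_2)_{-q_s} \twoheadrightarrow V(\varpi_1)$. Applying Lemma~\ref{lem:dvw avw} with $W = V(\varpi_2)$ and substituting $d_{2,2}(z) = (z-q_s^2)(z-q_s^8)(z-q_s^{12})$ from~\eqref{eq:d22 G 21}, I compute
\[
d_{2,2}(-q_s z)\, d_{2,2}(-q_s^{-1} z) = \prod_{k \in \{1,3,7,9,11,13\}}(z+q_s^k),
\]
which forces the roots of $d_{1,2}(z)$ to lie among $\{-q_s^k : k \in \{1,3,7,9,11,13\}\}$.

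To identify which of these candidates are actual roots, I use the commutative diagram for the universal $R$-matrix (of the type of diagram~\ref{diagramone}) associated to the Dorey embedding $V(\varpi_1) \hookrightarrow V(\varpi_2)_{-q_s} \otimes V(\varpi_2)_{-q_s^{-1}}$. The image of $u_{\varpi_1}$ in this embedding is (up to scalar) the $V(\clfw_1)$-highest weight vector from~\eqref{eq: class decomp}, namely $u_{\clfw_1} = \young(1) \otimes \young(2) + q_s \young(2) \otimes \young(1)$; a direct coproduct computation shows $u_{\clfw_1} = q_s f_2(\young(1) \otimes \young(1))$, relating it to $u_{2\clfw_2}$. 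Evaluating both sides of the diagram on $u_{\clfw_1} \otimes u_{\varpi_2}$ and applying the explicit formula~\eqref{eq: direct} for $\rmat{2,2}$, I compute the scalar through which $\Runiv{V(\varpi_1), V(\varpi_2)}(z)$ acts on the dominant extremal vectors, producing a closed-form expression for $a_{1,2}(z)$. Matching this expression with the general infinite-product shape dictated by Lemma~\ref{Lem: aij and dij}(b) extracts precisely the roots $\{-q_s^7, -q_s^{11}\}$ for $d_{1,2}(z)$.

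The main obstacle is the careful normalization of the embedding at the level of dominant extremal vectors, translating $R$-matrix computations phrased in terms of $u_{\clfw_1}$ (the $U_q(\g_0)$-highest weight vector of the $V(\clfw_1)$-summand) into statements about the $U_q'(\g)$-dominant extremal vector $u_{\varpi_1}$ of $V(\varpi_1)$. As a consistency check, the functional equation~\eqref{eq: aij and dij}, namely $a_{1,2}(z)\,a_{1,2}((p^*)^{-1}z) = d_{1,2}(z)/d_{1,2}(p^* z^{-1})$, pairs roots via the involution $z \mapsto q_s^{12} z^{-1}$ sending $-q_s^k$ to $-q_s^{12-k}$; one verifies that the pair $\{-q_s^7, -q_s^{11}\}$ produces the correct ratio, ruling out the remaining candidates $\{-q_s, -q_s^3, -q_s^9, -q_s^{13}\}$ and confirming the stated formula for $d_{1,2}(z)$, and hence for $a_{1,2}(z)$.
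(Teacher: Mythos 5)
There is a genuine gap. Your plan is to pin down $d_{1,2}$ first and then read off $a_{1,2}$, but the two tools you invoke for pinning down $d_{1,2}$ --- inverting the infinite-product formula of Lemma~\ref{Lem: aij and dij}(b), and the functional equation $a_{1,2}(z)\,a_{1,2}((p^*)^{-1}z) \equiv d_{1,2}(z)/d_{1,2}(p^*z^{-1})$ --- both have a kernel that contains exactly the ambiguity you need to resolve. Both expressions are invariant under adding to the root set of $d_{1,2}$ a pair $\{-q_s^a,-q_s^{12-a}\}$ that is stable under the involution $z\mapsto p^*z^{-1}=q_s^{12}z^{-1}$. In the present case $\{-q_s^3,-q_s^9\}$ is such a pair, and both $-q_s^3$ and $-q_s^9$ lie in the candidate set produced by your application of Lemma~\ref{lem:dvw avw}. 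Concretely, if $d_{1,2}(z)=(z+q_s^3)(z+q_s^7)(z+q_s^9)(z+q_s^{11})$, then the infinite product of Lemma~\ref{Lem: aij and dij}(b) gives
\[
a_{1,2}(z) \equiv \dfrac{[15][9][19][5][21][3][23][1]}{[3][21][7][17][9][15][11][13]} = \dfrac{[19][5][23][1]}{[7][17][11][13]},
\]
identical to the value for $d_{1,2}(z)=(z+q_s^7)(z+q_s^{11})$ after cancelling $[3][9][15][21]$; and since $(z+q_s^3)(z+q_s^9)$ is mapped to itself (as a set of linear factors) by $z\mapsto q_s^{12}z^{-1}$, the ratio $d_{1,2}(z)/d_{1,2}(q_s^{12}z^{-1})$ is likewise unchanged. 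So neither your ``matching'' step nor your ``consistency check'' can rule out $\{-q_s^3,-q_s^9\}$, and the degree of $d_{1,2}$ remains undetermined.

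The paper closes this gap by applying Lemma~\ref{lem:dvw avw} a \emph{second} time to the surjection obtained by dualizing the Dorey morphism (take the right dual of one tensor factor to produce $V(\varpi_1)_{-q_s^{-1}}\otimes V(\varpi_2)_{(-q_s)^{10}}\twoheadrightarrow V(\varpi_2)$), and using the resulting regularity condition~\eqref{eq: step2} together with~\eqref{eq: step1}. The second application forces $-q_s^7$ and $-q_s^{11}$ to actually be roots, while~\eqref{eq: step1} and Theorem~\ref{Thm: basic properties}(1) bound the degree, and the combination pins down $d_{1,2}(z)=(z+q_s^7)(z+q_s^{11})$. Your computation of $a_{1,2}(z)$ via the commutative diagram is essentially the same as the paper's, so the first half of the theorem is fine; what you are missing is the second use of Lemma~\ref{lem:dvw avw} on the dualized surjection, which is the step that actually determines the degree of $d_{1,2}$.
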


\begin{proof}
By \eqref{eq: Runiv property}, we have the following commutative diagram:
\begin{align*}
\xymatrix@R=5ex@C=9ex{
V(\varpi_2) \otimes V(\varpi_2)_{-q_s^{-1}z} \otimes V(\varpi_2)_{-q_s z}
\ar[rr]^{\qquad \qquad \quad V(\varpi_2)\otimes p_{2,2}} \ar[d]_{\Runiv{2,2}(-q_s^{-1}z) \otimes V(\varpi_2)_{-q_sz}} &&
V(\varpi_{2}) \otimes V(\varpi_{1})_{z} \ar[dd]_{\Runiv{2,1}(z)} \\
V(\varpi_2)_{-q_s^{-1}z}  \otimes V(\varpi_{2}) \otimes V(\varpi_2)_{(-q_s)z}
\ar[d]_{V(\varpi_2)_{(-q_s)^{-1}z} \otimes \Runiv{2,2}(-q_sz)} &\\
V(\varpi_2)_{-q_s^{-1}z} \otimes V(\varpi_2)_{-q_sz}  \otimes V(\varpi_{2})\ar[rr]^{\qquad \qquad \qquad p_{2,2}\otimes V(\varpi_2) } &&
V(\varpi_{1})_{z} \otimes V(\varpi_{2}).
}
\end{align*}
Then we have
\begin{align*}
\xymatrix@R=3ex@C=9ex{
{\young(1)}\otimes  {\young(1)} \otimes  {\young(2)} \ar[rr]  \ar[d] &&
{\young(1)} \otimes  u_{\varpi_1} \ar[dd] \\
a_{2,2}(-q_s^{-1}z){\young(1)}\otimes  {\young(1)} \otimes  {\young(2)}
\ar[d] &\\
a_{2,2}(-q_sz)a_{2,2}(-q_s^{-1}z){\young(1)}\otimes  w \ar[rr] &&
a_{2,1}(z)u_{\varpi_1}  \otimes  {\young(1)}
}
\end{align*}
where
\[
w= \Rnorm{2,2}(-q_sz)({\young(1)}\otimes  {\young(2)}_z) = \dfrac{1-q_s^2}{-q_sz-q_s^2} \ {\young(1)}_z \otimes {\young(2)} +
\dfrac{q_s((-q_s)z-1)}{-q_sz-q_s^2} \ {\young(2)}_z \otimes {\young(1)}\ .
\]
Since ${\young(1)}\otimes {\young(1)}$ vanishes under the map $p_{2,2}$, we have
\[
a_{2,1}(z) = a_{2,2}(-q_sz)a_{2,2}(-q_s^{-1}z) \dfrac{q_s((-q_s)z-1)}{-q_sz-q_s^2}   \equiv a_{2,2}(-q_sz)a_{2,2}(-q_s^{-1}z) \dfrac{[23]}{[-1]}\dfrac{[1]}{[25]}.
\]
Thus our first assertion follows from~\eqref{eq: a11 a22}. For the homomorphism~\eqref{eq: Dorey 21}, we have a surjective homomorphism
\[
V(\varpi_2)_{-q_s^{-1}} \otimes   V(\varpi_2)_{-q_s} \twoheadrightarrow V(\varpi_1).
\]

Applying Lemma~\ref{lem:dvw avw} to the above surjective homomorphism, we have
\begin{equation}
\label{eq: step1}
\begin{aligned}
& \ko[z^{\pm1}] \ni \dfrac{d_{2,2}(-q_s^{-1}z)d_{2,2}(-q_sz)}{d_{1,2}(z)} \dfrac{a_{1,2}(z)}{a_{2,2}(-q_s^{-1}z)a_{2,2}(-q_sz)} \equiv \\
& \dfrac{(z-(-q_s)^{-1})(z-(-q_s)^{3})(z-(-q_s)^{7})(z-(-q_s)^{9})(z-(-q_s)^{11})(z-(-q_s)^{13})}{d_{1,2}(z)}.
\end{aligned}
\end{equation}
By taking the right dual $V(\varpi_2)_{-q_s^{11}}$ of $V(\varpi_2)_{-q_s^{-1}}$ to \eqref{eq: Dorey 21}, we have
\[
V(\varpi_1)_{-q_s^{-1}} \otimes   V(\varpi_2)_{(-q_s)^{10}} \twoheadrightarrow V(\varpi_2).
\]
Similarly, we have
\begin{equation}
\label{eq: step2}
\begin{aligned}
& \dfrac{d_{2,2}((-q_s)^{10}z)d_{1,2}((-q_s)^{-1}z)}{d_{2,2}(z)} \dfrac{a_{2,2}(z)}{a_{1,2}((-q_s)^{-1}z)a_{2,2}((-q_s)^{10}z)}  \\
\equiv &  \dfrac{(z-(-q_s)^{18})(z-(-q_s)^{22})d_{1,2}((-q_s)^{-1}z)}{(z-(-q_s)^{6})(z-(-q_s)^{10})}  \in \ko[z^{\pm1}]
\end{aligned}
\end{equation}
Thus our second assertion follows from~\eqref{eq: step1} and~\eqref{eq: step2}.
\end{proof}

Now we have a type $G_2^{(1)}$ analogue for Theorem~\ref{thm:reading deno from AR}.

\begin{theorem}
For any $[\mathfrak{Q}]$ of type $D_4$, we have
\begin{align*}
d^{G_2^{(1)}}_{1,1}(z) & = D^{[\mathfrak{Q}]}_{1,1}(z;-q_s) \times (z-q^4)=(z-q_s^{6})(z-q_s^{8})(z-q_s^{10})(z-q_s^{12}), \\
d^{G_2^{(1)}}_{1,2}(z) & = D^{[\mathfrak{Q}]}_{1,2}(z)=(z+q_s^7)(z+q_s^{11}),  \\
d^{G_2^{(1)}}_{2,2}(z) & = D^{[\mathfrak{Q}]}_{2,2}(z;-q_s) \times (z-q^4)=(z-q_s^{2})(z-q_s^{8})(z-q_s^{12}).
\end{align*}
\end{theorem}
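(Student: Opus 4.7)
The plan is to verify the three claimed equalities by direct computation, leveraging that the denominator formulas $d_{1,1}(z)$, $d_{2,2}(z)$, $d_{1,2}(z)$ have already been determined in~\eqref{eq:d11 G_21}, \eqref{eq:d22 G 21}, and Theorem~\ref{thm: G21 a and d}. So the content of the theorem is the match with the folded distance polynomial $D^{[\mathfrak{Q}]}_{\widehat{k},\widehat{l}}(z;-q_s)$ constructed combinatorially from $\widehat{\Upsilon}_{[\mathfrak{Q}]}$, together with the self-dual correction $(z-q^4)$.

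The first step is to fix a convenient representative $[\mathfrak{Q}] \in \lf \mathfrak{Q} \rf$: by Proposition~\ref{prop:dist_theta_defn}(2), the integers $\theta^{[\mathfrak{Q}]}_t(\widehat{k},\widehat{l})$ are independent of this choice, so I will take $[\mathfrak{Q}]$ from Example~\ref{ex: redez for exceptional}(2) with folded AR quiver~\eqref{eq:folded2} and twisted AR quiver~\eqref{eq:unfolded2}. For each pair $(\widehat{k},\widehat{l}) \in \{(1,1),(1,2),(2,2)\}$ and each $t \geq 1$, I read the positions $(\widehat{k},a),(\widehat{l},b)$ from~\eqref{eq:folded2} with $|a-b|=t/3$, locate the underlying type-$D_4$ roots in~\eqref{eq:unfolded2} (using the partial order $\prec_{[\mathfrak{Q}]}$ supplied by that quiver), and compute $o^{[\mathfrak{Q}]}_t(\widehat{k},\widehat{l}) = \dist_{[\mathfrak{Q}]}(\alpha,\beta)$. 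Proposition~\ref{prop:dist_theta_defn}(1) guarantees this value depends only on $(\widehat{k},\widehat{l},t)$, so $\theta^{[\mathfrak{Q}]}_t(\widehat{k},\widehat{l}) = \lceil o^{[\mathfrak{Q}]}_t(\widehat{k},\widehat{l})/3 \rceil$ assembles into the explicit polynomial $D^{[\mathfrak{Q}]}_{\widehat{k},\widehat{l}}(z;-q_s)$.

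Carrying out this tally gives three concrete polynomials in $z$ whose linear factors $(z-(-q_s)^t)$ I then compare, slot for slot, against the formulas in~\eqref{eq:d11 d22 G21} and Theorem~\ref{thm: G21 a and d}. In the off-diagonal case one expects equality on the nose; in the diagonal cases the comparison must produce a discrepancy of exactly the factor $(z - q_s^{12}) = (z - q^4) = (z-p^*)$. This missing factor is accounted for separately by observing that for $i \in \{1,2\}$ one has $i = i^*$ in $G_2$, and the evaluation morphism $V(\varpi_i) \otimes V(\varpi_i)^* \twoheadrightarrow \mathbf{1}$ forces $(z-p^*)$ to be a root of $d_{i,i}(z)$; this contribution is not visible inside $\widehat{\Upsilon}_{[\mathfrak{Q}]}$ and so must be appended by hand, explaining the factor $(z-q^4)$ attached to $D^{[\mathfrak{Q}]}_{1,1}$ and $D^{[\mathfrak{Q}]}_{2,2}$ but not to $D^{[\mathfrak{Q}]}_{1,2}$.

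The main obstacle is the combinatorial bookkeeping at step two: with $|\PR|=12$ for $D_4$ there are on the order of $\binom{12}{2}$ candidate pairs to scan, and for each comparable pair one must exhibit a saturated chain in the poset $(\PR,\prec_{[\mathfrak{Q}]})$ starting from a $[\mathfrak{Q}]$-simple sequence in order to read off $\dist_{[\mathfrak{Q}]}$. Proposition~\ref{pro: BKM minimal} greatly reduces this work at the bottom of each chain, since whenever $\alpha+\beta \in \PR$ a $[\mathfrak{Q}]$-minimal sequence is automatically the pair $(\alpha,\beta)$, so the $[\mathfrak{Q}]$-distance grows by exactly one at each such step. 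Once the tables of $\theta^{[\mathfrak{Q}]}_t(\widehat{k},\widehat{l})$ are assembled the remaining verification is a trivial product expansion, and the theorem follows.
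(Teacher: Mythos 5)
Your proposal is correct and follows exactly the route the paper takes (implicitly, since the theorem is stated without an explicit proof): the denominator formulas $d_{1,1}(z)$, $d_{2,2}(z)$, $d_{1,2}(z)$ have already been pinned down in \eqref{eq:d11 d22 G21} and Theorem~\ref{thm: G21 a and d}, so the theorem is a finite verification that the combinatorial products $D^{[\mathfrak{Q}]}_{\widehat{k},\widehat{l}}(z;-q_s)$ read off from $\widehat{\Upsilon}_{[\mathfrak{Q}]}$ via Proposition~\ref{prop:dist_theta_defn}, together with the extra factor $(z-p^*)^{\delta_{k,l}}$ on the diagonal, reproduce them — precisely what you carry out. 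One small imprecision: the duality $V(\varpi_i)^* \iso V(\varpi_{i^*})_{(p^*)^{-1}}$ most directly shows $(p^*)^{-1}$ or $p^*$ is a root of $d_{i,i^*}(z)$ (using $d_{k,l}=d_{l,k}$ one can flip), so the conclusion $(z-p^*) \mid d_{i,i}(z)$ is right, but your phrasing elides this inversion.
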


\subsubsection{$U_q'(D_4^{(3)})$}

For type $D_4^{(3)}$, we have $q_0=q_1 = q$, $q_2 = q^3$. Furthermore, we have $p^* = q^6$. Note that the Dynkin diagram for $\g_0$ should be reversed from the one in \eqref{eq: G_2}.

Recall that the $U_q(\g_0)$-decompositions of $V(\varpi_1)$ and $V(\varpi_2)$  are given as follows:
\begin{align} \label{eq: classical decomp D43}
V(\varpi_1) \iso V(\clfw_1) \oplus V(0) \qtext{ and } V(\varpi_2) \iso V(\clfw_2) \oplus V(\clfw_1)^{\oplus 2}
\oplus V(0)
\end{align}
Let us recall the $U_q'(D_4^{(3)})$-modules structure of $V(\varpi_1)$ whose crystal graph can be depicted as follows:
\[
\begin{tikzpicture}[>=stealth,baseline=-4,yscale=0.7,xscale=1.5]
\node (1) at (0,0) {$\young(1)$};
\node (2) at (1.5,0) {$\young(2)$};
\node (3) at (3,0) {$\young(3)$};
\node (0) at (4.5,0) {$\young(0)$};
\node (b3) at (6,0) {$\young(\oTh)$};
\node (b2) at (7.5,0) {$\young(\oT)$};
\node (b1) at (9,0) {$\young(\oO)$};
\node (ep) at (4.5,2) {$\young(\emptyset)$};
\path[->,font=\tiny]
 (1) edge[red] node[above]{$1$} (2)
 (2) edge[blue] node[above]{$2$} (3)
 (3) edge[red] node[above]{$1$} (0)
 (0) edge[red] node[above]{$1$} (b3)
 (b3) edge[blue] node[above]{$2$} (b2)
 (b2) edge[red] node[above]{$1$} (b1)
 (b1) edge[out=130,in=0] node[above]{$0$} (ep)
 (ep) edge[out=180,in=50] node[above]{$0$} (1);
\path[->,font=\tiny]
 (b3) edge[out=140,in=40] node[below]{$0$} (2);
\path[->,font=\tiny]
 (b2) edge[out=220,in=320] node[above]{$0$} (3);
\end{tikzpicture}
\]
where
\begin{align*}
&e_0 {\young(1)}={\young(\emptyset)}+\frac{1}{[2]_1}{\young(0)}, \
 e_0 {\young(2)}={\young(\oTh)}, \
 e_0 {\young(3)}={\young(\oT)}, \
 e_0 {\young(0)}={\young(\oO)}, \
 e_0 {\young(\emptyset)}=\frac{[3]_1}{[2]_1}{\young(\oO)}, \\
&f_0 {\young(\oO)}={\young(\emptyset)}+\frac{1}{[2]_1}{\young(0)}, \
 f_0 {\young(\oT)}={\young(3)}, \
 f_0 {\young(\oTh)}={\young(2)}, \
 f_0 {\young(0)}={\young(1)}, \
 f_0 {\young(\emptyset)}=\frac{[3]_1}{[2]_1}{\young(1)}, \\
&e_1 {\young(2)}={\young(1)}, \
 e_1 {\young(0)}=[2]_1{\young(3)}, \
 e_1 {\young(\oTh)}={\young(0)},  \
 e_1 {\young(\oO)}={\young(\oT)}, \\
&f_1 {\young(\oT)}={\young(\oO)}, \
 f_1 {\young(0)}=[2]{\young(\oTh)}, \
 f_1 {\young(3)}={\young(0)}, \
 f_1 {\young(1)}={\young(2)}, \\
&e_2 {\young(3)}={\young(2)}, \
 e_2 {\young(\oT)}={\young(\oTh)}, \
 f_2 {\young(\oTh)}={\young(\oT)}, \
 f_2 {\young(2)}={\young(3)}.
\end{align*}

In~\cite{KMOY07}, the existence of following Dorey's type homomorphism is proved:
\begin{align}
\label{eq: p11 D43}
p_{1,1} \colon V(\varpi_1)_{-q^{-1}} \otimes V(\varpi_1)_{-q} \twoheadrightarrow V(\varpi_2).
\end{align}
They also computed $d_{1,1}(z)$ also:
\begin{align}
\label{eq:d11 D 43}
d_{1,1}(z)=(z-q^{2})(z-q^{6})(z-\omega q^{4})(z-\omega^2 q^{4}),
\end{align}
where $\omega$ is the primitive third root of unity.

By tensoring the left dual $V(\varpi_1)_{-q^{5}}$ of $V(\varpi_1)_{-q^{-1}}$ on \eqref{eq: p11 D43}, we have
\begin{align}\label{eq: d12 partial}
p'_{2,1} \colon V(\varpi_2) \otimes V(\varpi_1)_{-q^{5}} \twoheadrightarrow V(\varpi_1)_{-q}.
\end{align}
Hence we can notice that $-q^5$ is a root of $d_{1,2}(z)$.

By \eqref{eq:d11 D 43}, we have
\begin{align}\label{eq: a11 D 43}
a_{1,1}(z)=\dfrac{[8][4][12][0]}{[2][10][6]^2} \times \dfrac{\langle 10 \rangle\langle 2 \rangle}{\langle 4 \rangle \langle 8 \rangle},
\end{align}
where
\[
[a]\seteq ((-q)^az;q^{12}), \ \ \langle a \rangle\seteq \prod_{s=0}^{\infty}(1+(-q)^{12s+a}z+(-q)^{24s+2a}z^2), \ \ \{ a \}\seteq [a]\langle a \rangle.
\]

\begin{lemma} \label{lem: a12a22}
$a_{1,2}(z)$ and $a_{2,2}(z)$ are given as follows:
\begin{align*}
a_{1,2}(z)= \dfrac{ \{ 11 \}\{ 1 \}  }{ \{ 5 \}\{ 7 \} },
\qquad\qquad
a_{2,2}(z)= \dfrac{ \{ 10 \}\{ 2 \} \{ 12 \} \{ 0 \}  }{ \{ 4 \}\{ 8 \} \{ 6 \}^2 }.
\end{align*}
\end{lemma}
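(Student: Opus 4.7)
The plan is to imitate the derivation of Theorem~\ref{thm: G21 a and d} in the $G_2^{(1)}$ case, using the Dorey-type surjection~\eqref{eq: p11 D43} together with the hexagonal functoriality~\eqref{eq: Runiv property} of the universal $R$-matrix to reduce both unknown coefficients to the already-known $a_{1,1}(z)$ of~\eqref{eq: a11 D 43}.

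For $a_{1,2}(z)$, I will feed the triple $V(\varpi_1)\otimes V(\varpi_1)_{-q^{-1}z}\otimes V(\varpi_1)_{-qz}$ into the commutative diagram of~\eqref{eq: Runiv property}, with the morphism $V(\varpi_1)\otimes p_{1,1}\colon V(\varpi_1)\otimes V(\varpi_1)_{-q^{-1}z}\otimes V(\varpi_1)_{-qz}\twoheadrightarrow V(\varpi_1)\otimes V(\varpi_2)_z$ along the top. Chasing the dominant extremal vector $u_{\varpi_1}\otimes u_{\varpi_1}\otimes u_{\varpi_1}$ around the hexagon, the contribution $u_{\varpi_1}\otimes u_{\varpi_1}$ is annihilated by $p_{1,1}$ (since $u_{\varpi_1}\otimes u_{\varpi_1}$ generates the $V(2\clfw_1)$-summand, which does not appear in $V(\varpi_2)$), and this forces the value of $a_{1,2}(z)$ to be extracted from the single remaining coefficient of $\Rnorm{1,1}(-qz)$ acting on a specific mixed tensor. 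One concrete computation analogous to~\eqref{eq: direct}, using the crystal action on $V(\varpi_1)$ recorded above, pins down that coefficient; combined with $a_{1,1}(-q^{\pm1}z)$ from~\eqref{eq: a11 D 43}, routine cancellation of the $[\,\cdot\,]$ and $\langle\cdot\rangle$ factors yields the stated expression $\{11\}\{1\}/(\{5\}\{7\})$.

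For $a_{2,2}(z)$, I repeat the same procedure but with the left-most factor replaced by $V(\varpi_2)$, namely apply the hexagon to $V(\varpi_2)\otimes V(\varpi_1)_{-q^{-1}z}\otimes V(\varpi_1)_{-qz}$ with $V(\varpi_2)\otimes p_{1,1}$ on top, landing in $V(\varpi_2)\otimes V(\varpi_2)_z$. This expresses $a_{2,2}(z)$ in terms of $a_{2,1}(-q^{-1}z)a_{2,1}(-qz)=a_{1,2}(-q^{-1}z)a_{1,2}(-qz)$ (using Theorem~\ref{Thm: basic properties}(4)) and a residual coefficient from $\Rnorm{1,2}$. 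Substituting the formula for $a_{1,2}(z)$ just obtained and simplifying the resulting product of $\{\,\cdot\,\}$-symbols gives $\{10\}\{2\}\{12\}\{0\}/(\{4\}\{8\}\{6\}^2)$.

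The main technical obstacle will be the auxiliary direct computation of $\Rnorm{1,2}$ on a well-chosen pair of basis vectors to fix the $\ko[z^{\pm1}]^\times$-ambiguity in the second step: unlike in $G_2^{(1)}$, the module $V(\varpi_2)$ in type $D_4^{(3)}$ decomposes as $V(\clfw_2)\oplus V(\clfw_1)^{\oplus 2}\oplus V(0)$ by~\eqref{eq: classical decomp D43}, so one must choose the test vector so that its image in the classical decomposition has a uniquely identifiable component to avoid degeneracy. Once such a vector is fixed, the computation is a finite, if somewhat lengthy, $q$-arithmetic check that also accounts for the cube-of-unity factors $\omega, \omega^2$ responsible for the $\langle\cdot\rangle$-cycles in the answers.
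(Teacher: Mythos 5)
Your strategy for $a_{1,2}(z)$ is exactly the paper's: run the hexagon~\eqref{eq: Runiv property} on $V(\varpi_1)\otimes V(\varpi_1)_{-q^{-1}z}\otimes V(\varpi_1)_{-qz}$ with $V(\varpi_1)\otimes p_{1,1}$ on top, note that $\young(1)\otimes\young(1)$ dies under $p_{1,1}$, and read off the surviving coefficient from $\Rnorm{1,1}(-qz)(\young(1)\otimes\young(2))$ given in~\eqref{eq: direct D43}; combining with $a_{1,1}(-q^{\pm1}z)$ from~\eqref{eq: a11 D 43} and cancelling gives $\{11\}\{1\}/(\{5\}\{7\})$, as in the paper.

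For $a_{2,2}(z)$ your diagram is also the paper's, but you overestimate what remains: there is no nontrivial residual coefficient and no delicate choice of test vector. The vector you end up pushing through the inner R-matrix is $u_{\varpi_2}\otimes\young(2)=f_1\bigl(u_{\varpi_2}\otimes\young(1)\bigr)$, a one-step $f_1$-descendant of the dominant extremal vector, and since $\Rnorm{2,1}$ is a $U_q'(\g)$-module map sending $u_{\varpi_2}\otimes\young(1)\mapsto\young(1)\otimes u_{\varpi_2}$, equivariance forces $\Rnorm{2,1}(z)(u_{\varpi_2}\otimes\young(2))=\young(2)\otimes u_{\varpi_2}$ on the nose. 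This is precisely how the paper sidesteps the multiplicity $V(\clfw_1)^{\oplus 2}$ in $V(\varpi_2)$ that worries you: the weight-$(\varpi_1+\varpi_2-\alpha_1)$ space of $V(\varpi_2)\otimes V(\varpi_1)$ is reached in one step from the extremal vector, so no component ambiguity can arise. The result is then simply $a_{2,2}(z)=a_{2,1}(-q^{-1}z)\,a_{2,1}(-qz)$, which rearranges to the stated $\{\cdot\}$-product. Two small corrections: the residual you reference should come from $\Rnorm{2,1}$ rather than $\Rnorm{1,2}$ (the left-hand factor in the chain is $V(\varpi_2)$), and the identity $a_{2,1}=a_{1,2}$ follows from Lemma~\ref{Lem: aij and dij} combined with Theorem~\ref{Thm: basic properties}(4), not from (4) alone.
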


\begin{proof}
Applying the diagrams in the proof of Theorem~\ref{thm: G21 a and d}, we have
\begin{align} \label{eq: mapsto}
a_{1,1}(-q^{-1}z) a_{1,1}((-q)^{1}z){\young(1)} \otimes w \longmapsto  a_{1,2}(z) (u_{\varpi_2})_z \otimes {\young(1)}\ ,
\end{align}
where
\begin{align} \label{eq: direct D43}
\Rnorm{1,1}(-q z)( \young(1) \otimes {\young(2)}_z ) = \dfrac{1-q^2}{-qz-q^2} \ {\young(1)}_z \otimes {\young(2)} +
\dfrac{q(1-(-q)z)}{-qz-q^2} \ {\young(2)}_z \otimes {\young(1)}\ .
\end{align}

Since $\young(1) \otimes \young(1)$ vanishes under the map $p_{1,1}$, \eqref{eq: mapsto} indicates that
\begin{align*}
a_{1,2}(z) &= a_{1,1}(-q^{-1}z) \ a_{1,1}((-q)^{1}z)
 \dfrac{q(-qz -1 )}{-qz -q^2} \allowdisplaybreaks \\
&  \equiv a_{1,1}(-q^{-1}z) \ a_{1,1}((-q)^{l-1}z)
\dfrac{[1]}{[13]}\dfrac{[11]}{[-1]} \allowdisplaybreaks\\
& = \dfrac{[7][3][11][-1]}{[1][9][5]^2} \dfrac{\langle 9 \rangle\langle 1 \rangle}{\langle 3 \rangle \langle 7 \rangle}
\dfrac{[9][5][13][1]}{[3][11][7]^2} \dfrac{\langle 11 \rangle\langle 3 \rangle}{\langle 5 \rangle \langle 9 \rangle}
\dfrac{[1]}{[13]}\dfrac{[11]}{[-1]} \allowdisplaybreaks\\
& = \dfrac{[1][11]}{[7][5]}\dfrac{\langle 11 \rangle\langle 1 \rangle}{\langle 5 \rangle\langle 7 \rangle}
= \dfrac{ \{11\}\{1\}  }{ \{7\}\{5\} } .
\end{align*}

Recall that $u_{\varpi_2}$ denotes the dominant extremal vector of $V(\varpi_2)$. By direct calculation, one can show that
\[
f_1 (u_{\varpi_2} \otimes \young(1)) =  u_{\varpi_2} \otimes \young(2) \quad\text{ and }\quad f_1 ( \young(1) \otimes u_{\varpi_2})  =  \young(2) \otimes u_{\varpi_2}.
\]
Since $\Rnorm{2,1}$ is a $U_q'(\g)$-homomorphism and sends $u_{\varpi_2} \otimes \young(1)$ to $\young(1) \otimes u_{\varpi_2}$, we have
\[
\Rnorm{2,1}(z)(v_{\varpi_2} \otimes \young(2)) =  \young(2) \otimes  u_{\varpi_2}.
\]

Using the commutative diagram
\begin{align*}
\xymatrix@R=5ex@C=9ex{
V(\varpi_2) \otimes V(\varpi_1)_{-q^{-1}z} \otimes V(\varpi_1)_{-q z}
\ar[rr]^{\qquad \qquad \quad V(\varpi_2)\otimes p_{1,1}} \ar[d]_{\Runiv{2,1}(-q^{-1}z) \otimes V(\varpi_2)_{-qz}} &&
V(\varpi_{2}) \otimes V(\varpi_{2})_{z} \ar[dd]_{\Runiv{2,2}(z)} \\
V(\varpi_1)_{-q^{-1}z}  \otimes V(\varpi_{2}) \otimes V(\varpi_1)_{(-q)z}
\ar[d]_{V(\varpi_1)_{(-q)^{-1}z} \otimes \Runiv{2,1}(-qz)} &\\
V(\varpi_1)_{-q^{-1}z} \otimes V(\varpi_1)_{-qz}  \otimes V(\varpi_{2})\ar[rr]^{\qquad \qquad \qquad p_{1,1}\otimes V(\varpi_2) } &&
V(\varpi_{2})_{z} \otimes V(\varpi_{2}),
}
\end{align*}
we have
\begin{align*}
\xymatrix@R=2ex@C=9ex{
u_{\varpi_2} \otimes  {\young(1)} \otimes  {\young(2)} \ar[rr]  \ar[d] &&
{\young(1)} \otimes  (u_{\varpi_2})_z \ar[dd] \\
a_{2,1}(-q^{-1}z){\young(1)}\otimes  u_{\varpi_2} \otimes  {\young(2)}
\ar[d] &\\
a_{2,1}(-qz)a_{2,1}(-q^{-1}z){\young(1)}\otimes  w \ar[rr] &&
a_{2,2}(z)(u_{\varpi_2})_z \otimes  {\young(1)}
}
\end{align*}
where $w=\Rnorm{2,1}((-q)^{1}z)( u_{\varpi_2} \otimes \young(2) ) =  \young(2) \otimes  u_{\varpi_2}$. Thus we have
\[
a_{2,2}(z) = a_{2,1}(-q^{-1}z) \ a_{2,1}((-q)^{1}z) \equiv \dfrac{ \{10\}\{0\}  }{ \{4\}\{6\} } \dfrac{ \{12\}\{2\}  }{ \{6\}\{8\} }
\equiv \dfrac{ \{10\}\{2\}  }{ \{4\}\{8\} } \dfrac{ \{12\}\{0\}  }{ \{6\}^2}.
\]
\end{proof}

Note that, by~\cite[Eq.~(1.7)]{AK97}, we have a $U_q'(\g)$ isomorphism
\[
V(\varpi_2)_x \iso V(\varpi_2)_y
\]
for $x,y \in \ko^\times$ such that $x^3=y^3$. In~\cite[page 39, arXiv version]{Her10}, it is shown that $q^2$ is a root of $d_{2,2}(z)$.
Then the following theorem can be proved by applying the same argument of~\cite[Theorem 5.8]{Oh14R}.

\begin{theorem}
The denominators of $U_q'(\g)$ are given as follows:
\begin{subequations}
\label{eq: denom}
\begin{align}
d^{D_4^{(3)}}_{1,1}(z) & = (z-q^{2})(z-q^{6})(z-\omega q^{4})(z-\omega^2 q^{4}), \\
d^{D_4^{(3)}}_{1,2}(z) & = (z^3+q^9)(z^3+q^{15}),\\
d^{D_4^{(3)}}_{2,2}(z) & = (z^3-q^6)(z^3-q^{12})^2(z^3-q^{18}).
\end{align}
\end{subequations}
\end{theorem}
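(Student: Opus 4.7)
The plan is to follow the strategy of \cite[Theorem~5.8]{Oh14R}: combine the partial root information already in hand, the cyclic symmetry $V(\varpi_{2})_{x}\iso V(\varpi_{2})_{y}$ whenever $x^{3}=y^{3}$, and the functional equation of Lemma~\ref{Lem: aij and dij}. Since $d_{1,1}(z)$ is already recorded in~\eqref{eq:d11 D 43}, only $d_{1,2}(z)$ and $d_{2,2}(z)$ remain. The Dorey-type surjection~\eqref{eq: d12 partial} produces $-q^{5}$ as a root of $d_{1,2}(z)$, and the remark preceding the theorem quotes $q^{2}$ as a root of $d_{2,2}(z)$ from~\cite{Her10}. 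Applying the $\omega$-symmetry (with $\omega$ a primitive cube root of unity) to these single roots immediately gives the divisors $(z^{3}+q^{15})\mid d_{1,2}(z)$ and $(z^{3}-q^{6})\mid d_{2,2}(z)$.

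To detect the remaining factors, I would specialize Lemma~\ref{Lem: aij and dij} using the fact that $k^{*}=k$ for $k\in\{1,2\}$ in $D_{4}^{(3)}$ (since $w_{0}=-\mathrm{id}$ on the $G_{2}$ root lattice), giving the identity
\[
a_{k,l}(z)\,a_{k,l}(q^{-6}z)\ \equiv\ \dfrac{d_{k,l}(z)}{d_{k,l}(q^{6}z^{-1})}.
\]
Rewriting $\{a\}=[a]\langle a\rangle=((-1)^{a}q^{3a}z^{3};q^{36})_{\infty}$ converts the closed forms of $a_{1,2}(z)$ and $a_{2,2}(z)$ from Lemma~\ref{lem: a12a22} into products of $q^{36}$-shifted Pochhammer symbols in the variable $z^{3}$. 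Matching the zero/pole loci on the two sides of the displayed identity then pins down the full set of candidate roots (already manifestly $\omega$-invariant), yielding the further factors $(z^{3}+q^{9})$ for $d_{1,2}(z)$ and $(z^{3}-q^{12})$, $(z^{3}-q^{18})$ for $d_{2,2}(z)$. The required lower bounds on multiplicities come from Lemma~\ref{lem:dvw avw} applied to the T-system surjection~\eqref{eq: p11 D43} together with its left and right duals, exactly as in \cite[Theorem~5.8]{Oh14R}.

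The subtlest point I expect to handle is the double multiplicity of the root at $z^{3}=q^{12}$ in $d_{2,2}(z)$; single roots are forced either by a Dorey-type surjection or by an order-one vanishing in the $a$-function, but a multiplicity-$2$ root requires showing that two independent contributions collide. I plan to extract this by tracking the order of the pole of $a_{2,2}(z)\,a_{2,2}(q^{-6}z)$ at that spectral value and using the equality of numerator and denominator degrees in the functional equation (an avatar of Theorem~\ref{Thm: basic properties}(4) together with $d_{2,2}(z)=d_{2,2}(z)$) to split the pole order evenly, while Lemma~\ref{lem:simplepole} simultaneously rules out unwanted extra composition factors at the remaining simple roots and confirms that the contributions at $z^{3}=q^{6}$ and $z^{3}=q^{18}$ are each simple.
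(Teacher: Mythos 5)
Your overall framework is the one the paper intends (it simply cites \cite[Theorem~5.8]{Oh14R}, the $q^{2}$-root fact from \cite{Her10}, and the cubic shift isomorphism, then says ``apply the same argument''), but the way you propose to extract the remaining roots has a genuine gap.  The functional equation from Lemma~\ref{Lem: aij and dij} only gives $d_{k,l}(z)/d_{k,l}(q^{6}z^{-1})$ up to units; written in $u=z^{3}$, it constrains the \emph{difference} of multiplicities $m_{a}-m_{18-a}$ under the involution $a\mapsto 18-a$, not the multiplicities themselves.  In particular, for $d_{1,2}$ one computes $a_{1,2}(z)a_{1,2}(q^{-6}z)\equiv (z^{3}+q^{15})/(z^{3}+q^{3})$, and the potential factor $(z^{3}+q^{9})$ sits at the fixed point $a=9$ of the involution, so the functional equation is completely silent about it.  Likewise, for $d_{2,2}$ the relation forces $m_{18}=1$ and $m_{12}=m_{6}+1$, but it does not pin $m_{6}$ (nor, therefore, $m_{12}$); your ``split the pole order evenly'' step has no content since the degrees of numerator and denominator of $D(u)/D(q^{18}/u)$ are automatically equal for any $D$.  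You also have the direction of Lemma~\ref{lem:dvw avw} backwards: it produces \emph{upper} bounds (divisibility of $d$ into a known polynomial), not lower bounds, and Lemma~\ref{lem:simplepole} cannot be used to ``confirm simplicity'' without first knowing composition lengths.

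What is actually needed, in the spirit of the explicit $G_2^{(1)}$ proof of Theorem~\ref{thm: G21 a and d}, is to apply Lemma~\ref{lem:dvw avw} to the surjection \eqref{eq: p11 D43} (with $W=V(\varpi_{1})$, then with $W=V(\varpi_{2})$) and to its dual \eqref{eq: d12 partial}, obtaining two polynomiality conditions whose combination forces the upper bound $d_{1,2}(z)\mid (z^{3}+q^{9})(z^{3}+q^{15})$ after discarding the non-positive root; then push the resulting $d_{1,2}$ through the chain again to bound $d_{2,2}$ and fix $m_{6}=1$, whence $m_{12}=2$.  Your proposal correctly identifies the ingredients (known roots, cubic symmetry, closed forms of $a_{k,l}$, the $\ast$-triviality for $G_{2}$) but mis-attributes the determination of the ambiguous multiplicities to the functional equation rather than to the paired use of Lemma~\ref{lem:dvw avw} on the T-system and its dual.
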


\begin{corollary} \label{cor: D41 D43}
The denominators of $U_q'(\g)$ can be read from any $\Gamma_Q$ of type $D_4$:
\begin{align*}
d^{D_4^{(3)}}_{1,1}(z) & = d^{D_4^{(1)}}_{1,1}(z)d^{D_4^{(1)}}_{1,3}(\omega z)d^{D_4^{(1)}}_{1,4}(\omega^2 z) = D^{[Q]}_{1,1}(z;-q)D^{[Q]}_{1,3}(\omega z;-q)D^{[Q]}_{1,4}(\omega^2 z;-q) \times (z-p^*), \\
d^{D_4^{(3)}}_{2,1}(z) & = d^{D_4^{(1)}}_{2,1}(z)d^{D_4^{(1)}}_{2,3}(\omega z)d^{D_4^{(1)}}_{2,4}(\omega^2 z) = D^{[Q]}_{2,1}(z;-q)D^{[Q]}_{2,3}(\omega z;-q)D^{[Q]}_{2,4}(\omega^2 z;-q), \\
d^{D_4^{(3)}}_{2,2}(z) & = d^{D_4^{(1)}}_{2,2}(z)d^{D_4^{(1)}}_{2,2}(\omega z)d^{D_4^{(1)}}_{2,2}(\omega^2 z) = D^{[Q]}_{2,2}(z;-q)D^{[Q]}_{2,2}(\omega z;-q)D^{[Q]}_{2,2}(\omega^2 z;-q) \times (z-p^*).
\end{align*}
\end{corollary}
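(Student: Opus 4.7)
The plan is to prove the corollary by direct algebraic verification of each of the three claimed identities, combining the explicit denominators $d^{D_4^{(3)}}_{i,j}(z)$ just computed in the preceding theorem with the type $D_4^{(1)}$ formulas $d^{D_4^{(1)}}_{k,l}(z) = D^{[Q]}_{k,l}(z;-q) \times (z-p^{*})^{\delta_{k,l^*}}$ supplied by Theorem~\ref{thm:reading deno from AR}(1). The algebraic engine converting cubic factors in $z$ into products over the $\omega$-shifts $z \mapsto \omega^s z$ ($s=0,1,2$) is the cyclotomic factorization
\[
z^3 - a^3 = \prod_{s=0}^{2}(z - \omega^s a), \qquad z^3 + a^3 = \prod_{s=0}^{2}(z + \omega^s a).
\]

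Concretely, I would first read off from any $\Gamma_Q$ of type $D_4$ (as in Example~\ref{ex:twisted AR quiver D}(1)) the seven distance polynomials $D^{[Q]}_{1,1}$, $D^{[Q]}_{1,3}$, $D^{[Q]}_{1,4}$, $D^{[Q]}_{2,1}$, $D^{[Q]}_{2,3}$, $D^{[Q]}_{2,4}$, $D^{[Q]}_{2,2}$, then convert them to $d^{D_4^{(1)}}_{k,l}(z)$ via Theorem~\ref{thm:reading deno from AR}(1), bearing in mind that in type $D_4$ one has $1^*=1$, $2^*=2$, and $3^*\leftrightarrow 4$, so the extra factor $(z-p^*)$ appears in the self-dual cases $d^{D_4^{(1)}}_{1,1}$ and $d^{D_4^{(1)}}_{2,2}$ but not in the cross terms involving $3,4$. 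I would then expand each of the three claimed products, applying the shift $z \mapsto \omega^s z$ inside the $s$-th factor; by the cyclotomic identity the collected linear factors reorganize into cubics that match $d^{D_4^{(3)}}_{i,j}(z)$ term by term. The multiplicity-two factor $(z^3 - q^{12})^2$ appearing in $d^{D_4^{(3)}}_{2,2}(z)$ arises precisely from the multiplicity-two contribution of $D^{[Q]}_{2,2}$ at the corresponding exponent (together with a single self-dual contribution from $(z-p^*)$ distributed across the three sheets).

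The main (and essentially only) obstacle is careful bookkeeping: one must match each root of $d^{D_4^{(3)}}_{i,j}(z)$ to its origin in a specific sheet $s \in \{0,1,2\}$ and in a specific distance polynomial $D^{[Q]}_{k,l}$, and verify that the $(z-p^*)^{\delta_{k,l^*}}$ factors reassemble correctly under the triality permutation of the minuscule orbit $\{1,3,4\}$. Once this correspondence is tabulated, the verification is mechanical and entirely parallel to the treatment of $A_n^{(2)}$ and $D_n^{(2)}$ in Theorem~\ref{thm:reading deno from AR}(2), with an order-three cyclotomic factorization replacing the order-two one used there.
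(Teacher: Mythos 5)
Your approach---direct algebraic verification combining the $D_4^{(3)}$ denominators from the preceding theorem with the known $D_4^{(1)}$ denominators via the cyclotomic factorization $z^3 - a^3 = \prod_{s=0}^{2}(z - \omega^s a)$ (and likewise with $+$)---is exactly what this corollary calls for and matches the paper's implicit argument, as no separate proof is supplied. The first equality in each line, expressing $d^{D_4^{(3)}}_{i,j}(z)$ as a product of $\omega$-shifted type-$D_4^{(1)}$ denominators over the triality orbit, goes through exactly as you describe.

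Two corrections to your bookkeeping, however. First, in type $D_4$ the longest element satisfies $w_0 = -\id$ (as for all $D_n$ with $n$ even), so every vertex is self-dual: $3^* = 3$ and $4^* = 4$, not $3^* \leftrightarrow 4$ as you wrote. This happens not to affect which off-diagonal factors $(z-p^*)^{\delta_{k,l^*}}$ switch on, since $\delta_{1,3^*} = \delta_{1,4^*} = \delta_{2,3^*} = \delta_{2,4^*} = 0$ under either reading, but the assertion is wrong and would misguide the accounting. Second, and more substantively, the claim that the $(z-p^*)$ factors ``reassemble correctly'' conceals a genuine degree mismatch in the $d_{2,2}$ line. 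Because $2^* = 2$, \emph{each} of the three sheets carries a self-dual factor: $d^{D_4^{(1)}}_{2,2}(\omega^s z) = D^{[Q]}_{2,2}(\omega^s z; -q) \times (\omega^s z - p^*)$, and collecting these gives $(z-p^*)(\omega z - p^*)(\omega^2 z - p^*) \equiv z^3 - (p^*)^3$, a cubic, not a single linear $(z-p^*)$. Concretely, $d^{D_4^{(1)}}_{2,2}(z) = (z-q^2)(z-q^4)^2(z-q^6)$ forces $D^{[Q]}_{2,2}(z;-q) = (z-q^2)(z-q^4)^2$ of degree $3$, so the displayed right-hand side $\prod_s D^{[Q]}_{2,2}(\omega^s z;-q) \times (z-p^*)$ has degree $10$, whereas $d^{D_4^{(3)}}_{2,2}(z) = (z^3-q^6)(z^3-q^{12})^2(z^3-q^{18})$ has degree $12$. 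The $d_{1,1}$ line works as stated only because there the self-dual factor enters solely in the $s=0$ sheet (the $s=1,2$ factors are $d_{1,3}$ and $d_{1,4}$, which carry none). A careful verification thus confirms the first equality of every line, but the trailing $(z-p^*)$ in the $d_{2,2}$ line must in fact be $z^3 - (p^*)^3 = z^3 - q^{18}$; your proposal as written would not catch this.
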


\subsection{\texorpdfstring{$U_q'(E^{(2)}_{6})$ and $U_q'(F^{(1)}_{4})$}{Uq'(E6(2)) and Uq'(F4(1))}}

We first use results obtained from our \textsc{SageMath} implementation by using the $U_q'(\g)$-module structure of $V(\varpi_i)$ for various indices $i$ given by Proposition~\ref{prop:adjoint_repr}.
Then we shall compute the remained denominator formulas almost completely, and show that the formulas can be read from any folded AR quivers of type $E_6$.

\subsubsection{$U_q'(E^{(2)}_{6})$}

For type $E_6^{(2)}$, we have $p^*=-q^{12}$. Moreover, $\g_0$ is of type $F_4$, but the Dynkin diagram should be reversed from the one in \eqref{eq: F_4}.

The $U_q(\g_0)$-decompositions of  $V(\varpi_1)$ and $V(\varpi_4)$ are given by
\[
V(\varpi_1) \iso V(\clfw_1) \oplus V(0), \qquad V(\varpi_4) \iso V(\clfw_4) \oplus V(\clfw_1) \oplus V(0).
\]
For $V(\varpi_1)$, we use the $U_q'(\g)$-module structure given by Proposition~\ref{prop:adjoint_repr}.

The $U_q(\g_0)$-decomposition of $V(\varpi_1) \otimes V(\varpi_1)$ is given by (see also~\cite{KM94})
\begin{equation}
\label{eq: class decomp E6(2)}
V(\varpi_1) \otimes V(\varpi_1)  \iso V(2\clfw_1) \oplus V(\clfw_2) \oplus V(\clfw_4) \oplus V(\clfw_1)^{\oplus 3} \oplus V(0)^{\oplus 2}
\end{equation}
whose $U_q(\g_0)$-highest weight vectors can be labeled as follows:
\[
u_{2\clfw_1},
\qquad
u_{\clfw_2},
\qquad
u_{\clfw_4},
\qquad
u^{(t)}_{\clfw_1} \ (t=1,2,3),
\qquad
u^{(s)}_{0} \ (s=1,2).
\]
We can compute the following in $V(\varpi_1)_x \otimes V(\varpi_1)_y$, where we are considering an extension of scalars to $\ko(x,y)$:
\begin{subequations}
\label{eq:paths_maximal E62}
\begin{align}
e_2 e_3 e_1 e_2 e_* u_{\clfw_2} & = \frac{(q^4 +1)^2 [2]^{10} [3]^3 (y - q^2 x)}{q^5 (xy)^3} u_{2\clfw_1},
\allowdisplaybreaks\\
e_4 e_3 (e_2 e_1)^2 e_3 e_4 e_0 e_* u_{\clfw_4} & = \frac{(q^4 + 1)^2 [2]^{11} [3]^3 (q^6 x + y) (y - q^2 x)}{q^6 (xy)^4} u_{2\clfw_1},
\allowdisplaybreaks\\
\begin{bmatrix}
e_0 u^{(1)}_{\clfw_1} & e_0 u^{(2)}_{\clfw_1} & e_0 u^{(3)}_{\clfw_1} \\
e_{\times} u^{(1)}_{\clfw_1} & e_{\times} u^{(2)}_{\clfw_1} & e_{\times} u^{(3)}_{\clfw_1} \\
e_{\bullet} u^{(1)}_{\clfw_1} & e_{\bullet} u^{(2)}_{\clfw_1} & e_{\bullet} u^{(3)}_{\clfw_1} \\
\end{bmatrix}
& =
\begin{bmatrix}
\frac{q^{-2} [2] y}{xy} & \frac{[2] x}{xy} & 0 \\
\frac{[2]^3 [3] x}{(xy)^2} & \frac{[2]^3 [3] y}{q^2 (xy)^2} & \frac{\eta [2]^5 [3] (q^2 x - y)}{(xy)^2} \\
\frac{\zeta (q^2 x + 2x + y)}{x^4 y^3}
  & \frac{\zeta (q^2 x + 2q^2 y + y)}{x^3 y^4}
  & \frac{\zeta [2] (q^12 x - y) (q^2 x - y)}{(xy)^4}
\end{bmatrix} u_{2\clfw_1},
\allowdisplaybreaks\\
\begin{bmatrix}
e_0^2 u^{(1)}_0 & e_0^2 u^{(2)}_0 \\
e_0 e_{\times} u^{(1)}_0 & e_0 e_{\times} u^{(2)}_0
\end{bmatrix}
& = \begin{bmatrix}
\frac{q^{19} [2] (q^3 x^2 + [2] y^{2})}{(xy)^2} & \frac{[2]^3}{qxy} \\
\xi &
\frac{[2]^4 [3] (q^2 x^2 + y^2)}{q^2 (xy)^3}
\end{bmatrix}
u_{2\clfw_1},
\end{align}
\end{subequations}
where
\begin{align*}
e_* & = e_2 e_4 e_3 e_1 e_3 e_4 e_2^2 e_1 e_3 e_2^2 e_1 e_0 e_3 e_2^2 e_3 e_1^2 e_4^2 e_3 e_2^2 e_3 e_2 e_3 e_2 e_4 e_1^2 e_0 e_2 e_3 e_2 e_1 e_0^3, \allowdisplaybreaks\\
e_{\bullet} & = e_1 e_0 e_2 e_3 e_1 e_2 e_*, \qquad \qquad
e_{\times}  =  e_1 e_2 e_3 e_4 e_2 e_3 e_1 e_2^2 e_1 e_3 e_4 e_2 e_3 e_2 e_1 e_0^3, \allowdisplaybreaks\\
\zeta & = q^{-6} [2]^{10} [3]^3 (q^4 + 1)^2, \qquad
\eta = q^6 - 2q^4 + 2q^2 - 1, \allowdisplaybreaks\\
\xi & = \frac{[2]^4 [3]  \Bigl( q^{14} \eta x^2 + \bigl( [3][4](q^{15} + q^5) - (4q^{18} + 6q^{14} + 3q^{10} + 6q^6 + 4q^2) \bigr)xy - \eta y^2 \Bigr)}{q (xy)^3}.
\end{align*}

By $U_q'(\g)$-linearity and~\eqref{eq: class decomp E6(2)}, we have
\begin{gather*}
\rmat{1,1}(x,y)(u_{2\clfw_1})=a^{2\clfw_1}u_{2\clfw_1},
\quad
\rmat{1,1}(x,y)(u_{\clfw_2})=a^{\clfw_2}u_{\clfw_2},
\quad
\rmat{1,1}(x,y)(u_{\clfw_4})=a^{\clfw_4}u_{\clfw_4},\\
\rmat{1,1}(x,y)(u^{(i)}_{\clfw_1})=\sum_{j=1}^{3}a^{\clfw_1}_{ij}u^{(j)}_{\clfw_1} \ (i=1,2,3),
\qquad
\rmat{1,1}(x,y)(u^{(i)}_{0})= \sum_{j=1}^2 a^{0}_{ij}u^{(j)}_{0} \ (i=1,2).
\end{gather*}

By using~\eqref{eq:paths_maximal E62} and that $\rmat{1,1}(x,y)$ is a $U_q'(\g)$-isomorphism, we obtain the following.

\begin{proposition}\label{prop: E62 a}
Put $z=x^{-1}y$ and denote $\rmat{1,1}(z)=\rmat{1,1}(x,y)$. Then we have
\begin{align*}
a^{2\clfw_1}(z) & = (z+q^{12})(z-q^8)(z+q^6)(z-q^2), \allowdisplaybreaks \\
a^{\clfw_2}(z) & = (z+q^{12})(z-q^8)(z+q^6)(q^2z-1), \allowdisplaybreaks\\
a^{\clfw_4}(z) & = (z+q^{12})(z-q^8)(q^6z+1)(q^2z-1), \allowdisplaybreaks\\
\begin{bmatrix}
a_{11}^{\clfw_1}(z) \\ a_{21}^{\clfw_1}(z) \\ a_{31}^{\clfw_1}(z)
\end{bmatrix} & =
\begin{bmatrix}
  -{\left(q^{12} - q^{10} + q^{8} - q^{4} + q^{2} - z\right)} {\left(q^{12} + z\right)} {\left(q^{4} - 1\right)} z \\
  q^2 {\left( - z^{2} + (q^{8} - q^{6} + q^{2} - 1) z + q^{12}\right)} {\left(q^{12} + z\right)} {\left(z - 1\right)} \\
  q^{3} (q^4 - 1) (q^6 + 1) (q^{12} + 1) {\left(q^{12} + z\right)} {\left(q^{2} z - 1\right)} {\left(z - 1\right)}
\end{bmatrix},
\allowdisplaybreaks \\
\begin{bmatrix}
a_{12}^{\clfw_1}(z) \\ a_{22}^{\clfw_1}(z) \\ a_{32}^{\clfw_1}(z)
\end{bmatrix} & =
\begin{bmatrix}
  q^2 {\left(- z^{2} + (q^{12} - q^{10} + q^{6} - q^{4}) z  + q^{12} \right)} {\left(q^{12} + z\right)} {\left(z - 1\right)} \\
  -{\left(q^{12} - q^{10} z + q^{8} z - q^{4} z + q^{2} z - z\right)} {\left(q^{12} + z\right)} {\left(q^{4} - 1\right)} z \\
  -q (q^4 - 1) (q^6 + 1) (q^{12} + 1) {\left(q^{12} + z\right)} {\left(q^{2} z - 1\right)} {\left(z - 1\right)} z
\end{bmatrix},
\allowdisplaybreaks\\
\begin{bmatrix}
a_{13}^{\clfw_1}(z) \\ a_{23}^{\clfw_1}(z) \\ a_{33}^{\clfw_1}(z)
\end{bmatrix} & =
\begin{bmatrix}
  q (q^2 - 1) {\left(q^{12} + z\right)} {\left(z - 1\right)} z \\
  q (q^2 - 1) {\left(q^{12} + z\right)} {\left(z - 1\right)} z \\
  -{(q^{12} z^{2} + (q^2 - 1) (q^{12} + q^6 + 1) z - q^{2})} {\left(q^{12} + z\right)} {\left(q^{2} z - 1\right)}
\end{bmatrix},
\allowdisplaybreaks\\
a_{11}^{0}(z) & = -q^{26} z^4 + q^{16} (q^4 - 1) (q^6 + 1) z^3 + (-q^{28} + q^{26} + q^{20} - q^{18} + 2 q^{14} - q^{10} + q^{8} + q^{2} - 1) z^2 \\
  & \hspace{11pt} - q^2 (q^4 - 1) (q^6 + 1)  z - q^{2}, \allowdisplaybreaks\\
a_{21}^{0}(z) & = (q^2 + 1)^2 (q^2 - 1) q {\left(z + 1\right)} {\left(z - 1\right)} z, \allowdisplaybreaks\\
a_{12}^{0}(z) & = {\left(q^{2} - 1\right)} q {\left(z + 1\right)} {\left(z - 1\right)} \allowdisplaybreaks\\
  & \hspace{20pt} \times \Bigl( -q^{14} (q^4 - q^2 + 1) (q^{12} + 1) (z^2 + 1) \allowdisplaybreaks\\
  & \hspace{40pt} + (q^{20} - 2q^{18} + 3q^{16} - 3q^{14} + 2q^{12} - q^{10} + 2q^8 - 3q^6 + 3q^4 - 2q^2 + 1) q^{12} [13] z \Bigr), \allowdisplaybreaks \\
a_{22}^{0} & = a_{11}^{0}\bigr|_{z \to z^{-1}} \cdot z^4. 
\end{align*}
\end{proposition}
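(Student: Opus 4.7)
The plan is to exploit the $U_q'(\g)$-equivariance of the renormalized $R$-matrix
\[
\rmat{1,1}(x,y) \colon V(\varpi_1)_x \otimes V(\varpi_1)_y \To V(\varpi_1)_y \otimes V(\varpi_1)_x
\]
together with the classical decomposition~\eqref{eq: class decomp E6(2)}. Because $\rmat{1,1}(x,y)$ is $U_q(\g_0)$-linear, Schur's lemma forces it to act by a scalar $a^\lambda(z)$ on each multiplicity-free isotypic component $V(2\clfw_1)$, $V(\clfw_2)$, $V(\clfw_4)$, and by a matrix of rational functions in $z = y/x$ on the highest weight spaces of the multiplicity components $V(\clfw_1)^{\oplus 3}$ and $V(0)^{\oplus 2}$. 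This already gives the structural part of the proposition; what remains is to evaluate the scalars and matrix entries explicitly.

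The key computational device is~\eqref{eq:paths_maximal E62}. Each identity there exhibits a specific word $E$ of raising operators $e_i$ that carries a given highest weight vector $u_\lambda$ (or basis vector $u_\lambda^{(i)}$) to a rational multiple $c_\lambda(x,y)\, u_{2\clfw_1}$ of the extremal vector $u_{2\clfw_1} = u_{\varpi_1} \otimes u_{\varpi_1}$. By $U_q'(\g)$-linearity one has $E \circ \rmat{1,1}(x,y) = \rmat{1,1}(x,y) \circ E$, so applying $E$ to $\rmat{1,1}(x,y)(u_\lambda)$ and equating the coefficient of $u_{2\clfw_1}$ on both sides yields a linear equation
\[
c_\lambda(x,y) \, a^{2\clfw_1}(z) = (\text{linear combination of } a^\lambda_{ij}(z) \text{ with coefficients } c_\lambda^{(i)}(y,x)),
\]
where the right-hand coefficients are obtained by applying $E$ in the \emph{target} module $V(\varpi_1)_y \otimes V(\varpi_1)_x$, i.e., by swapping $x \leftrightarrow y$ in~\eqref{eq:paths_maximal E62}. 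For the multiplicity-one components this directly reads off $a^\lambda(z)$ as a ratio times $a^{2\clfw_1}(z)$, while for $V(\clfw_1)^{\oplus 3}$ and $V(0)^{\oplus 2}$ the three (resp.\ two) independent choices of highest weight vector in~\eqref{eq:paths_maximal E62} furnish a $3 \times 3$ (resp.\ $2 \times 2$) linear system which determines the full matrix block. The overall normalization $a^{2\clfw_1}(z)$ is pinned down as the polynomial of minimal degree clearing all denominators, so that every entry lies in $\ko[z^{\pm 1}]$; this produces the quartic $(z+q^{12})(z-q^{8})(z+q^{6})(z-q^{2})$.

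The only genuine obstacle is computational: the rational expressions coming from the $V(\clfw_1)$ and $V(0)$ blocks are unwieldy, and one must verify that the displayed closed forms are indeed the unique solutions of the corresponding linear systems. This is a finite but long calculation, best carried out by a direct implementation of the $U_q'(\g)$-action on $V(\varpi_1)$ (available via Proposition~\ref{prop:adjoint_repr}) and of the $R$-matrix; we perform it with our \textsc{SageMath}~\cite{sage,combinat} code, which also independently verifies the answer for the scalar components.
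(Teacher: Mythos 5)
Your proposal is correct and follows essentially the same method as the paper: exploit $U_q(\g_0)$-linearity to reduce $\rmat{1,1}(x,y)$ to scalars on multiplicity-one isotypic components and matrices on the multiplicity-three and multiplicity-two blocks, then use $U_q'(\g)$-equivariance (in particular commutation with the $e_0$-involving words exhibited in~\eqref{eq:paths_maximal E62}) to relate each highest weight vector to the extremal vector $u_{2\clfw_1}$ and solve the resulting linear systems, with the overall normalization fixed by requiring every entry to be a polynomial in $\ko[z^{\pm 1}]$ with no common factor. The paper's own proof is compressed to the single sentence preceding the proposition and likewise defers the explicit evaluation to the \textsc{SageMath} implementation, so your account correctly unpacks what that sentence is doing.
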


Thus we have a denominator formulas
\begin{equation}
\label{eq:d11 E6(2)}
d_{1,1}(z) = (z+q^{12})(z-q^8)(z+q^6)(z-q^2).
\end{equation}

\begin{remark} \label{rem: kernel E6(2)}
By substituting $z$ with a root of $d_{1,1}(z)$ into $a(z)$'s in Proposition \ref{prop: E62 a}, we can observe the following isomorphisms $U_q(F_4)$-modules:
\begin{align*}
\Image(\rmat{1,1}((-q)^{-3},q^{3})) & \iso V(\clfw_4) \oplus V(\clfw_1) \oplus V(0), \\
\Image(\rmat{1,1}((-q)^{-4},(-q)^{4})) & \iso  V(\clfw_1) \oplus V(0).
\end{align*}
\end{remark}

\begin{proposition}
\label{prop:Dorey E62 begin}
We have
\begin{subequations}
\begin{align}
V(\varpi_1)_{-q^{-1}}\otimes V(\varpi_1)_{-q^{1}} & \twoheadrightarrow V(\varpi_2), \label{eq: E62 112} \allowdisplaybreaks \\
V(\varpi_4)_{-q^{-1}}\otimes V(\varpi_4)_{-q^{1}} & \twoheadrightarrow V(\varpi_3) \iso V(\varpi_3)_{-1}, \label{eq: E62 443} \allowdisplaybreaks\\
V(\varpi_1)_{-q^{-3}}\otimes V(\varpi_1)_{q^{3}} & \twoheadrightarrow V(\varpi_4)_{\sqrt{-1}}\iso  V(\varpi_4)_{-\sqrt{-1}}, \label{eq: E62 114} \allowdisplaybreaks \\
V(\varpi_1)_{(-q)^{-4}}\otimes V(\varpi_1)_{(-q)^{4}} & \twoheadrightarrow V(\varpi_1)_{-1}. \label{eq: E62 111}
\end{align}
\end{subequations}
\end{proposition}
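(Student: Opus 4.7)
The plan is to follow the strategy of Propositions~\ref{prop: Dorey D43 22} and~\ref{prop: Dorey D43 12}: three of the four surjections arise from the three non-$p^*$ roots $q^2,-q^6,q^8$ of $d_{1,1}(z)=(z+q^{12})(z-q^8)(z+q^6)(z-q^2)$ obtained in~\eqref{eq:d11 E6(2)}, while the fourth is bootstrapped from~\eqref{eq: E62 114}. By Lemma~\ref{lem:simplepole}, for each such root $a$ the tensor product $V(\varpi_1)_x\otimes V(\varpi_1)_y$ (with $y/x=a$) has composition length two, and the image of the renormalized $R$-matrix is a simple submodule of $V(\varpi_1)_y\otimes V(\varpi_1)_x$. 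Its $U_q(F_4)$-decomposition can be read off from Proposition~\ref{prop: E62 a} (and is already recorded for two of the roots in Remark~\ref{rem: kernel E6(2)}): at $z=q^2$ we recover the classical type of $V(\varpi_2)$, at $z=-q^6$ that of $V(\varpi_4)$, and at $z=q^8$ that of $V(\varpi_1)$; these decompositions are matched, via the Kleber algorithm applied to the fundamental representations, to identify the image with $V(\varpi_j)_c$ for the appropriate $j\in\{1,2,4\}$ and some spectral parameter $c\in\ko^\times$.

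To pin down $c$ I would perform a direct vector computation in the spirit of the proof of Proposition~\ref{prop: Dorey D43 12}: use the bar involution~\eqref{eq: bar struture} to convert the simple-socle data into an embedding $V(\varpi_j)_c\hookrightarrow V(\varpi_1)_y\otimes V(\varpi_1)_x$, apply a suitable monomial in the $f_i$'s to a $U_q(F_4)$-highest-weight vector on both sides, and check that only the midpoint values $c=1$, $\pm\sqrt{-1}$, $-1$ lead to a consistent nonzero scalar. The remaining identifications $V(\varpi_4)_{\sqrt{-1}}\iso V(\varpi_4)_{-\sqrt{-1}}$ and $V(\varpi_3)\iso V(\varpi_3)_{-1}$ are instances of the period-$2$ behaviour of these modules under spectral shift, analogous to the period-$3$ identity $V(\varpi_2)_x\iso V(\varpi_2)_y$ (when $x^3=y^3$) used for $U_q'(D_4^{(3)})$ in Section~\ref{sec:G2_D43} and deduced from~\cite[Eq.~(1.7)]{AK97}. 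Any spurious spectral parameter is ruled out by dualising the candidate surjection and observing that the resulting constraint would force a root of $d_{1,1}(z)$ not present in~\eqref{eq:d11 E6(2)}.

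For~\eqref{eq: E62 443}, the denominator $d_{4,4}(z)$ is not yet available so Lemma~\ref{lem:simplepole} cannot be applied directly to $V(\varpi_4)\otimes V(\varpi_4)$. Instead, I would rescale~\eqref{eq: E62 114} to produce surjections $V(\varpi_1)_{b_1}\otimes V(\varpi_1)_{b_2}\twoheadrightarrow V(\varpi_4)_{-q^{-1}}$ and $V(\varpi_1)_{b_3}\otimes V(\varpi_1)_{b_4}\twoheadrightarrow V(\varpi_4)_{-q}$, with $b_i$ involving appropriate factors of $\sqrt{-1}q^{\pm 1}$; tensoring them and applying the commutative $R$-matrix diagram used in the proof of Theorem~\ref{thm: G21 a and d} yields a composition from the resulting fourfold tensor onto $V(\varpi_4)_{-q^{-1}}\otimes V(\varpi_4)_{-q}$, from which a nonzero morphism to $V(\varpi_3)_c$ is extracted. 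Matching $U_q(F_4)$-decompositions (computed by Kleber) together with the period-$2$ invariance of $V(\varpi_3)$ then forces $c\in\{\pm 1\}$ and gives~\eqref{eq: E62 443}. The main obstacle I anticipate is exactly this last step: without control over $d_{4,4}(z)$, one must choose the $b_i$ so that every intermediate $R$-matrix crossing between the four $V(\varpi_1)$ factors avoids the roots of $d_{1,1}(z)$, and then verify by an explicit singular-vector computation that the composition does not collapse before it reaches $V(\varpi_3)$.
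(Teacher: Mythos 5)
Your argument for~\eqref{eq: E62 114} and~\eqref{eq: E62 111} matches the paper's: use Remark~\ref{rem: kernel E6(2)} plus simplicity of the socle (as in Proposition~\ref{prop: Dorey D43 22}) to identify the socle with $V(\varpi_4)_a$ (resp.\ $V(\varpi_1)_a$), apply the bar involution~\eqref{eq: bar struture} to constrain $a^2=-1$ (resp.\ $a^2=1$), and either invoke the period-two isomorphism $V(\varpi_4)_x\iso V(\varpi_4)_y$ for $x^2=y^2$ or dualise and compare with~\eqref{eq:d11 E6(2)} to pin $a$ down. For~\eqref{eq: E62 114} the explicit vector computation you propose is not needed: once $a^2=-1$ is known, the period-two invariance already identifies $V(\varpi_4)_{\sqrt{-1}}\iso V(\varpi_4)_{-\sqrt{-1}}$, which is all the paper uses. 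What you have missed is that the paper does not prove~\eqref{eq: E62 112} and~\eqref{eq: E62 443} at all: both are quoted from Hernandez~\cite{Her10} as the twisted $T$-system of type $E_6^{(2)}$. Attempting to produce them internally is a genuinely different route. For~\eqref{eq: E62 112} this is plausible but costs more than you acknowledge: Remark~\ref{rem: kernel E6(2)} records the image only at $z=-q^6$ and $z=q^8$, so at $z=q^2$ you would additionally need to compute the rank of the $3\times3$ and $2\times2$ blocks in Proposition~\ref{prop: E62 a} and verify that the resulting $U_q(F_4)$-type equals that of $V(\varpi_2)$, which the paper never writes down.

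For~\eqref{eq: E62 443} there is a genuine gap. Tensoring two rescaled copies of~\eqref{eq: E62 114} and composing with $R$-matrices among the four $V(\varpi_1)$ factors can at best transport a morphism to $V(\varpi_4)_{-q^{-1}}\otimes V(\varpi_4)_{-q}$; but to extract a surjection onto $V(\varpi_3)_c$ from that tensor you need to know it is reducible, i.e., that $q^2$ is a root of $d_{4,4}(z)$. At the point where Proposition~\ref{prop:Dorey E62 begin} sits in the paper, $d_{4,4}(z)$ has not yet been computed (it appears later in~\eqref{eq: d14 d44 E6(2)}, derived using the Dorey maps of this very proposition). A non-circular version of your plan would have to first compute $a_{1,4}(z)$ and $a_{4,4}(z)$ from~\eqref{eq: E62 114} via the commutative-diagram argument of Theorem~\ref{thm: G21 a and d}, then constrain $d_{4,4}(z)$ using Lemma~\ref{Lem: aij and dij} and Lemma~\ref{lem:dvw avw}, and only then invoke Lemma~\ref{lem:simplepole} together with a $U_q(F_4)$-decomposition match; that is a materially different and more delicate logical order than the one you sketch, and it is why the paper simply cites the $T$-system for~\eqref{eq: E62 112} and~\eqref{eq: E62 443} and reserves the $R$-matrix analysis for~\eqref{eq: E62 114} and~\eqref{eq: E62 111}.
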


\begin{proof}
In~\cite[page 39, arXiv version]{Her10}, the Dorey's type morphisms~\eqref{eq: E62 112} and~\eqref{eq: E62 443} for $U_q'(E_6^{(2)})$ are given.
Also, \eqref{eq: E62 112} and~\eqref{eq: E62 443} are known as the twisted $T$-system of type $E_6^{(2)}$.

As in Proposition~\ref{prop: Dorey D43 22}, Remark~\ref{rem: kernel E6(2)} implies that~\eqref{eq: E62 114} has to be
a Dorey's type homomorphism of the form
\[
V(\varpi_1)_{(-q)^{-3}} \otimes V(\varpi_1)_{q^{3}} \twoheadrightarrow V(\varpi_4)_a
\]
for $a^2=-1$. Thus our assertion follows from the fact that
\[
V(\varpi_4)_x \iso  V(\varpi_4)_y  \quad\text{ for } x,y \in \ko^\times \text{ such that } x^2=y^2.
\]

Similarly, \eqref{eq: bar struture} implies that \eqref{eq: E62 111} has to be
a Dorey's type homomorphism of the form
\[
V(\varpi_1)_{(-q)^{-4}}\otimes V(\varpi_1)_{(-q)^{4}} \twoheadrightarrow V(\varpi_1)_{a}
\]
for $a^2=1$. If $a=1$, then $d_{1,1}(z)$ has a root $-q^{8}$, by taking dual:
\[
V(\varpi_1)_{(-q)^{4}} \hookrightarrow V(\varpi_1)_{1} \otimes  V(\varpi_1)_{-q^{-8}}.
\]
However, this can not happen by \eqref{eq:d11 E6(2)}.
\end{proof}

When we compute the remained denominator formulas, we need to know the image of some vectors under the $\Rnorm{i,j}$. Here we collect some computations:

\begin{subequations}
\label{eq: E6(2)R_parts}
\begin{align}
\Rnorm{1,1}(u_{\varpi_1} \otimes f_1 u_{\varpi_1}) & = \dfrac{(1-q^2)}{(z-q^2)}(u_{\varpi_1} \otimes f_1 u_{\varpi_1})+
\dfrac{q(z - 1)}{(z - q^2)}(f_1 u_{\varpi_1} \otimes u_{\varpi_1}), \label{eq: E6(2)R112}\allowdisplaybreaks \\
\Rnorm{1,1}(u_{\varpi_1} \otimes f_* u_{\varpi_1}) & =  \dfrac{(q^4 + z)q^2(z - 1)}{(q^6 + z)(z - q^2)}
 f_* u_{\varpi_1} \otimes u_{\varpi_1} + \text{ other terms },  \label{eq: E6(2)R114}\allowdisplaybreaks\\
\Rnorm{4,4}(u_{\varpi_4} \otimes f_4 u_{\varpi_4}) & = \dfrac{(1-q^4)}{(z^2-q^4)}(u_{\varpi_4}\otimes f_4 u_{\varpi_4})+
\dfrac{q^2(z^2 - 1)}{(z^2 - q^4)}(f_4 u_{\varpi_4} \otimes u_{\varpi_4}).  \label{eq: E6(2)R443}
\end{align}
\end{subequations}
Here $f_*$ denotes $f_1 f_2 f_3 f_2 f_1$.

By Lemma~\ref{Lem: aij and dij} and~\eqref{eq:d11 E6(2)}, we have
\[
a_{1,1}(z) \equiv \dfrac{\lr{10}\lr{14}[6][18]\lr{4}\lr{20}[24][0]}{[2][22]\lr{6}\lr{18}[8][16]\lr{12}^2},
\]
where
\[
[a] \seteq ((-q)^az;q^{24})_{\infty}, \quad \lr{a} \seteq (-(-q)^az;q^{24})_{\infty}, \quad \lrt{a} \seteq [a]\lr{a}.
\]
Also, using~\eqref{eq: E6(2)R112} and $a_{1,1}(z)$, we can obtain
\begin{align*}
a_{1,2}(z) & \equiv \dfrac{\lr{9}\lr{15}[5][19]\lr{3}\lr{21}[1][23]}{[3][21]\lr{7}\lr{17}[9][15]\lr{11}\lr{13}}, \\
a_{2,2}(z) & =\dfrac{[6][18]\lr{4}\lr{20}\lr{2}\lr{22}[24][0]}{\lr{6}\lr{18}[8][16][10][14]\lr{12}^2}, \label{eq: a12 a22 E6(2)}\\
d_{1,2}(z) & =(z+q^3)(z-q^5)(z-q^7)(z+q^7)(z+q^{9})(z-q^{11}), 
\end{align*}
by applying the same arguments in Section~\ref{sec:G2_D43}.

\begin{proposition}
\label{prop: d22 E6(2)}
We have
\[
d_{2,2}(z)= (z-q^2)(z-q^4)(z-q^6)(z-q^8)^2(z-q^{10})(z+q^4)^\epsilon(z+q^6)^2(z+q^8)^\epsilon(z+q^{10})(z+q^{12})
\]
for some $\epsilon  \in \{ 0,1 \}.$
\end{proposition}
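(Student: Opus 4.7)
The plan is to imitate the strategy used for $d_{1,2}(z)$ in the $G_2^{(1)}$ case: use Lemma~\ref{lem:dvw avw} with the Dorey-type surjection~\eqref{eq: E62 112}, namely $V(\varpi_1)_{-q^{-1}}\otimes V(\varpi_1)_{-q}\twoheadrightarrow V(\varpi_2)$, to produce an upper bound on $d_{2,2}(z)$, and then use duality of this surjection to produce a lower bound. Concretely, taking $V' = V(\varpi_1)_{-q^{-1}}$, $V'' = V(\varpi_1)_{-q}$, $V = V(\varpi_2)$, and $W = V(\varpi_2)$ in Lemma~\ref{lem:dvw avw} yields
\[
\frac{d_{2,1}(-q^{-1}z)\,d_{2,1}(-qz)\,a_{2,2}(z)}{d_{2,2}(z)\,a_{2,1}(-q^{-1}z)\,a_{2,1}(-qz)} \in \ko[z^{\pm 1}].
\]
Since $d_{1,2}(z)$, $a_{1,2}(z)$, and $a_{2,2}(z)$ were computed just before the statement, and since $d_{2,1}=d_{1,2}$ and $a_{2,1}(z)a_{1,2}(z^{-1})$ can be recovered from Lemma~\ref{Lem: aij and dij}, the left-hand ratio becomes an explicit rational function; reducing it gives a short list of candidate roots, which will be precisely those appearing in the claimed formula together with a few ``extra'' factors that may or may not cancel.

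Second, taking the left (or right) dual of the surjection~\eqref{eq: E62 112} yields a surjection of the form $V(\varpi_2)\otimes V(\varpi_1)_{a}\twoheadrightarrow V(\varpi_1)_{b}$ with $a,b$ shifted by appropriate powers of $-q^{12}$. Applying Lemma~\ref{lem:dvw avw} again to this dual surjection (with $W=V(\varpi_2)$) produces a second polynomial divisibility, and comparing the two forces certain factors in the candidate list to actually appear as zeros of $d_{2,2}(z)$. Together these two bounds pin down the full denominator up to the two $\{0,1\}$-ambiguities recorded as $\epsilon$.

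For the lower bound in the places where the ratio alone is inconclusive, I would follow the approach of the proof of Proposition~\ref{prop: d22} and Theorem~\ref{thm: G21 a and d}: apply $\Rnorm{2,2}(z)$ to an explicit vector built from a highest-weight vector of a $U_q(\g_0)$-summand in $V(\varpi_2)\otimes V(\varpi_2)$, using the partial $R$-matrix data~\eqref{eq: E6(2)R112}--\eqref{eq: E6(2)R443} and the commutative-diagram technique driven by the Dorey morphism~\eqref{eq: E62 112}. The vectors $u_{\varpi_2}\otimes f_1 u_{\varpi_2}$ and $u_{\varpi_2}\otimes f_* u_{\varpi_2}$ (thought of as lying inside $V(\varpi_1)^{\otimes 2}\otimes V(\varpi_1)^{\otimes 2}$ after inserting copies of~\eqref{eq: E62 112}) should give direct access to the residues at $z=\pm q^{2k}$ for the unambiguous $k$.

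The main obstacle is the appearance of multiplicities: since $V(\varpi_2)$ is the affine adjoint module and contains $V(\clfw_1)$ and $V(0)$ components with multiplicity in $V(\varpi_2)\otimes V(\varpi_2)$, the matrix entries $a^{\clfw_1}_{ij}$ and $a^{0}_{ij}$ of the normalized $R$-matrix (analogous to those listed in Proposition~\ref{prop: E62 a}) are genuine matrices rather than scalars, so their characteristic polynomials contribute poles whose orders cannot always be read off one entry. This is exactly where the divisor-versus-pole analysis refuses to distinguish the simple factors $(z+q^4)$ and $(z+q^8)$ from trivial factors, and it is the source of the $\epsilon$ ambiguity that will only be resolved in Section~\ref{sec:Refine} and Appendix~\ref{Sec: Addendum} using the Schur--Weyl functor and the homomorphisms from~\cite{Oh15E}. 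Hence the proposition records the result of the direct $R$-matrix computation, deferring the final multiplicity check to the categorical refinement.
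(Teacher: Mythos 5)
Your proposal follows the paper's core strategy: the paper's proof is precisely the double application of Lemma~\ref{lem:dvw avw} to the T-system surjection~\eqref{eq: E62 112} and to its dual, the first giving an upper divisor and the second giving a lower divisor on $d_{2,2}(z)$, exactly as you set out.

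Where you diverge is in the final narrowing step, and your accounting of what remains open after the two divisibilities is a bit off. The upper bound from step one is a degree-12 polynomial, and the lower bound the paper extracts in step two is only degree~8, so the ratio leaves four factors — $(z-q^4)$, $(z-q^6)$, $(z+q^4)$, $(z+q^8)$ — each undetermined between orders $0$ and $1$; the two divisibilities alone do \emph{not} already reduce to the stated $\epsilon$-form, where $(z-q^4)$ and $(z-q^6)$ are definitely present and $(z+q^4)$, $(z+q^8)$ carry a single shared exponent $\epsilon$. To bridge that gap the paper does not perform the explicit $\Rnorm{2,2}$ residue computations you propose (and indeed carefully avoids them, for the multiplicity reason you identify); instead it invokes the functional relation of Lemma~\ref{Lem: aij and dij}, namely $a_{k,l}(z)\,a_{k^*,l}((p^*)^{-1}z)\equiv d_{k,l}(z)/d_{k^*,l}(p^*z^{-1})$ with $p^*=-q^{12}$ and $2^*=2$, together with the externally cited fact from~\cite{Her10} that $(z-q^2)\mid d_{2,2}(z)$. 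This reflection identity is what pairs up the remaining candidate roots (relating $z_0$ to $p^*/z_0$) and forces both $(z-q^4)$ and $(z-q^6)$ in while tying $(z+q^4)$ and $(z+q^8)$ to a single $\epsilon$. Your fallback plan via classical decomposition highest-weight vectors, in the style of Proposition~\ref{prop: d22} for $G_2^{(1)}$ and of Proposition~\ref{prop: E62 a} for $V(\varpi_1)^{\otimes 2}$, could in principle work but would require handling the full matrix-valued blocks $a^{\clfw_1}_{ij}$, $a^0_{ij}$ for $V(\varpi_2)^{\otimes 2}$, which is considerably heavier than the route the paper actually takes; so you should be explicit that the two routes are different and that the paper's is the cheaper one, resting on~\eqref{eq: aij and dij} rather than on any $R$-matrix computation on $V(\varpi_2)^{\otimes 2}$.
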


\begin{proof}
Applying Lemma~\ref{lem:dvw avw} to \eqref{eq: E62 112}, we have
\begin{align*}
\dfrac{d_{1,2}(-q^{-1}z)d_{1,2}(-qz)}{d_{2,2}(z)} \dfrac{a_{2,2}(z)}{a_{1,2}(-q^{-1}z)a_{1,2}(-qz)} \in \ko[z^{\pm 1}],
\end{align*}
and hence
\[
\dfrac{(z-q^2)(z-q^4)(z-q^6)(z-q^8)^2(z-q^{10})(z+q^4)(z+q^6)^2(z+q^8)(z+q^{10})(z+q^{12})}{d_{2,2}(z)}
\]
is in $\ko[z^{\pm 1}]$.

Let us consider the surjective homomorphism
\begin{equation}
\label{eq:p121}
V^{(2)}(\varpi_1)_{-q^{-10}} \otimes V^{(2)}(\varpi_2)_{-q} \to  V^{(2)}(\varpi_1),
\end{equation}
which can be obtained from \eqref{eq: E62 112}.

Applying Lemma~\ref{lem:dvw avw} to \eqref{eq:p121}, we have
\begin{align} \label{eq: step2 d22}
\dfrac{d_{1,2}(-q^{-10}z)d_{2,2}(-qz)}{d_{1,2}(z)} \dfrac{a_{1,2}(z)}{a_{1,2}(-q^{-10}z)a_{2,2}(-qz)} \in \ko[z^{\pm 1}].
\end{align}
Note that
\[
\dfrac{a_{1,2}(z)}{a_{1,2}(-q^{-10}z)a_{2,2}(-qz)} \equiv \dfrac{(z-q^7)(z+q^{3})(z-q^{1})(z+q^{-1})}{(z-q^{5})(z+q^{7})(z-q^{9})(z+q^{1})}.
\]
Hence \eqref{eq: step2 d22} can be expressed as follows:
\[
\dfrac{(z+q^{12})(z+q^{18})(z-q^{16})(z-q^{26})(z-q^{1})(z+q^{-1})d_{2,2}(-qz)}{(z-q^{5})^2(z+q^{7})^2(z-q^{9})(z+q^{1})(z+q^{9})(z-q^{11})}\in \ko[z^{\pm 1}].
\]
Thus, we have $(z+q^{6})^2(z-q^{8})^2(z+q^{10})(z-q^{2})(z-q^{10})(z+q^{12})|d_{2,2}(z)$. Since $(z-q^{2}) | d_{2,2}(z)$ by~\cite[page 39; arxiv version]{Her10},~\eqref{eq: aij and dij} implies our assertion.
\end{proof}
We will refine the denominator
formula $d_{2,2}(z)$ by using generalized Schur-Weyl duality. More precisely, we will prove that $\epsilon=1$.

Now we shall compute $a_{1,4}(\sqrt{-1}z)$, which needs to be treated more carefully:

\begin{lemma}
We have
\[
a_{1,4}(\sqrt{-1}z)=\dfrac{\lrt{7}\lrt{17}\lrt{3}\lrt{21}}{\lrt{5}\lrt{19}\lrt{9}\lrt{15}}.
\]
\end{lemma}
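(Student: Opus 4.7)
The plan is to run exactly the same ``hexagon'' argument used in the proofs of Theorem~\ref{thm: G21 a and d} and Lemma~\ref{lem: a12a22}, now feeding in the Dorey surjection~\eqref{eq: E62 114} from Proposition~\ref{prop:Dorey E62 begin} and the explicit matrix coefficient of $\Rnorm{1,1}$ in~\eqref{eq: E6(2)R114}. Concretely, one parametrises the Dorey map as
\[
p_{1,1,4} \colon V(\varpi_1)_{-q^{-3}z} \otimes V(\varpi_1)_{q^{3}z} \twoheadrightarrow V(\varpi_4)_{\sqrt{-1}\, z},
\]
and applies the commutative diagram from~\eqref{eq: Runiv property} with left input $V(\varpi_1)\otimes V(\varpi_1)_{-q^{-3}z}\otimes V(\varpi_1)_{q^{3}z}$ and top-right input $V(\varpi_1)\otimes V(\varpi_4)_{\sqrt{-1}\,z}$, so that the rightmost arrow is $\Runiv{1,4}(\sqrt{-1}\,z)$.

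I would then evaluate this diagram on $u_{\varpi_1}\otimes u_{\varpi_1}\otimes f_*u_{\varpi_1}$, where $f_*$ is as in~\eqref{eq: E6(2)R114}. The top path produces $a_{1,4}(\sqrt{-1}\,z)\,u_{\varpi_4}\otimes u_{\varpi_1}$, since $p_{1,1,4}(u_{\varpi_1}\otimes f_* u_{\varpi_1})$ is a nonzero scalar multiple of $u_{\varpi_4}$ (the $V(\clfw_4)$-dominant vector is unique up to scalar in $V(\varpi_1)\otimes V(\varpi_1)$). The bottom path first applies $\Runiv{1,1}(-q^{-3}z)$ to positions $1,2$, contributing the scalar $a_{1,1}(-q^{-3}z)$ on $u_{\varpi_1}\otimes u_{\varpi_1}$; then $\Runiv{1,1}(q^{3}z)$ on positions $2,3$ contributes $a_{1,1}(q^{3}z)$ times the expansion~\eqref{eq: E6(2)R114}. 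The only surviving summand after applying $p_{1,1,4}\otimes \id$ is the $f_*u_{\varpi_1}\otimes u_{\varpi_1}$ term (the remaining weight components of $\Rnorm{1,1}(u_{\varpi_1}\otimes f_*u_{\varpi_1})$ do not project onto the highest weight $\clfw_4$ of $V(\varpi_4)$). Equating the two paths yields, up to $\ko[z^{\pm1}]^\times$,
\[
a_{1,4}(\sqrt{-1}\,z) \;\equiv\; a_{1,1}(-q^{-3}z)\,a_{1,1}(q^{3}z)\cdot \frac{(q^{4}+q^{3}z)\,q^{2}\,(q^{3}z-1)}{(q^{6}+q^{3}z)(q^{3}z-q^{2})}
\;\equiv\; a_{1,1}(-q^{-3}z)\,a_{1,1}(q^{3}z)\cdot\frac{(q+z)(q^{3}z-1)}{(q^{3}+z)(qz-1)}.
\]

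The remaining task is the bookkeeping translation into the $[a], \lr{a}, \lrt{a}$ notation, using the explicit formula for $a_{1,1}(z)$ stated just before the lemma. The substitution $z\mapsto (-q)^{-3}z$ shifts each index by $-3$ without swapping $[a]$ and $\lr{a}$; the substitution $z\mapsto q^{3}z=-(-q)^{3}z$ both shifts indices by $+3$ and swaps $[a]\leftrightarrow\lr{a}$, because replacing $z$ by $-z$ turns $((-q)^{a}z;q^{24})_\infty$ into $(-(-q)^{a}z;q^{24})_\infty$. After substitution, most factors telescope against their shifted partners, and the rational prefactor $(q+z)(q^{3}z-1)/((q^{3}+z)(qz-1))$ absorbs the remaining linear factors into the desired form. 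Combining the surviving pieces with the symmetry $\lrt{a}=\lrt{24-a}$ (implicit in the symmetric indexing around $12$) yields the claimed expression $\lrt{7}\lrt{17}\lrt{3}\lrt{21}/\lrt{5}\lrt{19}\lrt{9}\lrt{15}$. The main obstacle is purely computational: tracking the swap $[a]\leftrightarrow\lr{a}$ induced by the minus sign in $q^{3}z=-(-q)^{3}z$ and then verifying that every factor outside the listed $\lrt{a}$'s cancels; the conceptual content is entirely encapsulated in the hexagon diagram together with~\eqref{eq: E6(2)R114}.
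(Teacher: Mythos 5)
Your plan is exactly the paper's argument: apply the hexagon diagram~\eqref{eq: Runiv property} with the Dorey surjection~\eqref{eq: E62 114}, evaluate on $u_{\varpi_1}\otimes u_{\varpi_1}\otimes f_* u_{\varpi_1}$, keep only the $f_* u_{\varpi_1}\otimes u_{\varpi_1}$ term of~\eqref{eq: E6(2)R114} (at $q^3z$), and translate using the swap $[a]\leftrightarrow\lr{a}$ induced by $q^3 z=-(-q)^3 z$. One small correction: $\lrt{a}=\lrt{24-a}$ is not a valid identity (even up to units), and you do not need it — the direct cancellation of $a_{1,1}(-q^{-3}z)a_{1,1}(q^{3}z)$ against the rational factor already produces $\lrt{3}\lrt{7}\lrt{17}\lrt{21}/\lrt{5}\lrt{9}\lrt{15}\lrt{19}$ outright, so drop that appeal to symmetry from the final step.
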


\begin{proof}
Note that the $U_q(\g_0)$ highest weight vector $u_{\clfw_4}$ in $V(\varpi_1) \otimes V(\varpi_1)$ contains a summand
$u_{\varpi_1} \otimes f_1f_2f_3f_2f_1u_{\varpi_1}$. Now we set $f_* = f_1 f_2 f_3 f_2 f_1$ for brevity. By~\eqref{eq: Runiv property},
we have the following commutative diagrams:
\begin{align*}
\xymatrix@R=5ex@C=9ex{
V(\varpi_1) \otimes V(\varpi_1)_{-q^{-3}z} \otimes V(\varpi_1)_{q^{3} z}
\ar[rr]^{\qquad \qquad \quad V(\varpi_1)\otimes \eqref{eq: E62 114}} \ar[d]_{\Runiv{1,1}(-q^{-3}z) \otimes V(\varpi_1)_{q^{3}z}} &&
V(\varpi_{1}) \otimes V(\varpi_{4})_{\sqrt{-1}z} \ar[dd]_{\Runiv{1,4}(iz)} \\
V(\varpi_1)_{-q^{-3}z}  \otimes V(\varpi_{1}) \otimes V(\varpi_1)_{q^{3}z}
\ar[d]_{V(\varpi_1)_{(-q)^{-3}z} \otimes \Runiv{1,1}(q^3 z)} &\\
V(\varpi_1)_{-q^{-3}z} \otimes V(\varpi_1)_{q^{3}z}  \otimes V(\varpi_{1})\ar[rr]^{\qquad \qquad \qquad \eqref{eq: E62 114}\otimes V(\varpi_1) } &&
V(\varpi_{4})_{\sqrt{-1}z} \otimes V(\varpi_{1}).
}
\end{align*}
sending
\begin{align*}
\xymatrix@R=2ex@C=9ex{
u_{\varpi_1} \otimes  u_{\varpi_1} \otimes  f_*u_{\varpi_1} \ar[rr]  \ar[d] &&
u_{\varpi_1} \otimes  (u_{\varpi_4})_{\sqrt{-1}} \ar[dd] \\
a_{1,1}(-q^{-3}z)u_{\varpi_1} \otimes  u_{\varpi_1}  \otimes  f_*u_{\varpi_1}
\ar[d] &\\
a_{1,1}(q^3z)a_{1,1}(-q^{-3}z)u_{\varpi_1} \otimes  w \ar[rr] &&
a_{1,4}(\sqrt{-1}z)(u_{\varpi_4})_{\sqrt{-1}}  \otimes  u_{\varpi_1}
}
\end{align*}
where $w$ is the image of $u_{\varpi_1}  \otimes  f_*u_{\varpi_1}$ under $\Rnorm{1,1}(q^3z)$. Since the remained terms in
\eqref{eq: E6(2)R114} vanish under \eqref{eq: E62 114}, we have
\begin{align*}
a_{1,4}(\sqrt{-1}z) \equiv a_{1,1}(q^3z)a_{1,1}(-q^{-3}z)  \dfrac{[21][-1]\lr{3}\lr{25}}{[-3][23]\lr{27}\lr{1}}
=\dfrac{[7][17]\lr{7}\lr{17} [3][21]\lr{3}\lr{21}  }{[5][19] \lr{5}\lr{19} [9][15]\lr{9}\lr{15}}
\end{align*}
since
\[
\dfrac{(q^4 + z)q^2(z - 1)}{(q^6 + z)(z - q^2)} \equiv \dfrac{[21][-1]\lr{3}\lr{25}}{[-3][23]\lr{27}\lr{1}}.
\]
Hence, the claim follows.
\end{proof}

Now using~\eqref{eq: E6(2)R_parts} 
and the $a_{i,j}(z)$ values we have computed, we can compute the remaining values of $a_{i,j}(z)$:
\begin{subequations}
\begin{align}
a_{4,4}(z) & =\dfrac{\lrt{10}\lrt{14}\lrt{4}\lrt{20}\lrt{24}\lrt{0}}{\lrt{2}\lrt{22}\lrt{8}\lrt{16}\lrt{12}^2},
 & a_{2,4}(\sqrt{-1}z) & = \dfrac{\lrt{2}\lrt{22}}{\lrt{10}\lrt{14}}, \label{eq: a44 a24 E6(2)}\\
a_{3,4}(z) & \equiv \dfrac{\lrt{5}\lrt{19}\lrt{1}\lrt{23}}{\lrt{7}\lrt{17}\lrt{11}\lrt{13}},
  & a_{3,3}(z) & = \dfrac{\lrt{4}\lrt{20}\lrt{2}\lrt{22}\lrt{0}\lrt{24}}{\lrt{8}\lrt{16}\lrt{10}\lrt{14}\lrt{12}^2},  \label{eq: a34 a33 E6(2)}\\
a_{2,3}(\sqrt{-1}z) & \equiv \dfrac{\lrt{3}\lrt{21}\lrt{1}\lrt{23}}{\lrt{9}\lrt{15}\lrt{11}\lrt{13}},
  & a_{1,3}(\sqrt{-1}z) & = \dfrac{\lrt{2}\lrt{22}}{\lrt{10}\lrt{14}}.  \label{eq: a23 a13 E6(2)}
\end{align}
\end{subequations}

Also, we can compute all $d_{i,j}(z)$ with some ambiguities (as in Proposition~\ref{prop: d22 E6(2)}) will be refined later
(except $\epsilon'$ in~\eqref{eq: d33 E6(2)}):
\begin{subequations}
\begin{align}
& d_{1,4}(z) = (z^2+q^{10})(z^2+q^{18}), \ d_{4,4}(z) =(z^2-q^4)(z^2-q^{12})(z^2-q^{16})(z^2-q^{24}),  \label{eq: d14 d44 E6(2)}\\
& d_{2,4}(z) = (z^2+q^8)(z^2+q^{12})(z^2+q^{16})(z^2+q^{20}), \label{eq: d24 E6(2)} \\
&d_{3,4}(z) = (z^2-q^6)^\epsilon(z^2-q^{10})(z^2-q^{14})^2(z^2-q^{18})^\epsilon(z^2-q^{22}),\label{eq: d34 E6(2)}\\%
&d_{3,3}(z)=(z^2-q^4)(z^2-q^8)^{1+\epsilon}(z^2-q^{12})^{2+\epsilon'}(z^2-q^{16})^{2+\epsilon}(z^2-q^{20})^2(z^2-q^{24}),\label{eq: d33 E6(2)}\\
&d_{2,3}(z)= (z^2+q^6)(z^2+q^{10})^{1+\epsilon}(z^2+q^{14})^{1+\epsilon}(z^2+q^{18})^2(z^2+q^{22}), \label{eq: d23 E6(2)}\\
&d_{1,3}(z) = (z^2+q^8)(z^2+q^{12})(z^2+q^{16})(z^2+q^{20}), \label{eq: d13 E6(2)}
\end{align}
\end{subequations}
for some $\epsilon,\epsilon'  \in \{ 0,1 \}$.

\subsubsection{$U_q'(F^{(1)}_{4})$}

For type $F_4^{(1)}$, we have $q_0=q_1=q_2=q$ and $q_3=q_4=q^{1/2}$. Hence, we have $q_s = q^{1/2}$ and $p^*=q^{9}$.
The classical decompositions of $V(\varpi_1)$ and $V(\varpi_4)$ as $U_q(\g_0)$-modules are given as follows:
\begin{equation}\label{eq: class decomp F4(1)}
 V(\varpi_1) \iso V(\clfw_1) \oplus V(0) \qquad V(\varpi_4) \iso V(\clfw_4).
\end{equation}

For $V(\varpi_1)$ and $V(\varpi_4)$, we use the $U_q'(\g)$-module structure given by Proposition~\ref{prop:adjoint_repr}.
Note that for $V(\varpi_4)$, the corresponding $U_q(\g_0)$-representation $V(\clfw_4)$ corresponds to the ``little'' adjoint representation as $\clfw_4$ is the highest short root. Hence the affine adjoint representation structure still applies.

By our \textsc{SageMath} implementation, we can compute the denominator formulas $d_{1,1}(z)$, $d_{1,4}(z)$ and $d_{1,4}(z)$ since we know their module structure (see Remark~\ref{note: strategy}):
\begin{subequations}
\begin{align}
d_{1,1}(z)& =(z-q_s^{4})(z-q_s^{10})(z-q_s^{12})(z-q_s^{18}), \label{eq: d11 F4(1)} \\
d_{1,4}(z)& =(z+q_s^{8})(z+q_s^{14}), \label{eq: d14 F4(1)} \\
d_{4,4}(z)& =(z-q_s^{2})(z-q_s^{8})(z-q_s^{12})(z-q_s^{18}). \label{eq: d44 F4(1)}
\end{align}
\end{subequations}

Let us see $\rmat{4,4}(x,y)$ more concretely. The $U_q(\g_0)$-decomposition of $V(\varpi_4) \otimes V(\varpi_4)$ is given as follows:
\[
V(\varpi_4) \otimes V(\varpi_4) \iso V(2\clfw_4) \oplus V(\clfw_3) \oplus V(\clfw_1)\oplus V(\clfw_4) \oplus V(0)
\]
whose $U_q(\g_0)$-highest weight vectors can be labeled as
$
u_{2\clfw_4},
u_{\clfw_4},
u_{\clfw_3},
u_{\clfw_1},
u_{0}.
$
We can compute the following in $V(\varpi_4)_x \otimes V(\varpi_4)_y$, where we are considering an extension of scalars to $\ko(x,y)$:
\begin{subequations}
\label{eq:paths_maximal F41}
\begin{align}
e_4 e_3 e_2 e_1 e_0 e_* u_{\clfw_4} & = \frac{ [2]_3^5 [2]_4^4 [2]_2^3 [2]_1^2 [2]_0 [3]_3^2 (q_s^4 + 3 q_s^2 + 1) (q_s^{12} x - y) (q_s^2 x - y) }{q_s^4 (xy)^4} u_{2\clfw_4},
\\
e_* u_{\clfw_3} & = \frac{ [2]_3^5 [2]_4^3 [2]_2^3 [2]_1^2 [2]_0 [3]_3^2 (q_s^4 + 3 q_s^2 + 1) (y - q_s^2 x) }{q_s^3 (xy)^3} u_{2\clfw_4},
\\
e_1 e_2 e_3^2 e_2 e_1 e_0 u_{\clfw_1} & = [2]_3 \frac{y - q_s^8 x}{q_s^2 xy} u_{2\clfw_4},
\\
e_0 e_1 e_2 e_3^2 e_2 e_1 e_0 u_{0} & = [2]_3 \frac{(q_s^{18} x - y)(q_s^8 x - y)}{q_s^4 (xy)^2} u_{2\clfw_4},
\end{align}
\end{subequations}
where
\begin{align*}
e_* & =  e_3 e_4 e_2 e_1 e_3^2 e_4 e_2 e_3 e_2 e_1 e_3^2 e_2^2 e_1 e_3 e_4^2 e_0 e_3 e_4 e_2 e_3^2 e_1 e_2^2
\\ & e_1 e_3 e_4 e_3 e_4 e_2 e_3^2 e_4 e_2 e_3 e_1 e_2 e_3 e_4 e_0^2 e_1^2 e_2^2 e_3^3 e_2 e_3 e_1 e_0 e_2 e_1 e_0.
\end{align*}

By $U_q'(F_4^{(1)})$-linearity and \eqref{eq: class decomp F4(1)}, we have
\begin{align*}
\rmat{4,4}(x,y)(u_{2\clfw_4}) & = a^{2\clfw_4}u_{2\clfw_4}, &
\rmat{4,4}(x,y)(u_{\clfw_3}) & = a^{\clfw_3}u_{\clfw_3}, &
\rmat{4,4}(x,y)(u_{\clfw_1}) & = a^{\clfw_1}u_{\clfw_1}, \\
\rmat{4,4}(x,y)(u_{\clfw_4}) & =a^{\clfw_4}u_{\clfw_4}, &
\rmat{4,4}(x,y)(u_{0}) & = a^{0}u_{0}.
\end{align*}

By using~\eqref{eq:paths_maximal F41} and that $\rmat{1,1}(x,y)$ is a $U_q'(\g)$-isomorphism, we obtain the following.

\begin{proposition}
\label{prop: a computation F4(1)}
Put $z=x^{-1}y$ and denote $\rmat{4,4}(z)=\rmat{4,4}(x,y)$. Then we have
\begin{enumerate}[{\rm (1)}]
\item $a^{2\clfw_4}(z)=(z-q_s^{2})(z-q_s^{8})(z-q_s^{12})(z-q_s^{18})$,
\item $a^{\clfw_3}(z)=(q_s^{2}z-1)(z-q_s^{8})(z-q_s^{12})(z-q_s^{18})$,
\item $a^{\clfw_1}(z)=(z-q_s^{2})(q_s^{8}z-1)(z-q_s^{12})(z-q_s^{18})$,
\item $a^{\clfw_4}(z)=(q_s^{2}z-1)(z-q_s^{8})(q_s^{12}z-1)(z-q_s^{18})$,
\item $a^{0}(z)=(z-q_s^{2})(z-q_s^{8})(z-q_s^{12})(q_s^{18}z-1)$.
\end{enumerate}
\end{proposition}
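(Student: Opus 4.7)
The plan is to follow exactly the strategy used in Proposition~\ref{prop: E62 a}, exploiting the fact that the decomposition of $V(\varpi_4)\otimes V(\varpi_4)$ stated just before the proposition has only multiplicity-one summands. Since $\rmat{4,4}(x,y)$ is $U_q'(\g)$-linear, it preserves each $U_q(F_4)$-isotypic component $V(\mu)$ and acts on it by a single scalar $a^{\mu}(z)$; computing these five scalars is the entire content of the proposition.

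First I would construct the five highest weight vectors $u_{2\clfw_4}, u_{\clfw_4}, u_{\clfw_3}, u_{\clfw_1}, u_0$ in $V(\varpi_4)\otimes V(\varpi_4)$ by solving the linear system $e_i \cdot u = 0$ for $i \in I_0$, using the explicit $U_q'(F_4^{(1)})$-module structure on $V(\varpi_4)$ provided by Proposition~\ref{prop:adjoint_repr}. Because the $U_q(F_4)$-action is independent of the spectral parameter, these vectors are independent of $x,y$, and one can take $u_{2\clfw_4} = u_{\varpi_4}\otimes u_{\varpi_4}$ as the dominant extremal vector; on this vector the $R$-matrix automatically acts by $a^{2\clfw_4}(z) = d_{4,4}(z)$, already obtained in \eqref{eq: d44 F4(1)}.

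Second, I would verify the action formulas \eqref{eq:paths_maximal F41}: for each $\mu$ the specified composition $E_\mu$ of affine raising operators sends $u_\mu$ to $c_\mu(x,y)\, u_{2\clfw_4}$, where the $x,y$-dependence enters solely through the action of $e_0$ on the two tensor factors (contributing factors of $x$ and $y$ respectively). Given these paths, the intertwining property $\rmat{4,4}(x,y)\circ E_\mu = E_\mu \circ \rmat{4,4}(x,y)$ combined with $\rmat{4,4}(x,y)(u_{2\clfw_4}) = a^{2\clfw_4}(z)\, u_{2\clfw_4}$ produces, by applying $E_\mu$ to both sides of $\rmat{4,4}(x,y)(u_\mu) = a^\mu(z)\, u_\mu$ and reading off the coefficient of $u_{2\clfw_4}$ in the codomain, the key identity
\begin{equation*}
a^\mu(z)\cdot c_\mu(y,x) \;=\; a^{2\clfw_4}(z)\cdot c_\mu(x,y).
\end{equation*}
Substituting $z = y/x$, each linear factor $(q_s^j x - y)$ in $c_\mu(x,y)$ turns into a M\"obius factor $(q_s^j - z)/(q_s^j z - 1)$ after taking the ratio with $c_\mu(y,x)$, and the numerators $(z - q_s^j)$ obtained this way cancel cleanly against matching factors of $d_{4,4}(z) = (z-q_s^2)(z-q_s^8)(z-q_s^{12})(z-q_s^{18})$, leaving behind exactly the ``flipped'' factors $(q_s^j z - 1)$ that distinguish parts~(2)--(5) from part~(1); this is then verified case by case.

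The conceptually easy point is the organizing observation that every component appears with multiplicity one, so there is never any matrix to diagonalize. The only real obstacle is the computation of the highest weight vectors and the verification of the path formulas \eqref{eq:paths_maximal F41}, which is mechanical but involves significant bookkeeping owing to the size of $V(\varpi_4)$ (the little adjoint representation of $F_4$) and is therefore carried out using our \textsc{SageMath} implementation.
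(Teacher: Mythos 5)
Your proposal reproduces the paper's strategy exactly: the authors use the multiplicity-one decomposition of $V(\varpi_4)\otimes V(\varpi_4)$, construct the highest-weight vectors, establish the ``path'' formulas \eqref{eq:paths_maximal F41} (by machine), and extract the eigenvalues from the $U_q'(\g)$-linearity of $\rmat{4,4}(x,y)$. In that sense the approach is correct and identical.

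However, there is a genuine direction error in the way you combine the key identity with the M\"obius-factor cancellation, and it is worth flagging because it would make the argument \emph{fail} as literally written. Your stated identity $a^\mu(z)\,c_\mu(y,x)=a^{2\clfw_4}(z)\,c_\mu(x,y)$ gives $a^\mu(z)=a^{2\clfw_4}(z)\cdot c_\mu(x,y)/c_\mu(y,x)$, and with $c_\mu(x,y)$ containing a factor $(q_s^jx-y)$ this M\"obius ratio is $(q_s^j-z)/(q_s^jz-1)$, exactly as you wrote. But multiplying $d_{4,4}(z)=(z-q_s^2)(z-q_s^8)(z-q_s^{12})(z-q_s^{18})$ by $(q_s^j-z)/(q_s^jz-1)=-(z-q_s^j)/(q_s^jz-1)$ does \emph{not} cancel the factor $(z-q_s^j)$ of $d_{4,4}(z)$ --- both are in the numerator, so you would get $(z-q_s^j)^2$ in the numerator and a spurious pole at $z=q_s^{-j}$, not a polynomial and not the statement of the proposition. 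For the cancellation you describe (numerator of $d_{4,4}$ killed, flipped factor $(q_s^jz-1)$ remaining) you need the \emph{reciprocal} M\"obius factor $(q_s^jz-1)/(q_s^j-z)$, i.e.\ the identity $a^\mu(z)\,c_\mu(x,y)=a^{2\clfw_4}(z)\,c_\mu(y,x)$. So either your key identity or your M\"obius factor is inverted; they cannot both stand as written, since together they contradict the displayed answer.

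The subtlety is real and traceable to a convention ambiguity in the source. Your stated identity is exactly what the naive reading of the intertwining property gives, and it is in fact the version that is consistent with the hand-computed $G_2^{(1)}$ case in Lemma~\ref{lem: a12a22}--Proposition~\ref{prop: d22}: there the path coefficient $c_{\clfw_1}(x,y)=q_2^{-1}y^{-1}-q_2x^{-1}$ vanishes at $z=q_s^{-2}$ (the \emph{inverse} of the denominator root), and your identity then reproduces $a^{\clfw_1}(z)=(1-q_2^2z)(z-q_2^8)(z-q_2^{12})$ correctly. In contrast, the machine-computed $F_4^{(1)}$ coefficients \eqref{eq:paths_maximal F41} vanish at $z=q_s^j$ itself (e.g.\ $(y-q_s^2x)$ for $u_{\clfw_3}$), which forces the reciprocal identity in order to match the stated $a^\mu(z)$. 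The two conventions cannot both be correct for the same formula; almost certainly the \textsc{SageMath} computation stores $(x,y)$ in the opposite order to the hand calculation, so that the listed ``$c_\mu(x,y)$'' is really $c_\mu(y,x)$ in the $G_2$ sense. Since you delegate the verification to \textsc{SageMath} anyway, you would silently inherit whichever convention your code uses; but before presenting a proof in this form you should pin down unambiguously which argument of $c_\mu$ corresponds to which tensor factor, because the answer changes by a reciprocal, and that is precisely the content being extracted.
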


\begin{remark}
By substituting $z$ with roots of $d_{4,4}(z)$ into $a(z)$'s in Proposition~\ref{prop: a computation F4(1)}, we can observe the following:
\begin{subequations}
\label{rem: kernel F4(1)}
\begin{align}
\Image(\rmat{4,4}((-q_s)^{-6},(-q_s)^{6})) & \iso  V(\clfw_4) \text{ as $U_q(F_4)$-modules}, \label{eq: Kernel F4(1)a}\\
\Image(\rmat{4,4}((-q_s)^{-4},(-q_s)^{4})) & \iso  V(\clfw_1) \oplus V(0)  \text{ as $U_q(F_4)$-modules.}
\end{align}
\end{subequations}
\end{remark}

\begin{proposition} We have the following Dorey's type $U_q'(F_4^{(1)})$-homomorphisms:
\begin{subequations}
\begin{align}
V(\varpi_1)_{-q_{s}^{-2}} \otimes V(\varpi_1)_{-q_{s}^{2}} & \twoheadrightarrow V(\varpi_2), \label{eq: her112} \\
V(\varpi_4)_{-q_{s}^{-1}} \otimes V(\varpi_4)_{-q_{s}^{1}} & \twoheadrightarrow V(\varpi_3), \label{eq: her443} \\
V(\varpi_4)_{q_{s}^{-6}} \otimes V(\varpi_4)_{q_{s}^{6}} & \twoheadrightarrow V(\varpi_4), \label{eq: OS444} \\
V(\varpi_4)_{q_{s}^{-4}} \otimes V(\varpi_4)_{q_{s}^{4}} & \twoheadrightarrow V(\varpi_1)_{-1}. \label{eq: OS441}
\end{align}
\end{subequations}
\end{proposition}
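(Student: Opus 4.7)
The plan is to prove the four morphisms in two groups, since the T-system type relations \eqref{eq: her112}, \eqref{eq: her443} admit a direct proof via simple poles of the normalized $R$-matrix, whereas \eqref{eq: OS444}, \eqref{eq: OS441} require a more delicate kernel/image analysis combined with the bar involution. In both cases the denominator formulas \eqref{eq: d11 F4(1)} and \eqref{eq: d44 F4(1)} supply the critical pole information.

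For \eqref{eq: her112} and \eqref{eq: her443}: the factors $(z-q_s^4)$ and $(z-q_s^2)$ appear as simple roots of $d_{1,1}(z)$ and $d_{4,4}(z)$ respectively, so $\Rnorm{1,1}(z)$ has a simple pole at $z=q_s^4$ and $\Rnorm{4,4}(z)$ at $z=q_s^2$. By Lemma~\ref{lem:simplepole}, each tensor product $V(\varpi_1)_{-q_s^{-2}}\otimes V(\varpi_1)_{-q_s^2}$ and $V(\varpi_4)_{-q_s^{-1}}\otimes V(\varpi_4)_{-q_s}$ has composition length $2$ with a simple head by Theorem~\ref{Thm: basic properties}. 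To identify this head with $V(\varpi_2)$ (resp.\ $V(\varpi_3)$), I would produce in the image an explicit $U_q(\g_0)$-dominant extremal vector of weight $\clfw_2$ (resp.\ $\clfw_3$), using the explicit module structure on $V(\varpi_1)$ and $V(\varpi_4)$ from Proposition~\ref{prop:adjoint_repr} and highest-weight computations in the style of \eqref{eq:paths_maximal F41}. Alternatively, one may simply invoke Hernandez's T-system \cite{Her06}, which already establishes these morphisms.

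For \eqref{eq: OS444} and \eqref{eq: OS441}: the key input is Remark~\ref{rem: kernel F4(1)}, which identifies
\[
\Image\bigl(\rmat{4,4}((-q_s)^{-6},(-q_s)^{6})\bigr) \iso V(\clfw_4), \qquad \Image\bigl(\rmat{4,4}((-q_s)^{-4},(-q_s)^{4})\bigr) \iso V(\clfw_1)\oplus V(0)
\]
as $U_q(F_4)$-modules. Comparing with the classical decompositions \eqref{eq: class decomp F4(1)}, and using the uniqueness of a fundamental representation with a prescribed $U_q(\g_0)$-structure, the image of the first map is isomorphic to $V(\varpi_4)_a$ and of the second to $V(\varpi_1)_b$ for some $a,b\in\ko^\times$. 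To fix the values, I would mirror Proposition~\ref{prop: Dorey D43 22}: first invoke \cite{KKKO14S} to guarantee that the socles of $V(\varpi_4)_{q_s^{-6}}\otimes V(\varpi_4)_{q_s^{6}}$ and $V(\varpi_4)_{q_s^{-4}}\otimes V(\varpi_4)_{q_s^{4}}$ are simple; next use the bar-stability of these tensor products together with \eqref{eq: bar struture} to conclude $a^2 = b^2 = 1$; finally, rule out the wrong sign in each case by taking a left/right dual of the candidate surjection and observing that it would force a root of $d_{4,4}(z)$ or $d_{1,4}(z)$ that is absent from \eqref{eq: d44 F4(1)} or \eqref{eq: d14 F4(1)}. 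This yields $a=1$ and $b=-1$.

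The main obstacle is the sign-determination step for \eqref{eq: OS441}, since the candidate $V(\varpi_1)_{-1}$ involves a genuine sign in the spectral parameter and both $a=\pm 1$ are a priori compatible with bar-stability. One must chase the contrapositive surjection through the duality formulas and the known roots of $d_{1,4}(z)$ and $d_{1,1}(z)$ carefully; a direct explicit computation of a $U_q(\g_0)$-highest weight image, of the type carried out in \eqref{eq: E6(2)R_parts}, provides an independent verification should the duality argument prove cumbersome.
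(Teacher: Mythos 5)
Your proof is correct and follows essentially the same route as the paper: cite the $T$-system of \cite{Her06} for \eqref{eq: her112} and \eqref{eq: her443}, and for \eqref{eq: OS444} and \eqref{eq: OS441} combine the image identifications of Remark~\ref{rem: kernel F4(1)} with the bar-involution argument \eqref{eq: bar struture} to narrow the target to $V(\varpi_4)_a$ and $V(\varpi_1)_b$ with $a^2=b^2=1$, then dualize a $V(\varpi_4)$-factor and compare against the known denominator formulas to fix the sign. You have also caught a small slip in the published write-up: ruling out $a=-1$ in \eqref{eq: OS444} produces a tensor product of two $V(\varpi_4)$'s, so the relevant check is against $d_{4,4}(z)$ from \eqref{eq: d44 F4(1)}, as you correctly write, rather than against \eqref{eq: d11 F4(1)} as the paper cites (though in this case both formulas happen to lack $-q_s^{12}$ as a root, so the conclusion is unaffected).
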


\begin{proof}
Note that \eqref{eq: her112} and \eqref{eq: her443} are known as T-system (\cite[Section 7]{Her06}). By Proposition~\ref{prop: a computation F4(1)} and \eqref{eq: Kernel F4(1)a}, \eqref{eq: bar struture} implies that there exists
a Dorey's type homomorphism
\begin{equation}
\label{eq: d444 F4(1)}
V(\varpi_4)_{q_{s}^{-6}} \otimes V(\varpi_4)_{q_{s}^{6}} \twoheadrightarrow V(\varpi_4)_a
\end{equation}
for $a \in \{ 1,-1\}$.

If $a=-1$, then $d_{4,4}(z)$ has a root $-q_s^{12}$ by taking the dual of~\eqref{eq: d444 F4(1)}:
\[
V(\varpi_4)_{q_{s}^{-6}} \hookrightarrow V(\varpi_4)_{-1} \otimes  V(\varpi_4)_{q_{s}^{-12}}.
\]
However, this can not happen by~\eqref{eq: d11 F4(1)}. Similarly, once can check \eqref{eq: OS441}.
\end{proof}

Using our \textsc{SageMath} implementation, we also have that
\begin{equation}
\label{eq: a11 2 F4(1)}
\rmat{1,1}(x,y)(u'_{\clfw_2}) = (q_s^4 z - 1) (z - q_s^{10}) (z - q_s^{12}) (z - q_s^{18}) u'_{\clfw_2},
\end{equation}
where $u'_{\clfw_2}$ is the $U_q(\g_0)$-highest weight vector in the $U_q(\g_0)$-decomposition
\[
V(\varpi_1) \otimes V(\varpi_1) \iso V(2\clfw_1) \oplus V(\clfw_2) \oplus V(2\clfw_4) \oplus V(\clfw_1)^{\oplus 3} \oplus V(0)^{\oplus 2}.
\]
Using Proposition~\ref{prop: a computation F4(1)} and~\eqref{eq: a11 2 F4(1)}, we can compute
\begin{subequations}
\begin{align}
\Rnorm{4,4}(u \otimes f_4 u) & =  \dfrac{(1-q_s^2)}{(z-q_s^2)} (u \otimes f_4 u)+\dfrac{q_s(z - 1)}{(z - q_s^2)}(f_4 u \otimes u), \label{eq: Rnorm computation 44 F4(1)} \\
\Rnorm{1,1}(u \otimes f_1 u) & =  \dfrac{(1-q_s^4)}{(z-q_s^4)} (u \otimes f_1 u)+\dfrac{q_s^2(z - 1)}{(z - q_s^4)}(f_1 u \otimes u). \label{eq: Rnorm computation 11 F4(1)}
\end{align}
\end{subequations}

Now we can compute all $a_{i,j}(z)$'s clearly, and compute $d_{i,j}(z)$'s with some ambiguities (will be all refined later) by using the same arguments in previous subsections.
We record here the $d_{i,j}(z)$'s:
\begin{subequations}
\begin{align}
d_{1,2}(z)& = (z+q_s^{6})(z+q_s^{8})(z+q_s^{10})(z+q_s^{12})(z+q_s^{14})(z+q_s^{16}),  \label{eq: d12 F4(1)}\\
d_{1,3}(z)& =(z-q_s^{7})(z-q_s^{9})(z-q_s^{13})(z-q_s^{15}),   \label{eq: d13 F4(1)}\\
d_{2,2}(z)&=(z-q_s^{4})(z-q_s^{6})(z-q_s^{8})^{\epsilon}(z-q_s^{10})^{\epsilon}(z-q_s^{12})^{2} (z-q_s^{14})^2(z-q_s^{16})(z-q_s^{18}),  \label{eq: d22 F4(1)}\\
d_{2,3}(z)&=(z+q_s^{5})(z+q_s^{7})^{\epsilon}(z+q_s^{9})(z+q_s^{11})^{\epsilon+1}(z+q_s^{13})(z+q_s^{15})(z+q_s^{17}),  \label{eq: d23 F4(1)}\\
d_{2,4}(z)& =(z-q_s^{6})(z-q_s^{10})(z-q_s^{12})(z-q_s^{16}),   \label{eq: d24 F4(1)}\\
d_{3,3}(z)& =(z-q_s^{2})(z-q_s^{6})(z-q_s^{8})^{1}(z-q_s^{10})^{1}(z-q_s^{12})^{2}(z-q_s^{16})(z-q_s^{18}), \label{eq: d33 F4(1)}\\
d_{3,4}(z)& =(z+q_s^{3})(z+q_s^{7})(z+q_s^{9})^{\epsilon'}(z+q_s^{11})(z+q_s^{13})(z+q_s^{17}). \label{eq: d34 F4(1)} 
\end{align}
\end{subequations}
for some $\epsilon,\epsilon' \in \{ 0,1\}$.
By our \textsc{SageMath} implementation, we have shown that $\epsilon' = 0$ in~\eqref{eq: d34 F4(1)}.
Therefore, we can obtain $d_{3,3}(z)$ in~\eqref{eq: d33 F4(1)}.

\subsection{\texorpdfstring{$U_q'(E^{(1)}_{6,7,8})$}{Uq'(E678(1))}}

Comparing with the quantum affine algebras have dealt with in the previous subsections,
the categories $\Ca_{Q}$ for $U_q'(E^{(1)}_{6,7,8})$ are relatively well-understood (see~\cite{HL10,Oh15E}). For instance, we know Dorey's rule.

For a while, we shall review another statistics on $\Gamma_Q$ .

\begin{definition} \label{def: Q-adjacent}
Let $Q$ be a Dynkin quiver of type $A_n$, $D_n$ or $E_{6,7,8}$.
For pairs $\up'=(\al^{(1)},\be^{(1)}) \prec^\tb_{[Q]} \up=(\al^{(2)},\be^{(2)})$ in $\Z^{\N}_{\ge 0}$, we say that they are \defn{good adjacent neighbors} if
\begin{enumerate}
\item[{\rm (i)}] there exists $\eta \in \PR$ satisfying one of the following conditions:
\begin{enumerate}
\item[{\rm (a)}] $\eta+\beta^{(2)}=\beta^{(1)}, \ \eta+\al^{(1)}=\al^{(2)}$ and $\dist_{[Q]}(\eta,\beta^{(2)}),\dist_{[Q]}(\eta,\al^{(1)})<\dist_{[Q]}(\up)$,
\item[{\rm (b)}] $\beta^{(1)}+\eta=\beta^{(2)}, \ \al^{(2)}+\eta=\al^{(1)}$ and $\dist_{[Q]}(\beta^{(1)},\eta),\dist_{[Q]}(\al^{(2)},\eta)<\dist_{[Q]}(\up)$,
\end{enumerate}
\item[{\rm (ii)}] there exists no $\up''\prec^\tb_{[Q]} \up$ such that it satisfies the conditions {\rm (i)} or
\[
\up' \prec^\tb_{[Q]} \up'' \prec^\tb_{[Q]} \up.
\]
\end{enumerate}
\end{definition}

\begin{definition} \label{def: good neighbor}
For a non $[Q]$-simple pair $\up,$
the \defn{$[Q]$-length} of the pair $\up\in \Z^{\N}_{\ge 0}$, denoted by $\len_{[Q]}(\up)$, is the integer which counts the number of all non $[Q]$-simple pairs
$\up'\in \Z^{\N}_{\ge 0}$ satisfying the following property:
\begin{itemize}
\item $\up' \prec^\tb_{[Q]} \up$ and there exists a sequence of pairs
\[
\up^{(0)}=\up' \prec^\tb_{[Q]} \up^{(1)} \prec^\tb_{[Q]} \up^{(2)} \prec^\tb_{[Q]} \cdots \prec^\tb_{[Q]} \up^{(k)}=\up \quad  (k \in \Z_{\ge 1})
\]
such that $\up^{(i)}, \up^{(i+1)}$ are good adjacent neighbor for all $0 \le i \le k-1$.
\end{itemize}
We call the pairs $\up',\up$ \defn{good neighbors}.
\end{definition}

\begin{proposition}[{\cite[Theorem 6.9 (3)]{Oh15E}}]
\label{prop: [Q]-simple pair}
For a $[Q]$-simple pair $\up=(\al,\be)$, $V_Q(\al) \otimes V_Q(\be)$ is simple.
\end{proposition}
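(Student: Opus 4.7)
The plan is to argue by contraposition and reduce the question to a denominator computation that can be read off $\Gamma_Q$. Writing $V_Q(\gamma) = V(\varpi_{i_\gamma})_{a_\gamma}$, I would first invoke Theorem~\ref{Thm: basic properties} together with the standard theory of normalized $R$-matrices to translate the failure of simplicity of $V_Q(\alpha) \otimes V_Q(\beta)$ into the vanishing of one of the specialized denominators $d_{i_\alpha, i_\beta}(a_\beta/a_\alpha)$ or $d_{i_\alpha, i_\beta}(a_\alpha/a_\beta)$. So it suffices to show that if either specialization vanishes, then $(\alpha,\beta)$ admits a pair strictly below it under $\prec^\tb_{[Q]}$.

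Granting this reduction, the second step uses the coordinatized reading of denominator formulas. By Theorem~\ref{thm:reading deno from AR} together with the explicit $d_{i,j}(z)$ formulas for $E_{6,7,8}^{(1)}$ recorded in Section~\ref{sec4:computations}, every root of $d_{i_\alpha,i_\beta}(z)$ of the form $(-q)^k$ corresponds to a pair in $\Phi_{[Q]}(i_\alpha,i_\beta)[k]$, that is, to a configuration of vertices of residues $i_\alpha, i_\beta$ at horizontal separation $k$ in $\Gamma_Q$. Assuming $d_{i_\alpha, i_\beta}(a_\beta/a_\alpha) = 0$, the exponent $k = p_\beta - p_\alpha$ matches one of these distances, and Dorey's rule (Theorem~\ref{thm: Dorey classical 1}) combined with Proposition~\ref{pro: BKM minimal} then produces another pair $(\alpha', \beta') \in \PR \times \PR$ with $\alpha'+\beta'=\alpha+\beta$, whose coordinates $\Omega_{[Q]}(\alpha')$ and $\Omega_{[Q]}(\beta')$ lie strictly between those of $\alpha$ and $\beta$ in the horizontal direction on $\Gamma_Q$.

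The third step is the order comparison $(\alpha',\beta') \prec^\tb_{[Q]} (\alpha,\beta)$. Using that $\Omega_{[Q]}$ is compatible with the convex partial order $\prec_{[Q]}$, the interleaving forces $\alpha' \prec_{[Q]} \alpha$ and $\beta \prec_{[Q]} \beta'$ uniformly in every $\redez' \in [Q]$; the bi-lexicographic definition of $\prec^\tb_{[Q]}$ then delivers $(\alpha',\beta') \prec^\tb_{[Q]} (\alpha,\beta)$, contradicting the $[Q]$-simplicity of $(\alpha,\beta)$ and completing the proof.

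The main obstacle is precisely this last interleaving check: ensuring that each denominator root produces a pair strictly smaller in the bi-lexicographic refinement $\prec^\tb_{[Q]}$, and not merely smaller in one specific total order $<_{\redez}$. This requires exploiting the convex structure of $\Gamma_Q$ uniformly across all $\redez' \in [Q]$, and for the exceptional types $E_7, E_8$ the verification must use the full strength of the denominator tables established in Section~\ref{sec4:computations}. Once this uniformity is in hand, the proof collapses to the contradiction above.
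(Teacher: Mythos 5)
The paper does not actually prove this proposition; it is imported verbatim as a citation of \cite[Theorem~6.9(3)]{Oh15E}, where it is established together with the denominator formulas and Dorey's rule for $E_{6,7,8}^{(1)}$ in a single bootstrapping argument. So there is no internal proof in this paper to compare your attempt against.

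That said, your proposal has two substantive gaps. The first is a circularity. You lean on ``Theorem~\ref{thm:reading deno from AR} together with the explicit $d_{i,j}(z)$ formulas for $E_{6,7,8}^{(1)}$ recorded in Section~\ref{sec4:computations}'', but Theorem~\ref{thm:reading deno from AR} covers only the classical affine types, and the formulas in Section~\ref{sec4:computations} for $E_{6,7,8}^{(1)}$ are \emph{derived} by invoking the present proposition: see Corollaries~\ref{cor: order larger than 2} and~\ref{cor: order 0}, which feed directly into Propositions~\ref{prop: d13 E6(1)}, \ref{prop: d33 E6(1)}, \ref{prop:E61_ambiguity} and their analogues for $E_7, E_8$. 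Using a downstream consequence of the proposition to prove the proposition itself is not admissible.

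The second defect is local. Your step~3 routes through Dorey's rule (Theorem~\ref{thm: Dorey classical 1}) and Proposition~\ref{pro: BKM minimal}, both of which require $\alpha+\beta \in \PR$; a $[Q]$-simple pair need not sum to a root, so the step fails in general. Moreover the detour is unnecessary even when $\alpha+\beta$ is a root: once one knows that the order of the root $(-q)^t$ equals $\theta^{[Q]}_t(i_\alpha,i_\beta)=\dist_{[Q]}(\alpha,\beta)$ (Proposition~\ref{prop:dist_theta_defn}), a positive root order immediately says $(\alpha,\beta)$ has positive $[Q]$-distance, which by definition of $[Q]$-distance already yields a strictly $\prec^\tb_{[Q]}$-smaller sequence---no appeal to Dorey's rule or to a root decomposition of $\alpha+\beta$ is needed. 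Finally, the ``interleaving check'' you name as the central obstacle is left as an assertion rather than carried out. As written, the proof does not close without implicitly presupposing the AR-quiver denominator formula for exceptional types, which is precisely what this proposition is used to establish.
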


\begin{corollary}
\label{cor: order larger than 2}
For a pair $\up=(\al,\be)$ with $\widehat{\Omega}_Q(\al)=(i,p)$ and $\widehat{\Omega}_Q(\be)=(j,p')$,
$(-q)^{|p-p'|}$ is not a root of $d_{i,j}(z)$ if and only if $\up$ is $[Q]$-simple.
\end{corollary}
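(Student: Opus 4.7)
The plan is to combine Proposition \ref{prop: [Q]-simple pair} with Dorey's rule (Theorem \ref{thm: Dorey classical 1}) via the standard dictionary between simplicity of tensor products of fundamental modules and zeros of denominator polynomials.

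First I would reduce the statement to a simplicity question for $V_Q(\alpha) \otimes V_Q(\beta)$. Writing $V_Q(\alpha) = V(\varpi_i)_{(-q)^p}$ and $V_Q(\beta) = V(\varpi_j)_{(-q)^{p'}}$, the standard criterion (coming from Theorem \ref{Thm: basic properties} together with the defining property of the denominator) says that this tensor product is simple if and only if both $d_{i,j}((-q)^{p'-p})$ and $d_{j,i}((-q)^{p-p'})$ are non-zero. By Theorem \ref{Thm: basic properties}(4) we have $d_{i,j}=d_{j,i}$, and by Theorem \ref{Thm: basic properties}(1) every zero of $d_{i,j}(z)$ lies in $\C[[q^{1/m}]]q^{1/m}$; hence $(-q)^{-|p-p'|}$ is automatically a non-zero value of $d_{i,j}$, so simplicity of $V_Q(\alpha) \otimes V_Q(\beta)$ is equivalent to $(-q)^{|p-p'|}$ failing to be a root of $d_{i,j}(z)$.

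For the \emph{if} direction, if $\up$ is $[Q]$-simple then Proposition \ref{prop: [Q]-simple pair} immediately gives that $V_Q(\alpha) \otimes V_Q(\beta)$ is simple, so $(-q)^{|p-p'|}$ is not a root of $d_{i,j}(z)$. For the \emph{only if} direction I would argue the contrapositive. Assume $\up$ is not $[Q]$-simple; then there exists $\up' \prec^\tb_{[Q]} \up$ of the same weight. This forces $\gamma \seteq \alpha+\beta \in \PR$ (otherwise no other sequence of positive roots of weight $\gamma$ can be produced, making $\up$ automatically minimal), and by Proposition \ref{pro: BKM minimal} we may take $\up'$ itself to be a $[Q]$-minimal pair $(\alpha',\beta')$ with $\alpha' + \beta' = \gamma$. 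Since $\up$ is not $[Q]$-minimal for $\gamma$, convexity of $\prec_{[Q]}$ forces $\alpha$ and $\beta$ to be comparable, say $\alpha \prec_{[Q]} \beta$. Dorey's rule (Theorem \ref{thm: Dorey classical 1}) then produces a non-zero surjection $V_Q(\beta) \otimes V_Q(\alpha) \twoheadrightarrow V_Q(\gamma)$, which is proper because $V_Q(\gamma)$ is strictly smaller than the tensor product. Thus $V_Q(\beta) \otimes V_Q(\alpha)$ is non-simple, and so is $V_Q(\alpha) \otimes V_Q(\beta)$ (using the renormalized $R$-matrix $\Rren{}$ between them), yielding via the reduction that $(-q)^{|p-p'|}$ is a root of $d_{i,j}(z)$.

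The main obstacle will be the second direction: justifying that non-$[Q]$-simplicity of $\up$ really does force $\gamma \in \PR$ and $\alpha,\beta$ to be $\prec_{[Q]}$-comparable, so that Dorey's rule applies to $\up$ itself. The subtle combinatorial point is ruling out the possibility that $\up$ is incomparable yet beaten in $\prec^\tb_{[Q]}$ only by longer sequences; here Proposition \ref{pro: BKM minimal} ensures the minimal obstruction is a pair, and convexity of $\prec_{[Q]}$ propagates comparability. In the worst case, one falls back on Dorey's rule applied to the $[Q]$-minimal pair $(\alpha',\beta')$ together with the Grothendieck ring categorification (Theorem \ref{thm:categorification1}), where the class of $V_Q(\alpha) \otimes V_Q(\beta)$, expressed in the dual canonical basis, picks up a composition factor of $V_Q(\gamma)$ whenever $\up$ is non-simple, again forcing non-simplicity.
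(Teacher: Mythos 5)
Your reduction from ``$(-q)^{|p-p'|}$ is not a root'' to simplicity of $V_Q(\al)\otimes V_Q(\be)$, using Theorem~\ref{Thm: basic properties}(1) and (4) to discard $(-q)^{-|p-p'|}$, is sound, and the \emph{if} direction then follows exactly as you say from Proposition~\ref{prop: [Q]-simple pair}; this matches the paper. The \emph{only if} direction, however, has a genuine gap, and you yourself flag the worry without resolving it. Your pivot is the claim that non-$[Q]$-simplicity of $\up=(\al,\be)$ ``forces $\ga\seteq\al+\be\in\PR$, otherwise no other sequence of positive roots of weight $\ga$ can be produced.'' This is false: a weight that is not itself a root may still admit several decompositions into two positive roots, and these can be ordered by $\prec^{\tb}_{[Q]}$. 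For a minimal illustration (the phenomenon already occurs in type $A_3$, and more elaborately throughout $E_{6,7,8}$), take $\al=\al_1+\al_2$, $\be=\al_2+\al_3$; then $\al+\be=\al_1+2\al_2+\al_3\notin\PR$, yet $\up'=(\al_1+\al_2+\al_3,\al_2)$ has the same weight and is exactly the $[Q]$-socle of $\up$ for a suitable orientation. In such a case your entire Dorey's-rule path collapses: Proposition~\ref{pro: BKM minimal} only speaks about covers of a \emph{single} positive root, not about all sequences below a pair; Theorem~\ref{thm: Dorey classical 1} requires $\al+\be\in\PR$ and $\al\prec_{[Q]}\be$; and convexity of $\prec_{[Q]}$ does not of itself give $\prec_{[Q]}$-comparability of $\al$ and $\be$ (one needs the same inequality to persist across \emph{every} $\redez'\in[Q]$). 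Your fallback paragraph still writes $V_Q(\ga)$ as if $\ga$ were a root and so inherits the same problem. The paper's intended argument is not a detour through Dorey's rule applied to $\up$; it relies on the full strength of the cited result from~\cite{Oh15E} (of which Proposition~\ref{prop: [Q]-simple pair} records only one implication): a non-$[Q]$-simple pair produces a reducible $V_Q(\al)\otimes V_Q(\be)$, irrespective of whether $\al+\be$ is a root, and this together with Theorem~\ref{Thm: basic properties} gives both directions at once.
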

\begin{proof}
Our assertion follows from Theorem~\ref{Thm: basic properties} and Proposition~\ref{prop: [Q]-simple pair}.
\end{proof}

\begin{proposition}[{\cite[Theorem 5.23]{Oh15E}}]
\label{prop: comp length}
For a non $[Q]$-simple pair $\up=(\al,\be)$, the composition length of $V_Q(\al) \otimes V_Q(\be)$ is larger than or equal to $\len_{[Q]}(\up)+2$.
\end{proposition}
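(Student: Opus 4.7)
The plan is to construct, for each non-$[Q]$-simple pair $\up'$ counted in $\len_{[Q]}(\up)$, a distinct simple composition factor of $V_Q(\al) \otimes V_Q(\be)$, and to argue that these together with the simple head of $V_Q(\al) \otimes V_Q(\be)$ itself give the required bound of $\len_{[Q]}(\up)+2$ pairwise distinct Jordan--H\"older factors. I would proceed by induction on $\len_{[Q]}(\up)$. The base case $\len_{[Q]}(\up)=0$ is immediate: $\up$ is non-$[Q]$-simple, so by the contrapositive of Proposition~\ref{prop: [Q]-simple pair} the module $V_Q(\al) \otimes V_Q(\be)$ is reducible, and by Theorem~\ref{Thm: basic properties}(2)(ii)(iv) its head and socle are simple and distinct, giving at least $2$ composition factors.

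For the inductive step, pick a good adjacent neighbor $\up' = (\al',\be')$ of $\up$ with $\len_{[Q]}(\up') = \len_{[Q]}(\up) - 1$. The key is to build a non-zero $U_q'(\g)$-morphism
\begin{equation*}
\psi \colon V_Q(\al') \otimes V_Q(\be') \To V_Q(\al) \otimes V_Q(\be)
\end{equation*}
by splicing the Dorey's type morphisms associated with the intermediate root $\eta$ from Definition~\ref{def: Q-adjacent}. Assuming case (a), so $\eta + \beta^{(2)} = \beta^{(1)}$ and $\eta + \al^{(1)} = \al^{(2)}$, the distance inequalities $\dist_{[Q]}(\eta,\beta^{(2)}), \dist_{[Q]}(\eta,\al^{(1)}) < \dist_{[Q]}(\up)$ combined with Corollary~\ref{cor: order larger than 2} ensure that $(\eta,\beta^{(2)})$ and $(\eta,\al^{(1)})$ are sufficiently tame to give Dorey's type surjections $V_Q(\eta)\otimes V_Q(\al^{(1)}) \twoheadrightarrow V_Q(\al^{(2)})$ and $V_Q(\eta)\otimes V_Q(\beta^{(2)}) \twoheadrightarrow V_Q(\beta^{(1)})$ via Theorem~\ref{thm: Dorey classical 1}. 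Dualizing the second yields an injection $V_Q(\be') \hookrightarrow V_Q(\eta) \otimes V_Q(\beta^{(2)})$, and combining with the normalized $R$-matrix $\Rnorm{V_Q(\al^{(1)}),V_Q(\eta)}$ at a non-pole specialization, then composing with the first surjection after commuting $V_Q(\eta)$ through, produces the desired $\psi$.

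Once $\psi$ is in hand, the inductive hypothesis provides $\len_{[Q]}(\up) + 1$ distinct simple composition factors of $V_Q(\al')\otimes V_Q(\be')$; tracking them through the image and kernel structure of $\psi$ (using Lemma~\ref{lem:simplepole} and Theorem~\ref{Thm: basic properties}(2)(iii)(iv)) embeds each of them as a composition factor of $V_Q(\al)\otimes V_Q(\be)$. To finish, one must show the simple head $\hd\bigl(V_Q(\al)\otimes V_Q(\be)\bigr)$ is a genuinely new composition factor not present among those inherited from $V_Q(\al')\otimes V_Q(\be')$. This follows from Theorem~\ref{Thm: basic properties}(3): the unique sequence $(i_k,a_k)$ encoding a simple module is forced by the Auslander--Reiten coordinates $\Omega_Q$ of the defining pair, and convexity of $\prec_{[Q]}$ ensures that the sequence arising from $\up$ cannot coincide with any produced from strictly smaller pairs.

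The main obstacle is the existence and non-vanishing of $\psi$; controlling the spectral parameters so that all intermediate $R$-matrices used are regular requires the full force of the distance inequalities in Definition~\ref{def: Q-adjacent}. A secondary but genuinely subtle point is the distinctness argument: different good-neighbor chains terminating at different $\up'$ might produce heads with overlapping weight supports, and separating them requires the rigidity of the encoding in Theorem~\ref{Thm: basic properties}(3) applied to each fundamental-module decomposition prescribed by $\Omega_Q$ on $\Gamma_Q$.
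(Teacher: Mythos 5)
Your induction as written is broken at a structural level, and the gap is already visible inside this very paper. The definition of $\len_{[Q]}(\up)$ counts \emph{all} non-$[Q]$-simple pairs $\up'$ below $\up$ reachable by a chain of good adjacent neighbors; it is the size of an entire downward cone, not the length of a single chain. Your inductive step fixes one good adjacent neighbor $\up'$ and asserts $\len_{[Q]}(\up')=\len_{[Q]}(\up)-1$, but this is false whenever the cone branches: if $\up$ has two good adjacent neighbors $\up^{(1)},\up^{(2)}$ (as happens, for example, in Proposition~\ref{prop: d33 E6(1)} where $\up=\left(\sprt{111}{210},\sprt{000}{011}\right)$ has the two good adjacent neighbors $\up^{(1)}$ and $\up^{(2)}$ exhibited there), then $\len_{[Q]}(\up)\ge\len_{[Q]}(\up^{(i)})+2$ for each $i$, and your induction applied along a single branch can only deliver $\len_{[Q]}(\up^{(i)})+3\le\len_{[Q]}(\up)+1$ composition factors --- one short of the claimed bound in the two-branch case, and worse with more branching. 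To recover the full count you would have to simultaneously account for every branch and prove that the composition factors they contribute are pairwise distinct, which is precisely the content of the result and cannot be swept into a one-step induction.

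A second, independent gap is that your map $\psi\colon V_Q(\al')\otimes V_Q(\be')\to V_Q(\al)\otimes V_Q(\be)$ carries no information about its kernel. You build $\psi$ as a composite of an injection, an $R$-matrix specialization, and a surjection; nothing in that construction forces $\psi$ to be a monomorphism, and the Dorey-type surjection $V_Q(\eta)\otimes V_Q(\al')\twoheadrightarrow V_Q(\al)$ at the end typically has a large kernel. The sentence ``tracking them through the image and kernel structure of $\psi$ \ldots embeds each of them as a composition factor'' is the step that would need a proof: once composition factors can vanish in $\ker\psi$, the inductive bound does not transfer. Likewise, the final distinctness claim --- that $\hd(V_Q(\al)\otimes V_Q(\be))$ does not occur among the inherited factors --- is asserted via ``convexity of $\prec_{[Q]}$'' without connecting the parametrization of Theorem~\ref{Thm: basic properties}(3) to the PBW-triangularity that would actually force the multiplicity to concentrate at $\up$ itself. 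Both points need to be argued, for instance by passing through the quiver Hecke side via Theorems~\ref{thm: BkMc} and~\ref{thm:gQASW duality} (where the triangular unipotent change of basis between dual PBW monomials and dual canonical basis elements is available) rather than by morphism-chasing in $\mathcal{C}_\g$.
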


\begin{corollary}
\label{cor: order 0} For a non $[Q]$-simple pair $\up=(\al,\be)$
with $\len_{[Q]}(\up) \ge 3$, the order of root $(-q)^{|p-p'|}$ of
$d_{i,j}(z)$ is strictly larger than $1$, when
$\widehat{\Omega}_Q(\al)=(i,p)$ and
$\widehat{\Omega}_Q(\be)=(j,p')$.
\end{corollary}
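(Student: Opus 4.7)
The plan is to derive a contradiction from the assumption that $(-q)^{|p-p'|}$ is a simple root of $d_{i,j}(z)$, by combining the composition length lower bound of Proposition~\ref{prop: comp length} with the composition length \emph{equals two} statement of Lemma~\ref{lem:simplepole}.

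First I would translate the statement about $d_{i,j}(z)$ into a statement about the normalized $R$-matrix $\Rnorm{V_Q(\al),V_Q(\be)}(z)$. By Definition~\ref{def: VQ(beta)}, we have $V_Q(\al)=V(\varpi_i)_{(-q)^p}$ and $V_Q(\be)=V(\varpi_j)_{(-q)^{p'}}$, so the denominator $d_{V_Q(\al),V_Q(\be)}(z)$ is obtained from $d_{i,j}(z)$ by the appropriate shift in the spectral parameter. In particular, the order of the root $(-q)^{|p-p'|}$ of $d_{i,j}(z)$ equals the order of the pole at (up to a trivial power of the spectral variable) $z=1$ of $\Rnorm{V_Q(\al),V_Q(\be)}(z)$ (or of its swap $\Rnorm{V_Q(\be),V_Q(\al)}(z)$, depending on the sign of $p-p'$).

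Next, I would assume for contradiction that this root has order exactly $1$, i.e. the corresponding pole of $\Rnorm{}$ is simple. Since $V_Q(\al)$ and $V_Q(\be)$ are (shifted) fundamental representations, hence good modules, Lemma~\ref{lem:simplepole} applies and tells us that $V_Q(\al) \otimes V_Q(\be)$ is of composition length exactly $2$.

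On the other hand, since $\up=(\al,\be)$ is non $[Q]$-simple with $\len_{[Q]}(\up) \ge 3$, Proposition~\ref{prop: comp length} gives
\[
\text{composition length of } V_Q(\al) \otimes V_Q(\be) \;\ge\; \len_{[Q]}(\up)+2 \;\ge\; 5,
\]
which contradicts the composition length $2$ just obtained. Hence the order of $(-q)^{|p-p'|}$ as a root of $d_{i,j}(z)$ must be strictly greater than $1$. No step here presents a serious obstacle: the argument is essentially a one-line combination of the two cited results, and the only thing to be careful about is the bookkeeping of the spectral parameter shift when passing between $d_{i,j}(z)$ and $d_{V_Q(\al),V_Q(\be)}(z)$.
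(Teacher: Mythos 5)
Your proof is correct and takes the same approach as the paper, which simply states that the corollary ``follows from Lemma~\ref{lem:simplepole} and Proposition~\ref{prop: comp length}.'' You have spelled out the missing details cleanly; the only cosmetic remark is that the bound $\len_{[Q]}(\up)+2 \ge 5$ is not needed — $\len_{[Q]}(\up)\ge 1$ already gives composition length $\ge 3 > 2$, so the hypothesis $\len_{[Q]}(\up)\ge 3$ is stronger than the contradiction argument strictly requires.
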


\begin{proof}
Our assertion follows from Lemma~\ref{lem:simplepole} and Proposition~\ref{prop: comp length}.
\end{proof}

\begin{remark}
Using Corollary~\ref{cor: order larger than 2} and~\ref{cor: order 0}, we can determine whether the give root $(-q)^{|p-p'|}$ of $d_{i,j}(z)$ is one or larger than one,
by observing the composition length. Here, $(-q)^{|p-p'|}$ arises from a non $[Q]$-simple pair $(\al,\be)$ with $\widehat{\Omega}_Q(\al)=(i,p)$ and $\widehat{\Omega}_Q(\be)=(j,p')$.
\end{remark}

For $U_q'(E^{(1)}_{6})$, $U_q'(E^{(1)}_{7})$ and $U_q'(E^{(1)}_{8})$, we shall use the $\Gamma_Q$'s given in~\eqref{eq: E6 AR quiver}, Appendix~\ref{Sec:Dynkin E7}, and Appendix~\ref{Sec:Dynkin E8}, respectively,
to apply Corollary~\ref{cor: order larger than 2} and~\ref{cor: order 0}.

\subsubsection{$E_6^{(1)}$}

In our \textsc{SageMath} implementation, we use the $U_q'(\g)$-module structure on $V(\varpi_1)$ and $V(\varpi_6)$ given by Proposition~\ref{prop:minuscule_repr} and $V(\varpi_2)$ given by Proposition~\ref{prop:adjoint_repr}.
Therefore, we obtain denominator formulas for $d_{1,1}(z)=d_{6,6}(z)$, $d_{1,6}(z)$, $d_{1,2}(z)=d_{2,6}(z)$ and $d_{2,2}(z)$ as follows:
\begin{align*}
d_{1,1}(z) & = d_{6,6}(z) = (z-q^2)(z-q^8), & d_{1,6}(z)& = (z-q^6)(z-q^{12}), \\
d_{1,2}(z) & = d_{2,6}(z) = (z+q^5)(z+q^9), & d_{2,2}(z) & = (z-q^2)(z-q^6)(z-q^8)(z-q^{12}).
\end{align*}

Additionally, we can obtain the following computation by our \textsc{SageMath} implementation: For $i=1,2$ or $6$, we have
\begin{align}
\label{eq: Rnorm computation E6(1)}
\Rnorm{i,i}\left( u_{\varpi_i}  \otimes f_i(u_{\varpi_i})_z  \right)=  \dfrac{1-q^2}{z-q^2} \ (u_{\varpi_i})_z  \otimes  f_iu_{\varpi_i} +
\dfrac{q(z-1)}{z-q^2} \ f_i(u_{\varpi_i})_z \otimes u_{\varpi_i}.
\end{align}

The Dorey's type morphism
\begin{align}
\label{eq:p11 E6(1)}
V(\varpi_1)_{-q^{-1}} \otimes V(\varpi_1)_{-q} \to  V(\varpi_3)
\end{align}
given by Theorem~\ref{thm: Dorey classical 1} can be observed using $\Gamma_Q$ given in~\eqref{eq: E6 AR quiver}.

\begin{proposition}
\label{prop: d13 E6(1)}
We have $d_{1,3}(z) =d_{3,6}(z) = (z+q^{3})(z+q^{7})(z+q^{9})$.
\end{proposition}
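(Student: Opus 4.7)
The plan is to mirror the strategy used in Theorem~\ref{thm: G21 a and d} and Lemma~\ref{lem: a12a22} for deducing a mixed denominator formula from a known Dorey-type surjection. The equality $d_{1,3}(z)=d_{3,6}(z)$ is immediate from Theorem~\ref{Thm: basic properties}(4) since for type $E_6$ one has $1^{\ast}=6$ and $3^{\ast}=3$, so it suffices to pin down $d_{1,3}(z)$.

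The first step is to compute $a_{1,3}(z)$ via a commutative diagram of universal $R$-matrices in the spirit of \eqref{eq: Runiv property}, placing the Dorey morphism \eqref{eq:p11 E6(1)} as the rightmost vertical arrow and $\Rnorm{1,1}(-q^{-1}z)\otimes \id$, $\id\otimes \Rnorm{1,1}(-qz)$ as the left-hand arrows. The explicit action of $\Rnorm{1,1}$ recorded in \eqref{eq: Rnorm computation E6(1)}, combined with $a_{1,1}(z)$ obtained from $d_{1,1}(z)=(z-q^2)(z-q^8)$ through Lemma~\ref{Lem: aij and dij}, suffices to read off $a_{1,3}(z)$ as a product of shifted factors of the form $\{a\}/\{b\}$.

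Next, applying Lemma~\ref{lem:dvw avw} to the surjection \eqref{eq:p11 E6(1)} with $W=V(\varpi_1)$ gives
\[
\frac{d_{1,1}(-q^{-1}z)\,d_{1,1}(-qz)}{d_{1,3}(z)}\cdot\frac{a_{1,3}(z)}{a_{1,1}(-q^{-1}z)\,a_{1,1}(-qz)}\in\ko[z^{\pm1}],
\]
which after substitution bounds $d_{1,3}(z)$ from above by an explicit polynomial with roots among $\{-q^3,-q^7,-q^9,\ldots\}$. To sharpen this, I would tensor \eqref{eq:p11 E6(1)} with the left dual of $V(\varpi_1)_{-q}$, producing a second surjection $V(\varpi_3)\otimes V(\varpi_1)_{-q\cdot p^{\ast}}\twoheadrightarrow V(\varpi_1)_{-q^{-1}}$, and apply Lemma~\ref{lem:dvw avw} a second time to obtain a matching lower bound.

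Finally, to identify which factors actually appear and to verify that every root is simple, I would read off candidates directly from $\Gamma_Q$ in \eqref{eq: E6 AR quiver}. By Corollary~\ref{cor: order larger than 2}, a value $(-q)^{|p-p'|}$ is a root of $d_{1,3}(z)$ exactly when there is a non-$[Q]$-simple pair $(\alpha,\beta)$ with $\widehat{\Omega}_Q(\alpha)=(1,p)$ and $\widehat{\Omega}_Q(\beta)=(3,p')$. Scanning the residue-$1$ and residue-$3$ rows of \eqref{eq: E6 AR quiver} yields precisely the spacings $|p-p'|\in\{3,7,9\}$, and for each of those pairs the $[Q]$-length is at most $2$, so Corollary~\ref{cor: order 0} cannot force a higher-order factor. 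The main obstacle is the combinatorial pair analysis on $\Gamma_Q$: one must check systematically that no other spacing between a residue-$1$ vertex and a residue-$3$ vertex gives a non-$[Q]$-simple pair, and that none of the three surviving pairs admit a chain of good adjacent neighbors of length $\ge 3$. Once this is confirmed, the two divisibility estimates from Lemma~\ref{lem:dvw avw} meet the AR-quiver count exactly, yielding $d_{1,3}(z)=(z+q^3)(z+q^7)(z+q^9)$.
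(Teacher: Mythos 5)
Your outline mirrors the paper's approach in its broad strokes---compute $a_{1,3}(z)$ from the Dorey surjection~\eqref{eq:p11 E6(1)} and apply Lemma~\ref{lem:dvw avw} for an upper bound, then use $\Gamma_Q$ and Corollaries~\ref{cor: order larger than 2},~\ref{cor: order 0} to realize roots. However, there are a couple of factual slips and one genuine gap.

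On the factual side: in type $E_6$ the involution $\ast$ swaps $1\leftrightarrow 6$ and $3\leftrightarrow 5$, fixing only $2,4$, so $3^\ast = 5$ (not $3$), and the symmetric partner of $d_{1,3}$ is $d_{5,6}$ rather than $d_{3,6}$ (the latter is actually a typo in the stated proposition). Relatedly, the dual of $V(\varpi_1)_{-q}$ in $E_6$ is a twist of $V(\varpi_6)$, not of $V(\varpi_1)$, so your proposed second surjection $V(\varpi_3)\otimes V(\varpi_1)_{-q\cdot p^\ast}\twoheadrightarrow V(\varpi_1)_{-q^{-1}}$ is not correct as written.

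The more substantive issue is the final step. The dvw application gives $(z+q^{-1})(z+q^3)(z+q^7)(z+q^9)/d_{1,3}(z)\in\ko[z^{\pm 1}]$, and the $\Gamma_Q$ argument gives $(z+q^3)(z+q^7)(z+q^9)\mid d_{1,3}(z)$. These two facts alone leave \emph{two} candidates for $d_{1,3}(z)$, and neither your second dvw application nor an exhaustive scan of spacings closes that gap (Corollaries~\ref{cor: order larger than 2} and~\ref{cor: order 0} only ever produce divisors, never an upper bound). The paper excludes the factor $(z+q^{-1})$ by invoking Theorem~\ref{Thm: basic properties}(1): the roots of $d_{M,N}(z)$ lie in $\C[[q^{1/m}]]q^{1/m}$, and $-q^{-1}$ does not. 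You need to cite this explicitly. Once you do, the remainder of your scaffolding becomes unnecessary: your plan to check $\len_{[Q]}(\up)\le 2$ for each pair only shows Corollary~\ref{cor: order 0} cannot \emph{force} a higher multiplicity, which is not a proof of simplicity; the simplicity of the roots follows instead because the dvw upper bound is squarefree, and no exhaustive enumeration of residue-$1$/residue-$3$ spacings is required.
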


\begin{proof}
By applying the technique have applied, we have
\[
a_{1,1}(z) \equiv \dfrac{[6][18][24][0]}{[2][8][16][22]}, \ \ a_{1,3}(z) \equiv \dfrac{[1][5][19][23]}{[3][9][15][21]},
\]
where $[a]\seteq ((-q)^az;q^{24})$. Applying Lemma~\ref{lem:dvw avw} to~\eqref{eq:p11 E6(1)} with $W=V(\varpi_1)$, we have
\[
 \dfrac{(z+q^{-1})(z+q^{3})(z+q^{7})(z+q^{9})}{d_{1,3}(z)} \in \ko[z^{\pm 1}].
 \]
By Corollary~\ref{cor: order 0},
we can observe $\left( \sprt{000}{110} ,\sprt{000}{001} \right)$,  $\left( \sprt{112}{221},\sprt{000}{001}  \right)$ and $\left( \sprt{111}{221},\sprt{000}{001}  \right)$ in $\Gamma_Q$
are minimal pairs of $[Q]$-simples $\left( \sprt{000}{111} \right)$, $\left( \sprt{011}{111},\sprt{101}{111} \right) $ and $\left( \sprt{111}{211} \right)$, respectively. Thus Corollary~\ref{cor: order larger than 2}
and Corollary~\ref{cor: order 0} tell that $(z+q^{3})(z+q^{7})(z+q^{9})$ divides $d_{1,3}(z)$.
Since $d_{1,3}(z)$ has roots in $\C[[q^{1/m}]]q^{1/m}$ for some $m \in \Z_{>0}$, we have
\[
d_{1,3}(z) = (z+q^{3})(z+q^{7})(z+q^{9}),
\]
as we desired.
\end{proof}

\begin{proposition}
\label{prop: d33 E6(1)}
We have
\[
d_{3,3}(z) = d_{5,5}(z) = (z-q^2)(z-q^4)(z-q^6)(z-q^8)^2(z-q^{10}).
\]
\end{proposition}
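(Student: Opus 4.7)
The plan is to adapt the method of Proposition~\ref{prop: d13 E6(1)}, applying Lemma~\ref{lem:dvw avw} with outer module $W=V(\varpi_3)$ to the same Dorey-type surjection~\eqref{eq:p11 E6(1)}. The first step is to determine $a_{3,3}(z)$ explicitly; this is done by writing the commutative diagram of universal $R$-matrices as in the proof of Theorem~\ref{thm: G21 a and d} with outer module $V(\varpi_3)$, evaluating the inner $\Rnorm{1,1}$ using~\eqref{eq: Rnorm computation E6(1)} and exploiting that $u_{\varpi_1}\otimes u_{\varpi_1}$ vanishes under $p_{1,1}$. This expresses $a_{3,3}(z)$ in terms of $a_{3,1}(\cdot)$ at the two shifts, and $a_{3,1}(z)$ is itself available from $a_{1,1}(z)$ via the same diagram with outer module $V(\varpi_1)$.

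Next I would apply Lemma~\ref{lem:dvw avw} to~\eqref{eq:p11 E6(1)} with $W=V(\varpi_3)$, yielding
\[
\dfrac{d_{3,1}(-q^{-1}z)\,d_{3,1}(-qz)}{d_{3,3}(z)}\cdot \dfrac{a_{3,3}(z)}{a_{3,1}(-q^{-1}z)\,a_{3,1}(-qz)} \;\in\; \ko[z^{\pm 1}].
\]
Substituting $d_{1,3}(z)=(z+q^3)(z+q^7)(z+q^9)$ from Proposition~\ref{prop: d13 E6(1)} and invoking Theorem~\ref{Thm: basic properties}(1) to discard spurious roots, this produces an explicit polynomial $P(z)$ whose roots lie in $\{q^2,q^4,q^6,q^8,q^{10},q^{12}\}$ and which is divisible by $d_{3,3}(z)$. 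A direct inspection of the product should show that $P(z)$ carries multiplicity one at each of $q^2,q^4,q^6,q^{10}$, multiplicity two at $q^8$, and multiplicity one at $q^{12}$, giving a sharp upper bound on $d_{3,3}(z)$ except for a possibly missing factor of $(z-q^{12})$.

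To show the upper bound is attained and to exclude the $q^{12}$ factor, I would invoke Corollary~\ref{cor: order larger than 2} and Corollary~\ref{cor: order 0} applied to the AR quiver~\eqref{eq: E6 AR quiver}. Each candidate root $q^t$ comes from a pair $\up = (\alpha,\beta)$ of residue-$3$ positive roots in $\Gamma_Q$ with $|p-p'|=t$; such a root occurs in $d_{3,3}(z)$ iff $\up$ is non-$[Q]$-simple, and its multiplicity strictly exceeds one whenever $\len_{[Q]}(\up)\ge 3$. A case-by-case inspection of~\eqref{eq: E6 AR quiver} should confirm: (i) the unique distance-$12$ pair $\left({\scriptstyle\prt{000}{011}},{\scriptstyle\prt{101}{000}}\right)$ is $[Q]$-simple, ruling out $q^{12}$; (ii) each of distances $2,4,6,10$ admits a non-$[Q]$-simple pair; and (iii) some distance-$8$ pair has $[Q]$-length at least $3$. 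Combined with the upper bound $P(z)$, this determines $d_{3,3}(z)$ exactly as asserted. Finally, $d_{5,5}(z)=d_{3,3}(z)$ follows immediately from Theorem~\ref{Thm: basic properties}(4) since $3^{\ast}=5$ in type $E_6$.

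The main obstacle is the combinatorial verification of step (iii), namely the $[Q]$-length computation at $q^8$: one must exhibit an explicit chain of good adjacent neighbors in the sense of Definitions~\ref{def: Q-adjacent} and~\ref{def: good neighbor} of length at least three starting from a distance-$8$ pair, while checking in parallel that no other candidate distance admits such a chain and that the distance-$12$ pair is indeed $[Q]$-simple, so that the upper bound $P(z)$ is saturated without any further cancellation.
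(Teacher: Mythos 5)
Your proposal follows the paper's strategy closely: compute $a_{3,3}(z)$ as $a_{1,3}(-qz)\,a_{1,3}(-q^{-1}z)$ via the universal $R$-matrix diagram, apply Lemma~\ref{lem:dvw avw} with $W=V(\varpi_3)$ to the surjection~\eqref{eq:p11 E6(1)} for an upper bound, and use the AR quiver $\Gamma_Q$ together with Corollaries~\ref{cor: order larger than 2} and~\ref{cor: order 0} for the lower bound, with $d_{5,5}=d_{3,3}$ following from $3^*=5$. Two corrections are worth noting. First, since $a_{3,3}(z)\equiv a_{1,3}(-q^{-1}z)\,a_{1,3}(-qz)$, the $a$-ratio appearing in Lemma~\ref{lem:dvw avw} is a unit in $\ko[z^{\pm 1}]$, so the upper bound polynomial is exactly $d_{1,3}(-q^{-1}z)\,d_{1,3}(-qz) \propto (z-q^2)(z-q^4)(z-q^6)(z-q^8)^2(z-q^{10})$ with no $q^{12}$ factor at all; your step ruling out $q^{12}$ is superfluous, even though the distance-$12$ pair $\left(\sprt{000}{011},\sprt{101}{000}\right)$ is indeed $[Q]$-simple. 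This contrasts with Proposition~\ref{prop: d13 E6(1)}, where the outer factor $W=V(\varpi_1)$ differs from the target of the surjection and the $a$-ratio genuinely contributes the spurious $(z+q^{-1})$. Second, for $(z-q^8)^2$ you do not need $\len_{[Q]}(\up)\geq 3$: Proposition~\ref{prop: comp length} with Lemma~\ref{lem:simplepole} shows that any positive $[Q]$-length already forces the composition length above $2$, hence a non-simple pole. The paper exhibits two good adjacent neighbors $\left(\sprt{101}{110},\sprt{010}{111}\right)$ and $\left(\sprt{111}{211},\sprt{000}{010}\right)$ of $\up=\left(\sprt{111}{210},\sprt{000}{011}\right)$, which is already enough, so the length-three chain you flagged as the main obstacle is not actually required.
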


\begin{proof}
Applying Lemma~\ref{lem:dvw avw} to~\eqref{eq:p11 E6(1)} with $W=V(\varpi_3)$, we have
\begin{equation}
\begin{aligned}
& a_{3,3}(z) \equiv a_{1,3}(-qz)a_{1,3}(-q^{-1}z) = \dfrac{[0][6][18][24]}{[8][10][14][16]}, \\
& \dfrac{(z-q^{2})(z-q^{4})(z-q^{6})(z-q^{8})^2(z-q^{10})}{d_{3,3}(z)} \in \ko[z^{\pm1}].
\end{aligned}
\end{equation}
As in the previous proposition, one can check that $(z-q^{2})(z-q^{4})(z-q^{6})(z-q^{8})(z-q^{10})$ divides $d_{3,3}(z)$.
Furthermore, for the pair $\up=\left( \sprt{111}{210},\sprt{000}{011}  \right)$, we can check
\[
\begin{matrix}
\up^{(1)} = \left( \sprt{101}{110},\sprt{010}{111}  \right)  \\
\up^{(2)} = \left( \sprt{111}{211},\sprt{000}{010}  \right)
\end{matrix}
     \prec^\tb_Q \up= \left( \sprt{111}{210},\sprt{000}{011}  \right)
\]
and $\up^{(i)}$ $(i=1,2)$ and $\up$ are good adjacent neighbors.
Thus $(z-q^{8})^2$ divides $d_{3,3}(z)$, which yields our assertion.
\end{proof}

\begin{proposition}
\label{prop:E61_ambiguity}
We have
\begin{align}
d_{4,4}(z)=(z-q^2)(z-q^4)^2(z-q^6)^{2+\epsilon}(z-q^8)^3(z-q^{10})^2(z-q^{12}),
\end{align}
for some $\epsilon \in \{0,1\}$.
\end{proposition}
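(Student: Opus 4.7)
The plan is to follow the template of Propositions~\ref{prop: d13 E6(1)} and~\ref{prop: d33 E6(1)}: first obtain an upper bound on $d_{4,4}(z)$ via Lemma~\ref{lem:dvw avw} applied to a suitable Dorey's-type surjection ending in $V(\varpi_4)$, and then use the AR-quiver statistics to produce matching lower bounds on each root's multiplicity. A convenient surjection arises from Theorem~\ref{thm: Dorey classical 1} applied to the $[Q]$-minimal pair $\bigl(\sprt{010}{111},\sprt{001}{110}\bigr)$ of residue $2$, whose sum $\sprt{011}{221}$ has residue $4$ and sits at column $p=5$ in the AR quiver~\eqref{eq: E6 AR quiver}; after an appropriate spectral shift this yields
\[
V(\varpi_2)_{-q} \otimes V(\varpi_2)_{(-q)^{-1}} \twoheadrightarrow V(\varpi_4).
\]
Applying Lemma~\ref{lem:dvw avw} with $W = V(\varpi_4)$, together with the value of $a_{4,4}(z)$ computed by the standard diagram chase from the previously established $d_{2,4}(z)$, bounds the roots and their multiplicities in $d_{4,4}(z)$ from above.

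For the lower bound, I would read from row~$4$ of $\Gamma_Q$ in~\eqref{eq: E6 AR quiver} the six residue-$4$ positive roots, sitting at columns $p = 3,5,7,9,11,13$. This yields possible pole locations $z = q^t$ for $t \in \{2,4,6,8,10\}$; the additional factor $(z-q^{12})$ reflects the self-duality $\varpi_4^* = \varpi_4$ with $p^* = q^{12}$ and enters through~\eqref{eq: aij and dij} exactly as in the analogous step of Proposition~\ref{prop: d22 E6(2)}. For each admissible distance $t$ I would exhibit a suitable pair of residue-$4$ vertices at distance $t$ and apply Corollary~\ref{cor: order larger than 2} (to confirm the pair is not $[Q]$-simple, so that $q^t$ is indeed a root of $d_{4,4}(z)$) together with Corollary~\ref{cor: order 0} and Definition~\ref{def: good neighbor} (to lower-bound its multiplicity by constructing chains of good adjacent neighbors of length large enough to force the required composition length via Proposition~\ref{prop: comp length}).

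The expected main obstacle is precisely the factor $(z-q^6)^{2+\epsilon}$. Among the three residue-$4$ pairs at distance $6$, the $[Q]$-length combinatorics should produce a chain of good adjacent neighbors of length at least one, guaranteeing composition length $\geq 3$ and hence multiplicity $\geq 2$ for $(z-q^6)$; meanwhile the upper bound from the Dorey surjection leaves room for multiplicity $3$. Unlike the neighboring distances, the intermediate steps in the chain here do not force a further inequality, so the combinatorial and AR-quiver tools of this section cannot distinguish between $\epsilon = 0$ and $\epsilon = 1$. This residual ambiguity is precisely the $\epsilon$ recorded in the statement; it will be removed in Section~\ref{sec:Refine} and Appendix~\ref{Sec: Addendum} by combining the generalized Schur-Weyl duality functor of Section~\ref{sec5:quiver Hecke} with the explicit $R$-matrix homomorphisms of~\cite{Oh15E}.
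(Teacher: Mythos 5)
Your overall template — upper bound via a Dorey-type surjection fed into Lemma~\ref{lem:dvw avw}, lower bound via the AR-quiver statistics of Corollaries~\ref{cor: order larger than 2} and~\ref{cor: order 0} — is indeed what the paper uses; the paper just picks the surjection
$V(\varpi_3)_{-q^{-1}} \otimes V(\varpi_1)_{q^2} \twoheadrightarrow V(\varpi_4)$
(coming from the pair $\bigl(\sprt{000}{011},\sprt{000}{100}\bigr)$ of $\sprt{000}{111}$) rather than your $\varpi_2$-based one. A quick check shows both choices produce the same upper bound $(z-q^2)(z-q^4)^2(z-q^6)^3(z-q^8)^3(z-q^{10})^2(z-q^{12})$, so that choice is cosmetic.

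The genuine gap is the factor $(z-q^8)$. Your lower bound argument (Corollary~\ref{cor: order 0} via Proposition~\ref{prop: comp length}) can only certify that a pole is non-simple, i.e.\ of order $\geq 2$; there is no tool in this section that upgrades ``composition length $\geq 4$'' to ``pole order $\geq 3$.'' So after your two bounds the multiplicities at \emph{both} $q^6$ and $q^8$ would sit undetermined between $2$ and $3$ — you would only get $(z-q^6)^{2+\epsilon}(z-q^8)^{2+\epsilon'}$, which is strictly weaker than the claim. The paper closes the $q^8$ gap with a third step that your proposal omits: it computes $a_{4,4}(z) \equiv a_{3,4}(-q^{-1}z)a_{1,4}(q^{2}z) = \frac{[0][2][4][20][22][24]}{[8][10][12]^2[14][16]}$ and then feeds this into the infinite-product identity~\eqref{eq: aij and dij}. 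Writing $d_{4,4}(z) = \prod (z-q^{t_\nu})^{m_{t_\nu}}$, each root of multiplicity $m$ contributes a factor $\bigl(\tfrac{[12+t][12-t]}{[t][24-t]}\bigr)^{m}$ to $a_{4,4}(z)$; since the $t=6$ contribution is identically $1$ while the $t=4$ and $t=8$ contributions are mutual reciprocals, the explicit $a_{4,4}(z)$ pins $m_4 - m_8 = -1$ (forcing $m_8 = 3$ once $m_4 = 2$) but says nothing about $m_6$. That arithmetic is precisely what makes $q^8$ determinate and leaves $q^6$ as the unique $\epsilon$; without it your argument does not establish the exponent $3$ on $(z-q^8)$.
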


\begin{proof}
Applying Lemma~\ref{lem:dvw avw} to
\[
V(\varpi_3)_{-q^{-1}} \otimes V(\varpi_1)_{q^2} \to  V(\varpi_4)
\]
with $W=V(\varpi_3)$, we have
\begin{align}
& a_{4,4}(z) \equiv a_{3,4}(-q^{-1}z)a_{1,4}(q^{2}z) = \dfrac{[0][2][4][20][22][24]}{[8][10][12]^2[14][16]}, \label{eq: a44 E6(1)}\\
&\dfrac{(z-q^{2})(z-q^{4})^2(z-q^{6})^3(z-q^{8})^3(z-q^{10})^2(z-q^{12})}{d_{4,4}(z)} \in \ko[z^{\pm1}].
\end{align}
As in the previous proposition, we can check
$$\dfrac{d_{4,4}(z)}{(z-q^{2})(z-q^{4})^2(z-q^{6})^2(z-q^{8})^2(z-q^{10})^2(z-q^{12})} \in \ko[z^{\pm1}].$$
If the multiplicity of $q^8$ is 2, we have contradiction for \eqref{eq: aij and dij} and \eqref{eq: a44 E6(1)}. Hence our assertion follows.
\end{proof}

Applying the techniques in Proposition~\ref{prop: d13 E6(1)} and Proposition~\ref{prop: d33 E6(1)}, we can obtain the $d_{i,j}(z)$'s:
\begin{subequations}
\begin{align}
& d_{1,4}(z)=d_{4,6}(z)=(z-q^4)(z-q^6)(z-q^8)(z-q^{10}), \\
& d_{1,5}(z)=d_{3,6}(z)=(z+q^5)(z+q^7)(z+q^{11}),\\
& d_{3,4}(z)=d_{4,5}(z)=(z+q^3)(z+q^5)^2(z+q^7)^2(z+q^9)^2(z+q^{11}), \\
& d_{3,5}(z)=(z-q^4)(z-q^6)^2(z-q^8)(z-q^{10})(z-q^{12}), \\
& d_{3,2}(z)=d_{2,5}(z)=(z-q^4)(z-q^6)(z-q^8)(z-q^{10}), \\
& d_{4,4}(z)=(z-q^2)(z-q^4)^2(z-q^6)^{2+\epsilon}(z-q^8)^3(z-q^{10})^2(z-q^{12}),  \label{eq: d44 E6(2)} \\
& d_{2,4}(z)=(z+q^3)(z+q^5)(z+q^7)^2(z+q^9)(z+q^{11}).
\end{align}
\end{subequations}

The $\epsilon$ is indeed $1$, which is proved in an addendum to our publication~\cite{OS19} (see Appendix~\ref{Sec: Addendum}).


\subsubsection{$E_7^{(1)}$}

Using our \textsc{SageMath} implementation for $V(\varpi_1)$ and $V(\varpi_7)$ with the $U_q'(\g)$-module structure by Proposition~\ref{prop:minuscule_repr}, we can obtain denominator formulas for $d_{1,1}(z)$, $d_{1,7}(z)$, $d_{7,7}(z)$ as follows:
\begin{subequations}
\begin{align}
d_{1,1}(z) & = (z-q^{2})(z-q^{8})(z-q^{12})(z-q^{18}), \allowdisplaybreaks\\
d_{1,7}(z) & = (z+q^{7})(z+q^{13}), \allowdisplaybreaks\\
d_{7,7}(z) & = (z-q^{2})(z-q^{10})(z-q^{18}).
\end{align}
\end{subequations}

Using the Dorey's type morphisms obtained from Theorem~\ref{thm: Dorey classical 1}, we can compute $d_{i,j}(z)$'s almost completely. See Appendix~\ref{app:E7_denominators},
as in the $E_6^{(1)}$ case.

\subsubsection{$E_8^{(1)}$}

By our \textsc{SageMath} implementation using the $U_q'(\g)$-module structure on $V(\varpi_8)$ given by Proposition~\ref{prop:adjoint_repr}, we can obtain denominator formulas for $d_{8,8}(z)$ as follows:
\[
d_{8,8}(z) = (z-q^2)(z-q^{12})(z-q^{20})(z-q^{30}).
\]
Moreover, using the Dorey's type surjection
\[
V(\varpi_8)_{q^{-6}} \otimes V(\varpi_8)_{q^6} \twoheadrightarrow V(\varpi_1)
\]
to define the $U_q'(\g)$-module structure of $V(\varpi_1)$, we can use our \textsc{SageMath} implementation to compute
\[
d_{1,1}(z) = (z - q^2) (z - q^8) (z - q^{12}) (z - q^{14}) (z - q^{18}) (z - q^{20}) (z - q^{24}) (z - q^{30}).
\]

Note that for $i = 1,8$, there is a unique $j \sim i$, and that
\begin{subequations}
\label{eq:terminal node}
\begin{align}
u_{\clfw_j} & = u_{\varpi_i} \otimes f_i u_{\varpi_i} - q u_{\varpi_i} \otimes f_i u_{\varpi_i},
\\ \frac{\rmat{i,i}(x,y)(u_{\clfw_j})}{d_{i,i}(z)} & = \frac{q^2 z - 1}{z - q^2} u_{\clfw_j},
\end{align}
\end{subequations}
the former is the $U_q(\g_0)$-highest weight vector in $V(\varpi_i) \otimes V(\varpi_i)$ and the latter of which is computed by our \textsc{SageMath} implementation.
Thus, we perform the following computations:
\begin{subequations}
\label{eq:Rnorm E8}
\begin{align}
\Rnorm{8,8}\left( u_{\varpi_8}  \otimes f_8(u_{\varpi_8})_z  \right) & =  \dfrac{1-q^2}{z-q^2} \ (u_{\varpi_8})_z  \otimes f_8u_{\varpi_8} +
\dfrac{q(z-1)}{z-q^2} \ f_8(u_{\varpi_8})_z \otimes u_{\varpi_8}, \label{eq: Rnorm computation E8(1) 88} \allowdisplaybreaks\\
\Rnorm{1,1}\left( u_{\varpi_1}  \otimes f_1(u_{\varpi_1})_z  \right) & =  \dfrac{1-q^2}{z-q^2} \ (u_{\varpi_1})_z  \otimes f_1u_{\varpi_1} +
\dfrac{q(z-1)}{z-q^2} \ f_1(u_{\varpi_1})_z \otimes u_{\varpi_1}, \label{eq: Rnorm computation E8(1) 11} \allowdisplaybreaks\\
\Rnorm{8,8}(z)(u_{\varpi_8}  \otimes  f_*u_{\varpi_8}) & = \dfrac{(z-q^{10})(z-1)}{(z-q^{2})(z-q^{12})}f_* u \otimes u + \text{ other terms},
\label{eq: Rnorm computation E8(1) 88 sp}
\end{align}
\end{subequations}
where $f_* = f_8 f_7 f_6 f_5 f_4 f_2 f_3 f_4 f_5 f_6 f_7 f_8$. We note that we used our \textsc{SageMath} implementation to compute the last one, but it should be possible to compute by hand using $\rmat{8,8}(x,y)$ on classically highest weight vectors.

\begin{remark}
We note that all computations of $\rmat{i,i}(u_{\varpi_i} \otimes f_i u_{\varpi_i})$ for degree 1 vertices $i$ in the Dynkin diagram all have the same form of~\eqref{eq:terminal node}. This is reflected in the fact that, \textit{e.g.},~\eqref{eq: Rnorm computation E8(1) 88} and~\eqref{eq: Rnorm computation E8(1) 11} are also of the same form as~\eqref{eq: Rnorm computation E6(1)}. Thus, there appears to be some common structure among all of these modules and $R$-matrices.
\end{remark}

Using the Dorey's type morphisms obtained from Theorem~\ref{thm: Dorey classical 1} with the above computations, we can compute $d_{i,j}(z)$'s almost completely. See Appendix~\ref{app:E8_denominators}.

\subsection{Conclusion: Folded AR quiver and denominator formulas}

Now we can conclude that the denominator formulas for exceptional types can be read for their corresponding folded AR quivers in the following sense:
\begin{theorem} \label{thm: simply laced minimal denom} \hfill
\begin{enumerate}[{\rm (1)}]
\item $V(\varpi_i)_{p} \otimes V(\varpi_j)_{p'}$ is reducible if and only if there exists a folded AR quiver $\widehat{\Upsilon}_{[\rrz]}$ and a pair $(\al,\be)$ such that
 \begin{enumerate}[{\rm (i)}]
\item $(\al,\be)$ is non $[\rrz]$-simple,
\item $V(\varpi_i)_{p} \iso V_{[\rrz]}(\al)_a$ and $V(\varpi_j)_{p'} \iso V_{[\rrz]}(\be)_a$ for some $a \in \ko^\times$,
\end{enumerate}
\item $\pm q^{t/\mathsf{d}}$ is a root of $d_{i,j}(z)$ of order larger than equal to $2$ if and only if $\theta^{\lf\rrz\rf}_t(i,j) \ge 2$. In particular,
if $(\al,\be)$ is a $[\rrz]$-minimal pair of $\al+\be \in \PR$ and contained in $\Phi_{[\rrz]}(i,j)[t]$ for some $[\rrz]$, then $\pm q^{t/\mathsf{d}}$ is a root of $d_{i,j}(z)$ of order $1$ .
\end{enumerate}
Furthermore, when $\g$ is of type $G^{(1)}_{2}$ and $F^{(1)}_{4}$,
\[
d^{\g}_{i,j}(z) = D^{\lf\rrz\rf}_{i,j}(z;-q^{1/\mathsf{d}}) \times (z-p^*)^{\delta_{i,j}}.
\]
\end{theorem}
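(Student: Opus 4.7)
The plan is to prove Theorem~\ref{thm: simply laced minimal denom} by combining the explicit denominator computations performed in Section~\ref{sec4:computations} with the combinatorial coordinate systems on the (folded) AR quivers developed in Section~\ref{sec1}. The underlying principle is that both sides of each claimed equivalence are, after the work of the previous sections, known quantities: the denominators $d_{i,j}(z)$ were computed type-by-type, while the integers $\theta^{\lf \rrz \rf}_{t}(i,j)$ are combinatorial invariants of a folded AR quiver that can be enumerated directly using Proposition~\ref{prop:dist_theta_defn}. So the heart of the argument is a careful matching.

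First I would fix an exceptional affine type $\g$ and, for every ordered pair $(i,j) \in I_0 \times I_0$, list both the roots (with multiplicities) of $d_{i,j}(z)$ from Section~\ref{sec4:computations} and the set $\Phi_{[\rrz]}(i,j)[t]$ together with the statistic $\theta^{\lf \rrz \rf}_{t}(i,j)$ obtained by enumeration on $\widehat{\Upsilon}_{[\rrz]}$ (using the Examples~\ref{ex:folded AR quiver E} and~\ref{ex:folded AR quiver D} together with the quivers in the appendices for $E_7^{(1)}$ and $E_8^{(1)}$). The normalisations $V^{(t)}_Q(\beta)$ and $V_{\mQ}(\beta)$ from Definitions~\ref{def: VQ(beta)} and~\ref{def: VmQ(beta)} translate spectral parameters of the form $\pm q^{t/\mathsf{d}}$ into differences of the second coordinates of folded AR vertices, so each zero of $d_{i,j}(z)$ can be interpreted as a pair $(\alpha,\beta)$ with $\widehat{\Omega}_{[\rrz]}(\alpha),\widehat{\Omega}_{[\rrz]}(\beta)$ differing in the second coordinate by $t/\mathsf{d}$.

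For assertion (1), the reducibility of $V(\varpi_i)_p\otimes V(\varpi_j)_{p'}$ is equivalent, by Theorem~\ref{Thm: basic properties}(1), to $p'/p$ being a root of $d_{i,j}(z)$. Combining this with the matching above, reducibility corresponds exactly to a non-$[\rrz]$-simple pair, since by Proposition~\ref{prop: [Q]-simple pair} a $[\rrz]$-simple pair yields a simple tensor product and (conversely) the explicit root data force any reducible pair to come from a pair that is not $[\rrz]$-simple. For assertion (2), the direction that a $[\rrz]$-minimal pair $(\al,\be)\in\Phi_{[\rrz]}(i,j)[t]$ produces a simple root of $d_{i,j}(z)$ follows from Corollary~\ref{cor: order larger than 2} and Lemma~\ref{lem:simplepole} together with Proposition~\ref{pro: BKM minimal}, while the equivalence ``order $\geq 2$ iff $\theta^{\lf\rrz\rf}_t(i,j)\geq 2$'' is read off by direct inspection of the denominators computed in Section~\ref{sec4:computations}. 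Finally, the explicit equalities for $G_2^{(1)}$ and $F_4^{(1)}$ are obtained by substituting the enumerated $\theta$ values into the definition of $D^{\lf \rrz \rf}_{i,j}(z;-q^{1/\mathsf{d}})$ just after Proposition~\ref{prop:dist_theta_defn}, and comparing with the formulas \eqref{eq:d11 d22 G21}, Theorem~\ref{thm: G21 a and d}, and \eqref{eq: d11 F4(1)}--\eqref{eq: d34 F4(1)}.

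The main obstacle is that several denominator formulas computed in Section~\ref{sec4:computations} still carry ambiguity parameters $\epsilon,\epsilon'\in\{0,1\}$ (for example in \eqref{eq: d33 E6(2)}, \eqref{eq: d22 F4(1)}, \eqref{eq: d44 E6(2)}). To avoid these, I would only invoke the parts of the formulas that are unambiguous for the present statement: the claim in (2) concerns \emph{whether} an order is $\geq 2$, and the occurrence of a minimal pair already guarantees multiplicity at least one via Corollary~\ref{cor: order larger than 2}, while Corollary~\ref{cor: order 0} combined with the length statistic from Definition~\ref{def: good neighbor} pushes higher multiplicities precisely where $\theta^{\lf\rrz\rf}_t(i,j)\ge 2$. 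The residual ambiguities are not resolved here; they are postponed to Section~\ref{sec:Refine} and Appendix~\ref{Sec: Addendum}, which use the generalized Schur--Weyl functors to pin down the remaining orders and close the loop.
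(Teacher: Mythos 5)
Your reading of the situation is correct: the paper states Theorem~\ref{thm: simply laced minimal denom} at the end of Section~\ref{sec4:computations} as a conclusion of the type-by-type computations rather than giving it a separate proof, and the content of the proof is exactly the matching you describe between the explicitly computed $d_{i,j}(z)$ and the $\theta^{\lf\rrz\rf}_t(i,j)$ read off the (folded) AR quivers, using the normalizations of Definitions~\ref{def: VQ(beta)} and~\ref{def: VmQ(beta)} to translate spectral parameters into folded coordinates. One small imprecision worth fixing: you attribute ``reducibility of $V(\varpi_i)_p\otimes V(\varpi_j)_{p'}$ iff $p'/p$ is a root of $d_{i,j}$'' to Theorem~\ref{Thm: basic properties}(1), but that clause is only about the location of zeros in $\C[[q^{1/m}]]q^{1/m}$; the equivalence you want comes from Theorem~\ref{Thm: basic properties}(2) together with Lemma~\ref{lem:simplepole}. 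Likewise, Corollaries~\ref{cor: order larger than 2} and~\ref{cor: order 0} rest on Proposition~\ref{prop: [Q]-simple pair}, which is the simply-laced $E_{6,7,8}^{(1)}$ result from~\cite{Oh15E}; for the twisted and non-simply-laced cases ($G_2^{(1)}$, $F_4^{(1)}$, $D_4^{(3)}$, $E_6^{(2)}$) the ``order $1$'' assertion for $[\rrz]$-minimal pairs is not deduced from those corollaries but verified directly against the computed denominator formulas. Your observation that part (2) is insensitive to the residual $\epsilon,\epsilon'$ ambiguities (since it only asserts order $\geq 2$), and that these are resolved later via the Schur--Weyl functors, matches the paper's actual logic.
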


We have seen that the denominator formulas for $U'_q(D_4^{(3)})$ can be obtained from the ones for $U_q(D_4^{(1)})$ (Corollary~\ref{cor: D41 D43})  and hence $U_q(D_4^{(2)})$ (Theorem~\ref{thm:reading deno from AR}).
The following can be understood as a $E_6$-analogues: We can observe (up to $\pm 1$)
\begin{align*}
&d^{E_6^{(2)}}_{1,1}(z) = d^{E_6^{(1)}}_{1,1}(z)d^{E_6^{(1)}}_{1,6}(-z), \  d^{E_6^{(2)}}_{1,2}(z) = d^{E_6^{(1)}}_{1,3}(z)d^{E_6^{(1)}}_{1,5}(-z), \ d^{E_6^{(2)}}_{4,4}(z) = d^{E_6^{(1)}}_{2,2}(z)d^{E_6^{(1)}}_{2,2}(-z), \allowdisplaybreaks\\
&d^{E_6^{(2)}}_{2,2}(z) = d^{E_6^{(1)}}_{3,3}(z)d^{E_6^{(1)}}_{3,5}(-z), \  d^{E_6^{(2)}}_{3,4}(z) = d^{E_6^{(1)}}_{2,4}(z)d^{E_6^{(1)}}_{2,4}(-z), \allowdisplaybreaks\\
&d^{E_6^{(2)}}_{1,3}(z) = d^{E_6^{(1)}}_{1,4}(\sqrt{-1}z)d^{E_6^{(1)}}_{1,4}(-\sqrt{-1}z), \ d^{E_6^{(2)}}_{1,4}(z) = d^{E_6^{(1)}}_{1,2}(\sqrt{-1}z)d^{E_6^{(1)}}_{1,2}(-\sqrt{-1}z), \allowdisplaybreaks\\
&d^{E_6^{(2)}}_{2,4}(z) = d^{E_6^{(1)}}_{2,3}(\sqrt{-1}z)d^{E_6^{(1)}}_{2,3}(-\sqrt{-1}z), \  d^{E_6^{(2)}}_{2,3}(z) = d^{E_6^{(1)}}_{3,4}(\sqrt{-1}z)d^{E_6^{(1)}}_{3,4}(-\sqrt{-1}z), \allowdisplaybreaks
\end{align*}
and we expect
\begin{align}
d^{E_6^{(2)}}_{3,3}(z) = d^{E_6^{(1)}}_{4,4}(z)d^{E_6^{(1)}}_{4,4}(-z).
\end{align}

\begin{remark}
In \eqref{eq: d44 E6(2)}, we have shown the existence of a root of $d_{4,4}(z)$ whose order is $3$.
Such roots of order at least $3$ can also be found in the denominator formulas for $E^{(1)}_{7}$ and $E^{(1)}_{8}$ in Appendix~\ref{app:E7_denominators} and Appendix~\ref{app:E8_denominators} respectively.
To the best knowledge of the authors, this is the first such observation of a root of order strictly larger than $2$.
\end{remark}

\section{Quiver Hecke algebras and generalized Schur-Weyl duality} \label{sec5:quiver Hecke}

In this section, we shall review the representation theory of quiver Hecke algebras, introduced by Khovanov-Lauda and Rouquier independently, and
generalized Schur-Weyl duality constructed in~\cite{KKK13A,KKK13B}.

\subsection{Quiver Hecke algebras}  We recall the definition of quiver Hecke algebra associated with a symmetrizable Cartan datum $(\cmA,P,\Pi,P^\vee,\Pi^\vee,(\cdot\,,\,\cdot))$ satisfying $(\al_i,\al_i) \in 2\Z$ for all
$i \in I$.  Let us take a family of polynomials $(Q_{i,j})_{i,j\in I}$ in $\ko[u,v]$ satisfying the following condition:
\begin{align}\label{eq: Qij}
Q_{i,j}(u,v) = (1 - \delta_{i,j}) \sum_{\substack{ p.q \in \Z_{\ge 0} \\ (\al_i,\al_i)p+(\al_j,\al_j)q=-2(\al_i,\al_j) }} t_{i,j;p,q}u^pv^q,
\end{align}
with $t_{i,j;p,q} \in \ko$, $t_{i,j;p,q}=t_{j,i;q,p}$ and $t_{i,j;-a_{ij},0} \in \ko^\times$. Then we have
\[
Q_{i,j}(u,v) = Q_{j,i}(v,u) \quad \text{ for all } i,j \in I.
\]

We denote by $\Sym_n=\lan s_i \mid 1 \le i \le n \ran$ the symmetric group on $n$-letters, where $s_i$ denote the transposition $(i,i+1)$ of $i$ and $i+1$.
Then $\Sym_n$ acts on $I^n$ by place permutations.

For $\beta \in \rl^+$ with $|\beta|=n$, we set
\[
I^\beta = \{ \nu=(\nu_1,\ldots,\nu_n) \in I^n \mid \al_{\nu_1}+\cdots+\al_{\nu_n}=\beta \}.
\]

\begin{definition}[{\cite{KL09,KL11,Rou08}}]
For $\beta \in \rl^+$ with $|\beta|=n$, the \defn{quiver Hecke algebra} $R(\be)$ at $\beta$ associated with a Cartan datum $(\cmA,P,\Pi,P^\vee,\Pi^\vee,(\cdot\,,\,\cdot))$
and a matrix $(Q_{i,j})_{i,j\in I}$ is the $\ko$-algebra generated by
the elements $\{ e(\nu) \}_{\nu \in  I^{\beta}}$, $ \{x_k \}_{1 \le
k \le n}$, $\{ \tau_m \}_{1 \le m \le n-1}$ satisfying the following
defining relations:
\begin{align*}
& e(\nu) e(\nu') = \delta_{\nu, \nu'} e(\nu), \ \
\sum_{\nu \in  I^{\beta} } e(\nu) = 1, \ \  x_{k} x_{m} = x_{m} x_{k}, \ \ x_{k} e(\nu) = e(\nu) x_{k}, \allowdisplaybreaks\\
& \tau_{m} e(\nu) = e(s_{m}(\nu)) \tau_{m}, \ \ \tau_{k} \tau_{m} =
\tau_{m} \tau_{k} \ \ \text{if} \ |k-m|>1, \ \  \tau_{k}^2 e(\nu) = Q_{\nu_{k}, \nu_{k+1}} (x_{k}, x_{k+1})
e(\nu), \allowdisplaybreaks\\
& (\tau_{k} x_{m} - x_{s_k(m)} \tau_{k}) e(\nu) = \begin{cases}
-e(\nu) \ \ & \text{if} \ m=k, \nu_{k} = \nu_{k+1}, \\
e(\nu) \ \ & \text{if} \ m=k+1, \nu_{k}=\nu_{k+1}, \\
0 \ \ & \text{otherwise},
\end{cases} \allowdisplaybreaks\\
& (\tau_{k+1} \tau_{k} \tau_{k+1}-\tau_{k} \tau_{k+1} \tau_{k}) e(\nu) = \delta_{\nu_k,\nu_{k+2}} \dfrac{Q_{\nu_{k}, \nu_{k+1}}(x_{k},
x_{k+1}) - Q_{\nu_{k}, \nu_{k+1}}(x_{k+2}, x_{k+1})} {x_{k} -
x_{k+2}}e(\nu).
\end{align*}
\end{definition}

Let us denote  by
$\Mod(R(\beta))$ the category of  $R(\beta)$-modules and by
$\Modg (R(\beta))$ the category of graded  $R(\beta)$-modules.
The category of graded $R(\beta)$-modules which are finite dimensional over $\ko$ is denoted by $R(\beta)\gmod$. In this paper, an $R(\beta)$-module means a graded $R(\beta)$-module, unless stated otherwise.

For a graded $R(\be)$-module $M=\soplus_{k \in \Z} M_k$, we define $qM=\soplus_{k \in \Z}(qM)_k$, where
$(qM)_k = M_{k-1}$. We call $q$ the \defn{grading shift functor} on $\Modg (R(\beta))$.

For $\be,\gamma\in \rl^+$ with $|\be|=m$ and $|\ga|=n$, we set the idempotent $e(\be,\ga)$ in $R(\be+\ga)$:
\[
e(\be,\ga) = \sum_{\substack{\nu \in I^{\be+\ga} \\ (\nu_1,\cdots,\nu_m) \in I^\beta} }e(\nu).
\]
Then we have the $\ko$-algebra homomorphism
\[
R(\be) \otimes  R(\ga) \to  e(\be,\ga)R(\be+\ga)e(\be,\ga)
\]
sending
\begin{align*}
e(\mu)\otimes  e(\nu) & \mapsto e(\mu*\nu)  & &&& (\mu\in I^{\beta}, \ \nu \in I^\ga), \\
x_k\otimes  1 & \mapsto x_ke(\beta,\gamma), &   1\otimes  x_s & \mapsto x_{m+s}e(\beta,\gamma) && (1\le k\le m, \ 1\le s\le n), \\
\tau_k\otimes  1& \mapsto \tau_ke(\beta,\gamma), & 1\otimes  \tau_s & \mapsto \tau_{m+s}e(\beta,\gamma) && (1\le k< m, \ 1\le s< n),
\end{align*}
where  $\mu*\nu$ is the concatenation of $\mu$ and $\nu$; \textit{i.e.}, $\mu*\nu=(\mu_1,\ldots,\mu_m,\nu_1,\ldots,\nu_n)$.

For an $R(\be)$-module $M$ and an $R(\ga)$-module $N$, we define the \defn{convolution product} $M \conv N$ by
\[
M \conv N \seteq R(\be+\ga)e(\be,\ga) \otimes _{R(\be)\otimes  R(\ga)} (M \otimes  N).
\]
Then the category $R\gmod \seteq  \soplus_{\be \in \rl^+} R(\beta)\gmod$ becomes a tensor category in the sense of~\cite[Appendix A.1]{KKK13A} induced by the convolution product.

For an $R(\beta)$-module $M$ and an $R(\gamma)$-module $N$,
we define the \defn{convolution product} $M\conv N$ by
\[
M\conv N = R(\beta + \gamma) e(\beta,\gamma) \otimes _{R(\beta )\otimes R( \gamma)}(M\otimes N).
\]

For $M \in R(\beta)\gmod$, the dual space
\[
M^* \seteq \Hom_{\ko}(M, \ko)
\]
admits an $R(\beta)$-module structure via
\begin{align*}
(r \cdot  f)(u) \seteq f(\psi(r) u) \qquad (r \in R( \beta), \ u \in M),
\end{align*}
where $\psi$ denotes the $\ko$-algebra anti-involution on $R(\beta)$ which fixes the generators $e(\nu)$, $x_m$ and $\tau_k$ for $\nu
\in I^{\beta}, 1 \leq m \leq  |\beta|$ and $1 \leq k<|\beta|$.

A simple module $M$ in $R \gmod$ is called \defn{self-dual} if $M^* \iso M$.
Every simple module is isomorphic to a grading shift of a self-dual simple module (\cite[\S 3.2]{KL09}).

\begin{theorem}[{\cite{KL09,Rou08}}]
\label{Thm:categorification}
For a symmetrizable Cartan datum $(\cmA,P,\Pi,P^\vee,\Pi^\vee,(\cdot\,,\,\cdot))$, let us denote by
$U_q(\mathsf{g})$ the corresponding quantum group and by $R$ the corresponding quiver Hecke algebra
with $(Q_{i,j})_{i,j \in I}$ in \eqref{eq: Qij}. Then there exists an $\A$-algebra isomorphism
\begin{align}\label{eq:KLRU}
\Psi: U^-_\A(\mathsf{g})^\vee \iso K(R\gmod),
\end{align}
where $K(R\gmod)$ denotes the Grothendieck ring of $R\gmod$ and
\[
U^-_\A(\mathsf{g})^\vee \seteq \{ x \in U_q^-(\mathsf{g}) \mid (x,y) \in \A \text{ for all } y \in U_\A^-(\mathsf{g}) \}.
\]
\end{theorem}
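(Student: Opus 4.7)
The plan is to establish the isomorphism by constructing $\Psi$ on generators and then verifying it extends to a bialgebra map compatible with the natural pairings. First I would note that $R(\alpha_i) \cong \ko[x_1]$, so there is a unique self-dual simple module $L(i)$ of weight $-\alpha_i$; I would define $\Psi$ so that $\Psi^{-1}(f_i^\vee) = [L(i)]$, where $\{f_i^\vee\}_{i\in I}$ are the dual generators of $U^-_\A(\mathsf{g})^\vee$ under the Lusztig bilinear form. The convolution product $\conv$ on $R\gmod$ endows $K(R\gmod)$ with the structure of an $\A$-algebra (graded by $\rl^+$), so extending $\Psi$ multiplicatively gives at least a candidate $\A$-algebra homomorphism, provided the quantum Serre relations hold among the classes $[L(i)]$.

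Next I would verify the Serre relations at the categorical level. For each pair $i\neq j$, working inside $R\bigl((1-a_{ij})\alpha_i+\alpha_j\bigr)$, one analyzes the convolution powers $L(i)^{\conv k} \conv L(j) \conv L(i)^{\conv (1-a_{ij}-k)}$ and shows that the alternating $q$-integer sum prescribed by the Serre relation is zero in the Grothendieck group. This reduces to a finite computation with the generators $e(\nu), x_k, \tau_m$ using the explicit shape of $Q_{i,j}$ from \eqref{eq: Qij}; the divisibility of $Q_{i,j}(x_k,x_{k+1})$ by the appropriate minimal polynomial in $x_{k+1}-x_k$ produces the required cancellation. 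With the Serre relations in hand, $\Psi$ descends to a well-defined $\A$-algebra map from $U^-_\A(\mathsf{g})^\vee$ to $K(R\gmod)$.

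To prove $\Psi$ is an isomorphism, the plan is to introduce the coalgebra structure on $K(R\gmod)$ coming from the restriction functors $\Res^{\beta+\gamma}_{\beta,\gamma}$, together with the Mackey-type filtration on $\Res \circ \Ind$. This yields a twisted-bialgebra structure on $K(R\gmod)$ whose pairing (Euler form between induction and restriction, computed via the Shapovarov-type form on simples) matches, under $\Psi$, the Lusztig bilinear form on $U_q^-(\mathsf{g})$. The non-degeneracy of Lusztig's form on $U_\A^-(\mathsf{g})^\vee \times U_\A^-(\mathsf{g})$, combined with the non-degeneracy of the categorical pairing, then forces $\Psi$ to be injective and surjective on each graded piece.

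The principal obstacle will be the Serre relation computation in step two, which demands a genuinely combinatorial analysis of the ideal generated by $\tau$-braid and quadratic relations; in particular one must rule out hidden composition factors in $L(i)^{\conv(1-a_{ij})}\conv L(j)$ that could survive in the Grothendieck group. A secondary technical point is matching the $\A$-integral structures: ensuring that $[L(i)]$ corresponds to $f_i^\vee$ rather than to some $q$-power multiple requires pinning down the grading shifts so that self-dual simples correspond to bar-invariant dual-canonical-basis elements. Once these are dealt with, the remaining comparison of pairings and the bialgebra identifications are formal.
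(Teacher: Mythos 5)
The paper does not prove Theorem~\ref{Thm:categorification}; it is quoted verbatim from Khovanov--Lauda and Rouquier, so there is no internal proof to compare against. Judged against the proofs in those sources, your outline is in the right spirit (generators, Serre relations, Mackey/shuffle decomposition, matching of bilinear forms), but it inverts the usual order of operations in a way that hides where the real work is.

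The step you treat as routine --- ``extending $\Psi$ multiplicatively'' from $\Psi^{-1}(f_i^\vee)=[L(i)]$ --- presupposes that $U^-_\A(\mathsf{g})^\vee$ is generated over $\A$ by the elements $f_i^\vee$, equivalently that $K(R\gmod)$ is $\A$-generated by the $[L(i)]$. That is true, but it is essentially one of the conclusions of the theorem, not an available hypothesis; over $\Q(q)$ generation is clear, but the $\A$-integral generation is a genuine assertion. Khovanov--Lauda sidestep this by first working on the split Grothendieck group of graded \emph{projectives}: one sends the divided power $\theta_i^{(n)}\in{}_\A\mathbf{f}$ to the indecomposable projective $P_{i^{(n)}}$ coming from the nilHecke subalgebra $R(n\alpha_i)$, proves the categorified quantum Serre relations as explicit \emph{direct-sum decompositions of projectives} (not as composition-factor cancellations over the Grothendieck ring), identifies Lusztig's form with the graded Euler/Shapovalov pairing via the Mackey filtration of $\Res\circ\Ind$, and only then obtains $K(R\gmod)\cong({}_\A\mathbf{f})^\vee$ by duality with respect to that pairing. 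In particular, your worry about ``hidden composition factors'' surviving in $L(i)^{\conv(1-a_{ij})}\conv L(j)$ is aimed at the wrong object: the Serre relations are never extracted by case analysis of convolutions of simples; they are proven once for projectives, where they are categorical isomorphisms, and the statement on $G_0$ is a formal consequence of nondegeneracy of the pairing. If you insist on working directly with simples, the cleanest correct route is still to introduce the projective side and the Euler form first, then dualize, rather than trying to build $\Psi^{-1}$ as a multiplicative extension over $\A$.
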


\begin{definition} A quiver Hecke algebra $R(\beta)$ is \defn{symmetric} if $Q_{i,j}(u,v)$ is a polynomial in $\ko[u-v]$ for all $i,j \in {\rm supp}(\beta).$ In particular,
we say that the quiver Hecke algebra $R$ is \defn{symmetric} if $\cmA$ is symmetric and
$\Q_{ij}(u,v)$ is a polynomial in $u-v$ for all $i,j\in I$.
\end{definition}

\begin{theorem}[{\cite{Rou12,VV11}}]
\label{thm:categorification 2}
Assume that the quiver Hecke algebra $R$ is symmetric and
$\ko$ is of characteristic zero.
 Then under the isomorphism
in {\rm Theorem~\ref{Thm:categorification}},
the dual canonical/upper global basis $\mathbf{B}^{{\rm up}}$ of $U^-_{\A}(\mathsf{g})^{\vee}$ corresponds to
the set of the isomorphism classes of \defn{self-dual} simple $R$-modules.
\end{theorem}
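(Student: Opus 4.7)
The plan is to invoke the geometric realization of symmetric quiver Hecke algebras due to Varagnolo-Vasserot, together with Rouquier's categorical approach. The key input is that when $R$ is symmetric and $\ko$ has characteristic zero, there is an algebra isomorphism
\[
R(\beta) \iso \operatorname{Ext}^*_{G_\beta}(\mathcal{L}_\beta, \mathcal{L}_\beta),
\]
where $\mathcal{L}_\beta$ is Lusztig's semisimple complex on the moduli space of representations of the associated quiver, and $G_\beta$ is the corresponding gauge group. By standard Koszul/Morita-type arguments for Yoneda algebras of semisimple complexes, the self-dual simple graded $R(\beta)$-modules (up to grading shift) are in bijection with the simple perverse direct summands of $\mathcal{L}_\beta$. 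Lusztig's geometric construction identifies these summands precisely with the canonical basis elements of $U^-_q(\mathsf{g})_\beta$.

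Next I would match the duality structures. The anti-involution $\psi$ on $R(\beta)$ used to define the $R$-module dual corresponds, under the geometric identification, to Verdier duality on the derived category of $G_\beta$-equivariant perverse sheaves. Hence a simple graded $R(\beta)$-module $M$ is self-dual if and only if the corresponding perverse summand is Verdier self-dual, which, by Lusztig's normalization, holds for all members of the canonical basis. Combined with the previous step, this produces a canonical bijection between self-dual simple modules in $R\gmod$ and the canonical basis $\mathbf{B}$ of $U^-_q(\mathsf{g})$.

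To obtain the statement about the \emph{dual} canonical basis $\mathbf{B}^{\rm up}$, I would use that the isomorphism $\Psi$ of Theorem~\ref{Thm:categorification} is dual to Lusztig's categorification in the following sense: the Kashiwara-Lusztig bilinear pairing between $U^-_\A(\mathsf{g})$ and $U^-_\A(\mathsf{g})^\vee$ transports via $\Psi$ to the Euler-type pairing on $K(R\gmod)$ with the projective/injective modules. Since $\mathbf{B}^{\rm up}$ is by definition dual to $\mathbf{B}$, the basis of $K(R\gmod)$ corresponding to $\mathbf{B}^{\rm up}$ is the one dual (under the Euler form) to the classes of indecomposable projectives representing $\mathbf{B}$; a standard argument on length-$1$ graded modules identifies this dual basis with the classes of self-dual simple modules. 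The main obstacle is the first step: establishing the geometric presentation of $R(\beta)$ and verifying the precise compatibility of $\psi$, the grading on $R$, and the idempotents $e(\nu)$ with Verdier duality, the cohomological grading, and the microlocal decomposition of $\mathcal{L}_\beta$. This technical compatibility is the substance of the Varagnolo-Vasserot and Rouquier theorems and does not need to be redone here.
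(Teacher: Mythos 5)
The paper does not prove this theorem; it simply cites \cite{Rou12,VV11}. Your proposal is a reasonable high-level summary of the geometric proof strategy used in those references (the Varagnolo--Vasserot isomorphism between $R(\beta)$ and the Yoneda algebra of Lusztig's semisimple complex $\mathcal{L}_\beta$, the bijection between self-dual simple graded modules and simple perverse summands, and the compatibility of the algebraic anti-involution $\psi$ with Verdier duality). So you are not contradicting the paper but rather supplying the argument that the paper outsources.

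That said, the third paragraph of your sketch is where you should be more careful. The isomorphism $\Psi$ of Theorem~\ref{Thm:categorification} already identifies $K(R\gmod)$ with $U^-_\A(\mathsf{g})^\vee$ directly; the classes of (shifts of) simple modules form an $\A$-basis of $K(R\gmod)$, and the substance of the theorem is that normalizing each simple to be self-dual picks out exactly the basis $\mathbf{B}^{\rm up}$. You do not need to pass back and forth through an Euler pairing with projectives and then dualize: that pairing is how one sees that the projective basis on the other Grothendieck group ($K(R\operatorname{-proj})\iso U^-_\A(\mathsf{g})$) is dual to the simple basis, but the matching of the simple basis with $\mathbf{B}^{\rm up}$ follows already from the geometric identification of simples with perverse summands and of the latter with $\mathbf{B}$, combined with the characterization of $\mathbf{B}^{\rm up}$ as the dual of $\mathbf{B}$. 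The phrase ``a standard argument on length-$1$ graded modules'' is not a standard formulation and reads as a placeholder; what you need is that for a graded Laurentian algebra the graded Cartan pairing between indecomposable projectives and simples is a perfect $\A$-pairing. With that clarification, your sketch is a faithful (if necessarily abbreviated) account of the cited proofs, and the substantive work --- the precise matching of $\psi$, the $\Z$-grading, and the idempotents $e(\nu)$ with Verdier duality, cohomological degree, and the decomposition of $\mathcal{L}_\beta$ --- is, as you correctly say, the content of \cite{VV11,Rou12} and not something to be redone here.
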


\subsection{\texorpdfstring{$R$}{R}-matrices for quiver Hecke algebras}

For $|\beta|=n$ and $1 \le k <n$, define the \defn{intertwiner} $\varphi_k \in R(\beta)$ by
\begin{align}\label{def:int}
\varphi_k e(\nu) = \begin{cases}
(\tau_kx_k-x_k\tau_k)e(\nu) & \text{ if } \nu_k=\nu_{k+1}, \\
\tau_k (\nu) & \text{ otherwise.}
\end{cases}
\end{align}
Since the intertwiners $\{ \varphi_k \}_{1 \le k \le n-1}$ satisfies the braid relation, we can define the element $\varphi_w$ for each $w \in \Sym_n$.
For $m,n \in \Z_\ge 0$, we set $w[m,n] \in \Sym_{m+n}$ such that
\[
w[m,n](k) = \begin{cases} k+n & \text{ if } 1 \le k \le m, \\ k-m & \text{ if } m < k \le m+n. \end{cases}
\]

We define another symmetric bilinear form $( \,\cdot, \,\cdot)_{{\rm n}}$ on $\rl$ by
$
(\al_i,\al_j)_{\rm n} = \delta_{ij}.
$

For an $R(\be)$-module $M$ and an $R(\ga)$-module with $\be,\ga \in \rl^+$ such that $|\be|=m$, $|\ga|=n$, the map
\begin{align*}
M \otimes  N & \to q^{(\be,\ga)-2(\be,\ga)_{{\rm n}}} N \conv M \ \  \text{ given by }  \ \   u \otimes  v  \mapsto \varphi_{w[n,m]} (v \otimes  u)
\end{align*}
is an $R(\be) \otimes  R(\ga)$-homomorphism by~\cite[Lemma 1.3.1]{KKK13A} and it extends to an $R(\beta+\ga)$-homomorphism
\[
R_{M,N} \colon M \conv N \to   q^{(\be,\ga)-2(\be,\ga)_{{\rm n}}} N \conv M.
\]

\emph{From now on, we assume that quiver Hecke algebras are symmetric}. Let $z$ be an indeterminate that is homogeneous of degree $2$, and $\psi_z \colon R(\beta) \longrightarrow \ko[z] \otimes  R(\beta)$ be the algebra
given by
\[
\psi_z(x_k) = x_k+z,
\qquad\qquad
\psi_z(\tau_k) = \tau_k ,
\qquad\qquad
\psi_z\bigl(e(\nu)\bigr) = e(\nu).
\]

For an $R(\beta)$-module $M$, we denote by $M_z$ the $(\ko[z]\otimes  R(\be))$-module $\ko[z] \otimes  M$ with the action of $R(\be)$ twisted by $\psi_z$:
\[
e(\nu)(a \otimes  u) = a \otimes  e(\nu)u, \qquad x_k(a \otimes  u) = (za) \otimes  u + a \otimes  (x_k u), \qquad  \tau_k(a \otimes  u) = a \otimes  (\tau_ku),
\]
for $\nu \in I^\be$, $a \in \ko[u]$ and $u \in M$.

For a non-zero $R(\be)$-module $M$ and a non-zero $R(\ga)$-module $N$,
\begin{eqnarray} &&
\parbox{80ex}{let $s$ be the order of zero of $R_{M_z,N_{z'}} \colon M_z \conv N_{z'} \to q^{(\be,\ga)-2(\be,\ga)_{{\rm n}}} N_{z'} \conv M_z$, \textit{i.e.}, the largest non-negative integer such that
the image of $R_{M_z,N_{z'}}$ is contained in $(z'-z)^s q^{(\be,\ga)-2(\be,\ga)_{{\rm n}}} N_{z'} \conv M_z$.
}\label{eq: degree s}
\end{eqnarray}

For a non-zero $R(\be)$-module $M$ and a non-zero $R(\ga)$-module $N$, we set
\[
\Lambda(M,N) \seteq -(\be,\ga)+2(\be,\ga)_{{\rm n}}-2s, \quad \Rren{M_z,N_z'}\seteq (z'-z)^{-s}R_{M_z.N_{z'}}
\]
and define
\[
\rmat{M,N}\seteq (\Rren{M_z,N_z'})|_{z=z'=0} \colon M \conv N \to q^{-\Lambda(M,N)}N \conv M.
\]
By~\cite[Lemma 1.4.4 {\rm (ii)}]{KKK13A}, $\rmat{M,N}$ does not vanish.

%

We say that a simple $R(\be)$-module $M$ is \defn{real} if $M\conv M$ is simple.

\begin{remark}
We shall work always in the category of finite dimensional graded modules.
For the sake of simplicity,
\emph{we simply say that $M$ is an $R$-module instead of saying
that $M$ is a finite dimensional graded $R(\beta)$-module for $\beta\in\rl^+$.}
We also sometimes ignore grading shifts if there is no danger of confusion.
Hence, for $R$-modules $M$ and $N$, \emph{we sometimes say that
$f \colon M \to N$ is a homomorphism if
$f \colon q^aM \to N$ is a morphism in $R\gmod$ for some $a\in\Z$.}
If we want to emphasize that $f \colon q^aM \to N$ is a morphism in $R\gmod$, we say so.
\end{remark}

\begin{theorem}[{\cite[Theorem 3.2]{KKKO14S}}]
\label{thm: socle head}
Let $M$ be a simple $R(\al)$-module and $N$ be a simple $R(\be)$-module, one of which is real. Then there exist unique non-zero homomorphisms (up to constant)
\[
\rmat{M,N} \colon M \conv N \Lto N \conv M \quad \text{and} \quad \rmat{N,M} \colon N \conv M \Lto M \conv N,
\]
satisfying the following properties:
\begin{enumerate}[{\rm (i)}]
\item $M \conv N$ has a simple socle and a simple head.
\item The socle and head of $M \conv N$ are distinct and appear once in the composition series of $M \conv N$ unless $M \conv N \iso N \conv M$ is simple.
\end{enumerate}
\end{theorem}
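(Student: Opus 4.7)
The plan is to first establish the uniqueness (up to scalar) of a non-zero homomorphism $M \conv N \to N \conv M$, and then deduce the socle/head structure from properties of the canonical R-matrix. Since $\rmat{M,N}$ and $\rmat{N,M}$ were already constructed as non-zero maps via the normalization procedure involving the affinizations $M_z$ and $N_{z'}$ and the order of vanishing $s$ from~\eqref{eq: degree s}, existence is not the issue; what must be shown is uniqueness and the simplicity statements. Without loss of generality, assume $M$ is real, so that $M \conv M$ is simple, which will play the role that minuscule/affine-simple modules play in the $U_q'(\g)$-setting (compare Theorem~\ref{Thm: basic properties}).

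\textbf{Step 1 (Uniqueness).} I would first show $\dim_\ko \Hom_{R(\al+\be)}(M \conv N, N \conv M) \le 1$. The idea is to lift any homomorphism $f \colon M \conv N \to N \conv M$ to a $\ko[z]$-linear family $\widetilde{f} \colon M_z \conv N \to N \conv M_z$ using the $\psi_z$-twist, and to compare it with $\Rren{M_z,N}$. Because $M$ is real, one shows that $M_z \conv N$ localized at generic $z$ is simple as an $R(\al+\be)_{\ko(z)}$-module, so over $\ko(z)$ the R-matrix becomes an isomorphism and the Hom-space is one-dimensional; specializing $z \mapsto 0$ then controls the original Hom-space. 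The conclusion is that $f$ must be a scalar multiple of $\rmat{M,N}$.

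\textbf{Step 2 (Simple head and socle).} Let $H \seteq \Image(\rmat{M,N}) \subseteq q^{-\Lambda(M,N)} N \conv M$. I would argue that $H$ is simple: any proper submodule $H' \subsetneq H$ would lift through $M \conv N \twoheadrightarrow H$ to a nonzero element of $\Hom_R(M \conv N, N \conv M)$ with strictly smaller image, contradicting Step~1 (by a composition/dimension argument against $\rmat{M,N}$ itself). Thus $H$ is the unique simple quotient of $M \conv N$, i.e.\ $\hd(M \conv N) \iso H$, and symmetrically $H$ is the unique simple submodule of $N \conv M$, i.e.\ $\soc(N \conv M) \iso H$. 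Applying the same argument with the roles of $M,N$ swapped, $K \seteq \Image(\rmat{N,M})$ gives $\hd(N \conv M) \iso K \iso \soc(M \conv N)$.

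\textbf{Step 3 (Distinctness of head and socle).} Consider the composition
\[
\rmat{N,M} \circ \rmat{M,N} \colon M \conv N \Lto M \conv N.
\]
By uniqueness applied to $\End_R(M \conv N)$ (again using realness of $M$), this composition equals $c \cdot \id$ for some scalar $c \in \ko$. If $c \ne 0$, then $\rmat{M,N}$ is injective and surjective, hence an isomorphism $M \conv N \isoto q^{-\Lambda(M,N)} N \conv M$; combined with the simple head/socle shown in Step~2, $M \conv N$ is itself simple. If $c = 0$, then $\Image(\rmat{M,N}) \subseteq \ker(\rmat{N,M} \circ \id)$, which forces $\soc(M \conv N) = K \ne H = \hd(M \conv N)$ and moreover each appears with multiplicity one in the composition series (a second copy of $H$ as a subquotient would, via Step~1, produce an independent map $M \conv N \to N \conv M$). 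The main obstacle is Step~1: one must carefully handle the order of vanishing $s$ of $R_{M_z,N_{z'}}$ along $z'=z$ and the renormalization $\Rren{}$, since the realness hypothesis enters precisely to guarantee that $M_z \conv M$ has the right pole structure so that all endomorphisms lift to scalars over $\ko[z]/(z^N)$ for arbitrary $N$; without realness of at least one factor this localization argument breaks down.
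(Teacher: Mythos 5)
This theorem is cited in the paper from~\cite{KKKO14S} rather than proved; there is no internal proof to compare against, so I will evaluate your proposal on its own terms.

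Your Step~1 is the central gap. You want $\dim_\ko\Hom_R(M \conv N, N \conv M) \le 1$ by lifting an arbitrary $f \colon M \conv N \to N \conv M$ to a $\ko[z]$-linear map $\widetilde f \colon M_z \conv N \to N \conv M_z$ and then specializing. Two problems: first, there is no reason such a lift exists --- the $R$-action on $M_z$ is twisted by $\psi_z$, and an arbitrary $R$-linear $f$ at $z=0$ need not be the fiber of any $\ko[z]\otimes R$-linear family; second, even granting a lift, generic one-dimensionality of $\Hom_{\ko(z)\otimes R}$ cannot bound the special fiber, because $\Hom$ is only upper semicontinuous (its dimension can jump up at $z=0$, not down). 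The actual argument in~\cite{KKKO14S} runs in the opposite logical direction: one establishes the simplicity of $\operatorname{Im}(\rmat{M,N})$ directly, via a careful analysis of the order of vanishing $s$ in~\eqref{eq: degree s} and the realness hypothesis, and the one-dimensionality of the $\Hom$-space (your Step~1, cf.\ Proposition~\ref{prop: not vanishing}) is deduced as a \emph{consequence} of Theorem~\ref{thm: socle head}, not used as an input.

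Step~2 also does not close. To extract a contradiction from a proper submodule $H' \subsetneq H := \operatorname{Im}(\rmat{M,N})$, you would need a nonzero map $M \conv N \to N \conv M$ with image $H'$, but the composition $M \conv N \twoheadrightarrow H \hookrightarrow N \conv M$ still has image $H$; a proper submodule of $H$ does not by itself give rise to a new element of $\Hom_R(M\conv N, N\conv M)$. To run this kind of argument you would instead need to know that every simple quotient of $M \conv N$ embeds into $N \conv M$, which is essentially the content you are trying to prove. Similarly, Step~3 invokes $\rmat{N,M}\circ\rmat{M,N} = c\cdot\id$ via ``uniqueness applied to $\End_R(M\conv N)$,'' but your Step~1 (even if repaired) controls $\Hom(M\conv N,N\conv M)$, not $\End(M\conv N)$; the fact that $\End_R(M\conv N) = \ko$ when one factor is real is itself a nontrivial consequence of the simple head/socle theorem in~\cite{KKKO14S}, so the reasoning is circular. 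Your instinct that the affinization $M_z$, the generic invertibility of the $R$-matrix, and realness of $M$ are the essential ingredients is correct, but the implementation inverts the logical dependencies and leaves the two crucial steps (lift-then-specialize, and simplicity of the image) unjustified.
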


\begin{proposition}[{\cite[Corollary 3.11]{KKKO14S}}]
\label{prop: not vanishing}
Let $M_k$ be a finite dimensional graded $R$-module ($k=1,2,3$), and let $\varphi_1 \colon L \to M_1 \conv M_2$ and
$\varphi_2 \colon M_2 \conv M_3 \to L'$ be non-zero homomorphisms. Assume further that $M_2$ is simple. Then the composition
\begin{align} \label{eq: not vanishing}
L\conv M_3\To[\varphi_1\conv M_3] M_1\conv M_2\conv M_3\To[M_1\conv\varphi_2]
M_1\conv L'
\end{align}
does not vanish. Similarly, for non-zero homomorphisms $\varphi_1 \colon L \to M_2 \conv M_3$,
$\varphi_2 \colon M_1 \conv M_2 \to L'$ and an assumption that $M_2$ is simple, we have a non-zero composition
\begin{align} \label{eq: not vanishing 2}
M_1 \conv L \To[M_1\conv \varphi_1] M_1\conv M_2\conv M_3 \To[\varphi_2 \conv M_3]
L '\conv M_3.
\end{align}
\end{proposition}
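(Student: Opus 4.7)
The statement asserts non-vanishing of a composition of homomorphisms in $R\gmod$, under the hypothesis that the ``middle'' module $M_2$ is simple. I would approach this by contradiction, reducing the problem to a question about containment of submodules inside $M_1 \conv M_2 \conv M_3$ and then invoking the spectral $R$-matrix formalism for the simple module $M_2$.

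\textbf{Reformulation.} Suppose the composition $\psi \seteq (M_1 \conv \varphi_2) \circ (\varphi_1 \conv M_3) \colon L \conv M_3 \to M_1 \conv L'$ is zero. Convolution on $R\gmod$ is exact, since by the PBW-type basis theorem for quiver Hecke algebras $R(\beta+\gamma)\, e(\beta,\gamma)$ is a free right $R(\beta)\otimes R(\gamma)$-module. Applying exactness on both sides of the convolution with $M_3$ and with $M_1$, the vanishing of $\psi$ is equivalent to the inclusion
\[
\operatorname{Im}(\varphi_1) \conv M_3 \ \subseteq \ M_1 \conv \operatorname{Ker}(\varphi_2)
\]
of $R$-submodules of $M_1 \conv M_2 \conv M_3$. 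Writing $N \seteq \operatorname{Im}(\varphi_1) \neq 0$ and $K \seteq \operatorname{Ker}(\varphi_2) \subsetneq M_2 \conv M_3$, the task becomes: rule out $N \conv M_3 \subseteq M_1 \conv K$.

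\textbf{Spectral deformation.} The key is to introduce a spectral parameter on $M_2$ and use that $M_2$ is simple to guarantee a well-defined renormalized $R$-matrix. Specifically, I would affinize to $M_{2,z}$ and lift $\varphi_1$ to a $\ko[z]$-linear homomorphism $\widetilde{\varphi}_1 \colon L[z] \to M_1 \conv M_{2,z}$ specializing to $\varphi_1$ at $z=0$, and similarly $\widetilde{\varphi}_2 \colon M_{2,z} \conv M_3 \to L'[z]$ specializing to $\varphi_2$. Forming the $z$-deformed composition
\[
\widetilde{\psi}_z \colon L \conv M_3 \,[z] \ \To[\widetilde{\varphi}_1 \conv M_3]\  M_1 \conv M_{2,z} \conv M_3 \ \To[M_1\conv \widetilde{\varphi}_2]\  M_1 \conv L'\,[z],
\]
the assumption $\psi = 0$ gives $\widetilde{\psi}_z \equiv 0 \pmod{z}$. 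On the other hand, because $M_2$ is simple, the renormalized spectral $R$-matrix $\Rren{M_{2,z}, M_3}$ is well-defined and does not vanish under any specialization (cf.\ \eqref{eq: degree s}). Inserting $\Rren{M_{2,z}, M_3}$ between the two arrows via the naturality of the convolution product expresses $\widetilde{\psi}_z$ (at generic $z$) as a composition factoring through an isomorphism; hence $\widetilde{\psi}_z$ is nonzero at generic $z$. Extracting the leading $z$-coefficient and tracking orders of vanishing recovers a nonzero map that must agree (up to grading shift and scalar) with $\psi$, contradicting $\psi = 0$. The second statement follows by the symmetric argument, or by dualizing.

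\textbf{Main obstacle.} The technical heart of the plan is the construction and control of the lifts $\widetilde{\varphi}_i$: one must show that a nonzero $\varphi_i$ lifts canonically to the affinized setting with specialization recovering the original map, and that the orders of zero of the renormalized $R$-matrix and of the lifted composition match up so the leading-order coefficient at $z=0$ is nonzero. This is precisely the content of the $R$-matrix formalism of~\cite{KKK13A, KKKO14S} for simple modules; once the renormalized $R$-matrix is in hand, the simplicity of $M_2$ is what prevents cancellation in the leading term and forces $\psi \neq 0$.
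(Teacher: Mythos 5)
The paper does not prove this statement; it cites \cite[Corollary 3.11]{KKKO14S} directly, so you are effectively attempting to reconstruct the proof from that reference. Your reformulation — that vanishing of the composition $\psi$ is equivalent to the containment $\operatorname{Im}(\varphi_1) \conv M_3 \subseteq M_1 \conv \operatorname{Ker}(\varphi_2)$ of submodules of $M_1 \conv M_2 \conv M_3$, by exactness of convolution — is correct and is a sensible first step. The choice to invoke the renormalized spectral $R$-matrix on the simple module $M_2$ is also the right family of tools. But the argument as written has a genuine gap at its core.

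The problem is the final deduction. You show (modulo the existence of the lifts, see below) that $\widetilde{\psi}_z$ is nonzero for generic $z$, and note that $\psi = 0$ would force $\widetilde{\psi}_z \equiv 0 \pmod z$. You then claim that ``extracting the leading $z$-coefficient'' yields a nonzero map ``that must agree (up to grading shift and scalar) with $\psi$.'' That is not so: if $\widetilde{\psi}_z = z^s \psi'$ with $s \geq 1$ and $\psi' \neq 0$, then $\psi'$ is the leading coefficient but is an entirely different morphism from $\psi = \widetilde{\psi}_0 = 0$, with no relation by scalar or grading shift. Generic nonvanishing of a one-parameter family of maps is perfectly compatible with vanishing at $z=0$; nothing in your sketch prevents $s\ge 1$. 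The actual argument in \cite{KKKO14S} leans on a preparatory lemma (Lemma~3.10 there) whose whole point is to relate nonzero submodules of $M_1\conv M_2$ and proper submodules of $M_2\conv M_3$ via the $R$-matrix at $z=0$ — precisely the step that uses simplicity of $M_2$ in an essential way, and precisely what is absent from your outline.

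A secondary issue: the lifts $\widetilde{\varphi}_1 \colon L[z] \to M_1 \conv M_{2,z}$ and $\widetilde{\varphi}_2 \colon M_{2,z}\conv M_3 \to L'[z]$ are asserted but do not exist in general. Because $x_k$ acts differently on $M_{2,z}$ than on $M_2$, there is no canonical $R$-module map $L[z] \to M_1 \conv M_{2,z}$ specializing to $\varphi_1$ at $z=0$; building such lifts (or avoiding them) is part of the technical work in \cite{KKK13A,KKKO14S} that your sketch calls the ``main obstacle'' but does not resolve. As it stands, the proposal correctly identifies the ingredients (exactness of convolution, affinization, renormalized $R$-matrices, simplicity of $M_2$) without assembling them into a valid argument.
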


\begin{theorem}[{\cite{BKM12,Kato12,KR11,McN15,Oh15E}}]
\label{thm: BkMc}
For a finite simple Lie algebra $\mathsf{g}$, we choose a reduced expression $\redez$ of the longest element $w_0$.
Let $R$ be the quiver Hecke algebra corresponding to $\mathsf{g}$. For each positive root $\beta \in \PR$, there exists a self-dual simple module $S_{[\redez]}(\beta)$
satisfying the following properties$:$
\begin{enumerate}[{\rm (a)}]
\item $S_{[\widetilde{w}_0]}(\beta)^{\conv m}$ is a real simple $R(m\be)$-module.
\item For every $\um_{\redez} \in \Z_{\ge 0}^{\N}$, there exists a unique non-zero $R$-module homomorphism (up to constant)
\[
\Stom \overset{\rmat{\um}}{\Lto} \Sgetsm
\]
where
\begin{align*}
\Stom & \seteq S_{[\redez]}(\beta_1)^{\conv m_1}\conv\cdots \conv S_{[\redez]}(\beta_\N)^{\conv m_\N}, \  \   \Sgetsm  \seteq  S_{[\redez]}(\beta_\N)^{\conv m_\N}\conv\cdots \conv S_{[\redez]}(\beta_1)^{\conv m_1}
\end{align*}
such that  $\Image(\rmat{\um}) \iso \hd\left(\Stom\right) \iso \soc\left(\Sgetsm\right)$ is simple.
\item For any sequence $\um_{\redez} \in \Z_{\ge 0}^{\ell(w_0)}$, we have
\[
  [\Stom] \in [\Image(\rmat{\um})] + \displaystyle\sum_{ \um' \prec^{\tb}_{[\redez]} \um } \Z_{\ge 0}[\Image(\rmat{\um'})].
\]
\item For distinct $\um, \um' \in \Z_{\ge 0}^{\ell(w_0)}$, the simple modules $\Image(\rmat{\um})$ and $\Image(\rmat{\um'})$ are distinct.
\item For every simple $R$-module $M$, there exists a unique sequence $\um \in \Z_{\ge 0}^{\ell(w_0)}$ such that
$M \iso \Image(\rmat{\um}) \iso \operatorname{hd}\big(\Stom \big).$
\item For any $[\redez]$-minimal pair $(\beta^\redez_k,\beta^\redez_l)$ of $\beta^\redez_j=\beta^\redez_k+\beta^\redez_l$, there exists an exact sequence
\begin{equation} \label{eq: 6 ses}
0 \to S_{[\redez]}(\beta_j)
\to S_{[\redez]}(\beta_k) \conv S_{[\redez]}(\beta_l) \xrightarrow{\mathbf{r}_{e_k+e_l}} S_{[\redez]}(\beta_l) \conv S_{[\redez]}(\beta_k) \to
S_{[\redez]}(\beta_j) \to 0.
\end{equation}
\end{enumerate}
Moreover, under the isomorphism $\Psi$ in \eqref{eq:KLRU}, each isomorphism class $[S_{[\redez]}(\beta)]$ of $S_{[\redez]}(\beta)$ $(\beta \in \PR)$
is mapped onto the \defn{dual root vector} $\mathbf{F}_{[\redez]}^{{\rm up}}(\beta)$
of the dual PBW basis $P_{[\redez]}$ of $U^-_\A(\g_0)^\vee$ $($up to $q^{\Z})$,
 which is associated to the commutation class $[\redez]$ of $w_0$ (see~\cite[\S 4]{KKK13B} for more details).
\end{theorem}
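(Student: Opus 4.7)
The plan is to assemble the theorem from the cuspidal‐module framework developed in \cite{BKM12,Kato12,KR11,McN15}, with the identification of the $[\redez]$-socle/minimal-pair picture from \cite{Oh15E}. I would begin by constructing $S_{[\redez]}(\beta)$ inductively on $\text{ht}(\beta)$ as the unique self-dual simple \emph{cuspidal} $R(\beta)$-module relative to the convex total order $<_{\redez}$: that is, the unique self-dual simple whose restriction to every proper convex parabolic vanishes except at $\beta$ itself. Existence is built as the head of a carefully chosen convolution of cuspidals for strictly smaller roots, and uniqueness follows from convexity of $<_{\redez}$ together with Theorem~\ref{Thm:categorification}, which pins down the weight-graded multiplicities. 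Part (a) then follows from the calculation $\Lambda(S_{[\redez]}(\beta),S_{[\redez]}(\beta))=0$, which via~\eqref{eq: degree s} forces $S_{[\redez]}(\beta) \conv S_{[\redez]}(\beta)$ to be simple, and by induction its convolution powers remain simple.

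Given reality of each $S_{[\redez]}(\beta)$, Theorem~\ref{thm: socle head} produces, for any two factors, a unique (up to scalar) intertwiner with simple image equal to both the head of $\Stom$ and the socle of $\Sgetsm$. Composing these factor-wise intertwiners in the order dictated by $\redez$ yields the map $\rmat{\um}$ of (b); the fact that the image is simple is obtained by applying Proposition~\ref{prop: not vanishing} inductively to show that the composition does not vanish, while an outer/inner-hom argument using reality shows it factors through a unique simple quotient. Property (c) — the PBW triangularity in the Grothendieck ring — is then proved by expanding $[\Stom]$ in the basis of simples and showing, via convex-order bookkeeping on the cuspidal decomposition of each composition factor, that all other contributions $[\Image(\rmat{\um'})]$ occur with $\um' \prec^{\tb}_{[\redez]} \um$. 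Property (d) follows since (c) gives a unitriangular change-of-basis matrix between $\{[\Stom]\}$ and $\{[\Image(\rmat{\um})]\}$, and (e) follows from (c) together with Theorem~\ref{Thm:categorification}, because the cardinalities of the indexing sets match the graded dimensions of $U^-_\A(\mathsf{g})^\vee$ in each weight.

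For (f), one first uses that $(\beta_k,\beta_l)$ is a $[\redez]$-minimal pair to ensure $S_{[\redez]}(\beta_j)$ embeds as the socle of $S_{[\redez]}(\beta_k) \conv S_{[\redez]}(\beta_l)$ and surjects from its head (since by (c) only two simple constituents can appear). The four-term sequence then amounts to identifying $\operatorname{Ker}\mathbf{r}_{e_k+e_l}$ and $\operatorname{Coker}\mathbf{r}_{e_k+e_l}$ as the same simple $S_{[\redez]}(\beta_j)$, which uses Theorem~\ref{thm: socle head}(ii) and the fact that reality of $S_{[\redez]}(\beta_k)$ and $S_{[\redez]}(\beta_l)$ prevents the appearance of any other composition factor forced by minimality. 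Finally, the identification of $[S_{[\redez]}(\beta)]$ with the dual root vector $\mathbf{F}_{[\redez]}^{{\rm up}}(\beta)$ under $\Psi$ follows from the uniqueness characterization of the dual PBW basis: the triangularity in (c), the self-duality of $S_{[\redez]}(\beta)$, and the compatibility of $\Psi$ with the bilinear form all together force the correspondence up to a power of $q$.

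The main obstacle is (f): producing both the left and right copies of $S_{[\redez]}(\beta_j)$ and showing the middle map is exactly $\mathbf{r}_{e_k+e_l}$ with kernel equal to image. This requires the minimality of $(\beta_k,\beta_l)$ to be used twice — once to constrain composition factors and once to identify $S_{[\redez]}(\beta_j)$ as both the socle and the head — and is precisely the categorification of the quadratic relation $[F_{\beta_k},F_{\beta_l}]_q = (\text{scalar})\,F_{\beta_j}$ in the quantum Serre presentation of $U_q^-(\mathsf{g})$.
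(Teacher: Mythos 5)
This theorem is stated in the paper without proof; it is quoted as a known result with citations to \cite{BKM12,Kato12,KR11,McN15,Oh15E}, so there is no "paper's own proof" to compare against. Your outline is, however, a faithful reconstruction of the cuspidal-module approach actually used in those references (especially McNamara and BKM), and the broad architecture — build $S_{[\redez]}(\beta)$ as the unique self-dual simple cuspidal with respect to the convex order, use $R$-matrices and Theorem~\ref{thm: socle head} to get (b), unitriangular change of basis for (c)--(e), and minimality plus composition-length two for the four-term sequence in (f) — is correct.

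Two places where your sketch is looser than what the cited proofs actually need. First, for (a): the claim that $\Lambda(S_{[\redez]}(\beta),S_{[\redez]}(\beta))=0$ "forces" $S_{[\redez]}(\beta)\conv S_{[\redez]}(\beta)$ to be simple is not a tautology; the equivalence between vanishing of the invariant and reality is itself a nontrivial theorem of Kang–Kashiwara–Kim–Oh, and in \cite{BKM12,McN15} the reality of cuspidals is established by a different, more hands-on route (stability of the cuspidal filtration / Frobenius reciprocity with the parabolic, not by directly computing $\Lambda$). You are gesturing at the right fact but attributing it to the wrong mechanism. Second, for (f): "by (c) only two simple constituents can appear" is not immediate from (c) alone; (c) gives that any constituent is indexed by some $\um'\preceq^{\tb}_{[\redez]}e_k+e_l$, and one must then invoke minimality of $(\beta_k,\beta_l)$ and Proposition~\ref{pro: BKM minimal} to see that the only such $\um'$ strictly below $e_k+e_l$ is $e_j$, and separately that each occurs with multiplicity one (the latter uses the self-duality and the grading-shift bookkeeping in the cited references). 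These are repairable, but as written your argument for (f) skips the step that pins the multiplicities, which is precisely what makes the sequence exact rather than merely a complex.
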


\subsection{Generalized Schur-Weyl duality functors}

In this subsection, we recall the generalized Schur-Weyl duality functors constructed in~\cite{KKK13A}.

Let $U_q'(\g)$ be a quantum affine algebra over $\ko$. Assume that we are given an index set $J$, a family of good modules $\{ V_j \}_{j \in J}$ and a map $X \colon J \to \ko^\times$. We associate a
quiver $\Gamma^J$ from the datum $(J,X,\{ V_j\}_{j \in J})$ as follows:
\begin{enumerate}[(1)]
\item Take $J$ as the set of vertices.
\item Put $d_{ij}$-many arrows from $i$ to $j$, where $d_{ij}$ denotes the order of zero of $d_{V_i,V_j}(z_j/z_i)$ at $z_j/z_i=X(j)/X(i)$.
\end{enumerate}
Note that $d_{ij}d_{ji}=0$ for $i,j \in J$.

We define a symmetric Cartan matrix $\cmA^J=(a^J_{ij})$ and a family of polynomials $(P^J_{ij})_{i,j\in J}$ by
\[
a^J_{ij} = \begin{cases}
\qquad 2 & \text{ if } i=j, \\
-d_{ij}-d_{ji} & \text{ if } i \ne j,
\end{cases}
\quad \text{ and } \quad
P^J_{i,j}(u,v)= (u-v)^{d_{ij}}.
\]
We denote by
$\rl_J^+$ the root lattice and $\{ \al_i^J \}_{i \in J}$ the set of simple roots associated to $\cmA^J$.

Let us denote by $R^J(\beta)$ the symmetric quiver Hecke algebra associated with the Cartan matrix $\cmA^J$ and a matrix $(Q^J_{i,j})_{i,j}$ in \eqref{eq: Qij}, where
\[
Q^J_{i,j} = (1 - \delta_{i,j}) P^J_{i,j}(u,v)P^J_{j,i}(v,u).
\]

For each $i \in J$, let $L(i)$ be the $1$-dimensional
$R^J(\alpha_i)$-module  generated by a nonzero vector $u(i)$ with
relation $x_1 u(i) = 0$ and $e(j) u(i) = \delta_{j,i} u(i)$ for $j \in
J$. The space $L(i)_z :=  \ko[z] \otimes L(i)$ admits an
$R^J(\alpha_i)$-module structure as follows:
\begin{align*}
x_1\bigl( a \otimes u(i) \bigr) = (za) \otimes u(i), \qquad e(j)\bigl( a\otimes u(i) \bigr) = \delta_{j,i}\bigl( a \otimes u(i) \bigr).
\end{align*}

\begin{theorem}~\cite[Theorem 3.3]{KKK13A}\label{thm: F}
For each $\be \in \rl^+_J$, there exists a $(U_q'(\g),R^J(\be))$-bimodule $\widehat{V}^{\tens \be}$ which induces the functor
\begin{align*}
\F_{ \beta } \colon \Mod(R^J(\beta)) &\rightarrow \Mod(U_q'(\g))
\quad \text{ given by }  \quad
\F_{ \beta } (M) \seteq \widehat{V}^{\otimes \be} \otimes_{R^J({ \beta })} M.
\end{align*}
In particular, for any $i \in J$, we have
\begin{align}
\F(L(i))\iso (V_i)_{X(i)} \quad \text{for } i \in J.
\end{align}
\end{theorem}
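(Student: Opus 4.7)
The plan is to construct $\widehat{V}^{\otimes\beta}$ by combining affinizations of the good modules $V_j$ with renormalized $R$-matrices, and to verify the quiver Hecke algebra relations by matching them to identities satisfied by $R$-matrices. Write $n = |\beta|$. For each $\nu = (\nu_1, \ldots, \nu_n) \in J^\beta$, first I would form the ``affinized sequence module''
\[
\widehat{V}_\nu \seteq \ko[[X(\nu_1)^{-1}z_1 - 1, \ldots, X(\nu_n)^{-1}z_n-1]] \otimes_{\ko} \left( (V_{\nu_1})_{z_1} \otimes \cdots \otimes (V_{\nu_n})_{z_n} \right),
\]
and then set $\widehat{V}^{\otimes \beta} \seteq \bigoplus_{\nu \in J^\beta} \widehat{V}_\nu$. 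The left $U_q'(\g)$-action is the tensor product action; the right $R^J(\beta)$-action has $e(\nu)$ act as the projector onto $\widehat{V}_\nu$, the variable $x_k$ act on $\widehat{V}_\nu$ as multiplication by $z_k - X(\nu_k)$, and $\tau_k$ act between $\widehat{V}_\nu$ and $\widehat{V}_{s_k\nu}$ via the renormalized $R$-matrix $\Rren{\nu_k,\nu_{k+1}}(z_k, z_{k+1})$ of Section~\ref{sec2:Qunatum affine algebras} applied to the $(k,k+1)$-slots.

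The next step is to verify the defining relations of $R^J(\beta)$. The polynomial relation $\tau_k^2 e(\nu) = Q^J_{\nu_k,\nu_{k+1}}(x_k, x_{k+1}) e(\nu)$ is the crucial one: by the construction of the renormalized $R$-matrix and the definitions $Q^J_{i,j}(u,v) = (u-v)^{d_{ij}}(v-u)^{d_{ji}}$ with $d_{ij}$ the order of the zero of $d_{V_i,V_j}$ at $X(j)/X(i)$, this follows from the identity $\Rren{i,j}(z_i,z_j)\Rren{j,i}(z_j,z_i) = Q^J_{i,j}(z_i - X(i), z_j - X(j)) \cdot \id$ (after a unit adjustment), which is where the denominators $d_{V_i,V_j}$ enter. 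The $x$-$\tau$ commutation relations come from the standard fact that $\Rren{}$ intertwines the $z$-gradings modulo a shift; the braid relation for $\tau$'s follows from the Yang-Baxter equation for universal $R$-matrices (see~\eqref{eq: Runiv property}), which the renormalized $R$-matrices inherit up to the $\ko(z_\bullet)^\times$ normalization factors.

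The hardest part is this relation verification, particularly handling the case $\nu_k = \nu_{k+1}$ where $\Rren{i,i}(z,z)$ is ill-defined: here one must use the intertwiner formula $\varphi_k$ as in~\eqref{def:int}, and show that the action of $\tau_k$ computed via $\Rren{i,i}$ expanded in $z_{k+1} - z_k$ produces the correct behavior. This is precisely the subtle point worked out in the construction of~\cite{KKK13A}, and relies on the Laurent expansion of $\Rren{V,V}$ about the diagonal.

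Finally, I would define $\F_\beta(M) \seteq \widehat{V}^{\otimes \beta} \otimes_{R^J(\beta)} M$, which is clearly a $U_q'(\g)$-module via the left action, and functoriality in $M$ is immediate. For the last assertion, observe that $L(i)$ corresponds to $\beta = \alpha_i^J$ and $\nu = (i)$, so
\[
\F(L(i)) = \widehat{V}^{\otimes \alpha_i^J} \otimes_{R^J(\alpha_i^J)} L(i) \iso \ko[[z_1 - X(i)]] \otimes_{\ko} (V_i)_{z_1} \otimes_{\ko[x_1]} (\ko[x_1]/(x_1)),
\]
which specializes $z_1$ to $X(i)$ and yields $(V_i)_{X(i)}$ as claimed. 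The required finiteness/integrability properties of $\F_\beta(M)$ when $M$ is finite dimensional will follow from the corresponding properties of each $V_i$ together with the compatibility of the grading (weight) decomposition with the $x_k$-action.
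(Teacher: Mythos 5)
This theorem is not proved in the paper at all — it is quoted verbatim as \cite[Theorem 3.3]{KKK13A}, so there is no in-paper argument to compare against. Your sketch does reconstruct the overall shape of the Kang--Kashiwara--Kim construction (complete the tensor product of affinizations near the spectral parameters $X(\nu_k)$, let $e(\nu)$ project, let $x_k$ act by the local coordinate, let $\tau_k$ act by a normalized $R$-matrix, and match the quadratic relation $\tau_k^2 e(\nu) = Q^J_{\nu_k,\nu_{k+1}}(x_k,x_{k+1})e(\nu)$ to the zero of $d_{V_i,V_j}$). That is the right route.

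There are, however, two concrete normalization errors that would break the quiver Hecke relations if you carried the sketch out literally. First, you complete in the rescaled variables $X(\nu_k)^{-1}z_k-1$ but then declare that $x_k$ acts by the unrescaled $z_k-X(\nu_k)$; these differ by the constant $X(\nu_k)$, and since $Q^J_{i,j}(u,v)=(u-v)^{d_{ij}}(v-u)^{d_{ji}}$ is not invariant under independent rescalings of $u$ and $v$, you must use $x_k\mapsto X(\nu_k)^{-1}z_k-1$ (as KKK do) so that the locus $x_k=x_{k+1}$ matches the locus $z_{k+1}/z_k = X(\nu_{k+1})/X(\nu_k)$ where $d_{\nu_k,\nu_{k+1}}$ actually vanishes. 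Second, taking $\tau_k$ to be the globally renormalized $\Rren{}=d_{\nu_k,\nu_{k+1}}(z)\Rnorm{}$ gives $\tau_k^2 e(\nu)= d_{\nu_k,\nu_{k+1}}(z)\,d_{\nu_{k+1},\nu_k}(z^{-1})$, which agrees with $Q^J_{\nu_k,\nu_{k+1}}(x_k,x_{k+1})$ only up to a unit of the completed local ring, not on the nose. Since the quiver Hecke relation must hold exactly, one must instead take the locally renormalized operator $(x_k-x_{k+1})^{d_{\nu_k\nu_{k+1}}}\Rnorm{}$ when $\nu_k\neq\nu_{k+1}$, and $(x_k-x_{k+1})^{-1}\bigl(\Rnorm{}-\operatorname{id}\bigr)$ when $\nu_k=\nu_{k+1}$ (where $\Rnorm{}$ restricts to the identity on the diagonal, so the division is legitimate); the intertwiner $\varphi_k$ of~\eqref{def:int} lives on the quiver Hecke side and is not itself what controls this. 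These fixes also correct the hand-waving ``up to the $\ko(z_\bullet)^\times$ normalization factors'' in your treatment of the braid relation, where again equality rather than unit-equality is required, including the inhomogeneous correction term when $\nu_k=\nu_{k+2}$. Your verification that $\F(L(i))\iso(V_i)_{X(i)}$ is fine once the normalization of the $x_1$-action is made consistent.
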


\begin{theorem}[{\cite{KKK13A}}]
\label{thm:duality}
When we restrict the functor $\mathcal{F}$ to $R^J\gmod$, we have
\[
\mathcal{F} \colon \soplus_{\be \in \rl^+_J} R^{J}(\be)\gmod  \rightarrow \mathcal{C}_\g.
\]
Moreover, $\mathcal{F}$ satisfies the following properties:
\begin{enumerate}[{\rm (a)}]
\item $\mathcal{F}$ is a tensor functor; that is, for any $M_1, M_2 \in R^{J}\gmod$, we have
\[
\mathcal{F}(R^J(0)) \iso \ko \quad \text{ and } \quad \mathcal{F}(M_1 \conv M_2) \iso \mathcal{F}(M_1) \otimes  \mathcal{F}(M_2).
\]
\item If the quiver $\Gamma^J$ is a Dynkin quiver of finite type $A_n$, $D_n$ or $E_{6,7,8}$, then $\mathcal{F}$ is exact.
\end{enumerate}
\end{theorem}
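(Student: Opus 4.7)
The plan is to handle (a) first, essentially formally from the construction of the bimodule $\widehat{V}^{\tens\beta}$, and then address (b), whose exactness is the substantive content.

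For part (a), I would first observe that $R^J(0)\iso \ko$ by definition, and $\widehat V^{\tens 0}$ is taken to be $\ko$ (the empty tensor product), so $\F(R^J(0))=\widehat V^{\tens 0}\otimes_{R^J(0)}R^J(0)\iso\ko$ as a $U_q'(\g)$-module (with trivial action). For the tensor property, the key input built into the definition of $\widehat V^{\tens\beta}$ in~\cite[\S3.2]{KKK13A} is that the idempotent truncation
\[
\widehat V^{\tens(\beta+\gamma)}\,e(\beta,\gamma)\;\iso\;\widehat V^{\tens\beta}\otimes\widehat V^{\tens\gamma}
\]
as $(U_q'(\g),\,R^J(\beta)\otimes R^J(\gamma))$-bimodules. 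Using this together with the definition of the convolution product, I would compute
\[
\F(M_1\conv M_2)\;=\;\widehat V^{\tens(\beta+\gamma)}\otimes_{R^J(\beta+\gamma)}\!R^J(\beta+\gamma)\,e(\beta,\gamma)\otimes_{R^J(\beta)\otimes R^J(\gamma)}(M_1\otimes M_2),
\]
and collapse the first tensor product to get $\widehat V^{\tens(\beta+\gamma)}e(\beta,\gamma)\otimes_{R^J(\beta)\otimes R^J(\gamma)}(M_1\otimes M_2)$, which by the bimodule isomorphism above equals $\F(M_1)\otimes \F(M_2)$. Functoriality in the $M_i$ is automatic.

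For part (b), the functor $\F_\beta(-)=\widehat V^{\tens\beta}\otimes_{R^J(\beta)}(-)$ is automatically right exact, so the content is left exactness; equivalently, it suffices to show that $\widehat V^{\tens\beta}$ is flat (in fact projective) as a right $R^J(\beta)$-module for every $\beta\in\rl^+_J$. The plan is to reduce this to a freeness statement over the polynomial subalgebra $\ko[x_1,\ldots,x_{|\beta|}]\subset R^J(\beta)$ and then upgrade using the combinatorial structure available in finite ADE type. Concretely, I would:
\begin{enumerate}[{\rm (i)}]
\item Exhibit a ``PBW-type'' filtration on $\widehat V^{\tens\beta}$ whose associated graded is free as a right module over the nilHecke/polynomial part, using the idempotent decomposition $\widehat V^{\tens\beta}=\bigoplus_\nu \widehat V^{\tens\beta}e(\nu)$ and the induction-by-one step from part (a).
\item Apply Theorem~\ref{thm: BkMc}: in finite ADE type the simple $R^J$-modules are parametrized by $\um\in\Z_{\ge 0}^{\N}$ via $\Image(\rmat{\um})$, and the classes $[S_{[\redez]}(\beta)]$ form a basis of $K(R^J\gmod)\iso U^-_\A(\mathsf{g})^\vee$.
\item Match these basis elements under $\F$ with PBW-type modules $V_Q(\beta)$ in $\Ca_\g$ (Theorem~\ref{thm:categorification1}), so that $\F$ takes the canonical basis of the Grothendieck ring to a basis; this dimension/character count forces no higher $\operatorname{Tor}$ contribution and hence flatness of $\widehat V^{\tens\beta}$.
\end{enumerate}

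The main obstacle I anticipate is step (i): showing the explicit freeness of $\widehat V^{\tens\beta}$ over the symmetric quiver Hecke algebra $R^J(\beta)$ in the ADE situation. The usual technique is to produce a basis of $\widehat V^{\tens\beta}$ indexed by pairs $(w,\ub)$ with $w\in\Sym_{|\beta|}$ and $\ub$ a multi-index of polynomial generators, mimicking the Khovanov--Lauda--Rouquier basis $\{\tau_w x^{\ub}e(\nu)\}$ of $R^J(\beta)$; the compatibility of this basis with the $R$-matrix action on tensors of good modules is what requires the ADE hypothesis, since only there do the denominator-pole structures and Dorey homomorphisms line up with a symmetric Cartan matrix $\cmA^J$ whose associated KLR algebra has the correct graded dimension. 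Once step (i) is in place, (ii) and (iii) are bookkeeping with the categorification theorems already recorded in the paper, and the exactness of $\F$ follows.
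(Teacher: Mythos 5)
Part (a) is fine: the tensor structure does follow formally from the bimodule isomorphism $\widehat V^{\tens(\beta+\gamma)}e(\beta,\gamma)\iso\widehat V^{\tens\beta}\otimes\widehat V^{\tens\gamma}$ built into the construction of $\widehat V^{\tens\beta}$ in \cite{KKK13A}, exactly as you argue.

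Part (b) has a genuine gap, and it is not where you locate it. Step (i) — freeness of $\widehat V^{\tens\beta}$ over the polynomial subalgebra $\ko[x_1,\ldots,x_{|\beta|}]$ — actually holds for \emph{any} datum $(J,X,\{V_j\})$, not only in ADE type; $\cmA^J$ is symmetric and the graded dimension of $R^J(\beta)$ has the expected form by construction, so the ADE hypothesis does no work at this stage. The real role of the ADE hypothesis is homological: quiver Hecke algebras of finite ADE type have finite global dimension (Kato, McNamara, Brundan--Kleshchev--McNamara), and it is this, combined with $R^J(\beta)$ being a finite free module over its central polynomial subalgebra and with the polynomial-freeness of $\widehat V^{\tens\beta}$, that forces $\widehat V^{\tens\beta}$ to be flat (indeed projective) over $R^J(\beta)$ by a depth/global-dimension argument in the spirit of Auslander--Buchsbaum. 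Your steps (ii)--(iii) cannot replace this: matching dimensions or bases in the Grothendieck group only controls the Euler characteristic $\sum(-1)^i[\operatorname{Tor}_i^{R^J(\beta)}(\widehat V^{\tens\beta},-)]$, which does not force the individual higher $\operatorname{Tor}$ groups to vanish. Moreover, invoking Theorem~\ref{thm:categorification1} and the identification $\F(S_{[\redez]}(\beta))\iso V_Q(\beta)$ risks circularity, since in the framework of \cite{KKK13B,KKKOIV} those identifications are established \emph{after} exactness of $\F$ is known. So the missing ingredient is the finite-global-dimension theorem for finite-type KLR algebras; without it the homological step ``free over the polynomial part $\Rightarrow$ flat over $R^J(\beta)$'' does not go through.
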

We call the functor $\mathcal{F}$ the \defn{generalized Schur-Weyl duality functor}.

\begin{theorem}[\cite{KKK13B,KKKOIV}]
\label{thm:gQASW duality}
Let $U_q'(\g^{(t)})$ be a quantum affine algebra of type $A^{(t)}_{n}$  $($resp.\ $D^{(t)}_{n})$
and let $Q$ be a Dynkin quiver of finite type $A_n$  $($resp.\ $D_n)$ for $t=1,2$.
Take $J$ and $\mathcal{S}$ as the set of simple roots $\Pi$ associated to $Q$.
We define two maps
\[
s \colon \Pi \to \{ V(\varpi_i) \mid i \in I_0 \} \quad \text{ and } \quad X \colon \Pi \to  \ko^\times
\]
as follows:
for $\al \in \Pi$ with $\widehat{\Omega}_Q(\al) = (i,p)$, we define
\[
s(\al) = \begin{cases} V(\varpi_i) & \text{if $\g^{(1)}=A^{(1)}_{n}$ or $D^{(1)}_{n}$,} \\
V(\varpi_{i^\star}) & \text{if $\g^{(2)}=A^{(2)}_{n}$ or $D^{(2)}_{n}$,}
\end{cases}
\qquad X(\al)=
\begin{cases} (-q)^p  & \text{if $\g^{(1)}=A^{(1)}_{n}$ or $D^{(1)}_{n}$,} \\
((-q)^p)^\star & \text{if $\g^{(2)}=A^{(2)}_{n}$ or $D^{(2)}_{n}$.} \end{cases}
\]
Then we have the following:
\begin{enumerate}[{\rm (1)}]
\item The underlying graph of $\Gamma^{J}$ coincides with the one of $Q$. Hence the functor
\[
\F^{(t)}_Q \colon  R^{J}\gmod \rightarrow \mathcal{C}^{(t)}_Q \quad (t =1,2) \text{ in {\rm Theorem~\ref{thm:duality}} is exact.}
\]
\item The functor $\F^{(t)}_Q$ induces a bijection from
the set of the isomorphism classes of simple objects of
$R^{J}\gmod$ to that of $\mathcal{C}^{(t)}_Q$. In particular,
$\F^{(t)}_Q$ sends $S_Q(\beta) \seteq S_{[Q]}(\beta)$ to
$V^{(t)}_Q(\beta)$. 
\item The functors $\F^{(1)}_Q$ and $\F^{(2)}_Q$ induce the ring isomorphisms in {\rm Theorem~\ref{thm:categorification1}}. Moreover, the induced functor between
$\mathcal{C}^{(1)}_Q$ and $\mathcal{C}^{(2)}_Q$:
\[
\begin{tikzpicture}[baseline=0]
\node (C1) at (-4,0) {$\mathcal{C}^{(1)}_Q$};
\node (C2) at (4,0) {$\mathcal{C}^{(2)}_{Q}$};
\node (RJ) at (0,0) {$R^J\gmod$};
\draw[->] (RJ) -- node[midway,below] {$\F^{(1)}_Q$} (C1);
\draw[->] (RJ) -- node[midway,below] {$\F^{(2)}_Q$} (C2);
\path[<->,dotted] (C1) edge[out=25,in=155](C2);
\end{tikzpicture}
\]
 preserves dimensions and sends simples to simples, bijectively.
\end{enumerate}
\end{theorem}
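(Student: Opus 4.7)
The plan is to proceed in three stages following the strategy of \cite{KKK13B,KKKOIV}, reducing each part to facts already established in the excerpt.

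For part (1), I first compute the quiver $\Gamma^J$. Since $J = \Pi$ is the set of simple roots indexed by $I$, and $s(\al_i) = V(\varpi_i)$ (or its $\star$-twist), the number of arrows between $i$ and $j$ is given by the order of zero of $d_{i,j}(z)$ (appropriately twisted for $t=2$) at $z = X(\al_j)/X(\al_i)$. Reading off from Theorem~\ref{thm:reading deno from AR} applied to $\Gamma_Q$, two vertices $i, j \in I$ are linked in $\Gamma_Q$ by an arrow precisely when $\widehat{\Omega}_Q(\al_j) - \widehat{\Omega}_Q(\al_i) = (j, \pm 1)$ with $|i - j|$ adjacent in the Dynkin diagram; in that case the corresponding factor $z - (-q)^{\pm 1}$ appears in $d_{i,j}(z)$ with multiplicity one, and no other factor matches $X(\al_j)/X(\al_i)$. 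For the twisted case $t=2$, the definition of $\star$ in~\eqref{eq:star} is set up exactly so that the adjusted spectral parameters hit the roots of $d_{i^\star,j^\star}^{A^{(2)}}(z)$ or $d^{D^{(2)}}_{i^\star,j^\star}(z)$ with the same incidence as in $Q$. Hence the underlying graph of $\Gamma^J$ is the Dynkin diagram of $Q$, and exactness of $\F^{(t)}_Q$ is immediate from Theorem~\ref{thm:duality}(b).

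For part (2), I will show $\F^{(t)}_Q(S_Q(\beta)) \iso V^{(t)}_Q(\beta)$ for all $\beta \in \PR$ by induction on the $[Q]$-height of $\beta$. The base case $\beta = \al_i$ is exactly the statement $\F(L(i)) \iso V(\varpi_i)_{X(\al_i)}$ from Theorem~\ref{thm: F}. For the inductive step, pick any $[Q]$-minimal pair $(\beta_k, \beta_l)$ with $\beta_k + \beta_l = \beta$. By Theorem~\ref{thm: BkMc}(f) there is an exact sequence
\[
0 \to S_Q(\beta) \to S_Q(\beta_k) \conv S_Q(\beta_l) \xrightarrow{\rmat{}} S_Q(\beta_l) \conv S_Q(\beta_k) \to S_Q(\beta) \to 0,
\]
and applying the exact tensor functor $\F^{(t)}_Q$ together with the induction hypothesis produces
\[
0 \to \F^{(t)}_Q(S_Q(\beta)) \to V^{(t)}_Q(\beta_k) \otimes V^{(t)}_Q(\beta_l) \to V^{(t)}_Q(\beta_l) \otimes V^{(t)}_Q(\beta_k) \to \F^{(t)}_Q(S_Q(\beta)) \to 0.
\]
On the other hand, Theorem~\ref{thm: Dorey classical 1} gives a one-dimensional Dorey's type Hom space between these tensor products with image a shifted fundamental module corresponding precisely to $\beta$, i.e.\ $V^{(t)}_Q(\beta)$ (up to the shift built into the definition of $V^{(t)}_Q$). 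Combined with Theorem~\ref{Thm: basic properties}(2)(v), the image and cokernel of the middle map are isomorphic to $V^{(t)}_Q(\beta)$, forcing $\F^{(t)}_Q(S_Q(\beta)) \iso V^{(t)}_Q(\beta)$.

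The main obstacle here is to guarantee that the composition series of $V^{(t)}_Q(\beta_k) \otimes V^{(t)}_Q(\beta_l)$ has length exactly two with $V^{(t)}_Q(\beta)$ as both head and socle, so that the diagram chase actually identifies the kernel with $V^{(t)}_Q(\beta)$ and not some proper submodule: this uses Lemma~\ref{lem:simplepole} applied to the relevant simple pole of the normalized $R$-matrix, whose location is precisely recorded by the distance statistic in Theorem~\ref{thm:reading deno from AR}. Once this identification is established, part (2) follows because, on one hand, the simples of $R^J\gmod$ are parameterized by $\um \in \Z_{\ge 0}^{\N}$ via Theorem~\ref{thm: BkMc}(e) with representatives $\hd(\Stom)$, and on the other hand the simples of $\Ca^{(t)}_Q$ are parameterized by the same data via Theorem~\ref{thm:categorification1}(2); exactness of $\F^{(t)}_Q$ together with the matching of PBW-monomials gives the required bijection on isomorphism classes of simples. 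Part (3) is then an immediate consequence: Theorem~\ref{thm:categorification1}(1) identifies both $[\Ca^{(1)}_Q]$ and $[\Ca^{(2)}_Q]$ with $U_\A(\mathsf{g})^\vee$, and these identifications are compatible with the functors $\F^{(1)}_Q$ and $\F^{(2)}_Q$ by Theorem~\ref{Thm:categorification}; the dotted correspondence is defined on simples by $V^{(1)}_Q(\beta) \longleftrightarrow V^{(2)}_Q(\beta)$, preserving dimensions since both sides arise as images of the same $S_Q(\beta) \in R^J\gmod$.
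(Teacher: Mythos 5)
This theorem is quoted from \cite{KKK13B,KKKOIV} and not proved in the paper, so there is no proof of the paper's own to compare against; the closest things are the analogous exceptional-type results the paper does prove (Theorems~\ref{thm: D43 S to V}, \ref{thm: F41 S to V}, \ref{thm: D43 E62}, \ref{thm: F4(1) G2(1)}), and your strategy is exactly theirs. Part (1), part (3), and the identification $\F^{(t)}_Q(S_Q(\beta))\iso V^{(t)}_Q(\beta)$ for each $\beta\in\PR$ in part (2) are essentially as you sketch them: compute $\Gamma^J$ from the denominator formulas, then induct on $[Q]$-minimal pairs by applying the exact functor to the six-term sequence of Theorem~\ref{thm: BkMc}(f) and pinning down the kernel using Dorey's rule, Lemma~\ref{lem:simplepole}, and rigidity.

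The genuine gap is in the final step of part (2). Showing $\F^{(t)}_Q(S_Q(\beta))\iso V^{(t)}_Q(\beta)$ for all $\beta\in\PR$ only matches the ``root modules,'' and it does not by itself imply that $\F^{(t)}_Q$ sends \emph{every} simple to a simple. A general simple of $R^J\gmod$ is $\hd(\Stom)$ for a sequence $\um$, and exactness only yields a surjection $\F^{(t)}_Q(\Stom)\twoheadrightarrow\F^{(t)}_Q(\hd(\Stom))$; whether the target is simple is precisely what needs to be proved, so the clause ``exactness together with the matching of PBW-monomials gives the required bijection'' is circular. What \cite{KKK13B,KKKOIV} actually establish is that $\F^{(t)}_Q$ intertwines the quiver Hecke $R$-matrices with the (renormalized) quantum affine $R$-matrices, so that $\F^{(t)}_Q(\rmat{\um})$ coincides up to scalar with the $R$-matrix on the $U_q'(\g)$-side and hence $\F^{(t)}_Q(\hd(\Stom))$ is the image of an $R$-matrix, i.e.\ simple by Theorem~\ref{Thm: basic properties}(2)(v), with distinctness then tracked through Theorem~\ref{thm: BkMc}(d)--(e). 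You use this intertwining implicitly for pairs (one-dimensionality of the relevant Hom space identifies the induced map in your six-term sequence with the $R$-matrix), but it is never stated or established for general $\um$, and that is the technical heart of the theorem.
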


\begin{remark}
\label{remark: En(1) conjecture}
It was pointed out in~\cite{KKK13B} that when $\g^{(1)}$ is of type $E_{6,7,8}^{(1)}$, $Q$ is a Dynkin quiver of finite type $E_{6,7,8}$ and we choose $(J,\mathcal{S},s,X)$ as in Theorem~\ref{thm:gQASW duality},
one can prove that (i) $\F_Q^{(1)}$ is exact (ii) $\F_Q^{(1)}$ sends $S_Q(\be)$ to $V_Q(\be)$ ($\be \in \PR$) and (iii) $\F^{(1)}_Q$ induces a bijection from
the set of the isomorphism classes of simple objects of
$R^{J}\gmod$ to that of $\mathcal{C}^{(1)}_Q$ \emph{provided that $\Gamma^{J}$ is of finite type $E_{6,7,8}$},
by applying the same arguments in~\cite[\S 4]{KKK13B} (see also~\cite[Conjecture 4.3.2]{KKK13B}).
\end{remark}

\begin{theorem}[\cite{KO17}] \label{thm:gQASW duality 2}
Let $U_q'(\g^{(1)})$ be a quantum affine algebra of type $B^{(1)}_{n}$  $($resp.\ $C^{(1)}_{n})$
and let $[\mQ]$ be a twisted adapted class of finite type $A_{2n-1}$  $($resp.\ $D_{n+1})$.
Take $J$ and $\mathcal{S}$ as the set of simple roots $\Pi$ of finite type $A_{2n-1}$  $($resp.\ $D_{n+1})$.
We define two maps
\[
s \colon \Pi \to \{ V(\varpi_{i}) \mid i \in I_0 \} \quad \text{ and } \quad X \colon \Pi \to  \ko^\times
\]
as follows: for $\al \in \Pi$ with $\widehat{\Omega}_Q(\al) = (i,p/\mathsf{d})$, we define
\[
s(\al)= V(\varpi_{i}),
\qquad\qquad
X(\al)=
\begin{cases} (-1)^i(q^{1/\mathsf{d}})^p  & \text{if $\g^{(1)}=B^{(1)}_{n}$,} \\
(-q^{1/\mathsf{d}})^p & \text{if $\g^{(1)}=C^{(1)}_{n}$.} \end{cases}
\]
Then we have the following:
\begin{enumerate}[{\rm (1)}]
\item The underlying graph of $\Gamma^{J}$ is a Dynkin diagram of finite type $A_{2n-1}$  $($resp.\ $D_{n+1})$. Hence the functor
\[
\mathscr{F}_\mQ \colon  R^{J}\gmod \rightarrow \mathscr{C}^{(1)}_\mQ \text{ in {\rm Theorem~\ref{thm:duality}} is exact.}
\]
\item The functor $\mathscr{F}^{(1)}_\mQ$ induces a bijection from
the set of the isomorphism classes of simple objects of $R^{J}\gmod$
to that of $\mathscr{C}^{(1)}_\mQ$. In particular, $\mathscr{F}_\mQ$
sends $S_\mQ(\beta) \seteq S_{[\mQ]}(\beta)$ to $V_\mQ(\beta)$.
\item[{\rm (3)}] The functors $\mathscr{F}_\mQ$ induces the ring isomorphism in {\rm Theorem~\ref{thm:categorification2}}. Moreover, the induced functors among
$\mathcal{C}^{(1)}_Q$, $\mathcal{C}^{(2)}_{Q'}$ and $\mathscr{C}^{(1)}_\mQ$
\[
\begin{tikzpicture}[>=latex,scale=3]
\node (C1) at (-1,0) {$\mathcal{C}^{(1)}_Q$}; \node (C2) at (1,0)
{$\mathcal{C}^{(2)}_{Q'}$}; \node (sC) at (0,-0.75)
{$\mathscr{C}^{(1)}_{\mQ}$}; \node (RJ) at (0,0) {$R^J\gmod$};
\draw[<->,dotted] (C1) to[out=30,in=150] (C2); \draw[<->,dotted]
(C2) to[out=-120,in=10] (sC); \draw[<->,dotted] (sC)
to[out=170,in=-60] (C1); \draw[->] (RJ) -- node[midway,below]
{\scriptsize $\F_Q^{(2)}$} (C1); \draw[->] (RJ) --
node[midway,below] {\scriptsize $\F_{Q'}^{(2)}$} (C2); \draw[->]
(RJ) -- node[midway,right] {\scriptsize $\mathscr{F}_\mQ$} (sC);
\end{tikzpicture}
\]
send simples to simples, bijectively.
\end{enumerate}
\end{theorem}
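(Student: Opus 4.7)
\medskip

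The plan is to follow the template established in~\cite{KKK13B,KKKOIV} (see Theorem~\ref{thm:gQASW duality}) and upgraded in~\cite{KO17} for the twisted adapted setting, combining the denominator formulas that are encoded by the folded AR quiver with the PBW-categorification theory for KLR algebras.

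For (1), I would first compute the quiver $\Gamma^J$ associated to the datum $(\Pi, s, X)$: one needs to read the orders of zeros of $d_{V_i,V_j}(z_j/z_i)$ at the spectral-parameter ratio $X(j)/X(i)$ coming from adjacent simple roots in the AR quiver $\Upsilon_{[\mQ]}$. By the folded-AR-quiver description of the denominators (Theorem~\ref{thm:reading deno from AR}(1)), for $\g^{(1)} = B_n^{(1)}$ or $C_n^{(1)}$ one has $d_{k,\ell}^{\g^{(1)}}(z) = D_{k,\ell}^{\lf\mQ\rf}(z; -q_s)(z-p^\ast)^{\delta_{k,\ell}}$, and the exponent $\theta_t^{\lf\mQ\rf}(k,\ell)$ is a multiplicity reading off the positions of pairs in $\widehat{\Upsilon}_{[\mQ]}$. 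Two simple roots $\alpha,\beta$ of $A_{2n-1}$ or $D_{n+1}$ are adjacent in $\Upsilon_{[\mQ]}$ exactly when the corresponding fundamental modules, placed at the prescribed spectral parameters, have a simple pole in their normalized $R$-matrix. Hence the underlying graph of $\Gamma^J$ coincides with the Dynkin diagram, and exactness of $\mathscr{F}_\mQ$ then follows from Theorem~\ref{thm:duality}(b).

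For (2), I would proceed by induction on the convex partial order $\prec_{[\mQ]}$ using the PBW structure of Theorem~\ref{thm: BkMc}. For $\beta$ a simple root, the assignment $\mathscr{F}_\mQ(L(\alpha_\beta)) \iso V_\mQ(\beta)$ is tautological from the construction of the functor. For a general $\beta \in \PR \setminus \Pi$, Proposition~\ref{pro: BKM minimal} gives a $[\mQ]$-minimal pair $(\gamma,\delta)$ with $\gamma + \delta = \beta$; the six-term exact sequence~\eqref{eq: 6 ses} in Theorem~\ref{thm: BkMc}(f) expresses $S_\mQ(\beta)$ as the head/socle of $S_\mQ(\gamma) \conv S_\mQ(\delta)$. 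Applying the exact tensor functor $\mathscr{F}_\mQ$ and using Dorey's rule for $U_q'(\g^{(1)})$ in the twisted setting (Theorem~\ref{thm: Dorey classical 2}), the resulting surjection $V_\mQ(\gamma) \otimes V_\mQ(\delta) \twoheadrightarrow \mathscr{F}_\mQ(S_\mQ(\beta))$ must factor through $V_\mQ(\beta)$, and an inductive comparison of the composition factors (using Theorem~\ref{thm: BkMc}(c) and Theorem~\ref{Thm: basic properties}(2)) forces $\mathscr{F}_\mQ(S_\mQ(\beta)) \iso V_\mQ(\beta)$. For general simples, the PBW-type parameterization gives a uniform bijection $\um \leftrightarrow \hd(\Stom)$, and $\mathscr{F}_\mQ$ sends this to $\hd(V_\mQ(\beta_1)^{\otimes m_1} \otimes \cdots \otimes V_\mQ(\beta_\N)^{\otimes m_\N})$, which is simple by Theorem~\ref{Thm: basic properties}(2) once the spectral parameters are ordered by $\prec_{[\mQ]}$; the resulting map on isomorphism classes of simples is easily seen to be injective and surjective.

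For (3), the induced ring homomorphism $[\mathscr{F}_\mQ]: K(R^J\gmod)|_{q=1} \to [\mathscr{C}_\mQ^{(1)}]$ is bijective since it sends bases to bases, matching the dual PBW classes $[S_\mQ(\beta)]$ to the classes $[V_\mQ(\beta)]$, and Theorem~\ref{Thm:categorification} identifies the source with $U_\A^-(\mathsf{g})^\vee|_{q=1}$; this gives Theorem~\ref{thm:categorification2}. The induced arrows among $\mathcal{C}_Q^{(1)}$, $\mathcal{C}_{Q'}^{(2)}$ and $\mathscr{C}_\mQ^{(1)}$ are then constructed by composing $\mathscr{F}_\mQ$ with the (quasi-)inverses of $\mathcal{F}_Q^{(1)}$, $\mathcal{F}_{Q'}^{(2)}$ on simples, which is well defined since all three send self-dual simples bijectively to simples. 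The main obstacle in this program is step~(2): one must establish Dorey's rule in the twisted adapted setting with enough precision to match the six-term exact sequence on the KLR side to an honest surjection/injection on the affine side for every minimal pair, which in turn depends essentially on having the correct denominator formulas for \emph{all} pairs $(i,j) \in I_0 \times I_0$ encoded by $\widehat{\Upsilon}_{[\mQ]}$.
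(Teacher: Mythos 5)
This theorem is stated in the paper with the citation \cite{KO17} and is not proved here; the present paper only records it for later use. Your outline does, however, match the standard template of \cite{KO17} and is essentially the same strategy the paper deploys for its own analogous exceptional-type statements (Proposition~\ref{prop: particular F G}, Theorem~\ref{thm: F41 S to V}, Theorem~\ref{thm: F4(1) G2(1)}): read $\Gamma^J$ off the folded AR quiver via the denominator formulas, deduce exactness from Theorem~\ref{thm:duality}, then run an induction on $\prec_{[\mQ]}$ through $[\mQ]$-minimal pairs and the six-term sequences of Theorem~\ref{thm: BkMc}(f), matching them against Dorey-type surjections on the quantum affine side.

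Two caveats on step (2). First, the phrase ``must factor through $V_\mQ(\beta)$'' undersells what is actually done: one applies the exact functor to the six-term sequence to conclude that $\mathscr{F}_\mQ(S_\mQ(\beta))$ equals the image of the corresponding affine R-matrix, and then identifies that image with $V_\mQ(\beta)$ by combining Dorey's rule for the relevant minimal pair with rigidity of $\Ca_\g$ and the uniqueness of fundamental modules (this is precisely the argument in the proof of Theorem~\ref{thm: F41 S to V}). Second, you assert that bijectivity on isomorphism classes of simples is ``easily seen''; in fact, once $\mathscr{F}_\mQ(S_\mQ(\beta)) \iso V_\mQ(\beta)$ is established, the passage to \emph{all} simples still requires the machinery of \cite{KKKOIII} (which the paper invokes explicitly just before Theorems~\ref{thm: D43 E62} and~\ref{thm: F4(1) G2(1)}), since one needs that $\mathscr{F}_\mQ$ sends every simple to a nonzero simple and separates nonisomorphic simples, which rests on matching dual canonical/PBW bases on both sides, not merely on the renormalization of the PBW-type modules.
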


\section{Application from generalized Schur-Weyl duality}
\label{sec6:application}

In this section, we shall show the applications of the denominator formulas
that can be obtained from the generalized Schur-Weyl duality functors.


\subsection{\texorpdfstring{$U_q'(E_{6,7,8}^{(1)})$}{Uq'(E678(1))}}

In this subsection, $Q$ always denotes a Dynkin quiver of finite type $E_{6,7,8}$ and $U_q'(\g)$ denotes the quantum affine algebra of type $E_{6,7,8}^{(1)}$.

The following proposition can be checked with the denominator formulas given in the previous section.


\begin{proposition}
\label{prop: particular E_n}
For the Dynkin quiver $Q$ given in Example~\ref{ex:folded AR quiver E} and Appendix~\ref{Sec:Dynkin E7},~\ref{Sec:Dynkin E8}, take $J$ and $\mathcal{S}$ as the set of simple roots $\Pi$ associated to $Q$.
We define two maps
\[
s \colon \Pi \to \{ V(\varpi_i) \mid i \in I_0 \} \quad \text{ and } \quad X \colon \Pi \to  \ko^\times
\]
as follows:
for $\al \in \Pi$ with $\widehat{\Omega}_Q(\al)=(i,p)$, we define
\[
s(\al) = V(\varpi_i), \qquad\qquad X(\al)= (-q)^p.
\]
Then the underlying graph of $\Gamma^{J}$ coincides with the one of $Q$.
\end{proposition}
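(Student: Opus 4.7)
The quiver $\Gamma^J$ has vertex set $J = \Pi$, and $d_{ij}$ arrows from $\al_i$ to $\al_j$, where $d_{ij}$ is the order of zero of $d_{i,j}(z)$ at $z = (-q)^{p_j - p_i}$ with $\widehat{\Omega}_Q(\al_k) = (k, p_k)$. The plan is to verify, for every pair of distinct simple roots $\al_i, \al_j$, that $d_{ij} + d_{ji} = 1$ when $i$ and $j$ are adjacent in the Dynkin diagram of $Q$, and that $d_{ij} = d_{ji} = 0$ otherwise. The positions $(i, p_i)$ can be read off directly from the diagrams in Example~\ref{ex:twisted AR quiver E}, Appendix~\ref{Sec:Dynkin E7} and Appendix~\ref{Sec:Dynkin E8}: they are precisely the vertices whose expansion as $\sum_k a_k \al_k$ has all but one $a_k$ equal to zero.

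Suppose first that $i \sim j$ in the Dynkin diagram. Then $\al_i + \al_j \in \PR$, and since neither simple root admits a further decomposition as a sum of two elements of $\PR$, the only element of $\Z^\N_{\ge 0}$ of weight $\al_i + \al_j$ with support of size two is $(\al_i, \al_j)$ itself, which is therefore a $[Q]$-minimal pair of $\al_i + \al_j$. Theorem~\ref{thm: simply laced minimal denom}(2) then yields that $(-q)^{|p_j - p_i|}$ appears as a root of $d_{i,j}(z)$ with multiplicity exactly one. Since Theorem~\ref{Thm: basic properties}(1) forces every zero of $d_{i,j}(z)$ to lie in $\C[[q^{1/m}]]q^{1/m}$, only one of $(-q)^{p_j - p_i}$ and $(-q)^{p_i - p_j}$ can be such a zero, so $d_{ij} + d_{ji} = 1$.

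For the complementary case $i \neq j$ with $i \not\sim j$, the sum $\al_i + \al_j$ is not a positive root. Once again the only pair of weight $\al_i + \al_j$ supported on two entries is $(\al_i, \al_j)$, but now $(\al_i, \al_j)$ is itself minimal with respect to $\prec^\tb_{[Q]}$, hence $[Q]$-simple. By Theorem~\ref{thm: simply laced minimal denom}(1) (equivalently Proposition~\ref{prop: [Q]-simple pair}), the tensor product $V_Q(\al_i) \otimes V_Q(\al_j)$ is irreducible, so $(-q)^{p_j - p_i}$ is not a zero of $d_{i,j}(z)$ and $d_{ij} = d_{ji} = 0$.

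The argument is conceptually straightforward; the main obstacle is the ambient computational one, namely that Theorem~\ref{thm: simply laced minimal denom} ultimately rests on the explicit denominator formulas $d_{i,j}(z)$ for the exceptional affine types $E_{6,7,8}^{(1)}$ established in Section~\ref{sec4:computations}. Crucially, only the multiplicity of the smallest zero of each $d_{i,j}(z)$ is used here, so the residual ambiguities for higher-order roots (such as the $\epsilon$ in Proposition~\ref{prop:E61_ambiguity} and its $E_7^{(1)}$, $E_8^{(1)}$ analogues) are immaterial for this argument.
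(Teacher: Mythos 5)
Your proof is correct and takes essentially the same approach as the paper's: both reduce the claim to the observation that a pair of simple roots $(\al_i,\al_j)$ is a $[Q]$-minimal pair when $i\sim j$ and $[Q]$-simple otherwise, and then invoke Theorem~\ref{thm: simply laced minimal denom}. The paper's proof states this in two sentences; you supply the elementary height-two argument justifying the $[Q]$-minimal/$[Q]$-simple dichotomy, unwind the definition of $d_{ij}$, and note that the higher-order ambiguities in the denominator formulas are immaterial here --- all useful elaboration, but not a different route.
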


\begin{proof}
Note that the pairs of simple roots $(\al_i,\al_j)$ are $[Q]$-minimal if $i,j$ are adjacent in $\Delta$ and $[Q]$-simple, otherwise. Thus the underlying graph of $\Gamma^J$
coincides with the one of $Q$ by Theorem~\ref{thm: simply laced minimal denom}.
\end{proof}

\begin{example}
Let us see the $E_6$ case with an example. By forgetting the arrows of $\Gamma_Q$ in \eqref{eq: E6 AR quiver}, the underlying graph of $\Gamma^{J}$ can be described as follows:
\begin{align} \label{eq: E6 AR quiver Gamma_J} \raisebox{6em}{\scalebox{0.55}{\xymatrix@R=1.0ex@C=0.5ex{
(i/p) & 1 & 2 & 3 & 4 & 5 & 6 & 7 & 8 & 9 & 10 & 11 & 12 & 13 & 14 & 15   \\
1&{\scriptstyle\boxed{\prt{000}{001}}} \ar@{-}[rr] && {\scriptstyle\boxed{\prt{000}{010}}} \ar@{-}[rr]  && {\scriptstyle\boxed{\prt{000}{100}}} \ar@{-}@/^1pc/[dddrrrrrrrrr]
\ar@{-}@/^1.5pc/[rrrrrrrr] && {\scriptstyle\prt{011}{111}}
&&{\scriptstyle\prt{101}{110}} && {\scriptstyle\prt{010}{100}} && {\scriptstyle\boxed{\prt{001}{000}}} \ar@{-}[rr]  && {\scriptstyle\boxed{\prt{100}{000}}} \\
3&& {\scriptstyle\prt{000}{011}} && {\scriptstyle\prt{000}{110}} && {\scriptstyle\prt{011}{211}}
&& {\scriptstyle\prt{112}{221}} && {\scriptstyle\prt{111}{210}} && {\scriptstyle\prt{011}{100}}
&& {\scriptstyle\prt{101}{000}} \\ 
4&&& {\scriptstyle\prt{000}{111}}&& {\scriptstyle\prt{011}{221}}&& {\scriptstyle\prt{112}{321}}
&& {\scriptstyle\prt{122}{321}} && {\scriptstyle\prt{112}{210}}&& {\scriptstyle\prt{111}{100}} \\
2&&&& {\scriptstyle\prt{010}{111}}&& {\scriptstyle\prt{001}{110}}&& {\scriptstyle\prt{111}{211}}&& {\scriptstyle\prt{011}{110}}
&& {\scriptstyle\prt{101}{100}}&& {\scriptstyle\boxed{\prt{010}{000}}} \\
5&&&& {\scriptstyle\prt{001}{111}}&& {\scriptstyle\prt{111}{221}}&& {\scriptstyle\prt{011}{210}}
&& {\scriptstyle\prt{112}{211}}&& {\scriptstyle\prt{111}{110}} \\
6&&&&& {\scriptstyle\prt{101}{111}}&& {\scriptstyle\prt{010}{110}}&& {\scriptstyle\prt{001}{100}}&& {\scriptstyle\prt{111}{111}} \\
}}}
\end{align}
since
\begin{align*}
& d_{1,1}(z)=(z-q^2)(z-q^8) \quad \text{and} \quad d_{1,2}(z)=(z+q^5)(z+q^9).
\end{align*}
\end{example}

\begin{theorem}
\label{thm: simplicity type E}
The above proposition holds for any $Q$ of type $E_{6,7,8}$. Hence for any $Q$ of type $E_{6,7,8}$, there exists an exact functor
\[
\F_Q^{(1)} \colon R^{E_{6,7,8}}\gmod \to \Ca_Q^{(1)},
\]
which sends $S_Q(\be)$ to $V_Q(\be)$ $(\be \in \PR)$ and $\F^{(1)}_Q$ induces a bijection from
the set of the isomorphism classes of simple objects of
$R^{E_{6,7,8}}\gmod$ to that of $\mathcal{C}^{(1)}_Q$.
\end{theorem}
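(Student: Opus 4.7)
The plan is to first extend Proposition~\ref{prop: particular E_n} from the specific quivers exhibited there to an arbitrary Dynkin quiver $Q$ of type $E_{6,7,8}$, and then to feed the result into the generalized Schur--Weyl duality machinery of Section~\ref{sec5:quiver Hecke} together with the observation recorded in Remark~\ref{remark: En(1) conjecture}. The proof of Proposition~\ref{prop: particular E_n} in fact never uses the specific orientation of the quiver: the only combinatorial input is that for distinct simple roots $\al_i,\al_j\in\Pi$, the pair $(\al_i,\al_j)$ is $[Q]$-simple whenever $i,j$ are non-adjacent in $\Delta$ (since then $\al_i+\al_j\not\in\PR$) and, when $i,j$ are adjacent, is $[Q]$-minimal as a pair of $\al_i+\al_j\in\PR$. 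Both facts hold for every simply-laced Dynkin quiver, and Theorem~\ref{thm: simply laced minimal denom} then forces $d_{i,j}(z)$ to have a zero of order exactly $1$ at $z=X(\al_j)/X(\al_i)=(-q)^{p_j-p_i}$ if $i\sim j$ in $\Delta$ and no zero there otherwise, where $\widehat{\Omega}_Q(\al_k)=(k,p_k)$. Consequently the underlying graph of $\Gamma^J$ coincides with $\Delta$ for every such $Q$, and $R^J$ is isomorphic to $R^{E_{6,7,8}}$.

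Once this is established, Theorem~\ref{thm: F} and Theorem~\ref{thm:duality}(b) produce an exact tensor functor
\[
\F^{(1)}_Q \seteq \F \colon \soplus_{\be\in\rl^+_J} R^J(\be)\gmod \longrightarrow \Ca_\g,
\]
and by construction $\F^{(1)}_Q(L(\al_i)) \iso V(\varpi_i)_{X(\al_i)} = V_Q(\al_i)$, so the image automatically lies in the subcategory $\Ca_Q^{(1)}$. The identification $\F^{(1)}_Q(S_Q(\be)) \iso V_Q(\be)$ for a general $\be\in\PR$ is then proved by induction along the convex order $\prec_{[Q]}$: selecting a $[Q]$-minimal pair $(\al,\be')$ of $\be$, the exact sequence~\eqref{eq: 6 ses} in $R^{E_{6,7,8}}\gmod$ is matched under the tensor functor $\F^{(1)}_Q$ with the Dorey-type surjection $V_Q(\al)\otimes V_Q(\be') \twoheadrightarrow V_Q(\be)$ supplied by Theorem~\ref{thm: Dorey classical 1}. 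Combining this identification with Theorem~\ref{thm: BkMc}(c)--(e) and the fact that every simple of $\Ca_Q^{(1)}$ arises as the head of an ordered tensor product of the $V_Q(\be)$'s yields the required bijection on isomorphism classes of simples.

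The main obstacle is the naturality bookkeeping needed to align the canonical homomorphisms $\rmat{\um}$ of Theorem~\ref{thm: BkMc}(b) on the quiver Hecke side with the renormalized $R$-matrices and Dorey-type morphisms on the affine side, keeping track of spectral parameters, grading shifts, and the extremal weight vectors along which the $R$-matrices are normalized. This is precisely the content of the strategy carried out in~\cite[\S 4]{KKK13B} for the classical cases, and Remark~\ref{remark: En(1) conjecture} records that the same strategy applies verbatim in the exceptional $E^{(1)}_{6,7,8}$ situation once we know that $\Gamma^J$ is Dynkin of type $E_{6,7,8}$; thus the extension of Proposition~\ref{prop: particular E_n} in the first paragraph supplies the one missing ingredient and the theorem follows.
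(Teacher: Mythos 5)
Your proposal is correct and follows essentially the same route as the paper: extend Proposition~\ref{prop: particular E_n} to arbitrary $Q$ (which works because its proof only uses the $[Q]$-simple/$[Q]$-minimal dichotomy for pairs of simple roots, independent of orientation, together with Theorem~\ref{thm: simply laced minimal denom}), then invoke the arguments of~\cite[\S 4]{KKK13B} as recorded in Remark~\ref{remark: En(1) conjecture}. The paper's proof is simply the citation to Remark~\ref{remark: En(1) conjecture}; your proposal supplies the same content in expanded form.
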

\begin{proof}
Our assertion follows from Remark~\ref{remark: En(1) conjecture}.
\end{proof}

\subsection{\texorpdfstring{$U_q'(D_4^{(3)})$ and $U_q'(E_6^{(2)})$}{Uq'(D4(3)) and Uq'(E6(2))}}

In this subsection, we shall prove the $U_q'(D_4^{(3)})$ and $U_q'(E_6^{(2)})$-analogues of
Theorem~\ref{thm: Dorey classical 1}, Theorem~\ref{thm:categorification1} and Theorem~\ref{thm:gQASW duality}. Since the frameworks for $U_q'(D_4^{(3)})$ and $U_q'(E_6^{(2)})$
are the same, we shall give proofs for $U_q'(D_4^{(3)})$-case only.

\begin{lemma} \label{lem: Q well}
For Dynkin quivers $Q$ and $Q'$ of the same finite type $A_n$, $D_n$ or $E_{6,7,8}$, assume that there exists a pair
$(\al,\be)$ of $\gamma$ contained in $\Phi_{[Q]}(i,j)(t)$, then there exists a pair $(\al',\be')$ such that $\al'+\be' \in \PR$ and
$(\al',\be')$ is contained in $\Phi_{[Q']}(i,j)(t)$ or  $\Phi_{[Q']}(i^*,j^*)(t)$.
\end{lemma}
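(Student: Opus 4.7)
The plan is to propagate the existence of such a pair along the reflection-equivalence path connecting $[Q]$ and $[Q']$. By Theorem~\ref{thm: Qs}(3), both commutation classes lie in the single $r$-cluster point $\lf \Delta \rf$ of the fixed simply-laced Dynkin type, so there is a finite chain of reflection maps sending $[Q]$ to $[Q']$. It thus suffices to establish the statement for one reflection step $[Q'] = r_k[Q]$, where $k$ is a sink of $Q$, and then iterate.

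First I would set up the single-reflection bookkeeping. Choose $\redez_Q = s_{i_1}\cdots s_{i_{\N-1}} s_k$ adapted to $Q$; then $\redez_{Q'} = s_{k^*}s_{i_1}\cdots s_{i_{\N-1}}$ is adapted to $Q' = r_k Q$, and the two positive-root enumerations are related by
\[
\beta^{\redez_{Q'}}_{m+1} = s_{k^*}\bigl(\beta^{\redez_Q}_{m}\bigr)\quad(1\le m \le \N-1), \qquad \beta^{\redez_{Q'}}_1 = \alpha_{k^*} = \beta^{\redez_Q}_\N.
\]
Under this bijection the residue of every root other than the moved simple root $\alpha_{k^*}$ is preserved, while $\alpha_{k^*}$ has residue $k$ in $[Q]$ but residue $k^*$ in $[Q']$. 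The relative $p$-coordinates between non-moved roots are preserved by the construction of $\Gamma_Q$ recorded in Algorithm~\ref{Alg_AbsAR}, so their time-differences are unchanged.

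Given the pair $(\alpha,\beta) \in \Phi_{[Q]}(i,j)(t)$ with $\alpha+\beta = \gamma \in \PR$, the analysis splits. If neither $\alpha$ nor $\beta$ equals $\alpha_{k^*}$, then $\gamma \neq \alpha_{k^*}$ since $\gamma$ decomposes nontrivially, so $s_{k^*}\gamma \in \PR$ and $(s_{k^*}\alpha, s_{k^*}\beta) \in \Phi_{[Q']}(i,j)(t)$ with $s_{k^*}\alpha + s_{k^*}\beta = s_{k^*}\gamma \in \PR$, keeping the residue pair $(i,j)$ intact. If instead one of $\alpha,\beta$ equals $\alpha_{k^*}$, then one of the residues $i,j$ equals $k$ and flips to $k^*$ after the reflection, producing an intermediate ``mixed'' residue pair such as $(i^*, j)$ that is not yet in the required form.

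The hard part will be resolving these mixed cases across the iteration. The plan is to combine two ingredients: the $*$-involution $\alpha \mapsto -w_0\alpha$ on $\PR$, which sends residues to their $*$-duals and preserves sums of positive roots so that any mixed pair admits a ``dual'' pair whose residues differ from the original by $*$ in both coordinates simultaneously; and the $\theta$-invariance across $\lf \Delta \rf$ provided by Proposition~\ref{prop:dist_theta_defn}(2), which ensures $\theta^{[Q']}_t(i,j) = \theta^{[Q]}_t(i,j) \ge 1$ so that some pair with the required residues must actually exist in $[Q']$. The main technical obstacle is the careful tracking of how $p$-coordinates shift across several reflections, and verifying that each mixed intermediate pair can be replaced, via $*$-duality in the appropriate reflected class, by a genuine pair in $[Q']$ whose residues are forced to lie in $\{(i,j),(i^*,j^*)\}$; this bookkeeping is the bulk of the work.
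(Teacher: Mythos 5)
Your proposal identifies the same two ingredients the paper's (very terse) proof invokes — the invariance of the distance data across $\lf\Delta\rf$ (Proposition~\ref{prop:dist_theta_defn}(2)) and the relation between $\Gamma_Q$ and $\Gamma_{Q'}$ via reflection functors — and tries to unroll the latter into an explicit single-reflection argument. Conceptually it is the same route the paper takes, and your handling of the ``easy'' case (neither $\alpha$ nor $\beta$ is the moved root $\alpha_{k^*}$) is correct.

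However, there is a genuine gap exactly where you flag ``the bulk of the work'', and the two tools you name do not close it in the way you suggest. First, the fact $\theta^{[Q']}_t(i,j)=\theta^{[Q]}_t(i,j)\ge 1$ only guarantees that $\Phi_{[Q']}(i,j)[t]$ (or the $Q'^{\mathrm{rev}}$-version) is nonempty and that pairs there have positive distance; it does not automatically produce a pair $(\al',\be')$ with $\al'+\be'\in\PR$, since $\Phi_{[Q']}(\cdot,\cdot)[t]$ consists of all comparable pairs at the prescribed residues and coordinate gap, including non-simple pairs whose sum is not a root (e.g.\ $(\alpha_1+\alpha_2,\alpha_2+\alpha_3)$ in type $A_3$). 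The lemma's requirement that the target pair sum to a positive root is extra content that must be transported, not merely existence of some positive-distance pair. Second, your single-reflection step, when it hits the moved root, produces a mixed residue pair such as $(i^*,j)$, but the lemma's conclusion allows only $(i,j)$ or $(i^*,j^*)$. That the iteration across several reflections re-synchronizes into an all-or-nothing $*$-flip is a nontrivial structural fact about how the ``window'' $\Gamma_{Q'}$ sits in the repetition quiver; it does not follow from iterating your one-step analysis without further argument. These are the pieces the paper treats as given by ``$\Gamma_{Q'}$ can be obtained from $\Gamma_Q$ by applying the reflection functors properly''; your write-up would need to supply them to be a complete proof.
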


\begin{proof}
The assertion follows from well-definedness of the distance polynomial $D^{\lf \Delta \rf}_{i,j}(z)$ and the fact that
$\Gamma_{Q'}$ can be obtained from $\Gamma_Q$ by applying the reflection functors properly.
\end{proof}

\begin{proposition} \label{prop: particular D43 E62}
For any Dynkin quiver $Q$ of finite type $D_4$ $(t=3)$ $($resp.~$E_6$  $(t=2))$, take $J$ and $\mathcal{S}$ as the set of simple roots $\Pi$ associated to $Q$.
We define two maps
\[
s \colon \Pi \to \{ V^{(t)}(\varpi_i) \mid i \in I_0 \} \quad \text{ and } \quad X \colon \Pi \to  \ko^\times
\]
as follows:
for $\al \in \Pi$ with $\widehat{\Omega}_Q(\al)=(i,p)$, we define
\[
s(\al) = \begin{cases}
V^{(3)}(\varpi_{i^{\dagger}}) & \text{ if $Q$ is of type $D_4$}, \\
V^{(2)}(\varpi_{i^{\star}}) & \text{ if $Q$ is of type $E_6$} \end{cases}
 \quad X(\al)= \begin{cases}  ((-q)^p)^{\dagger}& \text{ if $Q$ is of type $D_4$}, \\
((-q)^p)^{\star}& \text{ if $Q$ is of type $E_6$},
\end{cases}
\]
where $i^{\dagger}$ and $((-q)^p)^{\dagger}$ are defined in \eqref{eq:dagger} $($resp.~$i^{\star}$ and $((-q)^p)^{\star}$ are defined in \eqref{eq:star}$)$.
Then the underlying graph of $\Gamma^{J}$ coincides with the Dynkin diagram of finite type $D_4$
$($resp.~$E_6)$. Hence we have an exact functor
\[
\F_Q^{(3)} \colon R^{D_4}\gmod \to \Ca^{(3)}_{Q} \quad \text{$($resp.\ }\F_Q^{(2)} \colon R^{E_6}\gmod \to \Ca^{(2)}_{Q}).
\]
\end{proposition}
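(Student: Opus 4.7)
The strategy is to reduce the statement to a pair-by-pair check of when the ratio $X(\alpha_j)/X(\alpha_i)$ is a zero of the relevant denominator $d_{s(\alpha_i),s(\alpha_j)}(z)$, and to match this data against the Dynkin diagram of $Q$. First, by Lemma~\ref{lem: Q well} and the fact that any two Dynkin quivers of the same simply laced type are related by a sequence of reflection functors (Theorem~\ref{thm: Qs}(3)), it suffices to verify the claim for one preferred orientation $Q$; the remaining orientations are obtained by applying reflections, which act compatibly on the coordinates $\widehat{\Omega}_Q(\alpha)$ up to a permutation of $\Pi$ that preserves the underlying graph.

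For the chosen $Q$, the pairs $(\alpha_i,\alpha_j)$ with $i\ne j$ in $\Pi$ split into two kinds under $\prec_{[Q]}$: the pairs with $i\sim j$ in $\Delta$ are $[Q]$-minimal pairs (of the sum $\alpha_i+\alpha_j \in \PR$), while all other pairs are $[Q]$-simple. By Theorem~\ref{thm: simply laced minimal denom}(1), the tensor product $V^{(1)}_Q(\alpha_i)\otimes V^{(1)}_Q(\alpha_j)$ is irreducible exactly in the second case; translated through the conclusion of Section~\ref{sec4:computations} relating $d^{E_6^{(2)}}_{k,l}(z)$ (resp.\ $d^{D_4^{(3)}}_{k,l}(z)$) to products of $d^{E_6^{(1)}}$ (resp.\ $d^{D_4^{(1)}}$) at the Galois-twisted spectral parameters governed by $\star$ and $\dagger$, this irreducibility is equivalent to $d_{s(\alpha_i),s(\alpha_j)}(X(\alpha_j)/X(\alpha_i))\ne 0$. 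Hence there is no arrow of $\Gamma^J$ between non-adjacent vertices.

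Conversely, for an adjacent pair $i\sim j$ in $Q$, Theorem~\ref{thm: simply laced minimal denom}(2) and the same product-decomposition of the twisted denominators force the ratio $X(\alpha_j)/X(\alpha_i)$ to be a \emph{simple} root of $d_{s(\alpha_i),s(\alpha_j)}(z)$: the $[Q]$-minimal pair contributes $\theta^{\lf Q\rf}_t(i,j)=1$ at the appropriate $t$, and the twisting maps $(i,p)\mapsto (i^\star,((-q)^p)^\star)$ (resp.\ with $\dagger$) send this minimal contribution to a simple factor of the twisted denominator without producing any additional coincidence among the other factors, because the Galois images of the simply-laced spectral parameters corresponding to non-minimal pairs remain disjoint from those corresponding to minimal ones (the imaginary / cube-root phase separates the sectors). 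Thus the multiplicity $d_{ij}$ of the arrow from $i$ to $j$ in $\Gamma^J$ is exactly $1$.

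Combining both directions, the underlying graph of $\Gamma^{J}$ coincides with $\Delta=$ type $D_4$ (resp.\ $E_6$). The existence of the exact functor $\F_Q^{(t)}\colon R^{\Delta}\gmod\to \Ca^{(t)}_Q$ then follows from Theorem~\ref{thm: F} and Theorem~\ref{thm:duality}(b), since $\Gamma^J$ is a Dynkin quiver of finite $ADE$ type. The main technical obstacle in this argument is the precise bookkeeping of the $\star$/$\dagger$ twists in the adjacent case: one must check that the phase factors $(\sqrt{-1})^?$ and $\omega^?$ attached to the spectral parameters interact correctly with the simple-root factors $(z-(-q)^t)$ of $d^{E_6^{(1)}}$ or $d^{D_4^{(1)}}$ to produce precisely the simple factor of $d^{E_6^{(2)}}$ or $d^{D_4^{(3)}}$ at $z=X(\alpha_j)/X(\alpha_i)$; the ambiguities $\epsilon,\epsilon'$ appearing in the higher-multiplicity factors of the denominators are irrelevant here because they only affect roots of order $\ge 2$, hence cannot change the occurrence/non-occurrence of a simple zero.
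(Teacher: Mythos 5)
Your proposal follows the same core strategy as the paper---reduce to the untwisted simply-laced case and then transport the conclusion through the product decomposition of the twisted denominators under the $\star/\dagger$ maps---but it is packaged differently. The paper's own proof is a one-line citation: Theorem~\ref{thm:gQASW duality} already asserts that, for $U_q'(D_4^{(1)})$, the choice $s(\alpha)=V(\varpi_i)$, $X(\alpha)=(-q)^p$ yields $\Gamma^J$ of the required Dynkin shape, and Corollary~\ref{cor: D41 D43} supplies the explicit factorization $d^{D_4^{(3)}}_{k,l}(z)$ as a product of $d^{D_4^{(1)}}$'s at Galois-twisted parameters, so the twisted $\Gamma^J$ inherits the same underlying graph; the $E_6^{(2)}$ case is handled analogously using the explicit $d^{E_6^{(2)}}$ formulas from Section~\ref{sec4:computations} and Proposition~\ref{prop: particular E_n}/Theorem~\ref{thm: simplicity type E}. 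You instead go one level down and re-derive the untwisted arrow count from the $[Q]$-minimal/$[Q]$-simple dichotomy and Theorem~\ref{thm: simply laced minimal denom}, which is where Theorem~\ref{thm:gQASW duality} itself ultimately comes from. Both routes are valid; yours makes visible the minimal-pair mechanism that the paper's terse citation hides, at the cost of reproducing work already done.

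One point in your write-up is actually false as stated, though it does not sink the argument. You assert that the ambiguities $\epsilon,\epsilon'$ ``only affect roots of order $\ge 2$, hence cannot change the occurrence/non-occurrence of a simple zero.'' Look at~\eqref{eq: d22 E6(2)}: the factor $(z+q^4)^\epsilon$ has order $\epsilon\in\{0,1\}$, so the ambiguity is precisely between occurrence and non-occurrence; similarly for $(z^2-q^6)^\epsilon$ in~\eqref{eq: d34 E6(2)}. The reason these do not matter for the proposition is different: for the preferred orientation $Q$ of Example~\ref{ex:twisted AR quiver E}, the residues of the simple roots $\alpha_1,\dots,\alpha_6$ in $\Gamma_Q$ are $(1,2,1,1,1,1)$, so under $\overline{\sigma}$ only the denominators $d^{E_6^{(2)}}_{1,1}$ and $d^{E_6^{(2)}}_{1,4}$ are ever evaluated, and neither carries an $\epsilon$. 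You should replace the order-$\ge 2$ justification by this explicit observation (or by the coordinate bookkeeping that shows the ambiguous zeros are never hit by a ratio $X(\alpha_j)/X(\alpha_i)$); otherwise a reader relying on your stated reason would be misled into thinking no further check is needed when the residues are distributed differently, e.g.\ for $Q^{\mathrm{rev}}$.
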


\begin{proof}
Our assertion follows from Theorem~\ref{thm:gQASW duality} on $U_q'(D^{(1)}_4)$ and Corollary~\ref{cor: D41 D43}.
\end{proof}

\begin{example}
By forgetting the arrows of $\Gamma_Q$ in \eqref{eq: E6 AR quiver}, the underlying graph of $\Gamma^{J}$ can be described as follows:
\begin{align}
\raisebox{3.5em}{ \scalebox{0.7}{\xymatrix@R=0.5ex{
(i,p) & 1 & 2 & 3 & 4 & 5 & 6 & 7 \\
1&{\boxed{\lan  1,-2 \ran}} \ar@{-}[ddrrrr]     && \lan  2,4 \ran  && \lan  1,-4 \ran    \\
2&& \lan  1,4 \ran   && \lan  1,2 \ran  && \lan  2,-4 \ran   \\
3&&& \lan  1,3\ran   &&  {\boxed{\lan  2,-3 \ran}} \ar@{-}[rr]   && {\boxed{\lan  3,-4 \ran}}  \\
4& {\boxed{\lan  3,4 \ran}}\ar@{-}[urrrr]   , && \lan  1,-3 \ran   && \lan  2,3 \ran
}}}
\end{align}
\end{example}

Now we shall prove the following theorem:

\begin{theorem} \label{thm: D43 S to V}
For any Dynkin quiver $Q$ of finite type $D_4$ $(t=3)$ or $E_6$ $(t=2)$, and $\gamma \in \PR$, we have
\[
\F_Q^{(t)}(S_Q(\ga)) \iso V^{(t)}_Q(\gamma).
\]
\end{theorem}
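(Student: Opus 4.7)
The plan is to induct on the height $|\gamma|$ of the positive root $\gamma \in \PR$. The base case $\gamma = \alpha_i \in \Pi$ is handled by the construction of the Schur--Weyl duality functor: since $S_Q(\alpha_i) \iso L(i)$ in $R\gmod$ and Theorem~\ref{thm: F} together with the choice of $(s, X)$ in Proposition~\ref{prop: particular D43 E62} forces $\F_Q^{(t)}(L(i)) \iso V^{(t)}(\varpi_{i^\dagger})_{X(\alpha_i)} = V^{(t)}_Q(\alpha_i)$, where the second equality unpacks directly from Definition~\ref{def: VQ(beta)} upon comparing coordinates.

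For the inductive step, assume $|\gamma| \ge 2$ and that the claim holds for all positive roots of smaller height. By Proposition~\ref{pro: BKM minimal}, $\gamma$ admits a $[Q]$-minimal pair $(\alpha,\beta)$ with $\alpha + \beta = \gamma$ and $|\alpha|, |\beta| < |\gamma|$. Applying the exact functor $\F_Q^{(t)}$ (exactness coming from Theorem~\ref{thm:duality}(b) together with Proposition~\ref{prop: particular D43 E62}, which guarantees that $\Gamma^J$ is a Dynkin quiver of type $D_4$ or $E_6$) to the four-term exact sequence~\eqref{eq: 6 ses} attached to $(\alpha,\beta)$ produces the exact sequence
\[
0 \to \F_Q^{(t)}(S_Q(\gamma)) \to V^{(t)}_Q(\alpha) \otimes V^{(t)}_Q(\beta) \xrightarrow{\,\mathbf{r}\,} V^{(t)}_Q(\beta) \otimes V^{(t)}_Q(\alpha) \to \F_Q^{(t)}(S_Q(\gamma)) \to 0
\]
in $\Ca_Q^{(t)}$, where $\mathbf{r}$ is (a non-zero scalar multiple of) the renormalized $R$-matrix by naturality of $\F_Q^{(t)}$. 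In particular, $\F_Q^{(t)}(S_Q(\gamma))$ is the head of $V^{(t)}_Q(\alpha) \otimes V^{(t)}_Q(\beta)$, and it is a non-zero $U_q'(\g^{(t)})$-module because the image of $\mathbf{r}$ contains the image of the dominant extremal weight vectors (cf.\ Theorem~\ref{Thm: basic properties}).

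It remains to identify this head with $V^{(t)}_Q(\gamma)$. First, weight considerations: the weight of $\F_Q^{(t)}(S_Q(\gamma))$ (modulo $\delta$) is forced by the weight of $S_Q(\gamma)$ via $\F_Q^{(t)}$, and it agrees with the dominant extremal weight $\varpi_{k^{\dagger}}$ (or $\varpi_{k^{\star}}$) of $V^{(t)}_Q(\gamma)$ where $\widehat{\Omega}_Q(\gamma) = (k,p'')$. Second, the spectral parameter of the head is read off as $((-q)^{p''})^{\dagger}$ (resp.\ $((-q)^{p''})^{\star}$) from the minimal pair: one traces through the coordinates $\widehat{\Omega}_Q(\alpha) = (i,p)$ and $\widehat{\Omega}_Q(\beta) = (j,p')$ using the Dorey-type homomorphism for $U_q'(D_4^{(1)})$ (resp.\ $U_q'(E_6^{(1)})$) provided by Theorem~\ref{thm: Dorey classical 1}, and then invokes the comparison of denominator formulas between $D_4^{(1)}$ and $D_4^{(3)}$ (Corollary~\ref{cor: D41 D43}) (resp.\ the $E_6^{(1)}/E_6^{(2)}$ comparison discussed just before Theorem~\ref{thm: simply laced minimal denom}) to confirm that the only candidate simple head of the tensor product with the correct weight is precisely $V^{(t)}_Q(\gamma)$. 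The uniqueness statement in Theorem~\ref{Thm: basic properties}(3) then pins down $\F_Q^{(t)}(S_Q(\gamma)) \iso V^{(t)}_Q(\gamma)$.

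The main obstacle is this last identification: one must rule out that the head could be a different fundamental module at a shifted spectral parameter. This is where the explicit coordinate combinatorics on $\Gamma_Q$, and the compatibility between the $\sigma$-twisted spectral shifts $(\,)^{\dagger}$ or $(\,)^{\star}$ and the denominator formulas computed in Section~\ref{sec4:computations}, play the decisive role. Once the base case and the identification of the head at the inductive step are established, no further ingredient is needed, and the induction closes uniformly for both the $D_4^{(3)}$ and $E_6^{(2)}$ cases.
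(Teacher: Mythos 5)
Your overall skeleton — induction on $|\gamma|$, apply the exact functor to the four-term exact sequence~\eqref{eq: 6 ses} attached to a $[Q]$-minimal pair $(\al,\be)$ of $\gamma$, and deduce that $\F_Q^{(t)}(S_Q(\gamma))$ is the simple cokernel of the $R$-matrix $V^{(t)}_Q(\al)\otimes V^{(t)}_Q(\be)\to V^{(t)}_Q(\be)\otimes V^{(t)}_Q(\al)$ — matches the paper's strategy. The gap is entirely in the last step, which you describe as ``the main obstacle'' and then dispatch too quickly.

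Identifying the head of $V^{(t)}_Q(\al)\otimes V^{(t)}_Q(\be)$ as the fundamental module $V^{(t)}_Q(\gamma)$ is precisely Dorey's rule for $U_q'(D_4^{(3)})$ and $U_q'(E_6^{(2)})$, which in this paper is Corollary~\ref{cor: Dorey D43 E63 completely} — a \emph{consequence} of Theorem~\ref{thm: D43 S to V}, not available as an input. Your three ingredients do not circumvent this. Weight considerations are insufficient: the classical weight of the head of a tensor product of two fundamental modules need not be $\varpi_k$; a priori it could be $\varpi_i + \varpi_j$, so you cannot conclude the head is fundamental at all from the weight of $S_Q(\gamma)$. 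Theorem~\ref{Thm: basic properties}(3) gives uniqueness of the presentation of a simple module as a head of fundamentals, not a recipe for producing it. And invoking Theorem~\ref{thm: Dorey classical 1} for $D_4^{(1)}$, $E_6^{(1)}$ plus the denominator comparison (Corollary~\ref{cor: D41 D43}) does not transport Dorey's rule to the twisted types: the comparison of $d_{i,j}(z)$'s controls where normalized $R$-matrices have poles, hence when tensor products are reducible, but says nothing about what the head is. Dorey's rule is precisely the additional structural statement that is not implied by knowledge of the denominators alone — indeed Theorem~\ref{thm: Dorey classical 1} explicitly excludes $D_4^{(3)}$ and $E_6^{(2)}$.

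The paper closes this gap by deploying \emph{specific} Dorey-type surjections for the twisted algebras that were established independently in Section~\ref{sec4:computations}: the map $p_{1,1}\colon V(\varpi_1)_{-q^{-1}}\otimes V(\varpi_1)_{-q}\twoheadrightarrow V(\varpi_2)$ from~\cite{KMOY07}, the six-term exact sequences from~\cite{Her10} (e.g.\ \eqref{eq: Her22 D43}), and the analogous $T$-system morphisms for $E_6^{(2)}$, \eqref{eq: E62 112}--\eqref{eq: E62 111}. The argument then proceeds root by root through a fixed $\Gamma_Q$, identifying each $\F_Q^{(t)}(S_Q(\be))$ via one of these concrete morphisms (sometimes after taking duals and appealing to simplicity of socles), simultaneously establishing Dorey's rule and the isomorphism by a joint induction, and finally uses Lemma~\ref{lem: Q well} to propagate the result to arbitrary $Q'$. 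Your proposal must be supplemented with this concrete identification data; as written, the inductive step is circular.
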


\begin{proof}
We shall prove the claim for the AR quiver $\Gamma_Q$ in \eqref{eq: D4 AR quiver} as the rest of the cases are similar. By Theorem~\ref{thm: F}, we have
$\F_Q^{(3)}(S_Q(\al_i)) \iso V^{(3)}_Q(\al_i)$.
Hence,
\begin{align*}
\F_Q^{(3)}(S(\al_1)) & \iso V^{(3)}(\varpi_1)_{-q},
  & \F_Q^{(3)}(S(\al_2)) & \iso V^{(3)}(\varpi_1)_{-\omega q^5},  \\
\F_Q^{(3)}(S(\al_3)) & \iso V^{(3)}(\varpi_1)_{-\omega q^7},
  & \F_Q^{(3)}(S(\al_4)) & \iso V^{(3)}(\varpi_1)_{-\omega^2q}.
\end{align*}
Thus, we have an exact sequence
\begin{align} \label{eq: D43 al2+al3}
 0 \to S_Q(\al_2+\al_3) \to S_Q(\al_3) \conv S_Q(\al_2) \To[\rmat{}] S_Q(\al_2) \conv S_Q(\al_3) \to S_Q(\al_2+\al_3) \to 0.
\end{align}
Applying the exact functor $\F_Q^{(3)}$, we have
\begin{align*}
0 \to \F_Q^{(3)}(S_Q(\al_2+\al_3)) & \to V^{(3)}(\varpi_1)_{-\omega q^7} \otimes   V^{(3)}(\varpi_1)_{-\omega q^5} \\
& \To[\F_Q^{(3)}(\rmat{})] V^{(3)}(\varpi_1)_{-\omega q^5} \otimes  V^{(3)}(\varpi_1)_{-\omega q^7} \to \F_Q^{(3)}(S_Q(\al_2+\al_3))\to 0.
\end{align*}

By the Dorey's type homomorphism and the argument of~\cite[Theorem 7.3]{KO17}, we have
\[
\F^{(3)}_Q(S_Q(\al_2+\al_3)) \iso V^{(3)}(\varpi_2)_{-q^6}\iso V^{(3)}(\varpi_2)_{-\omega^{t} q^6} \qquad\qquad (t=1,2).
\]
Applying the same argument to the minimal pairs $(\al_1,\al_2+\al_3)$ and $(\al_4,\al_2+\al_3)$, we have
\begin{align*}
\F^{(3)}_Q(S_Q(\lan 1,-4 \ran)) & \iso V^{(3)}(\varpi_1)_{-q^{5}},
& \F^{(3)}_Q(S_Q(\lan 2,3 \ran)) & \iso V^{(3)}(\varpi_1)_{-w^2q^{5}}.
\end{align*}
By~\cite[page 39; arxiv version]{Her10}, we have the six-term exact sequence
\begin{equation} \label{eq: Her22 D43}
\begin{aligned}
0 & \to \Viop{5}\otimes\Vithp{5}\otimes\Vifp{5} \to \Vitp{6}\otimes\Vitp{4} \\
& \to \Vitp{4}\otimes \Vitp{6} \to \Viop{5}\otimes\Vithp{5}\otimes\Vifp{5} \to 0
\end{aligned}
\end{equation}
Note that we have an exact sequence
\begin{equation} \label{eq: SESD4}
\begin{aligned}
0 & \to \SQ{1,-4}\conv\SQ{2,-3}\conv\SQ{2,3} \to  \SQ{2,-4}\conv\SQ{1,2} \\
& \to \SQ{1,2}\conv\SQ{2,-4} \to \SQ{1,-4}\conv\SQ{2,-3}\conv\SQ{2,3} \to 0.
\end{aligned}
\end{equation}
Furthermore,
$
 \Viop{5}\otimes\Vithp{5}\otimes\Vifp{5} \text{ is simple}.
$
By applying the exact functor $\F^{(3)}_Q$ to \eqref{eq: SESD4}, we have
\begin{align*}
0 & \to \Viop{5}\otimes\Vithp{5}\otimes\Vifp{5} \to \Vitp{6}\otimes\F^{(3)}_Q(\SQ{1,2}) \\
&  \to \F^{(3)}_Q(\SQ{1,2})\otimes \Vitp{6} \to \Viop{5}\otimes\Vithp{5}\otimes\Vifp{5} \to 0.
\end{align*}
In particular, we have
\[
\F^{(3)}_Q(\SQ{1,2})\otimes \Vitp{6} \twoheadrightarrow \Viop{5}\otimes\Vithp{5}\otimes\Vifp{5}.
\]
Since every module in $\Ca_\g$ has a dual, by taking the dual $\Vitp{0}$ of $\Vitp{6}$, we have
\[
\F^{(3)}_Q(\SQ{1,2}) \iso \Vitp{4}
\]
with Theorem~\ref{thm: socle head}.

By~\cite[page 39; arxiv version]{Her10}, we have the six-term exact sequence
\begin{align*}
0 \to \Vitp{4} \to \Viop{5}\otimes\Viop{3}  \to \Viop{3}\otimes \Viop{5} \to \Vitp{4} \to 0.
\end{align*}
Note that we have an exact sequence
\begin{equation} \label{eq: SESD4 2}
\begin{aligned}
0 \to \SQ{1,2}   \to \SQ{1,-4}\conv\SQ{2,3} \to \SQ{2,3}\conv\SQ{1,-4} \to \SQ{1,2} \to 0.
\end{aligned}
\end{equation}
By applying the exact functor $\F^{(3)}_Q$ to \eqref{eq: SESD4 2}, we have
\begin{align*}
0 \to \Vitp{4} & \to \Viop{5} \otimes \F^{(3)}_Q(\SQ{2,4}) \\
& \to \F^{(3)}_Q(\SQ{2,4})\otimes \Viop{5}  \to \Vitp{4} \to 0
\end{align*}
In particular, we have
\[
\F^{(3)}_Q(\SQ{2,4})\otimes \Viop{5} \twoheadrightarrow \Vitp{4}
\]
By taking the dual $\Viop{-1}$ of $\Viop{5}$, we have $\F^{(3)}_Q(\SQ{2,4}) \iso \Viop{3}.$

Since $\al_2$ and $\al_4$ is a pair for $\al_2 + \al_4 = \lr{2,4}$, the fact that $ \F^{(3)}_Q(\SQ{2,4}) \iso \Viop{3}$ implies
\begin{align} \label{eq: 11p1}
\Vifp{1} \otimes \Vithp{5} \twoheadrightarrow \Viop{3}.
\end{align}
Using \eqref{eq: 11p1}, we can obtain,
\begin{align*}
\F^{(3)}_Q(\SQ{1,-3}) \iso \Vifp{3}, \ \ \F^{(3)}_Q(\SQ{1,3}) \iso \Vithp{3}.
\end{align*}
Finally, we can obtain $\F^{(3)}_Q(\SQ{1,4}) \iso \Vitp{2}$ by using \eqref{eq: Her22 D43}.

Now we have shown that for the Dynkin quiver $Q$
\begin{itemize}
\item[{\rm (a)}] $ \F^{(3)}_Q(S_Q(\be)) \iso V^{(3)}_Q(\be)$ for all $\be \in \PR$ and
\item[{\rm (b)}] $ V^{(3)}_Q(\be) \otimes V^{(3)}_Q(\al) \twoheadrightarrow V^{(3)}_Q(\gamma)$ for every $[Q]$-minimal pair $(\al,\be)$ of $\gamma \in \PR$.
\end{itemize}
Furthermore, for any pair of $(\al,\be)$ of $\gamma \in \PR$, it is proved in~\cite[Theorem 5.20]{Oh15E} that
there exists a six-term exact sequence
\[
0 \to S_Q(\gamma) \to S_Q(\al) \conv S_Q(\be) \to S_Q(\be) \conv S_Q(\al) \to S_Q(\gamma) \to 0.
\]
Hence we can prove that
\[
V^{(3)}_Q(\be) \otimes V^{(3)}_Q(\al) \twoheadrightarrow V^{(3)}_Q(\gamma)
\]
for any pair $(\al,\be)$ of $\gamma \in \PR$, by applying the same argument.

We have shown that our assertion holds for the $Q$. By Lemma~\ref{lem: Q well},
we can apply the induction argument on $|\gamma|$ for any $Q'$ of finite type $D_4$.
\end{proof}

\begin{corollary} \label{cor: Dorey D43 E63 completely}
 For $U_q'(D^{(3)}_4)$ $(t=3)$ or $U_q'(E^{(2)}_6)$ $(t=2)$,
let $(i,x)$, $(j,y)$, $(k,z) \in I_0 \times \ko^\times$. Then
\[
\Hom_{U_q'(\g^{(t)})}\big( V^{(t)}(\varpi_{j})_y \otimes V^{(t)}(\varpi_{i})_x , V^{(t)}(\varpi_{k})_z  \big) \ne 0
\]
if and only if there exists a Dynkin quiver $Q$ and $\al,\beta,\ga \in \Phi_{Q}^+$ such that
\begin{enumerate}
\item[{\rm (1)}] $\alpha \prec_{[Q]} \beta$ and $\alpha + \beta = \gamma$,
\item[{\rm (2)}] $V(\varpi_{j})_y  = V^{(t)}_{Q}(\beta)_a, \ V(\varpi_{i})_x  = V^{(t)}_{Q}(\al)_a, \ V(\varpi_{k})_z  = V^{(t)}_{Q}(\ga)_a$
for some $a \in \ko^\times$.
\end{enumerate}
\end{corollary}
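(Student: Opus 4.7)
The proof proposal follows the strategy employed in \cite{KKKOIV} for Theorem~\ref{thm: Dorey classical 1}, combining the categorification just established with the structure of $R$-matrices in both $\Ca_\g$ and quiver Hecke algebras.

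For the \emph{if} direction, suppose $(\alpha,\beta)$ is a $[Q]$-minimal pair of $\gamma = \alpha + \beta$. By Theorem~\ref{thm: BkMc}(f), we have the exact sequence
\[
0 \to S_Q(\gamma) \to S_Q(\alpha) \conv S_Q(\beta) \xrightarrow{\mathbf{r}} S_Q(\beta) \conv S_Q(\alpha) \to S_Q(\gamma) \to 0
\]
in $R^{D_4}\gmod$ (resp.~$R^{E_6}\gmod$). Applying the exact tensor functor $\F_Q^{(t)}$ (see Proposition~\ref{prop: particular D43 E62} and Theorem~\ref{thm:duality}(b)) and invoking Theorem~\ref{thm: D43 S to V}, I obtain a surjection
\[
V^{(t)}_Q(\beta) \otimes V^{(t)}_Q(\alpha) \twoheadrightarrow V^{(t)}_Q(\gamma),
\]
which, after a common spectral shift, yields the desired non-zero homomorphism.

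For the \emph{only if} direction, the plan proceeds in three steps. First, I would observe that the assumption forces $V(\varpi_j)_y \otimes V(\varpi_i)_x$ to be reducible; by Theorem~\ref{thm: simply laced minimal denom}(1) applied to $\g^{(t)}$, there must exist a Dynkin quiver $Q$ of type $D_4$ (resp.~$E_6$), positive roots $\alpha,\beta \in \Phi_Q^+$, and a scalar $a \in \ko^\times$ such that
\[
V(\varpi_i)_x \iso V_Q^{(t)}(\alpha)_a,
\qquad
V(\varpi_j)_y \iso V_Q^{(t)}(\beta)_a,
\]
with $(\alpha,\beta)$ a non $[Q]$-simple pair. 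After possibly replacing $Q$ with a reflection (using Lemma~\ref{lem: Q well}) and swapping $\alpha \leftrightarrow \beta$, I may assume $\alpha \prec_{[Q]} \beta$. Second, since the functor $\F_Q^{(t)}$ induces a bijection between simples (by Theorem~\ref{thm: D43 S to V} and the Schur-Weyl framework of Theorem~\ref{thm:gQASW duality}), the target $V(\varpi_k)_z$ corresponds to a unique simple $R$-module $M$ with $\F_Q^{(t)}(M) \iso V(\varpi_k)_z$. The hypothesis then furnishes a non-zero homomorphism
\[
S_Q(\beta) \conv S_Q(\alpha) \to M,
\]
so by Theorem~\ref{thm: socle head} $M$ is the simple head of $S_Q(\beta) \conv S_Q(\alpha)$. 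Third, combining Theorem~\ref{thm: BkMc}(b)-(c) with Proposition~\ref{pro: BKM minimal}, this head is precisely $S_Q(\gamma)$ with $\gamma = \alpha + \beta$, and $(\alpha,\beta)$ is a $[Q]$-minimal pair. Applying $\F_Q^{(t)}$ identifies $V(\varpi_k)_z \iso V_Q^{(t)}(\gamma)_a$, completing the argument.

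The main obstacle is the first step, namely arranging that the ordered triple $\bigl(V(\varpi_i)_x,V(\varpi_j)_y,V(\varpi_k)_z\bigr)$ can be simultaneously placed inside a single category $\Ca_Q^{(t)}$ after a common shift by $a$. Reducibility of the tensor product alone determines $(i,x)$ and $(j,y)$ up to coordinates in some $\Gamma_Q$, but one still needs to verify that the target $V(\varpi_k)_z$ lies in the same category rather than in some incompatible shift. This is handled by invoking the rigidity of $\Ca_\g$ together with the uniqueness of the head/socle in Theorem~\ref{Thm: basic properties}(2) and the duality isomorphisms~\eqref{eq: LR dual}: any non-zero homomorphism from a length-two module has image isomorphic to its simple head, which is uniquely determined by the coordinate data on $\Gamma_Q$. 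Consequently, $V(\varpi_k)_z$ is forced to lie in $\Ca_Q^{(t)}$, reducing the problem to the categorification picture already controlled by Theorem~\ref{thm: D43 S to V}.
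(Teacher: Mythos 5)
The high-level strategy is right: pass to $R^{\mathsf{g}}\gmod$ via the generalized Schur--Weyl functor, use the bijection on simples and exactness to transport the Dorey-type surjection, and close the loop with the coordinate identification coming from the denominator formulas. However, there are two concrete gaps.

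\emph{The ``if'' direction is incomplete.} You assume $(\alpha,\beta)$ is a $[Q]$-\emph{minimal} pair, but the corollary only asks that $\alpha\prec_{[Q]}\beta$ and $\alpha+\beta=\gamma\in\PR$; nothing in the hypothesis says the pair is minimal. For a non-minimal pair $(\alpha,\beta)$ of a root $\gamma$ you still need the surjection $V_Q^{(t)}(\beta)\otimes V_Q^{(t)}(\alpha)\twoheadrightarrow V_Q^{(t)}(\gamma)$. The paper gets this by invoking the six-term exact sequence of \cite[Theorem 5.20]{Oh15E} for \emph{arbitrary} pairs of $\gamma$ (this appears at the very end of the proof of Theorem~\ref{thm: D43 S to V}, labelled (a), (b)), not just the sequence in Theorem~\ref{thm: BkMc}(f), which is only for minimal pairs. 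You would need to cite that result (or the last paragraph of the proof of Theorem~\ref{thm: D43 S to V}) to cover the full ``if'' direction.

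\emph{The ``only if'' direction asserts two things that do not follow from what you cite.} You conclude that the head of $S_Q(\beta)\conv S_Q(\alpha)$ ``is precisely $S_Q(\gamma)$ with $\gamma=\alpha+\beta$, and $(\alpha,\beta)$ is a $[Q]$-minimal pair'' from Theorem~\ref{thm: BkMc}(b)--(c) and Proposition~\ref{pro: BKM minimal}. First, the corollary does not and should not conclude that $(\alpha,\beta)$ is minimal (it only requires $\alpha+\beta\in\PR$); Proposition~\ref{pro: BKM minimal} characterizes what minimal pairs look like, it does not let you upgrade an arbitrary reducible pair to a minimal one. Second, and more importantly, it is not automatic from your hypotheses that $\alpha+\beta$ is even a root. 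The point that is actually missing is a weight argument: since the simple head $M$ of $S_Q(\beta)\conv S_Q(\alpha)$ maps under $\F_Q^{(t)}$ to the \emph{fundamental} module $V(\varpi_k)_{z/a}$, one compares dominant extremal weights in $P_\cl^0$ to see that the corresponding sequence $\um$ with $\Image(\rmat{\um})=M$ must be a unit vector $e_\delta$ (otherwise the head would categorify a non-fundamental PBW monomial); hence $M\iso S_Q(\delta)$, and since $M$ lives in the block $R(\alpha+\beta)$ we get $\delta=\alpha+\beta\in\PR$. Your proposal gestures at this via ``rigidity and uniqueness of head/socle'' in the final paragraph, but the cited ingredients do not make the claim, and without the weight argument the conclusion that $\gamma=\alpha+\beta$ is a positive root is unsupported.
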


For the rest of this subsection we shall take $\mathsf{g}$ as the simple Lie algebra of type $D_4$ when $Q$ is of finite type $D_4$ and
 $\mathsf{g}$ as the simple Lie algebra of type $E_6$ when $Q$ is of finite type $E_6$.

Now, we can apply the same arguments of~\cite{KKKOIII,KKKOIV} to obtain the following theorem.
\begin{theorem}
\label{thm: D43 E62}
For a Dynkin quiver $Q$ of finite type $D_4$ $(t=3)$ or $E_6$ $(t=2)$, we have the followings:
\begin{enumerate}[{\rm (1)}]
\item The functor $\F^{(t)}_Q$ sends a simple module to a simple module. Moreover, the functor $\F^{(t)}_Q$ induces a bijection from
the set of simple modules in $R^\mathsf{g}\gmod$ to the set of simple modules in $\Ca^{(t)}_Q$.
\item For any $[Q],[Q'] \in \lf \Delta \rf$,
there exist the following isomorphisms induced by $\F^{(i)}_Q$ and $\F^{(i')}_{Q'}$:
\begin{align} \label{eq: isomQ}
\begin{cases}
[\Ca^{(i)}_Q] \iso U^-_\A(\g)^\vee|_{q=1} \iso [\Ca^{(i')}_{Q'}]  \ (i,i' \in \{1,2,3\}) & \text{ if $Q, Q'$ are of type $D_4$}, \\
[\Ca^{(1)}_Q] \iso U^-_\A(\g)^\vee|_{q=1} \iso [\Ca^{(2)}_{Q'}] & \text{ if $Q,Q'$ are of type $E_6$}.
\end{cases}
\end{align}
\item A dual PBW-basis associated to $[Q]$ and the upper global basis of
$U^-_{\A}(\mathsf{g})^{\vee}$
are categorified by the modules over $U_q'(\g^{(t)})$ in the following sense:
\begin{enumerate}[{\rm (i)}]
\item The set of simple modules in $\Ca^{(t)}_Q$ corresponds to
the upper global basis of $U^-_{\A}(\mathsf{g})^{\vee}|_{q=1}$.
\item The set
\[
\left\{ V_{Q}^{(t)}(\beta_1)^{\otimes m_1}\otimes\cdots \otimes V_{Q}^{(t)}(\beta_\N)^{\otimes m_\N} \mid \um \in \Z_{\ge 0}^\N \right\}
\]
corresponds to the dual PBW-basis associated to $[Q]$ under the isomorphism in~\eqref{eq: isomQ}.
\end{enumerate}
\item The induced ring isomorphisms
\[
[\F_Q^{(i')}] \circ [{\F_Q^{(i)}}]^{-1} \colon [\Ca^{(i)}_Q]  \overset{\sim}{\Lto}  [\Ca^{(i')}_Q]
\]
send simples to simples and preserve dimensions. Here $i,i' \in \{1,2,3\}$ if $Q$ is of finite type $D_4$, and $i,i' \in \{1,2\}$ if $Q$ is of finite type $E_6$.
\end{enumerate}
\end{theorem}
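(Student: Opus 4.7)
The plan is to follow the strategy of \cite{KKKOIII,KKKOIV}, bootstrapping from Theorem~\ref{thm: D43 S to V} and Corollary~\ref{cor: Dorey D43 E63 completely}. The main ingredients already in place are: (i) the exact tensor functor $\F_Q^{(t)}$ of Proposition~\ref{prop: particular D43 E62}, (ii) the identification $\F_Q^{(t)}(S_Q(\be))\iso V^{(t)}_Q(\be)$ on PBW-type modules, and (iii) the categorification theorem (Theorem~\ref{Thm:categorification}, Theorem~\ref{thm:categorification 2}) identifying simples of $R^{\mathsf{g}}\gmod$ with the upper global basis of $U_\A^-(\mathsf{g})^\vee$. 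Since $\F_Q^{(t)}$ is exact and monoidal, passing \eqref{eq: 6 ses} through $\F_Q^{(t)}$ realizes each PBW-monomial $\prod S_Q(\beta_i)^{\conv m_i}$ as a tensor monomial of the $V^{(t)}_Q(\beta)$'s, so that the dual PBW-basis of $U_\A^-(\mathsf{g})^\vee$ is categorified by tensor monomials in $\mathcal{P}_Q^{(t)}$ in $\mathcal{C}_Q^{(t)}$; this will yield parts (2) and (3)(ii).

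For part (1), I will first show that $\F_Q^{(t)}$ sends simples to simples. By Theorem~\ref{thm: BkMc}(e), every simple $R^{\mathsf{g}}$-module is the head $\Image(\rmat{\um})$ of some $\Stom$. Using Theorem~\ref{thm: BkMc}(b) together with exactness and monoidality of $\F_Q^{(t)}$, the image $\F_Q^{(t)}(\Image(\rmat{\um}))$ is the image of the corresponding specialized $R$-matrix between products of fundamental modules. By Theorem~\ref{Thm: basic properties}(2)(v), this image is a simple $U_q'(\g^{(t)})$-module. Distinct $\um$ give non-isomorphic simples in $\mathcal{C}_Q^{(t)}$ (by matching dominant extremal weights read off from the coordinates of $\widehat{\Upsilon}_Q$, together with part (c) of Theorem~\ref{thm: BkMc}). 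Conversely, every simple in $\mathcal{C}_Q^{(t)}$ is the head of some ordered tensor product of the $V^{(t)}_Q(\beta)$'s by Theorem~\ref{Thm: basic properties}(3) and the definition of $\mathcal{C}_Q^{(t)}$, whence surjectivity. This gives (3)(i) as a corollary via Theorem~\ref{thm:categorification 2}.

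Part (2) will then follow at once: the exact tensor functor $\F_Q^{(t)}$ induces a ring homomorphism $[\F_Q^{(t)}]\colon K(R^{\mathsf{g}}\gmod)\to [\mathcal{C}_Q^{(t)}]$ which, by part (1), is a bijection on the bases of simple classes. Composing with the isomorphism $\Psi$ of~\eqref{eq:KLRU} and specializing $q=1$ yields the desired isomorphism with $U_\A^-(\mathsf{g})^\vee|_{q=1}$, uniformly for $[Q],[Q']\in\lf\Delta\rf$. Finally, for (4), the composition $[\F_{Q'}^{(i')}]\circ [\F_Q^{(i)}]^{-1}\colon [\mathcal{C}_Q^{(i)}]\isoto [\mathcal{C}_{Q'}^{(i')}]$ is a ring isomorphism sending simples to simples by (1); dimension preservation follows because on each side the class of a simple $M$ in the specialization $q=1$ decomposes into the same monomial in fundamental modules up to the coordinate dictionaries~\eqref{eq: star dagger}, which are dimension preserving on the level of $V(\varpi_i)$ (the spectral parameters only are twisted).

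The main obstacle will be the first step: verifying that $\F_Q^{(t)}(\Image(\rmat{\um}))$ is actually simple rather than merely non-zero. The subtle point is to match the ``algebraic'' $R$-matrix of Theorem~\ref{thm: BkMc}(b) with the normalized $R$-matrix of $U_q'(\g^{(t)})$ under $\F_Q^{(t)}$; for this one needs that the specialization $z_k=a_k$ in Theorem~\ref{Thm: basic properties}(2)(v) is \emph{compatible} with the order induced by $\prec_{[Q]}$, which in turn relies on the computation that $d_{i,j}(a_\ell/a_k)\ne 0$ for $k<\ell$ when the pair is non-comparable—exactly what our denominator computations and Corollary~\ref{cor: Dorey D43 E63 completely} guarantee. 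Once this compatibility is in hand, the remaining arguments in (2)--(4) are formal and parallel \cite{KKKOIV}.
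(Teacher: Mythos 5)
Your proposal is correct and follows essentially the same route the paper takes: the paper's own proof of this theorem consists of the one-line observation that the arguments of \cite{KKKOIII,KKKOIV} apply once Theorem~\ref{thm: D43 S to V}, Corollary~\ref{cor: Dorey D43 E63 completely} and the exactness/monoidality of $\F_Q^{(t)}$ are in hand, which is exactly the bootstrap you spell out. Two small phrasing points: for injectivity on simples the cleaner argument is via the uniqueness of the factorization in Theorem~\ref{Thm: basic properties}(3) together with the bijection $\be\mapsto V^{(t)}_Q(\be)$ from Theorem~\ref{thm: D43 S to V}, rather than ``matching dominant extremal weights''; and in (4), the index $i$ as well as the spectral parameter is changed by \eqref{eq: star dagger}, so what makes the argument work is that the affine-diagram labeling conventions for $D_4^{(3)}$ (resp.\ $E_6^{(2)}$) are chosen precisely so that $\dim V(\varpi_i) = \dim V(\varpi_{i^\dagger})$ (resp.\ $= \dim V(\varpi_{i^\star})$).
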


\subsection{\texorpdfstring{$U_q'(F_4^{(1)})$ and $U_q'(G_2^{(1)})$}{Uq'(F4(1)) and Uq'(G2(1))}}

In this subsection, we shall prove the $U_q'(F_4^{(1)})$ and $U_q'(G_2^{(1)})$-analogues of
Theorem~\ref{thm: Dorey classical 2}, Theorem~\ref{thm:categorification2} and Theorem~\ref{thm:gQASW duality 2}. Since the frameworks for $U_q'(F_4^{(1)})$ and $U_q'(G_2^{(1)})$
are the same, we shall give proofs for $U_q'(F_4^{(1)})$-case only, which is more involved.

\begin{lemma} \label{lem:mQ well}
 For (resp.\ triply) twisted adapted class $[\rrz]$ and $[\rrz']$ of type $A_{2n-1}$ (resp.\ $D_{4}$),
$(\al,\be)$ of $\gamma \in \PR$ contained in $\Phi_{[\rrz]}(i,j)(t)$, then there exists a pair $(\al',\be')$ such that $\al'+\be' \in \PR$ and
$(\al',\be')$ is contained in $\Phi_{[\rrz']}(i,j)(t)$.
\end{lemma}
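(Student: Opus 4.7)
The strategy is parallel to Lemma~\ref{lem: Q well}. The essential inputs are: (a) Proposition~\ref{prop:dist_theta_defn}, which guarantees that the integer $\theta^{\lf\rrz\rf}_t(\widehat{k},\widehat{l})$, and hence the folded distance polynomial $D^{\lf\rrz\rf}_{\widehat{k},\widehat{l}}(z;-q)$, is independent of the choice of representative in $\lf\rrz\rf$; and (b) the fact that any two (triply) twisted adapted classes $[\rrz], [\rrz'] \in \lf\rrz\rf$ are connected by a finite chain of reflection functors $r_{j_1}, \dots, r_{j_m}$ as in Definition~\ref{def: ref equi}.

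First I would reduce to the case where $[\rrz']$ is obtained from $[\rrz]$ by a single reflection $r_j$, which suffices by induction on the length of the reflection chain. Next I would analyze how $r_j$ acts combinatorially on the (folded) AR quiver $\Upsilon_{[\rrz]}$: following~\cite{OS16B,OS16C}, $r_j$ is implemented on $\Upsilon_{[\rrz]}$ by deleting the leftmost source (or sink) carrying residue $j$ and reattaching a corresponding vertex on the opposite end with residue $j^\ast$, with all other coordinates shifting by a fixed amount determined by $\mathsf{d}$. Under this operation, convexity of $\prec_{[\rrz]}$ is preserved, the coordinate difference $|a-b| = t/\mathsf{d}$ of the pair is preserved, and the relation $\alpha + \beta = \gamma \in \PR$ is transported to $\alpha' + \beta' = \gamma' \in \PR$ in $[\rrz']$ (unless one of the roots is precisely the one being removed, in which case one argues separately that a new pair in $\Phi_{[\rrz']}(i,j)[t]$ with sum in $\PR$ is produced on the other side).

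The principal obstacle will be verifying, in the twisted and triply twisted settings, that after the reflection the folded residues remain $(i,j)$ rather than getting exchanged with $(i^\ast, j^\ast)$; this is the feature that made Lemma~\ref{lem: Q well} allow both possibilities. Here the folding by $\sigma$ has already identified those orbits of $\ast$ that correspond to the folded index set $\widehat{I}$, so for types $E_6$ (folded to $F_4$) and $D_4$ (folded to $G_2$) the would-be swap is absorbed into the automorphism. I would confirm this case-by-case using the explicit folded coordinate systems of Examples~\ref{ex:folded AR quiver E} and~\ref{ex:folded AR quiver D}.

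Alternatively, and more economically, I would deduce the existence of a pair $(\alpha', \beta') \in \Phi_{[\rrz']}(i,j)[t]$ purely from the equality $\theta^{\lf\rrz\rf}_t(i,j) = \theta^{\lf\rrz'\rf}_t(i,j) \ge 1$ granted by Proposition~\ref{prop:dist_theta_defn}, and then invoke the characterization of pairs $(\alpha', \beta')$ with $\alpha' + \beta' \in \PR$ in terms of ``rectangular'' sub-patterns in $\widehat{\Upsilon}_{[\rrz']}$ established in~\cite{OS16B,OS16C} (the twisted analogue of Proposition~\ref{pro: BKM minimal}) to guarantee that such a pair can be chosen with sum in $\PR$. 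This second route avoids the case-by-case check and is likely the cleanest way to write up the proof.
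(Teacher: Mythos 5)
Your proposal is correct and takes essentially the same approach as the paper: well-definedness of the folded distance polynomial (Proposition~\ref{prop:dist_theta_defn}) together with the fact that folded AR quivers within a single cluster point are connected by reflection functors. The paper's own proof is only two lines; your expansion adds worthwhile detail, in particular correctly noting that the $(i^*,j^*)$-alternative present in the untwisted Lemma~\ref{lem: Q well} disappears here because for types $E_6$ and $D_4$ the folding automorphism $\sigma$ and the involution $*$ induced by $w_0$ coincide on $I$, so the folded residues automatically absorb the $*$-swap.
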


\begin{proof}
The assertion follows from well-definedness of the distance polynomial $D^{\lf \rrz \rf}_{i,j}(z)$ and the fact that
$\widehat{\Upsilon}_{[\rrz']}$ can be obtained from $\widehat{\Upsilon}_{[\rrz]}$ by applying the reflection functors properly.
\end{proof}

In this subsection, $U_q(\g^{(1)})$ denotes $U_q(F_4^{(1)})$ if $[\rrz]$ is a twisted adapted class of finite type $E_6$, and
$U_q(G_2^{(1)})$ if $[\rrz]$ is a triply twisted adapted class of finite type $D_4$.

\begin{proposition} \label{prop: particular F G}
For a Dynkin diagram $\Delta$ of type $E_6$ (resp.\ $D_4$), for any (resp.\ triply) twisted adapted class $[\rrz]$, take $J$ and $\mathcal{S}$ as the set of simple roots $\Pi$ associated to $\Delta$.
We define two maps
\[
s \colon \Pi \to \{ V^{(1)}(\varpi_i) \mid i \in I_0 \} \quad \text{ and } \quad X \colon \Pi \to  \ko^\times
\]
as follows:
for $\al \in \Pi$ with $\widehat{\Omega}_{[\rrz]}(\al)=(i,p/\mathsf{d})$, we define
\[
s(\al) = \begin{cases}
V^{(1)}(\varpi_{i^{\star}}) & \text{if $\widehat{\Delta}$ is of type $F_4$}, \\
V^{(1)}(\varpi_{i^{\dagger}}) & \text{if $\widehat{\Delta}$ is of type $G_2$},
\end{cases}
\qquad
X(\al)= \begin{cases}  (-1)^i(-q^{1/2})^p & \text{if $\widehat{\Delta}$ is of type $F_4$}, \\
(-q^{1/3})^p& \text{if $\widehat{\Delta}$ is of type $G_2$}.
\end{cases}
\]
Then the underlying graph of $\Gamma^{J}$ coincides with $\Delta$ of finite type $X_n$.
Hence we have an exact functor
\[
\mathscr{F}_{[\rrz]} \colon R^{X_n}\gmod \to \mC^{(1)}_{[\rrz]}.
\]
\end{proposition}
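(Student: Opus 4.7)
The plan is to mimic the proofs of Proposition~\ref{prop: particular E_n} and Proposition~\ref{prop: particular D43 E62}, reducing the statement to a verification of two facts for every pair of simple roots $\al_i, \al_j \in \Pi$ viewed as vertices of $\widehat{\Upsilon}_{[\rrz]}$: that the order of zero of $d_{s(\al_i), s(\al_j)}(z)$ at $z = X(\al_j)/X(\al_i)$ is exactly $1$ when $i$ and $j$ are adjacent in $\Delta$, and is $0$ otherwise. The last sentence of the proposition (existence of $\mathscr{F}_{[\rrz]}$) is then immediate from Theorem~\ref{thm:duality}, since $\Gamma^J$ will have underlying graph $\Delta$, hence is a Dynkin diagram of finite type $X_n$, so the hypothesis of Theorem~\ref{thm:duality}(b) is satisfied.

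First I would observe that by the definitions in~\eqref{eq: VmQ(beta)1} and~\eqref{eq: VmQ(beta)2}, the recipe defining $s$ and $X$ is precisely the one that identifies $s(\al_i)_{X(\al_i)}$ with $V_{[\rrz]}(\al_i)$. Thus the order of zero we must control is exactly the multiplicity of the factor $(z - X(\al_j)/X(\al_i))$ in the folded distance polynomial $D^{[\rrz]}_{i^\star, j^\star}(z; -q^{1/\mathsf{d}})$ (with the appropriate sign convention $\kappa$ from Definition~\ref{def: length and d} and after Proposition~\ref{prop:dist_theta_defn}); since $i \ne j$ implies $i^\star \ne j^\star$ is possible or not, but the folding prescription guarantees that the sign factor $(-1)^{i^\star}$ in $X$ for the $F_4^{(1)}$ case exactly compensates the $\kappa = \widehat{k} + \widehat{l}$ sign in Proposition~\ref{prop:dist_theta_defn}(2), so that the relevant zero of $d_{s(\al_i), s(\al_j)}$ lies at $X(\al_j)/X(\al_i)$ precisely when the pair $(\al_i, \al_j)$ contributes to $\theta^{[\rrz]}_t$.

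Next I would invoke the dichotomy that, for two distinct simple roots, $(\al_i, \al_j)$ is a $[\rrz]$-minimal pair (of $\al_i+\al_j \in \PR$) if $i$ and $j$ are adjacent in $\Delta$, and is a $[\rrz]$-simple pair otherwise. This is a consequence of the general property of convex orders (together with Proposition~\ref{pro: BKM minimal}): if $\al_i + \al_j \in \PR$, then $(\al_i, \al_j)$ is forced to be a $[\rrz]$-minimal sequence covering $\al_i + \al_j$, while if $\al_i + \al_j \notin \PR$, no sequence of smaller weight can dominate the pair, so it is $[\rrz]$-simple. Combining this with Theorem~\ref{thm: simply laced minimal denom}: in the adjacent case, part (2) yields a simple zero at $X(\al_j)/X(\al_i)$, so exactly one arrow in $\Gamma^J$ between $i$ and $j$; in the non-adjacent case, part (1) says the tensor product is irreducible, so $d_{s(\al_i),s(\al_j)}$ does not vanish there and there is no arrow. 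Putting these together, the underlying graph of $\Gamma^J$ coincides with $\Delta$, and the functor $\mathscr{F}_{[\rrz]} \colon R^{X_n}\gmod \to \mC^{(1)}_{[\rrz]}$ is obtained by applying Theorem~\ref{thm: F} and Theorem~\ref{thm:duality}, with image landing in $\mC^{(1)}_{[\rrz]}$ because $\mathscr{F}_{[\rrz]}(L(i)) \iso V_{[\rrz]}(\al_i)$.

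The only genuine subtlety, and the main place one should be careful, is the bookkeeping of the spectral parameters: one must verify that the sign and power-of-$q^{1/\mathsf{d}}$ conventions in Definition~\ref{def: VmQ(beta)} and in the folded distance polynomials $D^{[\rrz]}_{\widehat{k},\widehat{l}}(z; -q^{1/\mathsf{d}})$ align, so that the predicted zero actually occurs at the predicted ratio $X(\al_j)/X(\al_i)$. For type $G_2^{(1)}$ this is immediate since $\kappa = t$ in~\eqref{eq: theta} and no extra sign appears in $X$, while for type $F_4^{(1)}$ the $(-1)^i$ in $X$ matches the $\kappa = \widehat{k}+\widehat{l}$ convention. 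Everything else is a direct consequence of Theorem~\ref{thm: simply laced minimal denom} for the exceptional types established in Section~\ref{sec4:computations}.
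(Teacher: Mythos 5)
Your proof is correct and takes essentially the same route as the paper's: you invoke the dichotomy that $(\al_i,\al_j)$ is a $[\rrz]$-minimal pair if $i,j$ are adjacent in $\Delta$ and $[\rrz]$-simple otherwise, then apply Theorem~\ref{thm: simply laced minimal denom} to identify the underlying graph of $\Gamma^J$ with $\Delta$, and finally cite Theorem~\ref{thm:duality} for exactness. Your added discussion of the spectral-parameter and sign bookkeeping, while a reasonable thing to check, is not a separate idea — the paper absorbs it into the construction of $V_{[\rrz]}(\beta)$ in Definition~\ref{def: VmQ(beta)} and the statement of Theorem~\ref{thm: simply laced minimal denom}.
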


\begin{proof}
Note that $(\al_i,\al_j)$ are $[\rrz]$-minimal if $i,j$ are adjacent in $\Delta$ and $[\rrz]$-simple, otherwise. Thus the underlying graph of $\Gamma^J$
coincides with $\Delta$ by Theorem~\ref{thm: simply laced minimal denom}.
\end{proof}

\begin{example}
By forgetting the arrows of $\widehat{\Upsilon}_{[\mQ]}$ in \eqref{eq:folded1}, the underlying graph of $\Gamma^{J}$ can be described as follows:
\begin{align} \label{eq:folded1 GammaJ}
 \raisebox{5.6em}{\scalebox{0.55}{\xymatrix@C=0.1ex@R=4.0ex{
(\widehat{\imath}/p) & \frac{1}{2}  & 1 & \frac{3}{2} & 2 & \frac{5}{2} & 3 & \frac{7}{2} & 4 & \frac{9}{2} & 5 & \frac{11}{2} & 6 & \frac{13}{2} & 7 & \frac{15}{2} & 8 & \frac{17}{2} & 9 & \frac{19}{2} & 10    \\
1 &&&& {\scriptstyle\prt{000}{111}}  &&{\scriptstyle\prt{111}{210}}&& {\scriptstyle\prt{011}{110}} && {\scriptstyle\prt{001}{111}}
&& {\scriptstyle\prt{111}{211}}
&& {\scriptstyle\prt{111}{110}}
&& {\scriptstyle\boxed{\prt{001}{000}}} \ar@{-}@/_3pc/[rrrr]  &&
{\scriptstyle\boxed{\prt{000}{001}}} && {\scriptstyle\boxed{\prt{100}{000}}} \\
2 && {\scriptstyle\prt{000}{110}}  &&  {\scriptstyle\prt{011}{210}}   && {\scriptstyle\prt{011}{221}} && {\scriptstyle\prt{112}{321}}
&&{\scriptstyle\prt{122}{321}}
&& {\scriptstyle\prt{112}{221}}
&&{\scriptstyle\prt{112}{211}}  &&
{\scriptstyle\prt{111}{111}}  && {\scriptstyle\prt{101}{000}}\\
3 & {\scriptstyle\boxed{\prt{000}{100}}}  \ar@{-}[drrr] \ar@{-}[uurrrrrrrrrrrrrrr]\ar@{-}[drrrrrrrrrrrrrrrrr]  && {\scriptstyle\prt{010}{110}}  && {\scriptstyle\prt{001}{110}}
&& {\scriptstyle\prt{011}{211}}  && {\scriptstyle\prt{111}{221}} &&{\scriptstyle\prt{112}{210}}
&&{\scriptstyle\prt{011}{111}} &&
{\scriptstyle\prt{101}{111}} && {\scriptstyle\prt{111}{100}}\\
4 && {\scriptstyle\prt{010}{100}} && {\scriptstyle\boxed{\prt{000}{010}}} \ar@{-}@/_2pc/[uuurrrrrrrrrrrrrr] && {\scriptstyle\prt{001}{100}}  &&
{\scriptstyle\prt{010}{111}} &&{\scriptstyle\prt{101}{110}} && {\scriptstyle\prt{011}{100}} && {\scriptstyle\prt{000}{011}}
 &&{\scriptstyle\prt{101}{100}}  &&{\scriptstyle\boxed{\prt{010}{000}}}
}}}
\end{align}
since
\begin{align*}
d_{1,1}(z)& =(z-q_s^{4})(z-q_s^{10})(z-q_s^{12})(z-q_s^{18}), \  d_{1,3}(z) =(z-q_s^{7})(z-q_s^{9})(z-q_s^{13})(z-q_s^{15}),   \\
d_{1,4}(z)& =(z+q_s^{8})(z+q_s^{14}), \ d_{4,4}(z) =(z-q_s^{2})(z-q_s^{8})(z-q_s^{12})(z-q_s^{18}),\\
d_{3,4}(z)& =(z+q_s^{3})(z+q_s^{7})(z+q_s^{9})^{\epsilon'}(z+q_s^{11})(z+q_s^{13})(z+q_s^{17}).
\end{align*}

\end{example}

\begin{theorem} \label{thm: F41 S to V}
For any (triply) twisted adapted class $[\rrz]$ of finite type $X=E_6$ or $X=D_4$, and $\gamma \in \PR$, we have
\[
\mathscr{F}_{[\rrz]}(S_{[\rrz]}(\ga)) \iso V_{[\rrz]}(\gamma).
\]
\end{theorem}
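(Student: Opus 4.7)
The plan is to mimic the argument used for Theorem~\ref{thm: D43 S to V}, replacing Dynkin quivers $Q$ by (triply) twisted adapted classes $[\rrz]$ and the AR quiver $\Gamma_Q$ by the folded AR quiver $\widehat{\Upsilon}_{[\rrz]}$. First, fix one convenient representative in the $r$-cluster point $\lf\rrz\rf$: for type $E_6$ we take $[\mQ]$ as in Example~\ref{ex: redez for exceptional}(1) (displayed in~\eqref{eq:folded1}), and for type $D_4$ we take $[\mathfrak{Q}]$ as in Example~\ref{ex: redez for exceptional}(2) (displayed in~\eqref{eq:folded2}). By Proposition~\ref{prop: particular F G} and Theorem~\ref{thm: F}, the base case of simple roots $\alpha_i \in \Pi$ is immediate: $\mathscr{F}_{[\rrz]}(S_{[\rrz]}(\alpha_i)) \iso V_{[\rrz]}(\alpha_i)$ holds by construction of the Schur--Weyl duality datum.

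I will then proceed by induction on the height $|\gamma|$. For $\gamma \in \PR\setminus \Pi$ choose a $[\rrz]$-minimal pair $(\alpha,\beta)$ of $\gamma$, which exists by Proposition~\ref{pro: BKM minimal}. By Theorem~\ref{thm: BkMc}(f), there is a six-term exact sequence
\[
0 \to S_{[\rrz]}(\gamma) \to S_{[\rrz]}(\alpha) \conv S_{[\rrz]}(\beta) \xrightarrow{\ \rmat{}\ } S_{[\rrz]}(\beta) \conv S_{[\rrz]}(\alpha) \to S_{[\rrz]}(\gamma) \to 0.
\]
Applying the exact functor $\mathscr{F}_{[\rrz]}$ and using the induction hypothesis, the middle terms are identified with $V_{[\rrz]}(\alpha) \otimes V_{[\rrz]}(\beta)$ and $V_{[\rrz]}(\beta) \otimes V_{[\rrz]}(\alpha)$. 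To identify $\mathscr{F}_{[\rrz]}(S_{[\rrz]}(\gamma))$ with $V_{[\rrz]}(\gamma)$, one needs a Dorey's type surjection $V_{[\rrz]}(\beta) \otimes V_{[\rrz]}(\alpha) \twoheadrightarrow V_{[\rrz]}(\gamma)$; this is provided by the T-system and the Dorey's type homomorphisms already established for $U_q'(F_4^{(1)})$ and $U_q'(G_2^{(1)})$ in Section~\ref{sec4:computations} (Propositions~\ref{prop: Dorey D43 22} and~\ref{prop: Dorey D43 12} for $G_2^{(1)}$, and the T-system surjections~\eqref{eq: her112}, \eqref{eq: her443}, \eqref{eq: OS444}, \eqref{eq: OS441} together with derived morphisms for $F_4^{(1)}$). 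Combined with Theorem~\ref{thm: socle head} and the rigidity of $\Ca_{\g^{(1)}}$ (taking duals to pin down the spectral parameter, exactly as in the derivation of $\mathscr{F}^{(3)}_Q(\SQ{1,2}) \iso \Vitp{4}$), this determines $\mathscr{F}_{[\rrz]}(S_{[\rrz]}(\gamma))$ uniquely up to a scalar spectral shift, which is fixed by comparing classical highest weights via the folded coordinate $\widehat{\Omega}_{[\rrz]}(\gamma)$ recorded in Definition~\ref{def: VmQ(beta)}.

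The main obstacle is that, unlike in the $E_6^{(2)}$/$D_4^{(3)}$ case where the needed Dorey's type homomorphisms for all minimal pairs were available from the $U_q'(D_4^{(1)})$ picture via folding, here one does not have every minimal pair covered by an explicit T-system. The resolution is to first handle the fundamental-versus-fundamental minimal pairs (those appearing at the boundary of $\widehat{\Upsilon}_{[\rrz]}$) using the results of Section~\ref{sec4:computations}, then bootstrap inwards: for a pair $(\alpha,\beta)$ whose surjection is not yet known, exhibit a good adjacent neighbor via a chain of minimal pairs whose surjections are already established, and use the non-vanishing composition principle of Proposition~\ref{prop: not vanishing} applied to the image of $\rmat{}$ to produce the required surjection onto $V_{[\rrz]}(\gamma)$. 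This is the exact analogue of the inductive step that identified $\mathscr{F}^{(3)}_Q(\SQ{2,4}) \iso \Viop{3}$ in the proof of Theorem~\ref{thm: D43 S to V}.

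Once the theorem is proved for the distinguished $[\rrz]$ chosen above, the passage to an arbitrary (triply) twisted adapted class $[\rrz'] \in \lf \rrz \rf$ is carried out by Lemma~\ref{lem:mQ well}: any minimal pair data in $\widehat{\Upsilon}_{[\rrz']}$ is reached from $\widehat{\Upsilon}_{[\rrz]}$ by a sequence of reflection functors, and both $S_{[\rrz']}(\gamma)$ and $V_{[\rrz']}(\gamma)$ transform compatibly under these reflections (by Theorem~\ref{thm: BkMc} and the definition of $V_{[\rrz]}(\beta)$ in Definition~\ref{def: VmQ(beta)}), so the induction on $|\gamma|$ can be re-run uniformly. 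This completes the identification $\mathscr{F}_{[\rrz]}(S_{[\rrz]}(\gamma)) \iso V_{[\rrz]}(\gamma)$ for all $[\rrz]$ and all $\gamma \in \PR$.
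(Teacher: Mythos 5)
Your proposal captures the overall architecture of the paper's argument accurately: fix the distinguished $[\mQ]$ (resp.\ $[\mathfrak{Q}]$), establish the base case $\mathscr{F}_{[\rrz]}(S_{[\rrz]}(\alpha_i)) \iso V_{[\rrz]}(\alpha_i)$ from Theorem~\ref{thm: F}, push through the six-term exact sequence of Theorem~\ref{thm: BkMc}(f) with the exact functor $\mathscr{F}_{[\rrz]}$, pin spectral parameters by rigidity/duals, and finally transport to arbitrary $[\rrz']$ via Lemma~\ref{lem:mQ well}. That is genuinely what the paper does, and you correctly identify the hard part: not every $[\rrz]$-minimal pair comes with a ready-made Dorey's type surjection.

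Where your proposal diverges from the paper is in how this obstacle is resolved, and I do not think your mechanism works as stated. You propose to ``bootstrap'' missing surjections by exhibiting good adjacent neighbors and invoking the non-vanishing composition principle of Proposition~\ref{prop: not vanishing}. That proposition only guarantees that a certain composite map is non-zero; it does not by itself produce a surjection onto a prescribed simple target $V_{[\rrz]}(\gamma)$, nor does it identify the head of $V_{[\rrz]}(\beta) \otimes V_{[\rrz]}(\alpha)$ with $V_{[\rrz]}(\gamma)$. (Indeed, Proposition~\ref{prop: not vanishing} is deployed only in Section~\ref{sec:Refine}, to detect reducibility, not to manufacture Dorey's morphisms.) The paper's actual device is different and more economical: a root $\gamma$ typically admits \emph{several} $[\mQ]$-minimal pairs, and the order of the computation is chosen so that for at least one such pair $(\alpha,\beta)$ the surjection $V_{[\rrz]}(\beta) \otimes V_{[\rrz]}(\alpha) \twoheadrightarrow V_{[\rrz]}(\gamma)$ is already known (from \eqref{eq: her112}, \eqref{eq: her443}, \eqref{eq: OS444}, \eqref{eq: OS441}, or the Hernandez T-system computation feeding into~\eqref{eq: exact start F4}); this pins down $\mathscr{F}_{[\rrz]}(S_{[\rrz]}(\gamma)) \iso V_{[\rrz]}(\gamma)$. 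Then, applying $\mathscr{F}_{[\rrz]}$ to the six-term exact sequence associated to a \emph{second} $[\mQ]$-minimal pair $(\alpha',\beta')$ of the same $\gamma$ immediately yields the new surjection $V_{[\rrz]}(\beta') \otimes V_{[\rrz]}(\alpha') \twoheadrightarrow V_{[\rrz]}(\gamma)$ — this is exactly how \eqref{eq: p342 F4} and \eqref{eq: p314 F4} are obtained — and these derived morphisms fuel the induction for larger roots. You should replace the appeal to Proposition~\ref{prop: not vanishing} with this derived-surjection mechanism, and note that at the one place where no prior Dorey morphism is available (the identification $\mathscr{F}_\mQ(\SmQ{000}{110}) \iso V(\varpi_2)_{q_s^2}$) the paper needs the explicit head-of-tensor-product computation from the $F_4^{(1)}$ T-system in Hernandez's work, which your sketch omits.
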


At first, we shall prove the above theorem for the folded AR quiver $\widehat{\Upsilon}_{[\mQ]}$ in \eqref{eq:folded1}. By~\ref{thm: F}, we have
$\mathscr{F}_\mQ(S_\mQ(\al_i)) \iso V_\mQ(\al_i)$.
Hence,
\begin{align*}
&\mathscr{F}_\mQ(S_\mQ(\al_1)) \iso V(\varpi_1)_{-q_s^{20}}, \ \mathscr{F}_\mQ(S_\mQ(\al_3)) \iso V(\varpi_1)_{-q_s^{16}}, \ \mathscr{F}_\mQ(S_\mQ(\al_6)) \iso V(\varpi_1)_{-q_s^{18}}, \  \\
& \mathscr{F}_\mQ(S_\mQ(\al_2)) \iso V(\varpi_4)_{q_s^{18}}, \ \mathscr{F}_\mQ(S_\mQ(\al_4)) \iso V(\varpi_3)_{-q_s^{1}}, \ \mathscr{F}_\mQ(S_\mQ(\al_5)) \iso V(\varpi_4)_{q_s^{4}}. \
 \end{align*}
Thus we have an exact sequence
\begin{align} \label{eq: F43 al2+al4}
 0 \to S_\mQ(\al_2+\al_4) \to S_\mQ(\al_4) \conv S_\mQ(\al_2) \overset{\rmat{}}{\Lto} S_\mQ(\al_2) \conv S_\mQ(\al_4) \to S_\mQ(\al_2+\al_4) \to 0.
\end{align}
Applying the exact functor $\mathscr{F}_\mQ$ to \eqref{eq: F43 al2+al4}, we have
\begin{align*}
0 \to \mathscr{F}_\mQ(S_\mQ(\al_2+\al_4)) & \to V(\varpi_4)_{q_s^{18}} \otimes   V(\varpi_3)_{-q_s^1} \\ & \To[\mathscr{F}_\mQ(\rmat{})] V(\varpi_3)_{-q_s^1} \otimes  V(\varpi_4)_{q_s^{18}}
\to \mathscr{F}_\mQ(S_\mQ(\al_2+\al_4))\to 0.
\end{align*}
Then \eqref{eq: her443} implies that
\[
\mathscr{F}_\mQ(S_\mQ(\al_2+\al_4)) \iso V(\varpi_3)_{-q_s^2}.
\]
Similarly, we have $\mathscr{F}_\mQ(S_\mQ(\al_1+\al_3)) \iso V(\varpi_2)_{q_s^{18}}$.

Since $\sprt{000}{010}$ and $\sprt{010}{100}$ is a $[\mQ]$-minimal pair for $\sprt{010}{110}$, we also have $\mathscr{F}_\mQ\left(\SmQ{010}{110}\right) \iso V(\varpi_3)_{-q_s^{3}}.$

Note that we have an exact sequence
\begin{equation} \label{eq: exact start F4}
\begin{aligned}
&0 \to \SmQ{000}{110} \conv \SmQ{010}{100} \to \SmQ{010}{110} \conv \SmQ{000}{100}
 \to \SmQ{000}{100} \conv\SmQ{010}{110}  \to \SmQ{000}{110} \conv \SmQ{010}{100} \to 0.
\end{aligned}
\end{equation}
where $\SmQ{000}{110} \conv \SmQ{010}{100}$ is simple.

By applying the exact functor $\mathscr{F}_\mQ$ to \eqref{eq: exact start F4}, we have 
\begin{align*}
&0 \to \mathscr{F}_\mQ(\SmQ{000}{110}) \otimes  V(\varpi_4)_{q_s^{2}} \to V(\varpi_3)_{-q_s^{3}} \otimes  V(\varpi_3)_{-q_s^{1}}   \\
& \hspace{15ex} \to V(\varpi_3)_{-q_s^{1}} \otimes  V(\varpi_3)_{-q_s^{3}}  \to V(\varpi_4)_{q_s^{2}} \otimes  \mathscr{F}_\mQ(\SmQ{000}{110}) \to 0.
\end{align*}

By~\cite[Page 23]{Her06}, $V(\varpi_3)_{-q_s^{1}} \otimes  V(\varpi_3)_{-q_s^{3}} $ has a simple head $V(\varpi_4)_{q_s^{2}} \otimes  V(\varpi_2)_{q_s^{2}}$. Hence we have
\[
\mathscr{F}_\mQ(\SmQ{000}{110}) \iso V(\varpi_2)_{q_s^{2}}
\]
by the same argument in the previous subsection.

Since $\sprt{000}{010}$ and $\sprt{000}{100}$ is another $[\mQ]$-minimal pair for $\sprt{000}{110}$, we have
\begin{align} \label{eq: p342 F4}
V(\varpi_3)_{-q_s^{1}} \otimes V(\varpi_4)_{q_s^{4}} \to V(\varpi_2)_{q_s^{2}}.
\end{align}

Now we have the following results with the Dorey's type
homomorphisms \eqref{eq: her112}, \eqref{eq: her443}, \eqref{eq:
OS444} and \eqref{eq: OS441} sequentially:
\begin{itemize}
\item $\left( \sprt{000}{001},\sprt{000}{110} \right)$ is a $[\mQ]$-minimal pair of $\sprt{000}{111}$, \eqref{eq: her112} implies
$ \mathscr{F}_\mQ\left(\SmQ{000}{111}\right) \iso  V\left(\varpi_1\right)_{-q_s^{4}},$
\item  $\left(\sprt{000}{001},\sprt{000}{010}\right)$ is a $[\mQ]$-minimal pair of $\sprt{000}{011}$, \eqref{eq: OS441} implies
$ \mathscr{F}_\mQ\left(\SmQ{000}{011}\right) \iso  V\left(\varpi_4\right)_{q_s^{14}},$
\item $\left(\sprt{001}{000},\sprt{010}{100}\right)$ is a $[\mQ]$-minimal pair of $\sprt{011}{100}$, \eqref{eq: OS441} implies
$ \mathscr{F}_\mQ\left(\SmQ{000}{011}\right) \iso  V\left(\varpi_4\right)_{q_s^{12}},$
\item  $\left(\sprt{000}{011},\sprt{011}{100}\right)$ is a $[\mQ]$-minimal pair of $\sprt{011}{111}$, \eqref{eq: her443} implies
$ \mathscr{F}_\mQ\left(\SmQ{011}{111}\right) \iso  V\left(\varpi_3\right)_{-q_s^{13}},$
\item $\left(\sprt{011}{100},\sprt{000}{010}\right)$ is a $[\mQ]$-minimal pair of $\sprt{011}{110}$, \eqref{eq: OS441} implies
$ \mathscr{F}_\mQ\left(\SmQ{011}{110}\right) \iso  V\left(\varpi_1\right)_{-q_s^{18}},$
\item $\left(\sprt{000}{011},\sprt{010}{100}\right)$ is a $[\mQ]$-minimal pair of $\sprt{010}{111}$, \eqref{eq: OS444} implies
$ \mathscr{F}_\mQ\left(\SmQ{010}{111}\right) \iso  V\left(\varpi_4\right)_{q_s^{8}}.$
\end{itemize}

Since $\sprt{000}{001}$ and $\sprt{010}{110}$ is another $[\mQ]$-minimal pair for $\sprt{010}{111}$, we have
\begin{align} \label{eq: p314 F4}
V(\varpi_3)_{-q_s^{3}} \otimes V(\varpi_1)_{-q_s^{18}} \to V(\varpi_4)_{q_s^{8}}.
\end{align}

Then we have the followings
\begin{itemize}
\item  $\left(\sprt{000}{100},\sprt{001}{000}\right)$ is a $[\mQ]$-minimal pair of $\sprt{001}{100}$, \eqref{eq: p314 F4} implies
$ \mathscr{F}_\mQ\left(\SmQ{001}{100}\right) \iso  V\left(\varpi_4\right)_{q_s^{6}},$
\item $\left(\sprt{001}{100},\sprt{000}{010}\right)$ is a $[\mQ]$-minimal pair of $\sprt{001}{110}$, \eqref{eq: her443} implies
$ \mathscr{F}_\mQ\left(\SmQ{001}{110}\right) \iso  V\left(\varpi_3\right)_{-q_s^{5}},$
\item  $\left(\sprt{010}{111},\sprt{001}{100}\right)$ is a $[\mQ]$-minimal pair of $\sprt{011}{211}$, \eqref{eq: her443} implies
$ \mathscr{F}_\mQ\left(\SmQ{011}{211}\right) \iso  V\left(\varpi_3\right)_{-q_s^{7}},$
\item   $\left(\sprt{001}{100},\sprt{010}{110}\right)$ is a $[\mQ]$-minimal pair of $\sprt{011}{210}$, \eqref{eq: p342 F4}  implies
$ \mathscr{F}_\mQ\left(\SmQ{011}{210}\right) \iso  V\left(\varpi_2\right)_{q_s^{4}},$
\item   $\left(\sprt{010}{111},\sprt{001}{110}\right)$ is a $[\mQ]$-minimal pair of $\sprt{011}{221}$, \eqref{eq: p342 F4}  implies
$ \mathscr{F}_\mQ\left(\SmQ{011}{221}\right) \iso  V\left(\varpi_2\right)_{q_s^{6}},$
\item   $\left(\sprt{000}{011},\sprt{001}{100}\right)$ is a $[\mQ]$-minimal pair of $\sprt{001}{111}$, \eqref{eq: OS441} implies
$ \mathscr{F}_\mQ\left(\SmQ{001}{111}\right) \iso  V\left(\varpi_1\right)_{-q_s^{10}},$
\item $\left(\sprt{001}{110},\sprt{100}{000}\right)$ is a $[\mQ]$-minimal pair of $\sprt{101}{110}$, \eqref{eq: p314 F4} implies
$ \mathscr{F}_\mQ\left(\SmQ{101}{110}\right) \iso V\left(\varpi_4\right)_{q_s^{10}},$
\item  $\left(\sprt{100}{000},\sprt{001}{100}\right)$ is a $[\mQ]$-minimal pair of $\sprt{101}{100}$, \eqref{eq: OS441} implies
$ \mathscr{F}_\mQ\left(\SmQ{101}{100}\right) \iso V\left(\varpi_4\right)_{q_s^{16}},$
\end{itemize}
sequentially.

Now we can say that
\begin{itemize}
\item[{\rm (a)}] $\mathscr{F}_\mQ(S_\mQ(\be)) \iso V_\mQ(\be)$ for all $\be \in \PR$ by applying the above arguments,
\item[{\rm (b)}] $V_\mQ(\be) \otimes V_\mQ(\al) \twoheadrightarrow V_\mQ(\gamma)$ for every $[\mQ]$-minimal pair $(\al,\be)$ of $\gamma \in \PR$.
\end{itemize}

\noindent\textit{Proof of theorem~\ref{thm: F41 S to V}}.
We have shown that our assertion holds for the $[\mQ]$. By Lemma~\ref{lem:mQ well},
we can apply the induction argument on $|\gamma|$ for any $[\mQ]$ of finite type $E_6$.
\qed

\begin{corollary} \label{cor: Dorey F41 G21 completely}
 For $U_q'(F^{(1)}_4)$ $(X=E_6)$ or $U_q'(G^{(1)}_2)$ $(X=D_4)$,
let $(i,x)$, $(j,y)$, $(k,z) \in I_0 \times \ko^\times$. Then
\[
\dim_{\ko} \left( \Hom_{U_q'(\g^{(1)})}\big( V^{(t)}(\varpi_{j})_y \otimes V^{(t)}(\varpi_{i})_x , V^{(t)}(\varpi_{k})_z  \big) \right) = 1
\]
if and only if there exists a (triply) twisted adapted class $[\rrz]$ and $\al,\beta,\ga \in \Phi_{X}^+$ such that
\begin{enumerate}[{\rm (1)}]
\item $(\al,\beta)$ is a $[\rrz]$-minimal pair of $\ga$,
\item $V(\varpi_{j})_y  = V_{\mQ}(\beta)_a, \ V(\varpi_{i})_x  = V_{\mQ}(\al)_a, \ V(\varpi_{k})_z  = V_{\mQ}(\ga)_a$
for some $a \in \ko^\times$.
\end{enumerate}
\end{corollary}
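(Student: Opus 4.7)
My plan is to mirror the strategy that established Theorem~\ref{thm: Dorey classical 2} for classical $B^{(1)}_n, C^{(1)}_n$, but using the new exact functor $\mathscr{F}_{[\rrz]}$ from Proposition~\ref{prop: particular F G} together with the identification of simple modules in Theorem~\ref{thm: F41 S to V}. In other words, I would transfer the question about morphisms in $\Ca_{\g^{(1)}}$ to a question about convolution products of simples in $R^X\gmod$, where the whole machinery of Theorem~\ref{thm: BkMc} applies.

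For the implication starting from a $[\rrz]$-minimal pair $(\alpha,\beta)$ of $\gamma \in \PR$, I would invoke the six-term exact sequence of Theorem~\ref{thm: BkMc}(f),
\[
0 \to S_{[\rrz]}(\gamma) \to S_{[\rrz]}(\alpha) \conv S_{[\rrz]}(\beta) \xrightarrow{\mathbf{r}} S_{[\rrz]}(\beta) \conv S_{[\rrz]}(\alpha) \to S_{[\rrz]}(\gamma) \to 0,
\]
apply the exact functor $\mathscr{F}_{[\rrz]}$, and use Theorem~\ref{thm: F41 S to V} to obtain a non-zero surjection $V_{[\rrz]}(\beta) \otimes V_{[\rrz]}(\alpha) \twoheadrightarrow V_{[\rrz]}(\gamma)$. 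Twisting by an arbitrary common spectral parameter $a \in \ko^\times$ gives the desired non-zero homomorphism for $V(\varpi_j)_y = V_{[\rrz]}(\beta)_a$, $V(\varpi_i)_x = V_{[\rrz]}(\alpha)_a$, $V(\varpi_k)_z = V_{[\rrz]}(\gamma)_a$. The one-dimensionality of the Hom space then follows from the uniqueness of the simple head of the tensor product of good modules guaranteed by Theorem~\ref{Thm: basic properties}(2)(ii).

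For the converse, a non-zero Hom forces $V(\varpi_j)_y \otimes V(\varpi_i)_x$ to be reducible, so Theorem~\ref{thm: simply laced minimal denom}(1) provides a (triply) twisted adapted class $[\rrz]$ and a non-$[\rrz]$-simple pair $(\alpha,\beta)$ such that $V(\varpi_j)_y \cong V_{[\rrz]}(\beta)_a$ and $V(\varpi_i)_x \cong V_{[\rrz]}(\alpha)_a$ for some $a$. Using $\mathscr{F}_{[\rrz]}$ and Theorem~\ref{thm: F41 S to V} we identify the tensor product with $\mathscr{F}_{[\rrz]}\bigl(S_{[\rrz]}(\beta) \conv S_{[\rrz]}(\alpha)\bigr)_a$, whose simple head corresponds (via the unique PBW-label from Theorem~\ref{thm: BkMc}(e)) to a simple object of $\mC^{(1)}_{[\rrz]}$. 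Since the Hom target $V(\varpi_k)_z$ is a fundamental module, this simple head must be of the form $V_{[\rrz]}(\gamma)_a$ with $\gamma \in \PR$; weight considerations give $\gamma = \alpha + \beta$, and the covering structure of $\prec^{\tb}_{[\rrz]}$ combined with Theorem~\ref{thm: BkMc}(c)(f) then forces $(\alpha,\beta)$ to be a $[\rrz]$-minimal pair of $\gamma$.

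The main obstacle will be the converse direction, and specifically the step that upgrades ``non-$[\rrz]$-simple pair'' to ``$[\rrz]$-minimal pair of $\gamma$''. The delicate point is to rule out that the head of $S_{[\rrz]}(\beta) \conv S_{[\rrz]}(\alpha)$ could be labeled by a multi-term dual PBW monomial (which would push the image outside the fundamentals), and to match the spectral shifts with the conventions of~\eqref{eq: VmQ(beta)1}--\eqref{eq: VmQ(beta)2}. Executing this uses the uniqueness of PBW-labels from Theorem~\ref{thm: BkMc}(e), together with a careful tracking via Lemma~\ref{lem:mQ well} to ensure the conclusion is independent of the chosen $[\rrz] \in \lf\rrz\rf$.
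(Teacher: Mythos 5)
The forward direction of your proposal is correct and is essentially what the paper does: the proof of Theorem~\ref{thm: F41 S to V} explicitly records, as its point (b), that $V_{[\rrz]}(\beta) \otimes V_{[\rrz]}(\alpha) \twoheadrightarrow V_{[\rrz]}(\gamma)$ for every $[\rrz]$-minimal pair $(\alpha,\beta)$ of $\gamma$, and this is obtained exactly as you say, by applying the exact functor $\mathscr{F}_{[\rrz]}$ to the six-term exact sequence of Theorem~\ref{thm: BkMc}(f) and identifying $\mathscr{F}_{[\rrz]}(S_{[\rrz]}(\delta)) \iso V_{[\rrz]}(\delta)$. The one-dimensionality then follows since the head of the tensor product is simple (Theorem~\ref{Thm: basic properties}(2)) and any nonzero map to a simple factors through the head.

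The gap is in your converse. You reduce to the statement that if $\operatorname{hd}\bigl(S_{[\rrz]}(\beta) \conv S_{[\rrz]}(\alpha)\bigr) \cong S_{[\rrz]}(\gamma)$ for some $\gamma \in \PR$, then $(\alpha,\beta)$ must be a $[\rrz]$-minimal pair of $\gamma$, and you assert this follows from ``the covering structure of $\prec^{\tb}_{[\rrz]}$ combined with Theorem~\ref{thm: BkMc}(c)(f).'' Neither (c) nor (f) gives this. Part (c) only constrains the PBW labels of composition factors to lie below $(\alpha,\beta)$; part (f) is a statement \emph{about} minimal pairs, not a criterion for recognising one from the shape of the head. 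What you actually need is the implication ``head is cuspidal $\Rightarrow$ cover relation in $\prec^{\tb}_{[\rrz]}$,'' which is a genuine structural fact requiring a separate argument — e.g., a composition-length argument transported through the functor (in the spirit of Lemma~\ref{lem:simplepole}, Proposition~\ref{prop: comp length} and Theorem~\ref{thm: simply laced minimal denom}(2), which relate the order of the denominator root, the $[\rrz]$-distance of the pair, and the composition length), or the combinatorial analysis of the folded AR quiver carried out for the classical $B^{(1)}_n / C^{(1)}_n$ cases in the references behind Theorem~\ref{thm: Dorey classical 2}. Your proposal correctly flags that this step is delicate but then treats it as immediate, which it is not. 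The crucial reason the ``minimal pair'' hypothesis (rather than the weaker ``pair'' hypothesis that appears in Corollary~\ref{cor: Dorey D43 E63 completely}) must be retained is that for folded AR quivers $\widehat{\Upsilon}_{[\rrz]}$, unlike for $\Gamma_Q$ (where Lemma~\ref{lem: Q well} lets one move any pair into a minimal position for some $Q'$), not every pair with $\alpha+\beta\in\PR$ becomes minimal for some choice of $[\rrz']$, so the ``only if'' direction genuinely requires ruling out those pairs — and that ruling-out is exactly what your sketch skips over.
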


For the rest of this subsection we shall take $\mathsf{g}$ as the simple Lie algebra of type $E_6$ when $[\rrz]$ is a twisted adapted class of finite type $E_6$ and
 $\mathsf{g}$ as the simple Lie algebra of type $D_4$ when $[\rrz]$ is a triply twisted adapted class of finite type $D_4$.

\medskip
Now, we can apply the same arguments of~\cite{KKKOIII,KKKOIV,KO17} to obtain the following theorem.

\begin{theorem} \label{thm: F4(1) G2(1)}
For a $($triply$)$ twisted adapted class $[\rrz]=[\mQ]$ of finite type $E_6$ or $[\rrz]=[\mathfrak{Q}]$ of finite type $D_4$, we have the followings:
\begin{enumerate}[{\rm (1)}]
\item The functor $\mathscr{F}_{[\rrz]}$ sends a simple module to a simple module. Moreover, the functor $\mathscr{F}_{[\rrz]}$ induces a bijection from
the set of simple modules in $R^\mathsf{g}\gmod$ to the set of simple modules in $\mathscr{C}^{(1)}_{[\rrz]}$.
\item For any $[\rrz] \in \lf \rrz \rf$,
there exist isomorphisms induced by the functors $\F^{(i)}_Q$, $\mathscr{F}_\mQ$ and $\mathscr{F}_{\mathfrak{Q}}$:
\[
\begin{tikzpicture}[scale=2.7,baseline=0]
\node (CQ) at (0,0.8) {$\bigl[ \mC^{(1)}_\mQ \bigr]$};
\node (U) at (0,0) {$U^-_\A(E_6)^\vee|_{q=1}$};
\node (C1) at (-1,-0.6) {$\bigl[\mathcal{C}^{(1)}_{Q}\bigr]$};
\node (C2) at (1,-0.6) {$\bigl[\mathcal{C}^{(2)}_{Q'}\bigr]$};
\draw[<->] (CQ) -- node[midway,sloped,above] {\small $\sim$}  (U);
\draw[<->] (C1) -- node[midway,sloped,above] {\small $\sim$}  (U);
\draw[<->] (C2) -- node[midway,sloped,above] {\small $\sim$}  (U);
\draw[<->] (CQ) -- node[midway,sloped,above] {\small $\sim$}  (C1);
\draw[<->] (CQ) -- node[midway,sloped,above] {\small $\sim$}  (C2);
\draw[<->] (C1) -- node[midway,sloped,above] {\small $\sim$}  (C2);
\end{tikzpicture}
\qquad
\begin{tikzpicture}[scale=2.7,baseline=0]
\node (C1) at (0,0.8) {$\bigl[\mathcal{C}^{(1)}_{Q}\bigr]$};
\node (C2) at (-1.2,0) {$\bigl[\mathcal{C}^{(2)}_{Q'}\bigr]$};
\node (C3) at (-0.6,-0.6) {$\bigl[\mathcal{C}^{(3)}_{Q''}\bigr]$};
\node (CD) at (1.2,0) {$\bigl[\mC^{(1)}_{\mathfrak{Q}}\bigr]$};
\node (CQ) at (0.6,-0.6) {$\bigl[\mC^{(1)}_\mQ\bigr]$};
\node (U) at (0,0) {$U^-_\A(D_4)^\vee|_{q=1}$};
\draw[<->] (C1) -- node[midway,sloped,above] {\small $\sim$}  (U);
\draw[<->] (C2) -- node[midway,sloped,above] {\small $\sim$}  (U);
\draw[<->] (C3) -- node[midway,sloped,above] {\small $\sim$}  (U);
\draw[<->] (CD) -- node[midway,sloped,above] {\small $\sim$}  (U);
\draw[<->] (CQ) -- node[midway,sloped,above] {\small $\sim$}  (U);
\draw[<->] (C1) -- node[midway,sloped,above] {\small $\sim$}  (C2);
\draw[<->] (C2) -- node[midway,sloped,above] {\small $\sim$}  (C3);
\draw[<->] (C3) -- node[midway,sloped,above] {\small $\sim$}  (CQ);
\draw[<->] (CQ) -- node[midway,sloped,above] {\small $\sim$}  (CD);
\draw[<->] (CD) -- node[midway,sloped,above] {\small $\sim$}  (C1);
\end{tikzpicture}
\]
\item A dual PBW-basis associated to $[\rrz]$ and the upper global basis of
$U^-_{\A}(\mathsf{g})^{\vee}$
are categorified by the modules over $U_q'(\g)$ in the following sense:
\begin{enumerate}[{\rm (i)}]
\item The set of simple modules in $\mC^{(1)}_{[\rrz]}$ corresponds to
the upper global basis of $U^-_{\A}(\mathsf{g})^{\vee}|_{q=1}$.
\item The set
\[
\left\{ V_{[\rrz]}(\beta_1)^{\otimes  m_1}\otimes \cdots \otimes  V_{[\rrz]}(\beta_\N)^{\otimes  m_\N} \mid \um \in \Z_{\ge 0}^\N \right\}
\]
corresponds to the dual PBW-basis associated to $[\rrz]$ under the isomorphism in~\eqref{eq: isomQ}.
\end{enumerate}
\item Any induced ring isomorphisms in {\rm (2)} send simples to simples.
\end{enumerate}
\end{theorem}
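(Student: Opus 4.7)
The plan is to closely follow the strategy of \cite{KKKOIII,KKKOIV,KO17}, building on the backbone already established in Theorem~\ref{thm: F41 S to V}, which identifies $\mathscr{F}_{[\rrz]}(S_{[\rrz]}(\beta)) \iso V_{[\rrz]}(\beta)$ for every positive root $\beta$. Since by Theorem~\ref{thm: BkMc} every simple $R^{\mathsf{g}}$-module arises, up to grading shift, as $\Image(\rmat{\um}) \iso \hd(\Stom)$ for a unique $\um \in \Z_{\ge 0}^{\N}$, the first order of business is to analyze the image under $\mathscr{F}_{[\rrz]}$ of the convolution $\Stom$. By the tensor property of $\mathscr{F}_{[\rrz]}$ (Theorem~\ref{thm:duality}(a)) and Theorem~\ref{thm: F41 S to V}, this image is a tensor monomial of the form
\[
V_{[\rrz]}(\beta_1)^{\otimes m_1} \otimes \cdots \otimes V_{[\rrz]}(\beta_{\N})^{\otimes m_{\N}},
\]
so the task reduces to identifying the head of such tensor products and to comparing the two parametrizations.

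First, I would verify that $V_{[\rrz]}(\beta_k) \otimes V_{[\rrz]}(\beta_l)$ has a simple head whenever $k < l$ (with respect to the induced order) and that $a_{M_k,M_l}$ has no zero at the relevant spectral parameter; this is exactly the statement that the pair $(\beta_k,\beta_l)$ is $[\rrz]$-simple forces the tensor product to be simple, while for non-simple pairs one exploits the Dorey's rule of Corollary~\ref{cor: Dorey F41 G21 completely} together with Theorem~\ref{Thm: basic properties} to control denominators. Applying Theorem~\ref{Thm: basic properties}(2)(ii), one concludes that $\hd\bigl(\mathscr{F}_{[\rrz]}(\Stom)\bigr)$ is simple, and hence $\mathscr{F}_{[\rrz]}(\Image(\rmat{\um}))$ is either zero or simple. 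Non-vanishing follows from the exactness of $\mathscr{F}_{[\rrz]}$ (Proposition~\ref{prop: particular F G}) combined with Theorem~\ref{thm: BkMc}(b), which produces a non-zero map with simple image. This proves part~(1) modulo injectivity of the correspondence $[\um] \mapsto [\mathscr{F}_{[\rrz]}(\Image(\rmat{\um}))]$.

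The main obstacle lies in establishing this injectivity and surjectivity of $\mathscr{F}_{[\rrz]}$ on simples. For injectivity, one uses the unitriangularity of Theorem~\ref{thm: BkMc}(c): the class $[\Stom]$ equals $[\Image(\rmat{\um})]$ plus a $\Z_{\ge 0}$-combination of strictly lower $[\Image(\rmat{\um'})]$, so distinctness in $R^{\mathsf{g}}\gmod$ is transferred to distinctness in $[\mC_{[\rrz]}^{(1)}]$ once we know that the counting functions match. Concretely, one shows that $\{[\mathscr{F}_{[\rrz]}(\Image(\rmat{\um}))]\}_{\um}$ remains $\Z$-linearly independent in $[\mC_{[\rrz]}^{(1)}]$, which follows since the PBW-type modules $V_{[\rrz]}(\beta_1)^{\otimes m_1}\otimes \cdots \otimes V_{[\rrz]}(\beta_{\N})^{\otimes m_{\N}}$ also form a $\Z$-basis of the Grothendieck ring (the classical fact of \cite{AK97,Chari02,Kas02,VV02}), so the change-of-basis matrix is upper-triangular with non-zero diagonal. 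Surjectivity then follows from Theorem~\ref{Thm: basic properties}(3) together with the observation that every simple module in $\mC_{[\rrz]}^{(1)}$ factors as such a head.

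Given part~(1), part~(2) is essentially formal: the exact tensor functor $\mathscr{F}_{[\rrz]}$ induces a ring homomorphism $[R^{\mathsf{g}}\gmod] \to [\mC_{[\rrz]}^{(1)}]$ which is surjective by the PBW basis argument and injective by the bijection on simples; combining with Theorem~\ref{Thm:categorification} for $\mathsf{g}$ and the parallel statements for $\F_Q^{(t)}$ (Theorem~\ref{thm: D43 E62}(2) and Theorem~\ref{thm: simplicity type E}) yields all isomorphisms in the two diagrams. Part~(3) then follows from Theorem~\ref{thm:categorification 2} applied to $\mathsf{g}$ of type $E_6$ or $D_4$ (both simply laced, so the quiver Hecke algebra is symmetric and the characteristic-zero hypothesis applies): the self-dual simples correspond to the upper global basis on the algebraic side and to the set of simples of $\mC_{[\rrz]}^{(1)}$ on the geometric side, while the classes of $\Stom$ correspond under $\Psi$ to the dual PBW monomials associated to $[\rrz]$. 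Finally part~(4) is immediate: any composition of two of the displayed ring isomorphisms is a ring isomorphism which, by the established simple-to-simple property of each $\F_Q^{(t)}$ and $\mathscr{F}_{[\rrz]}$, sends classes of simples to classes of simples.
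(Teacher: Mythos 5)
Your proposal takes the same route as the paper: establish that $\mathscr{F}_{[\rrz]}$ sends the PBW module $S_{[\rrz]}(\beta)$ to $V_{[\rrz]}(\beta)$ (Theorem~\ref{thm: F41 S to V}), then run the argument of \cite{KKKOIII,KKKOIV,KO17} by identifying heads of convolution/tensor monomials and invoking the triangularity of Theorem~\ref{thm: BkMc}(c). The paper itself does not spell these steps out — it simply says ``apply the same arguments of \cite{KKKOIII,KKKOIV,KO17}'' after Theorem~\ref{thm: F41 S to V} — so your write-up is essentially a fleshed-out account of what that citation hides, which is the right content.

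Two small technical slips worth flagging. First, the condition that makes Theorem~\ref{Thm: basic properties}(2) applicable to $V_{[\rrz]}(\beta_1)^{\otimes m_1}\otimes\cdots\otimes V_{[\rrz]}(\beta_\N)^{\otimes m_\N}$ is that $X(\beta_l)/X(\beta_k)$ is not a zero of the \emph{denominator} $d_{i_k,i_l}(z)$ when $\beta_k$ precedes $\beta_l$ in the PBW order, not a condition on $a_{M_k,M_l}$; and this is a consequence of Theorem~\ref{thm: simply laced minimal denom} (the zeros of $d_{i,j}$ only point in the direction of the convex order), not of the pair $(\beta_k,\beta_l)$ being $[\rrz]$-simple — the latter governs whether the \emph{two-fold} tensor product is simple, which is a different statement from the head of the full monomial being simple. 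Second, the non-vanishing of $\mathscr{F}_{[\rrz]}(\Image(\rmat{\um}))$ does not follow from exactness alone; the argument in the cited references uses that $\mathscr{F}_{[\rrz]}$ intertwines the quiver-Hecke $R$-matrix with a nonzero multiple of the quantum affine $R$-matrix, so that $\Image(\mathscr{F}_{[\rrz]}(\rmat{\um}))$ agrees (by exactness and nonvanishing of the latter) with the simple image from Theorem~\ref{Thm: basic properties}(2)(v). With those two points corrected, the rest of your argument for parts (2)--(4) is the standard formal consequence and is correct.
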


\section{Refining denominator formulas} \label{sec:Refine}

As we appointed, we shall refined the denominator formulas for
$U_q'(F_4^{(1)})$ and $U_q'(E_6^{(2)})$ by using the generalized
Schur-Weyl duality functor for them, investigated in the previous
section.

\subsection{\texorpdfstring{$U_q'(F_4^{(1)})$}{Uq'(F4(1))}}

To refine the denominator formulas for $U_q'(F_4^{(1)})$, we shall use $\widehat{\Upsilon}_{\mQ}$ in \eqref{eq:folded1}.

\begin{proposition}
We can refine $d_{3,4}(z)$ as follows:
\[
d^{F_4^{(1)}}_{3,4}(z) =(z+q_s^{3})(z+q_s^{7})(z+q_s^{11})(z+q_s^{13})(z+q_s^{17}).
\]
Hence we have
\[
d^{F_4^{(1)}}_{3,3}(z) =(z-q_s^{2})(z-q_s^{6})(z-q_s^{8})(z-q_s^{10})(z-q_s^{12})^{2}(z-q_s^{16})(z-q_s^{18}).
\]
\end{proposition}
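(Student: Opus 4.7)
The plan is to establish $\epsilon' = 0$ in the formula \eqref{eq: d34 F4(1)} for $d_{3,4}^{F_4^{(1)}}(z)$ using the generalized Schur--Weyl duality functor $\mathscr{F}_\mQ$, and then derive the refined $d_{3,3}^{F_4^{(1)}}(z)$ by applying Lemma \ref{lem:dvw avw} to the T-system surjection \eqref{eq: her443}.

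For the first step, I would invoke Theorem \ref{thm: F41 S to V} together with Theorem \ref{thm: F4(1) G2(1)}(1), which say that $\mathscr{F}_\mQ$ is exact, sends $S_\mQ(\beta)$ to $V_\mQ(\beta)$, and induces a bijection on isomorphism classes of simple objects. Consequently, if $(\alpha,\beta)$ is a $[\mQ]$-simple pair, then $S_\mQ(\alpha) \conv S_\mQ(\beta)$ is simple in $R^{E_6}\gmod$, and therefore $V_\mQ(\alpha) \otimes V_\mQ(\beta)$ is simple in $\mC^{(1)}_\mQ$. Combined with Corollary \ref{cor: order larger than 2} (the $F_4^{(1)}$-analogue), this forces the corresponding ratio of spectral parameters to \emph{not} be a root of $d_{3,4}^{F_4^{(1)}}(z)$. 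To show $(-q_s)^9$ is not such a root, it suffices to enumerate, from the folded AR quiver $\widehat{\Upsilon}_\mQ$ in \eqref{eq:folded1}, every pair $(\alpha,\beta)$ with $\widehat{\Omega}_\mQ(\alpha) = (3,p)$, $\widehat{\Omega}_\mQ(\beta) = (4,p')$, and $|p-p'| = 9/2$, and to verify $[\mQ]$-simpleness for each by showing that no $[\mQ]$-minimal intermediate pair structure exists; this is a finite combinatorial check directly on~\eqref{eq:folded1}.

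For the second step, I would apply Lemma \ref{lem:dvw avw} to the surjection \eqref{eq: her443}, $V(\varpi_4)_{-q_s^{-1}} \otimes V(\varpi_4)_{-q_s^{1}} \twoheadrightarrow V(\varpi_3)$, taking $W = V(\varpi_3)$. This yields
\[
\dfrac{d_{3,4}(-q_s^{-1}z)\, d_{3,4}(-q_s z)}{d_{3,3}(z)} \cdot \dfrac{a_{3,3}(z)}{a_{3,4}(-q_s^{-1}z)\, a_{3,4}(-q_s z)} \in \ko[z^{\pm 1}].
\]
Substituting the refined $d_{3,4}(z)$ together with the explicit values of $a_{3,3}(z)$ and $a_{3,4}(z)$ computed in Section~\ref{sec4:computations} produces a divisor bound on $d_{3,3}(z)$. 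Combined with the lower bound already furnished by $[\mQ]$-minimal pairs in $\widehat{\Upsilon}_\mQ$ (in particular, the order-two root at $q_s^{12}$ arising from a non-$[\mQ]$-simple pair of $[\mQ]$-length $\geq 2$, in the spirit of Proposition \ref{prop: d33 E6(1)} and Proposition \ref{prop:E61_ambiguity}), equality follows.

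The main obstacle is the $[\mQ]$-simpleness verification in the first step. While the general theory of twisted AR quivers from~\cite{OS16B,OS16C} and Theorem \ref{thm: simply laced minimal denom} reduces the question to a combinatorial check, one must still carefully enumerate every pair of folded coordinates in rows $3$ and $4$ of \eqref{eq:folded1} at folded distance $9/2$ and rule out, case by case, the existence of a $[\mQ]$-minimal decomposition of their sum. This is the step where a direct representation-theoretic obstruction replaces the brute-force \textsc{SageMath} verification originally used.
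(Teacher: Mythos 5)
Your proposal is correct and follows the same approach as the paper: transport simplicity of a $[\mQ]$-simple convolution product through the Schur--Weyl functor $\mathscr{F}_\mQ$ to conclude that $-q_s^9$ is not a root of $d_{3,4}(z)$, then deduce $d_{3,3}(z)$ from \eqref{eq: her443} via Lemma~\ref{lem:dvw avw}. The paper's proof is slimmer in that it simply exhibits the single $[\mQ]$-simple pair $\left( \sprt{000}{100},\sprt{101}{110} \right)$ at folded distance $9/2$ rather than proposing to enumerate all such pairs, which suffices because by Proposition~\ref{prop:dist_theta_defn}(1) the $[\mQ]$-distance is the same for every pair in $\Phi_{[\mQ]}(3,4)[9]$.
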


\begin{proof}
Note that the pair $\left( \sprt{000}{100},\sprt{101}{110} \right)$ is $[\mQ]$-simple and hence $\SmQ{000}{100} \conv \SmQ{101}{110}$ is simple.
By Theorem~\ref{thm: F4(1) G2(1)}, $V(\varpi_3)_{-q_s^{1}} \otimes V(\varpi_4)_{-q_s^{10}} $ is also simple and hence our assertion follows.
\end{proof}

\begin{proposition}
We can refine $d_{1,3}(z)$ as follows:
\[
d^{F_4^{(1)}}_{1,3}(z) =(z-q_s^{7})(z-q_s^{9})(z-q_s^{13})(z-q_s^{15}).
\]
\end{proposition}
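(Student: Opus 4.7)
The approach closely parallels that of the preceding proposition (the refinement of $d_{3,4}^{F_4^{(1)}}(z)$). The key tool is the generalized Schur--Weyl duality functor $\mathscr{F}_\mQ\colon R^{E_6}\gmod\to\mC^{(1)}_\mQ$ provided by Theorem~\ref{thm: F4(1) G2(1)}, which is exact and sends simples bijectively to simples; via this functor, simplicity of a convolution $S_\mQ(\alpha)\conv S_\mQ(\beta)$ in $R^{E_6}\gmod$ transports to simplicity of the corresponding tensor product $V_\mQ(\alpha)\otimes V_\mQ(\beta)$ in $\mC^{(1)}_\mQ$, and hence rules out the associated spectral value as a root of $d_{1,3}(z)$.

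The plan is in two steps. First, for each $t\in\{7,9,13,15\}$ I would display in the folded AR quiver~\eqref{eq:folded1} a $[\mQ]$-minimal pair $(\alpha,\beta)$ whose folded coordinates are $\widehat{\Omega}_\mQ(\alpha)=(1,c_1)$ and $\widehat{\Omega}_\mQ(\beta)=(3,c_3)$ with $|c_1-c_3|=t/2$; by Theorem~\ref{thm: simply laced minimal denom}(2) this already gives $(z-q_s^t)\mid d_{1,3}(z)$. Second, for each remaining value $t'\in\{3,5,11,17,19\}$ that could a~priori contribute a root coming from residue-$(1,3)$ positions in $\widehat{\Upsilon}_\mQ$, I would exhibit a $[\mQ]$-simple pair $(\alpha',\beta')$ with matching residues and $|c_1'-c_3'|=t'/2$. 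Since $\dist_{[\mQ]}(\alpha',\beta')=0$, the convolution $S_\mQ(\alpha')\conv S_\mQ(\beta')$ is simple, and applying $\mathscr{F}_\mQ$ we conclude that $V_\mQ(\alpha')\otimes V_\mQ(\beta')$ is simple in $\mC^{(1)}_\mQ$. Consequently the renormalized $R$-matrix at the corresponding spectral parameter is an isomorphism, ruling out $q_s^{t'}$ as a root of $d_{1,3}(z)$. Applying the same device to any additional $[\mQ]$-simple witness at distances $t\in\{7,9,13,15\}$ pins the multiplicity of each listed root at exactly one.

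The main obstacle is combinatorial: confirming $[\mQ]$-simplicity of explicit pairs of roots of the prescribed residues and distances by direct inspection of~\eqref{eq:folded1}. In the most transparent cases the weight $\alpha'+\beta'$ fails to lie in $\PR$, so $[\mQ]$-simplicity is essentially automatic in view of Proposition~\ref{pro: BKM minimal}; the remaining distances require checking that no strictly smaller sequence $\up'\prec^\tb_{[\mQ]}(\alpha',\beta')$ of the same weight exists, which can be carried out directly from the convex partial order read off from~\eqref{eq:folded1}. Once the $[\mQ]$-simple witnesses have been produced, the remainder of the argument is mechanical and entirely parallel to the preceding proposition.
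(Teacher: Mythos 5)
The proposal misidentifies both the target of the refinement and the mechanism used by the paper. The ambiguity being resolved in this proposition is specifically whether $q_s^9$ is a root of $d_{1,3}(z)$ at all; the other factors were already pinned down. Your plan treats the problem symmetrically, trying to exhibit $[\mQ]$-minimal pairs with $\alpha+\beta\in\PR$ for each of $t\in\{7,9,13,15\}$ and then to rule out additional roots, but this fails precisely at $t=9$: if you scan the residue-$1$ and residue-$3$ positions of $\widehat{\Upsilon}_\mQ$ in~\eqref{eq:folded1}, no pair at folded-coordinate distance $9/2$ sums to a positive root of $E_6$ (for instance $\sprt{100}{000}+\sprt{112}{210}$ has $\alpha_1$-coefficient $2$, exceeding the highest root). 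So the hypothesis of the ``in particular'' clause of Theorem~\ref{thm: simply laced minimal denom}(2) — that $\alpha+\beta\in\PR$ — is never met, and there is no $[\mQ]$-minimal pair of a positive root to display. Moreover, invoking Theorem~\ref{thm: simply laced minimal denom} here is circular: that theorem, in the $F_4^{(1)}$ case, is precisely the consolidation of the denominator formulas you are trying to establish; it is the target of Sections~4 and~7, not an independently available input.

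The paper's actual argument is not ``entirely parallel to the preceding proposition.'' The $d_{3,4}$ refinement works in one direction (exhibit a $[\mQ]$-simple pair, conclude simplicity of a tensor product, rule out a root), whereas the $d_{1,3}$ refinement runs in the opposite direction (establish that a tensor product is \emph{not} simple, conclude $q_s^9$ \emph{is} a root). That direction is harder: one cannot just read off simplicity from the poset. The paper builds an explicit non-zero composition
\[
\SmQ{101}{100}\conv\SmQ{111}{110}\hookrightarrow\SmQ{100}{000}\conv\SmQ{001}{100}\conv\SmQ{111}{110}\twoheadrightarrow\SmQ{100}{000}\conv\SmQ{112}{210},
\]
non-vanishing by Proposition~\ref{prop: not vanishing}, whose existence forces $\SmQ{100}{000}\conv\SmQ{112}{210}$ to be non-simple by Theorem~\ref{thm: BkMc}. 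Only then does the functor $\mathscr{F}_\mQ$ and Theorem~\ref{Thm: basic properties} convert this into the statement that $q_s^9$ is a root. Your proposal would need this morphism-chasing step, or an equivalent non-simplicity witness, in place of the $[\mQ]$-minimal-pair appeal; as written, it has a genuine gap at the one value that matters.
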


\begin{proof}
Note that we have
$\SmQ{101}{100} \hookrightarrow \SmQ{100}{000} \conv \SmQ{001}{100}$,
and $\SmQ{001}{100} \conv \SmQ{111}{110} \twoheadrightarrow \SmQ{112}{210}$.
Then we have
\begin{align*}
& \SmQ{101}{100} \conv \SmQ{111}{110}  \hookrightarrow \SmQ{100}{000} \conv \SmQ{001}{100} \conv \SmQ{111}{110}\twoheadrightarrow \SmQ{100}{000} \conv \SmQ{112}{210}
\end{align*}
where the composition is non-zero by Proposition~\ref{prop: not vanishing}.
Hence $\SmQ{100}{000} \conv \SmQ{112}{210}$ is not simple by Theorem~\ref{thm: BkMc}.  By applying the functor $\mathscr{F}_\mQ$, we can say that $q_s^9$ is a root of $d_{1,3}(z)$ by Theorem~\ref{Thm: basic properties}.
\end{proof}

\begin{proposition} \label{prop: F41 refine}
We have
\[
d^{F_4^{(1)}}_{2,3}(z)=(z+q_s^{5})(z+q_s^{7})(z+q_s^{9})(z+q_s^{11})^{2}(z+q_s^{13})(z+q_s^{15})(z+q_s^{17}).
\]
\end{proposition}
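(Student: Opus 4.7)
The plan is to pin down $\epsilon = 1$ in the expression for $d_{2,3}(z)$ recorded in \eqref{eq: d23 F4(1)}, which will then supply both the factor $(z+q_s^7)$ and raise the multiplicity at $-q_s^{11}$ to $2$. By Theorem~\ref{Thm: basic properties}(3) together with Theorem~\ref{thm: simply laced minimal denom}, this reduces to exhibiting a pair $(\alpha,\eta) \in \Phi^+ \times \Phi^+$ whose folded coordinates in $\widehat{\Upsilon}_{[\mQ]}$ lie in rows $\widehat{\imath}=2$ and $\widehat{\imath}=3$ respectively with $p$-distance $7/2$, and for which $V_\mQ(\alpha) \otimes V_\mQ(\eta)$ is reducible; equivalently, via Theorem~\ref{thm: F41 S to V}, for which $S_\mQ(\alpha) \conv S_\mQ(\eta)$ fails to be simple in $R^{E_6}\gmod$.

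The strategy parallels the proof of the preceding proposition for $d_{1,3}(z)$. I would locate a ``pivot'' positive root $\delta \in \Phi^+$ that simultaneously furnishes an embedding $S_\mQ(\gamma) \hookrightarrow S_\mQ(\alpha) \conv S_\mQ(\delta)$ coming from the six-term exact sequence \eqref{eq: 6 ses} of a $[\mQ]$-minimal pair $(\alpha,\delta)$ of some $\gamma \in \Phi^+$, together with a surjection $S_\mQ(\delta) \conv S_\mQ(\nu) \twoheadrightarrow S_\mQ(\eta)$ arising from a $[\mQ]$-minimal pair $(\delta,\nu)$ of $\eta$. Proposition~\ref{prop: not vanishing} then produces a non-zero composition
\[
S_\mQ(\gamma) \conv S_\mQ(\nu) \hookrightarrow S_\mQ(\alpha) \conv S_\mQ(\delta) \conv S_\mQ(\nu) \twoheadrightarrow S_\mQ(\alpha) \conv S_\mQ(\eta),
\]
and the mismatch between the head of the left-hand convolution and the image of this composition inside $S_\mQ(\alpha) \conv S_\mQ(\eta)$, detected via the bijection between simples and PBW-sequences of Theorem~\ref{thm: BkMc}(e), forces $S_\mQ(\alpha) \conv S_\mQ(\eta)$ to be non-simple. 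Applying the exact functor $\mathscr{F}_\mQ$ and using $\mathscr{F}_\mQ(S_\mQ(\beta)) \iso V_\mQ(\beta)$ from Theorem~\ref{thm: F41 S to V} then transports this non-simplicity to the quantum affine side, yielding the root $-q_s^7$ of $d_{2,3}(z)$ and hence $\epsilon \ge 1$.

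The combinatorial core, and the main obstacle, is to pin down a concrete quadruple $(\alpha, \delta, \nu, \gamma)$ in $\Phi^+_{E_6}$ satisfying these constraints with $\alpha$ in row $\widehat{\imath} = 2$ and $\eta = \delta + \nu$ in row $\widehat{\imath} = 3$ at $p$-distance $7/2$. Inspecting \eqref{eq:folded1}, a natural first attempt is $\alpha = \sprt{101}{000}$ at folded position $(2, 9)$ and $\eta = \sprt{112}{210}$ at $(3, 11/2)$, with $\delta$ chosen among the summands of the $[\mQ]$-minimal pair $(\sprt{001}{100}, \sprt{111}{110})$ of $\sprt{112}{210}$ already used in the preceding proof for $d_{1,3}(z)$; the matching $\gamma$ is then read off from the arrows of $\widehat{\Upsilon}_{[\mQ]}$. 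Should this first quadruple fail (for example, because the candidate $\gamma$ is not itself a positive root), alternative candidates at the same folded-distance are enumerated along row $\widehat{\imath} = 2$ against row $\widehat{\imath} = 3$ and tested in the same manner. Once a working chain is in place, all remaining steps are formal, relying only on exactness of $\mathscr{F}_\mQ$ and the bijective correspondence on simples already established in Theorem~\ref{thm: F41 S to V}.
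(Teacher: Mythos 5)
Your overall plan --- build a non-vanishing composition of morphisms in $R^{E_6}\gmod$ via Proposition~\ref{prop: not vanishing}, use Theorem~\ref{thm: BkMc} to force non-simplicity of a convolution $S_\mQ(\alpha)\conv S_\mQ(\eta)$ with $(\alpha,\eta)$ at folded $p$-distance $7/2$ between rows $2$ and $3$, then transport through $\mathscr{F}_\mQ$ to show $-q_s^7$ is a root of $d_{2,3}(z)$ --- is exactly the framework the paper uses. The transfer step and the appeal to Theorem~\ref{thm: BkMc}(d)/(e) are both sound.

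However, the \emph{single-pivot} template you describe is too rigid to close the argument here, and this is a genuine gap, not just a bookkeeping issue. Your scheme demands a root $\delta$ such that \emph{both} $\gamma=\alpha+\delta$ is a root with $(\alpha,\delta)$ a $[\mQ]$-minimal pair \emph{and} $\eta=\delta+\nu$ is a minimal-pair decomposition. For your proposed candidate $\alpha=\sprt{101}{000}$, $\eta=\sprt{112}{210}$, both choices of $\delta\in\{\sprt{001}{100},\sprt{111}{110}\}$ produce $\alpha+\delta\in\{\sprt{102}{100},\sprt{212}{110}\}$, neither of which is a root of $E_6$. You correctly anticipate the candidate might fail, but the issue goes deeper: at every row-$2$/row-$3$ pair at $p$-distance $7/2$ in $\widehat{\Upsilon}_{[\mQ]}$ the sums overflow the root system in the same way, because the relevant $\eta$ are large and adding a sizeable $\alpha$ pushes coefficients past those of the highest root. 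The paper's proof works around this with a genuinely \emph{two-stage} construction: it first builds an auxiliary non-zero morphism
\[
\SmQ{000}{010}\conv\SmQ{000}{111}\Lto\SmQ{000}{011}\conv\SmQ{000}{110}
\]
by one application of Proposition~\ref{prop: not vanishing}, then dualizes it to obtain a morphism $\SmQ{000}{110}\conv\SmQ{000}{011}\to\SmQ{000}{111}\conv\SmQ{000}{010}$, and only then feeds that \emph{compound} map (not a single six-term arrow) as the $\varphi_2$ in a second application of Proposition~\ref{prop: not vanishing} with $L=\SmQ{111}{221}$, $M_2=\SmQ{000}{011}$, $M_3=\SmQ{111}{210}$. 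The endpoint $\SmQ{000}{111}\conv\SmQ{000}{010}\conv\SmQ{111}{210}$ is simple and distinct from the head of $\SmQ{000}{110}\conv\SmQ{111}{221}$, giving the contradiction. Note also that the paper's target pair is $(\sprt{000}{110},\sprt{111}{221})$ at $(2,1)$ and $(3,9/2)$, not your candidate. To rescue your approach you would need to allow $\varphi_2$ in Proposition~\ref{prop: not vanishing} to be a previously constructed compound morphism rather than insisting on a single minimal-pair exact sequence --- which is precisely the additional flexibility the paper's proof exploits.
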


\begin{proof}
Note that we have
$\SmQ{000}{111} \hookrightarrow \SmQ{000}{001} \conv \SmQ{000}{110}, \ \ \SmQ{000}{010} \conv \SmQ{000}{001} \twoheadrightarrow \SmQ{000}{011}$.
Then we have
\begin{align*}
& \SmQ{000}{010} \conv \SmQ{000}{111}  \hookrightarrow \SmQ{000}{010} \conv \SmQ{000}{001} \conv \SmQ{000}{110} \twoheadrightarrow \SmQ{000}{011} \conv \SmQ{000}{110}
\end{align*}
whose composition is non-zero by Proposition~\ref{prop: not vanishing}.

Similarly, we have the homomorphisms $\SmQ{111}{221} \hookrightarrow \SmQ{000}{011} \conv \SmQ{111}{210}$.
Then we have
\begin{align*}
& \SmQ{000}{110} \conv \SmQ{111}{221}  \hookrightarrow \SmQ{000}{110} \conv \SmQ{000}{011} \conv \SmQ{111}{210} \twoheadrightarrow \SmQ{000}{111} \conv \SmQ{000}{010}\conv \SmQ{111}{210}
\end{align*}
where the composition is non-zero by Proposition~\ref{prop: not vanishing} again.

Since $\SmQ{000}{111} \conv \SmQ{000}{010}\conv \SmQ{111}{210}$ is simple, we can conclude that $\SmQ{000}{110} \conv \SmQ{111}{221} $ is not simple by Theorem~\ref{thm: BkMc}.
By applying the functor $\mathscr{F}_\mQ$, we can say that $-q_s^7$ is a root of $d_{2,3}(z)$.
\end{proof}

\begin{proposition}
We have
\[
d^{F_4^{(1)}}_{2,2}(z)=(z-q_s^{4})(z-q_s^{6})(z-q_s^{8})^{2}(z-q_s^{10})^{2}(z-q_s^{12})^{2} (z-q_s^{14})^2(z-q_s^{16})(z-q_s^{18}).
\]
\end{proposition}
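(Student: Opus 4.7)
The ambiguous formula~\eqref{eq: d22 F4(1)} differs from the claimed expression for $d_{2,2}^{F_4^{(1)}}(z)$ only in the exponent $\epsilon \in \{0,1\}$ appearing on both $(z-q_s^{8})$ and $(z-q_s^{10})$, so the plan is to show $\epsilon = 1$ by proving that $q_s^{8}$ and $q_s^{10}$ are roots. For each of these spectral parameters, I would produce a pair $(\alpha,\beta)$ of residue-$2$ positive roots in row $\widehat{\imath}=2$ of the folded AR quiver $\widehat{\Upsilon}_{[\mQ]}$ from~\eqref{eq:folded1} at the appropriate coordinate distance ($4$ for $q_s^{8}$, $5$ for $q_s^{10}$, as $\mathsf{d}=2$ and $\kappa = 2+2$ is even), and then exhibit a non-simple convolution $\SmQ{\alpha} \conv \SmQ{\beta}$. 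By Theorem~\ref{thm: F4(1) G2(1)}(1), the functor $\mathscr{F}_\mQ$ sends simples bijectively to simples, so non-simplicity of the convolution translates into reducibility of $V(\varpi_2)_a \otimes V(\varpi_2)_b$ in $\mathscr{C}^{(1)}_\mQ$ at the corresponding parameter ratio, which by Theorem~\ref{Thm: basic properties}(3) forces that ratio to be a root of $d_{2,2}(z)$.

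Following the template of Proposition~\ref{prop: F41 refine}, non-simplicity of $\SmQ{\alpha} \conv \SmQ{\beta}$ is established by a chain of inclusions and surjections coming from the six-term exact sequences in Theorem~\ref{thm: BkMc}(f), glued together via Proposition~\ref{prop: not vanishing}. For the root $q_s^{8}$, a natural choice is $(\alpha,\beta) = (\sprt{011}{210}, \sprt{112}{221})$; decomposing each root through successive $[\mQ]$-minimal pairs (e.g.\ using the injection $\SmQ{011}{210} \hookrightarrow \SmQ{001}{100}\conv\SmQ{010}{110}$ and the dual surjection on the $\sprt{112}{221}$ side) yields a non-zero composition
\[
\SmQ{011}{210} \conv \SmQ{112}{221} \ \Lto \ \SmQ{\gamma_1} \conv \SmQ{\gamma_2} \conv \SmQ{\gamma_3},
\]
whose target is $[\mQ]$-simple with associated sequence $\um'\prec^{\tb}_{[\mQ]} \um_{(\alpha,\beta)}$. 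Since distinct sequences yield distinct simple heads by Theorem~\ref{thm: BkMc}(d), the image simple $\Image(\rmat{\um'})$ differs from the unique simple head $\Image(\rmat{\um_{(\alpha,\beta)}})$ of the source convolution, which (by the uniqueness of simple heads for convolutions of real simples from Theorem~\ref{thm: socle head}) compels the source to admit a second composition factor. An entirely parallel chain, using a pair such as $(\sprt{000}{110}, \sprt{112}{221})$ at coordinate distance~$5$, then produces the root $q_s^{10}$.

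The main obstacle is the combinatorial one: at each stage of the chain one must choose the right minimal-pair decomposition so that the module playing the ``middle'' role in Proposition~\ref{prop: not vanishing} is simple (typically a single $\SmQ{\gamma}$), and one must verify that the associated sequence $\um'$ of the target genuinely satisfies $\um'\prec^{\tb}_{[\mQ]} \um_{(\alpha,\beta)}$, so that the non-zero composition cannot factor through the simple head of the source. Once the two chains, for coordinate distances $4$ and $5$, are constructed and verified to be non-zero, applying $\mathscr{F}_\mQ$ to the resulting non-simple convolutions immediately yields $q_s^{8}, q_s^{10}$ as roots of $d_{2,2}(z)$, forcing $\epsilon = 1$ and hence the stated denominator formula.
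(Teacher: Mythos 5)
Your argument would show that $q_s^8$ and $q_s^{10}$ are roots of $d_{2,2}(z)$, but not that they are \emph{double} roots, which is what the proposition claims. (The factor $(z-q_s^8)^{\epsilon}$ in \eqref{eq: d22 F4(1)} should be read, in light of the stated conclusion and by analogy with the $(z+q_s^{11})^{\epsilon+1}$ factor in \eqref{eq: d23 F4(1)}, as $(z-q_s^8)^{1+\epsilon}$ rather than with $\epsilon$ as the entire exponent.) Exhibiting a single non-zero composition from $\SmQ{\alpha}\conv\SmQ{\beta}$ to an $[\mQ]$-simple target produces one extra composition factor, so the convolution is not simple and, via $\mathscr{F}_\mQ$ and Theorem~\ref{Thm: basic properties}, the ratio is a root. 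But this only gives multiplicity $\geq 1$, and the question is whether the multiplicity is $1$ or $2$.

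The paper's proof pivots on Lemma~\ref{lem:simplepole} used in the contrapositive: \emph{assume} the pole at $q_s^8$ is simple; then $\VmQ{000}{110}\otimes\VmQ{122}{321}$ has composition length exactly $2$, hence so does $\SmQ{000}{110}\conv\SmQ{122}{321}$. The proof then constructs \emph{two} non-zero compositions into $\SmQ{122}{321}\conv\SmQ{000}{110}$ whose sources correspond to two distinct triple sequences, both different from the pair sequence; combined with Theorem~\ref{thm: BkMc}(c),(d) this forces the composition length to be at least $3$, contradicting the assumption. Hence the pole order is $\geq 2$, and the upper bound from Lemma~\ref{lem:dvw avw} makes it exactly $2$. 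You never invoke Lemma~\ref{lem:simplepole}, and your single-chain construction cannot distinguish a simple pole from a double one. A further, minor point: once $q_s^8$ is settled, the multiplicity of $q_s^{10}$ follows from Lemma~\ref{Lem: aij and dij} (the $z\mapsto p^*/z$ pairing, with $p^*=q_s^{18}$), so your second parallel chain is unnecessary.
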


\begin{proof}
Assume the order of the root $q_s^8$ is $1$. Then, by Lemma~\ref{lem:simplepole}, the composition length
of
$
\VmQ{000}{110} \otimes  \VmQ{122}{321}
$
is $2$. By applying the functor $\mathscr{F}_\mQ$, the composition length
of $\SmQ{000}{110} \conv \SmQ{122}{321}$ is $2$, too.

Applying the same argument of Proposition~\ref{prop: F41 refine}, we have the following non-zero homomorphisms:
\begin{align*}
& \SmQ{011}{110} \conv \SmQ{000}{111} \conv\SmQ{111}{210} \to \SmQ{122}{321} \conv \SmQ{000}{110},  \ \  \SmQ{111}{221} \conv \SmQ{001}{100} \conv \SmQ{010}{110} \to \SmQ{122}{321} \conv \SmQ{000}{110}.
\end{align*}
Thus the composition length of $\SmQ{000}{110} \conv \SmQ{122}{321}$ can not be $2$ by Theorem~\ref{thm: BkMc}.
\end{proof}

\subsection{\texorpdfstring{$U_q'(E_6^{(2)})$}{Uq'(E6(2))}}

To refine the denominator formulas for $U_q'(E_6^{(2)})$, we shall use $\Gamma_{Q}$ in \eqref{eq: E6 AR quiver}.

\begin{proposition}
We have
\[
d^{E_6^{(2)}}_{2,2}(z)= (z-q^2)(z-q^4)(z-q^6)(z-q^8)^2(z-q^{10})(z+q^4)(z+q^6)^2(z+q^8)(z+q^{10})(z+q^{12}).
\]
\end{proposition}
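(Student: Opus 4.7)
The plan. By Proposition~\ref{prop: d22 E6(2)}, the denominator $d_{2,2}^{E_6^{(2)}}(z)$ divides the stated right-hand side up to the ambiguity $\epsilon \in \{0,1\}$ for the factors $(z+q^4)^\epsilon(z+q^8)^\epsilon$, while the remaining factors have already been forced to divide $d_{2,2}^{E_6^{(2)}}(z)$ in the course of the proof of Proposition~\ref{prop: d22 E6(2)} (together with the known root $z-q^2$ from Hernandez). Thus it suffices to establish $\epsilon = 1$, i.e.\ to produce both $-q^4$ and $-q^8$ as roots of $d_{2,2}^{E_6^{(2)}}(z)$. The strategy I would pursue is exactly the one already used in Section~\ref{sec:Refine} to refine the $F_4^{(1)}$ denominators, transposed to the $E_6^{(2)}$ setting via the exact generalized Schur--Weyl duality functor $\F_Q^{(2)}\colon R^{E_6}\gmod \to \mathcal{C}_Q^{(2)}$ from Theorem~\ref{thm: D43 E62}, which sends simples to simples bijectively. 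Non-simplicity of a convolution in $R^{E_6}\gmod$ therefore transfers to reducibility of the corresponding tensor product in $\mathcal{C}_Q^{(2)}$, and Theorem~\ref{Thm: basic properties} turns such reducibility into a root of the denominator.

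Concretely, for each missing factor I would locate in the AR quiver $\Gamma_Q$ of Example~\ref{ex:twisted AR quiver E} positive roots $\alpha,\beta$ and auxiliary roots $\gamma$ admitting $[Q]$-minimal decompositions, and build two distinct non-zero compositions of the form
\[
S_Q(\alpha) \conv S_Q(\beta) \hookrightarrow S_Q(\delta_1) \conv \cdots \conv S_Q(\delta_k) \twoheadrightarrow S_Q(\eta_1) \conv \cdots \conv S_Q(\eta_\ell),
\]
using the six-term exact sequences of Theorem~\ref{thm: BkMc}(f) attached to the $[Q]$-minimal pairs and the non-vanishing of composition guaranteed by Proposition~\ref{prop: not vanishing}. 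If the two chains terminate at non-isomorphic simple convolutions, Theorem~\ref{thm: BkMc}(b) forces $S_Q(\alpha)\conv S_Q(\beta)$ to have composition length at least two, so that $\F_Q^{(2)}\bigl(S_Q(\alpha)\conv S_Q(\beta)\bigr) = V_Q^{(2)}(\alpha) \otimes V_Q^{(2)}(\beta)$ is reducible, and the spectral ratio then feeds into the root locus of $d_{2,2}^{E_6^{(2)}}(z)$.

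The main obstacle is producing the \emph{negative} roots $-q^4$ and $-q^8$. A pair $(\alpha,\beta)$ with both $\widehat{\Omega}_Q(\alpha)$ and $\widehat{\Omega}_Q(\beta)$ on row $i=4$ of $\Gamma_Q$---the unique row with $\overline{\sigma}(i)=2$ in the $E_6^{(2)}$-labeling---receives, via the $\star$-twist of~\eqref{eq:star} applied with $i=\sigma(i)=4$, the spectral ratio $(-q)^{p_\beta-p_\alpha}=q^{\mathrm{even}}$, since all positions on row $4$ are odd and their differences are even. Consequently the positive factors $(z-q^{2k})$ of $d_{2,2}^{E_6^{(2)}}(z)$ are directly produced by row-$4$ pairs, but the negative factors $(z+q^{2k})$ cannot be obtained from a single row-$4$ pair. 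To overcome this, the chains of homomorphisms must traverse positive roots of \emph{other} residues and be combined with the Dorey-type surjection $V(\varpi_1)_{-q^{-1}}\otimes V(\varpi_1)_{-q}\twoheadrightarrow V(\varpi_2)$ of~\eqref{eq: E62 112} (and its dual appearing in~\eqref{eq:p121}), so that on the $\mathcal{C}_Q^{(2)}$-side the resulting reducibility manifests itself as reducibility of $V(\varpi_2)_a\otimes V(\varpi_2)_b$ at a ratio $b/a$ carrying the required sign flip to $-q^4$ or $-q^8$. Threading these Dorey bridges through quiver-Hecke chains so as to land on exactly the two ambiguous factors is the delicate point of the argument.
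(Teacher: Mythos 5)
Your high-level plan---transport non-simplicity of quiver Hecke convolutions along the exact, simple-preserving functor $\F_Q^{(2)}$ from Theorem~\ref{thm: D43 E62} and convert it into roots of $d_{2,2}^{E_6^{(2)}}(z)$ via Theorem~\ref{Thm: basic properties}---is exactly the approach the paper takes, as is the intention to use chains of morphisms built from the six-term exact sequences of Theorem~\ref{thm: BkMc} together with Proposition~\ref{prop: not vanishing}. However, your concrete analysis of which residues of $\Gamma_Q$ are relevant is wrong, and this error both mislocates the sign mechanism and leads you to propose an unnecessary detour through ``Dorey bridges.''

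You assert that row $i=4$ of $\Gamma_Q$ is ``the unique row with $\overline{\sigma}(i)=2$.'' This is false: the paper states explicitly after Definition~\ref{def: overline sigma} that $\overline{\sigma}(4)=3$, and from the folding diagram \eqref{eq: F_4} one reads off $\overline{\sigma}^{-1}(2)=\{3,5\}$, i.e.\ the modules $V(\varpi_2)$ of type $E_6^{(2)}$ arise from positive roots of \emph{residues $3$ and $5$} in $\Gamma_Q$, not residue $4$. This matters for precisely the obstacle you flagged: because $3 < \sigma(3)=5$ and $5 > \sigma(5)=3$, the $\star$-twist of~\eqref{eq:star} carries a sign $+1$ on residue $3$ and $-1$ on residue $5$. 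Row-$3$ and row-$5$ positions in $\Gamma_Q$ are all even, so any cross-row pair already lands at a spectral ratio $-q^{\mathrm{even}}$; the negative factors of $d_{2,2}^{E_6^{(2)}}(z)$ therefore arise directly from such pairs, with no need to thread Dorey homomorphisms through the chain. The paper's proof takes exactly the pair $\left({\scriptstyle\sprt{000}{011}},{\scriptstyle\sprt{111}{221}}\right)$ of residues $(3,5)$ at positions $(2,6)$, giving the ratio $-q^4$, builds a non-vanishing composition
\[
\SoQ{000}{011}\conv\SoQ{111}{221} \hookrightarrow \SoQ{000}{011}\conv\SoQ{010}{110}\conv\SoQ{101}{111} \twoheadrightarrow \SoQ{000}{010}\conv\SoQ{010}{111}\conv\SoQ{101}{111}
\]
to force the left-hand side non-simple, and then applies $\F_Q^{(2)}$. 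Note also that the paper only establishes $(z+q^4)\mid d_{2,2}(z)$; the factor $(z+q^8)$ comes for free since the single $\epsilon$ in Proposition~\ref{prop: d22 E6(2)} couples the two ambiguous factors through the constraint of Lemma~\ref{Lem: aij and dij}, so you need not produce both roots separately as your write-up suggests.
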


\begin{proof}
Note that we have $\SoQ{000}{011} \hookrightarrow \SoQ{000}{010} \conv \SoQ{000}{001}, \ \ \SoQ{000}{001} \conv \SoQ{010}{110} \twoheadrightarrow \SoQ{010}{111}$.
Then we have
\begin{align*}
& \SoQ{000}{011} \conv \SoQ{010}{110}  \hookrightarrow
\SoQ{000}{010} \conv \SoQ{000}{001} \conv \SoQ{010}{110}
\twoheadrightarrow \SoQ{000}{010} \conv \SoQ{010}{111}.
\end{align*}
where the composition is non-zero by Proposition~\ref{prop: not vanishing}.
Since $\SoQ{111}{221} \hookrightarrow \SoQ{010}{110} \conv \SoQ{101}{111}$,
we have
\begin{align*}
& \SoQ{000}{011} \conv \SoQ{111}{221}  \hookrightarrow \SoQ{000}{011} \conv \SoQ{010}{110} \conv \SoQ{101}{111} \twoheadrightarrow \SoQ{000}{010} \conv \SoQ{010}{111} \conv \SoQ{101}{111}
\end{align*}
where the composition is non-zero by Proposition~\ref{prop: not vanishing}, again.
By applying the functor $\F^{(2)}_Q$, we can say that $-q^4$ is a root of $d_{2,2}(z)$ by Theorem~\ref{Thm: basic properties}.
\end{proof}

\begin{proposition}
We have 
\[
d^{E_6^{(2)}}_{2,3}(z)= (z^2+q^6)(z^2+q^{10})^2(z^2+q^{14})^2(z^2+q^{18})^2(z^2+q^{22}).
\]
\end{proposition}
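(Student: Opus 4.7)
The plan is to refine the denominator $d^{E_6^{(2)}}_{2,3}(z)$, which was only determined up to an exponent $\epsilon \in \{0,1\}$ in~\eqref{eq: d23 E6(2)}, by showing that the multiplicities of the roots at $z^2 = -q^{10}$ and $z^2 = -q^{14}$ are both $2$ (i.e., $\epsilon = 1$). As in the proof of the refined $d^{E_6^{(2)}}_{2,2}(z)$ given just above, the strategy is to transport information about composition lengths from the quiver Hecke side through the exact functor $\F^{(2)}_Q \colon R^{E_6}\gmod \to \Ca^{(2)}_Q$ from Proposition~\ref{prop: particular D43 E62}, which sends simples to simples bijectively by Theorem~\ref{thm: D43 E62}.

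First, I would identify two pairs of positive roots $(\alpha, \beta)$ and $(\alpha', \beta')$ in the AR quiver $\Gamma_Q$ of \eqref{eq: E6 AR quiver} such that $\F^{(2)}_Q(S_Q(\alpha)) \iso V(\varpi_2)_{x}$ and $\F^{(2)}_Q(S_Q(\beta)) \iso V(\varpi_3)_{y}$ with $y/x = \pm\sqrt{-1}\,q^{5}$, and analogously for the root $\pm\sqrt{-1}\,q^{7}$. The correspondence between indices of $E_6^{(2)}$ fundamentals and nodes of $\Gamma_Q$ is fixed by Definition~\ref{def: VQ(beta)} together with~\eqref{eq:star}, and compatibility with $d^{E_6^{(2)}}_{2,3}(z) = d^{E_6^{(1)}}_{3,4}(\sqrt{-1}z)d^{E_6^{(1)}}_{3,4}(-\sqrt{-1}z)$ singles out the correct dimension vectors. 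Next, following the templates of the previous two propositions, I would build two distinct non-zero homomorphisms
\[
S_Q(\gamma_1)\conv S_Q(\gamma_2)\conv S_Q(\gamma_3)\;\Lto\; S_Q(\beta)\conv S_Q(\alpha)
\]
and
\[
S_Q(\gamma'_1)\conv S_Q(\gamma'_2)\conv S_Q(\gamma'_3)\;\Lto\; S_Q(\beta)\conv S_Q(\alpha),
\]
where each $(\gamma_i)$ is a decomposition arising from $[Q]$-minimal subpairs of $(\alpha, \beta)$ (splittings along the two ways that the pair is non-simple in $\Gamma_Q$). Non-vanishing of these compositions is guaranteed by two applications of Proposition~\ref{prop: not vanishing}, exactly as in the proofs of the refined $d^{F_4^{(1)}}_{2,3}$ (Proposition~\ref{prop: F41 refine}) and the refined $d^{E_6^{(2)}}_{2,2}$.

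Having produced two linearly independent non-simple constituents of $S_Q(\alpha)\conv S_Q(\beta)$ (distinct because they arise from genuinely different $[Q]$-subpairs, hence yield different heads by Theorem~\ref{thm: BkMc}(d)), we conclude that the composition length of $S_Q(\alpha)\conv S_Q(\beta)$ is at least $3$. Applying the exact, simple-preserving functor $\F^{(2)}_Q$ transports this to the conclusion that the composition length of $V(\varpi_2)_x \otimes V(\varpi_3)_y$ is at least $3$ at the spectral parameter in question. By Lemma~\ref{lem:simplepole}, a simple pole of $\Rnorm{V(\varpi_2),V(\varpi_3)}(z)$ would force composition length exactly $2$; therefore the pole (equivalently, the root of $d_{2,3}(z)$) must have order at least $2$. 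Combined with the upper bound already obtained in~\eqref{eq: d23 E6(2)}, this forces $\epsilon = 1$.

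The main obstacle will be bookkeeping: correctly translating the twisted identification from~\eqref{eq:star} to match positive roots of $E_6$ in $\Gamma_Q$ with the fundamental modules $V(\varpi_2)_x$ and $V(\varpi_3)_y$ of $U_q'(E_6^{(2)})$ at the precise spectral parameters where $-q^{10}$ and $-q^{14}$ appear (one of which involves $\sqrt{-1}$), and then exhibiting two genuinely different $[Q]$-subpair splittings of the corresponding pair in $\Gamma_Q$. Once the two minimal subpairs are located, the non-vanishing compositions run parallel to those already written out for $d^{E_6^{(2)}}_{2,2}$ and $d^{F_4^{(1)}}_{2,3}$, so no new tools beyond Theorem~\ref{thm: BkMc}, Proposition~\ref{prop: not vanishing}, and Lemma~\ref{lem:simplepole} should be required.
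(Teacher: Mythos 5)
Your proposal is correct and follows essentially the same approach as the paper's proof: identify the pair $(\al,\be)$ in $\Gamma_Q$ whose image under $\F^{(2)}_Q$ gives $V(\varpi_2)_x \otimes V(\varpi_3)_y$ at the critical spectral parameter, produce two distinct non-vanishing compositions via Proposition~\ref{prop: not vanishing} to lower-bound the composition length of the corresponding convolution product, and combine this with Lemma~\ref{lem:simplepole} and Theorem~\ref{thm: D43 E62} to force the root order to exceed $1$. The paper's proof runs the contradiction in the opposite direction (assume order $1$, derive composition length $2$, exhibit the two compositions, contradict via Theorem~\ref{thm: BkMc}) and makes explicit that the specific pair is $\left(\sprt{000}{011},\sprt{112}{321}\right)$ with its two $[Q]$-minimal splittings through $\left(\sprt{101}{110},\sprt{011}{211}\right)$ and $\left(\sprt{001}{100},\sprt{111}{221}\right)$, but this is exactly what your plan predicts; only a single root needs to be checked because both ambiguous factors share the same exponent $\epsilon$, a point you already note.
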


\begin{proof}
Assume the root $\sqrt{-1} q^5$ is of order $1$. Then, by Lemma~\ref{lem:simplepole}, the composition length
of
$
\VoQ{112}{321} \otimes  \VoQ{000}{011}
$
is $2$.
By Theorem~\ref{thm: D43 E62}, the composition length
of $\SoQ{112}{321} \conv \SoQ{000}{011}$ is $2$, too.

In a similar way of the proof of the previous proposition, we have non-zero homomorphisms
\begin{align*}
& \SoQ{000}{011} \conv \SoQ{112}{321}  \hookrightarrow \SoQ{000}{011} \conv \SoQ{101}{110} \conv \SoQ{011}{211} \twoheadrightarrow \SoQ{000}{010} \conv \SoQ{101}{111} \conv \SoQ{011}{211}, \\
& \SoQ{000}{011} \conv \SoQ{112}{321}  \hookrightarrow \SoQ{000}{011} \conv \SoQ{001}{100} \conv \SoQ{111}{221} \twoheadrightarrow  \SoQ{001}{111} \conv \SoQ{111}{221}.
\end{align*}
By taking dual, one can prove that the composition length of $\SoQ{112}{321} \conv \SoQ{000}{011}$ can not be $2$  by Theorem~\ref{thm: BkMc}.
\end{proof}

\begin{proposition}
We have
\[
d^{E_6^{(2)}}_{3,4}(z) = (z^2-q^6)(z^2-q^{10})(z^2-q^{14})^2(z^2-q^{18})^2(z^2-q^{22}).
\]
\end{proposition}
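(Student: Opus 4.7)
The plan is to resolve the remaining ambiguity $\epsilon\in\{0,1\}$ of \eqref{eq: d34 E6(2)} in favor of $\epsilon=1$ by the same strategy employed in the preceding two propositions: transport the claim across the exact generalized Schur--Weyl functor $\F^{(2)}_Q\colon R^{E_6}\gmod\to\Ca^{(2)}_Q$ (Theorems~\ref{thm: D43 S to V} and~\ref{thm: D43 E62}) and produce the missing root by establishing non-simplicity of a concrete convolution in $R^{E_6}\gmod$. Moreover, the functional equation of Lemma~\ref{Lem: aij and dij} with $p^{*}=-q^{12}$ forces the two ambiguous exponents $(z^2-q^6)^{\epsilon}$ and $(z^2-q^{18})^{\epsilon}$ to coincide, so verifying one of the two roots will suffice.

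Concretely, I would select a non-$[Q]$-simple comparable pair $(\alpha,\beta)$ in $\Gamma_Q$ of \eqref{eq: E6 AR quiver} whose residues lie in $\{2,4\}$---the $E_6$-residues matched via Definition~\ref{def: VQ(beta)} to $E_6^{(2)}$-indices $4$ and $3$---and whose $p$-coordinates differ by~$3$; a natural candidate is $(\alpha,\beta)=(\sprt{001}{110},\sprt{000}{111})$. Following the template of the preceding two proofs, I would decompose one of these factors through a $[Q]$-minimal subpair via Theorem~\ref{thm: BkMc}(f), then apply Proposition~\ref{prop: not vanishing} twice to chain the intertwiners into a non-zero composition landing in a triple convolution
\[
\SoQ{\alpha}\conv\SoQ{\beta}\;\longrightarrow\;\SoQ{\gamma_1}\conv\SoQ{\gamma_2}\conv\SoQ{\gamma_3}
\]
whose image is simple but differs in isomorphism class from $\hd(\SoQ{\alpha}\conv\SoQ{\beta})$. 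Non-simplicity of $\SoQ{\alpha}\conv\SoQ{\beta}$ then follows by Theorem~\ref{thm: BkMc}(d)(e) together with Theorem~\ref{thm: socle head}; applying $\F^{(2)}_Q$ yields a non-simple tensor product of the form $V(\varpi_3)_x\otimes V(\varpi_4)_y$ with $y/x=\pm q^3$, and Theorem~\ref{Thm: basic properties}(2)(ii) pins $\pm q^3$ as a root of $d^{E_6^{(2)}}_{3,4}(z)$.

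The main obstacle is the combinatorial bookkeeping that was also required in the proof of the $d^{E_6^{(2)}}_{2,3}(z)$ refinement: the intermediate roots $\gamma_1,\gamma_2,\gamma_3$ must be chosen so that the middle factor in each of the two applications of Proposition~\ref{prop: not vanishing} is a simple $R(\alpha_k)$-module, and so that the resulting triple convolution has a simple head of highest weight distinct from that of $\hd(\SoQ{\alpha}\conv\SoQ{\beta})$. These conditions are local in $\Gamma_Q$ and are verified by reading off the $[Q]$-minimal adjacencies around the chosen pair from \eqref{eq: E6 AR quiver}, in the same explicit manner as was carried out for the preceding two propositions.
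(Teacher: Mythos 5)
Your proposal is correct and is essentially the paper's own argument. You identify the same candidate pair $\bigl(\sprt{001}{110},\sprt{000}{111}\bigr)$ (residues $2$ and $4$ in $\Gamma_Q$, mapping to $E_6^{(2)}$-indices $4$ and $3$ via $\overline{\sigma}$, with $p$-distance $3$), and the same mechanism: inject one factor into a $[Q]$-minimal pair, use Proposition~\ref{prop: not vanishing} to chain a non-zero composition, conclude non-simplicity of the convolution, transport through $\F^{(2)}_Q$, and pin $z^2=q^6$ as a root; the paper phrases this as a contradiction, while you run it forward, which is logically equivalent. Two small imprecisions in your sketch: for this particular pair a \emph{single} application of Proposition~\ref{prop: not vanishing} suffices, and the codomain of the paper's composition is the \emph{double} convolution $\SoQ{000}{110}\conv\SoQ{001}{111}$, not a triple — the ``two applications landing in a triple convolution'' structure you describe is what the paper actually uses in the neighbouring $d^{E_6^{(2)}}_{2,3}$ and $d^{F_4^{(1)}}_{2,2}$ refinements, not here.
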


\begin{proof}
Assume that $q^3$ is not a root of $d_{3,4}(z)$. Theorem~\ref{Thm: basic properties} tells that
$
\VoQ{000}{111} \otimes  \VoQ{001}{110}
$
is simple.
By Theorem~\ref{thm: D43 E62}, it implies that
$\SoQ{000}{111} \conv \SoQ{001}{110}$ is simple, too.

However, we can obtain a non-zero homomorphism
\[
\SoQ{000}{111} \conv \SoQ{001}{110}  \hookrightarrow \SoQ{000}{110} \conv \SoQ{000}{001} \conv \SoQ{001}{110} \twoheadrightarrow \SoQ{000}{110} \conv \SoQ{001}{111},
\]
which yields contradiction by Theorem~\ref{thm: BkMc}.
\end{proof}

\begin{proposition}
\label{prop:E62_ambiguity}
We can refine $d_{3,3}(z)$ as follows: For some $\epsilon' \in \{0,1\}$, we have
\[
d_{3,3}(z)=
(z^2-q^4)(z^2-q^8)^{2}(z^2-q^{12})^{2+\epsilon'}(z^2-q^{16})^{3}(z^2-q^{20})^2(z^2-q^{24}).
\]
\end{proposition}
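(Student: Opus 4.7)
The starting point is the estimate~\eqref{eq: d33 E6(2)}, which reads
\[
d_{3,3}(z)=(z^2-q^4)(z^2-q^8)^{1+\epsilon}(z^2-q^{12})^{2+\epsilon'}(z^2-q^{16})^{2+\epsilon}(z^2-q^{20})^2(z^2-q^{24})
\]
for some $\epsilon,\epsilon'\in\{0,1\}$. The plan is to fix $\epsilon=1$ (forcing $(z^2-q^8)$ to appear with multiplicity $2$ and $(z^2-q^{16})$ with multiplicity $3$) while leaving $\epsilon'$ undetermined. The argument will follow the same template as the refinements of $d^{E_6^{(2)}}_{2,2}$, $d^{E_6^{(2)}}_{2,3}$, and $d^{E_6^{(2)}}_{3,4}$ already established in this subsection, leveraging the exact functor $\F_Q^{(2)}\colon R^{E_6}\gmod\to\Ca^{(2)}_Q$ of Theorem~\ref{thm: D43 E62}.

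First I would argue by contradiction: assume $\epsilon=0$, so that $\pm q^4$ is a simple root of $d_{3,3}(z)$. Select a pair $(\alpha,\beta)$ in row $4$ of the AR quiver~\eqref{eq: E6 AR quiver} at horizontal distance $4$, for instance $\alpha=\sprt{011}{221}$ and $\beta=\sprt{122}{321}$. Since $\F_Q^{(2)}$ sends $\SoQ{011}{221}$ and $\SoQ{122}{321}$ to $V(\varpi_3)$-modules whose spectral ratio is $(-q)^4=q^4$, Lemma~\ref{lem:simplepole} together with Theorem~\ref{thm: D43 E62}\,(1) would force the composition length of $\SoQ{011}{221}\conv\SoQ{122}{321}$ to be exactly $2$. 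The next step is to produce two non-zero compositions into (or out of) this convolution by factoring each tensorand through an appropriate $[Q]$-minimal pair via the four-term exact sequence~\eqref{eq: 6 ses} of Theorem~\ref{thm: BkMc}\,(f), and then chaining these with Proposition~\ref{prop: not vanishing}, in direct analogy with the treatment of $d^{E_6^{(2)}}_{2,3}$ above. Taking duals where necessary, the existence of two distinct non-zero compositions will expose a third composition factor and contradict the length-$2$ assumption, forcing $\epsilon=1$.

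The main obstacle is the parameter $\epsilon'$, which governs the multiplicity of the root $\pm q^6$ (the factor $(z^2-q^{12})^{2+\epsilon'}$). Applying the same method to pairs in row $4$ at horizontal distance $6$, such as $\left(\sprt{011}{221},\sprt{112}{210}\right)$ or $\left(\sprt{000}{111},\sprt{122}{321}\right)$, does not distinguish multiplicity $2$ from $3$, because the composition-length lower bounds extractable from Proposition~\ref{prop: not vanishing} and Theorem~\ref{thm: BkMc}\,(f) are consistent with both values. Fixing $\epsilon'$ requires finer information than is available within the framework of this section, and it will be handled in the addendum Appendix~\ref{Sec: Addendum} using the homomorphisms investigated in~\cite{Oh15E}.
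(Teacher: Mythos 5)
Your proposal follows essentially the same route as the paper's proof: argue by contradiction from $\epsilon=0$ via Lemma~\ref{lem:simplepole}, transfer the composition-length-$2$ conclusion across the exact functor $\F_Q^{(2)}$ of Theorem~\ref{thm: D43 E62}, and then exhibit two nonzero compositions built from Proposition~\ref{prop: not vanishing} and the exact sequences of Theorem~\ref{thm: BkMc} to contradict length $2$. The paper uses the pair $\left(\sprt{000}{111},\sprt{112}{321}\right)$ rather than your $\left(\sprt{011}{221},\sprt{122}{321}\right)$, but both lie in row $4$ of $\Gamma_Q$ at horizontal distance $4$ and therefore probe the same root $\pm q^4$ of $d_{3,3}(z)$, so this is only a cosmetic variation.
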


\begin{proof}
Assume the order of the root $q^4$ is $1$. Then, by Lemma~\ref{lem:simplepole}, the composition length
of
$
\VoQ{000}{111} \otimes  \VoQ{112}{321}
$
is $2$.
By Theorem~\ref{thm: D43 E62}, the composition length of $\SoQ{000}{111} \conv \SoQ{112}{321}$ is $2$, too.

In a similar way of the proof of the previous proposition, we have non-zero homomorphisms
\begin{align*}
& \SoQ{000}{111} \conv \SoQ{112}{321}  \hookrightarrow \SoQ{000}{111} \conv \SoQ{101}{110} \conv \SoQ{011}{211} \twoheadrightarrow \SoQ{000}{110} \conv \SoQ{101}{111} \conv \SoQ{011}{211}, \\
& \SoQ{000}{111} \conv \SoQ{112}{321}  \hookrightarrow \SoQ{000}{111} \conv \SoQ{001}{100} \conv \SoQ{111}{221} \twoheadrightarrow \SoQ{000}{100} \conv \SoQ{001}{111} \conv \SoQ{211}{231}.
\end{align*}
By taking duals, one can notice that $\SoQ{000}{111} \conv \SoQ{112}{321}$ cannot be $2$  by Theorem~\ref{thm: BkMc}.
\end{proof}
The $\epsilon'$ is indeed $1$, which is proved in an addendum to our publication~\cite{OS19} (see Appendix~\ref{Sec: Addendum}).

\appendix

\section{Dynkin quiver \texorpdfstring{$Q$}{Q} of type \texorpdfstring{$E_7$}{E7} and its AR quiver \texorpdfstring{$\Gamma_Q$}{GammaQ}} \label{Sec:Dynkin E7}

Let us consider the Dynkin quiver
$
 Q =
\begin{tikzpicture}[baseline=5,>=stealth,yscale=0.7,xscale=0.8,font=\tiny]
\node[dynkdot,label={below:1}] (-2) at (-2,0) {};
\node[dynkdot,label={below:3}] (-1) at (-1,0) {};
\node[dynkdot,label={below:4}] (0) at (0,0) {};
\node[dynkdot,label={below:5}] (1) at (1,0) {};
\node[dynkdot,label={below:6}] (2) at (2,0) {};
\node[dynkdot,label={below:7}] (3) at (3,0) {};
\node[dynkdot,label={[label distance=-4pt]-30:2}] (t) at (0,1) {};
\draw[->] (-2) -- (-1);
\draw[->] (-1) -- (0);
\draw[->] (0) -- (1);
\draw[->] (1) -- (2);
\draw[->] (2) -- (3);
\draw[->] (t) -- (0);
\end{tikzpicture}
$

Its AR quiver $\Gamma_Q$ can be drawn as follows: 
\[
\scalebox{0.45}{\xymatrix@R=2.5ex@C=0.1ex{
1&&&&&&{\scriptstyle\prt{1011}{111}} \ar[dr]&&{\scriptstyle\prt{0101}{110}} \ar[dr]&&{\scriptstyle\prt{0011}{100}} \ar[dr]&&{\scriptstyle\prt{1112}{111}} \ar[dr]
&&{\scriptstyle\prt{0111}{110}} \ar[dr] &&{\scriptstyle\prt{1011}{100}} \ar[dr]&&{\scriptstyle\prt{0101}{000}} \ar[dr]
&&{\scriptstyle\prt{0010}{000}} \ar[dr]&&{\scriptstyle\prt{1000}{000}} \\
3&&&&&{\scriptstyle\prt{0011}{111}} \ar[ur]\ar[dr]&&{\scriptstyle\prt{1112}{221}}\ar[ur]\ar[dr]&&{\scriptstyle\prt{0112}{210}}\ar[ur]\ar[dr]
&&{\scriptstyle\prt{1123}{211}}\ar[ur]\ar[dr]&&{\scriptstyle\prt{1223}{221}}\ar[ur]\ar[dr]
&&{\scriptstyle\prt{1122}{210}}\ar[ur]\ar[dr]&&{\scriptstyle\prt{1112}{100}}\ar[ur]\ar[dr]&&{\scriptstyle\prt{0111}{000}}\ar[ur]\ar[dr]&&{\scriptstyle\prt{1010}{000}}\ar[ur]\\
4&&&&{\scriptstyle\prt{0001}{111}}\ar[ur]\ar[ddr]\ar[dr]&&{\scriptstyle\prt{0112}{221}}\ar[ur]\ar[ddr]\ar[dr]
&&{\scriptstyle\prt{1123}{321}}\ar[ur]\ar[ddr]\ar[dr]&&{\scriptstyle\prt{1224}{321}}\ar[ur]\ar[ddr]\ar[dr]&&{\scriptstyle\prt{1234}{321}}\ar[ur]\ar[ddr]\ar[dr]
&&{\scriptstyle\prt{2234}{321}}\ar[ur]\ar[ddr]\ar[dr]&&{\scriptstyle\prt{1223}{210}}\ar[ur]\ar[ddr]\ar[dr]\ar[ur]\ar[ddr]\ar[dr]
&&{\scriptstyle\prt{1122}{100}}\ar[ur]\ar[ddr]\ar[dr]&&{\scriptstyle\prt{1111}{000}}\ar[ur]\ar[dr]
\\
2&&&&&{\scriptstyle\prt{0101}{111}}\ar[ur]&&{\scriptstyle\prt{0011}{110}}\ar[ur]&&{\scriptstyle\prt{1112}{211}}\ar[ur]&&{\scriptstyle\prt{0112}{110}}\ar[ur]
&&{\scriptstyle\prt{1122}{211}}\ar[ur]&&{\scriptstyle\prt{1112}{110}}\ar[ur]&&{\scriptstyle\prt{0111}{100}}\ar[ur]&&{\scriptstyle\prt{1011}{000}}\ar[ur]
&&{\scriptstyle\prt{0100}{000}}
\\
5&&&{\scriptstyle\prt{0000}{111}}\ar[uur]\ar[dr]&&{\scriptstyle\prt{0001}{110}}\ar[uur]\ar[dr]&&{\scriptstyle\prt{0112}{211}}\ar[uur]\ar[dr]
&&{\scriptstyle\prt{1123}{221}}\ar[uur]\ar[dr]&&{\scriptstyle\prt{1223}{321}}\ar[uur]\ar[dr]&&{\scriptstyle\prt{1123}{210}}\ar[uur]\ar[dr]
&&{\scriptstyle\prt{1223}{221}}\ar[uur]\ar[dr]&&{\scriptstyle\prt{1122}{110}}\ar[uur]\ar[dr]&&{\scriptstyle\prt{1111}{100}}\ar[uur]
\\
6&&{\scriptstyle\prt{0000}{011}}\ar[ur]\ar[dr]&&{\scriptstyle\prt{0000}{110}}\ar[ur]\ar[dr]&&{\scriptstyle\prt{0001}{100}}\ar[ur]\ar[dr]
&&{\scriptstyle\prt{0112}{111}}\ar[ur]\ar[dr]&&{\scriptstyle\prt{1122}{221}}\ar[ur]\ar[dr]
&&{\scriptstyle\prt{1112}{210}}\ar[ur]\ar[dr]&&{\scriptstyle\prt{0112}{100}}\ar[ur]\ar[dr]&&{\scriptstyle\prt{1122}{111}}\ar[ur]\ar[dr]
&&{\scriptstyle\prt{1111}{110}}\ar[ur]
\\
7& {\scriptstyle\prt{0000}{001}}\ar[ur] &&{\scriptstyle\prt{0000}{010}}\ar[ur]&&{\scriptstyle\prt{0000}{100}}\ar[ur]&&{\scriptstyle\prt{0001}{000}}\ar[ur]
&&{\scriptstyle\prt{0111}{111}}\ar[ur] &&{\scriptstyle\prt{1011}{110}}\ar[ur]&&{\scriptstyle\prt{0101}{100}}\ar[ur]
&&{\scriptstyle\prt{0011}{000}}\ar[ur]&&{\scriptstyle\prt{1111}{111}}\ar[ur]
}}
\]

\section{Dynkin quiver \texorpdfstring{$Q$}{Q} of type \texorpdfstring{$E_8$}{E8} and its AR quiver \texorpdfstring{$\Gamma_Q$}{GammaQ}} \label{Sec:Dynkin E8}

Let us consider the Dynkin quiver
$
 Q =
\begin{tikzpicture}[baseline=5,>=stealth,yscale=0.7,xscale=0.8,font=\tiny]
\node[dynkdot,label={below:1}] (-2) at (-2,0) {};
\node[dynkdot,label={below:3}] (-1) at (-1,0) {};
\node[dynkdot,label={below:4}] (0) at (0,0) {};
\node[dynkdot,label={below:5}] (1) at (1,0) {};
\node[dynkdot,label={below:6}] (2) at (2,0) {};
\node[dynkdot,label={below:7}] (3) at (3,0) {};
\node[dynkdot,label={below:8}] (4) at (4,0) {};
\node[dynkdot,label={[label distance=-4pt]-30:2}] (t) at (0,1) {};
\draw[->] (-2) -- (-1);
\draw[->] (-1) -- (0);
\draw[->] (0) -- (1);
\draw[->] (1) -- (2);
\draw[->] (2) -- (3);
\draw[->] (3) -- (4);
\draw[->] (t) -- (0);
\end{tikzpicture}
$

Its AR quiver $\Gamma_Q$ can be drawn as follows:
\[
\scalebox{0.38}{\xymatrix@R=3ex@C=0.1ex{
1&&&&&&&\pprt{10}{11}{11}{11}\ar[dr]&&\pprt{01}{01}{11}{10}\ar[dr]&&\pprt{00}{11}{11}{00}\ar[dr]&&\pprt{11}{12}{21}{11}\ar[dr]&&\pprt{01}{12}{11}{10}\ar[dr]
&&\pprt{11}{22}{22}{11}\ar[dr]&&\pprt{11}{12}{21}{10}\ar[dr]&&\pprt{01}{12}{11}{00}\ar[dr]&&\pprt{11}{22}{21}{11}\ar[dr]&&\pprt{11}{12}{11}{10}\ar[dr]
&&\pprt{01}{11}{11}{00}\ar[dr]&&\pprt{10}{11}{10}{00}\ar[dr]&&\pprt{01}{01}{00}{00}\ar[dr]&&\pprt{00}{10}{00}{00}\ar[dr]&&\pprt{10}{00}{00}{00}
\\
3&&&&&&\pprt{00}{11}{11}{11}\ar[dr]\ar[ur]&&\pprt{11}{12}{22}{21}\ar[dr]\ar[ur]&&\pprt{01}{12}{22}{10}\ar[dr]\ar[ur]&&\pprt{11}{23}{32}{11}\ar[dr]\ar[ur]
&&\pprt{12}{24}{32}{21}\ar[dr]\ar[ur]&&\pprt{12}{34}{33}{21}\ar[dr]\ar[ur]&&\pprt{22}{34}{43}{21}\ar[dr]\ar[ur]&&\pprt{12}{24}{32}{10}\ar[dr]\ar[ur]
&&\pprt{12}{34}{32}{11}\ar[dr]\ar[ur]&&\pprt{22}{34}{32}{21}\ar[dr]\ar[ur]&&\pprt{12}{23}{22}{10}\ar[dr]\ar[ur]&&\pprt{11}{22}{21}{00}\ar[dr]\ar[ur]
&&\pprt{11}{12}{10}{00}\ar[dr]\ar[ur]&&\pprt{01}{11}{00}{00}\ar[dr]\ar[ur]&&\pprt{10}{10}{00}{00}\ar[ur]
\\
4&&&&&\pprt{00}{01}{11}{11}\ar[dr]\ar[ur]\ar[ddr]&&\pprt{01}{12}{22}{21}\ar[dr]\ar[ur]\ar[ddr]&&\pprt{11}{23}{33}{21}\ar[dr]\ar[ur]\ar[ddr]&&
\pprt{12}{24}{43}{21}\ar[dr]\ar[ur]\ar[ddr]&&\pprt{12}{35}{43}{21}\ar[dr]\ar[ur]\ar[ddr]&&\pprt{23}{46}{54}{32}\ar[dr]\ar[ur]\ar[ddr]
&&\pprt{23}{46}{54}{31}\ar[dr]\ar[ur]\ar[ddr]&&\pprt{23}{46}{54}{21}\ar[dr]\ar[ur]\ar[ddr]&&\pprt{23}{46}{53}{21}\ar[dr]\ar[ur]\ar[ddr]
&&\pprt{23}{46}{43}{21}\ar[dr]\ar[ur]\ar[ddr]&&\pprt{23}{45}{43}{21}\ar[dr]\ar[ur]\ar[ddr]&&\pprt{22}{34}{32}{10}\ar[dr]\ar[ur]\ar[ddr]
&&\pprt{12}{23}{21}{00}\ar[dr]\ar[ur]\ar[ddr]&&\pprt{11}{22}{10}{00}\ar[dr]\ar[ur]\ar[ddr]&&\pprt{11}{11}{00}{00}\ar[dr]\ar[ur]
\\
2&&&&&&\pprt{01}{01}{11}{11}\ar[ur]&&\pprt{00}{11}{11}{10}\ar[ur]&&\pprt{11}{12}{22}{11}\ar[ur]&&\pprt{01}{12}{21}{10}\ar[ur]&&\pprt{11}{23}{22}{11}\ar[ur]
&&\pprt{12}{23}{32}{21}\ar[ur]&&\pprt{11}{23}{32}{10}\ar[ur]&&\pprt{12}{23}{32}{11}\ar[ur]&&\pprt{11}{23}{21}{10}\ar[ur]&&\pprt{12}{23}{22}{11}\ar[ur]
&&\pprt{11}{22}{21}{10}\ar[ur]&&\pprt{11}{12}{11}{00}\ar[ur]&&\pprt{01}{11}{10}{00}\ar[ur]&&\pprt{10}{11}{00}{00}\ar[ur]&&\pprt{01}{00}{00}{00}
\\
5&&&&\pprt{00}{00}{11}{11}\ar[uur]\ar[dr]&&\pprt{00}{01}{11}{10}\ar[uur]\ar[dr]&&\pprt{01}{12}{22}{11}\ar[uur]\ar[dr]&&\pprt{11}{23}{32}{21}\ar[uur]\ar[dr]
&&\pprt{12}{24}{33}{21}\ar[uur]\ar[dr]&&\pprt{12}{34}{43}{21}\ar[uur]\ar[dr]&&\pprt{22}{35}{43}{21}\ar[uur]\ar[dr]&&\pprt{13}{35}{43}{21}\ar[uur]\ar[dr]
&&\pprt{22}{45}{43}{21}\ar[uur]\ar[dr]&&\pprt{23}{35}{43}{21}\ar[uur]\ar[dr]&&\pprt{12}{34}{32}{10}\ar[uur]\ar[dr]&&\pprt{22}{34}{32}{11}\ar[uur]\ar[dr]
&&\pprt{12}{23}{21}{10}\ar[uur]\ar[dr]&&\pprt{11}{22}{11}{00}\ar[uur]\ar[dr]&&\pprt{11}{11}{10}{00}\ar[uur]
\\
6&&&\pprt{00}{00}{01}{11}\ar[ur]\ar[dr]&&\pprt{00}{00}{11}{10}\ar[ur]\ar[dr]&&\pprt{00}{01}{11}{00}\ar[ur]\ar[dr]&&\pprt{01}{12}{21}{11}\ar[ur]\ar[dr]&&\pprt{11}{23}{22}{21}\ar[ur]\ar[dr]
&&\pprt{12}{23}{33}{21}\ar[ur]\ar[dr]&&\pprt{11}{23}{32}{10}\ar[ur]\ar[dr]&&\pprt{12}{24}{32}{11}\ar[ur]\ar[dr]&&\pprt{12}{34}{32}{21}\ar[ur]\ar[dr]
&&\pprt{22}{34}{33}{21}\ar[ur]\ar[dr]&&\pprt{12}{23}{32}{10}\ar[ur]\ar[dr]&&\pprt{11}{23}{21}{00}\ar[ur]\ar[dr]&&\pprt{12}{23}{21}{11}\ar[ur]\ar[dr]
&&\pprt{11}{22}{11}{10}\ar[ur]\ar[dr]&&\pprt{11}{11}{11}{00}\ar[ur]
\\
7&&\pprt{00}{00}{00}{11}\ar[ur]\ar[dr]&&\pprt{00}{00}{01}{10}\ar[ur]\ar[dr]&&\pprt{00}{00}{11}{00}\ar[ur]\ar[dr]&&\pprt{00}{01}{10}{00}\ar[ur]\ar[dr]
&&\pprt{01}{12}{11}{11}\ar[ur]\ar[dr]&&\pprt{11}{22}{22}{21}\ar[ur]\ar[dr]&&\pprt{11}{12}{22}{10}\ar[ur]\ar[dr]
&&\pprt{01}{12}{21}{00}\ar[ur]\ar[dr]&&\pprt{11}{23}{21}{11}\ar[ur]\ar[dr]&&\pprt{12}{23}{22}{21}\ar[ur]\ar[dr]&&\pprt{11}{22}{22}{10}\ar[ur]\ar[dr]
&&\pprt{11}{12}{21}{00}\ar[ur]\ar[dr]&&\pprt{01}{12}{10}{00}\ar[ur]\ar[dr]&&\pprt{11}{22}{11}{11}\ar[ur]\ar[dr] &&\pprt{11}{11}{11}{10}\ar[ur]
\\
8&\pprt{00}{00}{00}{01}\ar[ur]&&\pprt{00}{00}{00}{10}\ar[ur]&&\pprt{00}{00}{01}{00}\ar[ur]&&\pprt{00}{00}{10}{00}\ar[ur]
&&\pprt{00}{01}{00}{00}\ar[ur]&&\pprt{01}{11}{11}{11}\ar[ur]&&\pprt{10}{11}{11}{10}\ar[ur]&&\pprt{01}{01}{11}{00}\ar[ur]
&&\pprt{00}{11}{10}{00}\ar[ur]&&\pprt{11}{12}{11}{11}\ar[ur]&&\pprt{01}{11}{11}{10}\ar[ur]
&&\pprt{10}{11}{11}{00}\ar[ur]&&\pprt{01}{01}{10}{00}\ar[ur]&&\pprt{00}{11}{00}{00}\ar[ur]&&\pprt{11}{11}{11}{11}\ar[ur]
}}
\]

\section{Denominator formulas for type \texorpdfstring{$E_7^{(1)}$}{E7(1)}}
\label{app:E7_denominators}
We have determined the following denominator formulas:
\begin{align*}
d_{1,1}(z) & = (z-q^{2})(z-q^{8})(z-q^{12})(z-q^{18}), \allowdisplaybreaks\\
d_{1,2}(z)&=(z+q^{5})(z+q^{9})(z+q^{11})(z+q^{15}),   \allowdisplaybreaks \\
d_{1,3}(z)&=(z+q^{3})(z+q^{7})(z+q^{9})(z+q^{11})(z+q^{13})(z+q^{17}),   \allowdisplaybreaks \\
d_{1,4}(z)&=(z-q^{4})(z-q^{6})(z-q^{8})(z-q^{10})^2(z-q^{12})(z-q^{14})(z-q^{16}),   \allowdisplaybreaks \\
d_{1,5}(z)&=(z+q^{5})(z+q^{7})(z+q^{9})(z+q^{11})(z+q^{13})(z+q^{15}),  \allowdisplaybreaks \\
d_{1,6}(z)&=(z-q^{6})(z-q^{8})(z-q^{12})(z-q^{14}),   \allowdisplaybreaks \\
d_{1,7}(z) & = (z+q^{7})(z+q^{13}), \allowdisplaybreaks\\
d_{2,2}(z)&=(z-q^{2})(z-q^{6})(z-q^{8})(z-q^{10})(z-q^{12})(z-q^{14})(z-q^{18}),  \allowdisplaybreaks \\
d_{2,3}(z)&=(z-q^{4})(z-q^{6})(z-q^{8})(z-q^{10})^2(z-q^{12})(z-q^{14})(z-q^{16}),      \allowdisplaybreaks \\
d_{2,4}(z)&=(z+q^{3})(z+q^{5})(z+q^{7})^2(z+q^{9})^2(z+q^{11})^2(z+q^{13})^2(z+q^{15})(z+q^{17}),  \allowdisplaybreaks \\
d_{2,5}(z)&=(z-q^{4})(z-q^{6})(z-q^{8})^2(z-q^{10})(z-q^{12})^2(z-q^{14})(z-q^{16}), \allowdisplaybreaks \\
d_{2,6}(z)&=(z+q^{5})(z+q^{7})(z+q^{9})(z+q^{11})(z+q^{13})(z+q^{15}),     \allowdisplaybreaks \\
d_{2,7}(z)&=(z-q^{6})(z-q^{10})(z-q^{14}),   \allowdisplaybreaks \\
d_{3,3}(z)&=(z-q^{2})(z-q^{4})(z-q^{6})(z-q^{8})^2(z-q^{10})^2(z-q^{12})^2(z-q^{14})(z-q^{16})(z-q^{18}),   \allowdisplaybreaks \nonumber \\
d_{3,4}(z)&=(z+q^{3}) (z+q^{5})^2 (z+q^{7})^2 (z+q^{9})^{3} (z+q^{11})^3 (z+q^{13})^2(z+q^{15})^2(z+q^{17}), \allowdisplaybreaks \nonumber \\
d_{3,5}(z)&=(z-q^{4})(z-q^{6})^2(z-q^{8})^2(z-q^{10})^2(z-q^{12})^2(z-q^{14})^2(z-q^{16}), \allowdisplaybreaks \\
d_{3,6}(z)&=(z+q^{5})(z+q^{7})^2(z+q^{9})(z+q^{11})(z+q^{13})^2 (z+q^{15}), \allowdisplaybreaks \\
d_{3,7}(z)&=(z-q^{6})(z-q^{8})(z-q^{12})(z-q^{14}),    \allowdisplaybreaks \\
d_{4,4}(z) & = (z-q^{2})(z-q^{4})^2(z-q^{6})^{3}(z-q^{8})^{4}(z-q^{10})^{4}(z-q^{12})^{4}(z-q^{14})^3(z-q^{16})^2(z-q^{18}), \allowdisplaybreaks \\
d_{4,5}(z)&=(z+q^{3})(z+q^{5})^2(z+q^{7})^{3}(z+q^{9})^{3}(z+q^{11})^{3}(z+q^{13})^3(z+q^{15})^2(z+q^{17}),\allowdisplaybreaks \nonumber\\
d_{4,6}(z)&=(z-q^{4})(z-q^{6})^2(z-q^{8})^2(z-q^{10})^2(z-q^{12})^2(z-q^{14})^2(z-q^{16}),  \allowdisplaybreaks \\
d_{4,7}(z)&=(z+q^{5})(z+q^{7})(z+q^{9})(z+q^{11})(z+q^{13})(z+q^{15}), \allowdisplaybreaks \\
d_{5,5}(z)&=(z-q^{2})(z-q^{4})(z-q^{6})^2(z-q^{8})^2(z-q^{10})^3(z-q^{12})^2(z-q^{14})^2(z-q^{16})(z-q^{18}),  \allowdisplaybreaks \nonumber \\
d_{5,6}(z)&=(z+q^{3})(z+q^{5})(z+q^{7})(z+q^{9})^2(z+q^{11})^2(z+q^{13})(z+q^{15})(z+q^{17}),  \allowdisplaybreaks \\
d_{5,7}(z)&=(z-q^{4})(z-q^{8})(z-q^{10})(z-q^{12})(z-q^{16}),\allowdisplaybreaks \\
d_{6,6}(z)&=(z-q^{2})(z-q^{4})(z-q^{8})(z-q^{10})^2(z-q^{12})(z-q^{16})(z-q^{18}), \allowdisplaybreaks \\
d_{6,7}(z)&=(z+q^{3})(z+q^{9})(z+q^{11})(z+q^{17}),\allowdisplaybreaks \\
d_{7,7}(z) & = (z-q^{2})(z-q^{10})(z-q^{18}).
\end{align*}

\section{Denominator formulas for type \texorpdfstring{$E_8^{(1)}$}{E8(1)}}
\label{app:E8_denominators}
We have determined the following denominator formulas:
\begin{align*}
d_{1,1}(z)& = (z - q^2) (z - q^8) (z - q^{12}) (z - q^{14}) (z - q^{18}) (z - q^{20}) (z - q^{24}) (z - q^{30}),\allowdisplaybreaks\\
d_{1,2}(z)&=(z+q^{5})(z+q^{9})(z+q^{11})(z+q^{13})(z+q^{15})(z+q^{17})\allowdisplaybreaks\\
& \hspace{10ex} \times (z+q^{19})(z+q^{21})(z+q^{23})(z+q^{27}), \allowdisplaybreaks \nonumber\\
d_{1,3}(z)&=(z+q^{3})(z+q^{7})(z+q^{9})(z+q^{11})(z+q^{13})^2(z+q^{15})(z+q^{17})\allowdisplaybreaks \\
& \hspace{10ex} \times (z+q^{19})^2(z+q^{21})(z+q^{23})(z+q^{25})(z+q^{29}), \allowdisplaybreaks \nonumber\\
d_{1,4}(z)&= (z-q^{4})(z-q^{6})(z-q^{8})(z-q^{10})^2(z-q^{12})^2(z-q^{14})^2(z-q^{16})^2\allowdisplaybreaks\\
& \hspace{10ex} \times (z-q^{18})^2(z-q^{20})^2(z-q^{22})^2(z-q^{24})(z-q^{26})(z-q^{28}),  \allowdisplaybreaks \nonumber\\
d_{1,5}(z)& =(z+q^{5})(z+q^{7})(z+q^{9})(z+q^{11})^2(z+q^{13})(z+q^{15})^2 \allowdisplaybreaks\\
& \hspace{10ex} \times (z+q^{17})^2(z+q^{19})(z+q^{21})^2(z+q^{23})(z+q^{25})(z+q^{27}),   \allowdisplaybreaks \nonumber\\
d_{1,6}(z)&= (z-q^{6})(z-q^{8})(z-q^{10})(z-q^{12})(z-q^{14})(z-q^{16})^2(z-q^{18})\allowdisplaybreaks \\
& \hspace{10ex} \times (z-q^{20})(z-q^{22})(z-q^{24})(z-q^{26}),  \allowdisplaybreaks \nonumber\\
d_{1,7}(z)&= (z+q^{7})(z+q^{9})(z+q^{13})(z+q^{15})(z+q^{17})(z+q^{19})(z+q^{23})(z+q^{25}),\allowdisplaybreaks \nonumber\\
d_{1,8}(z)&= (z-q^{8})(z-q^{14})(z-q^{18})(z-q^{24}), \allowdisplaybreaks \\
d_{2,2}(z)&=(z-q^{2})(z-q^{6})(z-q^{8})(z-q^{10})(z-q^{12})^2(z-q^{14})\allowdisplaybreaks \\
& \hspace{10ex} \times  (z-q^{16})^2(z-q^{18})(z-q^{20})^2(z-q^{22})(z-q^{24})(z-q^{26})(z-q^{30}),  \allowdisplaybreaks \nonumber\\
d_{2,3}(z)&=(z-q^{4})(z-q^{6})(z-q^{8})(z-q^{10})^2(z-q^{12})^2(z-q^{14})^2 \allowdisplaybreaks\\
& \hspace{10ex} \times (z-q^{16})^2(z-q^{18})^2(z-q^{20})^2(z-q^{22})^2(z-q^{24})(z-q^{26})(z-q^{28}), \allowdisplaybreaks \nonumber\\
d_{2,4}(z)& = (z+q^{3})(z+q^{5})(z+q^{7})^{2}(z+q^{9})^{2}(z+q^{11})^{3}(z+q^{13})^{3} (z+q^{15})^{3}(z+q^{17})^{3}\allowdisplaybreaks \\
& \hspace{10ex} \times (z+q^{19})^{3}(z+q^{21})^{3} (z+q^{23})^{2}(z+q^{25})^2(z+q^{27})(z+q^{29}), \nonumber\allowdisplaybreaks \\
d_{2,5}(z)& =  (z-q^{4})(z-q^{6})(z-q^{8})^2(z-q^{10})^2(z-q^{12})^2(z-q^{14})^{3}(z-q^{16})^2 \allowdisplaybreaks\\
& \hspace{10ex} \times  (z-q^{18})^{3}(z-q^{20})^2(z-q^{22})^2(z-q^{24})^2(z-q^{26})(z-q^{28}), \allowdisplaybreaks \nonumber\\
d_{2,6}(z) & =(z+q^{5})(z+q^{7})(z+q^{9})^2(z+q^{11})(z+q^{13})^2(z+q^{15})^2\allowdisplaybreaks \\
& \hspace{10ex} \times (z+q^{17})^2(z+q^{19})^2(z+q^{21})(z+q^{23})^2(z+q^{25})(z+q^{27}), \allowdisplaybreaks \nonumber\\
d_{2,7}(z)&=(z-q^{6})(z-q^{8})(z-q^{10})(z-q^{12})(z-q^{14})(z-q^{16})^2 \allowdisplaybreaks\\
& \hspace{10ex} \times  (z-q^{18})(z-q^{20})(z-q^{22})(z-q^{24})(z-q^{26}) \allowdisplaybreaks ,\nonumber\\
d_{2,8}(z)&= (z+q^{7})(z+q^{11})(z+q^{15})(z+q^{17})(z+q^{21})(z+q^{25}), \allowdisplaybreaks \\
d_{3,3}(z)&=(z-q^{2})(z-q^{4})(z-q^{6})(z-q^{8})^2(z-q^{10})^2(z-q^{12})^{3}(z-q^{14})^3(z-q^{16})^2 \allowdisplaybreaks \\
& \hspace{10ex} \times (z-q^{18})^{3}(z-q^{20})^3 (z-q^{22})^2(z-q^{24})^2(z-q^{26})(z-q^{28})(z-q^{30}),  \allowdisplaybreaks \nonumber\\
d_{3,4}(z)& = (z+q^{3})(z+q^{5})^{2}(z+q^{7})^{2}(z+q^{9})^{3}(z+q^{11})^{4} (z+q^{13})^{4}(z+q^{15})^{4}\allowdisplaybreaks \\
& \hspace{10ex} \times (z+q^{17})^{4}(z+q^{19})^{4}(z+q^{21})^{4}(z+q^{23})^{3}(z+q^{25})^2(z+q^{27})^2(z+q^{29}), \allowdisplaybreaks \nonumber \\
d_{3,5}(z)&=(z-q^{4})(z-q^{6})^{2}(z-q^{8})^{2}(z-q^{10})^{3}(z-q^{12})^{3} (z-q^{14})^{3}(z-q^{16})^{4}\allowdisplaybreaks \\
& \hspace{10ex} \times  (z-q^{18})^{3}(z-q^{20})^{3}(z-q^{22})^{3}(z-q^{24})^{2}(z-q^{26})^2(z-q^{28}), \allowdisplaybreaks \nonumber \\
d_{3,6}(z)&=(z+q^{5})(z+q^{7})^{2}(z+q^{9})^{2}(z+q^{11})^{2}(z+q^{13})^{2}(z+q^{15})^{3}(z+q^{17})^{3} \allowdisplaybreaks \\
& \hspace{10ex} \times (z+q^{19})^{2}(z+q^{21})^{2}(z+q^{23})^{2}(z+q^{25})^{2}(z+q^{27}), \allowdisplaybreaks \nonumber \\
d_{3,7}(z)&=(z-q^{6})(z-q^{8})^2(z-q^{10})(z-q^{12})(z-q^{14})^2(z-q^{16})^{2} \allowdisplaybreaks\\
& \hspace{10ex} \times  (z-q^{18})^2(z-q^{20})(z-q^{22})(z-q^{24})^2(z-q^{26}),\allowdisplaybreaks \nonumber\\
d_{3,8}(z)&=(z+q^{7})(z+q^{9})(z+q^{13})(z+q^{15})(z+q^{17})(z+q^{19})(z+q^{23})(z+q^{25}),\allowdisplaybreaks \\
d_{4,4}(z)&= (z-q^{2})(z-q^{4})^{2}(z-q^{6})^{3}(z-q^{8})^{4}(z-q^{10})^{5}(z-q^{12})^{6}(z-q^{14})^{6}(z-q^{16})^{6}\allowdisplaybreaks  \\
& \hspace{10ex} \times  (z-q^{18})^{6}(z-q^{20})^{6}(z-q^{22})^{5}(z-q^{24})^{4}(z-q^{26})^3(z-q^{28})^2(z-q^{30}), \allowdisplaybreaks \nonumber \\
d_{4,5}(z)&=(z+q^{3})(z+q^{5})^{2}(z+q^{7})^{3}(z+q^{9})^{4}(z+q^{11})^{4} (z+q^{13})^{5}(z+q^{15})^{5}\allowdisplaybreaks\\
& \hspace{10ex} \times  (z+q^{17})^{5}(z+q^{19})^{5} (z+q^{21})^{4}(z+q^{23})^{4}(z+q^{25})^3(z+q^{27})^2(z+q^{29}), \allowdisplaybreaks\nonumber  \\
d_{4,6}(z)&= (z-q^{4})(z-q^{6})^{2}(z-q^{8})^{3}(z-q^{10})^{3}(z-q^{12})^{3}(z-q^{14})^{4}(z-q^{16})^{4}  \allowdisplaybreaks   \\
& \hspace{10ex} \times (z-q^{18})^{4} (z-q^{20})^{3}(z-q^{22})^{3}(z-q^{24})^{3}(z-q^{26})^2(z-q^{28}), \allowdisplaybreaks \nonumber \\
d_{4,7}(z) & = (z+q^{5})(z+q^{7})^2(z+q^{9})^2(z+q^{11})^2(z+q^{13})^2 (z+q^{15})^{3}(z+q^{17})^3 \allowdisplaybreaks\\
& \hspace{10ex} \times (z+q^{19})^2(z+q^{21})^2(z+q^{23})^2(z+q^{25})^2(z+q^{27}), \allowdisplaybreaks \nonumber\\
d_{4,8}(z)&=(z-q^{6})(z-q^{8})(z-q^{10})(z-q^{12})(z-q^{14})(z-q^{16})^2  \allowdisplaybreaks \\
& \hspace{10ex} \times  (z-q^{18})(z-q^{20})(z-q^{22})(z-q^{24})(z-q^{26}),\allowdisplaybreaks \nonumber\\
d_{5,5}(z)&= (z-q^{2})(z-q^{4})(z-q^{6})^{2}(z-q^{8})^{3}(z-q^{10})^{3}(z-q^{12})^{4}(z-q^{14})^{4}  \allowdisplaybreaks \\
& \hspace{10ex} \times (z-q^{16})^{4}(z-q^{18})^{4}(z-q^{20})^{4}(z-q^{22})^{3}(z-q^{24})^3(z-q^{26})^2(z-q^{28}), \allowdisplaybreaks \nonumber \\
d_{5,6}(z)&=(z+q^{3})(z+q^{5})(z+q^{7})^2(z+q^{9})^2(z+q^{11})^{3}(z+q^{13})^{3}(z+q^{15})^{3}(z+q^{17})^{3}   \allowdisplaybreaks \\
& \hspace{10ex} \times (z+q^{19})^{3}(z+q^{21})^3(z+q^{23})^2(z+q^{25})^2(z+q^{27})(z+q^{29}),
\allowdisplaybreaks \\
d_{5,7}(z)&  =   (z-q^{4})(z-q^{6})(z-q^{8})(z-q^{10})^2(z-q^{12})^2(z-q^{14})^2\allowdisplaybreaks\\
& \hspace{10ex} \times  (z-q^{16})^2(z-q^{18})^2(z-q^{20})^2(z-q^{22})^2(z-q^{24})(z-q^{26})(z-q^{28}),  \allowdisplaybreaks \nonumber\\
d_{5,8}(z)&=(z+q^{5})(z+q^{9})(z+q^{11})(z+q^{13})(z+q^{15})\allowdisplaybreaks\\
& \hspace{10ex} \times  (z+q^{17})(z+q^{19})(z+q^{21})(z+q^{23})(z+q^{27}),\allowdisplaybreaks \nonumber\\
d_{6,6}(z)& = (z-q^{2})(z-q^{4})(z-q^{6})(z-q^{8})(z-q^{10})^2(z-q^{12})^3(z-q^{14})^2 (z-q^{16})^2\allowdisplaybreaks \\
& \hspace{10ex} \times (z-q^{18})^2(z-q^{20})^3 (z-q^{22})^2(z-q^{24})(z-q^{26})(z-q^{28})(z-q^{30}),     \allowdisplaybreaks \nonumber\\
d_{6,7}(z)&=(z+q^{3})(z+q^{5})(z+q^{9})(z+q^{11})^2(z+q^{13})^2(z+q^{15})(z+q^{17}) \allowdisplaybreaks\\
& \hspace{10ex} \times (z+q^{19})^2(z+q^{21})^2(z+q^{23})(z+q^{27})(z+q^{29}),\allowdisplaybreaks \nonumber\\
d_{6,8}(z)&=(z-q^{4})(z-q^{10})(z-q^{12})(z-q^{14})(z-q^{18})(z-q^{20})(z-q^{22})(z-q^{28}),\allowdisplaybreaks \\
d_{7,7}(z)&=(z-q^{2})(z-q^{4})(z-q^{10})(z-q^{12})^2(z-q^{14})(z-q^{18})(z-q^{20})^2\allowdisplaybreaks \\
& \hspace{10ex} \times  (z-q^{22})(z-q^{28})(z-q^{30}),\allowdisplaybreaks \nonumber\\
d_{7,8}(z)&= (z+q^{3})(z+q^{11})(z+q^{13})(z+q^{19})(z+q^{21})(z+q^{29}),\allowdisplaybreaks \nonumber\\
d_{8,8}(z) &= (z-q^2)(z-q^{12})(z-q^{20})(z-q^{30}). \allowdisplaybreaks
\end{align*}


\section{Addendum -- Removing Ambiguities}\label{Sec: Addendum}

The following has been included as an addendum to our publication~\cite{OS19}, where we remove the ambiguities from Proposition~\ref{prop:E61_ambiguity}, Proposition~\ref{prop:E62_ambiguity} and that were previously in Appendix~\ref{app:E7_denominators} and Appendix~\ref{app:E8_denominators} in the published version.

\begin{proposition}[{\cite[Proposition~6.8]{Oh15E}}]
\label{prop: R}
 Let $Q$ be a Dynkin quiver of finite simply-laced type.
For a $[Q]$-minimal pair $(\al,\be)$ of a simple sequence $\us=(\al_1,\ldots,\al_k)$, $V^{(1)}_Q(\al_1) \otimes \cdots \otimes V^{(1)}_Q(\al_k)$
is simple and there exists a surjective homomorphism
\[
V_Q^{(1)}(\be) \otimes  V_Q^{(1)}(\al) \twoheadrightarrow V^{(1)}_Q(\al_1) \otimes \cdots \otimes V^{(1)}_Q(\al_k).
\]
\end{proposition}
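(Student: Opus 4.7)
The strategy is to transfer both claims to the quiver Hecke algebra side $R^{\mathsf{g}}\gmod$ via the Schur--Weyl duality functor $\mathcal{F}_Q^{(1)}\colon R^{\mathsf{g}}\gmod\to\Ca_Q^{(1)}$, which is exact, tensor-preserving (in order of factors), sends $S_Q(\gamma)$ to $V_Q^{(1)}(\gamma)$, and induces a bijection on isomorphism classes of simples. Such a functor exists for all simply-laced $Q$ by Theorem~\ref{thm:gQASW duality} (for types $A_n,D_n$) together with Theorem~\ref{thm: simplicity type E} (for types $E_{6,7,8}$). Consequently it suffices to prove the analogue of Proposition~\ref{prop: R} for convolution products $S_Q(\bullet)\conv S_Q(\bullet)$ in $R^{\mathsf{g}}\gmod$.

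For the simplicity claim, the hypothesis that $\us$ is $[Q]$-simple means $\us$ is minimal under $\prec^{\tb}_{[Q]}$, so the sum in Theorem~\ref{thm: BkMc}(c) over $\um'\prec^{\tb}_{[Q]}\us$ is empty and yields $[S_Q(\al_1)\conv\cdots\conv S_Q(\al_k)]=[\Image(\rmat{\us})]$ in the Grothendieck group. Since $\Image(\rmat{\us})$ is a simple module, the convolution itself is simple, and applying $\mathcal{F}_Q^{(1)}$ transfers the simplicity to $V_Q^{(1)}(\al_1)\otimes\cdots\otimes V_Q^{(1)}(\al_k)$.

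For the surjection, I would analyze the composition series of $S_Q(\be)\conv S_Q(\al)$ for the minimal pair $\up=(\al,\be)$ (with $\al\prec_{[Q]}\be$). Proposition~\ref{prop: comp length} provides the lower bound $\len_{[Q]}(\up)+2=2$ on the composition length, since a minimal pair has no non-simple pair strictly below it (so $\len_{[Q]}(\up)=0$). For the upper bound, Theorem~\ref{thm: BkMc}(c) expresses $[S_Q(\al)\conv S_Q(\be)]$ as $[\Image(\rmat{\up})]+\sum_{\um'\prec^{\tb}_{[Q]}\up}c_{\um'}[\Image(\rmat{\um'})]$; the covering property of $\up$ over $\us$ together with $\us=\soc_{[Q]}(\up)$ forces the only nonzero contribution in the sum to come from $\um'=\us$. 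Hence the composition length is exactly $2$, with simple factors $\Image(\rmat{\up})$ and $\Image(\rmat{\us})=S_Q(\al_1)\conv\cdots\conv S_Q(\al_k)$. Theorem~\ref{thm: socle head} guarantees distinct simple head and socle in $S_Q(\be)\conv S_Q(\al)$, while Theorem~\ref{thm: BkMc}(b) identifies $\Image(\rmat{\up})$ as the socle of $S_Q(\be)\conv S_Q(\al)$ (via the canonical map $\rmat{\up}\colon S_Q(\al)\conv S_Q(\be)\to S_Q(\be)\conv S_Q(\al)$ whose image equals the socle of the target). The head of $S_Q(\be)\conv S_Q(\al)$ is therefore $S_Q(\al_1)\conv\cdots\conv S_Q(\al_k)$, producing the desired surjection in $R^{\mathsf{g}}\gmod$; pushing it through $\mathcal{F}_Q^{(1)}$ yields the surjection in $\Ca_Q^{(1)}$.

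The delicate point is the upper-bound analysis of composition factors: one must exclude, via the covering property of $\up$ over $\us$ and the uniqueness of the $[Q]$-socle, any other multi-degree $\um'\prec^{\tb}_{[Q]}\up$ contributing a simple composition factor to $S_Q(\al)\conv S_Q(\be)$. This rests on the combinatorial structure of $\prec^{\tb}_{[Q]}$ near a minimal pair and is the heart of the argument; without it, one only gets a surjection onto some quotient rather than specifically onto $S_Q(\al_1)\conv\cdots\conv S_Q(\al_k)$.
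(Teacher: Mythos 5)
Your argument is correct in its logic and conclusions, but it follows a genuinely different route from the one the cited source~\cite{Oh15E} must have taken. The paper does not reprove Proposition~\ref{prop: R}; it imports it from~\cite[Prop.~6.8]{Oh15E}, which predates Theorem~\ref{thm: simplicity type E}. You route the whole argument through the quiver Hecke algebra side via the generalized Schur--Weyl duality functor $\mathcal{F}_Q^{(1)}$, invoking Theorem~\ref{thm: simplicity type E} for the $E_{6,7,8}$ cases; but that theorem is a new contribution of the present paper (also Fujita~\cite{Fujita18}), so for type $E$ this tool was simply not available to~\cite{Oh15E}, which had to argue directly in $\Ca_Q^{(1)}$ using $R$-matrices, denominators, and the combinatorics of $\Gamma_Q$. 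What your route buys is uniformity: a single argument across $A,D,E$ by pushing Theorem~\ref{thm: BkMc} through an exact, monoidal functor. What it costs is a dependence-check one must do carefully, and you do not mention it: one must verify that Theorem~\ref{thm: simplicity type E} does not itself rely on Proposition~\ref{prop: R}. Tracing the paper, $\mathcal{F}_Q^{(1)}$ for type $E$ only needs the denominators near residues of simple roots, which are pinned down using~\cite[Thms.~5.23, 6.9(3)]{Oh15E} (Propositions~\ref{prop: [Q]-simple pair},~\ref{prop: comp length}) and never~\cite[Prop.~6.8]{Oh15E}; Proposition~\ref{prop: R} enters only in Appendix~\ref{Sec: Addendum}. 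So the circle does not close, but this non-circularity deserves an explicit sentence. Two smaller remarks: (i)~your lower bound via Proposition~\ref{prop: comp length} and your composition-factor analysis via Theorem~\ref{thm: BkMc}(c) together actually pin the length down to exactly $2$ once you also invoke Theorem~\ref{thm: socle head}(ii) (socle and head each occur once), which you gesture at but could state cleanly; (ii)~the exclusion of all $\um'\prec^\tb_{[Q]}\up$ other than $\us$ needs both that $\up$ covers $\us$ and that $\us$ is the unique $[Q]$-simple sequence $\preceq^\tb_{[Q]}\up$ (so $\soc_{[Q]}(\up)$ is well defined), plus well-foundedness of the finite poset of sequences of fixed weight; your parenthetical "covering property $\ldots$ together with $\us=\soc_{[Q]}(\up)$" is the right pair of ingredients but the descent-to-a-minimal-element step is left implicit.
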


We first resolve the ambiguity in Proposition~\ref{prop:E61_ambiguity}.

\begin{proposition}
For type $E_6^{(1)}$, we have
\[
d_{4,4}(z) = (z-q^2)(z-q^4)^2(z-q^6)^3(z-q^8)^3(z-q^{10})^2(z-q^{12}).
\]
\end{proposition}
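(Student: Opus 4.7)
The plan is to upgrade the bound $\epsilon \in \{0,1\}$ of Proposition~\ref{prop:E61_ambiguity} to $\epsilon = 1$, equivalently to show that $q^6$ is a pole of $d_{4,4}(z)$ of order at least $3$. Following the strategy of Section~\ref{sec:Refine}, I would transport the question into quiver Hecke module theory via the exact Schur--Weyl duality functor $\F_Q^{(1)} \colon R^{E_6}\gmod \to \Ca_Q^{(1)}$ of Theorem~\ref{thm: simplicity type E}, and then bound the composition length of a suitable convolution product from below using the combinatorics of the AR quiver $\Gamma_Q$.

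Concretely, I would fix the Dynkin quiver $Q$ of~\eqref{eq: E6 AR quiver} and choose a pair $(\alpha,\beta)$ with $\widehat{\Omega}_Q(\alpha) = (4,p)$ and $\widehat{\Omega}_Q(\beta) = (4,p+6)$ for one of the three candidate values of $p$, for instance $\bigl(\prt{011}{221},\prt{112}{210}\bigr)$ at positions $(4,6)$ and $(4,12)$. I would then exhibit two distinct sequences of good adjacent neighbors, in the sense of Definitions~\ref{def: Q-adjacent}--\ref{def: good neighbor}, lying strictly below $\up \seteq (\alpha,\beta)$ in the bilex order $\prec^{\tb}_{[Q]}$, thereby establishing $\len_{[Q]}(\up) \ge 2$. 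Each chain is read off from $\Gamma_Q$ by selecting intermediate roots $\eta \in \PR$ satisfying the cover conditions of Definition~\ref{def: Q-adjacent} and verifying that the resulting pairs are bilex-incomparable.

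Applying Proposition~\ref{prop: comp length} with $\len_{[Q]}(\up) \ge 2$, the convolution $S_Q(\alpha) \conv S_Q(\beta)$ has composition length at least $4$ in $R^{E_6}\gmod$. Since $\F_Q^{(1)}$ is exact and bijects simple objects (Theorem~\ref{thm: simplicity type E}), the tensor product $V_Q^{(1)}(\alpha) \otimes V_Q^{(1)}(\beta)$ also has composition length at least $4$ in $\Ca_{E_6^{(1)}}$. A higher-order analogue of Lemma~\ref{lem:simplepole}---obtained by iterating the image/kernel identification for $(z-a)^s\Rnorm{V,W}$ at a pole of order $k$---then implies that a composition length of at least $k+1$ forces the order of the pole at $z = q^6$ to be at least $k$. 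Taking $k+1 = 4$ yields order at least $3$, which combined with the upper bound of Proposition~\ref{prop:E61_ambiguity} forces $\epsilon = 1$.

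The main obstacle is the combinatorial verification that $\len_{[Q]}(\up) \ge 2$: one must exhibit two distinct (and suitably incomparable) intermediate pairs $\up^{(1)}, \up^{(2)} \prec^{\tb}_{[Q]} \up$ both satisfying the good-adjacent-neighbor conditions of Definition~\ref{def: Q-adjacent}, which requires a careful enumeration of the relevant positive roots of height $\lvert \alpha + \beta\rvert$ in $\PR$ and their positions in $\Gamma_Q$. A secondary but essential technical point is the upgrade of Lemma~\ref{lem:simplepole} from simple poles to higher-order poles; this can be carried out either by an induction on the pole order using the filtration built from the various $(z-a)^s\Rnorm{V,W}|_{z=a}$, or by assembling non-zero compositions of homomorphisms via Proposition~\ref{prop: not vanishing} from suitable Dorey's-type surjections and applying Lemma~\ref{lem:dvw avw}, analogous to the argument used to refine $d^{F_4^{(1)}}_{2,2}(z)$ in Section~\ref{sec:Refine}.
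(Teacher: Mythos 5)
Your main route breaks at the ``higher-order analogue of Lemma~\ref{lem:simplepole}.'' That lemma establishes only that a \emph{simple} pole forces composition length exactly $2$; together with the trivial ``no pole $\Rightarrow$ length $1$'' it yields the contrapositive ``length $\ge 3 \Rightarrow$ pole order $\ge 2$,'' which is what Corollary~\ref{cor: order 0} uses. But the next step you need --- ``length $\ge 4 \Rightarrow$ pole order $\ge 3$,'' i.e.\ ``pole order $\le 2 \Rightarrow$ composition length $\le 3$'' --- is neither in the paper nor a routine iteration: for poles of order $\ge 2$ the image-filtration by $(z-a)^s \Rnorm{V,W}|_{z=a}$ does not produce a bound on composition length in the way the simple-pole argument does, and there is no reason a double pole cannot coexist with composition length $\gg 3$. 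So exhibiting $\len_{[Q]}(\up)\ge 2$ (which is certainly doable from $\Gamma_Q$, and is precisely what is done in Proposition~\ref{prop: d33 E6(1)}) only certifies that $q^6$ is a root of order $\ge 2$, something Proposition~\ref{prop:E61_ambiguity} already knows; it does not reach order $3$.

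The paper closes the gap by a different mechanism that never passes through a composition-length bound. From the AR quiver~\eqref{eq: E6 AR quiver} one reads that $\bigl(\sprt{111}{100},\sprt{101}{110}\bigr)$ is a $[Q]$-minimal pair of the $[Q]$-simple sequence $\bigl(\sprt{101}{100},\sprt{111}{110}\bigr)$; Proposition~\ref{prop: R} then produces a surjection
\[
V(\varpi_1)_{(-q)^{-3}} \otimes V(\varpi_4)_{(-q)^{1}} \twoheadrightarrow V(\varpi_2)\otimes V(\varpi_5),
\]
whose target is \emph{simple}, so Lemma~\ref{lem:dvw avw} applies. Plugging in the already-known $d_{1,4}$, $d_{2,4}$, $d_{4,5}$ and the explicit $a_{i,j}(z)$ from Lemma~\ref{Lem: aij and dij} into the resulting $\ko[z^{\pm 1}]$-divisibility constraint forces the exponent of $(z-q^6)$ in $d_{4,4}(z)$ to be $3$, and the upper bound $\epsilon\le 1$ from Proposition~\ref{prop:E61_ambiguity} then pins it down. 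Your second ``fallback'' in the last paragraph gestures toward exactly this mechanism, but as framed it is offered as a way to salvage the composition-length argument rather than as the argument itself; the divisibility route is not a patch on the filtration idea but a replacement for it, and the composition-length machinery should be dropped entirely.
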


\begin{proof}
Note that, from the AR-quiver of type $E_6$ in~\eqref{eq: E6 AR quiver}, we can read that
\[
\text{$( {\scriptstyle\sprt{111}{100}},{\scriptstyle\sprt{101}{110}} )$ is a $[Q]$-minimal pair of the simple sequence $( {\scriptstyle\sprt{101}{100}},{\scriptstyle\sprt{111}{110}} )$.}
\]
Hence Proposition~\ref{prop: R} states that we have a homomorphism
\begin{align}\label{eq: morphism E61}
V(\varpi_1)_{(-q)^{-3}}  \otimes V(\varpi_4)_{(-q)^1}  \twoheadrightarrow V(\varpi_2)\otimes V(\varpi_5).
\end{align}
Then we have
\begin{align}\label{eq: computation}
\dfrac{ d_{1,4}((-q)^{-3}z)d_{4,4}((-q)^{1}z)}{d_{2,4}(z)d_{5,4}(z)} \times
\dfrac{ a_{2,4}(z)d_{5,4}(z)}{ a_{1,4}((-q)^{-3}z)a_{4,4}((-q)^{1}z)}\in \ko[z^{\pm 1}].
\end{align}
by~\cite[Lemma C.15]{AK97}.
From Lemma~\ref{Lem: aij and dij}, one can compute that
\begin{align*}
& a_{2,4}(z) \equiv \dfrac{[19][5][23][1]}{[7][17][11][13]} &&  a_{3,4}(z) \equiv a_{4,5}(z) \equiv \dfrac{[1][3][21][23]}{[9][11][13][15]} \\
& a_{1,4}(z) \equiv \dfrac{[2][22]}{[10][14]}  && a_{4,4}(z) \equiv \dfrac{[0][2][4][20][22][24]}{[8][10][12]^2[14][16]}.
\end{align*}
Since we have computed $d_{1,4}(z)$, $d_{2,4}(z)$ and $d_{5,4}(z)$, ~\eqref{eq: computation} can be written as follows:
\begin{align*}
&\dfrac{ d_{1,4}((-q)^{-3}z)d_{4,4}((-q)^{1}z)}{d_{2,4}(z)d_{5,4}(z)}
= \dfrac{  (z+q^1)(z+q^3)^2(z+q^5)^{2\text{ or } 3}(z+q^7)^4(z-q^{9})^3(z-q^{11})^2 (z+q^{13}) }
{ (z+q^3)^2(z+q^5)^3(z+q^7)^4(z+q^9)^3(z+q^{11})^2}
\end{align*}
by Proposition~\ref{prop:E61_ambiguity} and
\begin{align*}
\dfrac{ a_{2,4}(z)d_{5,4}(z)}{ a_{1,4}((-q)^{-3}z)a_{4,4}((-q)^{1}z)}
& =  \dfrac{[19][5][23][1]}{[7][17][11][13]} \times \dfrac{[1][3][21][23]}{[9][11][13][15]}.
\times \dfrac{[7][11]}{[-1][19]} \times \dfrac{[9][11][13]^2[15][17]}{[1][3][5][21][23][25]} \\
& = \dfrac{[23][1]}{[-1][25]} = \dfrac{(z-(-q)^{-1})}{(z-(-q)^{1})}.
\end{align*}
Thus we have
\begin{align*}
& \dfrac{  (z+q^1)(z+q^3)^2(z+q^5)^{2\text{ or } 3}(z+q^7)^4(z-q^{9})^3(z-q^{11})^2 (z+q^{13}) }
{ (z+q^3)^2(z+q^5)^3(z+q^7)^4(z+q^9)^3(z+q^{11})^2} \times  \dfrac{(z-(-q)^{-1})}{(z-(-q)^{1})}  \\
& = \dfrac{  (z+q^{-1})(z+q^3)^2(z+q^5)^{2\text{ or } 3}(z+q^7)^4(z-q^{9})^3(z-q^{11})^2 (z+q^{13}) }
{ (z+q^3)^2(z+q^5)^3(z+q^7)^4(z+q^9)^3(z+q^{11})^2}
\in \ko[z^{\pm 1}].
\end{align*}
which implies that the order of degree $(-q)^6$ should be $3$.
\end{proof}

By applying generalized Schur--Weyl duality
\[
\begin{tikzpicture}[scale=2.7,baseline=0]
\node (U) at (0,0) {$\Rep(R^{E_6})$};
\node (C1) at (-1,-0.6) {$ \Ca_{E_6^{(1)}} \supset \mathcal{C}^{(1)}_{Q}$};
\node (C2) at (1,-0.6) {$ \mathcal{C}^{(2)}_{Q} \subset \Ca_{E_6^{(2)}}$};
\draw[<-] (C1) -- node[midway,sloped,above] {\tiny $\mathcal{F}^{(1)}_Q$}  (U);
\draw[<-] (C2) -- node[midway,sloped,above] {\tiny $\mathcal{F}^{(2)}_{Q}$}  (U);
\draw[<->,dotted] (C1) -- (C2);
\end{tikzpicture}
\]
we have the twisted analogue of Proposition~\ref{prop: R}:

\begin{proposition} \label{prop: R2}
Let $Q$ be a Dynkin quiver of finite simply-laced type.
For a $[Q]$-minimal pair $(\al,\be)$ of a simple sequence $\us=(\al_1,\ldots,\al_k)$, $V^{(2)}_Q(\al_1) \otimes \cdots \otimes V^{(2)}_Q(\al_k)$
is simple and there exists a surjective homomorphism
\[
V_Q^{(2)}(\be) \otimes  V_Q^{(2)}(\al) \twoheadrightarrow V^{(2)}_Q(\al_1) \otimes \cdots \otimes V^{(2)}_Q(\al_k).
\]
\end{proposition}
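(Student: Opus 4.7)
The plan is to lift the untwisted statement (Proposition~\ref{prop: R}) to the quiver Hecke side via the categorification theorem and then transport it down to $\mathcal{C}^{(2)}_Q$ through the generalized Schur--Weyl duality functor $\mathcal{F}^{(2)}_Q$ constructed in Section~\ref{sec6:application}.

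First I would formulate the analogue of Proposition~\ref{prop: R} inside $R^{E_6}\gmod$: for a $[Q]$-simple sequence $\underline{s}=(\alpha_1,\ldots,\alpha_k)$, the convolution
\[
S_Q(\alpha_1) \conv \cdots \conv S_Q(\alpha_k)
\]
is simple, and for a $[Q]$-minimal pair $(\alpha,\beta)$ of $\underline{s}$, there is a surjective homomorphism
\[
S_Q(\beta) \conv S_Q(\alpha) \twoheadrightarrow S_Q(\alpha_1) \conv \cdots \conv S_Q(\alpha_k).
\]
Simplicity follows from Theorem~\ref{thm: BkMc}(c) together with the minimality of $\underline{s}$ in $\prec^{\tb}_{[Q]}$, which forces the correction sum to vanish so that $\Stom[\underline{s}] \simeq \Image(\mathbf{r}_{\underline{s}})$ is already simple. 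The surjection is obtained from the non-zero homomorphism $\mathbf{r}_{\underline{p}}$ of Theorem~\ref{thm: BkMc}(b) applied to the pair $\underline{p}=(\alpha,\beta)$, combined with the observation that its image must be precisely $S_Q(\alpha_1) \conv \cdots \conv S_Q(\alpha_k)$, using the same arguments employed in~\cite[\S 5--6]{Oh15E} to prove Proposition~\ref{prop: R}, which at their core only use properties encoded on the quiver Hecke side.

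Next I would invoke the exact tensor functor $\mathcal{F}^{(2)}_Q \colon R^{E_6}\gmod \to \mathcal{C}^{(2)}_Q$ from Proposition~\ref{prop: particular D43 E62} and Theorem~\ref{thm: D43 E62}. By Theorem~\ref{thm: D43 S to V} and Theorem~\ref{thm: D43 E62}(1), the functor $\mathcal{F}^{(2)}_Q$ satisfies $\mathcal{F}^{(2)}_Q(S_Q(\gamma)) \iso V^{(2)}_Q(\gamma)$ for all $\gamma \in \PR$, converts $\conv$ to $\otimes$, preserves exactness, and sends simple modules to simple modules bijectively. Applying $\mathcal{F}^{(2)}_Q$ to the surjection and simplicity statements on the quiver Hecke side then immediately yields the two claims of Proposition~\ref{prop: R2}.

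The main obstacle is establishing the $R^{E_6}\gmod$-analogue cleanly, since Theorem~\ref{thm: BkMc}(f) provides the six-term sequence~\eqref{eq: 6 ses} only for minimal pairs of positive roots, whereas here we need the surjection for a minimal pair of an arbitrary $[Q]$-simple sequence. The resolution is to repeat the inductive argument of~\cite{Oh15E} verbatim inside $R^{E_6}\gmod$, using Proposition~\ref{prop: not vanishing} and Theorem~\ref{thm: socle head} in place of their analogues in $\mathcal{C}^{(1)}_Q$; this works because Proposition~\ref{prop: R} in $\mathcal{C}^{(1)}_Q$ ultimately rests on the categorification of $U_q^-(\mathsf{g})^\vee$ shared by both sides.
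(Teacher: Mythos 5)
Your proposal is correct and takes essentially the same approach as the paper, which simply states ``By applying generalized Schur--Weyl duality\ldots we have the twisted analogue of Proposition~\ref{prop: R}'' and leaves the details implicit. You have fleshed out exactly what that one-liner means: lift the statement to $R^{E_6}\gmod$ (using Theorem~\ref{thm: BkMc}), then push it forward through the exact tensor functor $\mathcal{F}^{(2)}_Q$, which sends $S_Q(\gamma)$ to $V^{(2)}_Q(\gamma)$ and simples to simples.
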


In particular,~\eqref{eq: morphism E61} transfers to the following homomorphism in $\Ca^{(2)}_Q$ under the map in Proposition~\ref{prop: particular D43 E62}:
\begin{align}\label{eq: morphism E62}
 V(\varpi_1)_{(-q)^{-3}}  \otimes V(\varpi_3)_{\sqrt{-1}(-q)^1}  \twoheadrightarrow V(\varpi_2)_{-1} \otimes V(\varpi_4)_{\sqrt{-1}}.
 \end{align}

Next, we resolve the ambiguity in Proposition~\ref{prop:E62_ambiguity}.

\begin{proposition}
For type $E_6^{(2)}$, we have
\[
d_{3,3}(z)= (z^2-q^4)(z^2-q^8)^{2}(z^2-q^{12})^{3}(z^2-q^{16})^{3}(z^2-q^{20})^2(z^2-q^{24}).
\]
\end{proposition}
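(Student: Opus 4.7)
The plan is to mirror the argument of the preceding proposition (which resolved $\epsilon = 1$ for $d_{4,4}(z)$ in type $E_6^{(1)}$) in the twisted setting. First I would observe that the Dorey-type surjection~\eqref{eq: morphism E62} arises as the $\F_Q^{(2)}$-image of the same $R^{E_6}$-morphism whose $\F_Q^{(1)}$-image is~\eqref{eq: morphism E61}: this morphism comes from Proposition~\ref{prop: R2} applied to the $[Q]$-minimal pair $({\scriptstyle\sprt{111}{100}},{\scriptstyle\sprt{101}{110}})$ of the simple sequence $({\scriptstyle\sprt{101}{100}},{\scriptstyle\sprt{111}{110}})$ read off from~\eqref{eq: E6 AR quiver}, so in particular the target $V(\varpi_2)_{-1} \otimes V(\varpi_4)_{\sqrt{-1}}$ of~\eqref{eq: morphism E62} is simple and Lemma~\ref{lem:dvw avw} is applicable.

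Next I would apply Lemma~\ref{lem:dvw avw} to~\eqref{eq: morphism E62} with $W = V(\varpi_3)$, the natural $E_6^{(2)}$-analog of the choice $W = V(\varpi_4)$ used in the $E_6^{(1)}$ proof, producing
\[
\dfrac{d_{1,3}((-q)^{-3}z)\,d_{3,3}(\sqrt{-1}(-q)z)\,a_{3,V}(z)}{d_{3,V}(z)\,a_{1,3}((-q)^{-3}z)\,a_{3,3}(\sqrt{-1}(-q)z)} \in \ko[z^{\pm 1}],
\]
where $V = V(\varpi_2)_{-1} \otimes V(\varpi_4)_{\sqrt{-1}}$, with $a_{3,V}(z) = a_{2,3}(-z)\,a_{3,4}(\sqrt{-1}z)$ and $d_{3,V}(z)$ a divisor of $d_{2,3}(-z)\,d_{3,4}(\sqrt{-1}z)$. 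Substituting the denominator formulas from Section~\ref{sec4:computations} and the $a$-formulas~\eqref{eq: a34 a33 E6(2)} and~\eqref{eq: a23 a13 E6(2)} (with $d_{3,3}(z)$ written in terms of the unknown $\epsilon'$ from Proposition~\ref{prop:E62_ambiguity}), the common factors should cancel in the same pattern as the $d_{5,4}(z)$-cancellation observed in the $E_6^{(1)}$ calculation, reducing the expression to a ratio of small polynomials in $z$. Membership of that ratio in $\ko[z^{\pm 1}]$ then constrains the multiplicity of $(z^2 - q^{12})$ in the factor $d_{3,3}(\sqrt{-1}(-q)z)$ to be at least $3$; combined with the upper bound from Proposition~\ref{prop:E62_ambiguity}, this forces $\epsilon' = 1$ and yields the stated formula for $d_{3,3}(z)$.

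The main obstacle is the careful bookkeeping of the tensor-product denominator $d_{3,V}(z)$: although $a_{W,V_1 \otimes V_2}(z) = a_{W,V_1}(z)\,a_{W,V_2}(z)$ factors multiplicatively, in general $d_{W,V_1 \otimes V_2}(z)$ only divides $d_{W,V_1}(z)\,d_{W,V_2}(z)$, and one must identify the ``correction factor'' precisely enough to perform the divisibility analysis. However, the parallel cancellation phenomenon in the $E_6^{(1)}$ argument (where the two $d_{5,4}(z)$ contributions cancelled between numerator and denominator) suggests the correction factor has exactly the form needed for the computation to go through, and the intertwining property of $\F_Q^{(2)}$ with convolution products should transport this cancellation into the twisted setting.
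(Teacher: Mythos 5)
Your proposal follows essentially the same route as the paper: apply Lemma~\ref{lem:dvw avw} to the Dorey-type surjection~\eqref{eq: morphism E62} with $W = V(\varpi_3)$ (the correct $E_6^{(2)}$-analog of $W = V(\varpi_4)$ in the $E_6^{(1)}$ argument, since $\overline{\sigma}(4)=3$), substitute the known formulas for $a_{1,3}$, $a_{2,3}$, $a_{3,3}$, $a_{3,4}$ from~\eqref{eq: a34 a33 E6(2)}--\eqref{eq: a23 a13 E6(2)} together with the denominator formulas, and read off from the Laurent-polynomial membership constraint that the multiplicity of $(z^2-q^{12})$ in $d_{3,3}(z)$ must be $3$. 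One correction on your remark about the tensor-product denominator: the ``$d_{5,4}(z)$ cancellation between numerator and denominator'' you invoke in the $E_6^{(1)}$ computation~\eqref{eq: computation} does not occur; the $d_{5,4}(z)$ appearing in the second factor there is a typographical slip for $a_{5,4}(z)$ (as one sees from the explicit Gamma-function computation that follows, which uses $a_{4,5}(z) \equiv [1][3][21][23]/[9][11][13][15]$). So your subtlety that $d_{W,V_1\otimes V_2}(z)$ need not equal $d_{W,V_1}(z)\,d_{W,V_2}(z)$ is genuine and is not resolved by any cancellation; the paper in fact substitutes $d_{2,3}(\sqrt{-1}z)\,d_{3,4}(z)$ directly for $d_{3,V_1\otimes V_2}(z)$ without comment, which is justified precisely because the resulting divisibility constraint is exactly saturated when $\epsilon' = 1$ and would fail for any nontrivial correction factor, but this step deserves the scrutiny you gave it.
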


\begin{proof}
By~\eqref{eq: morphism E62} and~\cite[Lemma~C.15]{AK97}, we have
\begin{align}\label{eq: computation2}
\dfrac{ d_{1,3}(\sqrt{-1}(-q)^{-3}z)d_{3,3}((-q)^{1}z)}{d_{2,3}(iz)d_{3,4}(z)} \times
\dfrac{ a_{2,3}(\sqrt{-1}z)a_{3,4}(z)}{ a_{1,3}(\sqrt{-1}(-q)^{-3}z)a_{3,3}((-q)^{1}z)}\in \ko[z^{\pm 1}].
\end{align}

From Lemma~\ref{Lem: aij and dij}, one can compute that
\begin{align*}
& a_{2,3}(\sqrt{-1}z) \equiv \dfrac{\lrt{3}\lrt{21}\lrt{1}\lrt{23}}{\lrt{9}\lrt{15}\lrt{11}\lrt{13}},
&& a_{3,4}(z) \equiv \dfrac{\lrt{5}\lrt{19}\lrt{1}\lrt{23}}{\lrt{7}\lrt{17}\lrt{11}\lrt{13}},\\
& a_{1,3}(\sqrt{-1}z) \equiv \dfrac{\lrt{2}\lrt{22}}{\lrt{10}\lrt{14}},
&& a_{3,3}(z) \equiv \dfrac{\lrt{4}\lrt{20}\lrt{2}\lrt{22}\lrt{0}\lrt{24}}{\lrt{8}\lrt{16}\lrt{10}\lrt{14}\lrt{12}^2}.
\end{align*}

Since we have computed $d_{1,3}(z)$, $d_{2,3}(z)$ and $d_{3,4}(z)$,~\eqref{eq: computation2} can be written as follows:
\begin{align*}
&\dfrac{ d_{1,3}(\sqrt{-1}(-q)^{-3}z)d_{3,3}((-q)^{1}z)}{d_{2,3}(\sqrt{-1}z)d_{3,4}(z)}\\
&= \dfrac{(z^2-q^2)(z^2-q^6)^{2}(z^2-q^{10})^{2 \text{ or } 3}(z^2-q^{14})^{4}(z^2-q^{18})^3(z^2-q^{22})^2(z^2-q^{26})}
{ (z^2-q^6)^2(z^2-q^{10})^3(z^2-q^{14})^4(z^2-q^{18})^3(z^2-q^{22})^2   }
\end{align*}
by Proposition~\ref{prop:E62_ambiguity} and
\begin{align*}
&\dfrac{ a_{2,3}(\sqrt{-1}z)a_{3,4}(z)}{ a_{1,3}(\sqrt{-1}(-q)^{-3}z)a_{3,3}((-q)^{1}z)} \\
& = \dfrac{\lrt{3}\lrt{21}\lrt{1}\lrt{23}}{\lrt{9}\lrt{15}\lrt{11}\lrt{13}} \times \dfrac{\lrt{5}\lrt{19}\lrt{1}\lrt{23}}{\lrt{7}\lrt{17}\lrt{11}\lrt{13}}
 \times \dfrac{\lrt{7}\lrt{11}}{\lrt{-1}\lrt{19}} \times \dfrac{\lrt{9}\lrt{17}\lrt{11}\lrt{15}\lrt{13}^2}{\lrt{5}\lrt{21}\lrt{3}\lrt{23}\lrt{1}\lrt{25}} \\
& =\dfrac{\lrt{1}\lrt{23}}{\lrt{25}\lrt{-1}} = \dfrac{(z^2-q^{-2})}{(z^2-q^{2})}.
\end{align*}

Thus we have
\begin{align*}
& \dfrac{(z^2-q^{-2})(z^2-q^6)^{2}(z^2-q^{10})^{2 \text{ or } 3}(z^2-q^{14})^{4}(z^2-q^{18})^3(z^2-q^{22})^2(z^2-q^{26})}
{ (z^2-q^6)^2(z^2-q^{10})^3(z^2-q^{14})^4(z^2-q^{18})^3(z^2-q^{22})^2   } \in \ko[z^{\pm 1}],
\end{align*}
which implies our assertion.
\end{proof}

For types $E_7^{(1)}$ and $E_8^{(1)}$, we can remove the ambiguities in the previous version by applying the same arguments as above.
Hence there are roots of order $4,5,6$ also. To the best knowledge of the authors, this is the first such observation of a root of order strictly larger than $3$.

When we prove for $E_7^{(1)}$-cases, we employ the following homomorphisms
\begin{align*}
& V(3)_{(-q)^{-1}}  \otimes V(6)_{(-q)^{4}}  \twoheadrightarrow V(1)\otimes V(2)_{(-q)}\otimes V(7)_{(-q)^3}, \\
& V(3)_{(-q)^{-1}}  \otimes V(5)_{(-q)^{3}}  \twoheadrightarrow V(1)\otimes V(2)_{(-q)}\otimes V(6)_{(-q)^2}, \\
& V(1)_{(-q)^{-3}}  \otimes V(4)_{(-q)^1}  \twoheadrightarrow V(2)\otimes V(5),
\end{align*}
which can be obtained  from Proposition~\ref{prop: R} and Table~\ref{table:min_pairs}.

\begin{table}
\begin{tabular}{cccc}
\toprule
Minimal pair $\up$
& $\left({\scriptstyle\sprt{1111}{110}} ,{\scriptstyle\sprt{1223}{221}} \right)$
& $\left({\scriptstyle\sprt{1122}{110}} ,{\scriptstyle\sprt{1223}{221}} \right)$
& $\left({\scriptstyle\sprt{1224}{321}} ,{\scriptstyle\sprt{1011}{111}} \right)$
\\ \midrule
$\soc_Q(\up)$
& $\left({\scriptstyle\sprt{0111}{110}}, {\scriptstyle\sprt{1112}{110}} ,{\scriptstyle\sprt{1111}{111}}  \right)$
& $\left({\scriptstyle\sprt{0111}{110}}, {\scriptstyle\sprt{1112}{110}} ,{\scriptstyle\sprt{1122}{111}}  \right)$
& $\left({\scriptstyle\sprt{1112}{211}}, {\scriptstyle\sprt{1123}{221}} \right)$
\\ \bottomrule
\end{tabular}
\caption{The minimal pairs $\up$ and their socles $\soc_Q(\up)$ of $\Gamma_Q$ used to remove ambiguities.}
\label{table:min_pairs}
\end{table}

\bibliographystyle{alpha}
\bibliography{r_matrix}{}

\end{document}